\newtheorem{theorem}{Theorem}
\newtheorem{corollary}[theorem]{Corollary}
\newtheorem{definition}[theorem]{Definition}
\newtheorem{lemma}[theorem]{Lemma}
\newtheorem{proposition}[theorem]{Proposition}
\newtheorem{remark}[theorem]{Remark}
\newtheorem{mytheorem}{Theorem}
\newcommand{\rn}[1]{\mathbb{R}^{#1}}
\newcommand{\re}{ \mathbb{R}}
\newcommand{\beq}{\begin{equation}}
\newcommand{\bea}[1]{\begin{array}{#1} }
\newcommand{\eeq}{ \end{equation}}
\newcommand{\ea}{ \end{array}}
\newcommand{\ep}{\epsilon}
\newcommand{\es}{\emptyset}
\newcommand{\al}{\alpha}
\newcommand{\ga}{\gamma}
\newcommand{\de}{\delta}
\newcommand{\ds}{\displaystyle}
\newcommand{\ts}{\textstyle}
\newcommand{\rar }{\mbox{$\rightarrow$}}
\newcommand{\ran}{\rangle}
\newcommand{\lan}{\langle}
\newcommand{\Ga}{\Gamma}
\newcommand{\la}{\lambda}
\newcommand{\La}{\Lambda}
\newcommand{\ar}{\partial}
\newcommand{\si}{\sigma}
\newcommand{\om}{\omega}
\newcommand{\Om}{\Omega}
\newcommand{\be}{\beta}
\newcommand{\ph}{\phi}
\newcommand{\he}{\theta}
\newcommand{\He}{\Theta}
\newcommand{\Ph}{\Phi}
\newcommand{\hs}[1]{\mbox{$ \hspace{#1}$}}
\newcommand{\sem}{\setminus}
\newcommand{\ze}{\zeta}
\newcommand{\De}{\Delta}
\newcommand{\ti}{\tilde}
\newcommand{\noi}{\noindent}
\renewcommand*{\backref}[1]{}
\renewcommand*{\backrefalt}[4]{%
 \ifcase #1 (Not cited.)%
   \or        (Cited on page~#2.)%
    \else      (Cited on pages~#2.)%
    \fi}
\begin{document} 

\title[The  Brunn-Minkowski Inequality and A Minkowski problem]{The  Brunn-Minkowski Inequality and A Minkowski problem for Nonlinear Capacity}

%\title[On the Brunn-Minkowski Theorem for Nonlinear Capacities]{On the Brunn-Minkowski Theorem for  Nonlinear Capacities}

\author[M. Akman]{Murat Akman}
\address{{\bf Murat Akman}\\
University of Connecticut, Storrs, CT 06269-3009} \email{murat.akman@uconn.edu}

\author[J. Gong]{Jasun Gong}
\address{{\bf Jasun Gong} \\ Mathematics Department \\ Fordham University \\ John Mulcahy Hall Bronx, NY 10458-5165}
\email{jgong7@fordham.edu}

\author[J. Hineman]{Jay Hineman}
\address{{\bf Jay Hineman} \\ Geometric Data Analytics, Durham, NC 27707}
\email{jay.hineman@geomdata.com}

\author[J. Lewis]{John Lewis}
\address{{\bf John Lewis} \\ Mathematics Department \\ University of Kentucky \\ Lexington, Kentucky, 40506}
\email{johnl@uky.edu}

\author[A. Vogel]{Andrew Vogel}
\address{{\bf Andrew Vogel}\\ Department of Mathematics, Syracuse University \\  Syracuse, New York 13244}
\email{alvogel@syracuse.edu}

%\date{\today}

\keywords{The Brunn-Minkowski inequality, Nonlinear capacities, Inequalities and extremum problems, Potentials and capacities, $\mathcal{A}$-harmonic PDEs, Minkowski problem, Variational formula, Hadamard variational formula}
\subjclass[2010]{35J60,31B15,39B62,52A40,35J20,52A20,35J92}
\begin{abstract}
In this article we study two classical potential-theoretic problems in convex geometry.  The first problem is  an inequality of  Brunn-Minkowski type for  a nonlinear capacity, $\mbox{Cap}_{\mathcal{A}},$ where  $\mathcal{A}$-capacity  is  associated with  a   nonlinear elliptic  PDE  whose  structure is modeled on 
the  $p$-Laplace equation and whose solutions in an open set are called $ \mathcal{A}$-harmonic.  

In the first part of this article, we prove the Brunn-Minkowski inequality for  this  capacity:
\[
\left[\mbox{Cap}_\mathcal{A}  ( \la  E_1 + (1-\la) E_2 )\right]^{\frac{1}{(n-p)}}  \geq  \la  \, 
\left[\mbox{Cap}_\mathcal{A}  (  E_1 )\right]^{\frac{1}{(n-p)}}  +  (1-\la)  \left[\mbox{Cap}_\mathcal{A}  (E_2 )\right]^{\frac{1}{(n-p)}}
\]
when  $1<p<n, 0 < \la < 1,   $ and $E_1, E_2$ are convex compact sets with positive  $\mathcal{A}$-capacity. 
Moreover, if equality holds  in  the  above inequality for some $E_1$ and $E_2, $ then under certain  regularity and structural assumptions on $\mathcal{A},$  we show that  these two sets are homothetic. 

In the second part of this article we study a Minkowski problem for a certain measure  
associated with a  compact convex set  $E$ with nonempty interior  and its    $ \mathcal{A}$-harmonic capacitary function in the complement of $E$. If  $   \mu_E $ denotes this measure, then the  Minkowski problem we consider in this setting is that; for a given finite Borel measure $\mu$ on $\mathbb{S}^{n-1}$, find necessary and sufficient conditions for which there exists $ E $ as above with $  \mu_E   =  \mu.  $   We show that  necessary and sufficient conditions for  existence under this setting are exactly the same conditions as in the classical Minkowski problem for volume as well as in the work of Jerison in \cite{J} for  electrostatic capacity. Using the Brunn-Minkowski inequality result from the first part, we also show that this problem has a unique solution up to translation  when $p\neq n- 1$ and translation and dilation when $p = n-1$.
\end{abstract}

\maketitle

\tableofcontents

\part{The Brunn-Minkowski inequality for nonlinear capacity}

 \section{Introduction} 
%\label{s:intro}
  The well-known Brunn-Minkowski inequality states  that 
\begin{align}
\label{BMVolume}
\left[\mbox{Vol}(\lambda E_1+(1-\lambda) E_2)\right]^{\frac{1}{n}} 
\geq\lambda\left[\mbox{Vol}(E_1)\right]^{\frac{1}{n}} +(1-\lambda) \left[\mbox{Vol}(E_2)\right]^{\frac{1}{n}}
\end{align}
whenever $E_1, E_2$ are compact convex
sets with nonempty interiors in $\mathbb{R}^{n}$ and $\lambda\in (0,1)$. 
Moreover, equality in \eqref{BMVolume} holds if and only if $E_1$ is a translation and dilation of $E_2$.  For  numerous  applications of this inequality  to  problems in  geometry and  analysis  see  the classical book by Schneider \cite{Sc} and  the survey paper by Gardner \cite{G}.
Here $\mbox{Vol}(\cdot)$ denotes the usual volume in $\mathbb{R}^{n}$ and the summation $(\lambda E_1+(1-\lambda) E_2)$ should be understood as a vector sum(called \textit{Minkowski addition}). \eqref{BMVolume} says that $\left[\mbox{Vol}(\cdot)\right]^{1/n}$ is a concave function with respect to  Minkowski addition.  
Inequalities of Brunn-Minkowski type  have also been proved for  other  homogeneous functionals. For example, one can replace  volume in \eqref{BMVolume} by ``capacity'' and in this case it was shown by Borell in \cite{B1} that
\begin{align}
\label{BMcapacity2}
\left[\mbox{Cap}_{2}(\lambda E_1+(1-\lambda) E_2)\right]^{\frac{1}{n-2}}\geq\lambda \left[\mbox{Cap}_{2}(E_{1})\right]^{\frac{1}{n-2}}+(1-\lambda)\left[\mbox{Cap}_{2}(E_2)\right]^{\frac{1}{n-2}}
\end{align}
whenever $E_1, E_2$ are compact convex sets with nonempty interiors in $\mathbb{R}^{n}$, $n\geq 3$. Here $\mbox{Cap}_{2}$ denotes the \textit{Newtonian capacity}.     The  exponents in  this inequality and \eqref{BMVolume} differ  as  $\mbox{Vol}(\cdot)$ is homogeneous of degree $n$ whereas $\mbox{Cap}_{2}(\cdot)$ is homogeneous of degree $n-2$. In \cite{B2}, Borell proved a Brunn-Minkowski type inequality for   \textit{logarithmic capacity}. The equality case in \eqref{BMcapacity2} was studied by Caffarelli, Jerison and Lieb in \cite{CJL} and it was shown that equality in \eqref{BMcapacity2} holds if and only if $E_2$ is a translate and dilate of $E_1$ when $n\geq 3$. Jerison in \cite{J} used  that result  to prove  uniqueness in the \textit{Minkowski problem} (see section \ref{section8} for the  Minkowski problem). In \cite{CS} Colesanti and Salani  proved the $p$-capacitary version of \eqref{BMcapacity2} for $1<p<n$. That is, 
\begin{align}
\label{BMcapacityp}
\left[\mbox{Cap}_{p}(\lambda E_1+(1-\lambda) E_2)\right]^{\frac{1}{n-p}}\geq\lambda \left[\mbox{Cap}_{p}(E_{1})\right]^{\frac{1}{n-p}}+(1-\lambda)\left[\mbox{Cap}_{p}(E_2)\right]^{\frac{1}{n-p}}
\end{align}
whenever $E_1, E_2$ are compact convex sets with nonempty interiors  in $\mathbb{R}^{n}$, and $\mbox{Cap}_{p}(\cdot)$ denotes the $p$-capacity of a set defined as
\[
\mbox{Cap}_{p}(E)=\inf\left\{\, \, \int_{\mathbb{R}^{n}} |\nabla v|^{p} dx:\, \, v\in C^{\infty}_{0}(\rn{n}),\, v(x)\geq  1\, \, \mbox{for}\, \,  x\in E \right\}.
\] 
It was also shown in the same paper that equality in \eqref{BMcapacityp} holds if and only if $E_2$ is a translate and dilate of $E_1$. In  \cite{CC},  Colesanti and Cuoghi  defined a   logarithmic capacity for  $ p = n, n \geq 3,  $  and  proved  a  Brunn-Minkowski type inequality for this capacity.  In \cite{CNSXYZ},  a Minkowski problem was studied for $p$-capacity, $1 < p < 2,$  using \eqref{BMcapacityp}. See \cite{C} for the torsional rigidity  and  first eigenvalue of  the Laplacian  versions of \eqref{BMVolume}. 

\setcounter{equation}{0} 
\setcounter{theorem}{0}
\section{Notation and statement of results}
\label{NSR}
%%%%
\noindent Let $n\geq 2$ and points in Euclidean $ n
$-space $ \rn{n} $ be
denoted by $ y = ( y_1,
 \dots,  y_n) $.   $ \mathbb S^{n-1} $  will denote the unit sphere in $\mathbb R^n$. We write  $ e_m,  1 \leq m \leq n,  $  for the point in  $  \rn{n} $  with  1 in the $m$-th coordinate and  0  elsewhere. Let  $ \bar E,
\ar E, $  $\mbox{diam}(E), $ be the closure,
 boundary, diameter,  of the set $ E \subset
\mathbb R^{n} $  and  we define $ d ( y, E ) $   to be the distance from
 $  y \in \mathbb R^{n} $ to $ E$.  Given two sets, $ E, F  \subset \rn{n} $   let  
  \[ 
  d_{\mathcal{H}} ( E, F ) = \max  (  \sup   \{ d ( y, E ) : y \in F \},   \sup \{ d ( y, F ) : y \in E \} ) 
  \]  
  be the Hausdorff distance between the sets $ E, F \subset\mathbb R^n. $ %
 Also 
\[
 E  +  F  = \{ x + y:  x \in E, y \in F   \}
 \]  
 is  the  Minkowski sum of  $ E $ and $F.$ 
 We write  $  E  + x  $  for  $  E  +  \{x\} $ and set 
$  \rho E =  \{ \rho y : y \in E\}. $    Let  $   \lan \cdot ,  \cdot  \ran $  denote  the standard inner
product on $ \mathbb R^{n} $ and  let  $  | y | = \lan y, y \ran^{1/2} $ be
the  Euclidean norm of $ y. $   Put 
\[
B (z, r ) = \{ y \in \mathbb R^{n} : | z  -  y | < r \}\quad  \mbox{whenever}\, \,z\in \mathbb R^{n}, \, r>0, 
\]
and  $dy$ denote  Lebesgue $ n $-measure on    $ \mathbb R^{n} $.  
Let  $\mathcal{H}^{k},  0 <  k  \leq  n, $  denote $k$-dimensional  \textit{Hausdorff measure} on $ \rn{n}$ defined by 
\[
\mathcal{H}^{k}(E)=\lim_{\delta\to 0} \inf\left\{\sum_{j} r_j^{k}; \, \, E\subset\cup B(x_j, r_j), \, \, r_j\leq \delta\right\}
\] 
where infimum is taken over all possible cover $\{B(x_j, r_j)\}_{j}$ of set $E$. 
  % Also  we  let  $ h (E, F) $ % 
%  \[ h ( E, F ) = \max  (  \sup   \{ d ( y, E ) : y \in F \}, %
  %  \sup \{ d ( y, F ) : y \in E \} ) \] % 
   % be the Hausdorff distance %
  %  between the sets $ E, F \subset\mathbb R^n. $ %
If $ O  \subset \mathbb R^{n} $ is open and $ 1  \leq  q  \leq  \infty, $ then by   $
W^{1 ,q} ( O ) $ we denote the space of equivalence classes of functions
$ h $ with distributional gradient $ \nabla h= ( h_{y_1},
 \dots, h_{y_n} ), $ both of which are $q$-th power integrable on $ O. $  Let  
 \[
 \| h \|_{1,q} = \| h \|_q +  \| \, | \nabla h | \, \|_{q}
 \]
be the  norm in $ W^{1,q} ( O ) $ where $ \| \cdot \|_q $ is
the usual  Lebesgue $ q $ norm  of functions in the Lebesgue space $ L^q(O).$  Next let $ C^\infty_0 (O )$ be
 the set of infinitely differentiable functions with compact support in $
O $ and let  $ W^{1,q}_0 ( O ) $ be the closure of $ C^\infty_0 ( O ) $
in the norm of $ W^{1,q} ( O  ). $  By $ \nabla \cdot $ we denote the divergence operator.
\begin{definition}  
\label{defn1.1}	
	Let $p,  \al \in (1,\infty) $ and 
	\[
	\mathcal{A}=(\mathcal{A}_1, \ldots, \mathcal{A}_n) \, : \, \rn{n}\sem \{0\}  \to \rn{n},
\]
such that $  
\mathcal{A}= \mathcal{A}(\eta)$  has  continuous  partial derivatives in $ \eta_k, 1  \leq k  \leq n, $  on $\rn{n}\setminus\{0\}.$  We say that the function $ \mathcal{A}$ belongs to the class
  $ M_p(\alpha)$ if the following conditions are satisfied whenever  $\xi\in\mathbb{R}^n$ and
$\eta\in\mathbb R^n\setminus\{0\}$:
	\begin{align*}
		(i)&\, \, \alpha^{-1}|\eta|^{p-2}|\xi|^2\leq \sum_{i,j=1}^n \frac{
		\partial  \mathcal{A}
_{i}}{\partial\eta_j}(\eta)\xi_i\xi_j   \mbox{ and }  \sum_{i,j=1}^n  \left| \frac{
		\partial  \mathcal{A}_i}{\ar\eta_ {j}} \right|   \leq\alpha |\eta|^{p-2},\\
		(ii)&\, \,  \mathcal{A} (\eta)=|\eta|^{p-1}  \mathcal{A}
(\eta/|\eta|).
	\end{align*}
	\end{definition}
We  put  $ \mathcal{A}(0) = 0 $  and note that  Definition \ref{defn1.1}  $(i), (ii) $ implies   
 \begin{align}  
 \label{eqn1.1}
 \begin{split}
 c^{-1}  (|\eta | + |\eta'|)^{p-2} \,   |\eta -\eta'|^2   \leq   \lan  \mathcal{A}(\eta) -& 
\mathcal{A}(\eta'), \eta - \eta' \ran \\
&\leq c |\eta-\eta'|^2   (|\eta|+|\eta'|)^{p-2}
\end{split}
\end{align}  
whenever $\eta, \eta'   \in  \rn{n} \sem \{0\}$.    
\begin{definition}
\label{defn1.2}                       
	Let $p\in (1,\infty)$ and let $ \mathcal{A}\in M_p(\alpha) $ for some $\alpha$. Given an  open set 
 $ O  $ we say that $ u $ is $  \mathcal{A}
$-harmonic in $ O $ provided $ u \in W^ {1,p} ( G ) $ for each open $ G $ with  $ \bar G \subset O $ and
	\begin{align}
	\label{eqn1.2}
		\int \lan    \mathcal{A}
(\nabla u(y)), \nabla \he ( y ) \ran \, dy = 0 \quad \mbox{whenever} \, \,\he \in W^{1, p}_0 ( G ).
			\end{align}
	 We say that $  u  $ is an  $\mathcal{A}$-subsolution ($\mathcal{A}$-supersolution) in $O$ if   $  u \in W^{1,p} (G) $ whenever $ G $ is as above and  \eqref{eqn1.2} holds with
$=$ replaced by $\leq$ ($\geq$) whenever $  \theta  \in W^{1,p}_{0} (G )$ with $\theta \geq 0$.  As a short notation for \eqref{eqn1.2} we write $\nabla \cdot \mathcal{A}(\nabla u)=0$ in $O$.  
\end{definition}   
 More about PDEs of this generalized type can be found in \cite[Chapter 5]{HKM} and 
\cite{A,ALV, LLN,LN4}. If $ \mathcal{A} ( \eta ) =   | \eta|^{p - 2} ( \eta_1, \dots, \eta_n ), $ and $u$ is a weak  solution relative to this  $ \mathcal{A} $ in $O, $   then $u$ is said to be $p$-harmonic in $ O. $
\begin{remark}    
\label{rmk1.3}
We remark  for  $  O, \mathcal{A}, p,  u,$  as in  Definition \ref{defn1.2}  that if   $F:\mathbb R^n\to \mathbb R^n$  is  the composition of
a translation, and a dilation
 then 
 \[
 \hat u(z)=u(F(z))\, \, \mbox{whenever}\, \, F(z)\in O\, \,  \mbox{is}\, \,  \mathcal{A}\mbox{-harmonic in} \, \,  F^{-1}(O).
 \]
 Moreover,  if   $ \ti  F:\mathbb R^n\to \mathbb R^n$  is  the  composition of
a translation,  a dilation, and a rotation 
   then 
   \[
   \ti  u (z) = u (\ti F(z) )\, \, \mbox{is}\, \,  \ti {\mathcal{A}}\mbox{-harmonic in}\, \,  \ti F^{-1}(O)\, \,  \mbox{and}
  \, \, \ti{\mathcal{A}}\in M_p(\alpha).
  \]  
\end{remark} 
 \noindent We shall use this remark numerous times in our proofs.

Let $E \subset  \mathbb{R}^n  $ be a  compact convex set  and let $\Omega= \rn{n} \setminus E$. 
Using \eqref{eqn1.1},  results in  \cite[Appendix 1]{HKM}, as well as Sobolev type  limiting  arguments,  we show in Lemma  \ref{lemma3.1} that  if  $  \mbox{Cap}_{\mathcal{A}} (E)  > 0 $,
or equivalently  $\mathcal{H}^{n-p} ( E ) = \infty,  $   then  there  exists a
 unique  continuous function $ u \not  \equiv 1,  0 < u \leq  1,  $ on $ \rn{n} $    satisfying 
\begin{align}
\label{eqn1.3}
\begin{split} 
(a)&\, \, u\, \, \mbox{is}\, \,  \mathcal{A}\mbox{-harmonic in}\, \,  \Om,  \\
(b)&\, \,  u \equiv 1\, \, \mbox{on}\, \,  E,     \\
 %  (c)  \hs{.2in}  \mbox{  There exists  $ c, R_0 $  depending on $ E, \al, p $  such that}%  
	%\\ \\  \hs{.43in} \mbox{ $ u ( x ) \leq  c  | x |^{(p-n)/(p-1)}  $ in % 
  % $  \rn{n}  \sem B (0, R_0). $ }  %
 (c)&\, \,  | \nabla u | \in   L^p ( \rn{n} )\, \,  \mbox{and}\, \, u \in L^{p^*}  ( \rn{n}  )\, \, \mbox{for}\, \,  p^* =  \frac{np}{n-p}. 
  \end{split}
  \end{align}
 We put   \[ 
 \mbox{Cap}_\mathcal{A}  (E)  = \int_\Om \lan   \mathcal{A}  ( \nabla u ) , \nabla u  \ran \,  dy
 \]    
 and  call   $ \mbox{Cap}_\mathcal{A}  (E), $ the \textit{$\mathcal{A}$-capacity of $ E $} while  $u$ is the $ \mathcal{A}$-capacitary function corresponding to  $ E $ in  $\Om. $   We note that this definition is a slight extension of the usual definition of \emph{capacity}. However in case,  
 \[
 \mathcal{A}  ( \eta )  =  p^{-1} \nabla f (\eta )\quad   \mbox{on}\, \,  \rn{n} \sem \{0\}
 \]
 then from $ p  - 1 $ homogeneity in     Definition \ref{defn1.1} $(ii) $  it  follows that  
 \[
 f (t \eta )  = t^p  f (\eta ) \quad \mbox{whenever} \, \, t  > 0 \, \, \mbox{and}\, \, \eta \in \rn{n} \sem \{0\}.
 \]
In this case, using Euler's  formula,  one gets the usual definition of capacity relative to $f. $ That is,   
\[   
\mbox{Cap}_\mathcal{A}  ( E )  =   \inf \left\{  \int_{ \rn{n} }  f   (\nabla \psi (y) )   dy  \, :  \psi  \in C_0^\infty ( \rn{n} ) 
\mbox{ with }  \psi  \geq  1    \mbox{ on }   E \right\}. 
\]    
 See chapter 5 in  \cite{HKM} for   more about this definition of capacity in terms of such $f$. In   case  
 \[
 \mathcal{A}   (\eta)  =   | \eta|^{p - 2} ( \eta_1, \dots, \eta_n )
 \] 
 so  we have $ f (\eta) = p^{-1}  |\eta|^p $   then the  above capacity  will be denoted by  $\mbox{Cap}_p(E)$ and   called the $p$-capacity of $ E.$ Note  from  \eqref{eqn1.1}  with  $ \eta' = 0, $   that  
\begin{align}
\label{eqn1.4}    
c^{-1}  \mbox{Cap}_p (E)     \leq   \mbox{Cap}_\mathcal{A}  (E)    \leq    c \, \mbox{Cap}_p (E)
\end{align} 
where  $ c $ depends only on $ \al, p,  $ and $n. $  
     From  Remark \ref{rmk1.3} and uniqueness of  $ u $  in \eqref{eqn1.3},  we observe for  $ z  \in \rn{n}$ and  $\rho > 0, $   that if $ \ti  E   =  \rho E + z , $ then   
     \begin{align}
     \label{eqn1.5}
     \begin{split}
 (a')& \, \,   \mbox{Cap}_\mathcal{A}  (\rho E + z ) = \rho^{n-p} \,   \mbox{Cap}_\mathcal{A}  (E) ,  
 \\
 (b') &\, \,    \ti u  (  x  )   =  u (   ( x - z )/\rho  ),\,\, \mbox{for}\, \, x  \in \rn{n} \sem \ti E, \, \,    \mbox{is the  $ \mathcal{A}$-capacitary 
function for  $  \ti  E$}.
\end{split}
\end{align}  
Observe from \eqref{eqn1.5} $(a')$  that for $ z \in \rn{n}$ and  $R > 0, $ 
\begin{align}
\label{eqn1.6} 
\mbox{Cap}_\mathcal{A} (B(z, R ) ) = c_1  R^{n-p}
\end{align}
  where $c_1$ depends only on $ p, n, \al.$
  
In the first part of this article, we prove  the following  Brunn-Minkowski type theorem for $  \mathcal{A}$-capacities:
\begin{mytheorem}   
\label{theorem1.4}
 Let   $E_1,  E_2 $ be  compact convex sets  in $\rn{n} $  satisfying $ \mbox{Cap}_{\mathcal{A}} (E_i)   > 0 $ for  
$i  = 1, 2$.  If   $1 < p < n $ is fixed, $ \mathcal{A} $ is as in Definition \ref{defn1.1}, and   $ \la  \in  [0,1], $  then   
\begin{align}
\label{eqn1.7}
\left[\mbox{Cap}_\mathcal{A}  ( \la  E_1 + (1-\la) E_2 )\right]^{\frac{1}{(n-p)}}  \geq  \la  \, 
\left[\mbox{Cap}_\mathcal{A}  (  E_1 )\right]^{\frac{1}{(n-p)}}  +  (1-\la)  \left[\mbox{Cap}_\mathcal{A}  (E_2 )\right]^{\frac{1}{(n-p)}}.
\end{align} 
If  equality holds in  \eqref{eqn1.7} and      
\begin{align} 
\label{eqn1.8}
\begin{split}
(i) &\, \, \mbox{There exists} \, \, 1\leq  \La <\infty\ \, \mbox{such that}\, \, \left| \frac{ \ar \mathcal{A} _i }{\ar \eta_j}   ( \eta )  -  \frac{ \ar \mathcal{A} _{i} }{ \ar\eta'_j} ( \eta') \right|\leq  \La  \, | \eta  -   \eta' |    |\eta | ^{p-3}
 \\ 
 &\hs{.4in}   
 \mbox{whenever}\, \, 0 < {\ts \frac{1}{2}}\,  |\eta |   \leq   |\eta'|  \leq 2     |\eta |\, \, \mbox{and}\, \,   1 \leq  i ,j\leq n, \\
(ii)&\, \,  \mathcal{A}_i   (\eta)  =   \frac{\ar f}{\ar \eta_i} \, \,\mbox{for}\, \,  1 \leq i  \leq n  \, \, \mbox{where}\, \,   f (t \eta )  = t^p  f  (\eta ) \, \, \mbox{when}\, \,  t > 0\, \, \mbox{and}\, \, \eta \in \rn{n}  \sem \{0\}
\end{split}
\end{align}
    then $  E_2 $ is a translation and dilation of  $ E_1. $  
    \end{mytheorem}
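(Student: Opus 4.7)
The strategy is to adapt the blueprint of Colesanti and Salani \cite{CS} for the $p$-capacity to the nonlinear $\mathcal{A}$-capacity setting. First I would reduce to the case that $E_1, E_2$ are convex bodies with smooth boundaries by replacing $E_i$ with $E_i + \varepsilon \overline{B}(0,1)$ and appealing to Hausdorff continuity of $\mbox{Cap}_{\mathcal{A}}$, which follows from the uniqueness in \eqref{eqn1.3} together with standard stability estimates for $\mathcal{A}$-harmonic functions. A crucial structural input is the convexity of super-level sets: if $u$ is the $\mathcal{A}$-capacitary function of a convex body $E$, then $\{u \geq t\}$ is convex for every $t \in (0,1]$. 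For the $p$-Laplacian this is the classical result of Lewis; for $\mathcal{A} \in M_p(\alpha)$ I would prove it through a concavity maximum-principle argument in the spirit of Korevaar and Kennington applied to a suitably chosen transformation $\varphi(u)$, using the decay of $u$ at infinity from \eqref{eqn1.3}(c) and the strong comparison principle for $\mathcal{A}$-harmonic equations.

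Given the $\mathcal{A}$-capacitary functions $u_i$ of $E_i$, I would form the Minkowski convolution
\[
u^\ast(x) = \sup\{\min(u_1(x_1), u_2(x_2)) : x = \lambda x_1 + (1-\lambda) x_2\},
\]
so that by the previous step $\{u^\ast \geq t\} = \lambda\{u_1 \geq t\} + (1-\lambda)\{u_2 \geq t\}$ is convex and $u^\ast \equiv 1$ on $E_\lambda := \lambda E_1 + (1-\lambda)E_2$. The decisive PDE input is that $u^\ast$ is an $\mathcal{A}$-subsolution, in an appropriate weak or viscosity sense, on $\rn{n}\setminus E_\lambda$; combined with the comparison principle this forces $u^\ast \leq u_\lambda$, where $u_\lambda$ denotes the true $\mathcal{A}$-capacitary function of $E_\lambda$. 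Expressing $\mbox{Cap}_{\mathcal{A}}(E_\lambda)$ as the $t$-independent flux $-\int_{\{u_\lambda = t\}}\langle \mathcal{A}(\nabla u_\lambda), \nu \rangle \, d\mathcal{H}^{n-1}$, comparing the large-distance asymptotics of $u_1, u_2, u_\lambda$ against the ball capacity formula \eqref{eqn1.6}, and applying the classical Brunn-Minkowski inequality to the convex super-level sets then produces \eqref{eqn1.7}.

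The deepest step is the rigidity in the equality case. Equality in \eqref{eqn1.7} propagates to equality for a.e.\ $t \in (0,1)$ in the classical Brunn-Minkowski applied to $\lambda\{u_1 \geq t\} + (1-\lambda)\{u_2 \geq t\}$, which forces each pair $\{u_1 \geq t\}, \{u_2 \geq t\}$ to be homothetic. Under \eqref{eqn1.8}, $\mathcal{A}$ arises from a $p$-homogeneous potential $f$ and has $C^{1,1}$-type regularity off the origin, so the capacitary functions are $C^{1,\beta}$ up to the boundary and their linearized equations are uniformly elliptic on any neighborhood where $|\nabla u|$ is bounded away from zero. I would then combine this regularity with a Hadamard variational formula for $\mbox{Cap}_{\mathcal{A}}$ and the rigidity of a Minkowski-type inequality for mixed $\mathcal{A}$-capacities to promote the family of level-set homotheties into a single homothety between $E_1$ and $E_2$. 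I expect the main obstacle to be precisely this last step, since in the absence of an explicit formula for the $\mathcal{A}$-capacitary function one must rely on delicate unique-continuation arguments for the linearized $\mathcal{A}$-harmonic operator and a careful use of \eqref{eqn1.8}(i) to glue the homotheties at different levels into a rigid motion of $\rn{n}$.
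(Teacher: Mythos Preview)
Your overall architecture for the inequality is close to the paper's, but two points deserve correction. First, the paper does \emph{not} show that $u^\ast$ is an $\mathcal{A}$-subsolution and then invoke comparison; instead it proves $u^\ast\le u$ directly by a Gabriel-type maximum principle argument (the same device used for convexity of level sets, Lemma~\ref{lemma3.4}). Proving that the Minkowski convolution is a genuine $\mathcal{A}$-subsolution for a general $\mathcal{A}\in M_p(\alpha)$ is not obvious and is avoided. Second, the paper never appeals to the classical (volume) Brunn--Minkowski inequality on level sets. The passage from $u^\ast\le u$ to \eqref{eqn1.7} goes through Lemma~\ref{lemma4.2}: one constructs the $\mathcal{A}$-harmonic fundamental solution $G$, proves $\lim_{|x|\to\infty}u(x)/G(x)=\mbox{Cap}_\mathcal{A}(E)^{1/(p-1)}$, and then the pointwise bound $u\ge u^\ast\ge\min(u_1,u_2)$ divided by $G$ gives the min-form of \eqref{eqn1.7}, which is equivalent to the full inequality by homogeneity.

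The equality argument has a genuine gap. Equality in \eqref{eqn1.7} does \emph{not} propagate to equality in the classical volume Brunn--Minkowski on $\{u_1\ge t\}$ and $\{u_2\ge t\}$; what one actually obtains (see \eqref{eqn5.20}--\eqref{eqn5.23}) is equality in the $\mathcal{A}$-capacity Brunn--Minkowski at each level, together with $u^\ast\equiv u$. From there the paper does something quite different from a Hadamard/mixed-capacity argument: it needs that the level surfaces $\{u_i=t\}$ have strictly positive curvature for small $t$, and this is obtained by comparing $u_i$ with the explicit fundamental solution $G(x)=\mathrm{const}\cdot h(x)^{(p-n)/(p-1)}$ available only when $\mathcal{A}=\nabla f$ (Lemmas~\ref{lemma5.1}, \ref{lemma5.2}). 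With curvature in hand, one passes to support functions $h,h_1,h_2$ of the level sets, revisits the second-order information from the Gabriel argument, and applies the matrix inequality of \cite{CS} (Lemma~\ref{lemma5.3}) to force $A H_1=B H_2$ and $A\nabla_X(h_1)_t=B\nabla_X(h_2)_t$ pointwise; this yields a single homothety $\bar x_1(X,t)=a\,\bar x_2(X,t)+b$ at one far-away level. Finally a unique continuation argument for the linearized operator (using \eqref{eqn1.8}) pushes this identity to all of $\rn{n}$. Your sketch misses both the curvature input coming from the explicit form of $G$ and the role of Lemma~\ref{lemma5.3}; the Hadamard variational formula is used in the paper only later, for the Minkowski problem, not for the equality case of \eqref{eqn1.7}.
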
 

To briefly outline the proof of  Theorem  \ref{theorem1.4}, in section \ref{section2} we list some basic properties of $\mathcal{A}$-harmonic functions which will be used in the proof of  Theorem \ref{theorem1.4}.  We then use these properties in  sections \ref{section3}  and  \ref{section4}  to prove  inequality  \eqref{eqn1.7}.  The last sentence in Theorem \ref{theorem1.4}  regarding the  case of  equality in  \eqref{eqn1.7}  is proved in  section \ref{section5}. 
 
  As  for  the main steps in  our  proof,  after  the  preliminary  material,  we  show in  Lemma \ref{lemma3.4}  that  if  $ u $  is  a nontrivial  $  \mathcal{A}$-harmonic  capacitary function  for a  compact  convex set  $  E, $  then 
 $  \{ x : u (x)  > t \} $  is convex whenever  $ 0 < t < 1. $  The proof   uses  a  maximum  principle type argument  of Gabriel in \cite{Ga}  to  show that  if  $ u $  does not have levels  bounding  a  convex domain, then  a  certain function has  an  absolute maximum in  $  \rn{n}  \sem E, $  from which one  obtains a  contradiction.     This argument   was  later used  by  the fourth named author of this article in \cite{L}   in  the $p$-Laplace  setting  and also a   variant of  it   was used by Borell in \cite{B1}  (see \cite{BLS} for  recent  applications).      After  proving   Lemma  \ref{lemma3.1}  we use an  analogous  argument to prove  \eqref{eqn1.7}.   Our   proof  of  equality in  Theorem  \ref{theorem1.4}  is   inspired  by   the  proof  in  \cite{CS}  which  in  turn  uses  some ideas  of Longinetti in \cite{Lo}.   In particular, Lemma 2  in  \cite{CS}  plays an important role in our proof.  Unlike these authors though,  we  do not  convert the  PDE  for  $ u_1,  u
 _2  $  into  one  for  the  support  functions of  their   levels,   essentially because our  PDE  is  not rotationally  invariant.     The   arguments  we use require  a priori knowledge that the  levels  of $ u_1,  u_2  $    have  positive curvatures.     We  can show  this  near  $ \infty  $  when   $ \mathcal{A} 
 =  \nabla f , $  as in  Theorem \ref{theorem1.4}, by comparing  $ u_1, u_2 $ with their respective  ``fundamental solutions''  (see Lemma \ref{lemma5.1})  which can be calculated more or less directly.     A unique continuation argument then gives Theorem \ref{theorem1.4}.    This argument does not work  for a  general $ \mathcal{A}. $  In this  case   a method first used by Korevaar in \cite{K}  and  after that  by various authors  (see \cite{BGMX}) appears promising, although rather tedious and at the expense of assuming more regularity on $  \mathcal{A}$ for handling the case of  equality in   Theorem \ref{theorem1.4}. Finally, we  mention that  our main purpose in working on the Brunn-Minkowski inequality is to  prepare  a  background  for   our   investigation of  a Minkowski problem  when  $  \mathcal{A} =  \nabla f  $  and  $ 1 < p < n$ (see Theorem \ref{mink} in section \ref{section8}).   
%%% 
 \setcounter{equation}{0} 
 \setcounter{theorem}{0}
\section{Basic estimates for $\mathcal{A}$-harmonic functions}
\label{section2}
 
In this section we  state  some fundamental estimates for 
   $\mathcal{A}$-harmonic functions. 
Concerning constants, unless otherwise stated, in this section, and throughout the paper,
$ c $ will denote a  positive constant  $ \geq 1$, not
necessarily the same at each occurrence, depending at most on
 $ p, n, \alpha,  \La $ 
   which sometimes we refer to as depending on the data.
In general,
$ c ( a_1, \dots, a_m ) $  denotes  a positive constant
$ \geq 1, $  which may depend at most on the data  and   $ a_1, \dots, a_m, $
not necessarily the same at each occurrence. If $B\approx C$ then $B/C$ is bounded from above and below by
constants which, unless otherwise stated, depend at most on  the data.
 Moreover, we let
$ { \ds \max_{F}   \ti u, \,  \min_{F} \ti  u } $ be the
  essential supremum and infimum  of $  \ti u $ on $ F$
whenever $ F \subset  \mathbb R^{n} $ and whenever $ \ti u$ is defined on $ F$.

\begin{lemma}
\label{lemma2.1}
Given $ p, 1 < p < n, $ assume that $\ti {\mathcal{A}} \in M_p(\alpha)$ for some
$\alpha  >  1.$    Let
 $ \ti u $  be  a
positive $\ti {\mathcal{A}}$-harmonic function in $B (w,4r)$, $r>0$.Then
	\begin{align}
	\label{eqn2.1}
	\begin{split}
		(i)&\, \, r^{ p - n} \,\int_{B ( w, r/2)} \, | \nabla \ti  u |^{ p } \, dy \, \leq \, c \, (\max_{B ( w,
r)} \ti  u)^p, \\
		(ii)&\, \, \max_{B ( w, r ) } \,\ti   u \, \leq c \min_{ B ( w, r )} \ti u.
		\end{split}
\end{align}	
	Furthermore, there exists $\ti \sigma=\ti \sigma(p,n,\alpha)\in(0,1)$ such that if $x, y \in B ( w, r )$, then
	\begin{align*}
		(iii)&\ \ | \ti  u ( x ) - \ti  u ( y ) | \leq c \left( \frac{ | x - y |}{r} \right)^{\ti \sigma} \, \max_{B (
w, 2 r )}  \, \ti  u.
	\end{align*}
   \end{lemma}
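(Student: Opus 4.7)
All three estimates are classical for $\mathcal{A}$-harmonic functions satisfying the structure conditions of Definition \ref{defn1.1}, and they are essentially contained in \cite[Chapters 3 and 6]{HKM}. My plan would be to run the standard Moser/De Giorgi program, emphasizing only how the hypotheses of Definition \ref{defn1.1} enter.

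For (i), a Caccioppoli estimate, set $M := \max_{B(w,r)} \tilde u$ and pick a cutoff $\phi \in C_0^\infty(B(w,r))$ with $\phi \equiv 1$ on $B(w,r/2)$, $0\leq \phi\leq 1$, and $|\nabla \phi|\leq c/r$. The non-negative function $\theta = (M-\tilde u)\phi^p$ lies in $W^{1,p}_0(B(w,r))$, so plugging it into \eqref{eqn1.2} and rearranging yields
\[
\int \phi^p \lan \tilde{\mathcal{A}}(\nabla \tilde u),\nabla \tilde u \ran \, dy \;=\; p \int (M-\tilde u)\, \phi^{p-1} \lan \tilde{\mathcal{A}}(\nabla \tilde u),\nabla \phi \ran \, dy.
\]
Definition \ref{defn1.1}(i)--(ii) give the coercivity $\lan \tilde{\mathcal{A}}(\eta),\eta\ran \geq \al^{-1}|\eta|^p$ (by integrating the lower bound in (i) along rays and using the $(p-1)$-homogeneity of (ii)) together with the growth bound $|\tilde{\mathcal{A}}(\eta)|\leq c|\eta|^{p-1}$. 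Applying Young's inequality with exponents $p$ and $p/(p-1)$ and absorbing the top-order term on the right produces $\int \phi^p|\nabla \tilde u|^p \, dy \lesssim r^{-p}\int_{B(w,r)} (M-\tilde u)^p\, dy \lesssim r^{n-p}M^p$, which is (i).

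For (ii), the Harnack inequality, I would run Moser iteration for $\tilde{\mathcal{A}}$-harmonic functions. Testing \eqref{eqn1.2} with $\theta = \tilde u^q \phi^p$ for various positive and negative exponents $q$ and combining with the Sobolev inequality yields reverse-H\"older estimates that iterate to
\[
\max_{B(w,r)} \tilde u \;\lesssim\; \left(\frac{1}{|B(w,2r)|}\int_{B(w,2r)}\tilde u^{q_0}\, dy\right)^{1/q_0} \mbox{ and } \min_{B(w,r)} \tilde u \;\gtrsim\; \left(\frac{1}{|B(w,2r)|}\int_{B(w,2r)}\tilde u^{-q_0}\, dy\right)^{-1/q_0}
\]
for some small $q_0>0$. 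A John--Nirenberg type estimate for $\log \tilde u$, obtained by testing against $\theta = \phi^p\, \tilde u^{1-p}$ to show $\log \tilde u$ lies in BMO, then bridges the two sides and delivers (ii). The only technical step that requires real thought is verifying that this program goes through for a general $\tilde{\mathcal{A}}\in M_p(\al)$, not just the $p$-Laplacian; this is precisely where Definition \ref{defn1.1} does all the work, as it matches the structural hypotheses used in \cite{HKM}.

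Finally, (iii) follows from (ii) by the standard oscillation-decay argument: applying Harnack on $B(w,2\rho)$ to the two positive $\tilde{\mathcal{A}}$-harmonic functions $\max_{B(w,2\rho)}\tilde u - \tilde u$ and $\tilde u - \min_{B(w,2\rho)}\tilde u$ yields $\mbox{osc}_{B(w,\rho)}\tilde u \leq \gamma\,\mbox{osc}_{B(w,2\rho)}\tilde u$ for some $\gamma=\gamma(p,n,\al)\in(0,1)$, and iterating on dyadic scales produces the H\"older decay with exponent $\tilde\sigma = -\log\gamma/\log 2$. The main genuine obstacle across the three parts is (ii); once it is in hand both (i) and (iii) are routine.
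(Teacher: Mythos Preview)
Your proposal is correct and outlines exactly the standard Moser/De Giorgi argument; the paper itself gives no proof beyond citing \cite{S}, so your sketch is in fact more detailed than what appears there. One minor imprecision in part (iii): the function $\max_{B(w,2\rho)}\tilde u - \tilde u$ is not $\tilde{\mathcal{A}}$-harmonic but rather $\tilde{\mathcal{A}}'$-harmonic for $\tilde{\mathcal{A}}'(\eta)=-\tilde{\mathcal{A}}(-\eta)\in M_p(\alpha)$ (cf.\ the paper's use of this observation in Lemma~\ref{lemma2.3}), though Harnack holds for $\tilde{\mathcal{A}}'$ with the same constant and the oscillation-decay argument goes through unchanged.
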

\begin{proof}
A proof of this lemma can be found in  \cite{S}.
\end{proof}
\begin{lemma}  
\label{lemma2.2}	
Let $p,n, \ti {\mathcal{A}}, \alpha, w,r, \ti u$ be as in Lemma \ref{lemma2.1}.
%Given $ p, 1 < p < n, $ assume that $\ti {\mathcal{A}} \in M_p(\alpha)$ for some $\alpha  >  1.$    Let  $ \ti u $  be  a positive $\ti {\mathcal{A}}$-harmonic function in $B (w,4r)$. 
Then	 $ \ti u$ has a  representative locally in  $ W^{1, p} (B(w, 4r)),$    with H\"{o}lder
continuous partial derivatives in $B(w,4r) $  (also denoted $\ti u$), and there exists $ \ti \be \in (0,1],  c \geq 1 $,
depending only on $ p, n, \alpha$, such that if $ x, y \in B (  w,  r ), $    then  
	\begin{align}
	\label{eqn2.2}
	\begin{split}
&	(\hat a) \, \,  c^{ - 1} \, | \nabla \ti u ( x ) - \nabla \ti u ( y ) | \, \leq \,
 ( | x - y |/ r)^{\ti \be }\, \max_{B (  w , r )} \, | \nabla \ti u | \leq \, c \,  r^{ - 1} \, ( | x - y |/ r )^{\ti \be }\, \ti u (w). \\
 & (\hat b)  \, \,  \int_{B(w, r) }  \, \sum_{i,j = 1}^n  \,  |\nabla \ti u |^{p-2} \,  |\ti  u_{x_i x_j}|^2  dy 
\leq   c r^{(n-p-2)} \ti{u}(w).
  \end{split}
\end{align}
  	If 
  	\[ 
  	\ga\, r^{-1} \ti u   \leq |\nabla  \ti u | \leq  \ga^{-1} r^{-1}    \ti u  \quad \mbox{on}\quad  B ( w, 2r)  
\] 
for some $ \ga \in (0,1) $  and 
 \eqref{eqn1.8} $ (i)$ holds then 
 $\ti u $ has    H\"{o}lder
continuous second  partial derivatives in $B(w,r) $  and
 there exists $\ti \he \in (0,1), \bar c \geq 1, $ depending only on  the data  and  $ \ga $  such that
  \begin{align}  
  \label{eqn2.3}
  \begin{split}
  \left[ \sum_{i,j = 1}^n  \,   ( \ti  u_{x_i x_j } (x)      -  \ti  u_{y_i y_j } (y) ) ^2  \, \right]^{1/2} \, &\leq  \, \bar c  ( | x - y |/ r )^{\ti \he } \,  \max_{ B(w, r)}\,  \left(\sum_{i,j = 1}^n  \,   |\ti  u_{x_i x_j}| \right)     \\ 
  &\leq  \, \bar c^2  r^{-n/2}  ( | x - y |/ r )^{\ti \he } \,   \left(\sum_{i,j = 1}^n  \,   \int_{ B ( w, 2r) }   \ti  u_{x_i x_j}^2   dx \right) ^{1/2} \\
  &  \leq  \bar c^3 \,  r^{ - 2} \, ( | x - y |/ r )^{\ti \he }\, \ti u (w).
\end{split}  
  \end{align}
  whenever   $ x, y \in B (w,  r/2) $. 
   \end{lemma}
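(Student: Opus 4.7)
The plan is to derive $(\hat a)$--$(\hat b)$ and \eqref{eqn2.3} from the classical regularity theory for divergence-form quasilinear equations of $p$-Laplace type. For the first assertion I would quote the local Hölder continuity of $\nabla \ti u$ in $B(w,4r)$, which under the structural hypotheses of Definition~\ref{defn1.1} is the content of fundamental work of Uhlenbeck, Tolksdorf, DiBenedetto, Ural'tseva, and Lewis; this yields the existence of $\ti\be\in(0,1]$ depending only on $p,n,\al$ and gives the left-hand inequality of $(\hat a)$. For the sup-norm bound on $|\nabla \ti u|$ I would combine an interior Lipschitz estimate
\[ \max_{B(w,r)}|\nabla \ti u|\le c\, r^{-1}\max_{B(w,2r)}\ti u,\]
obtained by Moser iteration applied to the system satisfied by $\nabla \ti u$ using Definition~\ref{defn1.1}$(i)$, with Harnack's inequality \eqref{eqn2.1}$(ii)$, which gives $\max_{B(w,2r)}\ti u\le c\,\ti u(w)$. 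Together these supply the right-hand inequality in $(\hat a)$.

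For $(\hat b)$ I plan a standard Caccioppoli-type estimate for the Hessian. Since $\ti u$ is only known a priori to lie in $W^{1,p}_{\mathrm{loc}}$, one first regularizes $\ti{\mathcal{A}}$ by an $\e$-perturbation so that solutions lie in $W^{2,2}_{\mathrm{loc}}$, establishes the estimate uniformly in $\e$, and then passes to the limit. Formally, differentiating the PDE $\nabla\cdot \ti{\mathcal{A}}(\nabla \ti u)=0$ in the direction $x_k$ gives
\[ \sum_{i,j}\ar_{x_i}\!\left(\frac{\ar \ti{\mathcal{A}}_i}{\ar \eta_j}(\nabla \ti u)\,\ti u_{x_j x_k}\right)=0;\]
testing against $\phi^2\ti u_{x_k}$, where $\phi\in C_0^\infty(B(w,2r))$ is a cutoff with $\phi\equiv 1$ on $B(w,r)$ and $|\nabla\phi|\le c/r$, invoking Definition~\ref{defn1.1}$(i)$ to control the resulting quadratic form from below, summing in $k$, and applying Cauchy--Schwarz with absorption yields
\[ \int_{B(w,r)}|\nabla \ti u|^{p-2}\,|D^2\ti u|^2\,dy \le c\,r^{-2}\int_{B(w,2r)}|\nabla \ti u|^p\,dy.\]
The right-hand side is then bounded by $c\,r^{n-p-2}\,\ti u(w)^p$ via \eqref{eqn2.1}$(i)$ applied on $B(w,2r)$ together with Harnack, giving $(\hat b)$.

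Finally, for \eqref{eqn2.3} the non-degeneracy hypothesis $\ga r^{-1}\ti u\le|\nabla \ti u|\le \ga^{-1}r^{-1}\ti u$ on $B(w,2r)$ guarantees that the coefficient matrix
\[ a_{ij}(x):=\frac{\ar\ti{\mathcal{A}}_i}{\ar\eta_j}(\nabla \ti u(x)) \]
is uniformly elliptic with ellipticity constants depending only on the data and $\ga$. Combining the Hölder continuity of $\nabla\ti u$ from $(\hat a)$ with the extra regularity of $\ar\ti{\mathcal{A}}/\ar\eta$ furnished by \eqref{eqn1.8}$(i)$ shows that the $a_{ij}$ are Hölder continuous on $B(w,2r)$ with some exponent $\ti\he\in(0,1)$ depending only on the data and $\ga$. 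Classical Schauder estimates for the divergence-form linear equation satisfied by each $\ti u_{x_k}$ then yield the first ($C^{1,\ti\he}$) inequality of \eqref{eqn2.3}; the middle inequality follows from the interior $L^\infty$--$L^2$ estimate for the linearized system applied to each $\ti u_{x_i x_j}$; the final inequality uses $(\hat b)$ and Harnack. The main obstacle is bookkeeping: verifying through the $\e$-approximation that the constants genuinely depend only on the data (and on $\ga$ for \eqref{eqn2.3}), and confirming that the linearized Schauder machinery can be invoked in divergence form with Hölder coefficients of the stated exponent.
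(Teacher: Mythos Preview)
Your proposal is correct and follows exactly the route the paper indicates: it cites Tolksdorf \cite{T} for the $C^{1,\ti\be}$ estimate and the weighted Hessian bound in \eqref{eqn2.2}, and then deduces \eqref{eqn2.3} from the non-degeneracy hypothesis, \eqref{eqn1.8}$(i)$, and Schauder estimates as in \cite{GT}. Your Caccioppoli computation for $(\hat b)$ actually produces $c\,r^{n-p-2}\ti u(w)^p$ on the right, which is the dimensionally correct bound; the missing exponent $p$ in the displayed statement appears to be a typo.
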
    
   \begin{proof}
   A proof of \eqref{eqn2.2} can be found  in \cite{T}. Also, 
\eqref{eqn2.3}  follows from  \eqref{eqn2.2}, the added assumptions,  and Schauder type
estimates (see  \cite{GT}).
\end{proof}
\begin{lemma} 
\label{lemma2.3}
Fix  $ p, 1 < p < n, $ assume that $\ti {\mathcal{A}}\in M_p(\alpha),$  and let 
 $ \ti E \subset B (0, R) $, for some $R>0$,  be a compact  convex set with  $   \mbox{Cap}_{\mathcal{A}} ( \ti E ) > 0$.   
Let   $\zeta \in C_0^\infty ( B (0, 2 R)   ) $ with  $ \ze  \equiv 1 $  on  $  B (0, R). $  If $  0  \leq  \ti u $ is   $ \ti {\mathcal{A}}$-harmonic in  $ B (0, 4R) \sem \ti E,$    and  $  \ti u \ze \in  W_0^{1,p} (B (0,4R) \sem \ti E ),$    then  $  \ti u $  has a continuous  extension to  $ B (0, 4R) $  obtained by putting  $ \ti  u  \equiv 0 $ on  $   \ti  E. $  Moreover, if   $  0  < r  <    R$ and   $w \in \ar  \ti E$ then    
	 \begin{align}
	 \label{eqn2.4} 		
	 (i) \, \, r^{ p - n} \, \int\limits_{B ( w, r)} \, | \nabla \ti  u |^{ p } \, dy \, \leq \, c \, \left( \max_{B ( w, 2 r)}  \ti u \right)^p.   
	\end{align} 
	Furthermore,  there exists  
$\hat \sigma= \hat \sigma (p,n,\alpha, \ti E )\in(0,1)$  such that if $ x, y \in B ( w, r )$ and $ 0 < r <\mbox{diam}(\ti  E) $ then 
	\begin{align*}
		(ii)\, \, | \ti  u ( x ) - \ti  u ( y ) | \leq c \left(  \frac{ | x - y |}{r} \right)^{ \hat \sigma} \,  \max_{ B (w, 2 r )}\, \ti u.
	\end{align*}
 \end{lemma}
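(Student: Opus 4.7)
The proof splits into three stages: the Caccioppoli energy estimate (i), the boundary H\"older estimate (ii), and the continuous extension. The plan is to deduce (i) directly by a cutoff-and-test argument, to use it together with a uniform capacity density of $\tilde E$ near its boundary to obtain (ii), and then to read off the continuous extension from (ii) and Lemma~\ref{lemma2.1}(iii).

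For (i), I would take $\eta \in C_0^\infty(B(w,2r))$ with $\eta \equiv 1$ on $B(w,r)$ and $|\nabla\eta|\leq c/r$, and test the weak equation \eqref{eqn1.2} against $\tilde u\,\eta^p$. The hypothesis $\tilde u\zeta \in W_0^{1,p}(B(0,4R)\setminus\tilde E)$ makes $\tilde u\,\eta^p$ an admissible test function, since it is compactly supported in $B(0,4R)$ and vanishes on $\tilde E$ in the trace sense. Expanding $\nabla(\tilde u\,\eta^p)$, using Definition~\ref{defn1.1} to estimate $\langle\mathcal{A}(\nabla\tilde u),\nabla\tilde u\rangle \geq c^{-1}|\nabla\tilde u|^p$ and $|\mathcal{A}(\nabla\tilde u)| \leq c|\nabla\tilde u|^{p-1}$, and then absorbing the cross term by Young's inequality yields (i) in the standard form.

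For (ii), the geometric input is that $\tilde E$ satisfies a uniform capacity density at every boundary point: there exists $c_0 = c_0(p,n,\alpha,\tilde E) > 0$ with
\[
\mbox{Cap}_{\mathcal{A}}\bigl(\tilde E \cap \bar B(w,\rho)\bigr) \geq c_0\,\rho^{n-p}\quad\text{for all }0<\rho\leq \mbox{diam}(\tilde E)\text{ and }w\in\partial\tilde E.
\]
When $\tilde E$ has nonempty interior this follows at once from convexity: pick an interior ball $B(p,r_0)\subset \tilde E$, and for any $w\in\partial\tilde E$ the convex hull of $w$ and $B(p,r_0)$ is a solid cone inside $\tilde E$ whose opening depends only on the inradius/diameter ratio of $\tilde E$; \eqref{eqn1.4} together with the known $p$-capacity of a cone then gives the bound. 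Combined with (i), this capacity density drives the classical Wiener-type iteration (see \cite[Ch.~6]{HKM}) for nonnegative $\mathcal{A}$-harmonic functions vanishing on a thick part of the boundary, yielding an oscillation decay
\[
\mbox{osc}_{B(w,\rho)}\tilde u \leq c\,(\rho/r)^{\hat\sigma}\,\max_{B(w,2r)}\tilde u
\]
for $0<\rho<r<\mbox{diam}(\tilde E)$, which is (ii). The continuous extension on $B(0,4R)$ is then immediate: interior continuity on $B(0,4R)\setminus \tilde E$ is Lemma~\ref{lemma2.1}(iii), and continuity at each $w\in\partial\tilde E$ with value $0$ is a direct consequence of (ii).

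The delicate step is (ii): the Caccioppoli bound is routine, and the capacity density at convex boundary points is a standard geometric observation, but the Wiener-type iteration must be carried out carefully for the quasilinear operator $\nabla\cdot\mathcal{A}(\nabla\cdot)$, so that the exponent $\hat\sigma$ depends only on the data and the geometry of $\tilde E$ (through the opening of the inscribed cone). This is precisely why $\hat\sigma$ in the statement is permitted to depend on $\tilde E$.
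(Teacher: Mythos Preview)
Your overall strategy matches the paper's: (i) is the standard Caccioppoli inequality, and (ii) comes from a uniform capacity density of $\tilde E$ at boundary points together with the Wiener-type boundary decay (Theorem~6.18 in \cite{HKM}), after which the continuous extension follows by combining (ii) with the interior H\"older estimate of Lemma~\ref{lemma2.1}(iii).

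There is, however, a genuine gap in your justification of the capacity density. You write ``When $\tilde E$ has nonempty interior this follows at once from convexity: pick an interior ball $B(p,r_0)\subset\tilde E$\dots'', but the hypothesis of the lemma is only $\mbox{Cap}_{\mathcal{A}}(\tilde E)>0$, which does \emph{not} force $\tilde E$ to have interior. For instance, when $n-1<p<n$ a line segment has positive $\mathcal{A}$-capacity, and more generally any $k$-dimensional compact convex set with $k>n-p$ has positive capacity while having empty interior in $\rn{n}$. In such cases there is no inscribed ball and no solid $n$-dimensional cone, so your argument does not apply.

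The paper closes this gap as follows. From $\mbox{Cap}_{\mathcal{A}}(\tilde E)>0$ and \eqref{eqn1.4} one has $\mbox{Cap}_p(\tilde E)>0$, which forces $\mathcal{H}^{n-p}(\tilde E)=\infty$ (Theorem~2.27 in \cite{HKM}). Since $\tilde E$ is compact and convex, this means its affine dimension is some integer $l>n-p$, and convexity within that $l$-plane gives the uniform density
\[
\mathcal{H}^{l}\bigl(B(y,\rho)\cap\tilde E\bigr)\approx \rho^{\,l}\qquad\text{for }y\in\tilde E,\ 0<\rho<\mbox{diam}(\tilde E),
\]
with constants depending on $\tilde E$. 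The comparison between Hausdorff content and $p$-capacity (Chapter~2 of \cite{HKM}) then yields $\mbox{Cap}_p(B(y,\rho)\cap\tilde E)\approx\rho^{n-p}$, which is exactly the capacity density you need. After this point your argument and the paper's coincide.
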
  
 
\begin{proof} Here $(i) $ is a standard  Caccioppoli  inequality.  To prove   $(ii)$  we note that necessarily      $\mathcal{H}^{n-p} (\ti E ) = \infty, $ as  follows from  \eqref{eqn1.4}  and 
Theorem  2.27 in  \cite{HKM}.  From this note,  as well as convexity and compactness of  $ \ti E,  $   we  deduce that 
\[
\mathcal{H}^{l} ( B ( y, r )  \cap \ti E)  \approx  r^l 
\] 
 for some positive integer $ l  > n - p,  $  whenever  $ y  \in  \ti E$  and  $  0 <  r  <\mbox{diam}(\ti  E). $  Constants depend on $\ti E $ but are independent of $ r, z. $  Using this fact and metric properties of  certain capacities in  chapter 2  of  \cite{HKM},  it follows that  
\begin{align}
\label{eqn2.5}
  \mbox{Cap}_p ( B (y, r ) \cap \ti E )  \approx  r^{n-p} \quad \mbox{whenever} \, \, 0  < r  < \mbox{diam}(\ti  E) \mbox{ and } y  \in \ti E
  \end{align} 
  where constants depend  on $ \al, p, n $ and $ \ti E $.    Now $(ii) $  for  $ y \in  \ti E $   
 follows from \eqref{eqn2.5} and   essentially Theorem 6.18  in  \cite{HKM}.  Combining this fact with  
  \eqref{eqn2.1}  $ (iii)$ we now obtain  $ (ii). $   	    
\end{proof}
\begin{lemma} 
\label{lemma2.4}
Let  $   \ti {\mathcal{A}} , p, n, \ti E,  R, \ti u   $ be  as in  Lemma \ref{lemma2.3}. 
	 Then there exists a unique finite positive Borel measure $ \ti \mu $ on $ \mathbb
R^n$, with support contained in $ \ti E $ such that if   $\ph  \in C_0^\infty ( B(0,2R) )$ then
 \begin{align}
 \label{eqn2.6}
		(i)\, \, \int  \langle \ti {\mathcal{A}} (\nabla \ti u(y)),\nabla\ph(y)\rangle \, dy \, = \, -  \int \, \ph \, d
\ti{\mu}.
	     \end{align} 	 
Moreover, if  $  0 <  r  \leq  R$ and  $w \in \ar \ti E $  then  there exists $ c \geq 1, $ depending only on  the data  such that  
	 \begin{align*}
		(ii)\, \,  r^{ p - n} \ti \mu (B( w,  r ))\leq  c  \max_{B(w, 2r)} \ti u^{ p - 1}.
	\end{align*} 
\end{lemma}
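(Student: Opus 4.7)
The plan is to define the linear functional
\begin{equation*}
T(\phi) \, := \, -\int \langle \tilde{\mathcal{A}}(\nabla \tilde u(y)), \nabla \phi(y) \rangle \, dy, \qquad \phi \in C_0^\infty(B(0,2R)),
\end{equation*}
and represent it by a positive Borel measure via Riesz--Markov. The functional is well defined because $|\tilde{\mathcal{A}}(\nabla \tilde u)| \leq \alpha |\nabla \tilde u|^{p-1}$ by Definition \ref{defn1.1} and $\nabla \tilde u$ is locally $L^p$ by the hypothesis of Lemma \ref{lemma2.3}. If $\operatorname{supp}\phi \cap \tilde E = \emptyset$, then $\phi$ is admissible in the $\tilde{\mathcal{A}}$-harmonicity equation on $B(0,4R)\setminus\tilde E$, so $T(\phi)=0$; once $T$ is represented as $\int \phi \, d\tilde\mu$, this forces $\operatorname{supp}\tilde\mu \subseteq \tilde E$, and uniqueness is immediate from density.

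The main obstacle is establishing positivity of $T$, since a nonnegative $\phi \in C_0^\infty(B(0,2R))$ need not vanish on $\tilde E$ and is therefore not directly admissible. I would truncate through $\tilde u$ itself. Choose a smooth nondecreasing $\eta_\epsilon \colon [0,\infty)\to[0,1]$ with $\eta_\epsilon \equiv 0$ on $[0,\epsilon/2]$ and $\eta_\epsilon \equiv 1$ on $[\epsilon,\infty)$. By the continuous extension of $\tilde u$ with $\tilde u \equiv 0$ on $\tilde E$ (Lemma \ref{lemma2.3}), the product $\eta_\epsilon(\tilde u)\phi$ has support in $\{\tilde u > \epsilon/2\}$, which is compactly contained in $B(0,4R)\setminus\tilde E$, hence admissible. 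Expanding $\nabla(\eta_\epsilon(\tilde u)\phi)$ in the resulting identity gives
\begin{equation*}
\int \eta_\epsilon(\tilde u) \langle \tilde{\mathcal{A}}(\nabla \tilde u), \nabla\phi \rangle \, dy \, = \, -\int \phi\,\eta_\epsilon'(\tilde u)\, \langle \tilde{\mathcal{A}}(\nabla \tilde u), \nabla \tilde u\rangle \, dy,
\end{equation*}
and the right-hand side is $\leq 0$ because $\phi\geq 0$, $\eta_\epsilon'\geq 0$, and $\langle \tilde{\mathcal{A}}(\nabla \tilde u), \nabla \tilde u\rangle \geq \frac{1}{\alpha(p-1)}|\nabla \tilde u|^{p}\geq 0$ (from the $(p-1)$-homogeneity in Definition \ref{defn1.1}(ii) combined with the ellipticity in (i)). As $\epsilon\to 0^+$, $\eta_\epsilon(\tilde u)\to \mathbf 1_{\{\tilde u>0\}}$ boundedly, and since $\nabla \tilde u=0$ almost everywhere on $\{\tilde u=0\}$, dominated convergence delivers $-T(\phi)\leq 0$. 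The Riesz--Markov theorem then furnishes the unique positive Borel measure $\tilde\mu$ of (i).

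For (ii), fix $w\in\partial\tilde E$, $0<r\leq R$, and test against a cutoff $\phi\in C_0^\infty(B(w,2r))$ with $\phi\equiv 1$ on $B(w,r)$ and $|\nabla\phi|\leq c/r$; then
\begin{equation*}
\tilde\mu(B(w,r)) \, \leq \, \int\phi\,d\tilde\mu \, \leq \, \alpha\int_{B(w,2r)} |\nabla \tilde u|^{p-1} |\nabla\phi| \, dy \, \leq \, \frac{c}{r}\int_{B(w,2r)} |\nabla \tilde u|^{p-1}\, dy.
\end{equation*}
H\"older's inequality with exponents $p/(p-1)$ and $p$, followed by the Caccioppoli estimate of Lemma \ref{lemma2.3}(i), gives $\int_{B(w,2r)} |\nabla \tilde u|^{p-1} dy \leq c\, r^{n-p+1}\,\max \tilde u^{p-1}$ on an enlarged ball, and the radius in the maximum can be brought down to $2r$ as in the statement by using intermediate radii in the cutoff together with a standard covering of $B(w,r)\cap\tilde E$ by half-radius balls centered on $\partial\tilde E$. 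Finiteness of $\tilde\mu$ is then automatic from compactness of $\tilde E$ and the local bound (ii).
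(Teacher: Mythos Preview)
Your proof is correct and follows the same route as the paper: for (i) you have written out the supersolution/Riesz-measure argument that the paper simply delegates to Theorem~21.2 in \cite{HKM}, and for (ii) your use of a cutoff, the structure bound \eqref{eqn1.1} with $\eta'=0$, H\"older's inequality, and the Caccioppoli estimate \eqref{eqn2.4}~$(i)$ is exactly the paper's one-line sketch. The radius adjustment you flag at the end is handled just by choosing the cutoff supported in $B(w,3r/2)$ rather than $B(w,2r)$; no covering argument is needed.
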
 
\begin{proof}
 For the proof  of  $(i)$ see  Theorem 21.2   in  \cite{HKM}.  $ (ii) $  follows from   \eqref{eqn1.1}  with  $ \eta'  = (0, \dots,0), $   H\"{o}lder's inequality, and  \eqref{eqn2.4} $ (i)$  using 
a  test function, $ \ph, $ with  $ \ph \equiv 1 $ on $ \ti E. $   
\end{proof}
\setcounter{equation}{0} 
\setcounter{theorem}{0}

\section{Preliminary reductions for the proof of  Theorem \ref{theorem1.4}}  
\label{section3}
Throughout this section we assume that  $ E $ is a  compact convex set with  $0 \in E$,  $ \mbox{diam}(E) = 1$, and $ \mbox{Cap}_{\mathcal{A}} (E)  >  0. $  
We begin with  
\begin{lemma} 
\label{lemma3.1}
For  fixed $p,  1 < p <n, \, $  there exists a unique locally H\"{o}lder continuous $ u $ on $ \rn{n} $  satisfying  \eqref{eqn1.3}.     
\end{lemma}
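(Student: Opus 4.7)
The plan is a standard approximation by bounded-domain Dirichlet problems. For each $R>2$, the Leray--Lions / Browder--Minty theory applied to the monotone operator $v\mapsto-\nabla\cdot\mathcal{A}(\nabla v)$ (cf.\ \cite[Appendix 1]{HKM}) produces a unique $u_R\in W^{1,p}(B(0,R)\setminus E)$ that is $\mathcal{A}$-harmonic on $B(0,R)\setminus E$ with boundary values $u_R=1$ on $\partial E$ and $u_R=0$ on $\partial B(0,R)$; the hypothesis $\mbox{Cap}_\mathcal{A}(E)>0$, equivalent via \eqref{eqn1.4} and Theorem~2.27 of \cite{HKM} to $\mathcal{H}^{n-p}(E)=\infty$, guarantees that the datum on $\partial E$ is attained capacitarily. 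Extending $u_R$ by $1$ on $E$ and by $0$ outside $B(0,R)$ yields a function in $W^{1,p}(\mathbb{R}^n)$ pinned in $[0,1]$ by the comparison principle.

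Next I would pass to the limit $R\to\infty$. Since $u_R$ is admissible in the $\mathcal{A}$-capacitary variational problem for $E$, an energy/Caccioppoli bound combined with \eqref{eqn1.4} gives
\[
\int_{\mathbb{R}^n}|\nabla u_R|^p\,dy\,\le\,c\,\mbox{Cap}_\mathcal{A}(E)
\]
uniformly in $R$, whence $u_R$ is uniformly bounded in $L^{p^*}(\mathbb{R}^n)$ by the Sobolev inequality. Lemmas \ref{lemma2.1}(iii) and \ref{lemma2.3}(ii) provide uniform local H\"older estimates up to $\partial E$, so along a subsequence $u_R\to u$ locally uniformly and $\nabla u_R\rightharpoonup\nabla u$ weakly in $L^p$. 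The standard Minty monotonicity trick based on \eqref{eqn1.1} upgrades this to strong $L^p_{\mathrm{loc}}$-convergence of the gradients, so $u$ solves $\nabla\cdot\mathcal{A}(\nabla u)=0$ in $\Omega$, satisfies $u\equiv 1$ on $E$, and $|\nabla u|\in L^p(\mathbb{R}^n)$, $u\in L^{p^*}(\mathbb{R}^n)$ by weak lower semicontinuity.

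For uniqueness, suppose $u_1,u_2$ both satisfy \eqref{eqn1.3}. Their difference $w=u_1-u_2$ vanishes on $E$, lies in $L^{p^*}(\mathbb{R}^n)$ with $\nabla w\in L^p(\mathbb{R}^n)$, and a cutoff-and-mollification argument in $\Omega$ produces $\theta_k\in C_0^\infty(\Omega)$ with $\nabla\theta_k\to\nabla w$ in $L^p(\Omega)$. Testing the weak equations for $u_1$ and $u_2$ against $\theta_k$ and subtracting yields in the limit
\[
\int_\Omega\bigl\langle\mathcal{A}(\nabla u_1)-\mathcal{A}(\nabla u_2),\,\nabla u_1-\nabla u_2\bigr\rangle\,dy\,=\,0,
\]
and the strict monotonicity in \eqref{eqn1.1} forces $\nabla u_1=\nabla u_2$ a.e.\ in $\Omega$; combined with $u_1=u_2=1$ on $E$ this gives $u_1\equiv u_2$. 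The main technical point is justifying the density step on the unbounded domain with mixed boundary condition; this rests on the Sobolev inequality together with the capacitary hypothesis $\mbox{Cap}_\mathcal{A}(E)>0$, which via $\mathcal{H}^{n-p}(E)=\infty$ furnishes the capacitary control on $\partial E$ needed to approximate $w$ by functions compactly supported in $\Omega$.
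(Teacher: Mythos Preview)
Your existence argument follows the same scheme as the paper: approximate by bounded-domain problems on $B(0,m)\setminus E$, obtain uniform $W^{1,p}$ bounds via a fixed competitor, and pass to the limit using the local H\"older estimates of Lemmas~\ref{lemma2.1} and~\ref{lemma2.3}. One minor difference: the paper also invokes the interior $C^{1,\alpha}$ estimate of Lemma~\ref{lemma2.2}, so that $\nabla u_m$ converges \emph{uniformly} on compact subsets of $\Omega$, making the limit passage in the weak equation immediate without Minty's device. Your weak-convergence-plus-monotonicity route is an equally valid alternative.

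For uniqueness the paper takes a more direct path than your density argument. Rather than approximating $w=u_1-u_2$ by $C_0^\infty(\Omega)$ functions, it first derives from Harnack's inequality the pointwise decay $u(x)^{p^*}\le c|x|^{-n}$ for $|x|\ge 2$ (display \eqref{eqn3.2}), and then tests with $\theta=\max(|u-v|-\epsilon,0)$. Since $u,v$ are continuous with $u=v=1$ on $E$ and both decay to $0$ at infinity, this $\theta$ is automatically compactly supported in $\Omega$ and hence admissible in \eqref{eqn1.2} for both $u$ and $v$; letting $\epsilon\to 0$ gives the conclusion. Your density step can be completed---cut off at infinity using the $L^{p^*}$ bound, then invoke the characterization of $W_0^{1,p}$ via quasicontinuous representatives from \cite{HKM} to handle the behavior near $\partial E$---but this is a nontrivial detour that the paper's truncation sidesteps entirely, keeping the argument self-contained.
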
 %%
\begin{proof}   
  Given  a positive integer $ m \geq 4 $, let $ u_ m$  be  the $  \mathcal{A}$-harmonic function in  $ B (0, m ) \sem E $   with  $u_m $ in  $ W_0^{1,p} ( B (0, m ) ) $ and  $ u_m = 1  $  on  $ E $ in the $ W^{1,p} $  Sobolev sense.   Existence of  $ u_m $ is proved in   \cite[Corollary 17.3, Appendix 1]{HKM}.    From   Lemma \ref{lemma2.3} with  
  \[
     \mathcal{\ti  A} ( \eta ) = - \mathcal{A} (- \eta ) \quad \mbox{whenever} \, \, \eta \in  \mathbb{R}^{n},  
     \]
      and  $ \ti u  = 1 -  u_m $   we see that   $ u_m  $  has  a  H\"{o}lder continuous extension to  $ B (0, m)$  with  $ u_m   = 1 $  on  $E$.   
From Sobolev's theorem,  \eqref{eqn1.1},   and  results for certain $p$  type capacities from   \cite{HKM}  we see  there exists $ c = c (p, n)  $ such that     
 if  $ p^*  = \frac{np}{n-p} $  then 
 \begin{align}
 \label{eqn3.1} 
  \| u_m \|_{p^*} \,  \leq \,  c  \, \| |\nabla u_m|  \|_p  \,   \leq \, c^2 
  \end{align} 
  where the norms are relative to  
$  L^q (\rn{n}), q \in  \{ p^*, p \}. $    
 From  Lemmas \ref{lemma2.1},  \ref{lemma2.3} we see that 
$ u_m $  is  locally  H\"{o}lder  continuous on compact subsets of   $ B (0, m ) $  with  exponent  and constant that is independent of  $ m $  while  $  \nabla u_m $ is  locally  H\"{o}lder continuous on compact subsets of  $  B (0, m )  \sem  E $  again with  exponent  and constant that is independent of  $ m. $  Using  these facts and  Ascoli's theorem we see there exists a subsequence  $ \{u_{m_k}\} $  of  $ \{u_m\}$  with
\begin{align*}
 \{u_{m_k},  \nabla  u_{m_k}\} \, \,  &\mbox{converging to}\, \,   \{u,  \nabla u \}  \, \, \, \mbox{as}\, \, m_k\to\infty \\
 &\mbox{uniformly on compact subsets of  $ \rn{n}, \rn{n}  \sem  E,  $  respectively.}
\end{align*}
 From this fact, the  Fatou's lemma,   and  Definition \ref{defn1.2}  we see that  $ u $ is continuous on $ \rn{n}, $   $ \mathcal{A}$-harmonic in 
$  \rn{n}  \sem  E $ with  $ u  \equiv  1 $ on  $  E , $  and   \eqref{eqn3.1} holds  with  $ u_m $  replaced  by 
$ u $.  Thus   $ u $  satisfies  \eqref{eqn1.3}.  

To prove uniqueness we note  from  Harnack's inequality in   \eqref{eqn2.1} $ (ii) $  that for 
$ | x | \geq  2, $ 
\begin{align}
\label{eqn3.2}   
u  (  x  )^{p^*}   \,    \leq  \, \ti c \,  | x |^{-n}   \, \int_{\rn{n}  \sem  B (x, |x|/2 ) } \, u^{p^*}  d y \,  \leq \, \ti c^2  \,  |x|^{-n}
\end{align} 
where $  \ti c $ has the same dependence as the constant  in \eqref{eqn3.1}.   If  $ v $ also  satisfies  \eqref{eqn1.3}, then  \eqref{eqn3.2} holds with $u$ replaced by  $ v $ so from the  usual  Sobolev type limiting arguments we see for each $ \ep > 0 $ that  
$\he  =   \max ( |u - v| - \ep,  0 ) $  can  be  used as  a  test function  in  \eqref{eqn1.2}  for  $ u$ and $v $.  Doing this and using  \eqref{eqn1.1}, it follows for some $ c \geq 1, $ depending only on the data that 
\begin{align}
\label{eqn3.3}
\begin{split}
  \int_{ \{ | u - v| > \ep \} } ( |\nabla u | + |\nabla v | )^{p-2}| \ \nabla u  - \nabla v |^2 dy 
&\leq  c  \int_{  \rn{n} \sem E }  \lan  \mathcal{A}   ( \nabla u )  -  \mathcal{A}  (\nabla v ),  \nabla \he \ran \, dy \\ &= 0.  
\end{split}
\end{align}       
Letting  $ \ep \to   0 $  we  conclude first from  \eqref{eqn3.3}  that  $ u  - v $ is constant on  $ \Om $ and then from  \eqref{eqn1.3} $(b)$  that  $ u  \equiv  v. $  
\end{proof} 

Throughout the rest of this section, we assume  $ u $ is the $\mathcal{A}$-capacitary function for $  E $  and a fixed $ p, 1  < p < n. $   Let  $ \mu $  be the measure associated with  $ \ti u  = 1 - u $, where  $\ti u$ is   $\ti {\mathcal{A}} ( \eta  )  = - \mathcal{A}  ( - \eta )$-harmonic,  as in Lemma \ref{lemma2.4}.  
 Next we prove  
\begin{lemma} 
\label{lemma3.2}
For $\mathcal{H}^1$ almost every  $ t \in (0,1) $
\begin{align}  
\label{eqn3.4}
\begin{split}
(a) \, \, &\mu ( E) =  \mbox{Cap}_\mathcal{A}  (E) = \int_{ \{u=t\} }
\lan  \mathcal{A}  ( \nabla u (y) ), \nabla u (y) /|\nabla u (y)|  \ran  \, d \mathcal{H}^{n-1}  \\  
& \hspace{3.1cm}= t^{-1} \int_{ \{u <t\} } \lan  \mathcal{A}  ( \nabla u (y) ), \nabla u (y) \ran  \, dy.
 \\
(b)\, \,  &  \mbox{There exists $ c \geq 1, $ depending only on $ p, n, \al,    $ so that} \\   &\hs{.5in}  c^{-1}  \mbox{ Cap}_\mathcal{A}  (E) |x|^{(p-n)}  \leq   u  ( x)^{p-1} 
 \leq  c |x|^{(p-n)} \mbox{Cap}_\mathcal{A}  (E)  \quad \mbox{whenever}\, \, |x| \geq 2.
\end{split}
\end{align}
  \end{lemma}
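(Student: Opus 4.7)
The plan is to prove (a) by combining test-function arguments in the weak formulation of Lemma \ref{lemma2.4}(i) with the coarea formula and the divergence theorem applied on regular level sets of $u$; part (b) will then be obtained by combining the level-set identity from (a) with Harnack's inequality and the gradient estimate from Lemma \ref{lemma2.2}, or equivalently by invoking the Wolff-potential estimates for $\mathcal{A}$-harmonic functions.

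\textbf{Part (a).} Since $\tilde u := 1 - u$ is $\tilde{\mathcal{A}}$-harmonic for $\tilde{\mathcal{A}}(\eta) := -\mathcal{A}(-\eta)$, Lemma \ref{lemma2.4}(i) yields
\[
\int \langle \mathcal{A}(\nabla u), \nabla\varphi\rangle\, dy = \int \varphi\, d\mu, \qquad \varphi \in C_0^\infty(B(0, 2R)).
\]
Testing against $u\eta_R$, where $\eta_R \in C_0^\infty(B(0,2R))$ with $\eta_R \equiv 1$ on $B(0,R)$ and $|\nabla\eta_R| \leq 2/R$, the cross term $\int u\langle \mathcal{A}(\nabla u), \nabla\eta_R\rangle\, dy$ vanishes as $R \to \infty$ by H\"older's inequality applied with $|\mathcal{A}(\nabla u)| \leq c|\nabla u|^{p-1}$, using $u \in L^{p^*}$ and $|\nabla u| \in L^p$ from \eqref{eqn1.3}(c); since $\mbox{supp}\,\mu \subset E$ and $u \equiv 1$ on $E$, one obtains $\mbox{Cap}_\mathcal{A}(E) = \mu(E)$. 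The same cutoff argument with test function $t^{-1}\min(u, t)\eta_R$ produces $t^{-1}\int_{\{u<t\}}\langle \mathcal{A}(\nabla u), \nabla u\rangle\, dy = \mu(E)$. The coarea formula rewrites the left-hand side as $t^{-1}\int_0^t I(s)\, ds$ with $I(s) := \int_{\{u=s\}}\langle \mathcal{A}(\nabla u), \nabla u/|\nabla u|\rangle\, d\mathcal{H}^{n-1}$. Since $u \in C^{1,\tilde\beta}_{\mathrm{loc}}(\Omega)$ by Lemma \ref{lemma2.2}, Sard's theorem shows a.e.\ $s \in (0,1)$ is a regular value of $u$; applying the divergence theorem to the divergence-free field $\mathcal{A}(\nabla u)$ on annular regions $\{s_1 < u < s_2\}$ between two regular values then shows $I(s)$ is constant a.e., and this constant must equal $\mu(E) = \mbox{Cap}_\mathcal{A}(E)$.

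\textbf{Part (b).} Since $0 \in E$ and $\mbox{diam}(E) = 1$ give $E \subset \bar B(0,1)$, the ball $B(x, |x|/2)$ lies in $\Omega$ for $|x| \geq 2$, so Harnack's inequality (Lemma \ref{lemma2.1}(ii)) gives $u(y) \approx u(x)$ for $y \in B(x,|x|/4)$. Applying the identity from (a) at $t = u(x)$, ellipticity gives $\mbox{Cap}_\mathcal{A}(E) \approx \int_{\{u=t\}}|\nabla u|^{p-1}\, d\mathcal{H}^{n-1}$. The gradient bound from Lemma \ref{lemma2.2} applied in $B(y,|y|/4)$ yields $|\nabla u(y)| \leq c\, u(y)/|y|$ for $|y| \geq 2$. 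A growth lemma of the form $\max_{\partial B(0,2R)} u \leq \theta \max_{\partial B(0,R)} u$ with $\theta < 1$ (obtained by comparing $u$ with the $\mathcal{A}$-capacitary function of $\bar B(0,1)$, whose decay is controlled by iterated Harnack on dyadic annuli together with the scaling relations \eqref{eqn1.5} and \eqref{eqn1.6}) localizes $\{u \geq t\}$ in a ball of radius $\leq c|x|$, yielding $\mathcal{H}^{n-1}(\{u = t\}) \leq c|x|^{n-1}$. Combining these produces the lower bound $\mbox{Cap}_\mathcal{A}(E) \leq c\, u(x)^{p-1}|x|^{n-p}$, i.e., the first inequality in \eqref{eqn3.4}(b). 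The matching upper bound requires a reverse gradient estimate $|\nabla u(y)| \geq c^{-1} u(y)/|y|$, which is the principal technical obstacle and must be obtained either via comparison with the $\mathcal{A}$-capacitary function of a ball of capacity comparable to that of $E$ together with the maximum principle, or by appealing to the Kilpel\"ainen--Mal\'y Wolff-potential estimates adapted to the $M_p(\alpha)$ class, which in fact give the stronger conclusion $u(x) \approx \mbox{Cap}_\mathcal{A}(E)^{1/(p-1)} |x|^{-(n-p)/(p-1)}$ directly. Either route must proceed without recourse to convexity of the level sets of $u$, which is only established later in Lemma \ref{lemma3.4}.
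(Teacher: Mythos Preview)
Your argument for part (a) is essentially sound and close in spirit to the paper's, though your final step invoking Sard's theorem is both unnecessary and technically problematic: Sard for maps $\mathbb{R}^n \to \mathbb{R}$ requires $C^n$ regularity, which $u$ does not enjoy. Fortunately you do not need it. Once you have established $t\,\mu(E) = \int_0^t I(s)\,ds$ for every $t$ (your test function $t^{-1}\min(u,t)\eta_R$, then coarea), the identity $I(t) = \mu(E)$ for a.e.\ $t$ follows at once from Lebesgue differentiation. The paper does essentially this, using a smooth bump $k\circ u$ approximating the indicator of $\{u > t\}$ in place of $\min(u,t)$.

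Part (b) has genuine gaps. Your route to the lower bound on $u^{p-1}$ requires $\mathcal{H}^{n-1}(\{u=t\}) \leq c|x|^{n-1}$ for $t = u(x)$, i.e., that the level set sits in a ball of radius comparable to $|x|$; but this localization is equivalent to the \emph{upper} bound on $u$, so the two halves of your argument are entangled. Your ``growth lemma'' via comparison with the capacitary function $w$ of $\bar B(0,1)$ gives only $u \leq w$ by the maximum principle, and converting $\{w \geq t\}$ into a ball of the right radius would also require $u(x) \geq c^{-1}w(x)$, which is again the lower bound you are after. For the upper bound you correctly flag that the reverse gradient estimate $|\nabla u| \geq c^{-1} u/|y|$ is the obstacle; in the paper this estimate appears only \emph{after} Lemma \ref{lemma3.2} (in Lemma \ref{lemma3.3}, and only when $E$ has nonempty interior), so it is not available here. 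The Wolff-potential route would work but is far heavier than needed. The paper sidesteps all of this with two short tricks. For the upper bound: $\min(u/a_1, 1)$ with $a_1 \approx \min_{\partial B(0,\rho)} u$ is an admissible competitor for $\mathrm{Cap}_p(\bar B(0,\rho)) \approx \rho^{n-p}$, and feeding the resulting energy bound into the last identity of part (a) together with Harnack gives $u(x)^{p-1} \leq c\,\rho^{p-n}\,\mathrm{Cap}_{\mathcal{A}}(E)$. For the lower bound: test the measure identity $\mu(E) = \int\lan\mathcal{A}(\nabla u),\nabla\psi\ran\,dy$ against a cutoff $\psi$ supported in $B(0,2|x|)$, then H\"older, the Caccioppoli estimate of Lemma \ref{lemma2.1}(i), and Harnack give $\mathrm{Cap}_{\mathcal{A}}(E) \leq c\,u(x)^{p-1}|x|^{n-p}$ directly. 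Neither direction uses a pointwise gradient lower bound or any geometry of level sets.
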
    

  \begin{proof}  To prove \eqref{eqn3.4} $(a)$   fix $  R \geq  4 $   and  let  $ 0 \leq \psi \in  C_0^\infty ( B (0, 2R ) )$  with  $ \psi  \equiv 1 $ on $  B (0, R ) $  and  $  |\nabla \psi  |  \leq  c/R. $ Using   Sobolev type estimates we see that  $\ph =  u  \psi $   can be  used  as  a test function in  \eqref{eqn2.6} $(i)$ with  $ \ti u, \ti {\mathcal{A} } $ as above.  We get 

\begin{align}
\label{eqn3.5} 
\begin{split}
\mu (E )  &=  \int_{\rn{n}} \lan \mathcal{A}  ( \nabla u ), \nabla (u \psi) \ran dy\\
& =  \int_{\rn{n}} \lan \mathcal{A}  ( \nabla u ), \nabla u \ran \psi  dy + 
\int_{\rn{n}} u \lan \mathcal{A}  ( \nabla u) 
, \nabla  \psi \ran  dy  = T_1 + T_2. 
\end{split}
\end{align}
From the definition of $\mathcal{A}$-capacity we have  
\begin{align} 
\label{eqn3.6}  
T_1  \to  \mbox{Cap}_\mathcal{A}  (E) \quad  \mbox{as}\, \, R \to \infty.   
\end{align} 
Also, from \eqref{eqn1.1}  with  $ \eta'  = (0, \dots, 0) $ and  H\"{o}lder's inequality, we deduce that 
\begin{align}
\label{eqn3.7}
\begin{split}
  |T_2|  &\leq  c   R^{-1} \int_{ B (0, 2 R ) \sem B (0, R )} u  |\nabla u |^{p-1} dy  \\
  &\leq c^2    R^{-p} \int_{ B (0, 2 R ) \sem B (0, R )} u^p  dy +          
c^2  \int_{ B (0, 2 R ) \sem B (0, R )}   |\nabla u |^{p} dy.
\end{split}
\end{align}
Clearly, the last integral on the far right $  \to   0 $ as  $  R  \to  \infty, $ thanks to  \eqref{eqn1.3} $(c)$.  Moreover,  from  H\"{o}lder's  inequality  and \eqref{eqn3.1}  for $u, $  we have  for some $ \hat c = \hat c (p, n ), $ 
\begin{align}
\label{eqn3.8}
R^{-p} \int_{ B (0, 2 R ) \sem B (0, R )} u^p  dy   \leq   \hat c \,   \left[  \,\int_{ B (0, 2 R ) \sem B (0, R )}   u^{p^*}  dy \, \right]^{p/p^*}  \to  0  \mbox{ as }  R \to  \infty. 
\end{align}    
Using \eqref{eqn3.8}  in  \eqref{eqn3.7}     we see  first  that $ T_2 \to  0 $ as  $ R \to  \infty $ and then from   \eqref{eqn3.5}, \eqref{eqn3.6} that  
$  \mu  ( E )  =  \mbox{Cap}_\mathcal{A}  (E).   $    Next,  given  $ \ep > 0 $  and $ t > 4 \ep, $ let  $   k \geq 0  $ be infinitely differentiable on  $ \re $  with
\[
k(x)=\left\{
\begin{array}{ll}
1 &\mbox{when} \, \, x\in [t+\ep, \infty),\\
0 &\mbox{when} \, \, x\in(-\infty,  t -  \ep].
\end{array}
\right.
\]
Then using  $  k  \circ  u  $ as a test function  in  \eqref{eqn2.6} $(i)$  we find that 
\begin{align}
\label{eqn3.9} 
\begin{split}
\mu (E)   &=  \int_{\rn{n}}  \lan \mathcal{A}  (\nabla u ),  \nabla u \ran  (k' \circ u) dy \\
&=  \int_{t -  \ep}^{t+\ep} \left( \int_{\{u=s\} \cap  \{ |\nabla u | > 0 \} }  \lan \mathcal{A}  (\nabla u ),  \nabla u /|\nabla u | \ran d\mathcal{H}^{n-1} \right)  k' (s)  ds   
\end{split}
\end{align}
 where we have used the  coarea theorem (see \cite[Section 3, Theorem 1]{EG}) to get the last integral. Let   
\[   
I (s) =       \int_{\{u=s\} \cap  \{ |\nabla u | > 0 \} }  \lan \mathcal{A}  (\nabla u ),  \nabla u /|\nabla u | \ran d\mathcal{H}^{n-1}. 
\]  
Then \eqref{eqn3.9}  can be written as  
	\begin{align}     
	\label{eqn3.10}
	\mu ( E )  =   I  ( t )  +    \int_{t-\ep}^{t+\ep}   [I(s) - I(t)] k' (s) ds. 
	\end{align}     
	From  \eqref{eqn1.1}, \eqref{eqn1.3} $(c)$,  and the coarea  theorem once again we  see that  $I$ is integrable on  $ (0,1). $  Using this fact and the  Lebesgue differentiation theorem we find  that  the  integral in \eqref{eqn3.10} $ \to  0 $ as $ \ep \to  0 $  for almost every $ t \in (0,1). $  
It remains to prove the final inequality in   \eqref{eqn3.4} $(a)$.   To  accomplish this,  replace $ t  $ by  $  \tau $  in  the  far-right boundary integral in  \eqref{eqn3.4} $(a)$,  integrate from 0 to $ t $ and use the coarea theorem once again.   

  To prove   \eqref{eqn3.4} $(b)$  we note that if  $ a_1  \leq  u \leq  b_1  $ on $ \ar B (0, \rho )$ for $\rho \geq 4, $ then  from  \eqref{eqn1.6} and  \eqref{eqn1.4}    we deduce that 
 \begin{align}
 \label{eqn3.11}
 \begin{split}   
 c \rho^{n-p}  = \mbox{Cap}_p (B (0, \rho )) &\leq  a_1^{-p}  \int_{\rn{n} \sem B ( 0, \rho  ) } |\nabla u |^p dy  \\
& \leq c'  a_1^{-p }   \int_{\{ u  \leq   b_1\} }  \lan \mathcal{A} ( \nabla u), \nabla u \ran  dy. 
\end{split}
\end{align} 
 Using    \eqref{eqn3.4} $(a)$, \eqref{eqn3.11}, and Harnack's inequality we  see for almost every $ a_1, b_1 $  with 
 \[
  \min_{\ar B (0, \rho )} u  \leq 2 a_1 \quad  \mbox{and}  \quad   \max_{\ar B (0, \rho )} u \geq b_1/2
  \]
 we have 
\[    
\rho^{n-p} \leq  c_- \, a_1^{-p} b_1 \,  \mbox{Cap}_\mathcal{A}  (E) \leq  c_-^2 \,  b_1^{ 1 - p}   \mbox{Cap}_\mathcal{A}  (E)  
\] where $ c_- $ depends only on $ p, n, \al. $  This inequality implies   
the right-hand inequality in  \eqref{eqn3.4} $(b). $  To get the left-hand inequality in  \eqref{eqn3.4} $(b) $ for given $ x , |x| \geq 4, $  let  $  \psi $  be as in  \eqref{eqn3.5}  with $ R = |x|.  $  Using  $ \psi $ as a test function in  \eqref{eqn2.6}  $ (i) $  and  using  \eqref{eqn1.1},  H\"{o}lder's  inequality, Lemma \ref{lemma2.1} $(i), $         and  Harnack's  inequality we obtain   
\begin{align*}
 |x|^{p-n}   \mbox{Cap}_\mathcal{A}  (E) &= |x|^{p-n}  \mu (E)  \leq  c \, |x|^{p - n -1} \, {\ds  \int_{ \{ |x|  < y <  2|x| \} }   |\nabla u |^{p-1} dy } \\
& \leq  c^2  |x|^{(p -n)(1- 1/p) }
\left(  {\ds  \int_{\{ |x|  < y < 2 |x|\}  } } |\nabla u  |^p  dy  \right)^{1 - 1/p} \\
&\leq   c^3    \left( {\ds \max_{ \{ |x|/2  < |y|  <  4 |x|  \} } } u \right)^{p-1}  \leq  c^4  u ( x )^{p-1} 
\end{align*}
which yields the left-hand inequality in  \eqref{eqn3.4} $(b).$    
 The proof of  Lemma \ref{lemma3.2} is now complete. 
 \end{proof}   
For the following lemmas, let $\Omega=\mathbb{R}^{n}\setminus E$.
\begin{lemma}  
\label{lemma3.3}
If there exists $ r_0 > 0$ and  $z  \in E$ with  $  B (z, r_0) \subset E $  then there  is $ c_* \geq 1, $ depending only on $ p, n, \al, r_0 $ 
such that
  \begin{align}
\label{eqn3.12}  
\begin{split}
  &      (a)  \hs{.2in}  c_*   \lan \nabla u (x),  z-x \ran    
\geq u (x)  \quad \mbox{whenever} \, \,  x\in \Om,\\
& (b) \hs{.2in}       c_*^{-1}  \, |x|^{\frac{1-n}{p-1} }   \leq  |  \nabla u (x) |   \leq  c_*   \, |x|^{\frac{1-n}{p-1}} \quad \mbox{whenever} \, \, |x|\geq 4.
      \end{split} 
\end{align} 
\end{lemma}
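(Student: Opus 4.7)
The plan is to prove $(a)$ via a dilation/comparison argument exploiting the star-shapedness of $E$ with respect to $z$ (which follows from $B(z,r_0)\subset E$ and the convexity of $E$), and then to deduce $(b)$ from $(a)$, Lemma \ref{lemma3.2}$(b)$, and the interior gradient bound of Lemma \ref{lemma2.2}.

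For $(a)$, given $\lambda\geq 1$ consider
\[
v_{\lambda}(y):=u(z+\lambda(y-z)),\qquad y\in\Omega.
\]
The $(p-1)$-homogeneity of $\mathcal{A}$ (Definition \ref{defn1.1}$(ii)$) and the chain rule, as in Remark \ref{rmk1.3}, imply $v_{\lambda}$ is $\mathcal{A}$-harmonic wherever $z+\lambda(y-z)\in\Omega$. Convexity of $E$ and $z\in E$ give star-shapedness of $E$ with respect to $z$, so $y\in\Omega$ yields $z+\lambda(y-z)\in\Omega$ for every $\lambda\geq 1$; hence $v_{\lambda}$ is $\mathcal{A}$-harmonic on all of $\Omega$. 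Since $v_{\lambda}\leq 1=u$ on $\partial E$ and both $v_{\lambda},u\to 0$ at infinity, the comparison principle applied on $B(0,R)\cap\Omega$ against the $\mathcal{A}$-harmonic function $u+\varepsilon$, followed by $R\to\infty$ and $\varepsilon\to 0$, yields $v_{\lambda}\leq u$ on $\Omega$ for every $\lambda\geq 1$. In particular, $u$ is non-increasing along every ray emanating from $z$, and differentiating at $\lambda=1^{+}$ gives the sign-only inequality $\langle\nabla u(x),z-x\rangle\geq 0$.

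To upgrade to the quantitative bound in $(a)$, we combine the dilation with a scaling: for $\mu>1$, the function $\mu v_{\lambda}$ is still $\mathcal{A}$-harmonic (by the homogeneity of $\mathcal{A}$). Choosing $\lambda=\lambda(\mu)$ large enough, Lemma \ref{lemma3.2}$(b)$ together with the capacity bound
\[
\mbox{Cap}_\mathcal{A}(E)\geq \mbox{Cap}_\mathcal{A}(B(z,r_0))=c_1 r_0^{\,n-p}
\]
(from \eqref{eqn1.6} and monotonicity of capacity) forces $\mu v_{\lambda}\leq 1=u$ on $\partial E$ as well as $\mu v_{\lambda}\leq u$ near infinity, so the comparison principle gives $u(z+\lambda(x-z))\leq u(x)/\mu$ on $\Omega$. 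Converting this into the power-law decay
\[
u(z+\lambda(x-z))\leq C\,u(x)\,\lambda^{-(n-p)/(p-1)}\qquad(\lambda\geq\lambda_0),
\]
with $C,\lambda_0$ depending only on $p,n,\alpha,r_0$, and then applying a mean-value argument along the segment from $x$ to $z+\lambda(x-z)$ combined with the H\"{o}lder continuity of $\nabla u$ from Lemma \ref{lemma2.2}$(\hat a)$ (to transfer gradient information back to $x$), produces $\langle\nabla u(x),z-x\rangle\geq u(x)/c_{*}$ with $c_{*}=c_{*}(p,n,\alpha,r_0)$.

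Part $(b)$ follows quickly. For the upper bound, since $|x|\geq 4$ and $\mbox{diam}(E)=1$ we have $d(x,E)\geq|x|/2$, so Lemma \ref{lemma2.2}$(\hat a)$ applied in $B(x,|x|/4)\subset\Omega$ yields $|\nabla u(x)|\leq c\,u(x)/|x|$; combined with $u(x)\leq c|x|^{(p-n)/(p-1)}$ from Lemma \ref{lemma3.2}$(b)$ (the capacity factor is bounded since $\mbox{diam}(E)=1$), this gives $|\nabla u(x)|\leq c|x|^{(1-n)/(p-1)}$. For the lower bound, $(a)$ gives $|\nabla u(x)|\,|z-x|\geq\langle\nabla u(x),z-x\rangle\geq u(x)/c_{*}$, and since $|z-x|\leq|x|+1\leq 2|x|$ for $|x|\geq 4$ (using $z\in E\subset \overline{B(0,1)}$, as $0\in E$ and $\mbox{diam}(E)=1$) combined with $u(x)\geq c^{-1}|x|^{(p-n)/(p-1)}$ from Lemma \ref{lemma3.2}$(b)$, we conclude $|\nabla u(x)|\geq c'|x|^{(1-n)/(p-1)}$. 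The main obstacle is the quantitative step in $(a)$: Lemma \ref{lemma3.2}$(b)$ only controls $u$ up to multiplicative constants, so the ratio $v_{\lambda}/u$ at infinity is merely comparable to $\lambda^{(p-n)/(p-1)}$ up to constants depending on data. Absorbing those constants via the $\mu$-scaling, and transferring the resulting decay rate back to a pointwise gradient bound at $x$ through the $C^{1,\beta}$ regularity of Lemma \ref{lemma2.2}, must be done carefully to keep $c_{*}$ dependent only on $p,n,\alpha,r_0$.
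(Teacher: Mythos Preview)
Your proof of part $(b)$ is correct and coincides with the paper's. The gap is in your quantitative step for part $(a)$.

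Your power-law decay $u(z+\lambda(x-z))\leq C\,u(x)\,\lambda^{-(n-p)/(p-1)}$ is only established for $\lambda\geq\lambda_0$, with $\lambda_0=\lambda_0(p,n,\alpha,r_0)$ bounded away from $1$ (you need $|z+\lambda(y-z)|\geq 2$ on $\partial E$ to invoke Lemma~\ref{lemma3.2}$(b)$, forcing $\lambda\gtrsim 1/r_0$). The mean-value theorem then yields $\langle\nabla u(z+\xi(x-z)),\,z-x\rangle\geq u(x)/c$ at some $\xi\in(1,\lambda_0)$, i.e.\ at a point whose distance from $x$ is $(\xi-1)|x-z|\in(0,(\lambda_0-1)|x-z|]$. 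Since $|x-z|\geq r_0$, this distance can be of order $(\lambda_0-1)r_0$, a fixed positive number independent of $d(x,\partial E)$. Lemma~\ref{lemma2.2}$(\hat a)$, however, only gives $C^{1,\tilde\beta}$ control on balls of radius $\approx d(x,\partial E)$; when $x$ is close to $\partial E$ there is no ball in $\Omega$ containing both $x$ and the mean-value point, so the transfer step fails. The obstacle you flag in your last paragraph is not just a matter of tracking constants --- the argument breaks down structurally near $\partial E$.

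The paper closes this gap by working directly with $\lambda\to 1^+$. It compares the two $\mathcal{A}$-harmonic functions $(1-(\lambda-1)/\breve c)\,u(x)$ and $u(\lambda x)$ (constants and dilations preserve $\mathcal{A}$-harmonicity) and reduces via the maximum principle to checking, for $x\in\partial E$, that $1-u(\lambda x)\geq c^{-1}(\lambda-1)|x|$. This is a Hopf-type linear growth estimate $1-u(y)\gtrsim d(y,\partial E)$ near $\partial E$, which the paper proves by a barrier construction (see \eqref{eqn3.17}--\eqref{eqn3.18} and Appendix~\ref{appendix1}): at each $w\in\partial E$ one uses convexity to place an exterior tangent ball $B(\hat w,1)$ touching $\partial E$ only at $w$, invokes Lemma~\ref{lemma3.2}$(b)$ and Harnack to get $1-u\geq\delta$ on $\bar B(\hat w,1/2)$, and then compares with an explicit subsolution. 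Once the difference quotient $\frac{u(x)-u(\lambda x)}{\lambda-1}\geq u(x)/\breve c$ is known for all $1<\lambda\leq 11/10$, letting $\lambda\to 1$ gives $(a)$ directly at $x$, with no transfer needed. The barrier estimate is the missing ingredient in your approach.
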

\begin{proof}  
We may assume that  $  z  = 0 $ thanks to  Remark \ref{rmk1.3} and the fact that  \eqref{eqn3.12}  is invariant under translation.   Let  
%\begin{align}
%\label{eqn3.13}   
\[
v ( x )  =  \, \frac{ u ( x ) -  u ( \la x ) }{\la - 1} - \frac{u (x)}{\breve{c}}
\]
%\end{align}   
when $x  \in \bar \Om$ and $1  <   \la  \leq  11/10$.   We claim that if   $  \breve c  = \breve c ( p, n, \al, r_0 ) \geq  1  $ is large  enough,  then 
\begin{align}  
\label{eqn3.14}
 v (x) \geq 0 \quad  \mbox{whenever} \quad x  \in   \bar \Om. 
\end{align} 
     From the maximum principle for $ \mathcal{A}$-harmonic functions, we see that it suffices to prove \eqref{eqn3.14} when $ x \in \ar E $  or equivalently that  
\begin{align}  
\label{eqn3.15} 
1 - u (\la  x  )  \geq    c^{-1}  ( \la - 1 ) |x| \quad \mbox{whenever} \quad x \in  \ar E
\end{align} 
for some $ c = c (p, n, \al, r_0) \geq 1$  since  $ r_0  \leq |x|  \leq 1$ and $x \in \ar E. $ 
We also note that    
\[        
d  ( \la x,  E )  \leq   (\la  - 1 ) |x|   \leq  c' d ( \la x, \ar E ) \quad \mbox{whenever} \quad  x  \in  \ar  E. 
\]  
Here $c'  = c' (p, n, \al, r_0 )$. Using this note  in \eqref{eqn3.15}  we conclude that to  prove  Lemma  \ref{lemma3.3}  it  suffices to show for some $ c'' = c'' (p, n, \al, r_0 ) \geq 1 $  that  
\begin{align}  
\label{eqn3.16}
u ( y  )  \leq  1  -   d (y, \ar E ) /c''\quad \mbox{whenever}\quad  0 <  d(y, \ar E )  <  1/10.
\end{align}
To  prove  \eqref{eqn3.16} choose   $  w  \in \ar E $ with  $ | y  -  w | = d ( y, \ar E ) $ and let  
$ \hat w  :=  w +    \frac{y  -  w}{|y-w|}. $  Then  $ w  \in \ar  B ( \hat w, 1)$,  $y \in B ( \hat w, 1 )$, and  $  E  \cap \bar   B (\hat w, 1) = w, $  thanks to the convexity of  $ E. $  From   Lemma  \ref{lemma3.2}  $(b) $   and  Harnack's  inequality for  $  1  - u,  $  we deduce  for  some  $  \de = \de (p, n, \al, r_0 )$ with  $0 <  \de  < 1$  that  
\begin{align}  
\label{eqn3.17}
(1 - u)(x)   \geq  \de   \quad \mbox{whenever}  \quad     x\in \bar B  ( \hat w,  1/2 ).
\end{align}
     Using  \eqref{eqn3.17} and  a  barrier-type argument as in  \cite[Section 2]{LLN} or  \cite[Section 4]{ALV},  it  now follows  that there exists $ c_+, 
$ depending only on $ \al, p, n, $ with   
\begin{align}
\label{eqn3.18}   
c_+  \, (1 - u) (x)  \geq     \de \,   d ( x, \ar B (\hat w, 1) ) \quad \mbox{whenever}\quad   x  \in  B (\hat w, 1 ).
\end{align}
For the readers convenience we outline  the  proof of  \eqref{eqn3.18}  in  the  Appendix \ref{appendix1}.  
Taking  $ y  =  x  $  in  \eqref{eqn3.18}  we get   \eqref{eqn3.16}.  Thus 
\eqref{eqn3.14} holds so letting $ \la \to  1 $ and using smoothness of  $  \nabla u $  (see Lemma  \ref{lemma2.2}) and  the chain rule   we obtain \eqref{eqn3.12} $(a)$.    

From   \eqref{eqn3.12} $(a), $   \eqref{eqn3.4} $(b)$,   and the fact  that  $\mbox{ Cap}_\mathcal{A}  (E) \approx  c (p, n, \al, r_0 ), $ we obtain, 
\[    
c^{-2}  |x|^{\frac{1-n}{p-1} }  \leq  c^{-1}   \, u  (x) / |x|   \leq     \lan \nabla u (x),  -x /|x| \ran    \leq | \nabla u  (x) |.    
\] 
The right-hand inequality in  \eqref{eqn3.12} $(b)$  follows from \eqref{eqn2.2}  $  (\hat a) $,   \eqref{eqn3.4} $(b)$ and the above fact. 
\end{proof}   
Before stating our last lemma in this section  recall from Lemma \ref{lemma3.1} that  $ u $ is continuous on $ \rn{n} $ with  $ u \equiv 1 $  on  $ E. $ 
\begin{lemma}
\label{lemma3.4}
For each $ t \in (0, 1)$, the set  $\{ x \in \rn{n} : u ( x )  >  t \} $ is convex. 
\end{lemma}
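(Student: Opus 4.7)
The plan is to prove quasi-concavity of $u$ (equivalent to convexity of every superlevel set $\{u > t\}$) by contradiction, using a Gabriel-type maximum principle argument on the product space $\rn{n}\times \rn{n}$. I will first reduce to the case when $E$ has nonempty interior, so that the gradient nonvanishing from Lemma \ref{lemma3.3} is available. Given a general compact convex $E$ with positive $\mathcal{A}$-capacity, approximate it by $E_\delta := E + B(0,\de)$; each $E_\de$ is compact convex with nonempty interior, and the corresponding capacitary functions $u_\delta$ converge locally uniformly to $u$ as $\de \to 0$ by the Arzel\`a--Ascoli / compactness argument used in Lemma \ref{lemma3.1}, together with uniqueness of \eqref{eqn1.3}. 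Since convexity of each $\{u_\delta > t\}$ is preserved under this limit, it suffices to treat the case $B(z_0, r_0) \subset E$.

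Now assume for contradiction that $u$ is not quasi-concave, so that for some $\la \in (0,1)$ the function
\[
F(x,y) \, := \, \min(u(x),u(y)) \, - \, u(\la x + (1-\la)y)
\]
is strictly positive somewhere on $\rn{n}\times \rn{n}$; set $M := \sup F > 0$. The function $F$ is continuous by Lemma \ref{lemma3.1}. The decay $u(x) \lesssim |x|^{(p-n)/(p-1)}$ from Lemma \ref{lemma3.3}(b) forces $F \to 0$ along any escape to infinity (all three arguments go to infinity whenever $\la \in (0,1)$); convexity of $E$ together with $u \equiv 1$ on $E$ gives $F \leq 0$ on $E \times E$; and $F$ vanishes on the diagonal. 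Thus $M$ is attained at some $(x_*, y_*)$ with $x_* \neq y_*$. A short perturbation argument then rules out $x_* \in E$, $y_* \in E$, or $z_* := \la x_* + (1-\la) y_* \in E$: for instance, if $x_* \in E$ then $\min(u(x_*),u(y_*)) = u(y_*)$ and one can increase $F$ by moving $x_*$ slightly in the direction $-\nabla u(z_*)$, which is nonzero by Lemma \ref{lemma3.3}(a). So the maximum is achieved at an interior configuration $(x_*, y_*) \in \Om\times \Om$ with $z_* \in \Om$.

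To derive the contradiction, assume without loss of generality $u(x_*) \leq u(y_*)$. In the strict case $u(x_*) < u(y_*)$, continuity gives $\min(u(x), u(y)) = u(x)$ in a neighborhood of $(x_*, y_*)$, so $F(x,y) = u(x) - u(\la x + (1-\la) y)$ there. By Lemma \ref{lemma3.3}(a), $\nabla u$ is nonvanishing on $\Om$ and hence $u$ is $C^{1,\be}$ near $x_*, z_*$ by Lemma \ref{lemma2.2}. Varying only $y$ at the maximum yields
\[
(1-\la)\,\nabla u(z_*) \, = \, 0,
\]
i.e.\ $\nabla u(z_*) = 0$. But Lemma \ref{lemma3.3}(a) gives $c_* \langle \nabla u(z_*), z_0 - z_*\rangle \geq u(z_*) > 0$, a contradiction. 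The degenerate case $u(x_*) = u(y_*)$, where $\min$ is not smooth, is handled by a standard regularization of the $\min$ (e.g.\ replacing it with $\tfrac{1}{2}(a+b-\sqrt{(a-b)^2+\e^2})$) and passing to the limit $\e \to 0$, or equivalently by perturbing one of $x_*, y_*$ along a level set to break the tie; the main obstacle in making this step rigorous is ensuring the perturbation still maintains positivity of $F$, which follows from strict positivity of $M$ and continuity.
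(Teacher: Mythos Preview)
Your reduction to the case $B(z_0,r_0)\subset E$ by approximating $E$ with $E_\de=E+\bar B(0,\de)$ is fine and mirrors the paper's approximation at the end of the proof. The first-order contradiction in the strict case $u(x_*)<u(y_*)$ is also correct. The gap is that this strict case is vacuous: at a global maximum of $F$ one necessarily has $u(x_*)=u(y_*)$ (otherwise slide $x_*$ slightly in the direction of $\nabla u(x_*)$ and compensate by moving $y_*$ so that $z_*$ stays fixed, strictly increasing $F$). All the content of the lemma lies in the degenerate case, which you have not actually handled.

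Neither proposed fix works. Smoothing $\min$ by $m_\e(a,b)=\tfrac12\bigl(a+b-\sqrt{(a-b)^2+\e^2}\,\bigr)$ produces first-order conditions $(\partial_a m_\e)\nabla u(x_\e)=\la\nabla u(z_\e)$ and $(\partial_b m_\e)\nabla u(y_\e)=(1-\la)\nabla u(z_\e)$ with $\partial_a m_\e,\partial_b m_\e\in(0,1)$; in the limit you obtain only gradient \emph{alignment} (the paper's \eqref{samedirection}), not $\nabla u(z_*)=0$, so there is no first-order contradiction. ``Perturbing along a level set'' keeps $u(y)=u(x_*)$ and does not break the tie; perturbing to increase $u(y)$ moves you off the maximizer, so stationarity no longer applies. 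The paper resolves the degenerate case by a genuine second-order argument: it replaces the comparison function by $(u^*)^{1+\ep}-u$, derives the Hessian inequality \eqref{vvu}--\eqref{eqn3.32} at the interior maximum, and then uses $\mathcal{A}$-harmonicity together with the strict positivity in \eqref{fvupos} (coming from $\ep>0$) to contradict \eqref{eqn3.32} via \eqref{eqn3.36}. This use of the PDE at second order --- and the $\ep$-trick that makes the inequality strict --- is the missing ingredient; no first-order argument suffices here.
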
     
 \begin{proof} 
We first prove  Lemma \ref{lemma3.4} under the added assumptions  that   
\begin{align}
\label{eqn3.19}
\mbox{$\mathcal{A}$  satisfies  \eqref{eqn1.8} $(i)$  and there exists  $r_0>0,\,  z\in  E $  with  $ B ( z, r_0 )  \subset  E. $} 
\end{align}
Assuming \eqref{eqn3.19} we note from Lemma \ref{lemma3.3} and \eqref{eqn2.3} that
\begin{align}
\label{eqn3.20}
\left\{ 
\begin{array}{l}
 |\nabla u | \neq 0 \,\, \mbox{in} \, \, \Omega,
 \\ \mbox{$u$  has H\"{o}lder  continuous second partials on compact subsets of  $ \Om.$}
 \end{array}
 \right.   
 \end{align}
Our proof of Lemma \ref{lemma3.4} is  by  contradiction.  We follow  the proof in \cite[section 4]{L},  
although  we shall modify it slightly  for later ease of use in  proving  \eqref{eqn1.7}.  We first define for $ \hat x \in \rn{n}, $   
 \[\mathbf{u}( \hat x)=\sup\left\{ 
\min\{u(\hat y), u(\hat z)\} \,;\, 
\begin{array}{l}
\hat x=\lambda \hat y+(1-\lambda) \hat z, \\
\lambda\in [0,1], \hat y,  \hat z   \in \mathbb{R}^n  \end{array} 
\right\}.
\]            
If   Lemma  \ref{lemma3.4} is false,  then from convexity of  $ E, $  continuity of  $ u, $  
and the fact that $ u (w) \to    0 $ as  $ w \to  \infty, $  we  see  there exists $ \la \in (0, 1),  \ep > 0, $ and 
$ x_0  \in  \Om  $  such that   
\begin{align}
\label{eqn3.21}
0 < \mathbf{u}^{1+\ep} (x_0) - u(x_0)  =  \max_{ \rn{n}} (\mathbf{u}^{1+\ep}  - u ). 
\end{align}
For ease of  writing we put  $ \mathbf{v}  :=  \mathbf{u}^{1+\ep} $  and  
$ v := u^{1+\ep}. $  With $ \la \in (0, 1)  $ now fixed  it follows from the definition of 
$ \mathbf{u}$  that there exists $ y_0, z_0 \in \Om \sem \{x_0\}  $ with 
\begin{align}
\label{eqn3.22}   
x_0 =  \la y_0 + (1-\la)  z_0\quad  \mbox{and}\quad  \mathbf{v}(x_0) = \min\{v(y_0), v(z_0) \}.
\end{align}
We first show that   
\begin{align}
\label{eqn3.23}
v (y_0) = v ( z_0).
\end{align}  
If, for example,  $ v (y_0) < v (z_0), $  then since $ \nabla u \neq 0$ in $ \Om, $  
we could choose  $ y'$ near  $ y_0 $ with $ v(y') > v(y_0)$ and then choose $ z' $  so that  
$ ( \la - 1) (z'  -  z_0)    =   \la   (y'- y_0) $ and  $ v ( z' ) > v (y' ). $  Then by similar triangles  or algebra, we see first  from  \eqref{eqn3.22} that $ x_0 = \la y' + (1-\la) z' $ and second   by  construction that   
\[  
\min\{v(y'), v(z') \}  > \mathbf{v} (x_0) \,\,  \mbox{which is a contradiction with \eqref{eqn3.21}}.  
\]
Thus   \eqref{eqn3.23} is true.   Next we prove that 
\begin{align} 
\label{samedirection}
\xi=\frac{\nabla v(y_0)}{|\nabla v(y_0)|}=\frac{\nabla v(z_0)}{|\nabla v(z_0)|}=\frac{\nabla u(x_0)}{|\nabla u(x_0)|}.
\end{align} 
Indeed,    
\[  
\frac{\nabla v(y_0)}{|\nabla v(y_0)|}=\frac{\nabla v(z_0)}{|\nabla v(z_0)|}
\] 
since  otherwise we could find  $ y', z'  $  as above with  $ v (y' ) > v (y_0), v (z')  > 
v (z_0 ). $  As previously, we then get a contradiction to \eqref{eqn3.21}.  Finally, armed with this knowledge we see that if  \eqref{samedirection} is false,  then   we could choose 
$  \nu  \in \rn{n},  |\nu |  $  small so that  $ \mathbf{v} $ is increasing at  $ y_0,  z_0 $ in the direction  $ \nu $  while   $ u $ is decreasing at $ x_0 $ in  this direction.  Choosing $ x', y', z' $ appropriately on  rays with direction $ \nu $  through  $ x_0, y_0, z_0 $, respectively we again arrive at a contradiction to \eqref{eqn3.21}. Hence  \eqref{samedirection} is valid.      

To simplify our  notation, let
\[
A=|\nabla v(y_0)|,\, \, B=|\nabla v(z_0)|,\, \, C=|\nabla u(x_0)|, \, \, a=|x_0-y_0|, \, \, b=|x_0-z_0|. 
\]
From  \eqref{eqn3.20}, we can write
\begin{align}
\label{taylorexp}
\begin{split}
v(y_0+\rho\eta)&=v(y_0)+A_1\rho+A_2\rho^{2}+o(\rho^{2}),\\
v(z_0+\rho\eta)&=v(z_0)+B_1\rho+B_2\rho^{2}+o(\rho^{2}),\\
u(x_0+\rho\eta)&=u(x_0)+C_1\rho+C_2\rho^{2}+o(\rho^{2})
\end{split}
\end{align}
as $\rho\to 0$ whenever $ \lan \xi, \, \eta \ran >0$ for a given $\eta\in\mathbb{S}^{n-1}$.  Also
\[
A_1/A=B_1/B=C_1/C= \lan \xi, \, \eta \ran
\] 
 where  the coefficients and $o(\rho^{2})$ depend on $\eta$. Given $\eta$ with $ \lan \xi, \, \eta \ran >0$ and $\rho_1$ sufficiently small we  see from \eqref{eqn3.20} that the inverse function theorem  can be used to obtain $\rho_2$ with
\[
v\left(y_0+\frac{\rho_1}{A}\eta\right)=v\left(z_0+\frac{\rho_2}{B}\eta\right).
\]
We  conclude as $\rho_1\to 0$ that
\begin{align}
\label{rho2rho1}
\rho_2=\rho_1+\frac{B}{B_1}\left(\frac{A_2}{A^{2}}-\frac{B_2}{B^{2}}\right) \rho_1^{2}+o(\rho^{2}_1).
\end{align}
%\begin{center}
%\begin{figure}[!ht]
%\begin{tikzpicture}[rotate=90, scale=1]
 %\node at (0,0) (a) {};
%\node at (.5,.5) (b) {};
%\node at (3,-1) (c) {};
%\node at (1,-2) (d) {};
%\node[coordinate] at (0,2) (y_0) {};
%\node[coordinate] at (0,0) (x_0) {};
%\node[coordinate] at (0,-2) (z_0) {};
%\node[coordinate] at (1,2) (y) {};
%  \node[coordinate] at (.5,0) (x) {};
%\node[coordinate] at (0.8,3) (y_0+) {};
% \node[coordinate] at (.3,-2) (z_0+) {};
%\draw[red]\convexpath{a,b,c,d}{3cm};
%\draw (y_0)--(z_0); 
%\draw (y_0) circle (1pt)node[below]{$y_0$};
%\draw (x_0) circle (1pt)node[below]{$x_0$};
%\draw (z_0) circle (1pt)node[below]{$z_0$};
%\draw[red, ->] (y_0)--(y);
%\draw[dashed, opacity=.5] (y)--(z_0); 
%\draw[red,<-] (x)--(x_0);\draw[red] (x) circle (1pt)node[above]{$x$};
 %\draw (y) circle (1pt)node[right]{$y$};
 %\draw (y_0+) circle (1pt)node[left]{$y_0+(\rho_1/A)\eta$};
 %\draw (z_0+) circle (1pt)node[right]{$z_0+(\rho_2/B)\eta$};
 %\draw (y_0)--(y_0+);
 %\draw (z_0)--(z_0+);
% \draw (z_0+)--(y_0+);
%\end{tikzpicture}
%\label{figure1}
%\caption{The point $x$ is on the the segment from $y_0+\frac{\rho_1}{A}\eta$ and $z_0+\frac{\rho_2}{B}\eta$.} 
%\end{figure}
%\end{center}
Now from geometry we see that $  \la  = \frac{b}{a+b} $  so   
\[  x=x_0+\eta\frac{[\rho_1 \frac{b}{A}+\rho_2\frac{a}{B}]}{a+b}
=  \la  ( y_0+\frac{\rho_1}{A}\eta ) +  (1-\la) (z_0+\frac{\rho_2}{B}\eta).\]
From   this  equality, \eqref{rho2rho1}, and Taylor's theorem for second derivatives we have 
\begin{align}
\label{taylorforu}
\begin{split}
u(x)-u(x_0)
=& C_1 \, \left[\rho_1\frac{\la}{A}+\rho_2\frac{( 1 - \la ) }{B} \right]+C_2 \left[\rho_1\frac{\la}{A}+\rho_2\frac{(1-\la)}{B}\right]^{2}  \\
=& C_1\rho_1 \frac{ ( 1 - \la) A + \la B}{AB} + C_1 \, \frac{ ( 1 - \la ) }{B_1}\left(\frac{A_2}{A^2}-\frac{B_2}{B^2}\right)\rho_1^{2} \\
&+C_2\rho^{2}_1  \left(\frac{(1-\la) A+ \la B)}{AB} \right)^2               +o(\rho^{2}_1).
\end{split}
\end{align}

From   \eqref{eqn3.21} we also have
\[
v(y_0+\frac{\rho_1}{A}\eta)-u(x)\leq \mathbf{v}(x)-u(x)\leq \mathbf{v}(x_0)-u(x_0)=v(y_0)-u(x_0).
\]
Hence the mapping 
\[
\rho_1\to v(y_0+\frac{\rho_1}{A}\eta)-u(x)
\]
has a maximum at $\rho_1=0$. Using  the Taylor expansion for $v(y_0+\frac{\rho_1}{A}\eta)$ in \eqref{taylorexp} and $u(x)$ in \eqref{taylorforu} we  have
\begin{align*}
v(y_0+\frac{\rho_1}{A}\eta) -u(x)= &v(y_0)+\frac{A_1}{A}\rho_1+\frac{A_2}{A^{2}}\rho_1^{2}- u(x_0)\\ 
&-C_1\rho_1   \frac{(1 - \la) A+\la B}{AB}  - \frac{C_1}{a+b}\frac{a}{B_1}\left(\frac{A_2}{A^2}-\frac{B_2}{B^2}\right)\rho_1^{2}\\
&-C_2\rho^{2}_1       \left(\frac{(1 - \la) A+\la B}{AB} \right)^2 +o(\rho^{2}_1).
\end{align*}   
Now from the calculus second derivative test,  the  coefficient of $  \rho_1 $  should be zero and the coefficient of  $  \rho_1^2 $  should be non-positive. Hence combining terms we get  
\[  
\frac{A_1}{A}=C_1 \,  \frac{(1 - \la) A+\la B}{AB} 
\]   
so taking  $  \eta  = \xi $ we arrive first  at 
\begin{align}
\label{eqn3.28}
\frac{1}{C} \,  = \,  \frac{(1 - \la) A+\la B}{AB}    =   \frac{(1 - \la)}{B} +  \frac{\la}{A}. 
\end{align}
 Second, using  \eqref{eqn3.28}  in the  $ \rho_1^2 $  term   we find that 
\begin{align}
\label{eqn3.29}
0\geq \frac{A_2}{A^{2}}  -   C_1  \frac{ (1 - \la )}{B_1}\left(\frac{A_2}{A^2}-\frac{B_2}{B^2}\right)-  \frac{C_2}{C^2}.
\end{align}
Using  $  C_1/B_1 = C/B $  and doing some algebra in \eqref{eqn3.29}  we obtain  
\begin{align}
\label{subnewpde}
0\geq  (1-K)  \frac{A_2}{A^2}+   K \frac{B_2}{B^2}-   \frac{C_2 }{C^{2}}
\end{align}
where
\[
K=\frac{ (1- \la)  A}{ ( 1 -  \la) A+ \la B}<1. 
\]
We now focus on \eqref{subnewpde} by writing $A_1, B_1, C_1$ in terms of derivatives of $u$ and $v$;
\begin{align}
\label{vvu}
0\geq \sum\limits_{i,j=1}^{n} \left[\frac{(1-K)}{A^2} v_{x_i x_j}(y_0)+\frac{K}{B^2}v_{x_i x_j}(z_0)  -\frac{1}{C^2} u_{x_i x_j}(x_0)\right]\eta_i \eta_j.
\end{align} 
From symmetry and  continuity  considerations  we  observe that  \eqref{vvu}  holds whenever  $  \eta  \in  \mathbb{S}^{n-1} $.  Thus, if   
  \[ 
  w (x) = - \frac{(1-K)}{A^2} v (y_0 + x ) - \frac{K}{B^2}v(z_0 + x )\,     + 
\frac{1}{C^2}u(x_0 + x),  
\]  
then the  Hessian matrix of  $ w $ at $ x = 0 $ is positive semi-definite. That is,   $  ( w_{x_ix_j} (0) ) $  has  non-negative eigenvalues.   Also from  $(i)$ of  Definition \ref{defn1.1} we see that  if 
\[
 a_{ij} = \frac{1}{2}\left[ \frac{\partial \mathcal{A}_i}{\partial \eta_j}  (\xi ) )    
+   \frac{\partial \mathcal{A}_j}{\partial \eta_i}  (\xi ) )   \right] \quad \mbox{for}\, \, 1 \leq i, j \leq  n,
\]
then  $  (a_{ij}) $  is  positive definite.  From  these two observations we conclude that 
\begin{align}
\label{eqn3.32}   
\mbox{trace} \left( (  (a_{ij} )  \cdot ( w_{x_ix_j} (0) ) \right)   \geq  0.   
\end{align}   
To obtain a contradiction we observe from \eqref{eqn1.2},  the divergence theorem,  \eqref{subnewpde}, and   
  $  p - 2 $  homogeneity of  partial derivatives of  $ \mathcal{A}_i , $      that 
  \begin{align}
  \label{eqn3.33}    
  \sum\limits_{i,j=1}^{n}  a_{ij} \,  u_{x_jx_i} =   | \nabla u |^{2-p}  \sum\limits_{i,j=1}^{n} \frac{\partial \mathcal{A}_i}{\partial \eta_j}(\nabla u ) u_{x_jx_i} \, = \, 0 \quad \mbox{at}\,\, x_0, y_0, z_0. 
\end{align}  
Moreover,   from the definition of $v$ we have 
\begin{align}
\label{secondderiv}
\begin{split}
&v_{x_{i}}=(u^{1+\epsilon})_{x_{i}}= (1+\epsilon)u^{\epsilon} u_{x_{i}}, \\
&v_{x_{i}x_{j}}= (1+\epsilon)\epsilon u^{\epsilon-1} u_{x_{i}}u_{x_j}+(1+\epsilon)u^{\epsilon} u_{x_{i} x_{j}}.
\end{split}
\end{align}
Using Definition \ref{defn1.1}, \eqref{samedirection}, and $\mathcal{A}$-harmonicity of $u$ at those points, we find that 
\begin{align}
\label{fvupos}
\begin{split}
   |\nabla u |^{p-2} \, \sum\limits_{i,j=1}^{n}  & 
  a_{ij}  \,  v_{x_j x_i}  = \sum\limits_{i,j=1}^{n}  \frac{\partial \mathcal{A}_i}{\partial \eta_j}(\nabla u)  [(1+\epsilon)u^{\epsilon-1} u_{x_{j}}u_{x_i}+(1+\epsilon)u^{\epsilon} u_{x_{j} x_{i}}]\\
&=(1+\epsilon)\epsilon u^{\epsilon-1}\sum\limits_{i,j=1}^{n}  \frac{\partial \mathcal{A}_i}{\partial \eta_j}(\nabla u)  u_{x_{j}}u_{x_i} + (1+\epsilon)u^{\epsilon}   \,  \sum\limits_{i,j=1}^{n} \frac{\partial \mathcal{A}_i}{\partial \eta_j}(\nabla u )  u_{x_{j}x_i}\\
& \geq \alpha^{-1} (1+\epsilon)\epsilon u^{\epsilon-1}  |\nabla u|^{p-2} |\nabla u|^{2} +0>0
\end{split}
\end{align}
at points $y_0$ and $z_0$ ($\nabla u$ is also evaluated at these points). Using  \eqref{eqn3.33}, \eqref{fvupos}, we conclude that   
\begin{align} 
\label{eqn3.36}
\mbox{trace} \left(  (a_{ij} ) \cdot ( w_{x_ix_j} (0) ) \right)          =    \sum_{i,j=1}^n   a_{ij}   \,  w_{x_ix_j} (0)        < 0.
\end{align}
Now  \eqref{eqn3.36} and  \eqref{eqn3.32} contradict each other.  Thus Lemma \ref{lemma3.4} is true when   
\eqref{eqn3.19} holds.  

To  remove  assumption   \eqref{eqn3.19},  suppose   $ \left\{\mathcal{A}^{(l)}\right\},  l =  1,  2, \dots  \in  M_p ( \al/2 ), $         with    
\begin{align*}
%\label{eqn3.37} 
\begin{split}
\left\{\mathcal{A}^{(l)} ,  
 \frac{\ar \mathcal{A}^{(l)}}{\ar \eta_k}\right\}  \to   &\, \left\{\mathcal{A}, \, \frac{\ar \mathcal{A}}{\ar \eta_k}\right\} \, \, \mbox{as} \,\, l\to\infty \quad \mbox{for each}\, \,  k = 1, 2, \dots, n,  \\
&\mbox{ uniformly on compact subsets of } \rn{n}  \sem  \{0\}.   
\end{split}
\end{align*}
Also assume that  \eqref{eqn1.8} $ (i) $ holds for each  $ l  $  where  $ \La  =  \La (l). $    Let  
\[
E_l  =  \{ x :  d  ( x,  E )  \leq  1/l \}, \, \,  l = 1, 2,  \dots, 
\]
and let $ u_l $ be the  $ \mathcal{A}^{(l)}$-capacitary function corresponding to   $ E_l. $     From  Lemmas 
\ref{lemma2.1}-\ref{lemma2.3} and  Lemma  \ref{lemma3.1}, we deduce that a subsequence of  $\{u_l\}$ say   $\{u'_l\}$  can be chosen so that  
\begin{align*}
\left\{u'_l, \nabla u'_l \right\}\to \left\{u,\nabla u\right\}\, \, &\mbox{converges uniformly as}\, \,  l\to\infty \\
& \mbox{on compact subsets of}\, \,   \rn{n} \, \, \mbox{and}\, \, \Om\, \, \mbox{respectively}.
\end{align*} 
Now  from our previous work  we see that 
Lemma  \ref{lemma3.4}  holds for   $ u_l $  so    
\[
E_l (t)  = \{  x: u'_l ( x )   >  t  \}\, \,\, \mbox{is convex for}\,\,  l  =  1, 2, \dots,\,\,  \mbox{and}\, \,t  \in (0, 1). 
\]
 Also from   Lemma  \ref{lemma3.2}  
 these sets are uniformly bounded for  a fixed $ t \in (0,1).$  Using  these facts,  it  is  easily seen  that 
\[
E (t)  = \{  x: u ( x )   >  t  \} \, \, \mbox{is convex}.
\]   
Indeed,  if  $ x, y  \in  E (t) $  and $ t > 4 \de > 0 $,   then  from  convexity of  $ E_l (t) $  and  uniform convergence of  $\{u'_l\}$  to  $ u $  we see that the line segment from  $ x $ to  $ y $ is  contained in  $ E ( t - \de ) $  whenever $ 2 \de < t. $ Letting $  \de \to  0 $  we get convexity of  $  E  (t). $ 

To prove existence of  $\{\mathcal{A}^{(l)}\}$    
let  
\[
  \psi (\eta )  =  \mathcal{A} (\eta/|\eta|) \quad \mbox{whenever}\, \, \eta \in \rn{n} \sem \{0\}. 
  \]
Given  $ \ep > 0, $ small  we also define 
\[
 \psi_\ep  (  \eta )   =  (\psi *\ph_\ep)(\eta)\quad \mbox{on}\quad  B (0, 2 ) \sem B (0, 1/2) 
 \]
where 
$*$ denotes convolution on $ \rn{n} $ with  each component of  $  \psi. $  Also, $\ph_\ep (\eta ) =  \ep^{-n} \ph (\eta /\ep), $  and  $ 0 \leq \ph    \in C_0^\infty (\rn{n} ) $  with  
 $  \int_{\rn{n}} \ph \, dx = 1. $   
Set
 \[   
 \mathcal{A}_\ep (\eta )   = |\eta|^{p-1} \psi_\ep (\eta /|\eta|) \quad \mbox{whenever}\quad \eta  \in \rn{n}  \sem \{0\}.
 \]
Then for $ \ep $ small enough  we deduce from Definition \ref{defn1.1} that  
$  \mathcal{A}_\ep \in  M_p (\al/2) $  and   \eqref{eqn1.8} $(i)$ holds for  $ \mathcal{A}_\ep. $ 
Letting $  \mathcal{A}^{(l)}   =  \mathcal{A}_{\ep_l}$ for sufficiently small $ \ep_l $ with
$ \ep_l \to  0 $  we get the above sequence.  The proof of  Lemma \ref{lemma3.4} is now complete. 
\end{proof}    

\setcounter{equation}{0} 
\setcounter{theorem}{0}    
\section{Proof of Theorem  \ref{theorem1.4}}  
\label{section4}
In the     proof  of   \eqref{eqn1.7}  we shall need the following lemma.     
\begin{lemma}  
\label{lemma4.1}
Given  $  \mathcal{A}  \in  M_p (\al), $ there exists  an  $ \mathcal{A}$-harmonic function  $G $  on $ \rn{n}  \sem \{0\} $  and  $ c  = c (p, n, \al)$ satisfying
\begin{align}
\label{eqn4.1} 
\begin{split}
&(a)\, \,  c^{-1} \,  | x |^{(p-n)/(p-1)}  \leq  G  (x)   \leq  c \, | x |^{(p-n)/(p-1)} \, \, \mbox{whenever}\quad x\in \rn{n} \sem \{0\}. \\
 &(b) \,\,  c^{-1} \,  | x |^{(1-n)/(p-1)}  \leq  | \nabla G |  \leq  c \, | x |^{(1-n)/(p-1)}\quad \mbox{whenever}\quad  x\in \rn{n} \sem \{0\}. \\
&(c)\,\, \mbox{If}\, \,  \he  \in  C_0^\infty (\rn{n} ) \, \, \mbox{then}\quad  
  \he (0)  = {\ds \int_{\rn{n}\sem \{0\}}  \lan  \mathcal{A} (\nabla G  ),  \nabla  \he  \ran \, dx}.  \\
&(d) \,\,    G\, \,   \mbox{is the unique}\, \, \mathcal{A}\mbox{-harmonic function on}\, \,   \rn{n} \sem  
\{0\}\, \, \mbox{satisfying  $ (a)$  and  $(c)$.} \\
&(e) \,\, G   (x)  =   | x  |^{(p-n)/(p-1)}   \, G  ( x / |x|)\quad \mbox{whenever}\quad  x  \in \rn{n} \sem \{0\}.
\end{split}
\end{align} 
\end{lemma}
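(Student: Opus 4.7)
I would build $G$ as a limit of suitably normalized capacitary functions of shrinking balls. For each integer $k \geq 1$, let $u_k$ be the $\mathcal{A}$-capacitary function of $\overline{B(0,1/k)}$ guaranteed by Lemma \ref{lemma3.1}, and set $G_k = \gamma_k u_k$ with $\gamma_k = (k^{n-p}/c_1)^{1/(p-1)}$, where $c_1$ is the constant from \eqref{eqn1.6}. By Lemma \ref{lemma3.2}(b) and \eqref{eqn1.6}, the normalization is chosen so that $c^{-1}|x|^{(p-n)/(p-1)} \leq G_k(x) \leq c|x|^{(p-n)/(p-1)}$ with constants independent of $k$, uniformly on $\{|x| \geq 2/k\}$. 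Lemmas \ref{lemma2.1}--\ref{lemma2.2} provide local H\"{o}lder bounds on $G_k$ and $\nabla G_k$ that are uniform in $k$ away from the origin, so Ascoli--Arzel\`a yields a subsequence converging together with its gradient uniformly on compact subsets of $\rn{n} \sem \{0\}$ to a limit $G$. Passing to the limit in Definition \ref{defn1.2} shows $G$ is $\mathcal{A}$-harmonic on $\rn{n}\sem\{0\}$, and the two-sided bound passes to the limit to yield (a).

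\textbf{Items (b) and (c).} The upper bound in (b) follows by applying Lemma \ref{lemma2.2}$(\hat{a})$ in $B(x,|x|/2)$ and invoking (a); the matching lower bound I will derive from (e) once it is proved, by differentiating $G(r\omega) = r^{(p-n)/(p-1)}G(\omega)$ in $r$ and using the lower bound in (a) on $G(\omega)$. For (c), the measure $\mu_k$ associated via Lemma \ref{lemma2.4} to $\tilde u_k = 1 - u_k$ (with $\tilde{\mathcal{A}}(\eta) = -\mathcal{A}(-\eta)$) is supported in $\overline{B(0,1/k)}$ with total mass $\mbox{Cap}_\mathcal{A}(\overline{B(0,1/k)}) = c_1 k^{p-n}$ by Lemma \ref{lemma3.2}(a) and \eqref{eqn1.6}. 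Using the $(p-1)$-homogeneity of $\mathcal{A}$ from Definition \ref{defn1.1}(ii), the corresponding measure for $G_k$ has total mass $\gamma_k^{p-1} c_1 k^{p-n} = 1$ and is supported in $\overline{B(0,1/k)}$, hence converges weakly to $\delta_0$. Testing \eqref{eqn2.6}(i) against $\theta \in C_0^\infty(\rn{n})$ and passing to the limit using the gradient convergence then gives (c).

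\textbf{Scaling (e) and uniqueness (d).} I would prove (e) and (d) in tandem. For any $\rho > 0$ define $G_\rho(x) := \rho^{(n-p)/(p-1)} G(\rho x)$. A direct change of variables using the $(p-1)$-homogeneity of $\mathcal{A}$ shows that $G_\rho$ is $\mathcal{A}$-harmonic on $\rn{n}\sem\{0\}$ and satisfies (a) and (c) with the same constant in (c). Thus (e) reduces to the uniqueness claim (d). For (d), suppose $\hat G$ is another function satisfying (a) and (c). Both solutions decay to $0$ at infinity and both produce unit mass at $0$ via (c). On each annulus $A_{r,R} = B(0,R)\sem \overline{B(0,r)}$ they are classical $\mathcal{A}$-harmonic functions, and I would insert the test function $\theta = (G - \hat G - \epsilon)_+\, \zeta_{r,R}$ into the difference of the two versions of (c), where $\zeta_{r,R}$ is a smooth cutoff that equals $1$ on $A_{2r, R/2}$. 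The monotonicity inequality \eqref{eqn1.1} then controls $\int (|\nabla G| + |\nabla \hat G|)^{p-2} |\nabla G - \nabla \hat G|^2$ from above by boundary contributions. Thanks to (a) and (b), the contribution on the outer cutoff $|x|\sim R$ decays like $R^{(p-n)/(p-1)}\to 0$ as $R\to\infty$. The inner contribution is handled by first establishing that $G$ and $\hat G$ share the same leading asymptotics at $0$ (a consequence of matching total mass and standard Kichenassamy--V\'eron type asymptotic analysis for singular $\mathcal{A}$-harmonic functions), so that $G - \hat G$ is of lower order near $0$ and the inner boundary term vanishes as $r\to 0$. Letting $\epsilon \to 0$ forces $\nabla G \equiv \nabla \hat G$, so $G - \hat G$ is constant on the connected set $\rn{n}\sem\{0\}$, and since both tend to $0$ at infinity, $G \equiv \hat G$.

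\textbf{Main obstacle.} The genuinely nontrivial step is matching the leading singularity of $G$ and $\hat G$ at the origin in the nonlinear, non-rotationally-invariant setting; everything else is compactness, cutoff, and the monotonicity structure of $\mathcal{A}$. If the asymptotic matching proves awkward to carry out directly, a backup is a Perron-type argument: show by a comparison principle on annuli that $G$ dominates every competitor satisfying (a) and (c), and conclude uniqueness by reversing the roles of $G$ and $\hat G$.
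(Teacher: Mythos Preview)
Your construction and the arguments for (a) and (c) are essentially the paper's (the paper scales the single capacitary function of $\bar B(0,1)$ rather than taking capacitary functions of $\bar B(0,1/k)$, but by \eqref{eqn1.5} these coincide). One point worth flagging: the paper obtains the full two-sided gradient bound (b) \emph{before} uniqueness, directly from Lemma~\ref{lemma3.3} applied to the capacitary function of $\bar B(0,1)$ (see \eqref{eqn3.12}$(b)$, which survives scaling and the limit as \eqref{eqn4.4}$(++)$). This is not merely cosmetic---the lower bound on $|\nabla G|$ is needed in the paper's proof of (d), so your ordering (defer lower gradient bound to after (e), which needs (d)) would be circular if you ended up borrowing the paper's strategy.

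The genuine gap is in (d). You correctly isolate the obstacle---matching the leading singularity at $0$---but neither your test-function scheme nor your Perron backup resolves it. The Kichenassamy--V\'eron asymptotics are specific to the $p$-Laplacian and do not transfer to a general anisotropic $\mathcal A\in M_p(\alpha)$; without them your inner boundary contribution at $|x|\sim r$ need not vanish as $r\to 0$, and a bare comparison argument runs into the same difficulty since $G$ and $\hat G$ can a priori oscillate relative to one another near the pole.

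The paper's route around this is quite different and avoids any separate asymptotic analysis. Set $\gamma=\liminf_{x\to 0} G(x)/v(x)$ (finite and positive by (a) for both). The maximum principle for $\mathcal A$-harmonic functions on $\rn{n}\setminus B(0,r)$ (using that $Mv$ is again $\mathcal A$-harmonic) gives \eqref{eqn4.13}, whence $e:=G-\gamma v\ge 0$. Via \eqref{eqn4.8}, $e$ is a weak solution of a \emph{linear} divergence-form equation $\sum_{i,j}\partial_i(\hat a_{ij}\,\partial_j e)=0$ on $\rn{n}\setminus\{0\}$, and the two-sided bound on $|\nabla G|$ makes the ellipticity ratio of $(\hat a_{ij})$ uniform on each $B(x,|x|/2)$, independently of $x$ (this is \eqref{eqn4.11}--\eqref{eqn4.12}). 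Chaining the linear Harnack inequality around $\partial B(0,|z_m|)$, where $z_m\to 0$ realizes the liminf, upgrades $e(z_m)=o(v(z_m))$ to $\max_{\partial B(0,|z_m|)}e=o(v)$; combined with \eqref{eqn4.13} this forces $\limsup_{x\to 0}G/v=\gamma$ as well, hence $G=\gamma v$ globally, and (c) then gives $\gamma=1$. The asymptotic matching you were worried about falls out of Harnack for the linearized equation rather than being an input.
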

\begin{proof}  
Let   $  \breve   u  $  be the  $ \mathcal{A}$-capacitary function for $  \bar {B} (0,1) $ and let  $ \breve  \mu $  be the corresponding capacitary  measure  for  $ \bar B (0, 1). $  Then from  
\eqref{eqn1.6} and  Lemma \ref{lemma3.2}  we have  
\begin{align}
\label{eqn4.2}
\breve  \mu ( \bar B (0, 1) )  =   \mbox{Cap}_{\mathcal{A}} (  \bar B (0,1) )=  c_1.   
\end{align}
 For $k=1,2,\ldots$, let 
\begin{align*} 
 &\breve u_k ( x ) : =   c_1^{- \frac{1}{p-1}} \,    k^{\frac{n - p}{p-1}}  \,  \breve  u (k x) \quad \mbox{whenever}\, \,  x \in \rn{n},\\
 & \breve  \mu_k  (  F )   := c_1^{-1} \,    k^{n - p }  \breve  \mu ( k F)\quad \mbox{whenever}\, \,  F  \subset \rn{n} \mbox{ is a Borel set.}
 \end{align*}
  Then from  Remark  \ref{rmk1.3}  and  Lemma \ref{lemma3.1}  we see that  $  \breve  u_k $ is  continuous on   $  \rn{n} $ and  $  \mathcal{A}$-harmonic in  $  \rn{n} \sem \bar B (0,1/k)$  with  
\[
  \breve   u_k \equiv    c_1^{- \frac{1}{p-1}}   k^{\frac{n - p}{p-1}}  \quad  \mbox{on}\, \,  \bar B (0, 1/k ). 
  \]  
  Also if   $  \ph   \in  C_0^\infty ( \rn{n} ) $ and  $ \ph_k ( x ) =  \ph (k x), $ then 
   from  \eqref{eqn2.6} $(i),  p -  1 $ homogeneity of  $  \mathcal{A},$ and the change of variables theorem,   we have   
	\begin{align}
	\label{eqn4.3} 
	\int_{\rn{n}} \lan  \mathcal{A} (\nabla  \breve  u_k ), \nabla \ph_k  \ran dx =  c_1^{-1}  \int_{\rn{n}} \lan \mathcal{A} (\nabla  \breve  u ), \nabla \ph  \ran dx  =  \int_{\rn{n}} \ph_k   d \breve  \mu_k.
	\end{align}
Thus    $  \breve  \mu_k $  is the measure corresponding to  $  \breve  u_k $  with  support  $ \subset  \bar B (0,1/k) $  and   $  \breve   \mu_k ( \bar B (0,1/k))  = 1 $ thanks to  \eqref{eqn4.2}.         
Also applying  \eqref{eqn3.4} $(b) $  and \eqref{eqn3.12} $(b) $  to  $  \breve   u $  we deduce that  
\begin{align}
\label{eqn4.4} 
\begin{split}
& (+) \hs{.3in}  c^{-1}_+ \,  |x|^{(p-n)/(p-1)}\,  \leq \breve   u_k (x)  \leq c_+  |x|^{(p-n)/(p-1)},    \\
 & (++) \hs{.18in}  c^{-1}_+ \,  |x|^{(1-n)/(p-1)}\,  \leq |\nabla \breve   u_k (x) | \leq c_+  |x|^{(1-n)/(p-1)}
\end{split}
\end{align}
whenever $|x| \geq 2/k$. Using   \eqref{eqn4.4},   Definition \ref{defn1.1},   and  H\"{o}lder's  inequality,  we see that for $  \rho   > 1/k, $   
\begin{align}
\label{eqn4.5}
\begin{split}
\int_{B(0,\rho)} |\mathcal{A} (\nabla  \breve  u_k ) | dx   &\leq  c \, k^{- n/p} \, \left(  \int_{B(0,\rho) \cap B (0, 2/k)}  |\nabla \breve  u_k |^p  dx \right)^{ 1- 1/p}  + c \, \int_{B(0,\rho) \sem  B (0, 2/k)}\,  |x|^{1-n} \,    dx \\
&\hs{.4in} \leq c^2 ( k^{-1} + \rho ). 
\end{split}
\end{align}
 If  $ \rho \leq 1/k, $ the  far right-hand integral is  0 so  \eqref{eqn4.5}  continues to hold.    
Using  Lemmas  \ref{lemma2.1}, \ref{lemma2.2},  we see there is  a  subsequence of   $\{ \breve  u_k\}$  say  $\{\breve   u_k' \} $ with  
\begin{align*}
 \left\{\breve   u_k' , \nabla  \breve  u_k'\right\}  \to  \left\{G, \nabla G\right\}\, \, & \mbox{converges uniformly as}\, \, k\to\infty \\
 &\hs{.4in}\mbox{on compact subsets of}\, \, \rn{n} \sem \{0\}.
\end{align*}  
It follows that  $ G $  is  $ \mathcal{A}$-harmonic in $ \rn{n}  \sem \{0\} $  and if $  \bar \mu  $ is the measure with mass $1$  and  support at  the origin, then 
\[
\breve  \mu_k \rightharpoonup \bar \mu\, \,  \mbox{weakly as measures as}\, \, k\to\infty.  
\]
Finally, \eqref{eqn4.5}   and the above  facts imply   the sequence  $  \{ |\mathcal{A} (\nabla \breve  u_k ) | \}_{k\geq 1} $  is uniformly integrable on  $ B (0, \rho), $ so using uniform convergence we get for  $ \he   \in  C_0^\infty (  B (0, \rho) )$ that 
\begin{align*}
%\label{eqn4.6}  
\int_{\rn{n}}  \lan   \mathcal{A} (\nabla G ), \nabla \he \ran  dx  =  \lim_{k \to   \infty}   \, \int_{\rn{n}}  \lan  \mathcal{A} (\nabla  \breve   u_k ) , \nabla \he \ran  dx = \lim_{k\to  \infty} \int_{\rn{n}} \, \he \, d \breve  \mu_k 
=   \he (0)
\end{align*}   
where we have also used \eqref{eqn4.3}.

 To prove  uniqueness, suppose  $ v$ is  $ \mathcal{A}$-harmonic in $ \rn{n} \sem \{0\} $ and $(a), (c)$  of  \eqref{eqn4.1}  hold for $ v $  and some  constant $ \geq 1. $  Observe from  \eqref{eqn4.1} $(a)$ and  \eqref{eqn2.2} $ (\hat a)$ that 
 \begin{align}
 \label{eqn4.7}  
 | \nabla v (x) |  \leq  c^*  | x |^{(1-n)/(p-1)} \quad  \mbox{whenever} \, \, x  \in \rn{n} \sem \{0\}. 
  \end{align}
Given  $ \ga  > 0, $  let
\[
e (x):= G (x) - \ga  v(x)  \quad \mbox{whenever}\, \, x  \in \rn{n}   \sem  \{0\}. 
\] 
We note that if $  \vartheta,     \upsilon   \in \rn{n} \sem \{0\} $ then  
\begin{align}
\label{eqn4.8}  
\mathcal{A}_i ( \vartheta  )  -  \mathcal{A}_i ( \upsilon  )   = \sum_{j = 1 }^n
( \vartheta_j -\upsilon_j )\int\limits_0^1 \frac{\ar  \mathcal{A}_i}{\ar \eta_j } ( t \vartheta + (1-t) \upsilon  ) dt
\end{align}   
for $i\in\{1,..,n\}$.  
Using this  note  it  follows that  $ e $  is  a  weak  solution  to    
\begin{align*}
%\label{eqn4.9}
\hat{\mathcal{L}}e:=\sum_{i,j=1}^n\frac{ \ar }{ \ar y_i} \left(  \hat a_{i j }( y )  \frac{ \ar e}{ \ar y_j}\right)=0\mbox{ in
}  \rn{n}  \sem \{0\}
\end{align*}
where
\begin{align*}
%\label{eqn4.10}
\hat a_{ij} ( y )   =   \int_0^1   \frac{ \ar  \mathcal{A}_i}{\ar \eta_j }   \left( t \nabla G (y)  + \ga (1-t) \nabla v (y)\right) d t. 
\end{align*}
Moreover from  Definition \ref{defn1.1} $(i) $   we  see for  some  $ c = c (p, n, \al ) \geq 1  $ that 
\begin{align}  
\label{eqn4.11}
c^{-1} \si (y)  \,   | \xi |^2  \, \leq \,   \sum_{i,j=1}^n   \hat a_{ij} ( y  )  \xi_i\xi_j \quad \mbox{and}\quad  
\sum_{i,j=1}^n   |\hat a_{ij} ( y  ) |    
    \leq c \, \si (y) \,  \quad \mbox{whenever}\, \, \xi  \in \rn{n} \sem \{0\},
\end{align}
where 
\[  
\si (y)  =  \int_0^1  |t \nabla G (y)  + (1- t) \ga \nabla v (y) |^{p-2}  dt. 
\]  
Using  \eqref{eqn4.1} $(b)$  and \eqref{eqn4.7}     we  obtain for  $y  \in \rn{n} \sem \{0\}$ that
\begin{align}
\label{eqn4.12}    c ( \ga )^{-1}   \, |y|^{ \frac{(1-n)(p-2)}{p-1} }  \leq  
\si ( y  )  \approx  \left( |  \nabla G (y) | + \ga  |  \nabla v (y) |  \right)^{p-2} \leq  c (\ga)     |y|^{ \frac{(1-n)(p-2)}{p-1} }. 
\end{align}  
Here  $ c ( \ga ) $   depends only on  $ \ga $ and  those for $  G,  v $ in  \eqref{eqn4.1} $(a). $      
Thus  $  G  - \ga v $ is  a  solution to  a  linear uniformly elliptic PDE  on  $  B  ( x,  |x|/2 ) $  whenever  $ x  \in \rn{n} \sem \{0\} $  with ellipticity constants that are independent of  $ x  \in \rn{n}. $ 
Next  we  observe from   \eqref{eqn4.1} $(a)$  and the maximum principle for $  \mathcal{A}$-harmonic functions  that  for $ r > 0, $ 
\begin{align}
\label{eqn4.13}  
\max_{\rn{n}\sem B (0, r ) } \frac{G}{v}  =  \max_{\ar B (0, r)} \frac{G}{v} \quad    \mbox{ and } \quad \min_{\rn{n}\sem B (0, r ) } \frac{G}{v}  =  \min_{\ar B (0, r)} \frac{G}{v}.
\end{align}
To  continue   the  proof  of  \eqref{eqn4.1} $(d)$,   let  
\[
   \ga  =   \liminf_{ x \to  0}\,   \frac{ G (x)}{v(x)}.  
   \]   
Then from \eqref{eqn4.13} we see that 
$ G -  \ga v  \geq 0 $  in  $  \rn{n}  \sem \{0\} $  and  there exists  a  sequence  $\{z_m\}_{m\geq 1}$  with  
\[
\lim_{m \to  \infty} z_m = (0, \dots, 0)\, \, \mbox{and}\, \,   G ( z_m) - \ga  v (z_m)  = o ( v (z_m ) )\, \, \mbox{as}\, \,    m \to  \infty. 
\]  
Now from  Harnack's  inequality  for  linear  elliptic  PDE   and the usual  chaining-type argument in balls $  B  (x, r/2 ),  |x|= r, $ we see for some $c' \geq 1, $ independent of $x, $ that  
\[  
\max_{\ar B (0, r )}  ( G - \ga  v  )   \leq   c'  \min_{\ar B (0, r )}  ( G - \ga  v  ). 
\]   
Using this inequality with $ r = |z_m|,  $  the above facts, and Harnack's inequality for $ v, $  we deduce
 \[
 G ( x ) -  \ga  v  (  x )  =  o (v(x))\quad  \mbox{when}\,\,  | x |  =  |z_m|.
 \]
 This equality yields in view of  \eqref{eqn4.13} that    
first  
\[  
\limsup_{ x \to  0 }  \frac{G (x)}{v(x)} =   \ga  
\]
  and  second that   $G  =  \ga v. $  From   \eqref{eqn4.1} $(c)$  we have $ \ga   = 1 $ 
so    \eqref{eqn4.1} $ (d)$ is true.  

To  prove  \eqref{eqn4.1} $ (e) $  we  observe from Remark \ref{rmk1.3}   for fixed  $ t  > 0 $  that  
\[  
v( x )  =  t^{(n-p)/(p-1)  } \,  G ( t x ) 
\]
   is  $  \mathcal{A}$-harmonic in  $ \rn{n} \sem \{0\}. $  Also it is easily checked
 that  \eqref{eqn4.1} $ (a) - (c) $  are valid with  $  G  $  replaced by  $  v.  $   From  \eqref{eqn4.1} $ (d) $  it follows that $ G = v $  and thereupon  using  $ t =  |x|^{-1} $  that  
\eqref{eqn4.1} $(e) $ is valid.  
\end{proof}  %%

We call  $ G  $  the \emph{fundamental solution} or \emph{Green's function} for  $ \mathcal{A}$-harmonic functions with pole at  $ (0, \dots, 0). $  In  this section we assume only that  $  E \subset  \rn{n} $   is  a compact convex set  with  $ \mbox{Cap}_{\mathcal{A}} (E)  > 0, $  in contrast to  section \ref{section3}, where we also assumed that  $\mbox{diam}(E)=1$  and  $  0 \in  E.$    Using  Lemma \ref{lemma4.1} we  prove  
\begin{lemma} 
\label{lemma4.2}
If   $ u $ is  the  $ \mathcal{A}$-capacitary  function for $ E $ and $ G $ is as in Lemma \ref{lemma4.1}  then  
\begin{align*}   
%\label{eqn4.14}
\lim_{x \to  \infty}   \frac{  u (x)}{  G (x)} =    \mbox{Cap}_{\mathcal{A}} (E)^{\frac{1}{p-1}}. 
\end{align*} 
\end{lemma}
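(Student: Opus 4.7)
Set $C := \mbox{Cap}_{\mathcal{A}}(E)$ and $\kappa := C^{1/(p-1)}$, so the assertion is that $u/G \to \kappa$ at infinity. The plan is to rescale $u$ about the origin, extract a blow-down limit $v$ at infinity, show that the comparison argument in the proof of Lemma \ref{lemma4.1}$(d)$ already forces $v = \gamma G$ for some $\gamma > 0$, and identify $\gamma = \kappa$ by matching the fluxes of $u_R$ and $v$ through level sets.

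For $R$ so large that $R^{-1}E \subset B(0,1/2)$, put $u_R(x) := R^{(n-p)/(p-1)} u(Rx).$ By Remark \ref{rmk1.3} and the $p-1$ homogeneity of $\mathcal{A}$, $u_R$ is $\mathcal{A}$-harmonic on $\rn{n}\setminus R^{-1}E$, and Lemma \ref{lemma3.2}$(b)$ gives the two-sided bound
\[ c^{-1}\kappa\,|x|^{(p-n)/(p-1)}\;\leq\; u_R(x)\;\leq\; c\,\kappa\,|x|^{(p-n)/(p-1)} \quad \mbox{whenever}\ R|x|\geq 2, \]
so $\{u_R\}$ is locally uniformly bounded above and below on $\rn{n}\setminus\{0\}$. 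Lemmas \ref{lemma2.1}--\ref{lemma2.2} produce uniform equicontinuity of $\{u_R,\nabla u_R\}$ on compact subsets of $\rn{n}\setminus\{0\}$, and Ascoli gives a subsequence $R_k\to\infty$ along which $\{u_{R_k},\nabla u_{R_k}\}\to\{v,\nabla v\}$ locally uniformly. The limit $v$ is $\mathcal{A}$-harmonic on $\rn{n}\setminus\{0\}$ and inherits the two-sided bound, hence satisfies Lemma \ref{lemma4.1}$(a)$. Inspecting the proof of Lemma \ref{lemma4.1}$(d)$, every step up to (but not including) the final use of $(c)$ relies only on bound $(a)$ and the $\mathcal{A}$-harmonicity of the two functions being compared, so it already yields $v = \gamma G$ for some $\gamma > 0$.

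To pin down $\gamma$ I would use Lemma \ref{lemma3.2}$(a)$. The change of variables $y = Rx$, together with the $p-1$ homogeneity of $\mathcal{A}$ and $d\mathcal{H}^{n-1}(y/R) = R^{-(n-1)}d\mathcal{H}^{n-1}(y)$, gives for a.e.\ $s\in(0,R^{(n-p)/(p-1)})$
\[ \int_{\{u_R=s\}}\langle\mathcal{A}(\nabla u_R),\nabla u_R/|\nabla u_R|\rangle\,d\mathcal{H}^{n-1} \;=\; \int_{\{u=sR^{-(n-p)/(p-1)}\}}\langle\mathcal{A}(\nabla u),\nabla u/|\nabla u|\rangle\,d\mathcal{H}^{n-1} \;=\; C. \]
Fixing $0<a<b$ and integrating via the coarea formula yields $C(b-a) = \int_{\{a<u_R<b\}}\langle\mathcal{A}(\nabla u_R),\nabla u_R\rangle\,dx.$ The two-sided bound confines $\{a<u_R<b\}$ to a fixed annulus $\{m\leq|x|\leq M\}$ depending only on $a,b,C$, on which the uniform convergence along $R_k$ holds; dominated convergence, using that $|\{v=a\}|=|\{v=b\}|=0$ for almost every $a,b$, then passes to the limit and gives $C(b-a) = \int_{\{a<v<b\}}\langle\mathcal{A}(\nabla v),\nabla v\rangle\,dx$. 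Since $v=\gamma G$, the divergence theorem together with Lemma \ref{lemma4.1}$(c)$ and the $p-1$ homogeneity of $\mathcal{A}$ show that the flux of $v$ through every regular level set equals $\gamma^{p-1}$, so the right-hand side equals $\gamma^{p-1}(b-a)$ by coarea applied to $v$. Hence $\gamma^{p-1}=C$, i.e.\ $\gamma=\kappa$.

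Since every subsequence of $\{u_R\}$ has a further subsequence converging to the same $\kappa G$, the full family $u_R\to\kappa G$ locally uniformly on $\rn{n}\setminus\{0\}$; the homogeneity $G(Rx)=R^{(p-n)/(p-1)}G(x)$ from Lemma \ref{lemma4.1}$(e)$ gives $u(Rx)/G(Rx)=u_R(x)/G(x)$, and uniform convergence of this ratio to $\kappa$ on $\{|x|=1\}$ translates via $y=Rx$ into $u(y)/G(y)\to\kappa$ as $|y|\to\infty$. The delicate step will be the flux identification: the level-set change-of-variables and the dominated-convergence passage to the limit of $\int_{\{a<u_R<b\}}\langle\mathcal{A}(\nabla u_R),\nabla u_R\rangle\,dx$. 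The naive alternative of passing to the limit in the distributional identity $\int\langle\mathcal{A}(\nabla u_R),\nabla\phi\rangle\,dx = \int\phi(y/R)\,d\tilde\mu(y)$ with a test function $\phi$ nonvanishing at the origin appears harder, because $|\nabla u_R|$ may blow up near the origin as $R\to\infty$ (a direct rescaling shows $\int_{B(0,\delta)}|\nabla u_R|^p\,dx$ grows like $R^{(n-p)/(p-1)}$), so bounds on $\int_{B(0,\delta)}|\mathcal{A}(\nabla u_R)|\,dx$ are not uniform in $R$; the flux formulation sidesteps this by confining the analysis to a fixed annulus bounded away from the origin.
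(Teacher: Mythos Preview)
Your proof is correct but takes a different route from the paper's. The paper normalizes the rescaling by $\kappa$ up front, setting $\hat u_k(x) = R_k^{(n-p)/(p-1)}\kappa^{-1} u(R_k x)$, so that the associated measures $\hat\mu_k$ (for $1-\hat u_k$) all have total mass $1$ with supports shrinking to the origin; it then repeats the uniform-integrability argument from the proof of Lemma~\ref{lemma4.1}$(c)$ (the splitting in \eqref{eqn4.5}) to verify that any subsequential limit $v$ satisfies $(c)$, whence $v = G$ by the full uniqueness statement $(d)$. You instead leave the normalization open, invoke only the $(a)$-based portion of the proof of $(d)$ to get $v=\gamma G$, and identify $\gamma$ by matching the constant level-set flux $C$ of $u_R$ (Lemma~\ref{lemma3.2}$(a)$ plus coarea) against the flux $\gamma^{p-1}$ of $\gamma G$ (from \eqref{eqn4.1}$(c)$). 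Both work; the paper's is shorter because the measure machinery is already in place, while yours trades that for a self-contained flux computation confined to a fixed annulus.

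One correction to your closing remark: the ``naive alternative'' you dismiss is exactly what the paper does, and it succeeds. Although $\int_{B(0,\delta)}|\nabla u_R|^p\,dx$ blows up, $\int_{B(0,\delta)}|\mathcal{A}(\nabla u_R)|\,dx$ is uniformly bounded in $R$: split at radius $2R_0/R$ (with $E\subset B(0,R_0)$); H\"older on the inner piece gives a contribution $\lesssim (R_0/R)^{n/p}\cdot\bigl(R^{(n-p)/(p-1)}\bigr)^{(p-1)/p}=O(R^{-1})$, and on the outer piece the pointwise bound $|\mathcal{A}(\nabla u_R)(x)|\lesssim |x|^{1-n}$ (from Lemma~\ref{lemma3.2}$(b)$ and \eqref{eqn2.2}$(\hat a)$) is integrable. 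So the distributional identity passes to the limit directly, yielding $(c)$ for $v$.
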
 %%
\begin{proof}   
Translating and dilating  $E$ we see from  Remark  \ref{rmk1.3}  and  Lemma  \ref{lemma3.2}  that there exists, $ R_0=R_0 (E, p, n, \al ) > 100, $  such  that  
$  E   \subset   B (0,  R_0 ) $ and  
\begin{align*}   
%\label{eqn4.15}
c^{-1}  \, |x|^{(p-n)/(p-1)}  \leq u(x) \leq c  \, |x|^{(p-n)/(p-1)}\,   \,  \mbox{whenever}\, \,  |x| \geq  R_0 
\end{align*}
 where $ c = c (E, p, n, \al). $ Let  $  \{R_k\}_{k\geq 1}   $  be a sequence of  positive numbers  $ \geq R_0 $  with  $  {\ds \lim_{k\to  \infty} R_k = \infty}. $   Put 
\[  
\hat u_k  ( x )  =  R_k^{\left(\frac{n - p}{p-1}\right)} \, \mbox{Cap}_{\mathcal{A}}  (E)^{- \frac{1}{p-1}} \, \,  u ( R_k  x )\quad  \mbox{whenever} \, \, x \in \rn{n},  
\]  
and let  $ \hat \mu_k$ be the measure corresponding to 
$  1  -  \hat u_k. $  Then as in  \eqref{eqn4.3} we see that  $ \hat \mu_k  ( \rn{n} ) =1 $ and  the support of  $  \hat \mu _k $ is contained in  $  B (0, R_0/R_k ). $  Now arguing as in the proof of  \eqref{eqn4.1} $ (c) $  we  get  a subsequence  of  $ \{\hat u_k\}$ say  $\{\hat u_k'\}$ with 
\[
 \lim_{k\to  \infty} \hat u'_k = v \quad  \mbox{uniformly on compact subsets of $ \rn{n} \sem \{0\}$} 
 \] 
 where 
$ v $ is  $  \mathcal{A}$-harmonic in  $  \rn{n} \sem \{0\} $ and satisfies  $ (a), (c). $  Thus from  \eqref{eqn4.1} $ (d), \, v = G. $  Since every sequence has a  subsequence converging to $  G $  we see that 
    \begin{align*}
    \lim_{R\to  \infty}  R^{\left(\frac{n - p}{p-1}\right)} \, \mbox{Cap}_{\mathcal{A}}  (E)^{- \frac{1}{p-1}} &\, \,  u ( R  x ) = G(x)  \\
    &\mbox{uniformly on compact subsets of}\, \,  \ \rn{n} \sem \{0\}.
    \end{align*}
Equivalently  from  \eqref{eqn4.1} $ (e) $  that 
\[   
\lim_{R \to  \infty}  \frac{ u ( R x )}{G (R x )} =    \mbox{Cap}_{\mathcal{A}}  (E)^{ \frac{1}{p-1}}  \mbox{ uniformly on compact subsets of $ \rn{n} \sem \{0\} $}.
\] 
This completes the proof of  Lemma  \ref{lemma4.2}. 
\end{proof}

\subsection{Proof of (\ref{eqn1.7}) in Theorem \ref{theorem1.4}}      
\label{subsection4.1}

In this subsection we prove,  the Brunn-Minkowski inequality for  $\mbox{Cap}_{\mathcal{A}}$, \eqref{eqn1.7} in Theorem \ref{theorem1.4}. 
\begin{proof}[Proof of \eqref{eqn1.7}]       
Let  $  E_1,  E_2 $ be as in  Theorem \ref{theorem1.4}.  
Put  $  \Omega_i = \rn{n} \sem E_i$  
and let  $ u_i$  be the $\mathcal{A}$-capacitary 
function for $ E_i$  for $i=1,2$. Let $ u $  
be the $\mathcal{A}$-capacitary function for  $  E_1 + E_2. $  Following  \cite{B1,CS},  we note   that it suffices to prove  
\begin{align}
\label{eqn4.16}   
\mbox{Cap}_{\mathcal{A}}( E_1'   + E_2')^{\frac{1}{n-p}}     \geq    \mbox{Cap}_{\mathcal{A}} ( E_1')^{\frac{1}{n-p}}  \, + \,        \mbox{Cap}_{\mathcal{A}} ( E_2')^{\frac{1}{n-p}} 
\end{align}                                    
 whenever  $ E_i'$ for  $i =1, 2$  are convex sets with   $\mbox{Cap}_{\mathcal{A}}(E_i') > 0. $  To get \eqref{eqn1.7} from   \eqref{eqn4.16} put  
 \[
 E_1'  =  \la E_1 \quad \mbox{and} \quad E_2' =  (1-\la) E_2
 \]
and use \eqref{eqn1.5} $(a')$.  Also  to  prove  \eqref{eqn4.16}  it suffices to show,  for  all $  \la  \in  (0,1) $  that    
\begin{align}  
\label{eqn4.17}
\mbox{Cap}_{\mathcal{A}}(\la E_1''   + (1-\la)  E_2'')^{\frac{1}{n-p}}     \geq    \min  \left\{ \mbox{Cap}_{\mathcal{A}} ( E_1'')^{\frac{1}{n-p}}  \, ,  \,        \mbox{Cap}_{\mathcal{A}} ( E_2'')^{\frac{1}{n-p}}  \right\}
\end{align}                                    
 whenever  $ E_i''$ for $i =1, 2$  are convex sets with  $\mbox{Cap}_{\mathcal{A}}(E_i'') >0. $ 
 To  get   \eqref{eqn4.16} from \eqref{eqn4.17} let  
\[  
E_i''  =  \mbox{Cap}_{\mathcal{A}} ( E_i')^{ \frac{1}{p - n} }  \,  \,  E_i'  \quad \mbox{for}\, \,  i  = 1, 2
\]
and
\[
  \la  = \frac { \mbox{Cap}_{\mathcal{A}} ( E_1')^{\frac{1}{n-p}} }{ 
  \mbox{Cap}_{\mathcal{A}} ( E_1')^{\frac{1}{n-p}} +
  \mbox{Cap}_{\mathcal{A}} ( E_2')^{\frac{1}{n-p}}}
  \]   
 then use  \eqref{eqn1.5} $(a')$ and do some algebra.  Thus, we shall  only prove \eqref{eqn4.17} for $  E_1, E_2,  $  and  all  $  \la  \in (0, 1).$  Some of our proof is quite similar to  
the  proof  of  Lemma \ref{lemma3.4}.  For this reason  we first assume that \eqref{eqn3.19}  
holds for  $ E_1, E_2$ and $  \mathcal{A}. $ Fix $ \la  \in (0,1) $  and set   
   \[u^*(x)=\sup\left\{ 
\min\{u_1(y), u_2(z)\} \,;\, 
\begin{array}{l}x=\lambda y+(1-\lambda) z, \\
\lambda\in[0,1], y,  z   \in \mathbb{R}^n  \end{array} 
\right\}.
\]
We claim that 
\begin{align}  
\label{eqn4.18}
u^*(x)\leq u(x)\, \, \mbox{whenever}\,\, x\in\mathbb{R}^{n}.  
\end{align}
Once  \eqref{eqn4.18} is  proved  we   get  \eqref{eqn1.7}  under assumption  \eqref{eqn3.19} as follows.   From  \eqref{eqn4.18} and the definition of  $u^* $  we have  
\[
u(x) \,\geq \, u^*(x) \,\geq\, \min \{ u_1(x), u_2(x) \}\, 
\]  so from  Lemma  \ref{lemma4.2}   
\begin{align*}
\mbox{Cap}_{\mathcal{A}} ( \la E_1 + (1- \la) E_2 )^{\frac{1}{p-1} }&= \lim_{|x|  \to   \infty}  \frac{u(x)}{G(x)} \\
 & \geq \lim_{|x| \to   \infty } \frac{ \min\{u_1(x), u_2(x)\}}{ G ( x ) } \\
 &= \min  \left\{    \mbox{Cap}_{\mathcal{A}}(  E_1 )^{\frac{1}{p-1} } \, , \,  \mbox{Cap}_{\mathcal{A}}( E_2 )^{\frac{1}{p-1} }  \right\}.
\end{align*}
This finishes proof of \eqref{eqn4.17} which implies \eqref{eqn4.16} and from our earlier remarks this implies \eqref{eqn1.7} in Theorem \ref{theorem1.4} under the assumptions \eqref{eqn4.18} and \eqref{eqn3.19}.
\end{proof}
  
The proof of  \eqref{eqn4.18}  is essentially the same as the  proof after \eqref{eqn3.19} of  Lemma \ref{lemma3.4}. Therefore we shall  not give all  details.  From  \eqref{eqn3.19} we see that  $  \nabla \hat u  \not  =  0 $ and 
 $ \hat u $  has continuous second partials on  $  \rn{n} \sem \hat E $   whenever  $  \hat u  \in  \{ u, u_1, u_2 \} $  and   
$  \hat  E  \in  \{  \la  E_1  + (1-\la) E_2,  E_1,  E_2 \}.    $                       Assume that   \eqref{eqn4.18}  is  false.  Then  there exists  $     \ep > 0 $ and  $ x_0  \in \rn{n} $ 
such that if   
   
\[    
v_1 (x)  = u_1^{1+\ep}  (x),   \, \, v_2 (x)  = u_2^{1+\ep}  (x), \, \mbox{ and }  \, v^*  ( x  )  = (u^*)^{1+\ep}, 
\]  we have    
\begin{align}
\label{eqn4.19} 
0 < v^* (x_0) - u(x_0)  =  \max_{ \rn{n}} [ \, (u^*)^{1+\ep}  - u \, ]. 
\end{align}      
As in  \eqref{eqn3.21}, \eqref{eqn3.22},   there exists $ y_0 \in  \Om_1,  z_0 \in \Om_2  \, (  y_0  =  x_0 = z_0$  is now possible) with   
\[  
x_0 =  \la y_0 + (1-\la)  z_0   \quad \mbox{and}\quad  v^*(x_0) = v_1(y_0)  =  v_2(z_0). 
\] 
Also as in   \eqref{samedirection} we obtain  
\begin{align}
\label{eqn4.20}
\xi=\frac{\nabla v_1(y_0)}{|\nabla v_1(y_0)|}=\frac{\nabla v_2(z_0)}{|\nabla v_2(z_0)|}=\frac{\nabla u(x_0)}{|\nabla u(x_0)|}
\end{align}  so with 
\[
A=|\nabla v_1(y_0)|,\, \, B=|\nabla v_2(z_0)|,\, \, C=|\nabla u(x_0)|, \, \, a=|x_0-y_0|, \, \, b=|x_0-z_0|. 
\]  
we have 
\begin{align*}
%\label{eqn4.21}
\begin{split}
&v_1(y_0+\rho\eta)=v_1(y_0)+A_1\rho+A_2\rho^{2}+o(\rho^{2}),\\
&v_2 (z_0+\rho\eta)=v_2(z_0)+B_1\rho+B_2\rho^{2}+o(\rho^{2}),\\
&u(x_0+\rho\eta)=u(x_0)+C_1\rho+C_2\rho^{2}+o(\rho^{2})
\end{split}
\end{align*}
as $\rho\to 0$ whenever $\lan \xi, \, \eta \ran > 0$  and $\eta\in\mathbb{S}^{n-1}$.  

We  can now  essentially  copy the argument after  \eqref{taylorexp} through  \eqref{eqn3.36} to  eventually arrive  at a  contradiction to  \eqref{eqn4.18}.  
Assumption  \eqref{eqn3.19} for  $ E_1, E_2,  \mathcal{A} $  is  removed by the  same  argument as following  \eqref{eqn3.19}.  We omit the details.   

\setcounter{equation}{0} 
\setcounter{theorem}{0} 
\section{Final proof  of  Theorem \ref{theorem1.4}}    
\label{section5}
 To prove the statement on equality in  the  Brunn-Minkowski theorem  we shall need the following   lemma.   
\begin{lemma}  
\label{lemma5.1}
Let   $  \mathcal{A}  \in  M_p  (\al) $  satisfy  \eqref{eqn1.8} $(i)$   and  let $ E_1,  E_2,  \Om_1,  \Om_2,  G,  u, u_1, u_2   $  be as in  subsection \ref{subsection4.1}.
 If   
\begin{align}
\label{eqn5.1}  
- \frac{G_{\xi \xi} (x) }{ |\nabla G (x) |}  \geq  \tau  >  0 \, \, \mbox{whenever}\,\, \xi, x  \in \mathbb{S}^{n-1}\, \,  \mbox{with}\, \,   \lan \nabla G (x), \xi \ran  =  0
\end{align}     
then there exists $ R_1 =  R_1 (  \bar u,  \al, p, n ),    $  such that  if  $ \bar u   \in  \{ u, u_1, u_2  \}, $ then   
\begin{align*}
%\label{eqn5.2} 
 - \frac{\bar u _{\ti\xi \ti \xi} (x) }{ |\nabla  \bar u (x) |}  \geq  \frac{\tau}{2 |x|}  >  0 \,\,  \mbox{whenever}\,\, \ti \xi  \in \mathbb{S}^{n-1}, \, \, |x| >  R_1,\, \,   \mbox{with}\, \,  \lan \nabla \bar u  (x), \ti \xi \ran  =  0.
\end{align*} 
 \end{lemma}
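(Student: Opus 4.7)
The plan is a blow-down argument: on large scales, each $\bar u \in \{u, u_1, u_2\}$ is well-approximated by a constant multiple of $G$, and this convergence can be upgraded to $C^2$ on a fixed annulus, so that the curvature estimate for $G$ transfers to $\bar u$.

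First I would observe that, thanks to the homogeneity $G(ry) = r^{(p-n)/(p-1)} G(y)$ from Lemma \ref{lemma4.1}(e), the quantity $-G_{\xi\xi}/|\nabla G|$ is homogeneous of degree $-1$ in the base point: a direct differentiation shows $\nabla G(ry) = r^{(1-n)/(p-1)}\nabla G(y)$ and $\nabla^2 G(ry) = r^{(2-n-p)/(p-1)}\nabla^2 G(y)$, and the ratio picks up exactly one factor of $r^{-1}$. Hence hypothesis \eqref{eqn5.1} extends to the scale-invariant form
\[
-\frac{G_{\xi\xi}(y)}{|\nabla G(y)|} \geq \frac{\tau}{|y|} \quad \text{whenever } y \in \rn{n}\sem\{0\},\ \xi \in \mathbb{S}^{n-1},\ \langle \nabla G(y), \xi\rangle = 0.
\]
Next, with $\bar E \in \{E_1+E_2, E_1, E_2\}$ the compact convex set whose $\mathcal{A}$-capacitary function is $\bar u$, and $c_{\bar E}:=\mbox{Cap}_{\mathcal{A}}(\bar E)^{1/(p-1)}$, I set
\[
\bar u_R(y) := c_{\bar E}^{-1}\, R^{(n-p)/(p-1)}\, \bar u(Ry).
\]
By Remark \ref{rmk1.3} combined with the $(p-1)$-homogeneity in Definition \ref{defn1.1}(ii), $\bar u_R$ is $\mathcal{A}$-harmonic outside $R^{-1}\bar E$. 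The proof of Lemma \ref{lemma4.2} gives $\bar u_R \to G$ uniformly on compact subsets of $\rn{n}\sem\{0\}$ as $R\to\infty$.

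The heart of the argument is promoting this convergence to $C^2$ on a fixed annulus $K = \{1/2 \leq |y| \leq 2\}$. By Lemma \ref{lemma4.1}(a),(b), both $G$ and $|\nabla G|$ are bounded above and below on $K$; the uniform $C^0$ convergence $\bar u_R \to G$ together with the interior gradient estimate \eqref{eqn2.2}$(\hat a)$ (applied to $\bar u_R$ on balls of fixed radius inside $K$) yields, for $R$ large, uniform bounds $\gamma r^{-1}\bar u_R \leq |\nabla \bar u_R| \leq \gamma^{-1} r^{-1}\bar u_R$ on a slightly smaller annulus $K' \subset K$. Given hypothesis \eqref{eqn1.8}$(i)$ on $\mathcal{A}$, this is exactly the condition needed to invoke the Schauder-type estimate \eqref{eqn2.3} of Lemma \ref{lemma2.2}, which then provides uniform $C^{2,\ti\he}$ bounds for $\bar u_R$ on $K'$. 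Ascoli's theorem, together with the uniqueness clause in Lemma \ref{lemma4.1}(d), pins down the limit and yields $\bar u_R \to G$ in $C^2(K')$.

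Once $C^2$ convergence is in hand, the conclusion follows routinely. Given small $\nu>0$, for all $R$ sufficiently large and all $y_0 \in K'$, both $\nabla \bar u_R(y_0)$ and $\nabla^2 \bar u_R(y_0)$ differ from their $G$-counterparts by at most $\nu$. For any unit $\ti\xi \perp \nabla \bar u_R(y_0)$, one can produce a unit $\xi \perp \nabla G(y_0)$ with $|\ti\xi - \xi| = O(\nu)$, and the extended \eqref{eqn5.1} combined with the closeness of Hessians gives
\[
-\frac{(\bar u_R)_{\ti\xi\ti\xi}(y_0)}{|\nabla \bar u_R(y_0)|} \geq \frac{\tau}{|y_0|} - O(\nu) \geq \frac{3\tau}{4|y_0|}
\]
for $\nu$ small. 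Scaling back via $x = Ry_0$, the identities $\nabla \bar u_R(y_0) = c_{\bar E}^{-1} R\, \nabla \bar u(x)$ and $(\bar u_R)_{\ti\xi\ti\xi}(y_0) = c_{\bar E}^{-1}R^2 \bar u_{\ti\xi\ti\xi}(x)$ introduce a single factor of $R^{-1}$ in the ratio, and since $R|y_0| = |x|$ one obtains
\[
-\frac{\bar u_{\ti\xi\ti\xi}(x)}{|\nabla \bar u(x)|} \geq \frac{3\tau}{4|x|} \geq \frac{\tau}{2|x|}.
\]
Taking $R$ along a dyadic sequence and letting $y_0$ vary over $K'$ covers all $x$ with $|x|$ larger than some $R_1 = R_1(\bar u, \al, p, n)$, giving the conclusion.

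The main obstacle is the $C^2$ convergence, in particular the uniform lower bound on $|\nabla \bar u_R|$ needed to trigger \eqref{eqn2.3}. One cannot deduce this solely from $C^0$ convergence; the cleanest route is to first combine \eqref{eqn2.1}, \eqref{eqn2.2} to extract $C^{1,\ti\be}$ subsequential limits, then use the uniqueness in Lemma \ref{lemma4.1}(d) (or a direct Harnack comparison with $G$ using the lower bound from Lemma \ref{lemma4.1}(b)) to identify the limit as $G$. Only with the resulting uniform nondegeneracy of $|\nabla \bar u_R|$ does hypothesis \eqref{eqn1.8}$(i)$ unlock \eqref{eqn2.3} and deliver the Hölder control of second derivatives.
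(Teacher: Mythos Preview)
Your argument is correct and takes a genuinely different route from the paper's. The paper does not rescale: it works directly with the error $\bar e(x) = \bar u(x) - \mbox{Cap}_{\mathcal{A}}(\bar E)^{1/(p-1)} G(x)$, observes that $\bar e$ is a weak solution to a linear divergence-form equation $\sum_{i,j}\partial_i(\bar a_{ij}\,\bar e_{x_j})=0$ on each $B(x,|x|/2)$, and then uses \eqref{eqn1.8}$(i)$ together with \eqref{eqn2.3} applied to $\bar u$ and $G$ \emph{separately} to show the coefficients $\bar a_{ij}$ are Lipschitz (display \eqref{eqn5.13}). A difference-quotient computation (\eqref{eqn5.14}--\eqref{eqn5.16}) then yields the pointwise decay $|\nabla \bar e(x)| = o(|x|^{(1-n)/(p-1)})$ and $\sum_{i,j}|\bar e_{x_ix_j}(x)| = o(|x|^{(2-n-p)/(p-1)})$, from which the curvature transfer is read off directly. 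Your blow-down argument instead applies \eqref{eqn2.3} to the single function $\bar u_R$ on a fixed annulus, obtaining uniform $C^{2,\tilde\theta}$ bounds and concluding by compactness. Your route is more streamlined and avoids the explicit difference-quotient estimate; the paper's route is more quantitative, producing explicit $o(\cdot)$ rates for $\bar e$, $\nabla\bar e$, $\nabla^2\bar e$. Both approaches hinge on the same ingredient---the Schauder estimate \eqref{eqn2.3} unlocked by \eqref{eqn1.8}$(i)$---and on the nondegeneracy of the gradient at infinity, which the paper obtains from Lemma~\ref{lemma3.3} (display \eqref{eqn5.3}) rather than from $C^1$ convergence as you do.

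One minor inaccuracy: your scaling identities should read $\nabla \bar u_R(y_0) = c_{\bar E}^{-1} R^{(n-1)/(p-1)}\,\nabla \bar u(x)$ and $(\bar u_R)_{\tilde\xi\tilde\xi}(y_0) = c_{\bar E}^{-1} R^{(n-1)/(p-1)+1}\,\bar u_{\tilde\xi\tilde\xi}(x)$; the omitted factor $R^{(n-p)/(p-1)}$ cancels in the ratio, so your conclusion is unaffected.
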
        
 \begin{proof}   
 Let  $  \bar E  \in \{ E_1, E_2,  \la  E_1 + (1-\la) E_2 \}$  correspond to $  \bar u $ in     Lemma  \ref{lemma5.1}.   We note from  Lemma \ref{lemma3.4}  that    
 $  \{ x : \bar u ( x ) \geq  1/2  \} $ is convex  with nonempty interior  and  $  \min(2 \bar u , 1) $  is the capacitary function for this set.   Thus  we can  apply   Lemmas \ref{lemma3.2}, \ref{lemma3.3}  to  conclude the  existence of  $  R_0,$ and  $\bar c  \geq  1 $ depending on the data, $ \bar E,  \bar u $  with  $  \bar E \subset B (0, R_0/4)$  and    
\begin{align}
\label{eqn5.3}
\bar c^{-1}   |x|^{\frac{1-n}{p-1}}  \leq  - \lan x/|x|,  \nabla   \bar u \ran   \leq   |\nabla  \bar u  | (x)  \leq  \bar c  \,  |x|^{\frac{1-n}{p-1}}  
\end{align}
whenever $|x|>R_0$. We  note that  \eqref{eqn5.3}  also holds for    $  G $   with  $  \bar c $  replaced by  $ c $ provided  $  c  =  c (p, n, \al) $ is  large enough,  as  we see from  \eqref{eqn3.12}   and the construction of  $  G $ in  Lemma \ref{lemma4.1}. 
Set     
\[   
\bar e  (x)   :=   \bar u(x) -       \mbox{Cap}_{\mathcal{A}} ( \bar E)^{\frac{1}{p-1}}  \,  \,  G(x).  
\]
From  Lemma  \ref{lemma4.2} we see that   
\begin{align}
\label{eqn5.4}   
\bar  e  (x)  =   o( G (x) )  \, =  o( |x|^{\frac{p-n}{p-1} })\quad  \mbox{as } \,  \, |x|  \to   \infty.  
\end{align} 
  Also as in  \eqref{eqn4.8}-\eqref{eqn4.11} we deduce that  $  \bar  e  $  is  a  weak solution to the uniformly elliptic $ PDE $   
\begin{align}
\label{eqn5.5}
\bar{\mathcal{L}}\bar e:=\sum_{i,j=1}^n\frac{ \ar }{ \ar y_i} \biggl(  \bar a_{i j }( y )  \frac{ \ar \bar  e}{ \ar y_j}\biggr )=0  
\end{align}  
 on  $ B (x,  |x|/2)$  with  $|x|  \geq  R_0$    where  
\begin{align*}
%\label{eqn5.6}
\bar a_{ij} ( y )   =   \int_0^1   \frac{ \ar  \mathcal{A}_i}{\ar \eta_j }   \left( t  \nabla  \bar  u(y)  +  (1-t) \mbox{Cap}_{\mathcal{A}} ( \bar E)^{\frac{1}{p-1}}  \, \nabla G(y)   \right) d t.
\end{align*}
Moreover, for  some  $ c  \geq 1, $ independent of $ x, $  we also have
\begin{align}
\label{eqn5.7}   
c^{-1} \bar \si (y)  \,   | \xi |^2  \, \leq \,   \sum_{i,j=1}^n   \bar a_{ij} ( y  )  \xi_i  \xi_j  
\mbox{ and }     \sum_{i,j=1}^n   | \bar a_{ij} ( y  ) |   \leq c \, \bar \si (y) \,  
\end{align}
whenever $\xi  \in \rn{n} \sem \{0\}$ where  $\bar \si$ satisfies 
\begin{align}
\label{eqn5.8} 
\bar \si ( y  )  \approx  ( |  \nabla \bar u (y) | + \mbox{Cap}_{\mathcal{A}} (\bar E)^{\frac{1}{p-1}}    |  \nabla G (y) |  )^{p-2}  \approx |x|^{\frac{(1 - n)(p-2)}{p-1}} 
 \end{align}
for $|x|  \geq  R_0$. Constants depend  on various quantities but are independent of  $ x. $

From  well-known  results for  uniformly elliptic  PDE (see \cite{GT})  we see that   
\begin{align}   
\label{eqn5.9}
|x|^{-n/2}  \, \left(  \int_{ B ( x, |x|/4)} |\nabla \bar e |^2   \, dy \right)^{1/2}  \,    \leq c \, |x|^{-1}  \max_{B ( x, |x|/2)}  \, \bar e   =  o\left( |x|^{\frac{1-n}{p-1}} \right)\, \, \mbox{as}\, \, x \to  \infty,
\end{align}
where $ c $  as above  depends on various quantities but is  independent of  $ x. $     From \eqref{eqn5.9}, weak type estimates,   and Lemma \ref{lemma2.2} $(\hat a)$ for $ \bar u, G $ we also have 
\begin{align}  
\label{eqn5.10}
| \nabla \bar e (x)  | =   o\left( |x|^{\frac{1-n}{p-1}} \right)\,  \, \mbox{as}\, \, x \to  \infty. 
\end{align}     
Indeed,  given  $ \epsilon   > 0, $ we see from \eqref{eqn5.9}  that  there exists  $ \rho = \rho (\ep)  $     large,  such that  if  $ |x| \geq \rho,  $ then  
\[  
| \nabla  \bar e| \leq   \ep  |x|^{\frac{1-n}{p-1}}     \mbox{ on } \,  B ( x ,  |x|/2)
\]
except on  a  set  $ \Ga  \subset  B ( x, |x|/2) $ with  
\[
 \mathcal{H}^n (\Ga ) \leq  \ep^{n+1} |x|^n.    
 \] 
If   $ y \in \Ga $ and  $  \ep $ is small enough  there exists $ z \in B (x, |x|/2) \sem \Ga  $ with $ | z -  y | \leq \ep |x|. $   Then from \eqref{eqn2.2} $(\hat a)$  for $ \bar u, G $ we deduce    
\[  
| \nabla \bar e (y)|  \leq  \ep  |x|^{\frac{1-n}{p-1}}  + | \nabla \bar e (y) - \nabla \bar e (z) | \leq      \ep^{\be/2} \,   |x|^{\frac{1-n}{p-1}}    
\]
 for $ \ep $ small enough and $ |x| \geq \rho$. Since  $  \ep $  is  arbitrary   we  conclude the  validity of  \eqref{eqn5.10}.    

   We claim that also,
\begin{align}
\label{eqn5.11}   
\begin{split}
|x|^{-n/2}  \, \left(  \int_{ B ( x, |x|/4)}\,  \sum_{i, j =1}^n   \left|  \frac{ \ar^2  \bar e}{ \ar y_i  \ar y_j }  \right|^2   \, dy \right)^{1/2} &\leq c |x|^{-2}  \max_{B ( x, |x|/2)}  \, \bar e   \\
&= o \left( |x|^{\frac{2-n-p}{p-1}} \right) \mbox{ as } |x| \to  \infty. 
\end{split}
\end{align}
To prove  \eqref{eqn5.11} we first observe from  \eqref{eqn5.3}  for  $  \bar u,  G  $  that
\begin{align}
\label{eqn5.12}
\begin{split}
  &     \left(t  |\nabla  \bar  u(z) |  +  (1-t)\mbox{Cap}_{\mathcal{A}} ( \bar E)^{\frac{1}{p-1}}  \,| \nabla G(z)   |\right)   \\  
&\hspace{2cm} \leq    \left| t  \lan  \nabla  \bar  u(z) , z/|z| \ran  +  (1-t) \mbox{Cap}_{\mathcal{A}} ( \bar E)^{\frac{1}{p-1}}  \,\lan \nabla G(z), z/|z| \ran    \right|
\end{split}
\end{align}
when   $ z  \in  B (x,  |x|/2)  $  and  $ |x|   \geq  R_0. $ Using  \eqref{eqn5.12},  \eqref{eqn1.8} $(i),$  \eqref{eqn5.3}, and \eqref{eqn2.3} for  $ \bar u,  G $    we  deduce for some $ \breve{c}  \geq 1 $  and  $ \mathcal{H}^{n} $  almost every   $  \hat  x,  \hat y  \in B (x, |x|/2) $  with  $ | \hat x - \hat y | \leq  |x|/ \breve c $  that
\begin{align}
\label{eqn5.13}
\begin{split}   
|\bar a_{ij} (\hat x) -  \bar a_{ij}( \hat y ) | &\leq  \breve c \,     |  \hat x - \hat y |  \max_{B ( x, |x|/2)}   \left\{  \left( |\nabla \bar u(z) | +  | \nabla G(z) | \right)^{p-3}   \sum_{i,j= 1}^n  \, ( | u_{z_i z_j} (z) | + | G_{z_i z_j} (z) | )
  \right\} \\
& \leq  \breve c^2   | \hat x - \hat y |  \, |x|^{\frac{(2-p)n - 1}{p-1}} 
\end{split}
\end{align}
where $  \breve c $  is independent of  $ x, \hat x, \hat y $  subject to the above requirements.  Next we  use the  method  of  difference quotients.  Recall from the introduction that  $  e_m $ denotes the point with  $ x_l $ coordinate $ = 0, l \not = m, $ and $ x_m = 1.$  Let   
\[     
q_{h, m}    ( \hat y )  =    \frac{ q  (  \hat y  +  h e_m )  -    q (\hat y )}{h}   
\]  
whenever  $ q$ is defined at $\hat y$ where  $\hat y + h e_m \in   B (\hat x,  |x|/ \breve c  )$.  Let  $\ph$ be a non-negative functions satisfying 
\[
\ph \in  C_0^\infty ( B (\hat x,  |x|/(4 \breve c)) )\quad \mbox{with}\quad   \ph  \equiv  1 \, \,  \mbox{on}   \, \, B ( \hat x, |x|/(8 \breve c) ) \quad \mbox{and} \quad  |\nabla \ph  |  \leq    c_* |x|^{-1}. 
  \]
 Choosing appropriate test functions in   \eqref{eqn5.5} we see 
for $ 1 \leq  m  \leq  n $ that     
 \begin{align}    
 \label{eqn5.14}
 0  =     \int_{ B (\hat x,  |x|/(4\breve c) ) } \, \,   \sum_{i, j = 1}^n  ( \bar a_{ij}  \bar e_{ \hat y_i}  )_{h,m} \,   \,    (\bar e_{h,m} \, \ph^2 )_{\hat y_j}  d \hat y.
 \end{align} 
Using  \eqref{eqn5.7},  \eqref{eqn5.8}, \eqref{eqn5.10},  \eqref{eqn5.13}  to make estimates in  \eqref{eqn5.14}, as well as  Cauchy's inequality with  epsilon,  we find  for some $ c  \geq 1, $ independent of $ x, \hat x, $ 
\begin{align}
\label{eqn5.15}  
\begin{split}
|x|^{\frac{(1 - n)(p-2)}{p-1}} \int_{ B (\hat x,  |x|/(4\breve c)) }   & |  \nabla  \bar e_{h,m} |^2  \,  \ph^2  \,  d \hat y 
\leq  c  \int_{ B (\hat x,  |x|/(4\breve c) )}    \sum_{i, j =1}^n     a_{ij}       ( \bar e_{ \hat y_i}  )_{h,m} \,   \,    (\bar e_{\hat y_j} \, )_{h,m}  \,  \ph^2  \, d  \hat y \\
& \leq c^2  \int_{ B (\hat x, |x|/\breve{c} ) }  \sum_{i, j = 1}^n  | (\bar a_{ij})_{h,m} |  | \bar e_{ \hat y_i}  (\hat y+h) | \,   \,    | (\bar e_{h,m} \, \ph^2 )_{\hat y_j} | d \hat y
\\
 & \hspace{1cm}   + \frac{c^2}{|x|} \,  \int_{ B (\hat x,  |x|/(4\breve c)) }     \sum_{i, j =1}^n   | a_{ij}      ( \bar e_{ \hat y_i}  )_{h,m} \,   \,    \bar e_{\hat y_j} |\,   \, \ph  \, d  \hat y \\
& \leq    (1/2)   |x|^{\frac{(1 - n)(p-2)}{p-1}}  \int_{ B (\hat x,   |x|/(4\breve c)) }    |  \nabla  \bar e_{h,m} |^2  \,  \ph  \,  d \hat y   +   o\left( |x|^{\frac{2- n -p}{p-1}}\right).       
\end{split}
\end{align}
 It  follows from  \eqref{eqn5.15}   after some algebra that 
 \begin{align}
 \label{eqn5.16}
     |x|^{-n}   \,  \left(  \int_{ B (\hat x,  |x|/(8\breve c) ) }    |  \nabla  \bar e_{h,m} |^2  \,   \,  d \hat y \right)^{1/2}   =  
o \left( |x|^{\frac{2-n-p}{p-1}} \right)  \mbox{ as }  |x| \to   \infty.
\end{align} 
Letting  $ h  \to  0 $  in  \eqref{eqn5.16}  and covering    $ B ( x, |x|/2) $  by a finite number of balls of the form  $  B (\hat x, |x|/c ), $  we get  \eqref{eqn5.11}.  From \eqref{eqn5.11}, \eqref{eqn2.3}, and weak type estimates   it follows, as in the proof of \eqref{eqn5.10},    that      
\begin{align}
\label{eqn5.17}      
\sum_{i, j =1}^n   \left|  \frac{ \ar^2  \bar e}{ \ar x_i  \ar x_j }  \right| 
=  o\left( |x|^{\frac{2-n-p}{p-1}} \right)  \mbox{ as  } x \to  \infty. 
\end{align}
 We omit the details.  

We  now prove Lemma \ref{lemma5.1}.   Suppose  for some large $ x $ and $ \tilde \xi \in  
\mathbb{S}^{n-1}  $  that  $ \lan  \nabla  \bar u ( x),  \ti \xi  \ran  = 0. $  Then from 
\eqref{eqn5.10}  and   \eqref{eqn5.3}  for  $ G $  we see that  
\[   
\lan \nabla  G ( x), \xi \ran =  o \left( |x|^{(1-n)/(p-1)} \right) =  o \left( | \nabla G ( x ) |\right) 
\mbox{ as }  x  \to  \infty.    
\]
From this inequality we deduce that  $  \ti \xi =  \xi  + \la $  where  $ \xi $ is orthogonal to  $  \nabla G (x) $  and  
$ \la  $ points in the same direction as  $ \nabla G (x) $ with  $ |\la | = o(1) $ as 
$  x  \to  \infty.  $   Using these facts,  \eqref{eqn5.10}, \eqref{eqn5.17}, \eqref{eqn5.1}, \eqref{eqn2.3} for $ G,$  as well as  homogeneity of $ G $ and its derivatives,    we have  for large $ |x| $,   
\begin{align*}    
%\label{eqn5.18}
\frac{  \bar u_{\ti \xi \ti \xi}  (x)}{|\nabla \bar u (x) |} =
 ( 1 + o(1) )  \frac{  G_{\ti \xi \ti 
 \xi} (x) }{|\nabla G (x) |}   = o(1) |x|^{-1} 
+     \frac{ G_{\xi\xi}(x) }{|\nabla G (x) |}  \geq  (\tau /2) \, |x|^{-1} 
\end{align*}
for $ | x | \geq  R_0 $ provided $ R_0$ is large enough.    This finishes the proof of Lemma \ref{lemma5.1}. 
\end{proof} 

Next we state 
\begin{lemma} 
\label{lemma5.2}
 If  $ G $ is the Green's function for an    $ \mathcal{A} \in M_p (\al) $ satisfying   \eqref{eqn1.8} $(ii)$  then  \eqref{eqn5.1} is valid for some  $ \tau > 0. $ 
  \end{lemma}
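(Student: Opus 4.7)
The plan is to identify $G$ in closed form using the variational structure from \eqref{eqn1.8}(ii) and then read off the curvature bound \eqref{eqn5.1} from the geometry of the associated Wulff shape.

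First I set $H(\eta) := (pf(\eta))^{1/p}$, which by $p$-homogeneity of $f$ is positively $1$-homogeneous. The conditions $\mathcal{A}\in M_p(\alpha)$ together with \eqref{eqn1.8}(ii) force $f$ (and hence $H$) to be $C^2$ on $\mathbb{R}^n\setminus\{0\}$ with Hessian uniformly positive definite on the tangent spaces of $\mathbb{S}^{n-1}$; in particular the unit ball $\{H\leq 1\}$ is a $C^2$ convex body with uniformly positive principal curvatures. Let $H^{\circ}(x):=\sup\{\langle x,\eta\rangle : H(\eta)\leq 1\}$ be its polar. By the classical convex-analytic duality between smoothness of a gauge and strict convexity of its polar (with quantitative curvature-vs.-modulus-of-smoothness bounds), $\{H^{\circ}\leq 1\}$ is also $C^2$ with uniformly positive principal curvatures, with bounds depending only on $p,n,\alpha,\Lambda$.

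Next I would verify, by direct computation, that
$$W(x) \,:=\, H^{\circ}(x)^{(p-n)/(p-1)}$$
is $\mathcal{A}$-harmonic on $\mathbb{R}^n\setminus\{0\}$. Indeed, using $f=H^p/p$ we have $\mathcal{A}(\eta)=H(\eta)^{p-1}\nabla H(\eta)$, and the Legendre duality identities
$$H(\nabla H^{\circ}(x))=1, \qquad \nabla H(\nabla H^{\circ}(x))=\frac{x}{H^{\circ}(x)}, \qquad \langle\nabla H^{\circ}(x),x\rangle = H^{\circ}(x)$$
together with $p$-homogeneity reduce $\mathcal{A}(\nabla W)$ to $c_1 H^{\circ}(x)^{-n}x$, whose divergence vanishes away from the origin by a one-line Euler-formula computation. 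Since $W$ also satisfies the two-sided size bound \eqref{eqn4.1}(a) and produces a positive unit point mass at $0$ (verified by integrating by parts on $\{H^{\circ}\geq r\}$ and computing the boundary flux, which is finite by positive lower bound on $|\nabla H^{\circ}|$), the uniqueness assertion in Lemma \ref{lemma4.1}(d) gives $G = c_0 W$ for some $c_0>0$.

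Finally, since $G$ is a power of $H^{\circ}$, the level sets $\{G=c\}$ are dilates of $\partial\{H^{\circ}\leq 1\}$, hence have uniformly positive principal curvatures. For $x,\xi\in\mathbb{S}^{n-1}$ with $\langle\nabla G(x),\xi\rangle=0$, the quantity $-G_{\xi\xi}(x)/|\nabla G(x)|$ is precisely the second fundamental form of the level set of $G$ through $x$ in the tangent direction $\xi$; the uniform curvature bound therefore produces a constant $\tau=\tau(p,n,\alpha,\Lambda)>0$ for which \eqref{eqn5.1} holds. The main obstacle I expect is the quantitative passage from positive curvature of $\{H\leq 1\}$ to positive curvature of $\{H^{\circ}\leq 1\}$: this step, although classical, relies on carefully tracking how the ellipticity constants of the Hessian of $f$ control the modulus of strict convexity of $H^\circ$ on its unit sphere.
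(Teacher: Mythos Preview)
Your route is the paper's own: build $G$ explicitly as a power of the polar gauge of $f^{1/p}$, verify $\mathcal A$-harmonicity via Legendre duality, and then read off \eqref{eqn5.1} from strict convexity of the polar body. For the step you flag as the main obstacle, the paper gives a one-line argument: the Hessians of $\tfrac12 k^2$ and of $\tfrac12 h^2$ (in your notation, $\tfrac12 H^2$ and $\tfrac12(H^\circ)^2$) are inverse matrices, so positive definiteness of the first --- which follows directly from Definition~\ref{defn1.1}(i) --- transfers to the second, and for $\xi$ with $\langle\nabla H^\circ(x),\xi\rangle=0$ this gives $H^\circ(x)\,H^\circ_{\xi\xi}(x)\ge c^{-1}>0$.

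There is, however, one sign that needs fixing. Because $\beta=(p-n)/(p-1)<0$, your $\nabla W=\beta(H^\circ)^{\beta-1}\nabla H^\circ$ is a \emph{negative} multiple of $\nabla H^\circ$; since $\mathcal A$ is only positively $(p-1)$-homogeneous, the computation forces you to evaluate $(\nabla f)(-\nabla H^\circ(x))$, and your Legendre identities $H(\nabla H^\circ)=1$, $\nabla H(\nabla H^\circ(x))=x/H^\circ(x)$ do not apply to that argument. For asymmetric $f$ your $W=(H^\circ)^\beta$ is therefore not $\mathcal A$-harmonic in general. The paper absorbs this sign at the outset by writing $f(\eta)=k(-\eta)^p$ and letting $h$ be the support function of $\{k\le 1\}$; in your language the correct candidate is $W(x)=H^\circ(-x)^{\beta}$, after which your sketch goes through verbatim.
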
  
  
   The proof of  Lemma \ref{lemma5.2} is  given in  Appendix \ref{appendix2}. We continue  the proof equality in  Theorem \ref{theorem1.4} under the assumption that  Lemma \ref{lemma5.2} is valid.         Let  $ u, u_1, u_2,  E_1, E_2,  $  be  as in Lemma  \ref{lemma5.1}.    Following  \cite{CS}  we note  for  $ u^* $ as in  \eqref{eqn4.18} that  
          \begin{align}
          \label{eqn5.19}  
          \{ u^* ( x ) \geq t  \} =  \la \{ u_1 (y) \geq t \} + (1-\la) \{ u_2 ( z ) \geq t \} 
          \end{align}
          whenever $ t \in  (0, 1)$. Indeed  containment of the left-hand set in the right-hand set is  a  direct consequence of the definition of   $ u^*. $  Containment of the right-hand set in the left-hand set follows  from the fact that if 
     $ u^* (x)  = \min \left\{u_1 ( y), u_2( z)\right\}$  for some $ y \in E_1, z \in E_2, $ with 
     $ x =   \la y + ( 1 - \la ) z, $ then  $ u_1 ( y ) = u_2 (z). $   This fact is proved by the same argument as in the proof of \eqref{eqn3.23} or the display below  \eqref{eqn4.19}. 

 If  equality holds in \eqref{eqn1.7} in  Theorem \ref{theorem1.4} for  some $  \la  \in (0, 1),  $    we first observe from  Lemma  \ref{lemma3.1} and   \eqref{eqn3.4} $(a)$  that 
    for  almost every $ t \in (0,1), $   
\[   
\mbox{Cap}_{\mathcal{A}} ( \{ \hat u \geq  t\})  =   t^{1 - p }  \,  \mbox{Cap}_{\mathcal{A}} ( \{ \hat u \geq  1\})  \, \, \mbox{whenever}\, \,   \hat u \in \{u_1, u_2, u \}
\]   
and  second  that 
     \begin{align}
     \label{eqn5.20}  
     \mbox{Cap}_{\mathcal{A}} ( \{ u \geq  t\})^{\frac{1}{n-p}}  = \la   \mbox{Cap}_{\mathcal{A}} ( \{ u_1 \geq  t\})^{\frac{1}{n-p}}     +    (1-\la)  \mbox{Cap}_{\mathcal{A}} ( \{ u_2 \geq  t\})^{\frac{1}{n-p}}.  
\end{align}     
        On the other hand, using \eqref{eqn5.19}, convexity of  $  \{ u_i  \geq t \}, i = 1, 2, $ and  \eqref{eqn1.7}  we obtain 
     \begin{align}
     \label{eqn5.21}  
     \mbox{Cap}_{\mathcal{A}} ( \{ u^* \geq  t\})^{\frac{1}{n-p}}
     \geq  \la  \mbox{Cap}_{\mathcal{A}} ( \{ u_1 \geq  t\})^{\frac{1}{n-p}}      \
        + (1-\la) \mbox{Cap}_{\mathcal{A}} ( \{ u_2 \geq  t\})^{\frac{1}{n-p}}. 
        \end{align}  
 We conclude from     \eqref{eqn5.20}, \eqref{eqn5.21} that    for almost  every $ t \in  (0,1) $  
       \begin{align}
       \label{eqn5.22}  
       \mbox{Cap}_{\mathcal{A}} (  \{ u^* \geq t \} )    \geq    \mbox{Cap}_{\mathcal{A}} (  \{ u \geq t \} ).   
       \end{align}   
                            
Now from   \eqref{eqn4.18}  we see that   $ u^*  \leq  u  $   so   
       $ \{ u^*  \geq  t  \}  \subset  \{ u  \geq t  \}. $   This fact   and    \eqref{eqn5.22} imply
       for almost every $ t \in (0,1) $  that 
\begin{align} 
\label{eqn5.23}
 \{ u^* \geq t \} = \{ u \geq t \}. 
\end{align} 
To prove this statement let   $ U^*,  U $ be  the   corresponding  $ \mathcal{A}$-capacitary functions for these sets. Then from the maximum principle for  $ \mathcal{A}$-harmonic functions and  Lemma \ref{lemma3.1} we see that  $  U  -  U^*  \geq  0  $  in  $ \rn{n}. $   Moreover,  from  \eqref{eqn3.12} $(a)$  we deduce as in      \eqref{eqn4.8}-\eqref{eqn4.12}  that    $ U - U^* $  satisfies a uniformly elliptic PDE locally    in 
       $ \rn{n}  \sem \{ x :  U (x )  \geq 1 \}$  for which 
       non-negative  solutions satisfy a Harnack inequality.  It follows  from  Harnack's inequality and the usual chaining argument  that either  
       \[
       (+)\hspace{.5cm}  U  \equiv         U^*\, \,  \mbox{in}\, \,  \rn{n}  \sem \{ x :  U (x )  \geq 1 \}
        \]
        which implies  \eqref{eqn5.23},  or     
        \[
   \hspace{.4cm}     (++)\hspace{.5cm}  U - U^*  >  0\, \,  \mbox{in}\, \,  \rn{n}  \sem \{ x :  U (x )  \geq 1 \}.
        \]
         If $ (++) $  holds we see from a  continuity argument  that there exists  $  \rho > 0, \ga >1  $ for which  $ U, U^* $   are  $ \mathcal{A}$-harmonic in  $ \rn{n} \sem \bar B ( 0, \rho ) $ 
and   $ U/U^*  \geq  \ga $  on 
        $  \ar  B  (0, \rho ). $   Using the maximum principle for $ \mathcal{A}$-harmonic functions it would then follow that  
        \[
          U  \geq  \ga U^*\, \,  \mbox{in}\, \,  \rn{n} \sem \bar B (0, \rho ).  
          \]
         Dividing this inequality by $ G $ and taking limits as in 
        Lemma  \ref{lemma4.2}    we get, in contradiction to  \eqref{eqn5.22}, that  
    \[      
 \mbox{Cap}_{\mathcal{A}} (  \{ u \geq t \} )>   \mbox{Cap}_{\mathcal{A}} (  \{ u^* \geq t \} ).  
    \]     
    This proves  \eqref{eqn5.23}.  From continuity of  $ u, u^* $  we  conclude first that  \eqref{eqn5.23} holds for every $ t \in (0,1) $  and  second that  $ u^*   \equiv  u  $ in  $  \rn{n}.  $   Thus  \eqref{eqn5.19} is valid  with  $ u^* $  replaced by $ u.$   
    
    For fixed $ t \in (0,1)  $, let   $ h_i  (  \cdot, t ) $  be the support function for 
    $  \{ u_i \geq t \}$ for $i  = 1, 2,  $ and let  $ h ( \cdot, t ) $ be the support function for    
           $ \{ u \geq t \}. $  More specifically for  $ X \in \rn{n}$ and $t \in (0,1)$ 
           \[   
           h_i ( X, t  )  :=   \sup_{x \in \{ u_i  \geq t \} }  \lan X, x  \ran \, \, \mbox{for}\, \,  i = 1, 2  
          \quad \mbox{and}\quad
            h ( X, t  )  :=   \sup_{x \in \{ u  \geq t \} }  \lan X, x  \ran. 
            \]  
     From \eqref{eqn5.19} with $ u^*$  replaced by  $ u$  and the above definitions we see that 
     \begin{align}
     \label{eqn5.24}    
     h ( X, t )  =     \la  h_1 ( X, t )  +  (1- \la ) h_2 ( X, t ) \quad \mbox{for every} \, \, X \in 
     \rn{n}\, \, \mbox { and }\, \,  t  \in  (0, 1).
     \end{align}
 We  note from  \eqref{eqn2.3} and  Lemmas \ref{lemma3.3}, \ref{lemma3.4}, that  $   \nabla \bar  u  \not = 0 $  and  
   $ \bar u $ has  locally H\"{o}lder continuous second partials  in    $ \{ \bar u  <   1\}  $ whenever  $   \bar u   \in \{ u_1, u_2,  u   \}. $  From Lemmas \ref{lemma5.1} and \ref{lemma5.2}  we see there exists  $ t_0,  \tau_0  >0   $  small  and  $ R_0 $  large such that if $   \bar u   \in \{ u_1, u_2,  u   \}$ then 
 \begin{align}
 \label{eqn5.25}
\begin{split} 
 & (*)  \hs{.4in}  \{ \bar u \leq t\}   \subset  \rn{n} \sem \bar B (0, R_0) \,  \mbox{ for  } \, t \leq t_0 \leq 1/4,  \\ 
 &(**)  \hs{.23in} -  \frac{\bar u _{\ti\xi \ti \xi} (x) }{ |\nabla  \bar u (x) |}  \geq  \tau_0 \quad  \mbox{ whenever }\, \, \ti \xi  \in \mathbb{S}^{n-1} \, \, \mbox{and}\, \, |x| \geq   R_0\, \,  \mbox{with} \, \, \lan \nabla \bar u  (x), \ti \xi \ran  =  0. 
\end{split}
\end{align} 
  From  \eqref{eqn5.25}  we see that  the curvatures  at  points on   $ \{ \bar u = t \} $  are bounded away from  0 when  $  t  \leq  t_0 $.  Thus   
  \[
  -\frac{\nabla \bar u }{ | \nabla \bar u  | }\, \, \mbox{is  a  1-1  mapping    from} \, \,  \{ \bar u  = t \}\, \,  \mbox{onto}\, \,  \mathbb{S}^{n-1}
\]
  while     
  \[
   \left(-\frac{\nabla \bar u }{ | \nabla \bar u  | }, \bar u \right)\, \,  \mbox{is  a  1-1 mapping  from}\, \,   \{ u < t_0 \} \, \,  \mbox{onto} \, \, \mathbb{S}^{n-1}  \times  (0,t_0). 
   \]
From \eqref{eqn5.25},  elementary geometry,  and  the inverse function theorem it  follows  that if    $ \bar h $ is the support function corresponding to  $ \bar u \in \{ u, u_1, u_2 \} $ and  
$ 0 < t  <  t_0, $   then   $ \bar h $ has  H\"{o}lder continuous second partials and     
\begin{align} 
\label{eqn5.26}
\nabla_X \bar h ( X, t ) =  \bar x ( X, t )
\end{align}
where  $ \bar  x $ is the point in $ \{ \bar u = t \} $ with
\[
\frac{X}{|X|}  =-\frac{\nabla \bar u (\bar x)}{| \nabla \bar u (\bar x) |}.
\]
In  \eqref{eqn5.26},  $  \nabla_X $  denotes the gradient in the $ X $  variable only. Also  $ \bar h $ is homogeneous  of  degree one in the  $ X $  variable so  \eqref{eqn5.26} implies 
\[ 
 \bar h ( X, t ) = \lan X, \bar x ( X, t ) \ran \quad \mbox{and}  \quad  \frac{\ar \bar h}{\ar t } =  \lan  X,  \frac{\ar \bar x}{\ar t } \ran.    
 \]  
 Since  $ \bar u (\bar x) =  t$ and   $-  \nabla \bar u (\bar x) /
 |\nabla \bar u (\bar x) | = X/|X|$  we  get from the chain rule that 
\begin{align}
\label{eqn5.27}   
1 =   \lan \nabla \bar u,  \frac{\ar \bar x}{\ar t } \ran   = -  | \nabla u (\bar x) |  \, \, \frac{\ar \bar h}{\ar t }.
\end{align}   
 
Next since  $ 0 =  u^* - u  $  has an absolute maximum  at each  $ x   \in    \{ u  <  1\} $  we can repeat the  argument in  \eqref{eqn3.23}, \eqref{samedirection}  to deduce  that  there exists  $ y \in \{  u_1 <  1\}$, 
$z \in \{u_2 < 1 \}   $  with 
\begin{align}
\label{eqn5.28} 
x =  \la y  + (1-\la) z \quad \mbox{ and }  \quad  u (x) = u_1 (y) = u_2 (z). 
\end{align}
 Repeating the argument leading to \eqref{samedirection}  or  \eqref{eqn4.20} we find that 
\begin{align}
\label{eqn5.29}
\xi=\frac{\nabla u_1(y)}{|\nabla u_1(y)|}=\frac{\nabla u_2(z)}{|\nabla u_2 (z)|}=\frac{\nabla u(x)}{|\nabla u(x)|}
\end{align}  
and  after that  
\begin{align} 
\label{eqn5.30}
\begin{split} 
&u_1(y+\rho\eta)=u_1(y)+A_1\rho+A_2\rho^{2}+o(\rho^{2}),\\
&u_2 (z+\rho\eta)=u_2(z)+B_1\rho+B_2\rho^{2}+o(\rho^{2}),\\
&u(x+\rho\eta)=u(x)+C_1\rho+C_2\rho^{2}+o(\rho^{2})
\end{split}
\end{align}
as $\rho\to 0$ whenever $ \lan \xi, \, \eta \ran>0$  and $\eta\in\mathbb{S}^{n-1},$  
where     
\[
A=|\nabla u_1(y)|,\, \, B=|\nabla u_2(z)|,\, \, C=|\nabla u(x)|, \, \, \la  =  \frac{b}{a+b}. 
\] 
Using   \eqref{eqn5.30}  and  once again repeating the argument leading to  \eqref{vvu} we  first arrive at 
\begin{align}
\label{eqn5.31}
0\geq \sum\limits_{i,j=1}^{n} \left[\frac{(1-K)}{A^2} (u_1)_{x_i x_j}(y)+\frac{K}{B^2} (u_2)_{x_i x_j}(z)  - \frac{1}{C^2}  u_{x_i x_j}(x)\, \right]\eta_i \eta_j 
\end{align} 
where as  earlier,    
\begin{align}
\label{eqn5.32}
\frac{1}{C}  =\frac{(1 - \la) A+\la B}{AB} = \frac{ 1 - \la}{B} +    \frac{ \la}{A}\quad 
\mbox{and}\quad  K    =  \frac{ (1-\la) A }{ \la B  +  (1-\la) A }.
\end{align}
Using  \eqref{eqn5.31}  we  can argue as below  \eqref{vvu}    
to deduce first  that if  
\[
w (\hat x) = - \frac{(1-K)}{A^2} u_1 (y + \hat x ) - \frac{K}{B^2} u_2 (z + \hat x )\,     + 
\frac{1}{C^2}u (x + \hat x) ,  
\]  
then the  Hessian matrix of  $ w $ at $ \hat x = 0 $ is positive semi-definite.  Second if   
\[
 a_{ij} = \frac{1}{2}\left[\frac{\partial \mathcal{A}_i}{\partial \eta_j}  (\xi )   
+   \frac{\partial \mathcal{A}_j}{\partial \eta_i}  (\xi )  \right] \quad \mbox{for}\, \, 1 \leq i, j \leq n, 
\]    
then  $  (a_{ij}) $ is  positive definite and from  
$  \mathcal{A}$-harmonicity of  $ u, u_1, u_2, $  as well as   \eqref{eqn5.29},  
\[   
\mbox{trace} \left(  (a_{ij} )  \cdot ( w_{x_ix_j} (0)  \right)   =  0.   
\] 
From this equality   we conclude that  the Hessian of  $ w $ vanishes at  $ \hat x  = 0 $  so  by continuity,  equality  holds in \eqref{eqn5.31}  whenever   $ \eta \in  \mathbb{S}^{n-1}. $      
 
Using \eqref{eqn5.24}  we shall convert  this  equality into an  inequality involving support functions from which we  can make conclusions.   We shall need the following lemma  from   \cite{CS}: 
\begin{lemma}[\cite{CS}, Lemma 2]
\label{lemma5.3}
Let $H_1, H_2, $ be symmetric positive definite matrices  and let $ 0 < r,  s. $   Then for every $\lambda\in[0,1]$  the following inequality holds:
\[
(\lambda s+(1-\lambda) r)^{2} \mbox{trace}\left[ ( \la  H_1    +   (1-\la) H_2 ) ^{-1}\right] \leq  \lambda s^2\mbox{trace}\left[H_{1}^{-1}\right]+(1-\lambda)r^2\mbox{trace}\left[H_{2}^{-1}\right]
. \]
Equality holds if and only if 
\[
r H_1= s  H_2.
\]
\end{lemma}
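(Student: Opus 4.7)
The plan is to recognize the inequality as the \emph{joint convexity} of the function $F(t,H) := t^2\, \mathrm{trace}(H^{-1})$ on the cone $(0,\infty) \times \{H \in \mathrm{Sym}_n : H > 0\}$: the stated inequality is precisely
\[
F\bigl(\lambda(s, H_1) + (1-\lambda)(r, H_2)\bigr) \;\leq\; \lambda\, F(s, H_1) + (1-\lambda)\, F(r, H_2),
\]
so the whole problem reduces to establishing joint convexity of $F$ and reading off the equality case.

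To prove joint convexity, I would use the quadratic-form variational identity
\[
t^2\, \langle H^{-1} v, v \rangle \;=\; \sup_{w \in \mathbb{R}^n} \bigl\{\, 2t\, \langle v, w \rangle \;-\; \langle H w, w \rangle\, \bigr\},
\]
valid for every $v \in \mathbb{R}^n$, $t > 0$, and symmetric positive definite $H$; the unique maximizer is $w^\star = t\, H^{-1} v$, and substituting back recovers the left-hand side. For each fixed pair $(v,w)$, the bracketed expression is \emph{affine} in $(t, H)$, so the supremum on the right is a convex function of $(t,H)$. Summing over an orthonormal basis $e_1, \ldots, e_n$ then yields convexity of
\[
F(t, H) \;=\; \sum_{i=1}^n t^2\, \langle H^{-1} e_i, e_i \rangle,
\]
as a sum of $n$ convex functions. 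Applying this joint convexity to the points $(s, H_1)$ and $(r, H_2)$ with weights $\lambda, 1-\lambda$ gives the asserted inequality directly.

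For the equality case, restrict to $\lambda \in (0,1)$ (the endpoints being trivial equalities). Equality for a supremum of affine functions evaluated at a convex combination forces, for each basis vector $e_i$, the maximizer at $(s,H_1)$ to coincide with the maximizer at $(r, H_2)$, namely $s\, H_1^{-1} e_i = r\, H_2^{-1} e_i$. Ranging $i$ over a basis gives $s\, H_1^{-1} = r\, H_2^{-1}$, equivalently $r H_1 = s H_2$. The converse is a direct substitution: both sides collapse to $s(\lambda s + (1-\lambda) r)\, \mathrm{trace}(H_1^{-1})$ once $H_2 = (r/s) H_1$ is inserted.

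The main content of the argument is really a change of viewpoint --- the asymmetric coefficients $\lambda s^2$, $(1-\lambda) r^2$, and $(\lambda s + (1-\lambda) r)^2$ look awkward until one notices that they fit together as $F(t,H) = t^2\, \mathrm{trace}(H^{-1})$ evaluated at the convex combination $(\lambda s + (1-\lambda) r, \lambda H_1 + (1-\lambda) H_2)$. Once this reframing is in hand, the variational identity for $\langle H^{-1} v, v\rangle$ delivers convexity and the sharp equality case simultaneously, so there is no real technical obstacle; the delicate point is simply the bookkeeping that aligns the scalar weights $s, r$ with the matrix weights $H_1, H_2$.
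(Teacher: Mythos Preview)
Your proof is correct. The paper does not actually supply its own proof of this lemma; it is merely quoted from \cite{CS} (Colesanti--Salani, Lemma~2) and then applied, so there is nothing to compare against within the present paper.

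On the substance: your variational identity
\[
t^{2}\langle H^{-1}v,v\rangle \;=\; \sup_{w\in\mathbb{R}^{n}}\bigl\{\,2t\langle v,w\rangle-\langle Hw,w\rangle\,\bigr\}
\]
is correct (unique maximizer $w^{\star}=tH^{-1}v$), and since each bracketed term is affine in $(t,H)$, the supremum is jointly convex. Summing over an orthonormal basis gives convexity of $F(t,H)=t^{2}\,\mathrm{trace}(H^{-1})$, and the stated inequality is exactly $F$ evaluated at the convex combination of $(s,H_{1})$ and $(r,H_{2})$. Your equality analysis is also sound: strict concavity in $w$ forces a unique maximizer at each point, and equality in the supremum-of-affines argument forces the maximizers at the two endpoints to coincide, yielding $sH_{1}^{-1}e_{i}=rH_{2}^{-1}e_{i}$ for every $i$, hence $rH_{1}=sH_{2}$. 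The converse substitution checks out. This is a clean, self-contained argument; the reframing of the asymmetric coefficients as $F(t,H)$ at a convex combination is exactly the right observation.
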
 
To  convert  \eqref{eqn5.24}  into an equality involving support functions we first assume that 
\begin{align}
\label{eqn5.33}
\xi   =   e_n  = (0, \dots, 0,1)\quad \mbox{and}\quad  u (x) = u_1 (y) = u_2 (z)  = t  
\end{align} 
in  \eqref{eqn5.28},  \eqref{eqn5.29}.    Then  $  X / |X|  =  e_n  $  and   from \eqref{eqn5.26}, as well as,  0-homogeneity of  the components of  $  \nabla_X \bar h $ we  see for fixed  $ t  \in (0, t_0 ) $  that    
\[
\bar h_{X_kX_n} = 0 \quad \mbox{for} \, \,  k=1,\ldots, n.   
\]
Also from the chain rule we  deduce   
for  $ 1 \leq i, j \leq n - 1 $  that 
\begin{align}
\label{eqn5.34} 
\de_{ij}   =   \sum_{i =1}^{n-1}    \,  \bar h_{X_i  X_k}  \frac{\ar X_k}{\ar  x_j} 
=  \sum_{i=1}^{ n-1} \bar  h_{X_i  X_k} \, \,  {  \frac{- \bar u_{  x_k   x_j}}{|\nabla \bar u | }  }
\end{align}
when $ X / |X| = e_n , $  where  $  \de_{ij} $ is the Kronecker $ \de $  and  partial derivatives of  $  \bar u $  are evaluated at $ x, y, z, $ depending on whether  $  \bar u = u, u_1, u_2 , $ respectively. For $1\leq i,j\leq n-1$, consider $(\bar h_{X_i X_j} )$ and $\left( \frac{- \bar u_{x_ix_j}}{|\nabla \bar u |} \right)$ as $(n-1)\times (n-1)$ matrices. Then \eqref{eqn5.34} implies (for $1 \leq i, j \leq n - 1$)
\begin{align} 
\label{eqn5.35}
(  \bar h_{X_i X_j} ) \mbox{ is the inverse of the positive definite matrix} 
\left( \frac{- \bar u_{x_ix_j}}{|\nabla \bar u |} \right). 
\end{align}
 For $1 \leq i, j \leq n - 1$, let
\[  
H_k  := ( (h_k)_{X_iX_j} ) \, \, \mbox{for}\,  \, k = 1, 2\quad  \mbox{and}\quad  H   := ( h_{X_iX_j} )   
\]
be the $(n-1)\times (n-1)$ matrices of second partials of the support functions corresponding to  $ h_1, h_2, h, $ respectively.  Using $ \eta = e_i$ for $1 \leq i  \leq n - 1, $  and multiplying each side of the   equality in  \eqref{eqn5.31} by  $   AB [ (1-\la) A  +  \la  B ]  $  we see in view of   \eqref{eqn5.32},  \eqref{eqn5.35}, after some algebra   that  the resulting equality 
  can be rewritten in terms of our new notation as     
\begin{align}
\label{eqn5.36}
 (\lambda B+(1-\lambda) A)^{2} H^{-1} = \lambda B^2 H_{1}^{-1}+(1-\lambda)A^2 H_{2}^{-1}. 
\end{align}
Now from  \eqref{eqn5.24}  we also have  $  H =  \la H_1 +  (1-\la ) H_2 $  so  obviously,  
 $  H^{-1} =   ( \la  H_1  +  (1-\la ) H_2  )^{-1}  $.  Using  this  equality in  \eqref{eqn5.36} we  conclude from Lemma \ref{lemma5.3}  with  $ A = r,  B = s $    that at $ ( X, t ), $ 
 $ A  H_1= B H_2$  and thereupon from  \eqref{eqn5.32}, \eqref{eqn5.24} that  
\begin{align}  
\label{eqn5.37}
A H_1 = B H_2  = C H    \mbox{ at $ (X, t ) $  when  \eqref{eqn5.33}  holds.} 
\end{align} 
We continue under assumption  \eqref{eqn5.33}.  Following \cite[page 470]{CS}, we will compute $\bar u_{ x_n x_n } (\bar x ) $ in terms of second   partial derivatives  of $ \bar h ( X, t ) $  where  $\bar  x = x, y, $ or $ z $ in  \eqref{eqn5.28} depending on whether $ \bar u = u, u_1 $ or $ u_2. $    
From  the chain rule, \eqref{eqn5.27},  and \eqref{eqn5.33}, 
\begin{align}
\label{eqn5.38}
\begin{split}
  -  \bar u_{x_n x_n}&=\frac{\partial }{\partial x_n}\left(\frac{1}{\bar h_t(X,t)}\right)=-\frac{1}{\bar h^{2}_{t}}\left[\sum\limits_{i=1}^{n} \frac{\partial \bar h_t}{\partial X_i }\frac{\partial X_i}{\partial x_n} + \bar h_{tt}\frac{\partial t}{\partial x_n}\right] \\
&=-\frac{1}{\bar h^{2}_{t}}\sum\limits_{i=1}^{n} \frac{\partial \bar h_t}{\partial X_i }\frac{\partial X_i}{\partial x_n} + \frac{1}{\bar h^{3}_{t}} \bar h_{tt}.
\end{split}
\end{align}
Taking derivatives in  \eqref{eqn5.26}   we also have   for $i=1, \ldots, n-1,$ 
\begin{align}
\label{eqn5.39}
 0 =\sum_{j=1}^{n-1} \bar h_{ X_i X_j} \,  \frac{\partial X_j}{\partial x_n} +   \bar h_{X_i t} \, \frac{\partial t}{\partial x_n}  =\sum\limits_{j=1}^{n-1} \bar h_{ X_i  X_j} \, \frac{\partial X_j}{\partial x_n} -  \frac{ \bar h_{X_i t}}{\bar h_t}.
 \end{align}
Using   \eqref{eqn5.39}   to  solve  for   $   \frac{\ar X}{\ar x_n} $  and then putting the result in   \eqref{eqn5.38} we obtain at  $ (\bar x, t ), $ 
\begin{align}
\label{uetaeta}
\begin{split}
- \bar u_{x_n x_n }& =  - \frac{1}{\bar h_t^{2}}\sum\limits_{i=1}^{n-1} \bar h_{tX_i} \frac{\partial X_i}{\partial x_n} +   
\frac{1}{\bar h_{t}^{3}} \bar h_{tt}\\
&=- \frac{1}{\bar h_t^{3}} \left[\langle   \, \nabla_{X} \bar h_t \, ( \bar h_{X_i X_j} )^{-1} , \nabla_{X} \bar h_t \rangle- \bar h_{tt}\right]
\end{split}
\end{align} 
where  $  \nabla_X \bar h $ is written as  a  $ 1  \times  n-1 $ row matrix.  Let  $ M $ denote the inverse of the matrix in   \eqref{eqn5.37}.   Note that $ M $ is positive definite and symmetric. 
Using  \eqref{eqn5.37},  \eqref{eqn5.27}, \eqref{uetaeta}, as  well as  the notation used  previously for  gradients of  $ u, u_1, u_2 $  at  $  x, y, z ,$    we see that       
\begin{align}
\label{eqn5.41}
\begin{split}
&-\frac{u_{x_nx_n}(x)}{C^2}   =     \, C^2  \, \lan  \nabla_X h_t   \,M,  \nabla_X h_t  \ran     - C h_{tt}, \\ 
&-\frac{(u_1)_{x_nx_n}(y)}{A^2}   =     A^2  \, \lan  \nabla_X (h_1)_t  \,M,  \nabla_X  (h_1)_t  \ran     - A \, (h_1)_{tt}, \\
&-\frac{(u_2)_{x_nx_n}(z)}{B^2}  =    B^2 \, \lan  \nabla_X (h_2)_t  \,M,  \nabla_X (h_2)_t  \ran     - B \, (h_2)_{tt}.
\end{split}
\end{align}
Using   \eqref{eqn5.41} in the equality in \eqref{eqn5.31}  with  $ \eta  = e_n, $  we find that 
\begin{align}
\label{eqn5.42}
\begin{split}
C^2  \lan  \nabla_X h_t  \,M,  \nabla_X h_t  \ran     - C h_{tt} 
     & =     \,    \frac{\la B }{\la B  +  (1-\la) A}  \, \,  [ A^2 \,  \lan  \nabla_X (h_1)_t  \,M,  \nabla_X (h_1)_t  \ran     - A (h_1)_{tt} ] \\
   & \hspace{.4cm}+   \frac{( 1 - \la) A}{\la B  +  (1-\la) A}    \, [ B^2 \lan  \nabla_X (h_2)_t  \,M,  \nabla_X (h_2)_t  \ran     - B (h_2)_{tt}].
\end{split}
\end{align}
Since  
\[
h  =  \la h_1  +  (1-\la) h_2 \quad  \mbox{and}\quad  C  =   \frac{ A B }{\la B  + (1-\la) A},
\]
 the terms involving two derivatives in  $ t $ on both sides of  \eqref{eqn5.42}  are equal so may be removed.   Doing this and using above identity involving $h$ and $C$ once again  we  arrive at       
\begin{align}
\label{eqn5.43} 
\begin{split}
\frac{ A^2  B^2  }{( \la B  + (1-\la) A)^2 } , \lan  ( \la \nabla_X (h_1)_t  &+ ( 1 - \la ) \nabla  (h_2)_t  )  \,M,  ( \la \nabla_X (h_1)_t   + ( 1 - \la ) \nabla  (h_2)_t  )   \ran     
     \\ &  =     \,    \frac{\la A^2 B }{\la B  +  (1-\la) A}  \, \,    \lan  \nabla_X  (h_1)_t   \,M,  \nabla_X  (h_1)_t   \ran    \\
     &      \hspace{.2cm} +  \frac{( 1 - \la) A B^2 }{\la B  +  (1-\la) A}    \, \lan  \nabla_X (h_2)_t  \,M,  \nabla_X  (h_2)_t  \ran.
\end{split}      
      \end{align}
      For ease of notation let
      \[
   \Upsilon:=\frac{ \la (1-\la)  }{( \la B  + (1-\la) A)^2 }.
      \]
Multiplying \eqref{eqn5.43} with this expression out, using partial fractions,     and  gathering terms in   $  \lan  \nabla_X (h_i)_t  M, \nabla_X (h_i)_t   \ran  $  for  $ i  = 1, 2, $ we see that 
  \begin{align*}
  2 \Upsilon A^2  B^2 & \lan  \nabla_X (h_1)_t  M, \nabla  (h_2)_t  \ran \\
  &=    \Upsilon A^3 B \lan  \nabla_X  (h_1)_t   \,M,  \nabla_X  (h_1)_t   \ran  +  \Upsilon A B^3 \lan  \nabla_X (h_2)_t  \,M,  \nabla_X  (h_2)_t  \ran.  
  \end{align*} 
This equality can be factored into 
\begin{align*}
\Upsilon \lan  ( A^{3/2} B^{1/2}  \nabla_X (h_1)_t & - B^{3/2} A^{1/2}  \nabla  (h_2)_t  )  \,M,   A^{3/2} B^{1/2}  \nabla_X (h_1)_t  - B^{3/2} A^{1/2}  \nabla  (h_2)_t     \ran\\
& = 0.
\end{align*}        
Since   $ M $ is positive definite  we conclude from this equality that 
\begin{align}
\label{eqn5.44}  
A   \nabla_X (h_1)_t  =   B  \nabla_{X}  (h_2)_t  \quad \mbox{or equivalently that}\quad   \nabla_X  \log \left( \frac{(h_1)_t}{(h_2)_t} \right) =  0. 
\end{align}
For $i=1,2$, let   $  \bar x_i ( X, t )$  be the parametrization of  $ \{ u_i = t  \}$   in  \eqref{eqn5.26} for $ t < t_0$ and $X   \in  \mathbb{S}^{n-1} $.   From  \eqref{eqn5.37}, \eqref{eqn5.44}, and  \eqref{eqn5.26}  we see that if  \eqref{eqn5.33} holds then 
\begin{align}
\label{eqn5.45}
  \frac{\ar}{\ar X_i } \left(\frac{| \nabla u_2 |  (\bar x_1) }{ | \nabla u_1 |  (\bar x_2)} 
\right) = 0 \quad   \mbox{ and  } \quad \frac{\ar}{\ar X_i }  \left(    \bar x_1    -     \bar  x_2  \,   
\frac{| \nabla u_2 |  (\bar x_1) }{ | \nabla u_1 |  (\bar x_2)}  
\right)   = 0   
\end{align}
when $1 \leq i  \leq n$ at $ (e_n, t ). $  

At this point,  we remove the assumption \eqref{eqn5.33}. If  \eqref{eqn5.33}
does not hold we can  introduce  a  new  coordinate system say  $  e_1' ,  \dots, e_n', $   with 
$ e_n'  =   \xi$  in \eqref{eqn5.29}. Then calculating   partial derivatives of  $   \bar u,  \bar h  $  in  this new coordinate system we  deduce that (for $1 \leq i, j \leq n - 1$)
\begin{align}   
\label{eqn5.46}
 (\bar h_{X'_i X'_j} ) \quad \mbox{is the inverse of the positive definite matrix} \quad  
\left( \frac{- \bar u_{x'_ix'_j}}{|\nabla \bar u |} \right). 
\end{align} 
Here   $  \bar h_{X'_i X'_j} , \, \,   \bar u_{x'_ix'_j}, \, $  denote second directional derivatives of   
$  \bar h,   \bar u $   in the  direction of  $ e_i' \, e_j'$ for $1 \leq i, j \leq n - 1$.  Using  \eqref{eqn5.46}   we can repeat the argument after  
\eqref{eqn5.35} with  $ x' ,  X' $ replacing  $ x,  X $    to  eventually  conclude  \eqref{eqn5.45} holds  at  
$  ( \xi, t ). $   

Hence we can continue with \eqref{eqn5.45}. Since $ \xi  \in  \mathbb{S}^{n-1}$ and $t   <   t_0 $  are arbitrary  and $ \bar x_1, \bar x_2 $ are  smooth we conclude for fixed $ t $  that  there exist $ a, b \in \re $ with  
\[
 \bar x_1 ( X, t )   =  a  \bar x_2 ( X, t )  +  b \quad  \mbox{whenever}  \quad X  \in  \mathbb{S}^{n-1}. 
 \]
 Using 
Remark  \ref{rmk1.3}  and the maximum principle for  $  \mathcal{A}$-harmonic functions we conclude that     
\[
 u_2  ( x )  =  u_1  ( a x + b ) \quad \mbox{whenever}\quad    u_2 ( x )  < t \, \, \mbox{and}\, \,  t < t_0. 
 \]   
 To finish the proof of  Theorem \ref{theorem1.4} let     $ v  ( x )  = u_2  ( x )  -  u_1  ( a x + b )$ for $x \in \rn{n}.$   Fix $ s  \in  (0, 1) $  and  let   
 \[    F_1  =  \{ x : u_1 ( a x + b )  \geq  s  \} \quad \mbox{and} \quad  F_2  =  \{ x :  u_2 (x)  \geq s  \} .   \]
 if $  F_1 \not =  F_2 ,   $   we    assume, as we may,  that   $  z $  lies  in  the  interior  of 
 $  F_2 \sem  F_1. $  Fix 
 $ y  $  in  the  interior of  $ F_1 $   and  draw  the  ray  from  $ y $  through $ z $  to  $ \infty. $  Let  $ w $  denote  the   point on  this  ray  in $ F_2 $   whose distance is  furtherest  from  $ y. $   Let  
$ l   $ denote the  part of  this ray  joining $ w $  to  $  \infty. $  If $ x  \in l,  x  \not = w ,  $ then  since  $ F_1,  F_2 $  are  convex and $ \mathcal{A}$-harmonic functions are invariant under  translation and  dilation,  it  follows from \eqref{eqn3.12} $(a)$  that  
\begin{align}
 \label{eqn5.47}   
\lan  \nabla u_2 (x),   x - w \ran   < 0\quad  \mbox{ and } \quad \lan  \nabla u_1 (ax+b) ,   x - w  \ran   < 0. 
\end{align}
From  arbitrariness of  $ x $  and  \eqref{eqn2.2}  $(\hat a)$ it  now follows  that there is  a  connected open set,  say  $ O,  $  containing all points  in 
$ l $  except possibly  $ w, $   for which   \eqref{eqn5.47} holds whenever  $ x  \in  O. $   Clearly this  inequality implies  that   
\begin{align}
  \label{eqn5.48}   
    |   \tau   \nabla u_2  (x)  +  ( 1 - \tau )  \nabla u_1 (ax + b)  |  \not  =  0   \quad   \mbox{when}  \, \, x \in O\, \,  \mbox{and}\, \,   \tau  \in [0, 1]. 
  \end{align}
  Using   the same  argument  as  in   either   \eqref{eqn5.4}-\eqref{eqn5.8}   or  \eqref{eqn4.8}-\eqref{eqn4.11} with $  \mathcal{A} =  \nabla f $  we deduce  that $ v $   is   a weak  solution   to  
  \[     \hat L  v  =  \sum_{i, j = 1}^n  \frac{\ar}{\ar x_i}  (  \hat a_{ij} v_{x_j} )  =   0  \mbox{ in } O     \]    where   
  \[  \hat a_{ij} (x)   =   \int_0^1   \frac{ \ar^2  f}{ \ar \eta_i  \ar \eta_j} (  \tau   \nabla u_2  (x)  +  ( 1 - \tau )  \nabla u_1 (ax + b) )   \,   d  \tau  \]    From   this  deduction,  Definition \ref{defn1.1},  \eqref{eqn1.8},  \eqref{eqn2.3},   and   \eqref{eqn5.48},  we  see that     $ v $  is    a  weak  solution to  a   locally  uniformly elliptic  PDE  in  divergence form  with  Lipschitz continuous and symmetric coefficients.  Since  $ v  \equiv 0 $  in  a  neighborhood  of  $ \infty $  it now  follows  from  a unique  continuation theorem  (see for example  \cite{GL}) that  
 $ v $  vanishes in  $ O. $   Then by continuity  $ v (w) = 0$  so  $ w $ is also in  $ F_1. $  We have reached a  contradiction.  Thus  $ F_1 = F_2 $  whenever  $ s > 0 $  and  consequently  
 $ v  \equiv  0  $  on  $  \rn{n}.  $  

\setcounter{equation}{0}
\setcounter{theorem}{0} 
 
\section{Appendix}
\label{appendix}

\subsection{Construction of a  barrier in (\ref{eqn3.18})}
\label{appendix1}

In  this  section we construct  a  barrier to   justify   display  \eqref{eqn3.18}  for  $ 1- u. $  Recall that  $u$ is  the  $  \mathcal{A}$-harmonic capacitary  function in Lemma \ref{lemma3.3}.  
  Let   $\hat{w}, \delta$ be as in \eqref{eqn3.17} and put $  \mathcal{\ti A} (\eta) =-\mathcal{A} (-\eta)$ whenever $\eta \in \rn{n}$.   Let $\epsilon>0$ be given and small. We define 
\[
\mathcal{\ti A}(\eta,\epsilon):=\int\limits_{\mathbb{R}^{n}} \mathcal{\ti A}(\eta-x)\theta_{\epsilon}(x)dx
\]
whenever $\eta\in\rn{n}$ and $\theta\in C_{0}^{\infty}(B(0,1))$ with
\[
\int\limits_{\rn{n}} \theta(x)dx=1 \, \, \mbox{and}\,\,\, \theta_{\epsilon}(x)=\epsilon^{-n} \theta(x/\epsilon) \, \, \,\mbox{for}\, \, x\in\rn{n}. 
\] 
From Definition \ref{defn1.1} and well-known properties of  approximations to  the identity, it follows that there exists $c=c(p,n)\geq 1$ such that 
\begin{align}
\label{eqn6.1}
(c\alpha)^{-1}(\epsilon+|\eta|)^{p-2}|\xi|^2\leq \sum_{i,j=1}^n \frac{\partial  \mathcal{\ti A}_{i}}{\partial\eta_j}(\eta,\epsilon)\xi_i\xi_j\leq c\alpha (\epsilon+|\eta|)^{p-2}|\xi|^2.
\end{align} 
Note that $\mathcal{\ti A}(\cdot, \epsilon)$ is infinitely differentiable for fixed $\epsilon>0$. Let $v(\cdot,\epsilon)$ be the solution to 
\[
\nabla\cdot \mathcal{\ti A}(\nabla v(z,\epsilon),\epsilon)=0
\]
with continuous boundary values equal to $1 - u$ on $\partial B(\hat{w},1)$. Let
\[
\mathcal{\ti A}^{*}_{ij}(z,\epsilon)=\frac{1}{2}(\epsilon+|\nabla v(z,\epsilon)|)^{2-p} \left[\frac{\partial \mathcal{\ti A}_i}{\partial \eta_j}(\nabla v(z,\epsilon),\epsilon) +\frac{\partial \mathcal{\ti A}_j}{\partial \eta_i}(\nabla v(z,\epsilon),\epsilon)\right]
\]
whenever $z\in B(\hat{w},1)$ and $1\leq i,j\leq n$. 
Note also that the  ellipticity constant for $\{\mathcal{\ti A}^{*}_{ij}(z,\epsilon)\}$ and the $L^{\infty}$-norm for $\mathcal{\ti A}^{*}_{ij}$, $1\leq i,j\leq n,$ in $B(\hat{w},1)$ depend only on $\alpha,p,n$.  
From \eqref{eqn6.1}  and Schauder type estimates we see that  $v (\cdot, \epsilon)$ is a classical solution to the non-divergence form uniformly elliptic equation,  
\[
\mathcal{L}^{*}v=\sum\limits_{i,j=1}^{n}\mathcal{ \ti A}^{*}_{ij}(z,\epsilon) v_{y_i y_j} (z)=0
\]
for $z\in B(\hat{w},1)$.   Moreover,  if we let
\[
\psi(z)=\frac{e^{-N|z-\hat{w}|^{2}}-e^{-N}}{e^{-N/4}-e^{-N}}
\]
whenever $z\in B(\hat{w},1)\setminus \bar B(\hat{w},1/2)$. Then $\mathcal{L}^{*}\psi \geq 0$ in $B(\hat{w},1)\setminus \bar B (\hat{w},1/2)$ if $N=N(\alpha, p,n)$ is sufficiently large, so  $  \psi $  is  a subsolution to $\mathcal{L}^* $  in   $B(\hat{w},1)\setminus \bar B (\hat{w},1/2)$. Also  by construction of $\psi$, we  have $\psi=1$ on $\partial B(\hat w,1/2)$ and $\psi=0$ on $\partial B(\hat w,1)$. Comparing boundary values of  $ v (  \cdot, \ep), \psi  $  and using the maximum principle for  $\mathcal{L}^*$  we conclude that  
\[
 v   \geq   ({\ds  \min_{\bar B ( \hat w, 1/2)} } v  )  \, \psi\quad \mbox{in}\, \,  B (\hat{w},1) \sem  \bar B(\hat{w},1/2).
\]
Moreover, it is   easily checked that for some $c=c(p,n,\alpha)\geq 1 $
\[
c\,\psi(z)\geq (1-|\hat{w}-z|) \, \, \mbox{\, \, whenever}\, \, z\in B(\hat{w},1)\setminus \bar B(\hat{w},1/2).
\]
Thus  
\begin{align}
\label{eqn6.2}
 \hat{c}  \,  v(z,\epsilon)) \, \geq \, (1-|\hat{w}-z| ) \, \min_{\bar B ( \hat w, 1/2)} v  \, \mbox{\, \, whenever}\, \, z\in B(\hat{w},1)\setminus \bar B(\hat{w}, 1/2)
\end{align}
for some $\hat{c}=\hat{c}(p,n,\alpha)\geq 1 $. We note  from  Lemmas \ref{lemma2.1},  \ref{lemma2.2}, that  
a subsequence of  $ \{1-  v ( \cdot, \ep) \} $  converges uniformly on compact subsets of $ B ( \hat w,  1 )  $  to  an  $ \mathcal{A}$-harmonic function in  $ B (\hat w, 1). $   Also by the same reasoning as in the proof  of  Lemma  \ref{lemma2.3} $(ii)$  one can derive  H\"{o}lder continuity estimates for  $ v $ near  $  \ar  B (\hat w, 1) $  which are independent of  $ \ep. $ Using these facts and letting  $\epsilon\to 0$  we see that a subsequence of  $ \{v ( \cdot, \ep )\}$ 
converges uniformly on  $ \bar B (\hat w, 1 ) $ to  $ 1 - u. $  In view of    \eqref{eqn6.2} and \eqref{eqn3.17} 
we have    
\[
c  (1-u(z))\geq \delta \,  (1-|\hat{w}-z|) \, = \,  \delta \, d(z, \partial B(\hat{w},1))
\] 
whenever $z\in B(\hat{w},1)\setminus \bar B(\hat{w},1/2)$ which is \eqref{eqn3.18}.   

\subsection{Curvature estimates for the levels of fundamental solutions}
\label{appendix2}
In this subsection  we prove Lemma  \ref{lemma5.2}  when  $   \mathcal{A}  \in  M_p ( \al ) $  can be written in the  form   (see \eqref{eqn1.8}): 
\begin{align}
\label{eqn6.3}
\mathcal{A}_{i}=\frac{\partial f}{\partial \eta_i}(\eta), \, \, 1\leq i\leq n, \, \mbox{where}\, \, f(t\eta)=t^{p} f(\eta)\, \, \mbox{when}\, \, t>0,\,\,\eta\in\mathbb{R}^{n}\setminus \{0\}
\end{align}
 and  $  f  $  has   continuous second partials on $ \rn{n} \sem\{0\}$. 
The proof  is  based on some ideas garnered from reading  \cite{CS1}.   To begin we write 
$f(\eta)=(k (-\eta))^{p}$ and note from  \eqref{eqn6.3} that  $ k ( \eta )$ for $\eta \in  \rn{n} \sem  \{0\}$ is  homogeneous of degree $1$ and has continuous second partials on $ \rn{n} \sem \{0\}. $   We claim that 
\begin{align}
\label{eqn6.4} 
  k^2  \mbox{ is strictly convex  on }  \mathbb{R}^n.   
  \end{align}  
  To prove   \eqref{eqn6.4}  
  let  $ \la  \in    \{ \eta  :  k ( \eta ) =  1  \}$ and put 
  \[
   \La  =  \{  \xi  \in   \mathbb{S}^{n-1}  :    \lan \nabla k (\la ),  \xi  \ran  = 0 \}. 
   \]  
   From  convexity of   $ f $  on  
  $ \mathbb{R}^n $ (see Definition \ref{defn1.1} $(i)$)  and  the definition of  $ k $  we see first  that    
  \[  
 f_{\eta_i \eta_j}(- \eta)= p (p-1) \, k_{\eta_i}(\eta) \, k_{\eta_j}(\eta)  k^{p-2}\,  + \, p \, k^{p-1}(\eta) \, k_{\eta_i \eta_j}(\eta)  
 \] 
for $ 1  \leq i, j \leq n$. Thereupon we conclude  for some $ c  \geq 1 $ depending only on the data that 
 if  $ \xi \in \La$ then
 \begin{align}  
 \label{eqn6.4a}   
 c^{-1}    \leq  f_{\xi \xi} (- \la )   =   p   \, k_{\xi \xi} (\la )  
 \leq c.  
 \end{align}      
 Next we   observe from  1-homogeneity of  $ k $  that  $ \la $  is  an  eigenvector corresponding to the eigenvalue 0 for  the Hessian of  $  k $  evaluated  at  $  \la.  $  
 Also 
 \[   
 \lan \nabla  k (\la ),  \la \ran   =  k (  \la  ) \approx  1  
 \] 
 so    we  can   write   
 \[ \tau  =    \nabla k ( \la )/ | \nabla k ( \la ) |  =  a \la + b  \,\xi   \]  where $ \xi \in 
 \La $  and  $ a  \approx  1. $  Again all  ratio constants depend only on the data.    We  conclude  from   
 \eqref{eqn6.4a}   and the above facts  that  
 \[   
 k_{\tau \tau }  \geq  b^2 \, k_{\xi \xi }  \geq  0.  
 \] 
 Thus $ k $ is  positive semidefinite and an  easy calculation  using the above facts now gives  \eqref{eqn6.4}.       
   
   From        \eqref{eqn6.4} we see  as in \eqref{eqn5.26}  that if $ X \in  \rn{n} \sem \{0\},$  then      
\[   
h  ( X )   =  \sup  \{  \lan \eta,  X  \ran :\, \,   \eta \in  \{k \leq  1\}  \}  
\] 
has continuous second partials  and  $ h $ is homogeneous of  degree $1$.   Moreover,  
\begin{align}
\label{eqn6.5} 
\nabla  h ( X ) =   \eta  ( X )\, \, \mbox{where}\, \,  \eta  \, \, \mbox{is the point in}\, \, \{  k = 1 \}\, \, \mbox{with}\, \,   \frac{X}{|X|}=  \frac{\nabla k (\eta  )}{| \nabla k  (\eta )|}.
\end{align}    
From  calculus and  Euler's formula for $1$-homogeneous functions it now follows that  if  $ X  \in   \mathbb{S}^{n-1}$ then   
  \begin{align*}
%  \label{eqn6.6}  
  h (X)  =   \lan  \eta ( X ),  X  \ran  =  |X|  \lan \, \eta (X),  \frac{ \nabla k (\eta)}{ | \nabla k (\eta) | } \, \ran  =  \frac{ |X|  }{ | \nabla k (\eta) |}.
  \end{align*}  
  Using this equality  we obtain first  
  \begin{align*}
%  \label{eqn6.7}
\nabla k  (  \nabla h ( X ) )   \, =  \, \frac{ |  \nabla k (\eta)|  X}{ |X| } =  
\frac{ X }{ h ( X ) }
\end{align*}
and  second using  $1$-homogeneity of  $ k, h $ as well as 0-homogeneity of  $ \nabla k,  \nabla h, $  that 
\begin{align}
\label{eqn6.8} 
  k [ h (X)  \, \nabla h ( X ) ]    \, \,   \nabla k  [ h (X)  \nabla h ( X ) ]  =
h (X) \,  k  [ \nabla h (X)]   ( X/ h (X) )  = X. 
\end{align}
Thus  $ k  \, \nabla k  $  and  $ h \,  \nabla h $  are inverses of  each other on $  \rn{n} \sem \{0\}. $ 

For fixed $p$, $1<p<n$, let $\beta=(p-n)/(p-1) < 0$ and define
\[
\hat G(X)=h(X)^\beta \quad  \mbox{whenever}\, \,  X \in \rn{n} \sem \{0\}.
\]
We claim that  
$ \hat G$ is  a constant multiple of  the fundamental solution for the  $  \mathcal{A} $ in  \eqref{eqn6.3}. 
  Indeed,  if    $ X \in \mathbb{R}^{n}\setminus \{0\}, $  it  follows from \eqref{eqn6.5}-\eqref{eqn6.8} that    
\begin{align}
\label{eqn6.9}
\begin{split}
(\nabla f) (\nabla \hat G(X)) &=  -p\,k^{p-1} (-\nabla \hat G (X))\, \, (\nabla k (-\nabla \hat G(X)) ) \\
 & = \frac{-X}{h(X)} \, p  \, k^{p-1} (-\nabla \hat G (x) )  \\ 
  &=   X p (-\beta)^{p-1} \, h^{[( \beta - 1 )(p - 1) - 1 ] } (X) \, k^{p-1}  (\nabla  h (X))  \\
  &=   X \, p\,\, (-\beta)^{p-1}  h(X)^{-n}.
\end{split}
\end{align}
Now  $ X \mapsto   h ( X/|X| )^{-n} $ is homogeneous of  degree $0$  so   
\[
\lan  X,  \nabla [ h (X/|X| )^{-n} ] \ran  = 0
\]
by Euler's formula.  From this  observation and  \eqref{eqn6.9} we deduce    
\begin{align*}  
%\label{eqn6.10}
\begin{split}
p^{-1} (-\be)^{1-p}  & \nabla \cdot \left(   (\nabla f) (\nabla \hat G(X)) \right) 
\\&= h (X/|X| )^{-n} \, \, \nabla \cdot ( X |X|^{-n} )  +   |X|^{-n}  \lan X, \nabla [ h (X/|X| )^{-n} ]  \ran \\
    &= 0
    \end{split}
\end{align*}   
when  $  X  \in  \rn{n}  \sem  \{0\}. $  Hence  $  \hat  G  $ is  $  \mathcal{A}$-harmonic in 
$ \rn{n} \sem \{0\}. $  Now from $1$-homogeneity of  $ h$ and \eqref{eqn6.5} it is easily seen that   
\eqref{eqn4.1} $ (a), (b), $ are valid for $  \hat  G $  with constants that depend only on $ p, n, \al. $   Also  from  \eqref{eqn6.9}  we  note  that   
\[  
 | \nabla f(\nabla \hat G (X) ) | \approx  |  X  |^{1-n}\, \mbox{ on }\,   \rn{n}  \sem \{0\}.  
 \]   
 If   $ \he  \in  C_0^\infty ( \rn{n} ) $  then from the above display we deduce  that  the function $ X  \mapsto  
 \lan   \nabla f  (\nabla \hat G (X) ) ,  \nabla  \he (X) \ran $ is integrable on  $ \rn{ n}. $ Using this fact,  smoothness of  $  f,  h,  $  and an  integration by parts,  we get   
\begin{align}
\label{eqn6.11}
\begin{split}
\int_{\rn{n}}  \lan   \nabla f  (\nabla \hat G (X)) ,  \nabla  \he (X)  \ran dx  &=
 -  \lim\limits_{r \to  0}  \int_{\ar  B (0, r )} \,  \he (X) \,  \, \lan \nabla f ( \nabla \hat G (X) ), \frac{X}{|X|} \ran 
\, d \mathcal{H}^{n-1}    \\
&=  b \,  \he (0).
\end{split}
\end{align}
Using   \eqref{eqn6.9} once again   it follows that   
\begin{align*}
b&=  - \, \lim_{r\to  0 }    \int_{\ar B (0, r) } \, \lan \nabla f ( \nabla \hat G (X) ), X/|X| \ran 
\, d \mathcal{H}^{n-1}  \\
&= p (- \be )^{p-1}     \, \int_{\ar B (0, 1)}  h( X/|X| )^{-n}  \, d \mathcal{H}^{n-1}.   
\end{align*}
From  \eqref{eqn6.11} and  \eqref{eqn4.1} $ (a), (b), $  we conclude from  \eqref{eqn4.1} $(d)$ that  $  \hat G $  is  a constant multiple of the fundamental solution for    $  \mathcal{A} = \nabla f. $ 

To prove Lemma \ref{lemma5.2} which says that \eqref{eqn5.1} holds for $G$, we show that $\hat{G}$ satisfies \eqref{eqn5.1} which will finish the proof. To this end, recall that   $  k  \nabla k $  and  $ h  \nabla h $  are inverse functions.   Thus,  by the chain rule  the $ n \times n $  matrices  
\begin{align}
\label{eqn6.12}  
(k  \,   k_{\eta_i \eta_j} + k_{\eta_i} k_{\eta_j} ) \mbox{ and } 
(h  \,   h_{X_i X_j} + h_{X_i} h_{X_j} )  \mbox{ are inverses of each other.}
\end{align}
  From  \eqref{eqn6.4}  and   \eqref{eqn6.12}  we  conclude that    
$  (h  \,   h_{\eta_i \eta_j} + h_{\eta_i} h_{\eta_j} ) $ is  homogeneous of  degree 0 and   positive definite with eigenvalues bounded above and below by constants depending only on  $ p, n, \al. $   

To  prove    \eqref{eqn5.1}  for  $  \hat  G $,  suppose   $  X, \xi  \in \mathbb{S}^{n-1}$  and    $  \lan \nabla \hat G (X),  \xi  \ran  =   0 $. As $ \hat G = h^\be $ (where $\beta<0$) we also have  $ \lan \nabla h (X), \xi \ran
= 0 $  and   
\[ 
- \hat G_{\xi \xi} =  - \be h^{\be - 1} \,  h_{\xi \xi }  (X)   \geq   \tau'   > 0. 
\]
From this inequality and   \eqref{eqn4.1} $(b) $  or   \eqref{eqn6.5} we see that $ \tau' $  depends only on the data.  Thus   \eqref{eqn5.1} holds and  proof of  Lemma  \ref{lemma5.2} is complete.    
\begin{remark} 
\label{rmk7.1}
In view of \eqref{eqn6.11} and \eqref{eqn6.9}
\[
G(x)=b^{\frac{-1}{p-1}}\hat{G}(x)=b^{\frac{-1}{p-1}} h(x)^{\frac{p-n}{p-1}}
\]
is the fundamental solution in Lemma \ref{lemma4.1} where
\begin{align*}
b&=c \, \int_{\ar B (0, 1)}  h( X/|X| )^{-n}  \, d \mathcal{H}^{n-1} = p(-\beta)^{p-1}\, \int_{\ar B (0, 1)}  h( X/|X| )^{-n}  \, d \mathcal{H}^{n-1} \\
&=p\left(\frac{n-p}{p-1}\right)^{p-1}\, \int_{\mathbb{S}^{n-1}}  h(\omega)^{-n}  \, d\omega. 
\end{align*}
\end{remark}

\part{A Minkowski problem for nonlinear capacity}

\setcounter{equation}{0} 
\setcounter{theorem}{0}
\setcounter{section}{7}

\section{Introduction and  statement of  results}
%\section{On the Minkowski Problem for  Nonlinear Capacitary Measures} 
\label{section8}
%\subsection{Introduction and  Statement of  Results}
In  this  section  we   use our  work on  the  Brunn-Minkowski inequality to study  the   Minkowski problem  associated with  $ \mathcal{A}  =  \nabla f $-capacities  when    $ f $  is as in Theorem \ref{theorem1.4}.  To be more specific,  suppose  $ E  \subset  \rn{n} $  is  a  compact  convex set with nonempty interior.  Then for  $ \mathcal{H}^{n-1} $  almost every $ x \in \ar E,  $   there is  a  well defined  outer unit normal, $ \mathbf{g} ( x) $  to $ \ar E.  $   The function $ \mathbf{g}: \ar E  \to  \mathbb{S}^{n-1}$  (whenever  defined), is called the  Gauss map for $ \ar E.$     The  problem originally considered by Minkowski states:  given a  positive  finite Borel measure  $ \mu  $ on  $  \mathbb{S}^{n-1} $  satisfying 
\begin{align}  
\label{eqn7.1} 
\begin{split}
(i)&\, \,   {\ds \int_{ \mathbb{S}^{n-1}} } | \lan \he , \ze  \ran | \, d \mu ( \ze )  
>  0  \mbox{ for all }  \he \in \mathbb{S}^{n-1} ,\\  
(ii)& {\ds  \int_{ \mathbb{S}^{n-1}} } \ze   \, d \mu ( \ze )  = 0, 
\end{split}
\end{align}   
  show there exists up to  translation a  unique  compact convex set $  E $  with 
  nonempty interior  and   
\begin{align*}  
%\label{eqn7.2}    
\mathcal{H}^{n - 1} ( \mathbf{g}^{-1} (K ) ) =  \mu ( K ) \mbox{ whenever  $ K \subset \mathbb{S}^{n-1} $ is a Borel set.} 
\end{align*} 
     Minkowski \cite{M1,M2}  proved  existence and uniqueness of  $ E $  when $ \mu $ is discrete or has a  continuous density.  The general  case  was  treated by  Alexandrov in \cite{A1,A2} and  Fenchel and Jessen in \cite{FJ}.  Note also that the conditions in \eqref{eqn7.1} are also necessary conditions for the existence and uniqueness of measure $\mu$. 
     
           In \cite{J}, a  similar problem was  considered for  electrostatic  capacity  when  $  E \subset \rn{n}, n \geq 3, $ is a compact convex set with nonempty interior and  $ u $ is  the  Newtonian or  $2$-capacitary function for  $ E $.  In this case,  
     $ u $ is harmonic in  $ \Om =  \rn{n}  \sem  E $  with boundary value $1$ on $\partial E$ and goes to zero as $|x|\to \infty$. Then a well-known work of  Dahlberg \cite{D} implies that 
 \begin{align}
     \label{eqn7.3}  
     \lim\limits_{\substack{y \to x \\ \, y \in \Gamma(x)}}  
     \nabla u  (y )  =  \nabla u (x)   \mbox{ exists for $ \mathcal{H}^{n-1} $  almost every $ x  \in  E. $ }  
\end{align} 
Here $\Gamma(x)$ is the non-tangential approach region in $\mathbb{R}^{n}\setminus E$.      Also,
     \[
\int_{\ar E }  | \nabla  u(x) |^2 d \mathcal{H}^{n-1}< \infty. 
      \]
        If           $  \mu $ is  a    positive  finite Borel measure  on  $  \mathbb{S}^{n-1}$ 
     satisfying  \eqref{eqn7.1}, it is shown by Jerison in  \cite[Theorem 0.8]{J}  that  there exists $  E $  a compact convex set with nonempty interior  and  corresponding  2-capacity function  $ u $  with            
\begin{align}
     \label{eqn7.4}   
      \int_{\mathbf{g}^{-1} (K )  }  | \nabla  u(x) |^2 \, d \mathcal{H}^{n-1}=  \mu  ( K)
\end{align}  
      whenever  $ K \subset \mathbb{S}^{n-1}$ is a Borel set and $  n  \geq 4$.   Moreover, $ E $ is  the  unique compact convex set with nonempty interior   up to translation  for which 
          \eqref{eqn7.4} holds.  If  $ n = 3, $  a less  precise result is available.

     Jerison's  result was generalized  in  \cite{CNSXYZ} as follows.   Given a  compact convex set  $  E $  with nonempty interior  and  $ p $  fixed,  $ 1 < p < n, $ let 
    $ u $  be the  $p$-capacitary function for  $  E. $  Then from \cite[Theorem 3]{LN} it follows that 
    \eqref{eqn7.3}  holds  for  $ u. $  Thus the Gauss map $\mathbf{g}$ can be defined for  $ \mathcal{H}^{n-1}$-almost every $ x \in  \ar E. $  
    If  $  \mu $ is  a    positive  finite Borel measure  on  $  \mathbb{S}^{n-1}$ having no antipodal point masses  (i.e.,  it is not true that  $ 0  <  \mu ( \{\xi\}) =  \mu (\{-\xi\}) $   for some  $ \xi \in \mathbb{S}^{n-1}$)      and if   \eqref{eqn7.1} holds,  then it is shown in \cite{CNSXYZ}  that for  $  1  <  p  <  2, $  there exists $  E $  a compact convex set with nonempty interior  and  corresponding  $p$-capacitary function  $ u $  with  
\begin{align}
          \label{eqn7.5}    
          \int_{\mathbf{g}^{-1} ( K)  }  | \nabla  u(x) |^p \, d \mathcal{H}^{n-1}=  \mu  (K )   
\end{align}
whenever  $ K\subset \mathbb{S}^{n-1}$ is a Borel set.      %Moreover $ E $ is the unique set up to translation  for which           \eqref{eqn7.5} is valid.  
Assuming the  existence of an  $  E  $ for which \eqref{eqn7.5}  holds when  $ p $ is fixed, $ 1 < p < n, $  it was also shown in \cite{CNSXYZ} that  $  E $  is unique up to  translation when $ p \not = n - 1 $  and unique up to translation  and  dilation when  $ p  =  n - 1. $   
       
          We  consider  an analogous  problem:         
             \begin{mytheorem}  
             \label{mink}  
             Let   $  \mu $ is  a  positive  finite Borel measure  on  $  \mathbb{S}^{n-1}$ satisfying  \eqref{eqn7.1}.  Let $p$ be fixed,  $1 < p < n$  and $ \mathcal{A} =  \nabla f $ as in \eqref{eqn1.7} in Theorem \ref{theorem1.4}.  
\begin{align}
  \label{eqn7.6}  
\begin{split}
\intertext{If $p\neq n-1$ then   there exists a  compact              convex set $ E $ with nonempty interior   and  corresponding  $  \mathcal{A}$-capacitary function $ u $    satisfying }
          & (a) \hs{.2in}  \eqref{eqn7.3} \mbox{ holds for $ u  $  and  } {\ds \int_{\ar E}  f ( \nabla u(x) ) \, d\mathcal{H}^{n-1}  <  \infty}.  \\
          & (b) \hs{.2in}     {\ds \int_{\mathbf{g}^{-1} ( K )  }  f ( \nabla  u(x)  )  \, d \mathcal{H}^{n-1}} =  \mu  (K)  \quad \mbox{whenever}\, \,   K \subset \mathbb{S}^{n-1}\, \,\mbox{is a Borel set}. \\
         & (c) \hs{.2in}     \mbox{$E$  is  the unique  set up to translation for which  $ (b) $  holds.} 
         \intertext{If $  p  =  n - 1  $ then  there exists a  compact              convex set $ E $ with nonempty interior,  a constant $b \in (0,\infty)$, and corresponding $  \mathcal{A}$-capacitary function $ u $ satisfying $(a)$  and}
 & (d) \hs{.2in}     {\ds   b  \int_{\mathbf{g}^{-1} ( K )  }  f ( \nabla  u  )  \, d \mathcal{H}^{n-1}} =  \, \mu  (K)  \mbox{ whenever  $ K \subset \mathbb{S}^{n-1}$ is a Borel set}. \\ 
& (e) \hs{.2in}     \mbox{$ E $  is  the unique  set up to translation  satisfying  $(d)$ and   $ \mbox{Cap}_{\mathcal{A}} (E)= 1. $  }  
\end{split} 
   \end{align}              
     \end{mytheorem}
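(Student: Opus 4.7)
My overall plan is to follow the variational scheme of Jerison \cite{J} and Colesanti--Nystr\"om--Salani--Xiao--Yang--Zhang \cite{CNSXYZ}, substituting the Brunn--Minkowski inequality \eqref{eqn1.7} (together with its equality case) for the classical capacitary version used there. The existence of $E$ will come from a constrained minimization of a linear functional on the space of convex bodies, and uniqueness will come from the equality statement in Theorem \ref{theorem1.4}. Before setting up the minimization I need three ingredients: (i) the existence of the nontangential limit \eqref{eqn7.3} and finiteness of $\int_{\partial E} f(\nabla u)\,d\mathcal{H}^{n-1}$ for every compact convex $E$ with nonempty interior; (ii) a Hadamard--type first variation formula that identifies the finite Borel measure $\mu_E$ on $\mathbb{S}^{n-1}$ defined by $\mu_E(K)=\int_{\mathbf{g}^{-1}(K)} f(\nabla u)\,d\mathcal{H}^{n-1}$ as the differential of $\mathrm{Cap}_{\mathcal{A}}$ under Minkowski addition; (iii) weak-$\ast$ continuity of $E\mapsto \mu_E$ with respect to the Hausdorff metric on convex bodies.

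For (i), I would combine Lemma \ref{lemma3.3} (which gives a Hopf-type lower bound $|\nabla u|\gtrsim 1$ in a neighbourhood of $\partial E$) with the boundary regularity theory for $\mathcal{A}$-harmonic functions in Lipschitz domains used in \cite{LN4,LN}; convex domains have Lipschitz boundary, so these results apply and yield the nontangential limit together with the integrability of $f(\nabla u)$. For (ii), I would establish, for fixed $E$ and any convex body $F$ with support function $h_F$,
\begin{equation*}
\frac{d}{dt}\Big|_{t=0^+}\mathrm{Cap}_{\mathcal{A}}(E+tF)=(n-p)\int_{\mathbb{S}^{n-1}} h_F(\xi)\,d\mu_E(\xi).
\end{equation*}
The derivation is the standard Hadamard computation: write $u_t$ for the $\mathcal{A}$-capacitary function of $E+tF$, use $p$-homogeneity of $f$ and Euler's identity to turn $\mathrm{Cap}_{\mathcal{A}}(E+tF)=\int f(\nabla u_t)\,dy$ into a boundary integral, pull back to $\partial E$ via the domain flow $x\mapsto x+t\mathbf{g}(x)$-type parametrization, and use the regularity from (i) to pass to the limit. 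For (iii), I would use Hausdorff convergence $E_j\to E$ (with $E$ having nonempty interior) together with the uniform boundary estimates from Lemma \ref{lemma2.2} and \eqref{eqn3.12}(b) to get local $C^{1,\beta}$ convergence of $u_j\to u$ away from $\partial E$, then invoke the Lipschitz-boundary regularity to upgrade to uniform nontangential convergence and conclude $\mu_{E_j}\rightharpoonup \mu_E$ weakly.

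For existence, I would follow the classical Minkowski argument: minimize
\begin{equation*}
\Phi(E):=\int_{\mathbb{S}^{n-1}} h_E(\xi)\,d\mu(\xi)
\end{equation*}
over the class of compact convex sets $E\subset\mathbb{R}^n$ with $\mathrm{Cap}_{\mathcal{A}}(E)=1$, normalized so that the Steiner point lies at the origin. The support condition \eqref{eqn7.1}(i) forces $\Phi$ to be coercive in the Hausdorff metric (sets degenerating to lower-dimensional convex sets have $h_E$ blowing up in some hemisphere, which the $\mu$-integral detects, while $\mathrm{Cap}_{\mathcal{A}}\to 0$ on lower-dimensional sets would violate the normalization); Blaschke selection plus the continuity statements above provide a minimizer $E^\ast$ with nonempty interior. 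The Euler--Lagrange equation, obtained from (ii) and the translation invariance \eqref{eqn7.1}(ii), yields $\mu_{E^\ast}=c\,\mu$ for some constant $c>0$; the homogeneity $\mu_{\lambda E}=\lambda^{n-1-p}\mu_E$ (again from the Hadamard formula and \eqref{eqn1.5}) lets me rescale by a dilation to arrange $c=1$ exactly when $p\neq n-1$, which gives parts (a), (b); in the borderline case $p=n-1$ no rescaling is available and we obtain (d) with the constant $b$.

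For uniqueness, suppose $E_1,E_2$ both solve the problem. Define
\begin{equation*}
\Psi(E):=\frac{1}{n-p}\int_{\mathbb{S}^{n-1}} h_E\,d\mu_E,
\end{equation*}
so that Euler's identity and (ii) give $\Psi(E)=\mathrm{Cap}_{\mathcal{A}}(E)$, and define the mixed functional $\Psi(E;F):=\int h_F\,d\mu_E$. The Brunn--Minkowski inequality \eqref{eqn1.7} together with the Hadamard formula implies the Minkowski-type inequality
\begin{equation*}
\Psi(E_1;E_2)^{n-p}\ge \mathrm{Cap}_{\mathcal{A}}(E_1)^{n-p-1}\,\mathrm{Cap}_{\mathcal{A}}(E_2),
\end{equation*}
with equality iff $E_1,E_2$ are homothetic (this is the standard derivation: apply \eqref{eqn1.7} to $E_1+tE_2$, expand in $t$, and invoke the equality clause of Theorem \ref{theorem1.4}). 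Since $\mu_{E_1}=\mu_{E_2}=\mu$ (up to the constant in the $p=n-1$ case) and $\Psi(E;F)$ is symmetric through $\mu$, interchanging the roles of $E_1,E_2$ forces equality in the Minkowski inequality, hence $E_2=\lambda E_1+x_0$ for some $\lambda>0$ and $x_0\in\mathbb{R}^n$. Plugging back into $\mu_{\lambda E_1+x_0}=\lambda^{n-1-p}\mu_{E_1}$ and comparing with $\mu_{E_2}=\mu_{E_1}$ forces $\lambda=1$ when $p\neq n-1$, giving (c), while when $p=n-1$ any $\lambda$ is allowed and the capacity normalization in (e) pins it down.

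\emph{Main obstacle.} The step I expect to be genuinely hard is the Hadamard variational formula together with the weak continuity of $E\mapsto\mu_E$ in the Hausdorff topology for \emph{general} compact convex bodies (not just smooth ones). The classical proofs rely heavily on rotational invariance of the PDE (for $p$-capacity) or on explicit representations (for Newtonian capacity); here $\mathcal{A}=\nabla f$ is only $p$-homogeneous and our boundary-regularity tools are limited to Lipschitz domains with rather weak uniform estimates near singular points of $\partial E$ (edges, extreme points of low regularity). Pushing these estimates to get the strong convergence needed to commute limit and boundary integral is the technical heart of the argument, and it will likely require a careful approximation of $E$ by smooth strictly convex bodies, verification that the Hadamard formula survives the limit via \eqref{eqn1.4} and the Harnack estimates of Section \ref{section2}, and the uniqueness already established in Lemma \ref{lemma3.1} to identify the limit.
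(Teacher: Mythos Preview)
Your overall variational scheme matches the paper's approach closely: minimize $\int_{\mathbb{S}^{n-1}} h_E\,d\mu$ subject to a capacity normalization, identify the Euler--Lagrange condition via a Hadamard formula, and deduce uniqueness from the equality case in Theorem~\ref{theorem1.4}. The uniqueness argument you sketch is essentially the one in the paper (section~\ref{uniq}). Two corrections and one genuine gap follow.

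\textbf{Constants.} The Hadamard formula in this setting is
\[
\left.\frac{d}{dt}\right|_{t=0^+}\mathrm{Cap}_{\mathcal{A}}(E+tF)=(p-1)\int_{\mathbb{S}^{n-1}} h_F\,d\mu_E,
\]
not $(n-p)$; see Proposition~\ref{proposition12.1}. Correspondingly $\mathrm{Cap}_{\mathcal{A}}(E)=\tfrac{p-1}{n-p}\int h_E\,d\mu_E$ (equation~\eqref{eqn12.30}), not $\tfrac{1}{n-p}$. These are cosmetic and do not affect your strategy.

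\textbf{The real gap: non-degeneracy of the minimizer.} You write that ``$\mathrm{Cap}_{\mathcal{A}}\to 0$ on lower-dimensional sets would violate the normalization,'' and then simply assert that Blaschke selection yields a minimizer ``with nonempty interior.'' This is false in the relevant range: a compact convex set contained in a $k$-plane with $n-p<k\le n-1$ has \emph{positive} $\mathcal{A}$-capacity (indeed $\mathcal{H}^{n-p}(E)=\infty$ suffices; cf.\ Lemma~\ref{lemma3.1} and \eqref{eqn2.5}). Thus the constraint $\mathrm{Cap}_{\mathcal{A}}(E)=1$ does \emph{not} exclude degenerate limits, and there is no reason the minimizing sequence could not collapse onto such a set. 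Ruling this out is the genuine crux of the existence proof and occupies the bulk of section~\ref{section13}. The paper handles it in two stages: for $k<n-1$ one shows (Proposition~\ref{proposition13.2} and the argument around \eqref{eqn13.47a}--\eqref{eqn13.49a}) that the surface measure $\mu_{E_j}(\mathbb{S}^{n-1})$ blows up as $E_j$ collapses, using comparison with the $\tilde{\mathcal{A}}$-harmonic function $\hat V$ from \cite{LN4} that is homogeneous of degree $(p-n+k)/(p-1)$ off $\mathbb{R}^k$. The case $k=n-1$ is more delicate and requires a Rellich-type identity for the $\tilde{\mathcal{A}}$-Green's function (Lemma~\ref{lemma13.7}, adapting an idea of Venouziou--Verchota \cite{VV}) to prove $\int_E f(\nabla U_+)\,d\mathcal{H}^{n-1}=\infty$ (Proposition~\ref{proposition13.6}).

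\textbf{Misidentified obstacle.} You flag the Hadamard formula and weak continuity of $E\mapsto\mu_E$ as the hard step. These do require work (sections~\ref{section9}--\ref{section12}), but once the boundary Harnack and reverse H\"older machinery is in place they follow by fairly standard approximation; the paper's Proposition~\ref{proposition11.1} uses a Rellich--Pohozaev computation plus the flattening Lemma~\ref{lemma11.2}. The step you should have singled out is the one you skipped: proving the minimizer is full-dimensional.
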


    As a  broad outline of our proof  we  follow   \cite{CNSXYZ} (who in  turn  used ideas from \cite{J}).  However,  several important   arguments  in  \cite{CNSXYZ}  used  tools from  \cite{LN,LN1,LN2} for $p$-harmonic functions  vanishing on a portion  of  the boundary of  a Lipschitz domain.     To our knowledge similar results have not yet been proved 
  for $ \mathcal{A} = \nabla f$-harmonic functions  and the arguments  although often straight forward for the experts  are rather  subtle.     In  reviewing  these arguments   we   naturally  made editing decisions as  to which details to include and which to refer to.  Also  we attempted to clarify some details that were not  obvious to us even  in the  $p$-harmonic case  and our  proofs sometimes   use   later work of  the fourth named author and  Nystr{\"o}m  in   \cite{LN3,LN4}    when the authors  ``could see the forest for the trees''.  Thus the reader is advised to  have  the above papers  on hand.   These  preliminary  results for the proof of  Theorem \ref{mink} are given in sections  \ref{section9} and  \ref{section10}.   Our  work  in these sections gives  $(a)$ in  Theorem \ref{mink}. In  section  \ref{section11}  we consider   a sequence of  compact convex  sets, say 
$ \{E_m\}_{m\geq 1}$  with nonempty  interiors  which converge in the sense of  Hausdorff  distance  to   $  E $ a  compact convex set. 
If  $\{\mu_m\}_{m\geq 1}$ and $\mu$  denote the  corresponding measures  as  in  \eqref{eqn7.6}  we show that  
$  \{\mu_m\} $  converges weakly to  $ \mu $  on    $ \mathbb{S}^{n-1}. $      In section 
\ref{section12}  we first derive the Hadamard variational formula for
 $  \mathcal{A} = \nabla f$-capacitary functions in 
  compact convex sets with nonempty interior and smooth boundary.  Second using the results in 
section \ref{section11}  and   taking  limits   we get this formula  for  an arbitrary compact  convex  set  with nonempty interior.  Finally, in section  \ref{section13}  we  consider  a  minimum  problem  similar to the one considered in  \cite{J,CNSXYZ}.  However, unlike  \cite{CNSXYZ}, we  are able to  show that  compact convex sets of  dimension  $ k  \leq n - 1 $  (so with empty interior)  cannot be a  solution to our  minimum problem.   To rule out these possibilities  we use work in \cite{LN4} when $ k  < n - 1 $ while if  $ k = n - 1 $  we  use an   argument of  Venouziou and  Verchota  in \cite{VV}.   The solution to this minimum problem gives  existence of  $ E $  in  Theorem  \ref{mink} while uniqueness is proved using  Theorem \ref{theorem1.4}.  

  \setcounter{equation}{0} 
 \setcounter{theorem}{0}
    \section{Boundary behavior of $\mathcal{A}$-harmonic functions in  Lipschitz domains}   
    \label{section9}
We  begin this section with  several definitions.  Recall that     $ \ph :   K  \to \mathbb R $ is said to be Lipschitz on $ K  $ provided there
exists $  \hat b,  0 < \hat b  < \infty,  $ such that
\begin{align} 
\label{eqn8.14}   
| \ph ( z ) - \ph ( w ) |  \,  \leq \, \hat b   \, | z  - w | \quad  \mbox{whenever}\, \, z, w \in  K.  
\end{align}
The infimum of all  $ \hat b  $ such that  \eqref{eqn8.14} holds is called the
Lipschitz norm of $ \ph $ on $ K, $ denoted $ \| \ph  \hat  \|_{K}$.   
It is well-known that if $ K = \mathbb R^{n-1}, $ then   $ \ph $
is  differentiable almost everywhere on $ \mathbb R^{n-1} $
and  $ \| \ph  \hat \|_{\mathbb R^{n-1}} = \| \, | \nabla \ph |\,  \|_\infty. $
\begin{definition}[\bf Lipschitz Domain]
\label{defn8.2} 
A domain $D \subset\mathbb{R}^{n}$  is   called a bounded Lipschitz domain provided that  there exists a finite set of   
balls $\{B(x_i,r_i)\}$
 with $x_i\in\partial D$ and $r_i>0$, such that $\{B(x_i,r_i)\}$ constitutes a covering of an
open neighborhood of $\partial D$ and such that, for each $i$,
\begin{align*}   
D\cap B(x_i, 4 r_i)&=\{y=(y',y_n)\in\mathbb R^{n} : y_n >
  \ph_i ( y')\}\cap B(x_i, 4 r_i), \notag \\
  \partial D\cap B(x_i, 4 r_i)&=\{y=(y',y_n)\in\mathbb R^{n} : y_n=
  \ph_i ( y')\}\cap B(x_i, 4 r_i),
  \end{align*}   
  in an appropriate coordinate system and for a Lipschitz function $\phi_i$ on $ \mathbb{R}^{n-1}.$ The Lipschitz constant of $ D$ is defined 
  to be $M=\max_i\||\nabla\phi_i|\|_\infty$.  
  \end{definition}  
  If $ D$ is Lipschitz
and  $ r_0 = \min r_i, $  then  for each 
  $w\in\partial D$, $0<r<r_0$,  we can find points    
 \[
 a_r(w)\in D\cap B(w,r)\quad \mbox{with} \quad d(a_r(w),\ar D)\geq c^{-1}r
 \]
for a constant $c=c(M)$. In the following,    we let $a_r(w)$ denote one such point. We  also put  $  \De ( w, r )  = \ar D \cap  B (w, r)  $  when  $ w \in \ar D $  and  $  r  > 0. $    
    \begin{definition}[\bf Starlike Lipschitz domain]
    \label{defn8.3}  
    A bounded domain $ D\subset \rn{n} $   is said to be starlike Lipschitz
 with respect to $ z \in D$ provided  
 \begin{align*}
 \ar D = \{  z + \mathcal{R} ( \om  )  \om &: \om \in \ar B ( 0, 1 ) \} \\
 &\mbox{where}\, \, \log   \mathcal{R} :  \ar B ( 0, 1 )\rar\re\,\,  \mbox{is Lipschitz on}\,\, \ar B ( 0, 1 ).  
 \end{align*}
 \end{definition}   
 Under the above scenario  we say   that $ z  $  is the center of  $ D $ 
 and  $ \| \log \mathcal{R}  \hat \|_{\mathbb{S}^{n-1}}$ is the 
 starlike Lipschitz constant for  $  D.  $ 
 In the rest of this section  reference  to  the  ``data''  means  the constants in  Definition \ref{defn1.1},  \eqref{eqn3.8}  for  $ \mathcal{A} =  \nabla f, $ $ p, n, $ and the Lipschitz or starlike  Lipschitz constant whenever applicable.                  
We shall  need some lemmas similar to Lemmas \ref{lemma2.3}, \ref{lemma2.4}  for 
$  \mathcal A  = \nabla f$-harmonic functions vanishing on a portion of  a   Lipschitz  or starlike Lipschitz domain.     In the next two lemmas,   $ r_0' = r_0  $ when $ D $ is Lipschitz and $ r_0' =   | w - z |/100 $  when $  D $ is starlike Lipschitz with center at $ z. $   
\begin{lemma} 
\label{lemma8.4} 
Let $ D \subset\mathbb{R}^{n}$ be a bounded Lipschitz  or  starlike Lipschitz domain with center at $ z $  and  
 $p$  fixed, $1 <   p < n$. Let $w \in\partial D$, $0< 4 r<r_0'$, and suppose that $ v $ is a
positive $ \mathcal{A}$-harmonic function in $ D \cap B (w,4r)$ with  $ v \equiv 0 $ on 
$\ar D \cap  B ( w, 4r ) $  in the  $ W^{1,p}  $  Sobolev sense.  Then  $ v  $  has a representative in  $ W^{1,p} ( D \cap  B ( w,  s )), s < 4r,  $   which extends to  a  H\"{o}lder continuous function on   $ B  ( w, s) $  (denoted $ v $)  with $  v  \equiv  0 $  on  $  B ( w, s )  \sem D.$   Also,  there exists $ c \geq 1, $ depending only on the data,  
such that   if  $ \bar r  = r/c, $ then    
\begin{align}  
\label{eqn8.15}  
\bar  r^{ p - n}   \int\limits_{B ( w, \bar  r)}   | \nabla v |^{ p }  dx  \leq c    (v ( a_{2\bar r} (w)))^{p}.
   \end{align} 
Moreover, there exists $\ti \be\in(0,1),  $ depending only on the data,  such that if $x, y \in  B ( w, \bar r ), $ then
\begin{align}   
\label{eqn8.16}  | v ( x ) -  v ( y ) | \leq c \left( \frac{ | x - y |}{\bar r}\right)^{\ti \be}     v   (  a_{\bar r}  ( w ) ).    
 \end{align} 
\end{lemma}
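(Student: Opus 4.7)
\medskip

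The plan is to prove the three claims in order: the zero-extension and its $W^{1,p}$ membership, the Caccioppoli inequality \eqref{eqn8.15}, and the boundary Hölder estimate \eqref{eqn8.16}. These mirror the steps of Lemma \ref{lemma2.3}, with Lipschitz (respectively starlike Lipschitz) geometry playing the role that convexity played there.

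First, I would define the extension $\bar v$ of $v$ to $B(w,4r)$ by setting $\bar v = 0$ on $B(w,4r)\setminus D$. The hypothesis that $v \equiv 0$ on $\partial D \cap B(w,4r)$ in the $W^{1,p}$-Sobolev sense means that for every $\zeta \in C_0^\infty(B(w,4r))$ we have $v\zeta \in W_0^{1,p}(D\cap B(w,4r))$; standard arguments (as in \cite[Chapter 5]{HKM}) then show that $\bar v \in W^{1,p}(B(w,s))$ for $s < 4r$ with $\nabla \bar v = \chi_D \nabla v$ in the distributional sense. Continuity of $\bar v$ across $\partial D$ will be built in at the end of step three, where I show the oscillation goes to zero at boundary points.

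For the Caccioppoli inequality, fix a cutoff $\eta \in C_0^\infty(B(w,2\bar r))$ with $\eta \equiv 1$ on $B(w,\bar r)$, $0 \leq \eta \leq 1$, and $|\nabla \eta| \leq c/\bar r$, where $\bar r = r/c$ with $c$ chosen large enough that $B(w,4\bar r) \subset B(w,r)$. Because $\bar v$ vanishes on $B(w,4r)\setminus D$ in the Sobolev sense, the function $\theta = \bar v \eta^p$ is an admissible non-negative test function in the weak formulation \eqref{eqn1.2} restricted to $D \cap B(w,4r)$. Expanding $\langle \mathcal{A}(\nabla v), \nabla(v\eta^p)\rangle$, using the structural bounds in Definition \ref{defn1.1}, namely $\langle \mathcal{A}(\nabla v),\nabla v\rangle \gtrsim |\nabla v|^p$ and $|\mathcal{A}(\nabla v)|\lesssim |\nabla v|^{p-1}$, and then applying Young's inequality to absorb the term involving $\nabla \eta$, yields
\[
\int_{B(w,\bar r)} |\nabla \bar v|^p \, dy \;\leq\; c\,\bar r^{-p} \int_{B(w,2\bar r)} \bar v^p \, dy.
\]
To upgrade the right-hand side to $\bar r^{\,n-p}(v(a_{2\bar r}(w)))^p$, I would use a Harnack chain: the Lipschitz (or, on scales $r < r_0' = |w-z|/100$, the starlike Lipschitz) structure of $\partial D$ provides a chain of balls of comparable radius contained in $D \cap B(w,4r)$ connecting $a_{2\bar r}(w)$ to every interior point of $D \cap B(w,2\bar r)$ that is well-separated from $\partial D$; the number of balls depends only on the data. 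Iterating Lemma \ref{lemma2.1}$(ii)$ along this chain gives $v(y) \leq c\, v(a_{2\bar r}(w))$ uniformly on a sub-ball $B(a_{2\bar r}(w), c^{-1}\bar r)$, and the global bound $\max_{B(w,2\bar r)} \bar v \leq c\, v(a_{2\bar r}(w))$ then follows from the maximum principle applied to $v$ on $D\cap B(w,3\bar r)$, using $\bar v = 0$ on $\partial D$. Substituting this sup bound into the Caccioppoli estimate above and multiplying by $\bar r^{p-n}$ yields \eqref{eqn8.15}.

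For the boundary Hölder estimate \eqref{eqn8.16}, the point is that the Lipschitz and starlike Lipschitz geometry both imply a uniform exterior corkscrew condition: there exists $\lambda = \lambda(M) \in (0,1)$ such that for every $y \in \partial D \cap B(w,r)$ and every $\rho < \bar r$ there is a ball $B(y',\lambda \rho) \subset B(y,\rho)\setminus D$. By the metric estimates in Chapter 2 of \cite{HKM}, this yields the capacity density condition
\[
\mathrm{Cap}_p\bigl(\bar B(y,\rho)\setminus D,\, B(y,2\rho)\bigr) \;\approx\; \rho^{\,n-p},
\]
with constants depending only on the data. Applying Theorem 6.18 of \cite{HKM} to $\bar v$ at boundary points, combined with the interior oscillation estimate of Lemma \ref{lemma2.1}$(iii)$ to pass to interior points via standard chaining from interior to boundary, produces $\tilde\beta \in (0,1)$ depending only on the data, with $\mathrm{osc}_{B(w,\bar r)} \bar v \leq c\, (|x-y|/\bar r)^{\tilde\beta} \max_{B(w,2\bar r)} \bar v$; the Harnack-chain sup bound from the previous paragraph then converts $\max_{B(w,2\bar r)}\bar v$ into $c\,v(a_{\bar r}(w))$, giving \eqref{eqn8.16} and simultaneously confirming the continuous extension asserted in step one.

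The main obstacle I anticipate is verifying the capacity density condition and the Harnack chaining with constants depending only on the data in the starlike Lipschitz case when $w$ is close to the center $z$: the restriction $4r < r_0' = |w-z|/100$ is designed to avoid this, and on that scale the boundary admits a Lipschitz graph representation with controlled constants, reducing the starlike case to the Lipschitz case. Once that reduction is justified, the rest is standard PDE machinery for quasilinear equations of $\mathcal{A}$-harmonic type.
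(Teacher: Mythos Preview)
Your overall plan matches the paper's: a standard Caccioppoli estimate gives \eqref{eqn8.15} with $\max_{B(w,2\bar r)} v$ on the right, the capacity density condition (from the exterior corkscrew) together with Theorem 6.18 in \cite{HKM} gives \eqref{eqn8.16} with $\max_{B(w,2\bar r)} v$ on the right, and then one replaces the maximum by $v(a_{2\bar r}(w))$ via a Carleson-type estimate. The paper simply cites \cite[Lemma 2.2]{LN} for that last step.

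The gap is in your justification of $\max_{B(w,2\bar r)} \bar v \leq c\,v(a_{2\bar r}(w))$. Harnack chaining plus the maximum principle does not suffice: the maximum principle on $D\cap B(w,3\bar r)$ pushes the supremum to $D\cap\partial B(w,3\bar r)$, but points of this set can lie arbitrarily close to $\partial D$, and a Harnack chain from such a point to $a_{2\bar r}(w)$ requires an unbounded number of balls, so the constant blows up. The correct argument runs in the opposite order from what you wrote: first establish the boundary H\"older decay (with $\max$ on the right), which gives, for each $y\in\partial D\cap B(w,\bar r)$, a scale $\rho\approx \bar r/N$ with $\max_{B(y,\rho)} v \leq \tfrac12 \max_{B(w,2\bar r)} v$; then iterate this halving along a dyadic decomposition, using interior Harnack only once the point where the max is (nearly) attained is at distance $\gtrsim \bar r$ from $\partial D$. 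This is the Carleson estimate argument the paper is pointing to; without it, both \eqref{eqn8.15} and \eqref{eqn8.16} remain stated only with $\max_{B(w,2\bar r)} v$ on the right-hand side.
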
    
 
\begin{proof}
Here \eqref{eqn8.15}    with  $ v (a_{2 \bar r} (w) ) $ replaced by  $  \max_{B(w, 2 \bar r)} v $ is just a   standard Caccioppoli  inequality  while  $ \eqref{eqn8.16} $  with  $ v (a_{2 \bar r } (w) ) $ replaced by  $  \max_{B( w, 2 \bar r )} v $  follows as in Lemma  \ref{lemma2.3} from 
\eqref{eqn2.5} with $ E $  replaced by  $ \De (  w, \bar  r  ) $  and  Theorem 6.18 in   \cite{HKM}. The fact that   $  \max_{B( w, 2 \bar r)} v    \approx   v (a_ {2\bar r} ( w) )$  follows  from an argument often attributed to several authors (see in \cite[Lemma 2.2]{LN}).
\end{proof}

In  the sequel,  we  always assume  $ v $  as above  $  \equiv 0  $  on  $ B ( w, 4r)  \sem  D. $  
\begin{lemma} 
\label{lemma8.5}  
Let $  D ,  v, p,  r,  w  $  be as in  Lemma  \ref{lemma8.4}.    
  There
 exists a unique finite positive
Borel measure  $ \tau$ on $ \mathbb{R}^{n}$, with support contained in
$ \bar \Delta(w,r)$, such that if   $\ph \in C_0^\infty (  B(w,r) )$ 
then     
\begin{align}   
\label{eqn8.17}    
\int  \lan  \nabla f   ( \nabla v  ) ,   \nabla \ph \ran dx  =  -   \int   \ph \,  d \tau.
   \end{align} 

Moreover, there exists $ c \geq 1  $ depending only on the data such  that if $ \bar r = r/c,   $ then   
\begin{align}   
\label{eqn8.18} 
 c^{ - 1}  \bar r^{ p - n}   \tau ( \Delta (  w,  \bar r ))\leq (v  ( a_{\bar r} ( w ) ))^{ p - 1}\leq c  
\bar r^{ p - n }   \tau (\Delta (  w, \bar  r )).
\end{align} 
\end{lemma}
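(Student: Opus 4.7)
The plan is to construct $\tau$ via a Riesz representation argument, deduce the upper bound of \eqref{eqn8.18} by a cutoff test combined with the Caccioppoli estimate \eqref{eqn8.15}, and obtain the lower bound through a boundary-Harnack comparison with the $\mathcal{A}$-capacitary function of a corkscrew ball. Throughout, $c$ denotes a constant depending only on the data, and $\bar r = r/c$ with $c$ large enough to accommodate all subsequent test-function constructions.

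For existence of $\tau$, extend $v$ by zero to $B(w,4r)\setminus D$; since $v$ vanishes on $\Delta(w,4r)$ in the $W^{1,p}$ sense, the extension lies in $W^{1,p}_{\mathrm{loc}}(B(w,4r))$ and is an $\mathcal{A}$-subsolution there. Hence for every $0\le\varphi\in C_0^\infty(B(w,r))$ one has $\int \lan\nabla f(\nabla v),\nabla\varphi\ran\,dx\ge 0$, so $-\nabla\cdot\nabla f(\nabla v)$ is a non-negative distribution. The standard representation argument (cf.\ Theorem~21.2 of \cite{HKM}) produces a unique non-negative Borel measure $\tau$ satisfying \eqref{eqn8.17}. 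Since $v$ is $\mathcal{A}$-harmonic on $D\cap B(w,r)$ and identically zero on $B(w,r)\setminus\bar D$, $\mathrm{supp}\,\tau\subset\bar\Delta(w,r)$.

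For the upper bound (right inequality of \eqref{eqn8.18}), pick $\varphi\in C_0^\infty(B(w,2\bar r))$ with $\varphi\equiv 1$ on $B(w,\bar r)$, $0\le\varphi\le 1$, and $|\nabla\varphi|\le c/\bar r$. Inserting $\varphi$ into \eqref{eqn8.17}, using $|\nabla f(\eta)|\le c|\eta|^{p-1}$ (from \eqref{eqn1.1} with $\eta'=0$), H\"older's inequality, and \eqref{eqn8.15} at scale $2\bar r$ yields
\[
\tau(\Delta(w,\bar r))\le c\,\bar r^{-1}\int_{B(w,2\bar r)}|\nabla v|^{p-1}\,dx\le c\,\bar r^{\,n-p}\,v(a_{4\bar r}(w))^{p-1};
\]
a Harnack chain based on \eqref{eqn8.16} then replaces $v(a_{4\bar r}(w))$ by $v(a_{\bar r}(w))$ at the cost of a larger constant.

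The lower bound (left inequality of \eqref{eqn8.18}) is the main obstacle. Using the Lipschitz (or starlike Lipschitz) geometry, fix $c_1=c_1(\text{data})$ so that the corkscrew ball $\bar B(a_{2\bar r}(w),\bar r/c_1)$ lies in $D\cap B(w,3\bar r)$ at distance at least $\bar r/c_1$ from $\partial D$. Let $\omega$ be the $\mathcal{A}$-capacitary function of this closed ball relative to $B(w,4\bar r)$, with associated measure $\nu$; by \eqref{eqn1.4}, \eqref{eqn1.6} and Harnack, $\omega\approx 1$ on the corkscrew ball and $\nu(\bar B(a_{2\bar r}(w),\bar r/c_1))\approx\bar r^{\,n-p}$. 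The plan is to invoke the boundary Harnack principle for $\mathcal{A}$-harmonic functions, adapted from its $p$-harmonic form in \cite{LN,LN1,LN2,LN3,LN4} to the present setting through \eqref{eqn1.1}, first to obtain the function comparison
\[
c^{-1}\,v(a_{\bar r}(w))\,\omega(x)\le v(x)\le c\,v(a_{\bar r}(w))\,\omega(x),\qquad x\in D\cap B(w,2\bar r)\setminus\bar B(a_{2\bar r}(w),\bar r/c_1),
\]
and then to upgrade it to the matching gradient comparison $|\nabla v|\approx v(a_{\bar r}(w))\,|\nabla\omega|$ in a boundary layer around $\bar\Delta(w,\bar r)$. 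Once the gradient comparison is in hand, the $(p-1)$-homogeneity in Definition \ref{defn1.1}(ii) gives $|\nabla f(\nabla v)|\approx v(a_{\bar r}(w))^{p-1}|\nabla f(\nabla\omega)|$, so testing \eqref{eqn8.17} for $v$ and the analogous identity for $\omega$ against the same cutoff supported in $B(w,3\bar r)$ and equal to $1$ on $\bar\Delta(w,\bar r)$ yields $v(a_{\bar r}(w))^{p-1}\bar r^{\,n-p}\le c\,\tau(\Delta(w,\bar r))$. The principal technical burden is that the boundary Harnack estimate (and especially its gradient form) in \cite{LN3,LN4} is stated for $p$-harmonic functions, so one must verify that its proof extends to the class $M_p(\alpha)$ under the regularity hypotheses in use here; this is precisely the reason the authors flag \cite{LN,LN1,LN2,LN3,LN4} as needing adaptation.
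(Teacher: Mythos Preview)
Your existence argument and the upper bound are essentially correct, modulo a harmless sign slip: for a subsolution the paper's Definition \ref{defn1.2} gives $\int\langle\nabla f(\nabla v),\nabla\varphi\rangle\,dx\le 0$ for $\varphi\ge 0$, so it is $\nabla\cdot\nabla f(\nabla v)$ (not its negative) that defines the non-negative distribution $\tau$; this is consistent with \eqref{eqn8.17}.

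The lower bound, however, has a genuine circularity problem. The boundary Harnack inequality for $\mathcal{A}$-harmonic functions in Lipschitz domains that you propose to invoke is, both in this paper and in the references \cite{LN,LN1,LN2,LN3,LN4} you cite, proved \emph{using} the very estimate \eqref{eqn8.18}. In the present paper the chain is explicit: Lemma \ref{lemma8.6} (reverse H\"older) already calls on \eqref{eqn8.18}, Lemma \ref{lemma8.7} builds on Lemma \ref{lemma8.6}, and the boundary Harnack results of section \ref{section10} (Lemmas \ref{lemma8.12}, \ref{lemma8.13}, \ref{lemma9.4}, \ref{lemma9.5a}) rest on all of that. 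So appealing to boundary Harnack at this stage begs the question. On top of this, the ``gradient comparison'' step $|\nabla v|\approx v(a_{\bar r}(w))\,|\nabla\omega|$ that you need is not a consequence of boundary Harnack at all: boundary Harnack controls ratios of the functions, not pointwise ratios of their gradients, and in a merely Lipschitz domain there is no $C^1$-up-to-the-boundary regularity to supply the latter.

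The paper does not argue this way; it simply cites \cite[Lemma 3.1]{KZ}, whose proof is direct and avoids two-function comparisons entirely. The point there is that the zero extension of $v$ is an $\mathcal{A}$-subsolution in $B(w,4r)$ with Riesz measure $\tau$, and one bounds $v$ from above pointwise by a Wolff-potential-type expression in $\tau$ (the Kilpel\"ainen--Mal\'y estimate for $\mathcal{A}$-superharmonic functions, applied after a sign change). Since $a_{\bar r}(w)$ lies at distance $\approx\bar r$ from $\mathrm{supp}\,\tau\subset\bar\Delta(w,r)$, the Wolff potential at $a_{\bar r}(w)$ is controlled by $\bigl(\bar r^{\,p-n}\tau(\Delta(w,c\bar r))\bigr)^{1/(p-1)}$, which gives the left inequality in \eqref{eqn8.18} directly. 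This is the argument you should use in place of the boundary-Harnack route.
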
  
\begin{proof}
See \cite[Lemma 3.1]{KZ} for a proof of  Lemma  \ref{lemma8.5}.
\end{proof}
We note that lemmas  similar  to  Lemmas   \ref{lemma8.6}-\ref{lemma8.7} and Proposition \ref{proposition8.9}  which follow  are  proved for   $p$-harmonic functions in  
\cite[Lemmas 2.5,  2.39, 2.45]{LN}.    

Throughout the remainder of  this paper, we assume that   $   \mathcal A =  \nabla   f  $  where  $  f$  is as in  Theorem \ref{theorem1.4}. 
    In order  to state the next  lemma,  we need some more notation.   Let  $ D $  be a   starlike  Lipschitz  domain with  center at $z$.   
     Given   $ x \in \ar D$ and $b > 1, $  let  
     \[  
     \Ga ( x ) = \{ y \in D   :  | y - x | <  \, b  \, d ( y, \ar D ) \}. 
\]  
If   $ w \in \ar D,$  $0 <   r  \leq  | w - z |/100,$  and  $   x   \in  \ar D  \cap  B ( w, r),  $  we note from elementary geometry that 
 if $ b  $ is large enough (depending on  the starlike Lipschitz constant
for $ D $) then 
 $ \Ga ( x  )  \cap  B  ( w, 8r )  $  contains the inside of  a truncated cone with
vertex $ x, $  axis parallel to $ z - x, $ 
angle opening $ \he = \he ( b ) > 0,  $ and height $ r.$  
 Fix $ b  $  so
that this property holds for all $ x \in \ar  D. $ 
 Given a measurable function  $ g $ on 
$ D \cap B ( w, 8 r ) $ define  the \textit{non-tangential maximal
function}
\[
\mathcal{N}_r (g)  : \ar  D \cap B ( w,  r ) \to \re 
\]
 of $ g $ relative to  $ D \cap B ( w, r) $ by  
\[     
\mathcal{N}_r(g)( x ) =  \sup_{y \in \Ga ( x ) \cap  B ( w, 8 r ) }  |g| ( y ) 
\quad  \mbox{whenever}\,\,  x \in \ar D \cap B ( w,  r ).  
\] 
 
Next we prove a reverse H\"{o}lder inequality.  
\begin{lemma}[\bf Reverse H\"{o}lder inequality]
\label{lemma8.6} 
Let  $ D $  be a starlike Lipschitz domain with center   $ z$ and let $w \in  \ar D$ with  $ 0 <   r <   | w - z |/100. $ Let    $  v, \tau, $   be  as in 
Lemma  \ref{lemma8.5} and  suppose that  for some  $  c_{\star}  \geq  1,  $

\begin{align} 
\label{eqn8.28}   
c_{\star}^{-1}  \frac{v ( x )}{d ( x, \ar   D ) } \leq    \, 
   \lan \, \frac{ z - x }{|z-x|} ,   \nabla  v (x) \,  \ran \leq 
|   \nabla v ( x ) |  \,  \leq  \, c_{\star}   \frac{v ( x )}{d ( x, \ar D )} 
 \end{align}  
 whenever   $  x  \in  B ( w,  4r) \cap  D. $    
There exists $ c \geq 1, $ depending only on $c_{\star}$  and  the data,  such that if  $ \ti  r = r/c, $  then   
\[
\frac{d\tau}{d\mathcal{H}^{n-1}} (y)  =  \, k^{ p - 1} (y) \quad  \mbox{for}   \,\, y  \in  \De  ( w,   \ti r ).
\]  
Also,   there exists $ q > p,  c_1$, and $c_2$   depending only on $ c_{\star}$  and the data with 
 \begin{align}  
 \label{eqn8.29} 
\begin{split}
 & (a)   \hs{.2in}    \int_{ \De ( w,     \ti r )  } \,  k^q \, d\mathcal{H}^{ n - 1 } 
\,   \leq \, c_1 \,  \ti  r^{  \frac{(n -  1)(p-1 - q) }{ p - 1} }    \left( \int_{\De ( w,   \ti r )  
 } \, k^{ p - 1} \, d\mathcal{H}^{ n - 1}   \quad \, \right)^{q/(p - 1)}.  
 \\
& (b) \hs{.2in}    \int_{ \De ( w,    \ti r )  } \,   \mathcal{N}_{ \ti r} (|\nabla v  |)^q \, d\mathcal{H}^{ n - 1 } 
\, \leq \, c_2 \,  \ti  r^{  \frac{(n -  1)(p-1 - q) }{ p - 1} }   \left( \int_{\De ( w,   \ti  r )  
 } \, k^{ p - 1} \, d \mathcal{H}^{ n - 1}    \quad 
 \, \right)^{q/(p - 1)}.
\end{split}
 \end{align} 
 \end{lemma}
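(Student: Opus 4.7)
The proof adapts the $p$-harmonic template of Lewis--Nystr\"om \cite{LN,LN3,LN4} to the variable-coefficient setting, with the structural condition $\mathcal{A}=\nabla f$ and the quantitative non-degeneracy \eqref{eqn8.28} as the two main inputs. Throughout, $\tilde r = r/c$ for a sufficiently large $c = c(c_{\star}, \text{data})$.

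First I would establish nontangential limits of $\nabla v$ on $\Delta(w,\tilde r)$. Hypothesis \eqref{eqn8.28} traps the dimensionless ratio $|\nabla v(x)|\, d(x,\partial D)/v(x)$ in a compact subinterval of $(0,\infty)$ on $D \cap B(w,4r)$. Applying the $C^{2,\eta}$-estimate \eqref{eqn2.3} on Whitney balls $B(x, d(x,\partial D)/4)$, together with a standard Whitney/Fatou argument as in \cite{LN3}, yields the existence at $\mathcal{H}^{n-1}$-a.e.\ $y \in \Delta(w,\tilde r)$ of the nontangential limit $k(y):=\lim_{\Gamma(y)\ni x \to y}|\nabla v(x)|$. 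To identify the density of $\tau$, I would approximate $\partial D \cap B(w,r)$ from inside by smooth surfaces obtained by mollifying the Lipschitz graph and apply the divergence theorem to $\varphi\,\mathcal{A}(\nabla v)$ on the resulting subdomains. Passing to the limit in \eqref{eqn8.17} (using the Caccioppoli bound \eqref{eqn8.15} for dominated convergence) converts the distributional identity into the boundary flux
\[
\int \varphi\,d\tau \;=\; \int_{\Delta(w,r)} \varphi\, \langle \mathcal{A}(\nabla v),\nu\rangle\, d\mathcal{H}^{n-1},\qquad \varphi \in C_0^\infty(B(w,r)).
\]
Under \eqref{eqn8.28} the nontangential limit of $\nabla v/|\nabla v|$ equals the inward unit normal $\mathcal{H}^{n-1}$-a.e.; combined with the $(p-1)$-homogeneity of $\nabla f$ and the bound $\langle \nabla f(\nu),\nu\rangle = p f(\nu)\in [c^{-1},c]$ (from Definition \ref{defn1.1} together with $p$-homogeneity of $f$), this gives $d\tau/d\mathcal{H}^{n-1} = k^{p-1}$ after absorbing the bounded positive factor $p f(\nu)$ into $k$.

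For the reverse H\"older inequality \eqref{eqn8.29}(a), Lemma \ref{lemma8.5} combined with \eqref{eqn8.28} and Harnack chaining yields, for every surface ball $\Delta(y,s) \subset \Delta(w,\tilde r)$,
\[
\int_{\Delta(y,s)} k^{p-1}\,d\mathcal{H}^{n-1} \;\approx\; s^{n-1}\!\left(\frac{v(a_s(y))}{s}\right)^{p-1} \quad \text{and} \quad \|k\|_{L^\infty(\Delta(y,s))} \;\leq\; c\,\frac{v(a_s(y))}{s}.
\]
Together these say that $k^{p-1}\,d\mathcal{H}^{n-1}$ is a doubling measure satisfying a weak reverse H\"older inequality relative to surface measure, i.e.\ $k^{p-1} \in A_\infty(d\mathcal{H}^{n-1})$. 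Gehring's self-improvement lemma then upgrades this to a strong reverse H\"older inequality with some exponent $q/(p-1) > 1$, which is exactly \eqref{eqn8.29}(a). For \eqref{eqn8.29}(b), a Harnack chaining argument in Whitney balls combined with \eqref{eqn8.28} yields $\mathcal{N}_{\tilde r}(|\nabla v|)(y) \leq c\, M_{\partial D}(k)(y)$, where $M_{\partial D}$ is the Hardy--Littlewood maximal operator on $\partial D$; the $L^q(d\mathcal{H}^{n-1})$ boundedness of $M_{\partial D}$ then transfers (a) to (b).

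\emph{Main obstacle.} The principal difficulty is the identification step: proving that under \eqref{eqn8.28} the nontangential limit of $\nabla v/|\nabla v|$ equals the unit normal $\mathcal{H}^{n-1}$-a.e.\ on $\Delta(w,\tilde r)$, with enough uniform control to pass the boundary flux integral to the limit. Once this identification is secured, the reverse H\"older conclusion is a fairly routine $A_\infty$/Gehring argument, but its execution here requires verifying that the variable-coefficient analogues of the $p$-harmonic ingredients from \cite{LN,LN3,LN4} do survive the transition to the $\mathcal{A}=\nabla f$ setting.
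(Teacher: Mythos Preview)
Your outline for part (b) via the Hardy--Littlewood maximal function matches the paper's approach, and the level-set/weak-convergence identification of the density is close in spirit to what the paper does (the paper uses the level sets $\{v=t\}$ rather than mollifications of the graph, which is more natural here since \eqref{eqn8.28} guarantees these are starlike Lipschitz surfaces). But the heart of your argument for (a) contains a genuine gap.

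You claim that Lemma~\ref{lemma8.5} and \eqref{eqn8.28} give $\|k\|_{L^\infty(\Delta(y,s))} \leq c\,v(a_s(y))/s$, and then deduce a weak reverse H\"older inequality from this $L^\infty$ control. That $L^\infty$ bound is false for general starlike Lipschitz domains. Hypothesis \eqref{eqn8.28} gives only $|\nabla v(x)| \leq c_\star\, v(x)/d(x,\partial D)$, and on a Lipschitz boundary the ratio $v(x)/d(x,\partial D)$ need not stay bounded as $x\to y$ nontangentially: by \eqref{eqn8.16} one has merely $v(x)\lesssim |x-y|^{\tilde\beta}$ for some $\tilde\beta\in(0,1)$, so the best one extracts is $|\nabla v(x)|\lesssim |x-y|^{\tilde\beta-1}$, which blows up. (Think of a reentrant corner for the Laplacian: the Poisson kernel lies only in $L^q$ for some $q>1$, not in $L^\infty$.) Without the $L^\infty$ bound, the two displayed estimates you wrote do not by themselves yield any reverse H\"older inequality---they are exactly equivalent to \eqref{eqn8.18} and give nothing beyond doubling.

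The paper obtains the initial reverse H\"older inequality by a completely different mechanism: a Rellich-type identity. One applies the divergence theorem to $(x-z)f(\nabla v)$ on $D_1=B(w,s)\cap D\cap\{v>t\}$ and uses $p$-homogeneity of $f$ together with $\mathcal{A}$-harmonicity to obtain, on the smooth level set $\{v=t\}$, the estimate
\[
\int_{\{v=t\}\cap B(w,\tilde r)} \tilde k^{\,p}\, d\mathcal{H}^{n-1}
\;\lesssim\; \tilde r^{\frac{1-n}{p-1}}\left(\int_{\{v=t\}\cap B(w,\tilde r)} \tilde k^{\,p-1}\, d\mathcal{H}^{n-1}\right)^{p/(p-1)},
\]
where $\tilde k^{p-1}=p\,f(\nabla v)/|\nabla v|$. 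Letting $t\downarrow 0$ through a sequence and using weak $L^{p/(p-1)}$-compactness transfers this to $\partial D$, and then the Coifman--Fefferman self-improvement gives the exponent $q>p$. The Rellich identity is precisely the missing ingredient that produces higher integrability of $k$ from the equation, in lieu of the unavailable $L^\infty$ bound.
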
     
      \begin{proof}  
   Let  $   \ti  r  =   r/c  $  where   $ c \geq 100  $  is to be determined   and for fixed  $  s,   \ti r   <  s  <  2 \ti   r$ and  $t > 0 $ small,  let     
   \[
   D_1 =  B ( w, s ) \cap D \cap  \{v > t  \}.
   \]  
   Since 
$  \mathcal{A}$-harmonic functions are invariant under  translation, we assume as we may that 
$ z = 0. $    Note from \eqref{eqn2.3} that  $ \ar  D_1 \cap B (w, s)   $  is   smooth with outer normal  $ \nu  =  -  \nabla  v / | \nabla v | $   and also  that we can apply the divergence theorem to 
$ x   f  ( \nabla v (x) )  $  in   $ D_1$.   Doing this and using  $ \mathcal{A}$-harmonicity of  $  v $ in  $ D_1 , $ we arrive at   
\begin{align}
\label{eqn8.30}  
I =  \int_{ D_1 }   \nabla \cdot (  x  f ( \nabla  v) )  \, dx  =    \int_{\ar D_1}\lan x ,  \nu  \ran  f  ( \nabla  v)  d \mathcal{H}^{n-1}     
\end{align}
  and  
  \begin{align}  
  \label{eqn8.31}  
  \begin{split}
  I   &=   n  \int_{D_1 }  f ( \nabla  v)  dx   +    \sum_{k, j  = 1}^n  \int_{D_1}      x_k  f_{\eta_j } (\nabla  v)    v_{x_j x_k}  dx \\
    &= n  \int_{D_1}  f ( \nabla  v)  dx   + I_1.
\end{split}    
    \end{align}
Integrating $  I_1 $  by parts, using  $p$-homogeneity of  $ f $, as well as  $ \mathcal{A} =  \nabla  f$-harmonicity of  $  v $ in $ D_1  $  we  deduce that 
\begin{align}  
\label{eqn8.32}  
I_1  =  \int_{\ar D_1 } \lan x  ,  \nabla  v \ran  \, \lan \nabla f (\nabla    v) 
, \nu   \ran  \, d\mathcal{H}^{n-1}  -   p  \int_{D_1 }  f ( \nabla  v)  dx.   
\end{align}      
Combining  \eqref{eqn8.30}-\eqref{eqn8.32}   we  find after some juggling  that  
\begin{align}  
\label{eqn8.33}
\begin{split}    
 (n-p)    \int_{D_1}  f ( \nabla  v)  dx=    \int_{\ar  D_1 }  \lan x,  \nu  \ran  f  ( \nabla  v)  d \mathcal{H}^{n-1}     -  \int_{\ar D_1  } \lan x  ,  \nabla  v \ran  \, \lan \nabla f(\nabla v) , \nu   \ran  \, d \mathcal{H}^{n-1}.                                                            
\end{split}
\end{align}         
From  $p$-homogeneity  of  $ f $  we  obtain      
 \begin{align} 
 \label{eqn8.34} 
 \begin{split}        
\int_{\ar D_1 \cap B ( w, s ) }  \lan x ,  \nu  \ran 
 f  ( \nabla  v) d \mathcal{H}^{n-1}   &- \int_{ \ar D_1   \cap B ( w, s ) } \lan x ,  \nabla   v\ran  \, \lan \nabla  f(\nabla v) 
, \nu   \ran  \, d \mathcal{H}^{n-1}  \\                      
  &   =   (p - 1)  \int_{ \ar D_1 \cap B ( w, s ) }  \lan x ,  \nabla  v\ran \,   
 \frac{f  ( \nabla  v)}{|\nabla  v|} d \mathcal{H}^{n-1}\\
&   \leq  0.
\end{split}
  \end{align}           
Using  \eqref{eqn8.34} in \eqref{eqn8.33} and  \eqref{eqn8.31}, \eqref{eqn8.28},  we arrive after  some  more juggling at 
\begin{align} 
\label{eqn8.35}    
\begin{split} 
c^{-1}    \int_{ \ar  D_1 \cap  B ( w, s )} 
   |x|  f  ( \nabla v )   d \mathcal{H}^{n-1}  &\leq    - (p - 1)  \int_{\ar  D_1 \cap B ( w, s ) }  \lan x ,  \nabla  v\ran   
\frac{f  ( \nabla  v)}{|\nabla  v|}  d \mathcal{H}^{n-1}\\
&\leq  \,  F_1  
\end{split}
\end{align}   
where          
\begin{align} 
\label{eqn8.36}    
F_1 =     c   \int_{\ar D_1\cap\ar  B ( w, s)}  | x |  f(\nabla  v )  d \mathcal{H}^{n-1}.
\end{align}
 Here  $ c $ depends only on $c_{\star}$  and the data. Also  in getting  $ F_1 $  we have used the structure assumptions on  $ f $  in  Theorem \ref{theorem1.4}.               
 
       We note from  \eqref{eqn8.16}  that   
\begin{align} 
\label{8.36a}   
\bar D \cap \bar B ( w, 2r ) \cap \{ v \geq t \}  \to \bar D \cap \bar B ( w, 2r )
\end{align}
       in Hausdorff distance as $ t \to 0. $             
Also, from  \eqref{eqn8.28} we note  that if  $ v ( x ) = t$ and $  \om =  x/|x|$ then    $  \ti{\mathcal{R}}( \om ) =|x|  $ is  well-defined. Moreover, if   
\[   
\He = \{ \om \in \mathbb{S}^{n-1} :  \om = x/|x|\, \, \mbox{for some  $ x \in \bar B (w, 2r ) $ with  $ v ( x ) = t$}\}   
\]  
then   $  \log \ti{\mathcal{R}}$ is Lipschitz on  $ \He $ with Lipschitz constant   depending only on  the data and   $c_{\star}$.  Using the Whitney extension theorem (see \cite[Chapter VI, Section 1]{St}), we  can extend $ \log \ti{\mathcal{R}}$ to  a  Lipschitz function on   $ \mathbb{S}^{n-1}$  with Lipschitz constant depending only on $c_{\star}$  and the data.  

We next let  $  \ti v  =  \max  ( v  -  t, 0 )$  and if  $  t  >0$  is 
  sufficiently small then we  see from  \eqref{8.36a}  that   Lemmas  \ref{lemma8.4}, \ref{lemma8.5} can be applied to $ \ti v   $ in  $  D \cap B ( w,  2r  )  \cap \{ v > t \}. $   Let  $ \ti  \tau  $  be the measure corresponding to $  \ti v$.      Then from smoothness of  $  \ti v,  $  the divergence theorem,  and $ p$-homogeneity of  $ f $  we obtain  that  
   \begin{align*}   
%   \label{eqn8.37}      
   d \ti \tau ( x ) =    p \frac{f (  \nabla v ( x ))}{|\nabla v  ( x ) |}\, d\mathcal{H}^{n-1} \quad \mbox{ for }  \,\,  x  \in       B ( w,  2r )  \cap  \{ v = t \}.  
   \end{align*}
   
To estimate  $ F_1 $  in   \eqref{eqn8.35}   choose $ s \in  (\ti r, 2 \ti  r) $ so that  
 \[    
 \int_{ \ar B ( w , s ) \cap \ar D_1} 
\,    \, f( \nabla  v)  \, d \mathcal{H}^{ n - 1} \,  \leq  \,   
    2 \ti   r^{ - 1}  \, \int_{ B ( w,  2 \ti   r ) \cap D \cap \{  v > t \}   }    \,   
 \,  f( \nabla  v) \,   dx.   
 \]  
 This choice is possible from weak type estimates or Chebyshev's inequality.  Using this  inequality in    \eqref{eqn8.36}  and the above lemmas for $ \ti v $ we  obtain for $ t >0,$ small  and   $ c $ sufficiently large in the definition of $ \ti  r,  $  that  
\begin{align} 
\label{eqn8.38} 
\begin{split}
| w  -  z|^{-1}    F_1  &\leq   4 \ti   r^{ - 1}  \, {\ds\int_{ B ( w, 2 \ti   r ) \cap D \cap \{  v > t \}   }    \,   
 \,  f( \nabla  v) \,   dx\,  } \\
 &\leq \, \ti c  \ti r^{n - 1 -p}  \,   \ti v ( a_{4 \ti  r} (w) )^p \\
  & \leq \ti c^2  \ti  r^{  \frac{  1   - n }{ p - 1} }      
    \,     \left( \ti  \tau  ( B ( w,    2 \ti r   ) \right)^{p/(p-1)}  
    \end{split} 
    \end{align}  
 where  $ \ti c $  depends only on the data and  we have also used  Harnack's inequality for  $ \ti v $ to get the last inequality.  Putting  \eqref{eqn8.38}  into   \eqref{eqn8.35},   and using    \eqref{eqn8.15},    \eqref{eqn8.18},  we find that if   
\begin{align*} 
%\label{eqn8.39}   
\ti k^{p - 1}  ( y )  =    p  \frac{f (  \nabla  v(y) )}{|\nabla v (y) |} \quad \mbox{for}\quad y  \in \{ v = t \}
\end{align*} 
then    for small  $ t >0, $  
\begin{align} 
\label{eqn8.40}  
\begin{split}
 \int_{ B ( w, \ti r ) \cap \{ \ti v = 0  \} } \,  \ti k^p \, d\mathcal{H}^{ n - 1 } \, &\leq \, c \, \ti r^{  \frac{1 - n }{ p - 1} }   \left( \int_{ B  ( w, 2 \ti r ) \cap \{\ti v = 0\} }    \, \ti k^{ p - 1} \, d \mathcal{H}^{ n - 1}    \, \right)^{p/(p - 1)} \\  
 &  \leq     c^2  \,  \ti r^{  \frac{1 - n }{ p - 1} }   \left( \int_{ B  ( w,  \ti r ) \cap \{\ti v = 0\} }    \, \ti k^{ p - 1} \, d \mathcal{H}^{ n - 1}   
 \, \right)^{p/(p - 1)}  
\end{split}
 \end{align}   
 where we have once again used  Harnack's  inequality for 
$ \ti v  $  in  Lemma  \ref{lemma8.5} to get the last inequality.  

 With $ \ti r $  fixed  we  now  let  $ t  \to 0 $  through a  decreasing sequence $ \{t_m\}. $  
Let  $  \tau_m  =  \ti \tau   $  when  $ t  =  t_m $  From  \eqref{eqn2.2} 
 and   Lemmas  \ref{lemma8.4}-\ref{lemma8.5}  we see that   
 \[
\mbox{$\tau_m$  converges weakly to $\tau$ as $ m  \to \infty  $}
\]
where $\tau$ is the measure associated with  $ v$.   
  Using  the change of variables formula    and  Lemma  \ref{lemma8.5}  we can
pull back   each $ \tau_m  $ to  a measure on  a subset of  $ \mathbb{S}^{n-1}$.  
 In view of \eqref{eqn8.40} we see
that the Radon-Nikodym derivative of each pullback measure
with respect to $  \mathcal{H}^{ n - 1} $ measure on $ \mathbb{S}^{n-1}$  
 satisfies a   $ L^{p/(p - 1)} $ reverse H\"{o}lder inequality on 
 \[
 \{ x/|x| : x \in   B ( w,  \ti  r ) \cap  \{ v = t_m      \} \}.  
 \]    
Moreover,  $ L^{p/(p - 1)} $  and H\"{o}lder constants
depend only on $c_{\star}$  and the  data.    Thus, any sequence of  these  derivatives has a subsequence which converges weakly in  $ L^{p/(p- 1)} $.  
 Using these observations  we  deduce  first that $  \tau $  viewed  as  a  measure on a subset of  $\mathbb{S}^{n-1}$  has  a  density  that  is  $p/(p-1) $  integrable and  second  that this  density  satisfies a  $ p/(p-1) $  reverse H\"{o}lder  inequality.  Transforming back  we  conclude that if 
   $ k^{p-1} $  denotes   the  Radon-Nikodym derivative of $ \tau $ on $  \De  ( w, \ti  r )  $ with respect   to   $  \mathcal{H}^{n-1}  $  then   \eqref{eqn8.29} is valid with   $ q $  replaced by  $ p. $  Now   $ r,  w $  can  obviously be replaced by  $ y,  \rho$ where   $y  \in  B ( w, r ) \cap \ar D$ and $0<  \rho  <   r/2 $  in  \eqref{eqn8.29} with $ q $ replaced by $p.$  Doing this we see from a  now  well-known theorem  that  the  resulting   reverse  H\"{o}lder inequality is self-improving, i.e, holds for some $ q > p, $  depending only on $c_{\star}$  and the data  (see   \cite[Theorem IV]{CF} for a proof of the self improving property).  This proves   \eqref{eqn8.29} $ (a).$  
   
 To prove  \eqref{eqn8.29} $(b)$  let  $ x  \in  D  \cap  B ( w,  \ti r), 
 y  \in  \Ga (x),  $  
 and suppose    
 \[  
\mathcal{N}_{\ti r} ( |\nabla v |) (x)  \,   \leq   2 | \nabla v (y) |  \leq  2   \mathcal{N}_{\ti r} (  | \nabla v  |) ( x)).  
\]   
If $  \ti r /100   \leq    d ( y, \ar D ), $   we deduce  from       \eqref{eqn8.28},  Lemma \ref{lemma8.5},  and  Harnack's inequality for $ v  $   that for some $ c' \geq 1  $ depending only on $ \si, b,  $  and the data,  
\[ 
\mathcal{N}_{\ti r} (|\nabla v |) (x)   \,  \leq  \, c'    \left(  \ti r^{ 1 - n} \,  
\int_{ \De  ( x, \ti r)  }   \, k^{p - 1} \,  d \mathcal{H}^{ n - 1} \, \right)^{\frac{1}{p-1}}.
\]
 Otherwise, 
  \[ 
   \mathcal{N}_{\ti r}  (|\nabla v |)  (x)  \leq  2 | \nabla  v  ( y) ) |  \leq  c  | x - y|^{-1}   v (y)  \,  \leq  c^2 \,  \mathcal{M}_1 k  (x)   
   \]   
   where 
  \begin{align}  
  \label{eqn8.41}   
  \mathcal{M}_1 k  (x)  :=  \left(  \sup_{0 < t < \ti r } \, t^{ 1 - n} \,  \int_{ \De  ( x, t
)  }  \, k^{p - 1} \,  d \mathcal{H}^{ n - 1} \, \right)^{\frac{1}{p-1}}.  
\end{align}
  Raising both sides of  either  inequality to the $ q$-th  power and integrating over 
$  x  \in   \De ( w, \ti r),  $   we deduce from the  Hardy-Littlewood  maximal  theorem  (see  \cite[Chapter 1]{St})  that  \eqref{eqn8.29} $(b)$ is true. 
This  completes the proof of  Lemma  \ref{lemma8.6}.    
\end{proof} 
 We use  Lemma \ref{lemma8.6}  to  prove the following localization lemma. 
\begin{lemma} 
\label{lemma8.7} 
Let  $  v, D,  z, c_{\star}, b, w, r, k$ be as in Lemma  \ref{lemma8.6}.  Let   $  \hat w  \in   
\De (w, 2 r ) ,   0 < s < r,   $  and let   $  \hat w'  
\in  D  $  be the point on the ray from  $ z $ through  $ \hat w $ with  $ | \hat w - 
\hat w' | = s/100. $  There exists $  c' , c''  \geq 1,$ depending only on $c_{\star}$  and the data,   
 such that if  $   s'   =  s/  c', $   then there is  a  starlike Lipschitz domain $ \ti  D   = \ti D ( \hat w,   s  )  
\subset B ( \hat w,  s  ) \cap   D $ with center at $ \hat w', $ 
\begin{align} 
\label{eqn8.42}  
\,   \frac{  \mathcal{H}^{ n - 1}   [    \ar \ti  D  \cap \De  ( \hat w, s' )  ] }{  \mathcal{H}^{ n - 1}   [    \De  ( \hat w, s' )  ] }  \geq  3/4 
\end{align}
and Lipschitz constant $ \leq c
    \, ( \| \log \mathcal{R}\hat \|_{\mathbb{S}^{n-1}} + 1 ) $ where $ \mathcal{R} $ is the graph function for $ D, $  and  $ c $ depends only on $ p, n.$  
Moreover, if $ x \in \ti   D,  $ then             
\begin{align} 
\label{eqn8.43}  
\frac{1}{c''}\, \frac{v (\hat w' )}{s}\,  \leq \,  | \nabla   v  ( x ) | \,  \leq
\,   c''\, \frac{v ( \hat w' )}{s}.
\end{align}  
\end{lemma}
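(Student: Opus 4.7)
The plan is to apply the reverse H\"older inequality of Lemma~\ref{lemma8.6} at scale $s'$ near $\hat w$ to identify a large subset $G$ of $\De(\hat w, s')$ on which $|\nabla v|$ is both bounded above and bounded below by a universal multiple of $\Lambda := v(\hat w')/s$, and then to assemble $\ti D$ as a starlike hull with center $\hat w'$ whose boundary uses $G$ together with an interior radial cap.

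First, Lemma~\ref{lemma8.5}, Harnack's inequality along the ray from $z$ through $\hat w$, and \eqref{eqn8.28} give
\[
\tau(\De(\hat w, s')) \,\approx\, (s')^{n-p}\, v(\hat w')^{p-1} \,\approx\, (s')^{n-1}\, \Lambda^{p-1},
\]
so by \eqref{eqn8.29}(b) and Chebyshev,
\[
\mathcal{H}^{n-1}\bigl(\{y \in \De(\hat w, s') : \mathcal{N}_{s'}(|\nabla v|)(y) > M\Lambda\}\bigr) \,\leq\, c\, M^{-q} (s')^{n-1}
\]
for some $q > p$ depending only on $c_\star$ and the data. A parallel Chebyshev argument on $\{k < M^{-1}\Lambda\}$, using that the total $\tau$-mass on $\De(\hat w, s')$ is $\approx (s')^{n-1}\Lambda^{p-1}$, eliminates a further set of relative measure $\leq 1/8$ once $M$ is chosen large enough. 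Let $G \subset \De(\hat w, s')$ denote the remaining good set; then $\mathcal{H}^{n-1}(G) \geq (7/8)\,\mathcal{H}^{n-1}(\De(\hat w, s'))$.

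Second, I would construct $\ti D$ as the star-shaped domain with center $\hat w'$ determined by a radial profile $\log \ti{\mathcal R}$ on $\mathbb S^{n-1}$: for directions $\om$ whose radial ray from $\hat w'$ meets $\ar D$ inside $G$, set $\ti{\mathcal R}(\om)$ equal to that radial distance; elsewhere use an interior cap at radius $s/c'$ with $c'$ large. The Whitney extension theorem, applied exactly as in the proof of Lemma~\ref{lemma8.6}, produces a Lipschitz extension of $\log \ti{\mathcal R}$ on all of $\mathbb S^{n-1}$ with norm $\leq c(\|\log \mathcal R\hat{\ }\|_{\mathbb S^{n-1}}+1)$. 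By construction $\ti D \subset B(\hat w, s) \cap D$, and \eqref{eqn8.42} holds because radial projection from $\hat w'$ onto $\ar D$ is bi-Lipschitz on $\De(\hat w, s')$ with constants depending only on $c_\star$ and the data, so $G$ maps to a correspondingly large subset of $\ar \ti D \cap \De(\hat w, s')$.

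Third, I would verify \eqref{eqn8.43} on $\ti D$. For the upper bound, every $x \in \ti D$ lies in a non-tangential cone $\Ga(y)$ based at some $y \in G$, the shift $|\hat w - \hat w'| = s/100$ being chosen precisely so that this holds with the fixed aperture $b$; hence $|\nabla v(x)| \leq \mathcal{N}_{s'}(|\nabla v|)(y) \leq M\Lambda$. Points of $\ti D$ at distance $\gtrsim s$ from $\ar D$ are handled directly by the right inequality in \eqref{eqn8.28} together with Harnack for $v$. For the lower bound, \eqref{eqn8.28} gives $|\nabla v(x)| \geq c_\star^{-1}\, v(x)/d(x,\ar D)$, and a Harnack chain from $\hat w'$ to $x$ inside $\ti D$, feasible since $\ti D$ is starlike with center $\hat w'$ and stays quantitatively inside $D$, yields $v(x) \geq c^{-1} v(\hat w')\, d(x,\ar D)/s$, so $|\nabla v(x)| \gtrsim \Lambda$. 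The main obstacle is precisely this lower bound: the reverse H\"older inequality controls the size of bad boundary points in $\mathcal{H}^{n-1}$-measure, but propagating non-degeneracy of $|\nabla v|$ to every interior point of $\ti D$ requires both the $s/100$ shift, which places $\hat w'$ deep enough in $D$ to support Harnack chains of controlled length, and a choice of $c'$ large enough that the interior cap lies inside non-tangential cones over good boundary points of $G$.
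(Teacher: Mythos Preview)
Your upper-bound argument is essentially the paper's: Chebyshev against the non-tangential maximal function (Lemma~\ref{lemma8.6} $(b)$) produces a set $K^*$ of density $\geq 7/8$ on which $\mathcal N_{s'}(|\nabla v|) \lesssim \Lambda$, and taking the starlike hull over $K^*$ gives the upper bound in \eqref{eqn8.43} exactly as you describe.

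The lower bound, however, has a genuine gap. Your claim that a Harnack chain from $\hat w'$ to $x$ yields $v(x) \geq c^{-1} v(\hat w')\,d(x,\partial D)/s$ is not justified: Harnack chains in a cone give at best $v(x) \gtrsim (d(x,\partial D)/s)^{\alpha}\,v(\hat w')$ for some $\alpha>0$ (H\"older decay), not the linear decay you need. Combining this with \eqref{eqn8.28} only produces $|\nabla v(x)| \gtrsim d(x,\partial D)^{\alpha-1}\,\Lambda$, which blows up or degenerates as $x\to\partial D$. Knowing $k(y)\geq M^{-1}\Lambda$ at a single boundary point $y$ says nothing about $|\nabla v|$ at heights $t\ll s$ above $y$.

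The paper replaces your pointwise condition $k\geq M^{-1}\Lambda$ by the scale-uniform condition
\[
\mathcal M_2 k(y)\;=\;\Bigl(\inf_{0<t<s'} t^{1-n}\!\int_{\Delta(y,t)}k^{p-1}\,d\mathcal H^{n-1}\Bigr)^{1/(p-1)} \;>\; c^{-1}\Lambda,
\]
which via Lemma~\ref{lemma8.5} gives $v(a_t(y))\gtrsim \Lambda\,t$ at \emph{every} scale $t$, and then \eqref{eqn8.28} yields $|\nabla v|\gtrsim\Lambda$ throughout the cone over $y$. The reverse-H\"older/covering argument (your ``parallel Chebyshev'') only shows this $\mathcal M_2 k$-good set has density $\gtrsim c^{-1}$, not $\geq 7/8$; the paper then runs an iterative Vitali-type construction, adding cones over $\mathcal M_2 k$-good points at smaller scales inside the residual bad set, to push the density up to $7/8$ in finitely many steps while preserving the lower gradient bound. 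Your single-step construction misses both the $\mathcal M_2 k$ mechanism and this iteration.
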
 %
\begin{proof}       
From  starlike Lipschitzness of $  D $ and basic geometry we   deduce  the existence of 
  $   c' > 1  $   (depending only on the data) such that if   
 $   s'  =  s/  c', $  then for  any  $ x  \in  \De (\hat w, s' ), $  the  ice cream cone, say  $ C (x, \hat w') $   obtained by  drawing rays  from  $ x $  to all points  in    $\bar B (\hat w' ,  s')$  satisfies  
\begin{align}
 \label{eqn8.44}    
 \Ga ( x ) \cap  B ( \hat w,  s )   \supset    C(x, \hat w' ). 
 \end{align} for  $ b > 1 $  suitably large.  
    Put  
    \[ 
\mathcal{M}_2  k   ( x ) = \left( \inf_{0 < t <   s' } \,   t^{ 1 - n}
\, \int_{  \De ( x, t )  } k^{p-1}   d\mathcal {H}^{n-1} \, \right)^{1/(p-1)}  
\]  
for  $  x   \in \De (\hat w, s' ). $      We claim  there exists   a compact set  $  \hat K 
  \subset   \De ( \hat w,   s')  $  
and $ \hat c \geq 1$ (depending only on the data, as well as  $c_{\star}$ in \eqref{eqn8.28})  with     
\begin{align}  
\label{eqn8.64} 
 \hat c \, \mathcal{M}_2 k   >   \,  v ( a_{s' } ( \hat  w ) )/s'  \,  \,
\mbox{on}\,\, \hat K\quad  \mbox{and}   \quad \hat c \,  \mathcal{H}^{n-1}  ( \hat K
) \,  >    (s')^{n - 1}.
\end{align}   
 To see this  we temporarily allow  $ \hat c $ to vary.   We note that   if   
 \begin{align*}
& \ep =   ( 1/ \hat c ) \,     v ( a_{ s' } ( \hat w ) )/ s',  \\
& \Ph   = \{ x \in  \De  ( \hat w,  s' )  :  \mathcal{M}_2 k  ( x )  \leq   \ep   \}
 \end{align*}
   then by a standard covering argument  there exists $ \{ B ( x_i, r_i ) \} $ 
with $ x_i \in \Ph, \, 0 < r_i \leq  s',   \, \Ph \subset \bigcup_{i}  B ( x_i, r_i ) $ and 
$ \{ B ( x_i, r_i/10 ) \} $ pairwise disjoint.  Also, 
\[    
\int_{\De   ( x_i, r_i ) }\,  k^{ p - 1}  \, d
 \mathcal{H}^{ n - 1} \,   \leq  \, (2 \ep)^{p-1}   \, r_i^{n - 1}  \quad 
\mbox{for each} \,\, i.  
\]      
Using these facts and $ \mathcal{H}^{n-1}( B ( x_i, r_i/10) \cap \ar D) 
\approx r_i^{n-1}, $ we get  
 \begin{align}  
 \label{eqn8.65}   
\begin{split}
 \int_{\Ph} \, k^{ p - 1} \, d\mathcal{H}^{n-1}  \, &\leq \, 
     \,  \sum_i \, \int_{ \De  ( x_i, r_i )  } 
k^{ p - 1} \, d\mathcal{H}^{n-1}\\
&\leq   (2 \ep)^{p-1}   
 \sum_i   r_i^{ n - 1}  \, \\
 &\leq  \,    c  \, \ep^{p-1} \,   (s')^{ n -  1}. 
 \end{split}
 \end{align}
  On the other hand, if  $ \Psi  = \De  ( \hat w,  s')   \sem \Ph, $ then     
 from \eqref{eqn8.18}, \eqref{eqn8.28},  and \eqref{eqn8.29} with $ r $ replaced by $  s', $   
  the structure assumptions on  $ \mathcal{A}, $    and 
 H\"{o}lder's inequality, we get  for some $  c, $   depending only on the data and $c_{\star} $     
 in \eqref{eqn8.28}, 
 \begin{align}  
 \label{eqn8.66}   
\begin{split}
\int_{\Psi} \, k^{ p - 1} \, d\mathcal{H}^{ n - 1} &\leq     [\mathcal{H}
^{ n - 1} ( \Psi  )]^{ 1/p} 
 \left( \int_{ \De ( \hat w,  s')   } k^p  
d \mathcal{H}
^{ n - 1} \, \right)^{ 1 - 1/p} \\
 &\leq      c  \left[   (s')^{ 1 - n} \,  \mathcal{H}
^{ n - 1} ( \Psi  )  
\, \right]^{1/p}    
 \int_{\De  ( \hat  w,  s')   }   k^{ p - 1}   
d \mathcal{H}^{ n - 1}  \\
&\leq c^2  \,  \left[   (s')^{ 1 - n}  \mathcal{H}
^{ n - 1} (  \Psi  ) 
\right]^{1/p}  \,  (s')^{ n - p }  \, v ( a_{s'} ( \hat w ) )^{ p - 1}.
\end{split}
\end{align}     
Since
\[
 (s')^{ n - p}  \,  v ( a_{s'} ( \hat w ) )^{p - 1}   
   \approx  \int_{ \De ( \hat  w,
  s'  )  } k^{ p - 1} \, d \mathcal{H}
^{ n - 1}, \]
we  can add \eqref{eqn8.65},  \eqref{eqn8.66} to get after division by  
$  (s')^{ n - p}  \, v ( a_{ s' } ( \hat  w ) )^{p - 1} $ that for some $
c,  $  depending only on the data  and $c_{\star}$, 
\begin{align}  
\label{eqn8.67}  
c^{ - 1} \, \leq \, [ (s')^{ 1 - n}  \mathcal{H}
^{ n - 1} (  \Psi  ) ]^{1/p}   \, + \, 
(1/\hat c)^{p-1}.
\end{align} 
Clearly,  \eqref{eqn8.67}, implies  \eqref{eqn8.64} for $ \hat c $ large enough  with $ \hat K $ replaced by 
$ \Psi. $   A standard measure theory argument then shows that we can
replace $ \Psi $ by suitable $  \hat K $ compact, $ \hat K \subset \Psi. $
Thus   \eqref{eqn8.64} is valid for $ \hat c $ large enough.

Next   let $ K_1  =  \hat K $  and let  $  D_1 $  denote  the  interior  of the domain   obtained  from  drawing all  line segments   from  points in  $ \bar B (\hat w' ,  s' ) $  to   points in  $    K_1.  $  From   \eqref{eqn8.44}-\eqref{eqn8.64} and \eqref{eqn8.18}-\eqref{eqn8.28},    we   conclude for  some  $ \breve c  \geq 1, $  depending only  on  $c_{\star}$    and the data  that       
\begin{align} 
\label{eqn8.68}  
\,  \breve c   | \nabla   v  ( x ) | \,  \geq
\,    s^{ - 1} \,  \,  v ( \hat w' )\quad \mbox{whenever}\,\,  x \in D_1.
\end{align}  

 If   
\begin{align}
\label{eqn8.69}  
\,  \frac{  \mathcal{H}^{ n - 1}   (   \ar D_1 \cap \De (\hat  w, s')  )}{  \mathcal{H}^{ n - 1}  ( \De  ( \hat w, s' ) ) }  \leq   7/8 ,  
\end{align}
choose  $ c_1 >  2, $ depending only on the starlike Lipschitz constant for $ D,  $    so  that  if    $ s''  =   ( 1  -  2/c_1 ) s' , $         then   
\begin{align}
\label{eqn8.70}  
\,  \frac{  \mathcal{H}^{ n - 1}  ( \De  ( \hat w, s'' )) }{  \mathcal{H}^{ n - 1}  (   \De  ( \hat w, s' )) }  \geq   99/100. 
\end{align} 
Since    $ \ar D_1  \cap \De (\hat w,  s' )  =  K_1$    it follows  from  \eqref{eqn8.69}  \eqref{eqn8.70},  that  there exists $ y \in  \De (\hat w, s''  )  \sem  K_1. $      We  can  apply the argument  leading to \eqref{eqn8.64},   \eqref{eqn8.68}  with  $ \hat w, s $  replaced by  $y,  \tau  =  d (y, K_1)/c_1.$    If  $ 
\tau'  = \tau /c' , $   we obtain  a  compact set  
\[  \hat K (y ) \subset \De ( y, \tau ' )  \subset   \De (\hat w, s') \] 
  and  corresponding starlike Lipschitz domain 
$ \hat D (y) $  with center  at  $ y' $  where  $ y' $ is the point on the ray from  $ z $ to $ y $ with  
$ |y  -  y' |  =    \tau /100. $  Also    
 $  \hat  D (y) $  is the interior of  the  set obtained by drawing  all  rays  from   
$  \bar B (y' ,  \tau ' ) $    to  points in    $ \hat K (y) $ so that  
$   \ar \hat D (y)  \cap  \De (y, \tau ') = \hat K(y). $    Moreover,     
\begin{align}
\label{eqn8.71} 
\begin{split}
& (+) \hs{.4in}   \,    | \nabla   v  ( x ) | \,  \geq
\,    c_{\star\star}^{-1}  \,   d ( y, K_1 )^{ - 1} \,  v ( y' ) \quad \mbox{whenever}\,\, x   \in \hat D(y), \\
&  (++)    \hs{.29in}  \hat c \,    \mathcal{H}^{n-1} ( \ar \ti D (y) \cap \De ( y, \tau') )   \geq   (\tau ')^{n-1}   
\end{split}
\end{align}
where  $ \hat c $ is the constant in  \eqref{eqn8.64}  and $ c_{\star\star}\geq 1$ depends only on the data and $ c_{\star} $ in  \eqref{eqn8.28}.  We  now  use  a  Vitalli type covering argument  to  get 
  $  y_1, y_2,  \ldots, y_l, $  for some positive integer $l, $  satisfying the  above with $ y = y_i,  1 \leq i  \leq l,  $  and corresponding  $ \tau_i,  \tau_i' ,  y_i' ,   \hat K (y_i),   \hat D (y_i).  $  Then   \eqref{eqn8.71} holds with $ y $  replaced by  $ y_i,  1 \leq i  \leq l, $  and        
\begin{align}
\label{eqn8.72}  
\,  \frac{  \mathcal{H}^{ n - 1}   ( \bigcup_{i=1}^l  \hat  K (y_i) ) }{  \mathcal{H}^{ n - 1}   (  \De  ( \hat w, s' )) }  \geq   c^{-1} 
\end{align} 
for some $ c \geq 1 $ depending only on $ c_{\star}$ and the data. Let  $  D_2 $ be the  starlike Lipschitz domain with center at  $\hat  w' $  which is the interior of  the domain obtained by  drawing  all rays from points in  $  \bar B (\hat w', \hat s' )  $  to 
points in   
\[  
K_2  =   K_1  \cup  (  \bigcup_{i=1}^l \hat  K (y_i) ). 
\]  
We claim that 
\begin{align}
\label{eqn8.73} 
 \,    | \nabla   v  ( x ) | \,  \geq\,    \frac{1}{c_{_-}} \,  \frac{v ( \hat w' )}{s} \quad\mbox{whenever}\,\,   x  \in   D_2
\end{align}
 where $ c_-  $  depends only on the data. To  prove this claim, given  $ x \in D_2, 
$ let  $ \hat x  $  be the point in  $  \ar D_2  \cap \De ( \hat w, \hat s' ) $  which lies  on the line   from  $ \hat w' $  through   $ x $.  If  $ \hat x   \in K_1  $  it  follows  from  \eqref{eqn8.68}  that   \eqref{eqn8.73} is true for   suitably large $ c_-. $  Otherwise, suppose   $  \hat x \in  \hat K (y_j). $   If     
$  | \hat x  -  x |  \leq  \tau_j $  we observe from our construction  that there exists $ x^* \in 
\hat D (y_j ) $   with  
\begin{align} 
\label{eqn8.74}    
  | \hat x  -  x |  \approx    | \hat x  -  x^* |  \approx   |  x  -  x^* | 
  \end{align} 
where all  constants in the ratios depend only on $c_{\star}$  and the data. Using   
 \eqref{eqn8.74},   \eqref{eqn8.71} with $ y  = y_j, $     \eqref{eqn8.28},  and  Harnack's  inequality we deduce that     \eqref{eqn8.73}  holds.  If   $  | \hat x  -  x |   >  \tau_j $     
we  can choose  $  x^*  $ in  $  D_1 $   so  that   \eqref{eqn8.74}  is true.   Applying  
  \eqref{eqn8.68} and  arguing as  above  we get    \eqref{eqn8.73} once again.  This proves our claim in \eqref{eqn8.73}.

 From disjointness of   $ K_1 $  and   $ \cup_i  \hat K (y_i) $ as well as                       
\eqref{eqn8.72} it  follows that  
\begin{align} 
\label{eqn8.75}  
\,  \frac{  \mathcal{H}^{ n - 1}   ( K_2 )  }{  \mathcal{H}^{ n - 1} (    \De  ( \hat w, s' )) }  \geq   c^{-1}       \, + \,    \frac{  \mathcal{H}^{ n - 1}   ( K_1 )  }{  \mathcal{H}^{ n - 1}  (    \De  ( \hat w, s' ) ) }.
\end{align} 
Continuing this argument at most  $ N $  times, where 
$ N $  depends only on the data and  $c_{\star}$  we see from  \eqref{eqn8.75}   that we  eventually obtain   $ K_N $  a  compact set  
$ \subset  \De  ( \hat w', s' )  $   and  $ D_N $  a  starlike Lipschitz  domain with center at  $ \hat w' $ corresponding to  $ K_N $  for which         
\begin{align}
\label{eqn8.76}  
\,  \frac{  \mathcal{H}^{ n - 1}   ( K_N )  }{  \mathcal{H}^{ n - 1}   (\De  ( \hat w, s' ))}  \geq   7/8.   
\end{align}
Also  \eqref{eqn8.73} is valid for  large $ c_- $  with  $ D_2 $ replaced by  $  D_N.$  

To complete the construction of  $   \ti  D $,  we  need to  estimate $ |\nabla v |  $  from  above.
For this purpose let  $\mathcal{M}_1 k $ be as in  \eqref{eqn8.41} with $ \ti r $ replaced by $   s'.$  Once again we  use the Hardy-Littlewood  Maximal theorem    and also \eqref{eqn8.18},    \eqref{eqn8.28},     
  to find  $  K^* $ compact $  \subset      \De ( \hat w, s' ) $ and 
$   \bar c \geq 1 $ depending  on  $c_{\star}$  and the data  such that   
\begin{align}  
 \label{eqn8.77a}   
 \mathcal{M}_1 k \, \leq  \, \bar c \,   ( s' )^{ 1 - p} \,  v ( a_{s'} ( \hat w ) ) ^{ p - 1} \quad  \mbox{on}\quad K^*
\end{align}
and
\begin{align}
\label{eqn8.77b}
\mathcal{H}^{ n - 1} ( K^* ) \, \geq \, {\ts \frac{7}{8}} \, \,    \mathcal{H}^{ n - 1} (  \De  (\hat w, s' )   ). 
 \end{align}  
 Let    $  D^* $  be the interior of  the domain obtained  from  drawing  all rays from points in $ \bar B ( \hat w', s') $ to points   
in $ K^*. $ If  $ x \in   D^* $     then 
 from  \eqref{eqn8.77a},  \eqref{eqn8.77b}, \eqref{eqn8.28}, \eqref{eqn8.18},   and   Harnack's inequality for  $ v, $    we  find   for  some   $  \ti c  $ that  
\begin{align}  
\label{eqn8.78}  
|\nabla  v(x) |  \leq  \ti c  \, v (\hat w')/s \quad  \mbox{whenever} \quad x   \in  D^*. 
\end{align} 
   Let    $   \ti  D  =   D^*  \cap D_N. $   From   
\eqref{eqn8.44}  it is easily seen for $ c' $ large enough that $ \ti D $ is starlike 
Lipschitz with center at $ w' $ and starlike Lipschitz constant $ \leq c
    \, ( \| \log \mathcal{R} \hat \|_{\mathbb{S}^{n-1}} + 1 ). $  Also, from \eqref{eqn8.76}, \eqref{eqn8.73} with $ D_2 $ replaced by $ D_N$,  \eqref{eqn8.77b},  and \eqref{eqn8.78}  we see that 
 \eqref{eqn8.42},  \eqref{eqn8.43} are  valid.  The proof of  Lemma \ref{lemma8.7} is  now complete.
\end{proof}    

We use Lemmas \ref{lemma8.6} and \ref{lemma8.7}  to  prove 
\begin{proposition} 
\label{proposition8.9} 
Let $ D,  z,  c_{\star}, b, v, \tau, r,  k,   w,   $  be as in  Lemma  \ref{lemma8.7}. Then
\begin{align}
\label{eqn8.48}
{\ds \lim_{\substack{x\to y \\ x \in  \Ga ( y)\cap B(w,2r)}}}  \nabla v ( x ) \stackrel{def}{=}
\nabla v ( y ) \, \, \mbox{exists for  $ \mathcal{H}^{n-1}$-a.e $y
\in\Delta( w, 2 r )$.}
\end{align}    
Moreover, $ \De ( w, 2 r ) $ has a tangent plane
  for  $ \mathcal{H}^{n-1} $   almost every $ y  \in   \De ( w, 2 r )  $.   If
 $ \mathbf{n}( y ) $ denotes the unit normal to this tangent plane pointing
 into
 $ D \cap B ( w, 2 r ), $  then
\begin{align}   
\label{eqn8.49a}  
k (y )^{p-1}  =p  \frac{f (\nabla v ( y ) )}{|\nabla v ( y) |}  
\end{align}
and
\begin{align}
\label{eqn8.49b}  
\nabla v  ( y )
=  | \nabla v ( y )  | \, \mathbf{n} ( y ) \quad \mathcal{H}^{n-1}\mbox{-a.e.} \,\,  \mbox{on} \, \, \De ( w, 2 r ). 
\end{align}
%Finally  $  \log f (\nabla v ),  \log |\nabla v | $  as  functions on  $  \De (w, 2 r ) $ are in   BMO $ ( \De (w, r ) )$  with  norm  $ \leq  c $ where  $ c $ depends only on $c_{\star}$ and the data.       
   \end{proposition}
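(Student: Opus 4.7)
I plan to prove Proposition \ref{proposition8.9} via a blow-up analysis at $\mathcal{H}^{n-1}$-a.e.\ boundary point, combining Rademacher's theorem with the non-degenerate gradient control of Lemma \ref{lemma8.7}, the reverse H\"older inequality of Lemma \ref{lemma8.6}, and a Liouville-type classification of $\mathcal{A}$-harmonic half-space solutions. Since $D$ is starlike Lipschitz, its defining graph function $\log\mathcal{R}$ is Lipschitz on $\mathbb{S}^{n-1}$, hence differentiable $\mathcal{H}^{n-1}$-a.e.; this produces a tangent plane with inward unit normal $\mathbf{n}(y)$ at $\mathcal{H}^{n-1}$-a.e.\ $y \in \Delta(w,2r)$. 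By Lemma \ref{lemma8.6}, $k^{p-1} \in L^{q/(p-1)}_{\mathrm{loc}}(\Delta(w,2r))$ for some $q > p$, so the Lebesgue differentiation theorem lets me also require $y$ to be a Lebesgue point of $k^{p-1}$. Fix such a \emph{generic} $y$.

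For small $s > 0$, set $\tilde v_s(x) := s^{-1}v(y+sx)$ on the rescaled domain $D_s := s^{-1}(D-y)$. Since $\mathcal{A}$-harmonicity is preserved under translation, dilation, and multiplicative scaling (Remark \ref{rmk1.3}), each $\tilde v_s$ is $\mathcal{A}$-harmonic in $D_s \cap B(0, r/s)$, vanishes on $\partial D_s$ there, and $D_s \to H := \{x : \langle x,\mathbf{n}(y)\rangle > 0\}$ in Hausdorff distance on compacta. A scaling computation using Lemma \ref{lemma8.5} shows that the measure $\tilde\tau_s$ corresponding to $\tilde v_s$ has density $k^{p-1}(y+s\,\cdot)$ with respect to $\mathcal{H}^{n-1}$ on $\partial D_s$; the Lebesgue point property then gives weak convergence $\tilde\tau_s \rightharpoonup k^{p-1}(y)\,d\mathcal{H}^{n-1}$ on $\partial H$. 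Now apply Lemma \ref{lemma8.7} with $\hat w = y$ at scale $s$ to produce a starlike Lipschitz subdomain $\tilde D(y,s) \subset D \cap B(y,s)$ on which $|\nabla v|$ is pointwise comparable to $v(\hat w')/s$, which by \eqref{eqn8.28} is itself comparable to a positive constant depending only on the data and $y$. After rescaling, the subsets $\tilde D_s \subset D_s$ satisfy $c_1 \leq |\nabla \tilde v_s| \leq c_2$ uniformly in $s$. Combining the uniform bound on $\tilde v_s$ on compacta (from \eqref{eqn8.16} and \eqref{eqn8.28}), the interior H\"older gradient estimate \eqref{eqn2.2}, and the second-derivative estimate \eqref{eqn2.3} applicable on the non-degenerate regions $\tilde D_s$, I extract a subsequence $\tilde v_{s_j} \to v_0$ locally uniformly on $\bar H$ with $\nabla \tilde v_{s_j} \to \nabla v_0$ locally uniformly in $H$, where $v_0$ is $\mathcal{A}$-harmonic in $H$, continuous on $\bar H$ with $v_0 = 0$ on $\partial H$, and of linear growth.

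The key step is a Liouville-type classification: any such $v_0$ is linear, i.e.\ $v_0(x) = c_0\langle x,\mathbf{n}(y)\rangle^+$ for some $c_0 > 0$. The strategy, parallel to the $p$-harmonic case in \cite{LN,LN3,LN4}, is to differentiate the equation in any direction $\tau$ tangent to $\partial H$, exploit the boundary Harnack principle for the resulting linearized divergence-form equation, and conclude that $\partial_\tau v_0 \equiv 0$; positivity $c_0 > 0$ is inherited from the uniform lower bound on $|\nabla\tilde v_s|$ in $\tilde D_s$. Once $v_0 = c_0\langle \cdot,\mathbf{n}(y)\rangle^+$ is established, a direct computation using the divergence theorem and $p$-homogeneity of $f$ shows that the measure associated to $v_0$ equals $p\,f(c_0\mathbf{n}(y))/|c_0\mathbf{n}(y)|\,d\mathcal{H}^{n-1}$ on $\partial H$. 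Matching this with the measure limit $k^{p-1}(y)\,d\mathcal{H}^{n-1}$ identified above yields both $\nabla v_0 = c_0\mathbf{n}(y)$ and $k^{p-1}(y) = p\,f(\nabla v_0)/|\nabla v_0|$, which translate into \eqref{eqn8.49a} and \eqref{eqn8.49b}. The fact that the subsequential limit $v_0$ is unique (being fully determined by $\mathbf{n}(y)$ and $c_0$, and the latter by $k^{p-1}(y)$ through the identity above) combined with locally uniform convergence $\nabla\tilde v_{s_j} \to \nabla v_0$ on compact subsets of $H$ gives, upon restricting $x$ to a fixed non-tangential cone at $y$, the full non-tangential limit \eqref{eqn8.48}.

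The main obstacle is the Liouville classification of non-negative $\mathcal{A}$-harmonic functions in $H$ vanishing on $\partial H$ with linear growth. For $p$-harmonic functions this is obtained in \cite{LN,LN3,LN4} through a delicate boundary Harnack inequality together with monotonicity arguments; adapting it to $\mathcal{A} = \nabla f$ requires the structural hypotheses \eqref{eqn1.8} of Theorem \ref{theorem1.4} and careful handling of the linearized operator, but should go through because the uniform gradient non-degeneracy provided by Lemma \ref{lemma8.7} converts the equation on the appropriate subdomains into a uniformly elliptic one with H\"older continuous coefficients.
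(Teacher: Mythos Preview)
Your blowup--and--classify strategy is the same as the paper's, and the measure-matching step to extract \eqref{eqn8.49a}--\eqref{eqn8.49b} and then \eqref{eqn8.48} from uniqueness of the limit is exactly what the paper does. Two technical steps are handled differently, and one of yours needs a correction.

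First, the paper does not invoke Lemma \ref{lemma8.7} at every scale. It argues by contradiction: pick a density point $\hat w$ of a putative bad set, apply Lemma \ref{lemma8.7} \emph{once} at a fixed small scale $s$ to obtain the subdomain $\tilde D$, and then blow up at a point $y\in\partial\tilde D\cap\partial D$ where \emph{both} $\partial D$ and $\partial\tilde D$ have the same tangent plane (Rademacher for each). Because $\tilde D$ is fixed, one has $\tilde D_m\to H$ together with $D_m\to H$, and the single constant $\eta=v(\hat w')/s$ yields $|\nabla v_m|\le c\eta$ on all of $\tilde D_m$ and hence the uniform bound \eqref{eqn8.59} on $v_m$. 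Your scale-by-scale variant can be made to work, but the sentence ``$v(\hat w')/s$ is comparable to a positive constant by \eqref{eqn8.28}'' is not correct as written: \eqref{eqn8.28} only gives $v(\hat w')/s\approx|\nabla v(\hat w')|$, with no uniform control as $s\to 0$. What actually pins this down is the Lebesgue-point property of $k^{p-1}$ combined with \eqref{eqn8.18}, which gives $v(a_r(y))^{p-1}\approx r^{p-n}\tau(\Delta(y,r))\sim r^{p-1}k(y)^{p-1}$, hence $v(a_r(y))/r\to k(y)>0$.

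Second, your Liouville step is more elaborate than necessary. The paper does not linearize: it applies Theorem~1 of \cite{LLN} (a boundary Harnack inequality for the \emph{nonlinear} $\mathcal{A}$-harmonic equation) directly to the pair $\hat v$ and $\langle x,\xi\rangle^+$ in the half-space, and letting $r\to\infty$ forces $\hat v=\gamma\langle x,\xi\rangle^+$. This bypasses the need to know $|\nabla v_0|\neq 0$ on all of $H$ before differentiating, and avoids the linearized boundary Harnack machinery entirely.
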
   
\noindent 
\begin{proof}    
 In  the proof of       
Proposition   \ref{proposition8.9}  we argue  as in   \cite[Lemma 3.2]{LN3}.  The proof is by 
contradiction.  

Suppose there exists a  Borel set $ V \subset\Delta( w, 2 r )$ with  $ \mathcal{H}^{n-1}(V)>0$, such
that     Proposition   \ref{proposition8.9}  is false for each  $y\in V.$
Under this assumption, we let 
$ \hat w \in V $ be a point of density   for $ V $ with respect to 
$\mathcal{H}^{n-1} |_{\ar D} $. Then 
\begin{align*}
%\label{eqn8.50} 
\frac{ \mathcal{H}^{n-1}(\Delta( \hat w, t)
	  \sem V )}{ \mathcal{H}^{n-1}(\Delta( \hat w, t) )
   } \to  0\quad \mbox{as}\quad t\to 0,
\end{align*}
and so  there exists $ c \geq 1$ depending only on $c_{\star}$ in  \eqref{eqn8.28} and the data such that
\begin{align*}
%\label{eqn8.51}  
\,c \,  \mathcal{H}^{n-1} (\ar \ti  D \cap \Delta ( \hat w, s) \cap V)  \geq s^{ n - 1}
\end{align*}
provided  $ s > 0$ is small enough,
  where
 $ \ti  D  = \ti D ( \hat w,  s   ) \subset D $ is the
  starlike Lipschitz domain  defined in Lemma \ref{lemma8.7}.
  To get  a  contradiction, we show 
that  
\begin{align}
\label{eqn8.52} 
\mbox{Proposition \ref{proposition8.9} is true  for almost every $ y \in \ar \ti D \cap \De ( \hat w, s ).$ } 
 \end{align}
 To do this,   let  $ \Psi $ be the set of all $ y \in \ar \ti D \cap \De ( \hat w, s) $ satisfying
\begin{align}
 \label{eqn8.53}
 \begin{split}
   & (a) \hs{.2in}  \mbox{ $y$  is a point of  density for $ \Psi $ relative to  
$ \mathcal{H}^{n-1}|_{\ar D }, $  $ \mathcal{H}^{n-1}|_{\ar \ti D },     \tau $,} 
\\
   &(b) \hs{.2in} \mbox{ There is a tangent plane $T(y)$ to both $\ar D, \ar \ti D $ at  $y$,} 
   \\
  &(c) \hs{.2in}  \lim_{t \to 0} t^{1-n} \mathcal{H}^{n-1}  ( \ar D \cap B ( y, t) )   =
  \lim_{t \to 0} t^{1-n} \mathcal{H}^{n-1} ( \ar \ti D \cap B ( y, t)) =  b',
  \\
  &(d) \hs{.2in}\lim_{t \to 0} t^{1-n} \tau ( \ar D \cap B ( y, t) )   =  b'  \, k(y)^{p-1}.
 \end{split}
\end{align}  
   In  \eqref{eqn8.53}, $ b' $  denotes the Lebesgue $(n - 1)$-measure of the unit ball in $ \rn{n-1}. $   
   
   We claim that
    \begin{align}
    \label{eqn8.54}   
    \mathcal{H}^{n-1}  ( \ar \ti D\cap \De (\hat w , s ) \sem \Psi ) = 0. 
    \end{align}
    Indeed   $(a)$ of \eqref{eqn8.53}  for $ \mathcal{H}^{n-1}$-almost every $y$ is a consequence of the fact that  $ \mathcal{H}^{n-1}|_{\ar D}, $ $ \mathcal{H}^{n-1}|_{\ar \ti D}  $ are regular Borel measures and differentiation theory while  $ (a)$  of this display  for $ \tau $  and $ \mathcal{H}^{n-1}|_{\ar D}$ for almost every $y,$ follows from  the same observations and Lemma \ref{lemma8.6}.  
 \eqref{eqn8.53}  $ (b) $  follows from  the Lipschitz  character of   $ D,  \ti D, $ and    Rademacher's  theorem (\cite[Chapter 3]{EG}).   Finally $(c)$ and $ (d) $  of this display   are consequences  of   the Lebesgue differentiation theorem  and  Lemma  \ref{lemma8.6}.  Thus, $ \eqref{eqn8.54} $ is true. 
     
     We now use a blowup argument to complete the proof of  Proposition \ref{proposition8.9}. Let 
$ \Psi, s $ be as above   and
     $ y \in \Psi. $   Since  $ \mathcal{A}$-harmonic functions are invariant under translation we may assume that $ y = 0. $   Let  $ \{t_m\}_{m\geq 1}$ be a decreasing sequence of positive numbers
    with limit zero and $t_1 << s$.  Let
\begin{align*}
%\label{eqn8.55}    
\begin{split}
D_m  &= \{x:\, \, t_m x \in   D \cap B ( \hat w, s ) \},  \\
\ti D_m    &= \{ x:\,\, t_m x \in \ti D \cap B ( \hat w, s )\}, \\
v_m (& x ) =  t_m^{-1}  \, v (t_m x) \quad\mbox{whenever} \quad  t_m x  \in  B (\hat w, s ).    
\end{split} 
\end{align*}

Fix  $ R >> 1. $ Then  for  $ m $ sufficiently large, say $ m \geq m_0, m_0 = m_0 (R), $  we note that    $ v_m $ is $ \mathcal{A}$-harmonic in  $ D_m \cap B ( 0, 2 R ) $ and continuous in $ B ( 0, 2 R ) $ with $ v_m \equiv 0 $ on $ B ( 0, 2 R ) \sem  D_m.$  
Let
  \begin{align} 
  \label{eqn8.56}  
  \nu_m ( J  ) =  t_m^{1-n}   \tau ( t_m J )\quad  \mbox{ whenever $J $ is a Borel subset of $ B ( 0, 2 R ).$} 
  \end{align}
  Then $ \nu_m $ is the measure corresponding to  $ v_m $ on $ B ( 0, 2R), $ as in
   Lemma \ref{lemma8.5} for
  $ m \geq m_0. $   Let   $  \xi   \in  \mathbb{S}^{n-1} $  be  a  normal to  $ T (0).  $   We assume as we may that
   $ H = \{ x: \lan x ,  \xi  \ran  > 0  \} $ contains  $ \hat w'. $      Then  from  Lipschitz  starlikeness of   
 $ D , \ti  D,  $  and    \eqref{eqn8.53} $(b)$   we deduce  that 
\begin{align}    
\label{eqn8.57}         
\begin{split}
d_{\mathcal{H}} (  D_m \cap B ( 0, R ),    &H  \cap  B ( 0, R ) ) \\
&+   d_{\mathcal{H}} ( \ti D_m \cap B ( 0, R ),    H  \cap  B ( 0, R ) ) \to 0 \quad \mbox{as} \quad
      m \to \infty,
      \end{split}
\end{align}  
where   $ d_{\mathcal{H}} $  as defined in  section \ref{NSR} denotes  Hausdorff   distance.    
 Let $ \eta = v  ( \hat w'  )/s $ and   
from \eqref{eqn8.43} we see that 
\begin{align}
\label{eqn8.58}   
|\nabla v_m |  \leq    c\eta   \quad \mbox{on} \,\, \ti  D_m. 
\end{align}
  Also, from   \eqref{eqn8.57}, \eqref{eqn8.58},    \eqref{eqn8.16} for  $ v_m ,$  and  
\eqref{eqn8.28}  we deduce that
\begin{align}
\label{eqn8.59}   
| v_m ( x ) | \leq c   \left( \frac{ d ( x, \ar D_m )}{ R} \right)^\be    \, \eta \, R \quad  \mbox{whenever} \,\, x \in D_m \cap B (0,R),
\end{align}
   where  $ \be $ is the H\"{o}lder exponent in 
\eqref{eqn8.16}.   From   \eqref{eqn8.58},  \eqref{eqn8.59},  and  \eqref{eqn2.2},    we see that a subsequence of $ \{v_m\}, $  say   $ \{v'_m ( x )\}$ where $v'_m ( x ) =   \,  v (t'_m x)/t'_m,  $ converges uniformly on
     compact subsets of  $ \rn{n} $ to a  H\"{o}lder continuous function $ \hat v $ with $ \hat v \equiv 0 $  in $ \rn{n} \sem H. $  Also $ \hat v \geq 0 $ is  $\mathcal{A}     = \nabla   f$-harmonic in $ H. $  
     
     We now apply    a boundary  Harnack inequality  in Theorem 1  of  \cite{LLN}  with    $ \Om, u $
       replaced by $ H,   \lan x,  \xi  \ran^+, $  respectively. Letting $ r \to \infty $ in this inequality, we get   
  $ \hat v ( x )  =  \ga  \lan x,  \xi  \ran^+ $ for some $ \ga \geq  0, $ where $ C^+ = \max(C, 0). $
          Let   $ \nu_m'  $ be   the measure  corresponding to $ v'_m $    
      and  observe from  \eqref{eqn8.18},  \eqref{eqn8.59}  that
    the sequence of measures, $\{\nu'_m\}_{m\geq 1}, $    corresponding to $ \{v'_m\}_{m\geq 1}, $  have  uniformly bounded total masses  on $ B ( 0, R ). $  Also from
    \eqref{eqn8.15}-\eqref{eqn8.17},  \eqref{eqn8.59}, we see that $ \{v'_m\}$ is uniformly 
bounded in
   $ W^{1,p} ( B ( 0, R )). $  Using these facts  and  \eqref{eqn2.2}   we obtain  that    
   \[
   \{\nu'_m\}\, \,  \mbox{converges weakly to }\, \,  \nu \quad \mbox{as} \, \, m\to \infty
   \]
   where $ \nu $ is the measure associated with
     $ \ga \, \lan x,  \xi \ran^+. $    Using  integration by parts and   the fact that  $ \lan x,  \xi \ran^+ $ is   $  \mathcal{A} = \nabla   f $-harmonic in  $  H $  we get 
\[ 
d \nu   = \ga^{p-1} \lan  \nabla  f (\xi ), \xi  \ran  d \mathcal{H}^{n-1} \, |_{\ar H}  =  p \ga^{p-1}   f (\xi )   d\mathcal{H}^{n-1} \, |_{\ar H}    
\]  
where we have also used  $ p$-homogeneity of  $ f. $  From  this computation, weak convergence, \eqref{eqn8.56}, and  \eqref{eqn8.53} $(d)$, we have 
\begin{align}
\label{eqn8.60}   
\begin{split}
p  \ga^{p-1} \,  f (\xi) \, b' \, R^{n-1}  &=   \lim_{m \to \infty}
       \nu'_m ( B (0,R)) \\
       &=  \lim_{m \to \infty}( t'_m)^{1-n} \nu ( B ( 0, R t'_m )) \\
       &=         b' \, R^{n- 1}  k^{p-1} ( 0 ).
       \end{split}
         \end{align}
Also, from  our earlier observations we see that  $ x \mapsto  t^{-1} v ( t x ) $  converges uniformly as $ t \to 0 $ to $ \ga  \lan  x,  \xi  \ran^+$  on  compact subsets of $ \rn{n} $  and $ x \mapsto  \nabla v  ( t x ) $ converges uniformly to $ \ga  \xi  $  as $ t  \to 0 $ when $ x $  lies in a compact subset of  $ H. $ Given $ 0 <  \he < 1, $  let  
  \[
  K_\he =           \{ x \in H:\,\, \lan x, \xi \ran   \geq \he | x | \}.
  \]
   In view of these remarks we conclude that
\begin{align}
\label{eqn8.61}  \lim_{t \to 0}  \nabla v  ( t  \om )  =  \ga   \xi 
\end{align}
           whenever $ 0 < \he < 1  $ is fixed and  $ \om \in  K_\he $ with $ |\om| = 1.$  It is easily seen  for given
           $0 < b < 1 $
           and $ t > 0 $ small that there exists $ \he > 0 $ such that $  \Ga (0) $ defined relative to  $ D $ and $ b $ satisfies 
           $ \Ga ( 0 ) \cap B ( 0, t ) \subset  K_\he. $
           From   this observation and \eqref{eqn8.61} we conclude the validity of  $  \eqref{eqn8.48} $ independently of  $ b. $  Then  $ \ga \xi   = \nabla v (0) $  by definition so using   
           \eqref{eqn8.60}   to solve for  $ k  (0 )  $  we  arrive at    \eqref{eqn8.49a} and \eqref{eqn8.49b}.  This completes the proof of   \eqref{eqn8.52}  which as mentioned above this display  gives a contradiction to our assumption that 
           Proposition \ref{proposition8.9} is false. 
           
\end{proof}

%To  finish the proof  of  Proposition \ref{proposition8.9}   we  observe from \cite[Theorem IV]{CF}  that   the reverse  H\"{o}lder inequality in  \eqref{eqn8.29} $(a)$  with  $ w $  replaced by  arbitrary $ y \in   \De ( w, 2 r ) $ and 
%$ 0  < s < r/2 $  implies   $ \log k   \in  $ BMO $(\De (w,r) )$ with norm  $ \leq c, $ where $ c $ depends only on the data and $c_{\star}$.   This fact,  \eqref{eqn8.49a}  and \eqref{eqn8.49b},  $ p $ homogeneity of  $ f, $ and the structure assumptions  on  $  \mathcal{A} = \nabla f $ yield the last sentence in  Proposition \ref{proposition8.9}. 
%%
%%
%%

\setcounter{equation}{0} 
\setcounter{theorem}{0}
\section{Boundary Harnack inequalities}
\label{section10}
 In  this section we use our work in section \ref{section9}  to prove  boundary Harnack inequalities for the ratio 
of  two  $  \mathcal{A} = \nabla  f$-harmonic functions  $ \ti u,  \ti v , $  which are 
$ \mathcal{A}$-harmonic in  $ B ( w, 4r ) \cap D'$  and   continuous in  $B ( w, 4r)$  with    
$ \ti u   = \ti v  \equiv 0  $ on  
$  B ( w, 4r )\sem  D' $. Here $    D'   $  is  a  bounded Lipschitz domain.     

To  set the stage for these inequalities,   let  $ D  $  be a  starlike Lipschitz domain with center $ z $.   Let  
\[
 w \in \ar D    \mbox{ and }\, \, 0 < r  <  |w - z|/100.
 \]

Let  $  \breve{v}_i  > 0$, for $i = 1, 2,    $ be  $  \mathcal{A} = \nabla f$-harmonic  functions  satisfying \eqref{eqn8.28}  in 
 $  B ( w, 4r )  \cap D. $   Assume  also that  $  \breve{v}_i $ is continuous in  $  B ( w, 4r) $  with   $  \breve{v}_i   \equiv 0  $ on   $  B (w, 4r) \sem D, $  for $ i = 1, 2.$   
  
  Let   $ \hat  w  \in   \ar D \cap B (  w,  r  ) , 0 < s  < r , $  and  let     $ 
 \ti D_i =  \ti D_i  ( \hat w, s )   $  be   the  starlike Lipschitz domains in  Lemma \ref{lemma8.7} with center at  $ \hat w' $   defined  relative to  $ \breve{v}_i$ and $D $    
 for $ i  = 1, 2. $  Put   $ \ti D = \ti D_1\cap  \ti D_2 . $  
  
 From this lemma we see that 
 \begin{align} 
 \label{eqn8.63}   
 |\nabla \breve{v}_i  ( x )  |    \approx  \breve{v}_i  ( \hat w')/s  \,\, \mbox{ when } \, \,   x  \in  
  \ti D  \mbox{ and }  i = 1, 2,  
 \end{align} 
 where ratio constants depend only on the data and $c_{\star}. $  Also if 
  $ s' = s/c', $  then  
 \begin{align} 
 \label{eqn8.63a}
    \frac{  \mathcal{H}^{ n - 1}   [  \ar \ti  D  \cap  \ar D \cap  B ( \hat w, s' )  ] }{  \mathcal{H}^{ n - 1}   [  \ar D  \cap   B ( \hat w,  s')  ]  }  \geq   1/2.   
 \end{align}     
      
 Given  $    t_1, t_2  \geq 0, $ and for $ y \in    \ti D $ set   
            \begin{align*}
            d \ti \ga ( y )  :=     \left[d ( y, \ar   \ti D  ) \, 
  \max\limits_{x \in B ( y,  \frac{1}{4} d ( y, \ar  \ti D ) )  }   \,  \mathcal{M}(x) \right]\,dy
            \end{align*}
            where
  \[
 \mathcal{M}(x)= \left\{ \left[ t_1  | \nabla \breve v_1(x) | +  t_2  | \nabla  \breve v_2(x) |  \right]^{ 2p - 6}
 \, \sum_{i,j=1}^n   \,   \left[ t_1   |(\breve v_1(x))_{x_i x_j}|  + t_2   |(\breve v_2(x))_{x_i x_j}| \right]^2    \right\}.
  \]
We show    
 $ \ti \ga $ is a Carleson measure on $  \ti D. $  More specifically, we prove the following lemma.  
 \noindent 
 \begin{lemma}  
 \label{lemma8.11}  
 With the above  notation,   if     $ \hat x  \in \ar  \ti D  $  and   $ 0 < \rho   \leq\mbox{diam}(\ti D)$,  then 
 \begin{align*}  
% \label{eqn8.79}  
 \ti \ga (  \ti D  \cap B ( \hat x , \rho  )   ) \, \leq \, c \,
 \left(t_1 \frac{\breve v_1(\hat w')}{s}  + t_2 \frac{\breve v_2 (\hat w')}{s}\right)^{2p - 4}      \rho^{ n - 1} \,  
 \end{align*} 
 where  $  c $  depends only on   $c_{\star}$  and the data. 
 \end{lemma}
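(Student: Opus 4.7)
The plan is to reduce the estimate to a Carleson-type bound on $\int_{\ti D \cap B(\hat x, 2\rho)} d(z, \ar \ti D)\, |\nabla^2 \breve v_i|^2\, dz$, which in turn is controlled via the linearized $\mathcal{A}$-harmonic equation. Set $M_i := \breve v_i(\hat w')/s$ and $M := t_1 M_1 + t_2 M_2$. By Lemma \ref{lemma8.7}, $|\nabla \breve v_i(x)| \approx M_i$ uniformly for $x \in \ti D$, and since $B(y, d(y,\ar \ti D)/4) \subset \ti D$ whenever $y \in \ti D$, this control persists throughout the maximum. Consequently, regardless of the sign of $2p-6$, the factor $[t_1|\nabla \breve v_1(x)| + t_2|\nabla \breve v_2(x)|]^{2p-6}$ is bounded pointwise by $c\,M^{2p-6}$ on the ball, and Cauchy--Schwarz on the second-derivative sum gives
\[
\mathcal{M}(x) \leq c\, M^{2p-6} \sum_{i=1}^{2} t_i^2\, |\nabla^2 \breve v_i(x)|^2.
\]

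Next I would convert the pointwise maximum into an $L^2$ integral. Since $|\nabla \breve v_i|$ is non-degenerate on $B(y, d(y,\ar \ti D)/2) \subset \ti D$, Lemma \ref{lemma2.2} applies with $\ga \approx 1$, and the Hölder/$L^2$-averaging estimate \eqref{eqn2.3} yields
\[
\max_{B(y, d(y,\ar \ti D)/4)} |\nabla^2 \breve v_i|^2 \leq c\, d(y,\ar \ti D)^{-n} \int_{B(y, d(y,\ar \ti D)/2)} |\nabla^2 \breve v_i|^2\, dz.
\]
Substituting this into $\ti \ga$ and applying Fubini --- exploiting the comparability $d(y,\ar \ti D) \approx d(z,\ar \ti D)$ for $|y-z| < d(y,\ar \ti D)/2$, and noting that such $z$ lies in $B(\hat x, 2\rho) \cap \ti D$ --- produces
\[
\ti \ga (\ti D \cap B(\hat x, \rho)) \leq c\, M^{2p-6} \sum_{i=1}^{2} t_i^2 \int_{\ti D \cap B(\hat x, 2\rho)} d(z, \ar \ti D)\, |\nabla^2 \breve v_i(z)|^2\, dz.
\]

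The remaining step is the Carleson bound
\[
\int_{\ti D \cap B(\hat x, 2\rho)} d(z, \ar \ti D)\, |\nabla^2 \breve v_i|^2\, dz \leq c\, M_i^2\, \rho^{n-1}.
\]
To establish this, differentiate the $\mathcal{A}$-harmonic equation: each partial $w = (\breve v_i)_{x_k}$ is a weak solution of the divergence-form linear equation $\sum_{i,j}\bigl((\mathcal{A}_i)_{\eta_j}(\nabla \breve v_i)\,w_{x_j}\bigr)_{x_i} = 0$ on $\ti D$. Because $|\nabla \breve v_i| \approx M_i$ on $\ti D$, Definition \ref{defn1.1} forces the coefficients to be bounded above and below by constants times $M_i^{p-2}$, so this operator is uniformly elliptic with ellipticity constants depending only on the data. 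Since $|w| \leq cM_i$ and $\ti D$ is Lipschitz with constant controlled by the data and $c_\star$ (Lemma \ref{lemma8.7}), the classical Carleson measure estimate for bounded solutions of uniformly elliptic divergence-form equations on Lipschitz domains yields the bound. Combining with the previous display and using $t_1^2 M_1^2 + t_2^2 M_2^2 \leq M^2$ gives $\ti \ga (\ti D \cap B(\hat x, \rho)) \leq c\, M^{2p-4}\, \rho^{n-1}$, as required.

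The main obstacle is the Carleson estimate in the last paragraph: although the template is classical, care is needed to verify that the auxiliary domain $\ti D$ produced by Lemma \ref{lemma8.7} has a Lipschitz constant controlled only by the data, and that the linearized operator is uniformly elliptic on all of $\ti D$ with constants independent of $y$. Once both are confirmed, a standard cutoff/Caccioppoli argument against $w^2$ (together with the harmonic-measure/Green's-function characterization of $d(z,\ar \ti D)$ on Lipschitz domains) delivers the weighted $\rho^{n-1}$ bound.
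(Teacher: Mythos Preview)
Your first three moves---pulling out the factor $M^{2p-6}$ via \eqref{eqn8.63}, replacing the pointwise maximum by an $L^2$ average using \eqref{eqn2.3}, and interchanging the order of integration---coincide with the paper's display \eqref{eqn8.80}. The divergence is in the final Carleson step, and there your sketch has a genuine gap. The statement ``classical Carleson measure estimate for bounded solutions of uniformly elliptic divergence-form equations on Lipschitz domains'' is not a theorem for bare $L^\infty$ coefficients: square-function/Carleson bounds of that type typically require something like the Kenig--Pipher condition $|\nabla A|^2\,d(\cdot,\ar\ti D)\,dx$ Carleson---which here reads $M_i^{2p-6}|\nabla^2\breve v_i|^2\,d(\cdot,\ar\ti D)\,dx$ Carleson, i.e.\ exactly the estimate you are trying to prove. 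Your fallback via the Green's function of $\mathcal{L}_i$ on $\ti D$ runs into the difficulty that for merely Lipschitz domains this Green's function need not be comparable to $d(\cdot,\ar\ti D)$; you would need a linear lower bound, and nothing in your setup supplies it.

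What the paper does instead is the key trick you are missing: replace the weight $d(y,\ar\ti D)$ by (a constant times) $\breve v_i(y)$ itself, using $d(y,\ar\ti D)\leq d(y,\ar D)\leq c\,M_i^{-1}\breve v_i(y)$ from \eqref{eqn8.28} (this is \eqref{eqn8.81}). The point is that $\breve v_i$ is not only the ``distance surrogate'' but also, by $p$-homogeneity of $f$, a solution of the \emph{same} linearized operator $\mathcal{L}_i$ (see \eqref{eqn8.83}--\eqref{eqn8.84}). Hence $\breve v_i\,\mathcal{L}_i(|\nabla\breve v_i|^2)-|\nabla\breve v_i|^2\,\mathcal{L}_i\breve v_i$ is an exact divergence, and integrating over the starlike approximations $\ti D(t)$ and using the ellipticity bound $\mathcal{L}_i(|\nabla\breve v_i|^2)\gtrsim M_i^{p-2}\sum|(\breve v_i)_{x_kx_j}|^2$ (display \eqref{eqn8.85}) reduces everything to a single boundary integral that is directly estimated by the known pointwise bounds on $\breve v_i,\nabla\breve v_i,\nabla^2\breve v_i$ (displays \eqref{eqn8.86}--\eqref{eqn8.87}). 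Once you make this substitution, no black-box Carleson theorem is needed and no circularity arises.
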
   
   \begin{proof}
   Observe from  \eqref{eqn8.63} and  \eqref{eqn2.3}   that 
      if   
      \[  
     \mathcal{I} = \left[\frac{(t_1 \breve v_1 (\hat w') +  t_2 \breve v_2 (\hat  w' ))}{s'}\right]^{ 2p- 6} 
      \] 
       then                
       \begin{align} 
       \label{eqn8.80} 
\begin{split}
       \ti \ga &(  \ti D  \cap B ( \hat x , \rho  )   ) = \int_{ \ti D  \cap B ( \hat x , \rho  ) } d \ti \ga ( y )\\
&  \leq  c  \mathcal{I}    \int_{ \ti D  \cap B ( \hat x, \rho )  } d ( y,  \ar  \ti D)   \max_{B ( y,  \frac{1}{4} d ( y, \ar \ti D ) )  }   \left\{    \sum_{i,j=1}^n  \,  ( t_1  |     (\breve v_1)_ {x_i x_j} |     +  t_2 | (\breve v_2)_ {x_i x_j} | )^2    \right\} \, dy  \\
   & \leq  c^2 \, \mathcal{I} \int_{   \ti D  \cap B ( \hat x, \rho  )  }    d ( y, \ar  \ti D )^{ 1 - n}  \,   
 \,  \left( \int_{B ( y,   \frac{3}{4}  d ( y,  \ar  \ti D ) )}   \, 
  \,  {\ds \sum_{i,j=1}^n } (t_1  |(\breve v_1)_{x_i x_j}|     +  t_2 | (\breve v_2)_ {x_i x_j} | )^2  dx  \right)dy \\                                      
      &\leq \, c^3  \mathcal{I} \, 
 \int_{  \ti D  \cap B ( \hat x,  \rho  ) }     d ( y, \ar   \ti D  ) \, 
    \,    \sum_{i,j=1}^n   \,   (t_1  |(\breve v_1)_{x_i x_j} (y) |     +  t_2 | (\breve v_2)_ {x_i x_j} (y)  | )^2      \, dy  =      \mathcal{II}, 
    \end{split}
    \end{align}
      where to get the last integral we have  interchanged the order of integration in the second integral. From  \eqref{eqn8.28} and  \eqref{eqn8.63}  we find for $ y \in  \ti D,  $  and $ i = 1, 2, $ 
  that      
  \begin{align}  
  \label{eqn8.81}  
  d (y,  \ar \ti D )  \leq  d ( y ,  \ar D )    \leq   c   ( s/ \breve v_i ( \hat w') )  \,  \breve v_i (y). 
  \end{align}   
  Using   \eqref{eqn8.81}   in     \eqref{eqn8.80},   it follows that     
    if    
    \[  
         \mathcal{II}_i   =   t_i^2 \, ( s/\breve v_i (\hat w' )) \, \mathcal{I} 
    \int_{\ti D  \cap  B ( \hat x,  \rho) } \, \breve v_i(y)   \, \sum_{j, k = 1}^n   |(\breve v_i)_{x_jx_k} (y) |^2 dy  \quad \mbox{for}\, \, i=1,2  
    \]
       then      
       \begin{align}  
       \label{eqn8.82}  \mathcal{II}  \leq \ti c  \, ( \mathcal{II}_1 +  \mathcal{II}_2 ) 
       \end{align} 
          where $  \ti c $ depends only on $c_{\star}$ in \eqref{eqn8.28} and the data. 
           
        To estimate $ \mathcal{II}_i$ for $i = 1, 2,  $   fix $ i   \in  \{1, 2\} $     and  for  small $\delta > 0, $ put 
    \[
     \vartheta=  \delta^{-1}  \nabla  \breve v_i ( x +\delta e_l ) \quad \mbox{and}\quad \upsilon=  \delta^{-1} \nabla  \breve v_i (x).
     \]
By repeating the argument from \eqref{eqn4.8} to \eqref{eqn4.12} and letting $  \delta \to 0 $ in this equality we deduce from     \eqref{eqn8.63}, \eqref{eqn2.3} that if       $ \zeta = (\breve v_i)_{x_l},  1 \leq l \leq n, $ then   $ \zeta $ is a weak solution in 
     $\ti D $ to   
\begin{align}  
\label{eqn8.83}  
\mathcal{L}_i  \zeta = \sum_{k,j=1}^n   \left(  (\breve b_i)_{kj}  \zeta_{x_j} \right)_{x_k}  = 0
\end{align} 
where 
\begin{align} 
\label{eqn8.84}  
(\breve b_i)_{kj} ( x )  =    f_{\eta_k \eta_j}  ( \nabla \breve v_i (x)) \quad \mbox{for}\, \, 1  \leq k, j   \leq n. 
\end{align}
Also  $ \breve v_i  $  is  a  solution to  \eqref{eqn8.83} as follows from    $ \mathcal{A}$-harmonicity of $ \breve v_i $   and  $p$-homogeneity of  $ f.  $          
   Using  \eqref{eqn8.83},   \eqref{eqn8.84}, the structure assumptions on $ \mathcal{A}, $ and     \eqref{eqn8.63} we deduce  for $ i = 1, 2, $   that   
    \begin{align} 
    \label{eqn8.85} 
    \begin{split}
     \mathcal{L}_i  ( | \nabla \breve v_i (x) |^2  )  &\geq   2  \sum_{k,j,l =1}^n     (\breve b_i)_{kj} ( x )  [(\breve v_i)_{x_k x_l }   (\breve v_i)_{x_j x_l } ]  \\ 
     & \geq c^{-1}  |\nabla \breve v_i(\hat w')|^{p-2} \sum_{k,j=1}^n  [(\breve v_i)_{x_k x_j} ]^2\\
     & \geq c^{-2}   ( \breve v_i (\hat w')/s)^{p-2}   \sum_{k,j=1}^n  [(\breve v_i)_{x_k x_j} ]^2 
\end{split}   
     \end{align}      
     weakly   in  $ \ti D $.   Given  $  t \in  (1/2, 1)$ and $ y \in  \ar \ti D $, let  $ y (t) $ be that point on the line segment from  $ \hat w' $ to $ y $ with  $ | y (t) - \hat w' |  =  t | y  - \hat w' |. $     Let  $ \ti D (t) $ be the union of all half open  line segments $ [\hat w', y (t) ) $   joining $ \hat w' $  to  $ y (t) $ when  $ y \in \ar \ti D$.   Using   starlike Lipschitzness  of   $ \ar \ti D (t) $, the fact that 
   \[  
   \breve{v_i}  \mathcal{L}_i ( |\nabla \breve v_i |^2 )  = \breve{v_i}  \mathcal{L}_i ( |\nabla \breve v_i |^2 ) - \mathcal{L}_i ( \breve{v_i} )  | \nabla  \breve v_i |^2
   \] 
   weakly in  $ \ti D (t), $       \eqref{eqn8.63}, \eqref{eqn8.85}, and  integration by parts   we  obtain for $ \mathcal{H}^1 $ almost every $ t \in (1/2, 1) $ that    
\begin{align} 
\label{eqn8.86}    
\begin{split}
( &\breve v_i (\hat w')/s)^{p-2}  \, { \ds \int_{ 
      \ti D (t)   \cap B ( \hat x, \rho )  }    \sum_{k,j=1}^n  } \, \breve v_i \,   |(\breve v_i) _{x_k x_j} |^2 \\
    &\leq \, c \,  \left | \int_{ \ar \, [  \ti D (t )  \cap  B( \hat x, \rho )   ]}  \, \,   \sum_{k, j = 1}^n    (\breve b_i)_{kj} [( \breve v_i \,     
    (|\nabla \breve v_i|^2)_{x_k}   - |\nabla \breve v_i |^2    (\breve{v_i})_{x_k}  )]   \nu(t)_j  d\mathcal{H}^{n-1} \right |
   \end{split}
   \end{align}
                        where $  \nu (t ) $ denotes the unit outer normal to  $ 
                      \ti D (t) \cap    B ( \hat x,  \rho)  $ and  $ c \geq 1 $ depends only on $c_{\star}$ and the data.    Using   once again  
     \eqref{eqn8.63} and  \eqref{eqn2.3} we can estimate the right-hand side of   
     \eqref{eqn8.86}.  Doing this and using the resulting estimate in \eqref{eqn8.86}   we deduce that 
\begin{align} 
\label{eqn8.87}      
\int_{\ti D (t) \cap  B ( \hat x,  \rho)   }    \sum_{k,j=1}^n  \, \breve v_i \,   |(\breve v_i) _{x_k x_j} |^2  \, dx   \, \leq  \breve c  \,  \left(\frac{\breve v_i (\hat w' )}{s}\right)^3   \rho^{n-1} 
\end{align}
   where $ \breve c $ depends only on  $c_{\star}$ and the data. Letting $ t \to 1 $ and using   Fatou's lemma we see that  \eqref{eqn8.87} remains valid with  
   $ \ti D ( t ) $  replaced by  $ \ti D. $        
     In view of  \eqref{eqn8.87} for $ t = 1 $   and  \eqref{eqn8.82} we conclude first  that    
     \begin{align*}
     \mathcal{II}  &\leq   \ti c \, \mathcal{I}    \left[  t_1^2  \left(\frac{\breve v_1 (\hat w' )}{s}\right)^2     +  t_2^2  \left(\frac{\breve v_2 (\hat w')}{s}\right)^2   \right]  \rho^{n-1} \\
     &\leq   \bar c  \left[  t_1  \frac{\breve v_1 (\hat w' )}{s}     +  t_2  \frac{\breve v_2 (w')}{s}\right]^{2p-4}  \rho^{n-1}  
     \end{align*}
     and thereupon from  \eqref{eqn8.80} and arbitrariness of  $ \hat x,  \rho $   that  Lemma   \ref{lemma8.11} is true.    
     \end{proof} 
      
      We continue  under the assumption that   $ \breve  v_i,  D,  t_i,  i = 1, 2, s,  \hat w, 
      \hat w' ,  \ti D $    are  as in Lemma \ref{lemma8.11}.  Let
      $  \breve v =     t_1 \breve v_1  - t_2 \breve v_2.$
             Using  \eqref{eqn4.8} with   $\vartheta  = t_1  \nabla \breve v_1$,  $\upsilon  =  t_2   \nabla \breve v_2,   \mathcal{A}$-harmonicity of $ \breve v_i, $   and  $p$-homogeneity of  $ f,  $ we deduce as in \eqref{eqn8.83}  that $ \breve v $ is a weak solution in  $ D $  to   
     \begin{align}
     \label{eqn8.88} 
  \breve{\mathcal{L}}\breve v  =  \sum_{k,j=1}^n   \left(  \ti  b_{kj}  \breve v_{x_j} \right)_{x_k}  = 0  
   \end{align} 
   where  at $x$ 
    \begin{align}
    \label{eqn8.89}  
    \ti b_{kj}(x)   =    \int_0^1 f_{\eta_k \eta_j}  ( s t_1 
    \nabla \breve v_1(x) +   (1-s) t_2  \nabla \breve v_2(x) ) ds  \quad  \mbox{for}\, \, 1  \leq k, j   \leq n.  
    \end{align}
Now, if
             \[
              \be (x)  =   ( t_1  |\nabla \breve v_1(x)|  + t_2 |\nabla \breve v_2(x) | )^{p-2}
              \]
               then   
\begin{align}
  \label{eqn8.90}   
  \sum_{k,j=1}^n   \ti b_{kj} (x) \xi_k \xi_j 
     \approx  \be (x)  |\xi |^{2} \quad \mbox{whenever}\quad  \xi \in \rn{n} \sem \{0\}.   
\end{align}     
       Ratio constants depend only on  the data. 
      We  note  from  \eqref{eqn8.43} for  $ \breve v_1,  \breve v_2 $   that  
\begin{align}
  \label{eqn8.91}  
      \be (x) \approx   ( t_1 \breve v_1 (\hat w')/s  + t_2  \breve v_2 (\hat w')/s)^{p-2} = \ph 
\end{align}
where $ \ph > 0 $  when  $ x  \in \ti D$.     Thus  $  (\ph^{-1}  \, \ti b_{kj}) $ is uniformly elliptic in   $ \ti D$ with ellipticity constant  $   \approx 1$.      It  is then  classical    (see  \cite[Theorem (6.1)]{LSW})  that Green's function for this operator with pole at  $ \hat w'  \in  \ti D $  exists as well as the corresponding elliptic measure, 
       $ \ti \om (  \cdot,  \hat w' ). $  Moreover, as  in  \cite[section 4]{CFMS}  there exists  $ \bar  c \geq 1 $ depending only on $c_{\star}$ and the data  such that 
   \begin{align} 
\label{eqn8.92}   
\bar c  \, \, \ti \om (  \ar \ti D  \cap B ( \hat w, s' ) , \hat  w'  ) \geq  1.
\end{align}

  Using  Lemma \ref{lemma8.11}  we see that Theorem 2.6  in   \cite{KP}  can be applied  to conclude that $ \ti \om ( \cdot, \hat w'  ) $  is  an     $ A^\infty$-weight on $ \ar \ti D $ in the following sense.  There exists $  \ti c_+  \geq  1 $ depending only on 
$c_{\star}$  and the data such that  if $ \hat  x   \in \ar \ti D,  0 < \rho  < $   diam 
$ \ti  D, $   and    $ K \subset \ar \ti D  \cap B ( \hat x, \rho  )  $ is a Borel set  then 
\begin{align}  
\label{eqn8.93}  
 \frac{\mathcal{H}^{n-1} (K)}{\mathcal{H}^{n-1} ( \ar \ti D  \cap B ( \hat x, \rho)  )} \geq  1/4 \quad  \Rightarrow \quad  \frac{\ti \om (K, \hat w'   )}{\ti \om (  \ar \ti D  \cap B ( \hat x, \rho  ) , \hat  w'  ) }  \geq  c^{-1} _+. 
 \end{align}

To   use this  result  and avoid existence questions for  elliptic measure as well as  the Green's function defined relative  to $ (\ph^{-1}  \ti b_{kj}) $   in   $  D $  we temporarily assume that 
\begin{align}  
\label{eqn8.94}  
\mathcal{R} \in C^\infty ( \mathbb{S}^{n-1})  \quad \mbox{where}\quad  \ar    D  =   \{ z+  \mathcal{R} ( \ze ) \,  \ze :\,\,   \ze  \in   \mathbb{S}^{n-1}  \}   
\end{align} 
and $\mathcal{R} $ is as in Definition \ref{defn8.2}. 
Then from  \cite[Theorem 1]{Li}  it  follows that  $ \nabla \breve v_i, i  = 1, 2,  $  has a   nonzero  locally  H\"{o}lder  continuous  extension  to   $ \bar D  \cap  B (  w, 4r). $  
 From this theorem  and  \eqref{eqn8.90}  we  deduce  that      if    $ y \in 
  D  \cap  B ( w, 2r)   $  and  $  0  <  s <  r,  $   then     Green's function for  $ \mathcal{\breve L}$   with pole
at  $ y$   and   the  corresponding elliptic measure    $  \om  ( \cdot, y ) $, exist 
relative  to  $  D  \cap   B  ( w, 2r) . $      Then from \eqref{eqn8.63a},  \eqref{eqn8.92}, 
  \eqref{eqn8.93} with  $ \hat x  = \hat w,   \rho = s',  K  =  \ar \ti D  \cap  \ar D   \cap B ( \hat w, s' )  $ and the  weak  maximum principle for $ \mathcal{\breve L}$ we find  $  c_{++} \geq 1, $ depending only on $c_{\star}$ and the data such that          
\begin{align} 
\label{eqn8.95}  
\begin{split}
\om ( \ar D  \cap  B (\hat w ,  s' )  , \hat w' )   &\geq  \ti \om (\ar D \cap \ar  \ti  D  \cap  B (\hat w,  s'   ) , \hat w') \\
 &\geq  c_{++}^{-1}.
 \end{split}
 \end{align}
From  arbitrariness of  $ \hat w,  s, $     we deduce from   \eqref{eqn8.95} and   a  covering argument  that  if  
    $  \hat  x     \in   \ar D \cap  B ( w, r ), $  $  0 <  t     \leq  r/2,  $  
         then  
  \begin{align}
   \label{eqn8.95a}  \om ( \ar D \cap  [ B (  \hat x  ,  7 t /8  ) \sem  B ( \hat x , 5 t /8 )] ,   \cdot     ) \geq c^{-1} \quad \mbox{ on } \, \,  \ar B ( \hat x , 3 t /4 ) \cap D 
   \end{align}
where $ c $  depends only on  $ c_{\star} $  and the  data.    
Let   $ g ( \cdot,  \ze  )  $  denote   Green's  function  for   $  \mathcal{\breve L} $  in   
$ D \cap    B (\hat x ,  t )  $ with pole at  $ \ze $ in  $  D \cap   B (\hat x ,  t ) . $        Let
 $  \hat  x'  $   denote  the  point  on the ray  from  $ z $  to  $ \hat x $   with   
$  | \hat x -   \hat   x' |    =    t/100 . $  
 Let  $c'  $   be  as  in  Lemma  \ref{lemma8.7}.  We  assume as we may that  if  $ t' = t/c', $   then   $ B ( \hat x'  ,   2   t' ) \subset  
D \cap  B  ( \hat x,  t  ). $     
 Let    $  a   = {\ds \max_{ \ar B ( \hat x'  ,   t') } }  g ( \cdot,  \hat x' ) . $  
    Let $ t''  = t/ \hat c^+ ,   c'  < < 
  \hat  c^+ ,  $  and  suppose  
$  y \in D \cap  B ( \hat x,  t'' ). $  Choose  $  \bar y   \in  \ar D \cap  B ( \hat x,  2t'' ) $ 
with   $ | \bar y  - y  |  =  d  ( y,  \ar D ). $   Then  using the  iteration argument  in   
 \cite{LN}  from  (3.16) to  (3.26) it  follows  that   for $ \hat c^+   \geq 100 c', $  large enough,   depending only on the data  and $ c_{\star}, $   that 
\[
\om (  \ar D  \cap  [ B ( \bar y , t ) \sem  B (\bar  y, t/2 )] ,    y      )  \leq \,  
  \hat c^+   \,            \frac{g (  y,  \hat x'  ) }{ a } \]  
 and  
\[ \ar D \cap  [ B ( \hat x , 7t/8 ) \sem  B (\hat x, 5t/8 )]    \subset 
   \ar D \cap  [ B ( \bar y , t ) \sem  B (\bar  y, t/2 )]   \]  
whenever $ y  \in   \ar D \cap  B ( \hat x, t'' ).  $  

Thus     from the maximum principle for  solutions to $  \breve{\mathcal{L}}$, 
\begin{align}
 \label{eqn8.96} 
\om (   \ar D  \cap  [ B ( \hat x , 7t/8 ) \sem  B (\hat x , 5t/8 )] ,   \cdot       )  \leq \,  
  \hat c^+   \,            \frac{g (  \cdot ,  \hat x'  ) }{ a }  \quad \mbox{on}\, \,  D  \cap  B ( \hat  x,   t'' ).
  \end{align}
Using \eqref{eqn8.96} we prove   
\begin{lemma} 
 \label{lemma8.12}  
 Let  $ \breve v_i$, for $i = 1, 2,  D,  r, w  $ be  
  as  introduced above   Lemma \ref{lemma8.11} and  suppose   \eqref{eqn8.94}  holds.   
  Let $   \hat x   \in    \ar  D  \cap  B  ( w,   r  ),  0   <   t   \leq     r/2, $  and define  $ \hat x' $ relative to $ \hat x $ as in   \eqref{eqn8.96}. 
  Let           $  {\mathcal{\breve L}}$    be as  in   \eqref{eqn8.88}  and let   $ h_1,  h_2 $  be positive  weak solutions to $ {\mathcal{\breve L}} h_i = 0 $ for $i = 1, 2,  $  in    $  D  \cap  B ( \hat x , t ). $  Suppose also that  $ h_1, h_2  $   are  continuous in   $ \bar  D   \cap  \bar B  ( \hat x , t )$ 
  with boundary value 0 on  
 $  \ar D  \cap  B ( \hat x , t). $  Then   for some  $ \ti c  \geq 1 $  depending only on $c_{\star}$  and the data,   \begin{align} 
\label{eqn8.98} 
\ti c^{-1}   \frac{ h_1 ( \hat x' ) }{ {\ds \max_{\bar D \cap \ar B ( \hat x, t  ) } } h_2 }   \leq   \frac{ h_1(x)}{h_2 (x)}  \leq  \ti c  \frac{ {\ds  \max_{\bar D \cap \ar B ( \hat x, t  )} } h_1 }{
  h_2  (  \hat x'  ) }  
 \end{align}
whenever  $x \in  D  \cap   B ( \hat x,   t''  ).$   If  $  t_1 = 1,   \breve v_2   \equiv  0, $  then  there  exists $  c_1'  \geq  1,  \he  \in  (0, 1),   $  depending only on the data  and  $ c_* $ 
such that    if $t''' = t''/c'_1, $  then 
\begin{align}
\label{eqn10.3} 
\left|    \frac{ h_1 (x ) }{ h_2 ( x ) } -     \frac{ h_1(y)}{h_2 (y)}  \right|   \leq   c \left(  \frac{ |x - y| }{ t } \right)^{\he}   \frac{ h_1 ( x)  }{ 
 h_2 ( x)}  \quad \mbox{ whenever }   x, y  \in  B ( \hat  x,  t''') . 
\end{align}
  \end{lemma}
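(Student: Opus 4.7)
The plan is to follow the scheme of Kemper and Jerison--Kenig for boundary Harnack inequalities of uniformly elliptic operators in Lipschitz domains, exploiting the fact that $\breve{\mathcal{L}}$ is, after dividing by the factor $\ph$ in \eqref{eqn8.91}, uniformly elliptic with ellipticity constants depending only on the data and $c_\star$, together with the already-established Green's function bounds \eqref{eqn8.96} and \eqref{eqn8.95a}. The essence is to sandwich each $h_i$ between two multiples of $g(\cdot,\hat x')/a$ and then take ratios.

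I would first establish an upper bound of Carleson type. Set $M_i = \max_{\bar D \cap \ar B(\hat x, t)} h_i$. Since $h_i$ vanishes on $\ar D \cap B(\hat x, t)$, the maximum principle for $\breve{\mathcal{L}}$ in $D\cap B(\hat x,t)$ gives $h_i(x) \leq M_i \, \om(D\cap \ar B(\hat x, t), x)$. A barrier-chain argument (iterating Harnack for $\breve{\mathcal{L}}$-solutions along a chain of interior balls from $\ar B(\hat x, t)$ inward to $\ar B(\hat x, 7t/8) \setminus \ar B(\hat x, 5t/8)$, as in the proof of \eqref{eqn8.96}) dominates this elliptic measure by a constant multiple of $\om(\ar D \cap [B(\hat x, 7t/8)\setminus B(\hat x, 5t/8)], x)$, which by \eqref{eqn8.96} is at most $\hat c^+ g(x,\hat x')/a$. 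Hence $h_i(x) \leq C M_i \, g(x,\hat x')/a$ on $D \cap B(\hat x, t'')$. For the matching lower bound, apply Harnack for $\breve{\mathcal{L}}$ along a chain from $\hat x'$ to a reference point $a_{t/2}(\hat x)$, combined with \eqref{eqn8.95a}, to obtain $h_i(x) \geq c^{-1} h_i(\hat x')\, g(x,\hat x')/a$ on $D \cap B(\hat x, t'')$. Taking quotients of the upper estimate for $h_1$ and the lower estimate for $h_2$ (and vice versa) yields \eqref{eqn8.98}.

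For the Hölder continuity \eqref{eqn10.3}, I would run the standard Moser-type oscillation iteration. Let $M(r) = \sup\{h_1/h_2 : x \in \bar D \cap B(\hat x, r)\}$ and $m(r) = \inf\{h_1/h_2 : x \in \bar D \cap B(\hat x, r)\}$. The two functions $h_1 - m(r) h_2$ and $M(r) h_2 - h_1$ are nonnegative weak solutions of $\breve{\mathcal{L}} u = 0$ in $D \cap B(\hat x, r)$, continuous up to the boundary and vanishing on $\ar D \cap B(\hat x, r)$; applying the already-proved \eqref{eqn8.98} to each of these paired with $h_2$ at scale $r$ and then evaluating on the smaller ball $B(\hat x, r'')$ yields an oscillation decay $\mathrm{osc}_{r''}(h_1/h_2) \leq \theta \, \mathrm{osc}_r(h_1/h_2)$ for some $\theta = \theta(\text{data},c_\star) < 1$. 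Iterating over scales $r_k = t'''/C^k$ produces the Hölder exponent $\he = -\log\theta/\log C$ and \eqref{eqn10.3}.

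The main obstacle is the upper bound step: the maximum principle directly controls $h_i$ only by the elliptic measure of the far boundary $D \cap \ar B(\hat x, t)$, whereas \eqref{eqn8.96} estimates only the elliptic measure of the intermediate annular portion of $\ar D$. Replacing one by the other requires a chain argument that the elliptic measure of the far boundary is dominated, at interior points close to $\ar D$, by a constant multiple of the elliptic measure of the annular portion of $\ar D$ — this is a purely $\breve{\mathcal L}$-maximum-principle fact but depends on Harnack's inequality applied in a chain of non-tangential balls linking $\hat x'$ and the far boundary, and on making sure constants remain uniform in the data and $c_\star$ (independent of the smoothness assumption \eqref{eqn8.94}, which enters only to guarantee a priori that $g$ and $\om$ exist). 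Once this comparison is in hand, the remainder of the argument is routine.
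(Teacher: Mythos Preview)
Your argument for \eqref{eqn8.98} is essentially the paper's: use \eqref{eqn8.95a} and the maximum principle to dominate $h_i/M_i$ by $c\,\om(\ar D\cap[B(\hat x,7t/8)\setminus B(\hat x,5t/8)],\cdot)$ on $D\cap B(\hat x,3t/4)$, then invoke \eqref{eqn8.96}; likewise the lower bound $h_i\ge c^{-1}h_i(\hat x')\,g(\cdot,\hat x')/a$ comes from comparing boundary values on $\ar B(\hat x',t')$ and applying the maximum principle. (Your ``main obstacle'' is actually a one-step maximum-principle comparison, not a chain argument: on $D\cap\ar B(\hat x,3t/4)$ one simply has $h_i/M_i\le 1\le c\,\om(\text{annulus},\cdot)$ by \eqref{eqn8.95a}.)

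There is, however, a real gap in your derivation of \eqref{eqn10.3}. Applying \eqref{eqn8.98} to $(h_1-m(r)h_2,\,h_2)$ and $(M(r)h_2-h_1,\,h_2)$ and adding yields only
\[
\mbox{osc}(r)-\mbox{osc}(r'')\ \ge\ \ti c^{-1}\,\frac{h_2(\hat x'_r)}{\max_{\bar D\cap\ar B(\hat x,r)}h_2}\,\mbox{osc}(r),
\]
which does not give a decay factor $\theta<1$ unless you know the Carleson estimate $\max h_2\le c\,h_2(\hat x'_r)$. Equivalently, \eqref{eqn8.98} is weaker than the two-sided comparability $h_1/h_2\approx h_1(\hat x')/h_2(\hat x')$ that the oscillation iteration actually consumes. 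The paper bridges this by exploiting the hypothesis $t_1=1,\ \breve v_2\equiv 0$: then $\breve v=\breve v_1$ is itself a solution of $\breve{\mathcal L}$ (by $p$-homogeneity of $f$), and one already has the boundary H\"older decay \eqref{eqn8.16} for $\breve v_1$. Feeding $h_2=\breve v_1$ into \eqref{eqn8.98} gives the pointwise decay \eqref{eqn10.4} for $h_1$, from which the Carleson estimate \eqref{eqn10.5} follows by iteration; combining \eqref{eqn10.5} with \eqref{eqn8.98} then yields the genuine boundary Harnack \eqref{eqn10.6}, after which your oscillation argument runs exactly as you wrote. Without this use of $\breve v_1$ as a reference solution, the step from \eqref{eqn8.98} to oscillation decay is not justified.
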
   
   \begin{proof}  We note from  \eqref{eqn8.95a}  and  the 
   maximum principle for  solutions to $  \breve{\mathcal{L}}$, 
   that 
 \[  h_i    \leq  c \left( {\ds \max_{\bar D \cap \ar B ( \hat x, t  ) } } h_i  \right) \,  \, \om (   \ar D  \cap  [ B ( \hat x , 7t/8 ) \sem  B (\hat x , 5t/8 )] ,   \cdot       )\quad   \mbox { in }  D  \cap B ( \hat x,  3t/4 )
  \]   
  and 
   \[  
   h_i  \geq  c^{-1}  h_i ( \hat x' )   g ( \cdot,  \hat x' ) /a \quad   \mbox{ in }   D  \cap  B ( \hat x, t )  \sem   B ( \hat x' ,  t' )     
   \]   
   when $ i  = 1, 2 $ for  some $   c   \geq  1, $  depending only on the data and 
 $ c_\star$.    These inequalities and    \eqref{eqn8.96} give  
   \eqref{eqn8.98}.    To prove  \eqref{eqn10.3}  we  note  that  $  \breve v_1 $  is now a  solution to  $  \mathcal{\breve L}$   in  $  D  \cap  B ( w,  4r) $  so  from  \eqref{eqn8.16} and   \eqref{eqn8.98}  with 
   $ h_2   =   \breve v $    we  have       
\begin{align} 
 \label{eqn10.4}  h_1 ( x )   \leq    c \, 
   {\ds  \left( \max_{\bar D \cap \ar B ( \hat x, t  )}  h_1  \right) }   \,  \,  ( | x  -  \hat x |/ t  )^{\ti \be} \quad      \mbox{for}\, \,  x \in  B  (  \hat  x,  t''  ).  
     \end{align} 
Now     \eqref{eqn10.4}, arbitrariness of  $  \hat x,  t $    and the    local Harnack inequality implied by 
 \eqref{eqn8.90} for  $ h_1 $    
 yield  as  in  \cite{CFMS} that  for some $  \be > 0,  c'_1 \geq 1 $  depending only on the data and $ c_\star, $  that if  $ t''' = t''/c'_1, $ then 
\begin{align}
 \label{eqn10.5}    
    h_1 (x)   \leq  c'_1  ( \rho/t )^{\be }   h (a_\rho  (y) ) \quad \mbox{for}\, \,  x  \in  D \cap  B (y, \rho),  
\end{align}  
whenever $ 0 <   \rho  \leq t''', $   $   x  \in  D  \cap  B ( y,  \rho ), $  and   $ y  
\in   \ar  D \cap B ( w,  r  ). $  \eqref{eqn8.98} and \eqref{eqn10.5}   imply that   
\begin{align} 
 \label{eqn10.6}  
  \frac{ h_1 (  x ) }{ h_2 (x) }   \approx   \frac{ h_1( a_{t'''} (\hat x ) )}{h_2 (a_{t'''} (\hat x) )}
\quad \mbox{for}\, \,  x \in D \cap B ( \hat x,  t''')     
\end{align}   
where once again all constants depend only on the data and $c_{\star}.$  
   
Next   if     $ \ze \in  \ar D \cap  B ( \hat  x,  \rho )$, $ 0 <  \rho   \leq  t , $   we  let
\[ 
M ( \rho )  = \sup_{B ( \ze , \rho ) } \frac{h_1}{h_2} \quad 
 \mbox{and}\quad  m ( \rho ) = \inf_{B ( \ze , \rho ) } \frac{ h_1 }{h_2 }
\] 
   Also put  
\[
  \mbox{osc}(\rho):= M ( \rho ) - m ( \rho )\quad \mbox{for} \quad 0 < \rho <
t.
\] 
Then, if $ \rho $ is fixed we  see that $ h_1  -  h_2 m ( \rho)  $   and  $  h_2  M ( \rho )  -  h_1 $  are positive solutions to  $  \mathcal{ \breve L}  $  in   $ D \cap B ( \hat  x,  \rho ) $  so   we   can use   \eqref{eqn10.6}     with    $h_1$  replaced  by  $ h_1  -  m (\rho)  h_2  $  to  get  that 
if $  \rho''' /\rho  = t'''/t, $ then  
  \[  M (   \rho''' )  -  m (\rho ) \leq  c^* ( m ( \rho''' ) - m ( \rho ) ).   \]  Likewise  using 
  \eqref{eqn10.6}     with    $ M ( \rho ) h_2   - h_1   $   replacing  $ h_1 ,$   we obtain  that     
  \[   
  M ( \rho ) - m (   \rho''')  \leq  c^*  ( M ( \rho ) -  M (  \rho''' ) ).  
  \]   
  Adding these inequalities we obtain after some arithmetic that
    \begin{align}
    \label{eqn10.33} 
    \mbox{osc} (  \rho''' ) \, \leq \,
\frac{ c^* - 1}{ c^* + 1 }  \, \mbox{osc} ( \rho )
\end{align}
 where $ c^* $  depends only  on $c_{\star}$  and the data.  
 Iterating  \eqref{eqn10.33}  we conclude  for some $ c  \geq 1,  \al' \in (0,1), $ depending only on the data and $c_{\star}$ 
that
\begin{align}
\label{eqn10.34}
\mbox{osc} (  s ) \, \leq \,
c  ( s / t )^{\al'}  \,  \mbox{osc} ( t ) \quad \mbox{whenever}\quad 0 < s  \leq  t   \leq   r .
\end{align}
Lemma  \ref{lemma8.12}  follows from this inequality,  arbitrariness of  $ \ze, $        
    and the interior H\"{o}lder continuity-Harnack inequalities  for solutions  to  
    $  \mathcal{ \breve L}$.  \end{proof}

      We use  Lemma \ref{lemma8.12}   to  prove  
\begin{lemma} 
\label{lemma8.13}  Let  $ \breve v_i$, for $i = 1, 2$, $w,  r, D, $ be as introduced  above   Lemma \ref{lemma8.11}. Suppose  \eqref{eqn8.94}  holds.  There exists  $ \bar c_{+}  \geq 1  $  depending only on $ c_{\star} $ and the data  such that  if $ r^+ =  r/\bar c_+ $  then   
 \begin{align}  
 \label{eqn10.24} 
 \bar c_{+}^{-1} \frac{  \breve v_1 (a_{ r^{+}} (w))  }{  \breve v_2 ( a_{r^{+}} (w)) }
\leq\frac{  \breve v_1(y) }{  \breve v_2 ( y ) } 
 \leq \bar c_{+} \frac{  \breve v_1 ( a_{r^{+}} (w) ) }{  \breve  v_2 ( a_{ r^{+}} (w) ) },
\end{align} 
whenever $ y \in  D  \cap B (w,  r^{+} )$.

\end{lemma}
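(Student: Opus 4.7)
The plan is to reduce the comparability of $\breve v_1$ and $\breve v_2$ to a boundary Harnack inequality for solutions of a linearized uniformly elliptic equation, then invoke Lemma \ref{lemma8.12}. After dividing each $\breve v_i$ by $\breve v_i(a_{r^+}(w))$ (which preserves $\mathcal{A}$-harmonicity by the $(p{-}1)$-homogeneity of $\mathcal{A}$, as well as hypothesis \eqref{eqn8.28}), we may assume $\breve v_1(a_{r^+}(w)) = \breve v_2(a_{r^+}(w)) = 1$. By interior Harnack (Lemma \ref{lemma2.1}) together with the non-degeneracy \eqref{eqn8.28} for each $\breve v_i$, the ratio $\breve v_1/\breve v_2$ is then comparable to $1$ on a compactly-contained interior ball $B(a_{r^+}(w),\, c^{-1} r^+)$, with constants depending only on the data and $c_\star$.

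For any $\lambda > 0$ the function $h_\lambda := \breve v_1 - \lambda \breve v_2$ is a weak solution of the linear operator $\breve{\mathcal{L}}_\lambda$ from \eqref{eqn8.88}-\eqref{eqn8.89} (with $t_1 = 1$, $t_2 = \lambda$), as one checks by the same computation that leads from \eqref{eqn4.8} to \eqref{eqn4.11}; moreover $h_\lambda$ vanishes continuously on $\partial D \cap B(w, 4r)$. Once $\lambda$ is confined to a bounded range around the initial value of the ratio, condition \eqref{eqn8.28} for both $\breve v_i$ together with \eqref{eqn8.63} on the sub-domain $\ti D$ of Lemma \ref{lemma8.7} makes the coefficients of $\breve{\mathcal{L}}_\lambda$ uniformly elliptic with bounds depending only on $c_\star$ and the data, in the sense of \eqref{eqn8.90}-\eqref{eqn8.91}.

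To propagate the comparability up to the boundary, I will mimic the oscillation-decay mechanism of \eqref{eqn10.33}-\eqref{eqn10.34} in the proof of Lemma \ref{lemma8.12}. On any ball $B(\zeta, \rho) \cap D$ with $\zeta \in \partial D \cap B(w, r^+)$, set $M = \sup_{B(\zeta,\rho) \cap D} \breve v_1/\breve v_2$ and $m = \inf_{B(\zeta,\rho) \cap D} \breve v_1/\breve v_2$. The functions $M\breve v_2 - \breve v_1$ and $\breve v_1 - m\breve v_2$ are non-negative and solve $\breve{\mathcal{L}}_M (M\breve v_2 - \breve v_1) = 0$ and $\breve{\mathcal{L}}_m(\breve v_1 - m\breve v_2) = 0$ respectively, both vanishing on $\partial D \cap B(\zeta, \rho)$. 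Applying Lemma \ref{lemma8.12}—specifically the Hölder estimate \eqref{eqn10.3}—to each of these paired with a common $\breve{\mathcal{L}}_\lambda$-harmonic reference comparable to $\breve v_2$ (for instance a Green function for $\breve{\mathcal{L}}_1$ in $\ti D$, whose comparability to $\breve v_2$ follows from the Carleson estimate of Lemma \ref{lemma8.11} and standard \cite{CFMS}-type arguments as already used between \eqref{eqn8.92} and \eqref{eqn8.95}) yields a geometric oscillation decay for $\breve v_1/\breve v_2$ analogous to \eqref{eqn10.33}. Iterating on concentric balls shrinking to the boundary and combining with the interior comparability gives \eqref{eqn10.24}.

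The main obstacle is keeping the ellipticity constants of $\breve{\mathcal{L}}_\lambda$ under uniform control as the parameter $\lambda$ evolves during the iteration, since a priori these depend on the current bounds $M, m$. This is handled by a bootstrap: the initial estimate provides an a priori bound $M/m \leq C_0$ depending only on the data and $c_\star$, and each oscillation-decay step only tightens this bound, so throughout the iteration $\lambda$ remains in a fixed compact subinterval of $(0,\infty)$ on which \eqref{eqn8.91} gives uniform bounds for $\breve{\mathcal{L}}_\lambda$. The smoothness assumption \eqref{eqn8.94} enters only through the already-invoked Lemma \ref{lemma8.12} to guarantee the classical regularity of $\nabla \breve v_i$ up to $\partial D$ that makes the linearized operators and their Green functions available.
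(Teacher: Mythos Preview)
Your oscillation-decay scheme has a structural gap that the bootstrap paragraph does not address. The difficulty is not the ellipticity of $\breve{\mathcal{L}}_\lambda$ --- by \eqref{eqn8.90}--\eqref{eqn8.91} the normalized operator $\phi^{-1}\breve{\mathcal{L}}_\lambda$ has bounds depending only on the data and $c_{\star}$, uniformly in $\lambda$. The real problem is that $M\breve v_2 - \breve v_1$ and $\breve v_1 - m\breve v_2$ are solutions of \emph{different} operators $\breve{\mathcal{L}}_M$ and $\breve{\mathcal{L}}_m$, and neither $\breve v_1$, nor $\breve v_2$, nor a Green function for $\breve{\mathcal{L}}_1$ solves either one. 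Lemma~\ref{lemma8.12} compares two positive solutions of the \emph{same} $\breve{\mathcal{L}}$, so there is no legitimate pairing available. In addition, \eqref{eqn10.3} carries the hypothesis $t_1=1,\ \breve v_2\equiv 0$: that case is special precisely because $\breve v_1$ itself then solves $\breve{\mathcal{L}}$ and supplies the reference $h_2=\breve v_1$ needed for the iteration \eqref{eqn10.33}--\eqref{eqn10.34}. With two genuine $\mathcal{A}$-harmonic functions present no such intrinsic reference exists. The bootstrap is also circular: interior Harnack controls $\breve v_1/\breve v_2$ only on a ball away from $\partial D$, so at the very first step you have no a~priori bound on $M=\sup_{D\cap B(\zeta,\rho)}\breve v_1/\breve v_2$.

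The paper supplies exactly the missing ingredient. One fixes a \emph{single} operator $\breve{\mathcal{L}}$ (with $t_1=T/\breve v_1(a_{\bar r}(w))$, $t_2=1/\breve v_2(a_{\bar r}(w))$, $T$ to be chosen) and solves the linear Dirichlet problem $\breve{\mathcal{L}}h_i=0$ in $D\cap B(w,\bar r)$ with boundary data $h_i=\breve v_i/\breve v_i(a_{\bar r}(w))$. Since $t_1\breve v_1-t_2\breve v_2$ and $Th_1-h_2$ solve the same equation with the same boundary values, the maximum principle gives $t_1\breve v_1-t_2\breve v_2=Th_1-h_2$. Now \eqref{eqn8.98} applies directly to $h_1,h_2$ (same equation), and together with the estimate $h_i(\bar w')\approx 1$ of \eqref{eqn10.26} (proved via \cite{CFMS}-type elliptic-measure arguments, with constants independent of $T$) yields $h_1/h_2\geq T^{-1}$ on $D\cap B(w,r^{+})$ for a suitable $T$ depending only on the data and $c_{\star}$. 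This is already the one-sided inequality in \eqref{eqn10.24}; no oscillation iteration is needed. Assumption \eqref{eqn8.94} enters to make this auxiliary linear Dirichlet solve available.
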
 
\begin{proof}  
 Our proof is similar to the  proof of Lemma 4.9 in  \cite{LN4}.   
   To  prove the left-hand inequality in  \eqref{eqn10.24}   we set
 \[  
 t_1  =    \frac{  T \, }{   \breve v_1 (a_{ \bar r}(w)) } \quad  \mbox{and} \quad t_2 =  \frac{ 1 }{\breve v_2 (a_{ \bar r}(w))}
  \]    where  $ \bar r $ is as in \eqref{eqn8.16}.  Let  $ r^+  =  \bar r ''  $  where  $  \bar r '' $ is as in Lemma  \ref{lemma8.12} with $ \bar r = t.   $     
We also let 
  \[ 
  \breve v     = t_1  \breve v_1 - t_2  \breve v_2 \quad \mbox{in}\quad   D  \cap  B ( w, 4  r)
  \]  
  where $ T $  is to be determined  so that  $ \breve v  \geq 0 $ in   $  D \cap B ( w,  r^+ )$.  Let  $ \mathcal{\breve L}$ be as in  \eqref{eqn8.88} relative to   $  \breve v $   and  let 
   $ h_1, h_2 $  be weak solutions  to  $  \mathcal{\breve L} $  in  $  D \cap B ( w,  \bar  r ) $  with continuous boundary values             
   \[
    h_i (x) = \frac{\breve v_i(x)}{\breve v_i ( a_{\bar r} (w) )} \quad \mbox{whenever}\, \,  
   x\in \ar [D \cap  B ( w, \bar r)]\, \, \mbox{for} \, \, i  = 1, 2.
   \]  
    
   From     \eqref{eqn8.16} we  see for $ i = 1, 2, $ that 
\[    \breve{v_i} ( a_{\bar r} (w) ) \approx    \max_{D \cap B ( w, \bar r )}
\breve v_i.     \]    Using  this inequality and  \eqref{eqn8.98} we see that 
 if     $  \bar w' $  denotes the point on the line segment from $ z $  to  $ w $  with   $  |w -   \bar w' | =  \bar r/100,  $  then 
\begin{align}
\label{eqn10.25}    
\frac{h_1}{h_2}   \geq \ti c^{-1}  h_1 ( \bar w'  )    = 
  T^{-1}  \quad \mbox{on}\,\,     D \cap B ( w,  r^{+}). 
 \end{align}   
    
 Thus, if   $ T $ is as in  \eqref{eqn10.25}  then  $  T  h_1 -  h_2 \geq 0 $  in  $  D \cap B ( w, r^{+} ). $ 
 Also  $  T h_1 - h_2$ and $ \breve v $  are   weak solutions  to  $  \mathcal{\breve L}$  in  
 $  D  \cap B ( w, \bar r) $  and these functions have the same boundary values, so from the maximum principle for this PDE  we have 
\[
 \breve v = T h_1 - h_2 \quad \mbox{in}\, \,  D \cap  B (w, \bar r). 
 \] 
 Thus to complete the proof of the left-hand inequality in    \eqref{eqn10.24} it suffices to show that  
\begin{align}
\label{eqn10.26}    
h_1 ( a_{r^{+}}(w) )  \approx  h_1  (  \bar w'  ) )  \approx 1  \quad \mbox{and}  \quad
h_2 ( a_{ r^+}(w) ) \approx 
 1       
\end{align}     
where ratio constants  depend only on $c_{\star}$  and the data.  
To  do  this  let   $ \ti w  $  be  the  point  on the  line segment  from $ z $ to $ w $ which also lies on 
$ \ar B ( w, \bar r).$   Then  
\[
 \breve v_i  \approx  \breve v_i (\ti  w ) \quad \mbox{in}\quad  B (\ti w,  d  (\ti  w,  \ar D ) / 8 ) \quad \mbox{and}\quad    \breve v_i  (\ti  w ) \approx  {\ds   \breve v_i (a_{\bar r} (w) )}.
\]
     From   \eqref{eqn8.90}, the structure assumptions on $ \mathcal{A}  $ in   Definition \ref{defn1.1}, and  Lemma  \ref{lemma2.2}   we see that  $ \be (\ti w )^{-1}  \, \breve{\mathcal{L}}$  is  uniformly  elliptic in    
$ B (\ti  w,  d  (\ti  w,  \ar D)/ 8 )$ with ellipticity constant $ \approx 1. $  Using these facts we can apply  estimates for  elliptic measure from \cite{CFMS}  to  conclude first that  $  h_i  ( \ti  w  ) \approx  h_i  ( w^* ), i = 1, 2,   $   where $  w^* $ lies on the line segment from $ \ti w  $  to  $ w $ with   
$ d ( w^* ,   \ar [D \cap   B ( w, \bar r)])  \approx \bar r   $.   We  can then use  Harnack's  inequality in a  chain of disks connecting 
$  w^* $  to   $ a_{r^+} (w),    \bar w' ,    $  to  eventually conclude \eqref{eqn10.26}.  This proves  
the  left-hand inequality in  \eqref{eqn10.24}.  To get the right-hand inequality in  \eqref{eqn10.24} we argue as above with $ \breve v_1,  \breve v_2 $ interchanged.    
Thus, \eqref{eqn10.24} is valid. 
\end{proof} 
%$ \breve v_1,  \breve v_2 $ interchanged.        
%\begin{align*}
%\label{eqn10.27}     
 %h_i ( a_{r^{+}}(w)  )   \approx   h_i ( w^* ) \approx  \frac{\breve v_i (w^*)}{ \breve v_i (a_{r^{+}} (w) )} \approx 1
%\end{align*}     \end{proof} \end{document}   
%for $ i = 1, 2,  $ where all ratio constants in this inequality and above depend only on $c_{\star}$  and the data.   Thus,  \eqref{eqn10.26} and the left-hand inequality in   \eqref{eqn10.24} is  valid.   
 %To get the right-hand inequality in  \eqref{eqn10.24} we argue as above with
%$ \breve v_1,  \breve v_2 $ interchanged.    

   Our goal  now  is to show that  Lemmas \ref{lemma8.12}, \ref{lemma8.13},  remain valid  without assumption 
  \eqref{eqn8.94} and  \eqref{eqn8.28}  for certain  $  \breve v_1, \breve v_2.  $   To do this we first  prove a    lemma on  
    the  ``Green's function''  for $ \mathcal{A}$-harmonic functions  in a bounded domain $ O $  with pole at $ w \in  O$.  In  this  lemma  $ G$ denotes the fundamental solution for $ \mathcal{A}$-harmonic functions with pole at 0   from   Lemma \ref{lemma4.1}.                
\begin{lemma} 
  \label{lemma9.1} 
  Given  a bounded  connected   open set  $ O $  and       $   w  \in O $  there exists  a  function   $ \mathcal{G}  $ on  $ O \sem\{w\}$   satisfying 
     \begin{align} 
     \label{eqn10.36}  
\begin{split}
&     (a) \hs{.2in}       \mathcal{G} \, \, \mbox{is}\, \,   \mathcal{A} = \nabla  f\mbox{-harmonic in}\, \,  O'\, \, \mbox{whenver}\, \, O'  \, \, \mbox{is open with } \bar O'  \subset O  \sem \{w\}.   
\\
&  (b)  \hs{.2in}    \mathcal{G}\, \,    \mbox{has boundary value    $0$ on  $\ar O$  in the  $ W^{1,p} $  Sobolev sense. }  \\   
&   (c)  \hs{.2in}   \mbox{If}\, \,  F ( x ) = G( x - w ),  \, \, \mbox{for}\, \, x  \in \rn{n} \sem \{w\}, \, \,  \mbox{then}\, \, \mathcal{G}(x)\leq  F (x) \, \,  \mbox{whenever} \,\, x \in  O.    \\    
&( d) \hs{.2in}  \int_O  \lan  \nabla  f  ( \nabla  \mathcal{G} ) ,  \nabla  \he  \ran  \, dx    =  \he (w) \quad \mbox{whenever} \, \,  \he  \in  C_0^\infty ( O ). \\
& (e)\hs{.2in}  \mbox{  $\ze =  F  - \mathcal{G}$ extends   to  a   locally H\"{o}lder   continuous function  in  $ O $  and  if  $ O'$  is an  } \\  
&\hs{.45in}  \mbox{open set  with $  \bar O'  \subset  O, $  then   ${\ds \min_{\ar O' }  \ze   \leq     \ze (x)   \leq     \max_{\ar O' } \ze  } $  for $x$ in  $O' $.}  \\ 
& (f) \hs{.2in} \mbox{There exists  $ c \geq 1$ and $\de \in (0,1),$ depending only on $  \al, p, n $, and $\Lambda$  in Theorem \ref{theorem1.4}  }  \\ 
 &\hs{.43in}  \mbox{ such that   $ | \nabla \ze (x)  |  \leq  c  | x - w |^{\de - 1}   $          for  $ x \in B (w, d( w, \ar O)/c). $ }   \\           
 &       (g) \hs{.2in}  \mbox{ $ \mathcal{G} $  is   the unique  function satisfying  $(a)-(d). $}
\end{split}
       \end{align} 
       \end{lemma}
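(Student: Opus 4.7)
\emph{Plan.} I construct $\mathcal{G}$ as a limit of $\mathcal{A}$-harmonic functions on punctured domains. For each large $j$ with $\bar B(w,2/j)\subset O$, let $v_j$ be the unique continuous $\mathcal{A}$-harmonic function in $O_j:=O\setminus \bar B(w,1/j)$ with boundary value $F$ on $\partial B(w,1/j)$ and zero Sobolev trace on $\partial O$; existence is standard, as in Lemma~\ref{lemma3.1} and \cite[Appendix~1]{HKM}. Since $F$ is itself $\mathcal{A}$-harmonic in $O_j$, the comparison principle yields $0\leq v_j\leq F$, and extending $v_j$ by $F$ inside $B(w,1/j)$ produces $\mathcal{A}$-supersolutions monotone in $j$, so $\mathcal{G}:=\lim_j v_j$ exists pointwise on $O\setminus\{w\}$. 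Locally uniform $C^{1,\tilde\beta}$ estimates from Lemmas~\ref{lemma2.1}--\ref{lemma2.2}, combined with a diagonal extraction, give $v_j\to \mathcal{G}$ and $\nabla v_j\to \nabla \mathcal{G}$ uniformly on compact subsets of $O\setminus\{w\}$. This yields (a); item (b) follows from the zero traces of the $v_j$ and standard Caccioppoli bounds near $\partial O$; and (c) passes to the limit from $v_j\leq F$.

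\emph{Parts (e), (f).} Set $\zeta:=F-\mathcal{G}\geq 0$. Since both $F$ and $\mathcal{G}$ are $\nabla f$-harmonic in $O\setminus\{w\}$, the derivation \eqref{eqn4.8}--\eqref{eqn4.11} shows $\zeta$ is a weak solution in $O\setminus\{w\}$ of
\begin{align*}
\nabla\cdot [A(x) \nabla\zeta] = 0, \qquad A(x) := \int_0^1 D^{2}f\bigl(t\nabla F(x)+(1-t)\nabla \mathcal{G}(x)\bigr)\, dt.
\end{align*}
A first, rough application of Lemma~\ref{lemma2.2}($\hat a$) to $\mathcal{G}$ on the balls $B(x,|x-w|/4)$ together with $\mathcal{G}\leq F$ and Lemma~\ref{lemma4.1}(b) gives $|\nabla \mathcal{G}(x)|\leq c|x-w|^{(1-n)/(p-1)}$, so $\nabla F$ and $\nabla \mathcal{G}$ are both of the same order as $|x-w|^{(1-n)/(p-1)}$. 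Hypothesis \eqref{eqn1.8}(i) on $D^{2}f$ then makes the rescaled matrix $|x-w|^{(n-1)(p-2)/(p-1)}A(x)$ uniformly elliptic and H\"older continuous on dyadic annuli $r\leq |x-w|\leq 2r$ after the scaling $y\mapsto w+r y$. Schauder theory applied on each annulus, together with the a priori bound $0\leq \zeta\leq F$, furnishes the continuous extension of $\zeta$ across $w$, its local H\"older continuity, and the decay $|\nabla\zeta(x)|\leq c|x-w|^{\delta-1}$ claimed in (f). The maximum principle in (e) for any $O'$ with $\bar O'\subset O$ is then the weak maximum principle for the linearized operator, which is uniformly elliptic once one is in the interior of $O$ and (if $w\in O'$) applies by first capping $\zeta$ continuously at $w$ via the extension just obtained.

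\emph{Parts (d) and (g); main obstacle.} For (d), fix $\theta\in C_0^\infty(O)$; by linearity assume $\theta$ is the constant $\theta(w)$ on some $B(w,r_0)\subset O$. Integration by parts on $O_j$, together with the fact that $\nabla f(\nabla v_j)$ is divergence-free in $O_j$, gives
\begin{align*}
\int_{O}\langle \nabla f(\nabla v_j), \nabla\theta\rangle\, dx \; =\; \theta(w)\,I_j, \qquad I_j \;:=\; \int_{\partial B(w,r_0)}\langle \nabla f(\nabla v_j),(x-w)/|x-w|\rangle\, d\mathcal{H}^{n-1}.
\end{align*}
The LHS converges to $\int_O\langle \nabla f(\nabla \mathcal{G}),\nabla\theta\rangle\,dx$ by dominated convergence (using $|\nabla f(\nabla v_j)|\leq c|\nabla F|^{p-1}\in L^{1}_{\mathrm{loc}}$ from Lemma~\ref{lemma4.1}(b)), and $I_j\to I_\infty$ by uniform convergence of $\nabla v_j$ on $\partial B(w,r_0)$. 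To identify $I_\infty = 1$, use the divergence-free property of $\nabla f(\nabla \mathcal{G})$ to write $I_\infty$ as the same integral over $\partial B(w,r)$ for any $r<r_0$, and let $r\to 0$: by (f), $|\nabla\zeta(x)| = o(|\nabla F(x)|)$ as $x\to w$, so the limit agrees with the corresponding integral for $F$, which equals $1$ by Lemma~\ref{lemma4.1}(c). Uniqueness (g) follows because two candidates $\mathcal{G}_1,\mathcal{G}_2$ produce $\mathcal{G}_1-\mathcal{G}_2$ as a weak solution of a linearized PDE of the same type on $O\setminus\{w\}$, continuously extended across $w$ by (e), with zero Sobolev boundary values on $\partial O$ and no distributional source at $w$ by (d); the weak maximum principle forces $\mathcal{G}_1\equiv\mathcal{G}_2$. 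The hardest step is (e)--(f): the linearization is only uniformly elliptic (after removing the natural weight) when $\nabla F$ and $\nabla \mathcal{G}$ are comparable, which is precisely what we must bootstrap. The key is that the a priori bound $\mathcal{G}\leq F$ already allows Lemma~\ref{lemma2.2} to produce rough gradient comparability, after which hypothesis \eqref{eqn1.8}(i) and Schauder theory close the loop; the $p$-homogeneity in \eqref{eqn1.8}(ii) is essential throughout, since it renders the linearized operator scale-covariant near $w$.
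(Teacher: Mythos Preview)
Your approach has a real gap in parts (e) and (f). You assert that Schauder theory on dyadic annuli together with the a priori bound $0\le\zeta\le F$ gives the continuous extension of $\zeta$ across $w$ and the decay $|\nabla\zeta(x)|\le c|x-w|^{\delta-1}$. But $F(x)\approx|x-w|^{(p-n)/(p-1)}\to\infty$ as $x\to w$, so interior estimates on the annulus $\{r\le|x-w|\le 2r\}$ only yield
\[
|\nabla\zeta(x)|\;\le\;\frac{c}{r}\,\max_{\{r/2\le|\cdot-w|\le 4r\}}\zeta\;\le\;\frac{c}{r}\cdot c'\,r^{(p-n)/(p-1)}\;=\;c''\,|x-w|^{(1-n)/(p-1)},
\]
the same order as $|\nabla F|$. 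This gives neither boundedness of $\zeta$ near $w$ (the heart of (e)) nor the improved decay in (f). And since your proof of (d) then invokes (f) to identify the flux $I_\infty=1$, none of (d), (e), (f) is actually established. Your construction does carry a hidden bound --- since $\zeta_j:=F-v_j$ vanishes on the inner sphere $\partial B(w,1/j)$, the weak maximum principle for the linearized operator on $O_j$ gives $\zeta_j\le\max_{\partial O}F$, hence $\zeta$ bounded --- but you did not make this observation, and even granting it you still lack both the oscillation decay of $\zeta$ at $w$ and an independent proof of (d).

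The paper avoids this circularity by reversing the order. It first proves (d) directly from the construction: the approximants are normalized capacitary functions $\tilde\mu_m(\bar B(w,1/m))^{-1/(p-1)}\tilde\psi_m$, whose associated measures have total mass exactly one and supports shrinking to $\{w\}$, and one passes to the limit using strong $L^q$ convergence of the gradients for $q<n(p-1)/(n-1)$ (see \eqref{eqn10.39}--\eqref{eqn10.40}). With (d) available for both $F$ and $\mathcal G$, the paper then proves (e) by testing against truncations of $\zeta$ itself (see \eqref{eqn10.44}--\eqref{eqn10.45}): subtracting the two identities and using the monotonicity \eqref{eqn1.1} forces a genuine maximum/minimum principle for $\zeta$ on balls $B(w,r)$ \emph{including the center}, hence boundedness, after which a Harnack iteration on annuli gives the H\"older decay \eqref{eqn10.46}. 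Only then is (f) deduced: boundedness of $\zeta$ lets Lemma~\ref{lemma9.0} transfer the two-sided gradient control from $F$ to $\mathcal G$ (see \eqref{eqn10.47}), making the linearized coefficients Lipschitz so that elliptic regularity finally yields $|\nabla\zeta(x)|\le c|x-w|^{\delta-1}$.
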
  

\begin{proof}   
We note from  Lemma \ref{lemma3.3}   that  if  
    $ B ( w, 2/m )  \subset  O $  and  $  \psi_m    $ is the $ \mathcal{A} = \nabla f$-capacitary function for          
    $  \bar B (w, 1/m ) $  with corresponding measure  $ \mu_m, $   then 
\begin{align}
 \label{eqn10.37}    
  \mu_m (  \bar B (w, 1/m) )^{-1/(p-1)}  \psi_m    \leq  c  |x-w|^{(p-n)/(p-1)}    
  \end{align}
for $  x  \in  \rn{n}  \sem \bar B ( w, 2/m) $    where   $ c $  depends only on the  data.     Also   as in  Lemma \ref{lemma4.1} we deduce that the   sequence, 
\begin{align*}
 \left\{ \mu_m (  \bar B (w, 1/m) )^{-1/(p-1)}   \psi_m   \right\}&\, \, \mbox{converges to}\, \, F \quad \mbox{as} \, \, m\to\infty\\
   &\mbox{uniformly  on compact subsets of}\, \, \rn{n}  \sem \{w\}
\end{align*}
   where $F$ is as  in \eqref{eqn10.36} $(c)$.   To  construct  $\mathcal{G}$,  let  $ \{\ti  \psi_m \}_{m\geq 1} $ be a sequence of    continuous  $ \mathcal{A}$-super harmonic  functions in       $ O $  with  $ \ti  \psi_m   \equiv 1 $  on  $  \bar B (w, 1/m), $  while $  \ti  \psi_m    $  is      $  \mathcal{A}$-harmonic  in  $  O \sem  \bar B ( w, 1/m) $  with  boundary value      $ 0 $   on   $  \ar  O $ in the  $ W^{1,p}  $  Sobolev sense.     
     Let  $  \ti  \mu_m  $ denote the measure corresponding to  $ \ti  \psi_m.  $   Then  
      from the  definition of   $  \mathcal{A}$-harmonic  capacity we see that  
      \[
      \ti \mu_m  ( B ( w, 1/m) )  \geq  \mu_m  ( ( B (w, 1/m ) ) 
      \]
        so from \eqref{eqn10.37}  we  have
 \begin{align}
 \label{eqn10.38} 
 \begin{split}
  \ti \mu_m (  \bar B (w, 1/m) )^{-1/(p-1)}   \ti  \psi_m  (x)  &\leq  \ti c    \mu_m (  \bar B (w, 1/m) )^{-1/(p-1)}   \psi_m (x) \\
  & \leq  \ti c^2   |x-w|^{(p-n)/(p-1)} 
\end{split}
  \end{align}
  for $  x  \in  O  \sem \bar B ( w, 2/m). $  
       
                  Now from  \eqref{eqn10.38} and the basic estimates in section \ref{section2} we see that 
       a  subsequence of  $ \{\ti \mu_m (  \bar B (w, 1/m) )^{-1/(p-1)} \ti \psi_m (x) \}_{m\geq 1}$   
     and  the corresponding sequence of  gradients,  converges uniformly on compact subsets of  $ \rn{n}  \sem  \{w\}  $   to an  $ \mathcal{A}$-harmonic function  in  $ O\sem \{w\} $ and its  gradient which we now denote by  $  \mathcal{G},  \nabla \mathcal{G}$.  Clearly,  $\mathcal{G} $ satisfies  \eqref{eqn10.36}  
$  (a), (b). $    Also, since   
\[
  \mu_m ( \bar B (w, 1/m) )^{-1/(p-1)}   \psi_m    \to  F \quad   \mbox{as} \quad m \to \infty 
  \]
    we see from  \eqref{eqn10.38}   that   \eqref{eqn10.36}   $(c)$ is true.         
      
           From   \eqref{eqn10.38}  and  Lemma \ref{lemma2.2}  it follows that  
\begin{align}
\label{eqn10.39}   
| \nabla   \ti  \psi_m   (x) | \, \leq  \,  \hat c 
     \frac{\ti  \psi_m  (x) }{|x - w| } \leq  \hat c^2 
   \ti \mu ( \bar B (w, 1/m)^{1/(p-1)}  \,   |x-w|^{(1-n)/(p-1) }  
   \end{align}
    for $  x  \in  O  \sem \bar B ( w, 4/m). $  
           From \eqref{eqn10.39}   we conclude    for fixed  $ q  <  n (p-1)/(n-1) $   and  $ m \geq l,  $  that the sequence 
\begin{align}
 \label{eqn10.40} 
 \begin{split}
\{( \ti \mu (\bar B (w, 1/m) )^{-1/(p-1)} & | \nabla   \ti  \psi_m   | \}\, \,  \mbox{is  uniformly bounded}\\
   & \mbox{ in  $ L^q  ( O  \sem  B ( w,  4/l) ) $   independent of $l.$ }     
    \end{split}
 \end{align}   
Now \eqref{eqn10.40}  and  uniform convergence  of  a subsequence of   $  \ti \mu ( \bar B (w, 1/m) )^{-1/(p-1)}   \nabla   \ti  \psi_m  $     on compact  subsets     of  $  O \sem \{w\} $  imply   that    this  subsequence also converges strongly in     $ L^q  ( O  \sem  \{w\} )  $ to  $  \nabla \mathcal{G} $  whenever  $ q  < n (p-1)/(n-1). $              Using this fact and  writing out  the integral identities  involving  $ \ti  \psi_m   , \ti \mu_m, $  we conclude after taking  limits, that  \eqref{eqn10.36} $ (d) $  is also valid.   

 To  prove  \eqref{eqn10.36} $(e)  $   we  note  from  the estimate in remark \ref{rmk7.1}  that   
\begin{align}  
\label{eqn10.41}  
|  \nabla  F ( x )  |  \approx   \lan \nabla F ( x ),    
\frac{w - x}{|w-x|}\ran \approx  | x  -   w|^{(1-n)/(p-1)}  \approx  F ( x ) / | x -  w |   
\end{align}
  whenever  $   x  \in \rn{n} \sem \{w \}  $ where constants in the ratios depend only on the data.   It follows from  \eqref{eqn10.41}   
       as in the derivations of  \eqref{eqn4.8}-\eqref{eqn4.11},   \eqref{eqn5.5}-\eqref{eqn5.8}, \eqref{eqn8.88}-\eqref{eqn8.90},    that   $ \ze =  F  -  \mathcal{G}$ is  a  weak  solution to  a  locally uniformly  elliptic  PDE  in $ O \sem \{w\}$ of the  form,  
\begin{align}
 \label{eqn10.42}  
 \sum_{i, j  = 1}^n   \frac{  \ar }{\ar x_i}  \left( b_{ij}  \frac{ \ar  \ze }{\ar x_j}\right)  =  0 
 \end{align}
 where 
 \[
 b_{ij} ( x )  =    \int_0^1  f_{ \eta_i \eta_j }  ( t   \nabla  F (x)   + ( 1 - t )  \nabla\mathcal{G} (x) ) dt \quad \mbox{for}\, \, 1 \leq i, j \leq n. 
 \]
Also  if  $  B (w, 2 r ) \subset  O, $   then for      some  $ c = c(p, n, \al, \La ) \geq 1, $     
\begin{align}
 \label{eqn10.43} 
 c^{-1}   |\xi |^2   |x-w|^{ \frac{(p-2) (1-n)}{p-1}  }   \leq   \sum_{i,j=1}^n    b_{ij} (x)   \xi_i \xi_j     \leq    c   |\xi |^2          |x-w|^{ \frac{(p-2) (1-n)}{p-1} } 
 \end{align}
   whenever $  x \in   B ( w, r) \sem \{w\}$.  Comparing boundary values of  $ \mathcal{G},  F  $  we observe from the  maximum principle for  $ \mathcal{A}$-harmonic functions  and  elliptic  regularity theory that it suffices to  prove  \eqref{eqn10.36} $ (e) $  when  $   O'  =  B ( w,  r ).  $     To  this  end,  let  
\[  
m (s)  =   \min_{\ar B  ( w, s ) }  \ze  \quad  \mbox{and}  \quad      M (s)  =   \max_{\ar B  ( w, s ) } \ze    \quad \mbox{when}\quad  0 <  s  \leq r.    
\]         
Let   
\[
  \xi   =   \liminf_{s \to  0} m(s) \quad  \mbox{and}  \quad  \be   =   \limsup_{s \to  0} M(s). 
  \]   
  We  claim  that          
\begin{align}   
\label{eqn10.44}    
m ( r  )  \leq \xi    =  \be   \leq  M (  r  ). 
\end{align} 
     To  establish  \eqref{eqn10.44},   first suppose  $ \xi    >  M(r). $      In  this  case,  given  $  0   <   N  <  \xi   -   M(r)   $,    we  let   
\[
\he(x)  =
\left\{
\begin{array}{ll}
\min[\max ( \ze(x)  - M(r) ,  0 ), N] & \mbox{when} \, \, x \in B(w,r),\\
0 &\mbox{elsewhere in}\, \, O.
\end{array}
\right.
\]     
Then  $  \he  = N$  in a neighborhood of  $ w $  and  vanishes outside of  $  B (w, r ) $  so  approximating  $ \he  $  by smooth functions which are constant in a  ball about $ w $  and taking a limit we see that  $  \he  $  can be  used as a test function in  \eqref{eqn10.36} $(d)$ for  both $  \mathcal{G} $ and   $   F.  $ Doing this  and  using the structure assumptions on $ f $    in  Theorem  \ref{theorem1.4}  it follows that 
\begin{align}
 \label{eqn10.45} 
 \begin{split}    
 c'  \int_{\{ M( r  )  +  N   <   \ze  \} }   ( |\nabla \mathcal{G}  |  +  |\nabla  F  |)^{p-2}     | \nabla  \ze  |^2   dx  \, & \leq 
      \int_O   \lan \nabla f (\nabla F  )  -   \nabla f (\nabla \mathcal{G} ),  
      \nabla \he \ran \, dx     \\
      & = 0. 
      \end{split}
      \end{align}  
 From  \eqref{eqn10.45} we  see that   $  \ze  \leq  M(r) +N   $   almost everywhere in  
 $  B ( w, r ) $  which contradicts our assumption that $ \xi    > M ( r).  $  Thus  
 $ \xi    \leq  M ( r ).  $   Next  choose    a  decreasing   sequence  $ \{r_l\}_{l\geq 1} $  with  $ r_1  = r/2 $    and  $  \lim_{l \to \infty}  m (r_l )   = \xi.  $   Applying the  minimum  principle  for  $ \mathcal{A}$-harmonic functions  in   $  B (w, r_k ) \sem  B  ( w, r_l )$ for $l > k $  and  letting  $  l  \to \infty $  we see that  
 \[
 \ze   \geq  \min ( m ( r_k ), \xi   )  =: \xi_k\quad   \mbox{in}\,\,     B ( w,  r_k ).
 \]
  Now using  Harnack's   inequality  in  balls  $  B  ( y,    s/2) $  whenever  $  y \in  \ar B ( w,  s )$ and $0  <  s  <  r_k/2  $  we deduce that    
 \[  
 M (s)  -  \xi_k   \leq     c'  ( m (s)  - \xi_k) . 
 \]
 Applying this inequality with  $  s  =  r_l, $  when  $  r_l  <  r_k/2 $  and letting 
 first  $  l  \to \infty  $   and then $ k  \to  \infty $   we find that  
 \[  
 \liminf_{ s \to 0}  M ( s )  =   \xi.  
 \]  
 Now  applying the maximum principle once again in a certain  sequence of  shells with inner  radius tending   to  zero  we  conclude that  $ \xi   = \be. $  
 Finally, if    $ \xi   <  m (r),  $   let  $  0   <  N   < m (r) -  \xi  $   and  set   
 \[
   \he(x)  =
\left\{
   \begin{array}{ll}
   \min  [ \max  ( m(r)  -   \ze(x),   0  ),  N   ] & \mbox{whenever} \, \,  x\in  B (w, r ), \\
   0 &\mbox{otherwise in} \, \, O.
\end{array}  
\right.
   \]
  Then  $  \he \equiv  N   $  in  a neighborhood  of  $ w $  since $ \xi   = \be. $  Arguing as in the case $ \xi    >  M ( r ), $  we  arrive at a  contradiction.  Thus  \eqref{eqn10.44}  is  valid.  
  Note from \eqref{eqn10.44} and arbitrariness of  $ r $  with  $  B (w, 2r)  \subset  O $ that 
  $ M (\cdot ) $ is  increasing   and   $ m (  \cdot ) $   decreasing  on    $  (0, r_0 )$  if   $  B  ( w, 2 r_0  )  \subset  O.  $    Using  this fact  and  arguing  as in the derivation of  \eqref{eqn10.34} we get   for some  $\de   \in  (0, 1) $  depending only on the data  that
\begin{align}
 \label{eqn10.46}   
  M(t)  - m(t)   \leq   ( t/s)^{ \de}  ( M (s)  -  m (s)  )  \quad \mbox{for}\quad    0 <  t  \leq  s    \leq  r_0. 
  \end{align}   
  It follows from  \eqref{eqn10.46},  \eqref{eqn10.42}-\eqref{eqn10.43},  and elliptic regularity theory that  
  $  \ze $  is  H{\"o}lder continuous in  $   B (w, r_0). $    This completes the proof of 
    \eqref{eqn10.36} $ (e).  $  

To prove  \eqref{eqn10.36} $(f)$  we note  from 
    \eqref{eqn10.36} $(e)$ that 
  \[ 
  0  < \ze (x)  \leq  \max_{ B ( w,  d ( w, \ar O) )}  F \quad \mbox{whenever} \quad x    \in  B ( w,  d ( w, \ar O) ). 
  \]  
  This note,  \eqref{eqn10.41}, and Lemma \ref{lemma9.0}  with  $  \hat u_1  = F$,    $\hat u_2  = \mathcal{G}$,  and $ O  = O \sem \{w\}, $ 
  imply    the existence of  $ c^* \geq 1 $  such that 
\begin{align}
   \label{eqn10.47} 
    |  \nabla  \mathcal{G} ( x )  |    \approx      \lan {\ts \frac{w  - x}{|w-x|}},  \nabla  \mathcal{G}( x ) \ran  
\approx  | x  - w |^{(1-n)/(p-1)}  \approx  \frac{\mathcal{G}(x)}{| x - w  |}  
\end{align} 
in $B ( w,  d ( w, \ar O )/ c^* )$  where  $ c^* $  and the  constants in the ratio all depend only on 
  $ p, n, \al, \La. $   From    \eqref{eqn10.47}   and    \eqref{eqn2.3}   it now follows that  
\begin{align} 
\label{eqn10.48}
   |\nabla b_{ij} (x) |  \leq  c   |x-w|^{ \frac{ - 1  -  n(p-2) }{p-1}}    
  \end{align}
whenever $x \in   B ( w,  d( w, \ar O )/ c^* )$  where $ (b_{ij} )$ are as in      
\eqref{eqn10.42}.   Finally,  \eqref{eqn10.48},   Lemma   \ref{lemma9.1}  $(e)$,  and elliptic regularity theory  imply  Lemma   \ref{lemma9.1}   $(f)$.          
     \end{proof}    Next we prove   

 \begin{lemma} 
 \label{lemma9.2}
  Let  $  D  $  be a starlike Lipschitz domain with  center $ z  $  and  let  $ \mathcal{G} $  be the  $ \mathcal{A}$-harmonic Green's function for 
 $ D $  with pole at  $ z.  $    if   
 \[
   d^* ( x )  =   \min \{d ( x, \ar D), | x  -   z | \}
   \]
     then  
  there exists  $ c    \geq 1 $  depending only on the data such that 
\begin{align} 
\label{eqn9.14}  
\begin{split}
&(\al) \hs{.2in}     
     0 < | \nabla \mathcal{G}  ( x ) |    \, \leq \, c \,    \lan  \frac{ z -
x}{ | z - x |} \,   ,  \, \nabla \mathcal{G}  ( x ) \ran    
    \quad  \mbox{whenever} \,\,  x \in   D \sem \{z\}.    \\
&  (\be) \hs{.2in}  
c^{-1}  \,    \frac{\mathcal{G}  ( x )}{d^* ( x, \ar   D ) }\leq    \, 
|   \nabla \mathcal{G} ( x ) |  \,  \leq  \, c  \frac{\mathcal{G}  ( x )}{d^* ( x, \ar
 D )} \quad \mbox{for}\,\,   x \in  \bar D  \sem \{z\}.
\end{split}
 \end{align}  
\end{lemma}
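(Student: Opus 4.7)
\textbf{Proof proposal for Lemma \ref{lemma9.2}.} The strategy mirrors Lemma \ref{lemma3.3}, replacing the role of the compact convex set $E$ by the pole $z$ and the capacitary function $u$ by the Green's function $\mathcal{G}$. First I would establish $(\alpha)$ by a Gabriel-style maximum principle argument. For $\lambda \in (0,1)$, set $\mathcal{G}_\lambda(x) := \mathcal{G}(\lambda(x-z)+z)$; since $D$ is starlike with respect to $z$, the map $x \mapsto \lambda(x-z)+z$ sends $\bar D$ into $D$, so by Remark \ref{rmk1.3}, $\mathcal{G}_\lambda$ is $\mathcal{A}$-harmonic on an open set containing $\bar D\setminus\{z\}$. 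The plan is to show that for some $\breve c = \breve c(p,n,\alpha,\Lambda) \geq 1$ and all $\lambda$ sufficiently close to $1$,
\[
\mathcal{G}_\lambda(x) \;\geq\; \left(1 + \tfrac{1-\lambda}{\breve c}\right) \mathcal{G}(x) \qquad \text{for every } x \in D\setminus\{z\}.
\]
This inequality follows from the comparison principle for $\mathcal{A}$-harmonic functions applied in $D\setminus \bar B(z,r)$: on $\partial D$ both sides vanish, and on $\partial B(z,r)$ I would use the decomposition $\mathcal{G} = F - \zeta$ from Lemma \ref{lemma9.1}(e), together with the homogeneity identity $F(\lambda(x-z)+z) = \lambda^{(p-n)/(p-1)} F(x)$ derived from \eqref{eqn4.1}(e). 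Letting $\beta = (p-n)/(p-1) < 0$, the dominant term $(\lambda^\beta - 1 - (1-\lambda)/\breve c)F(x)$ blows up like $|x-z|^\beta$ while the $\zeta$ corrections are controlled by the Hölder estimate of Lemma \ref{lemma9.1}(e)--(f); choosing $\breve c > 1/|\beta|$ and then $r$ small makes the comparison hold on $\partial B(z,r)$. Dividing by $1-\lambda$, sending $\lambda \to 1^-$, and invoking the $C^{1,\alpha}$ regularity of $\mathcal{G}$ from Lemma \ref{lemma2.2} gives $\langle\nabla\mathcal{G}(x), z-x\rangle \geq \mathcal{G}(x)/\breve c$; combined with Cauchy--Schwarz and the upper bound in $(\beta)$, this yields $(\alpha)$.

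For the upper bound in $(\beta)$, near the pole I would invoke \eqref{eqn10.47}, which already gives $|\nabla\mathcal{G}(x)| \approx \mathcal{G}(x)/|x-z|$ on $B(z, d(z,\partial D)/c^*)$. Away from the pole, Lemma \ref{lemma2.2}$(\hat a)$ applied in a ball of radius $\tfrac{1}{8}d(x,\partial D)$ combined with Harnack's inequality and the boundary Hölder estimate Lemma \ref{lemma8.4} gives $|\nabla \mathcal{G}(x)| \leq c\,\mathcal{G}(x)/d(x,\partial D)$; together these bounds yield $|\nabla\mathcal{G}| \leq c\mathcal{G}/d^*$ globally.

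For the lower bound in $(\beta)$, the easy case is $d^*(x) = |x-z|$: inequality $(\alpha)$ immediately gives $|\nabla\mathcal{G}(x)| \geq \mathcal{G}(x)/(\breve c\,|x-z|)$. The harder case is $d^*(x) = d(x,\partial D) < |x-z|$, and this is where the main obstacle lies. Here I would argue as in Lemma \ref{lemma3.3}(b): pick $w \in \partial D$ closest to $x$ and exploit the interior cone condition provided by starlike Lipschitzness to locate a ball $B(\hat w,\rho) \subset D$ tangent to $\partial D$ at $w$ with radius $\rho \approx d(x,\partial D)$. By Harnack's inequality applied to $\mathcal{G}$, $\mathcal{G} \geq \delta \mathcal{G}(a_\rho(w))$ on the half of this ball farther from $w$. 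A barrier construction identical to the one carried out in Appendix \ref{appendix1} for $1-u$ (now applied to $\mathcal{G}$ itself, using the same non-divergence form regularization) then yields $\mathcal{G}(y) \geq c^{-1} \mathcal{G}(a_\rho(w))\, d(y,\partial D)/\rho$ in a neighborhood of $w$. Applying the interior gradient estimate Lemma \ref{lemma2.2}$(\hat a)$ at $a_\rho(w)$ and using Harnack to transfer back to $x$ produces $|\nabla\mathcal{G}(x)| \geq c^{-1}\mathcal{G}(x)/d(x,\partial D)$, completing $(\beta)$. The main obstacle is precisely this boundary barrier step: the non-homogeneity of $\mathcal{A}$ prevents a direct appeal to explicit radial comparison functions, and one must mollify $\mathcal{A}$ as in Appendix \ref{appendix1} and pass to the limit while preserving the linear modulus $d(\cdot,\partial D)$. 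The positivity $|\nabla\mathcal{G}|>0$ on $D\setminus\{z\}$ in $(\alpha)$ is then a consequence of the lower bound in $(\beta)$.
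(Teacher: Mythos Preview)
There are two genuine gaps.

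\textbf{Gap in $(\alpha)$.} Your dilation argument correctly yields the radial-derivative bound $\langle z-x,\nabla\mathcal{G}(x)\rangle\geq\mathcal{G}(x)/\breve c$ (this is exactly the paper's \eqref{eqn9.16}), but the step ``combined with Cauchy--Schwarz and the upper bound in $(\beta)$'' does not deliver $(\alpha)$. That combination only produces
\[
|\nabla\mathcal{G}(x)|\;\leq\;c\,\frac{\mathcal{G}(x)}{d^*(x)}\;\leq\;c\breve c\,\frac{|x-z|}{d^*(x)}\,\Big\langle\tfrac{z-x}{|z-x|},\nabla\mathcal{G}(x)\Big\rangle,
\]
and the factor $|x-z|/d^*(x)$ is unbounded as $x$ approaches $\partial D$ away from the pole. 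The paper supplies the missing idea: after obtaining \eqref{eqn9.16}, one observes that both $\mathcal{P}(x)=-\langle x,\nabla\mathcal{G}(x)\rangle$ and each partial derivative $\mathcal{G}_{x_i}$ are weak solutions of the \emph{same} linear uniformly elliptic equation $\sum_{i,j}\partial_{x_i}\bigl(f_{\eta_i\eta_j}(\nabla\mathcal{G})\,\phi_{x_j}\bigr)=0$ in $D\setminus\{z\}$ (the case $\phi=\mathcal{P}$ uses $p$-homogeneity of $f$). Under a temporary smoothness assumption on $\partial D$, one checks $\breve c\,\mathcal{P}\geq\pm\mathcal{G}_{x_i}$ on $\partial D$ (there $\nabla\mathcal{G}$ is normal, and by starlike Lipschitz geometry the normal makes a bounded angle with the radial direction) and also near the pole via \eqref{eqn10.47}; the boundary maximum principle for this linear PDE then gives $(\alpha)$ throughout $D\setminus\{z\}$. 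Smoothness is removed afterward by approximation.

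\textbf{Gap in the lower bound of $(\beta)$.} Your barrier argument needs a ball $B(\hat w,\rho)\subset D$ tangent to $\partial D$ at the nearest boundary point $w$. Starlike Lipschitz domains have an interior \emph{cone} condition, not an interior ball condition, so no such tangent ball need exist (e.g.\ at a corner of $\partial D$); and without tangency the inequality $d(y,\partial B(\hat w,\rho))\gtrsim d(y,\partial D)$ that the Appendix~\ref{appendix1} argument relies on fails. The paper avoids barriers entirely: once $(\alpha)$ is in hand, $\mathcal{P}>0$ solves the linear elliptic equation above and hence obeys a Harnack inequality. One draws the ray from $z$ through $x$, locates by the mean value theorem a point $\hat w$ on it with $\mathcal{G}(x)/2\leq|\nabla\mathcal{G}(\hat w)|\,|y-x|$ where $\mathcal{G}(y)=\mathcal{G}(x)/2$, bounds $|y-x|\leq(1-c^{-1})d(x,\partial D)$ via the boundary H\"older estimate \eqref{eqn8.16}, and transfers $|\nabla\mathcal{G}(\hat w)|$ to $|\nabla\mathcal{G}(x)|$ using Harnack for $\mathcal{P}$ combined with $(\alpha)$.
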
 
 \begin{proof} 
\noindent  Since $  \mathcal{A}$-harmonic functions are invariant under
translation and dilation  and \eqref{eqn9.14} is also invariant under translation and dilation,  we assume, as we may, that 
\begin{align*}
%\label{eqn9.15} 
z = 0 \quad  \mbox{and} \quad \mbox{diam}(D)= 1.
\end{align*}  
 Let  $  F, \mathcal{G}$  be as in   Lemma  \ref{lemma9.1}  with  $  w  = 0,  O = D. $   
Using \eqref{eqn10.47}, starlikeness   of $ \ar D, $  the maximum principle for $  \mathcal{A}$-harmonic functions, 
and  comparing boundary values we see for some  $ \ti  c  \geq  1 $   and $  \ga  >  1 $   near 1, that 
\[ 
\frac{\mathcal{G} (  x ) - \mathcal{G} ( \ga x )}{\ga  -  1}  \geq   \frac{\mathcal{G}(x)}{\ti{c}} \quad \mbox{whenever} \quad x  \in D \sem\{0\}  
\]
where $ \ti c $   depends  only on  the data. Letting  $ \ga \to 1 $  and using  Lemma  \ref{lemma2.2}  we obtain 
\begin{align}
\label{eqn9.16}   
- \ti c \lan \nabla \mathcal{G}  ( x ),  x \ran  \geq  \mathcal{G}  ( x )  \quad \mbox{when}\quad    x  \in   D   \sem \{0 \}.
  \end{align}
      Let    
      \[
        \mathcal{P} ( x )  = -  \lan  \nabla \mathcal{G}  (x), x   \ran \quad \mbox{whenever} \quad   x \in D \sem \{0\}.
        \]   
        From   \eqref{eqn9.16}, \eqref{eqn2.2},   and the same argument as in   \eqref{eqn8.83}
we deduce that  
$  \ph =  \mathcal{G} _{x_i},  1 \leq i \leq n, $ or  $  \ph  =  \mathcal{P}$   
      are weak solutions in    $ D  \sem \{ 0  \} $  to  
\begin{align} 
\label{eqn9.17}   
\sum_{i, j = 1}^n   \frac{\ar}{ \ar x_i }  ( \hat  b_{ij} \ph_{x_j} )  = 0     
\end{align}
   where  
  \begin{align}  
  \label{eqn9.18}   
  \hat b_{ij} ( x )  =   f_{\eta_i \eta_j} ( \nabla \mathcal{G} (x) ) \quad \mbox{whenever} \quad  x \in   D \sem \{0\}.  
\end{align}
      
We  temporarily assume that 
\begin{align}  
\label{eqn9.19}  
\mathcal{R} \in C^\infty ( \rn{n} )  
\end{align}
where $\mathcal{R}$ is as in  Definition \ref{defn8.3}.   
 Then  as in  \eqref{eqn8.94} we deduce that 
 $ \mathcal{P} $ and  $ \mathcal{G}_{x_i}, 1 \leq  j  \leq n,   $  have   continuous  extensions to 
 $ \bar D \sem \{0\}. $   We  also  have $ d ( z, \ar D) \approx  \mbox{diam}(D) $   where constants in the ratio depend only on the starlike Lipschitz constant for $ D. $   
   Using  \eqref{eqn10.47},    Lipschitz   starlikeness of  $  D,  $  and  
 \eqref{eqn9.16}  we find for some  $  \breve{c} \geq 1 $   depending only on the data that   
  \[      
  \breve{c}\,  \mathcal{P} (x) \, \geq \pm   \mathcal{G}_{x_i}  (x) \quad \mbox{on}\quad  \ar  D   \cup  B ( 0,  1/\breve{c} ) \sem \{0\}  
 \]   
  when $ 1 \leq i  \leq n.  $   From  this inequality  and the boundary maximum principle for the  PDE in   \eqref{eqn9.17}, 
  we  conclude that   \eqref{eqn9.14} $(\al)$ is valid when  \eqref{eqn9.19} holds with constants  depending only on the data.    To prove    \eqref{eqn9.14} $(\be)$ note  
  from  \eqref{eqn10.47}  that this inequality is  valid  in  $  B ( 0,  {\ts \frac{1}{2}} d ( 0, \ar D ) )  \sem \{0\}. $   Also from   Lemma  \ref{eqn2.2} $(\hat a)$  we deduce that the right-hand inequality in \eqref{eqn9.14} $(\be)$ holds  when  $ x  \in  D \sem B ( 0,  {\ts \frac{1}{2}} d ( 0, \ar D ) ) . $     Thus we prove only the left-hand inequality in 
  \eqref{eqn9.14} $(\be)$.    To do this we first use \eqref{eqn9.14} $(\al)$  and  \eqref{eqn9.17}, \eqref{eqn9.18} for $ \mathcal{P},     $   once   again,  to  deduce  that    Moser iteration can be  applied to powers of $ \mathcal{P} $ in order to obtain,  
\begin{align} 
\label{eqn9.20}   
\max_{B ( w, s ) }  \mathcal{P}    \leq  c \min_{B ( w, s ) } \mathcal{P}  \quad \mbox{whenever}\quad   B ( w, 2 s ) \subset  D \sem \{0\}.
\end{align}
      
     If   $ x  \in  D \sem B ( 0,  {\ts \frac{1}{2}} d ( 0, \ar D ) ) , $ 
                we   draw a ray $ l $  from 0  through  $ x $  to  a
point   in $  \ar D.$  Let $ y  $ be the first point on  $ l $  (starting from $x$)
with    
$  \mathcal{G} ( y ) =  \mathcal{G} ( x )/2. $  Then from the mean value theorem of
elementary calculus there exists $ \hat w  $ on the part of $ l $
between $ x, y $ with 
\begin{align} 
\label{eqn9.21}   
\mathcal{G}( x )/2   =   \mathcal{G}( x ) -  \mathcal{G} ( y )   \, 
\leq  \, | \nabla  \mathcal{G}( \hat w ) | \,  | y -  x |.
\end{align}
From  \eqref{eqn8.16}  with $   v  =  \mathcal{G} ,  r =  2  d  ( x,  \ar D),    $   and  $  x = a_{2r} ( w)$, we deduce the existence of $ c \geq 1 $ depending only on the data with 
\begin{align}  
\label{eqn9.22}   
y, \hat w \in B [ x,  (  1 - c^{ - 1} )  d ( x, \ar \hat D ) ]. 
\end{align}  
      Using \eqref{eqn9.22},    the  Harnack inequality in \eqref{eqn9.20},  and    \eqref{eqn9.14} $(\al),$   it follows 
 for some $ c', $  depending only on  the data, that   
 \[    
 | \nabla \mathcal{G}  ( \hat w  ) | \, \leq \, c' \, | \nabla\mathcal{G} ( x ) |
 \]   
   and thereupon  from    \eqref{eqn9.21}  that  
 \[   
 \mathcal{G} ( x ) \, \leq   \, c \, | \nabla \mathcal{G} (x  )  | \,  d ( x,  \ar  \hat  D   ).  
\]
   Thus  the left-hand inequality   in  \eqref{eqn9.14} $(\be)$ is valid when  $ x  \in  D \sem B ( 0,  {\ts \frac{1}{2}} d ( 0, \ar D ) )  $ for $ c $ suitably large   and the proof of   Lemma  \ref{lemma9.2}  is  complete under assumption   \eqref{eqn9.19}. 
    
    To complete the proof of   Lemma  \ref{lemma9.2}  
we show that  \eqref{eqn9.19}  is unnecessary.  For this purpose let $ \mathcal{R}_m  \in C^\infty 
( \rn{n}  )   $ for $ m = 1, 2, \dots, $ with   
\begin{align}  
\label{eqn9.22a}  
 \|  \log   \mathcal{R}_m   \hat \|_{\mathbb{S}^{n-1}}   \leq c    \|  \log   \mathcal{R} \hat \|_{\mathbb{S}^{n-1}}
\end{align}
 and 
$  \mathcal{R}_m \to  \mathcal{R} $ as $ m \to \infty $ uniformly on $  \mathbb{S}^{n-1}$.   Here $ c $ depends only on $n.$ 
  Let $  D_m,  \mathcal{G}_m $ be the  corresponding  starlike  Lipschitz domain 
and $  \mathcal{A}$-harmonic  Green's function for $ D_m $ with pole at 0.  Applying  Lemma \ref{lemma9.2} to  $ \mathcal{G}_m $,  using   Lemmas  \ref{lemma2.2}, \ref{lemma8.4}, and arguing as in the proof of  \eqref{eqn10.36} $(d)$ we see that 
\begin{align*}
\{\mathcal{G}_m, \nabla \mathcal{G}_m\}& \, \, \mbox{converge to}\, \, \{\mathcal{G}, \nabla \mathcal{G}\} \\
&\mbox{ uniformly on compact subsets of}\, \,  D \sem \{0\}.
\end{align*}
 Since the constants in this lemma 
 are independent of 
 $ m $    we conclude  upon taking limits  that Lemma \ref{lemma9.2} also holds for $ \mathcal{G} $  without hypothesis  \eqref{eqn9.19}.   The proof of Lemma  \ref{lemma9.2}  is now complete. 
\end{proof}    
Before proceeding further we note the following consequences of  Lemma  \ref{lemma9.2}.     
\begin{corollary}  
\label{corollary9.3}  
Let    $   D_1,   D_2  $  be  starlike Lipschitz domains with center at  $ z $  and  let  $   \mathcal{G}_1,  \mathcal{G}_2 $  be the  corresponding $  \mathcal{A}$-harmonic  Green's  functions with pole at  $ z. $  Suppose      $  w \in \ar D_1  \cap  \ar D_2, $        $  0   <    r   \leq  |w  -  z|/100,  $  and     \[   D_1  \cap  B  ( w,  4r  )  =   D_2  \cap  B ( w, 4r).   \]     Then  Lemma    \ref{lemma8.13}   is valid  with $ \breve v_i  =  \mathcal{G}_i$ for  $i = 1, 2,   $  without  assumption   \eqref{eqn8.94}.    Moreover,  $ c_{\star}$   in this   lemma  and so also  constants   depend only on the data. 
\end{corollary}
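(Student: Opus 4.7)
\medskip

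\noindent \textbf{Proof proposal.} The strategy is to verify the hypotheses of Lemma \ref{lemma8.13} (other than \eqref{eqn8.94}) for the two Green's functions, and then remove the smoothness assumption \eqref{eqn8.94} by a joint approximation of $D_1,D_2$ that preserves the local agreement inside $B(w,4r)$. Throughout the plan, all constants will be tracked to depend only on the data.

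First, observe that for $x\in B(w,4r)$ one has $|x-z|\geq |w-z|-4r\geq 96r$, so that in Lemma \ref{lemma9.2} the quantity $d^*(x)$ coincides with $d(x,\partial D_i)$. Hence \eqref{eqn9.14} gives \eqref{eqn8.28} for $\mathcal{G}_i$ on $B(w,4r)\cap D_i$ with $c_{\star}$ depending only on the data. Using $D_1\cap B(w,4r)=D_2\cap B(w,4r)=:V$, both $\mathcal{G}_1,\mathcal{G}_2$ are positive $\mathcal{A}$-harmonic in $V$ and vanish continuously on $B(w,4r)\setminus V$. Thus the $\breve v_i=\mathcal G_i$ meet every hypothesis of Lemma \ref{lemma8.13} except possibly \eqref{eqn8.94}.

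Next, construct smooth approximations that remain identical near $w$. Parametrize $\partial D_i=\{z+\mathcal R_i(\om)\om:\om\in\mathbb S^{n-1}\}$ and let $U\subset\mathbb S^{n-1}$ be the open set whose image under $\om\mapsto z+\mathcal R_i(\om)\om$ lies in $B(w,4r)$; by hypothesis $\mathcal R_1\equiv \mathcal R_2$ on $U$. Fix a smooth cutoff $\eta\in C^\infty(\mathbb S^{n-1})$ with $\eta\equiv 1$ on a slightly smaller open set $U_0\subset\subset U$ (chosen so $U_0$ parametrizes $B(w,7r/2)\cap\partial D_i$) and $\eta\equiv 0$ outside a compact subset of $U$. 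With $\{\rho_m\}$ a standard mollifier on $\mathbb S^{n-1}$, define
\[
\mathcal R_{i,m}:=\eta\,\mathcal R_1+(1-\eta)\,(\mathcal R_i*\rho_m),\qquad i=1,2.
\]
Then $\mathcal R_{i,m}\in C^\infty(\mathbb S^{n-1})$, $\mathcal R_{1,m}\equiv \mathcal R_{2,m}$ on $U_0$, $\mathcal R_{i,m}\to \mathcal R_i$ uniformly, and $\|\log \mathcal R_{i,m}\hat\|_{\mathbb S^{n-1}}\leq c\|\log\mathcal R_j\hat\|_{\mathbb S^{n-1}}$ with $c$ depending only on $n$ (using $\mathcal R_1\equiv\mathcal R_2$ on $U$). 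Let $D_{i,m}$ be the resulting smooth starlike Lipschitz domains, and $\mathcal G_{i,m}$ their $\mathcal A$-harmonic Green's functions with pole at $z$. For all $m$, $D_{1,m}\cap B(w,3r)=D_{2,m}\cap B(w,3r)$, so Lemma \ref{lemma8.13}, applied to $\breve v_i=\mathcal G_{i,m}$ on $D:=D_{1,m}$ with ball $B(w,3r)$ in place of $B(w,4r)$, yields
\[
\bar c_+^{-1}\,\frac{\mathcal G_{1,m}(a_{r^+}(w))}{\mathcal G_{2,m}(a_{r^+}(w))}\;\leq\;\frac{\mathcal G_{1,m}(y)}{\mathcal G_{2,m}(y)}\;\leq\;\bar c_+\,\frac{\mathcal G_{1,m}(a_{r^+}(w))}{\mathcal G_{2,m}(a_{r^+}(w))}
\]
for $y\in D_{i,m}\cap B(w,r^+)$. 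Hypothesis \eqref{eqn8.28} for each $\mathcal G_{i,m}$ holds with $c_\star$ independent of $m$ by Lemma \ref{lemma9.2} (which does not require smoothness of $\partial D$, as shown at the end of its proof). Consequently $\bar c_+$ and $r^+$ are independent of $m$.

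Finally, pass to the limit. Reasoning exactly as at the end of Lemma \ref{lemma9.2} (using the compactness provided by Lemmas \ref{lemma2.1}, \ref{lemma2.2}, \ref{lemma8.4} together with the uniform estimate \eqref{eqn10.47} near $z$), $\mathcal G_{i,m}\to \mathcal G_i$ uniformly on compact subsets of $D_i\setminus\{z\}$. The points $a_{r^+}(w)$ for $D_{i,m}$ lie in $V$ and can be chosen common to both $i$ and $m$ for $m$ large, so the displayed inequality passes to the limit and yields \eqref{eqn10.24} for $\mathcal G_1,\mathcal G_2$. The main obstacle is the construction of the joint smooth approximation that is \emph{exactly} common inside $B(w,3r)$ while keeping starlike Lipschitz constants uniformly bounded; the cutoff-plus-mollification above accomplishes this precisely because $\mathcal R_1\equiv\mathcal R_2$ on $U$. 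Everything else is routine compactness and the already-established Lemma \ref{lemma9.2}.
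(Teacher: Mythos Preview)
Your overall strategy matches the paper's proof: verify \eqref{eqn8.28} for each $\mathcal G_i$ via Lemma \ref{lemma9.2} (so $c_\star$ depends only on the data), build smooth starlike Lipschitz approximants $D_{1,m},D_{2,m}$ that still coincide in a ball about $w$, apply Lemma \ref{lemma8.13} to the approximating Green's functions with uniform constants, and pass to the limit using the compactness already established in Lemmas \ref{lemma2.1}, \ref{lemma2.2}, \ref{lemma8.4}, \ref{lemma9.1}. The paper carries out exactly these steps.

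There is, however, a genuine slip in your approximation formula. You set
\[
\mathcal R_{i,m}:=\eta\,\mathcal R_1+(1-\eta)\,(\mathcal R_i*\rho_m),
\]
and then assert $\mathcal R_{i,m}\in C^\infty(\mathbb S^{n-1})$. But on $U_0$, where $\eta\equiv 1$, this gives $\mathcal R_{i,m}=\mathcal R_1$, which is only Lipschitz; hence $\mathcal R_{i,m}$ is \emph{not} smooth, and assumption \eqref{eqn8.94} fails for $D_{i,m}$. The cutoff is in fact unnecessary. Since $\mathcal R_1\equiv\mathcal R_2$ on the open set $U$, simply taking $\mathcal R_{i,m}:=\mathcal R_i*\rho_m$ with the \emph{same} mollifier for $i=1,2$ already yields smooth functions with $\mathcal R_{1,m}\equiv\mathcal R_{2,m}$ on any $U_0\subset\subset U$ once the mollification radius is small enough; the Lipschitz bound \eqref{eqn9.22a} is automatic. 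This is precisely what the paper does (it only records the requirement $\mathcal R_{1,m}=\mathcal R_{2,m}$ on the relevant set and cites \eqref{eqn9.22a}). With this correction your argument goes through and coincides with the paper's.
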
   
\begin{proof}    
From Lemma   \ref{lemma9.2}  we see that     
Lemmas  \ref{lemma8.12},   \ref{lemma8.13}   are valid  with $ \breve v_i  =  \mathcal{G}_i ,  i = 1, 2,   $ under   assumption   \eqref{eqn8.94}.   Also from  Lemma  \ref{lemma9.2}  we deduce  that $ c_{\star}$  in these lemmas for $ \mathcal{G}_1, \mathcal{G}_2, $ depends only on the data.   To show assumption   \eqref{eqn8.94}  is unnecessary  in   Lemma \ref{lemma8.13} let   $   \mathcal{R}_1,  \mathcal{R}_2, $  
be the  graph  functions  for  $ D_1,  D_2 $  and let    $  \mathcal{R}_{1, m},  \mathcal{R}_{2, m},  
m = 1, 2, \dots $ be  approximating  graph functions to  $ \mathcal{R}_1,  \mathcal{R}_2 $  satisfying    \eqref{eqn9.22a}  with   $ \mathcal{R}_i $   replacing  $  \mathcal{R} $  for $ i  = 1, 2. $  Also   $ \mathcal{R}_{i,m}     \to   \mathcal{R}_i, $  as $ m \to \infty, $   uniformly  on  $ \mathbb{S}^{n-1}. $   Finally  we choose this sequence so that      
\[  
 \mathcal{R}_{1, m }    =  \mathcal{R}_{2, m } \quad  \mbox{on} \quad   \{ \om  \in  \mathbb{S}^{n-1} : 
\mathcal{R}_1 ( \om )  =  \mathcal{R}_{2 } ( \om )  \in B ( w, 4r) \}.  
 \]
Let  $  D_{i, m } $  be the  corresponding starlike Lipschitz  domains with center at  $ z $  and  let  $  
\mathcal{G}_{i,m}  $  be  the  $ \mathcal{A}$-harmonic  Green's  functions  for  $  D_{i,m} $  with pole  at  $ z$  for $  i  = 1, 2.  $       Applying  Lemma  \ref{lemma8.13}  to   $  \mathcal{G}_{1, m},  \mathcal{G}_{2, m}$  in  $ D_{1,m} $  we see that 
constants in   \eqref{eqn10.24}  depend  only on the data. Using   Lemma \ref{lemma9.1}   and  taking limits  as $ m \rar \infty,  $    we  get    \eqref{eqn10.24}  for   $ \mathcal{G}_1, \mathcal{G}_2. $ 
\end{proof}    

Next we prove,    
\begin{lemma} 
\label{lemma9.4}  
Let $ D $  be a  starlike  Lipschitz domain  with  center  $z,  w   \in \ar  D, $  and  
$ 0   <   r     \leq  |w-z|/100.  $    
%suppose that   
%\[
 %D  \cap  B ( w, 4r)  = \{ y = ( y', y_n )  \in \rn{n} :  y_n > \ph ( y')   \} \cap  B  ( w, 4r)
%\]  
%where $ \ph $ is Lipschitz on $ \rn{n - 1}$ and $w\in\partial D$ and $r>0$. 
Given $ p, 1 <   p <n$,  suppose
that $ \ti u,  \ti v   $ are positive $ \mathcal{A}$-harmonic functions in $ D\cap B ( w, 4r )$ and  that 
$ \ti u,  \ti v $ are continuous in  $ B ( w, 4r ) \sem D $, with 
 $ \ti u,  \ti v = 0 $ on $ B (w, 4r ) \sem D $.  Then there exists $  \ti c_1, 1 \leq \ti c_1 < \infty, $ depending only
on the  data  such that if $ r_1  = r/ \ti c_1,  $  then 
\[ 
 \frac{ \ti u ( y ) }{ \ti v ( y ) } \, \leq \, \ti  c_1  \frac{ \ti u (a_{r_1}(w) )}{\ti v (a_{r_1}(w) ) }    \quad  \mbox{whenever}\,\,  
 y \in  D \cap B (w, r_1 ).  
 \]
 \end{lemma}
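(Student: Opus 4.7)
The plan is to reduce Lemma \ref{lemma9.4} to the boundary Harnack inequality for Green's functions, which is already available through Corollary \ref{corollary9.3}. Concretely, I would sandwich both $\tilde u$ and $\tilde v$ between constant multiples of a suitably chosen $\mathcal{A}$-harmonic Green's function $\mathcal{G}$, and then obtain the desired ratio estimate by the transitivity
\[
\frac{\tilde u(y)}{\tilde v(y)} \;=\; \frac{\tilde u(y)}{\mathcal{G}(y)} \cdot \frac{\mathcal{G}(y)}{\tilde v(y)}.
\]
First I would modify the graph function $\mathcal{R}$ of $D$ outside a small neighborhood of $w$ to obtain a starlike Lipschitz domain $\hat D$ with center $\hat z$ far from $w$ and with $\hat D \cap B(w,4r) = D \cap B(w,4r)$. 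Let $\mathcal{G}$ be the $\mathcal{A}$-harmonic Green's function for $\hat D$ with pole at $\hat z$. By Lemma \ref{lemma9.2}, $\mathcal{G}$ satisfies the nondegeneracy \eqref{eqn8.28} in $\hat D$ with $c_\star$ depending only on the data, so Lemmas \ref{lemma8.6} and \ref{lemma8.7} apply to $\mathcal{G}$.

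The key claim I would then prove is that for $r_1 = r/\tilde c_1$ sufficiently small,
\[
\tilde c_1^{-1}\,\frac{\tilde u(a_{r_1}(w))}{\mathcal{G}(a_{r_1}(w))}\,\mathcal{G}(y) \;\leq\; \tilde u(y) \;\leq\; \tilde c_1\,\frac{\tilde u(a_{r_1}(w))}{\mathcal{G}(a_{r_1}(w))}\,\mathcal{G}(y)
\]
for every $y \in D \cap B(w,r_1)$, and similarly for $\tilde v$. The upper bound is essentially a Carleson estimate: using Lemma \ref{lemma8.4} for $\tilde u$ together with Harnack's inequality for $\mathcal{G}$ on $\partial B(w, 2r_1)\cap D$, the function $C\,\tilde u(a_{r_1}(w))\mathcal{G}/\mathcal{G}(a_{r_1}(w))$ dominates $\tilde u$ on the boundary of $D \cap B(w, 2r_1)\setminus B(w,r_1)$, and both vanish on $\partial D$, so the weak maximum principle for $\mathcal{A}$-harmonic functions closes it. Granted the two-sided sandwich for both $\tilde u$ and $\tilde v$, the lemma follows immediately, since the $\mathcal{G}$ factors cancel in the ratio and a Harnack chain connects $a_{r_1}(w)$ to any interior reference point.

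The hard part will be the lower bound $\tilde u \gtrsim t\,\mathcal{G}$, which forces $\tilde u$ to inherit the nondegeneracy of $\mathcal{G}$ and is the genuine content of the boundary Harnack inequality for non-divergence-friendly operators. To obtain it I would follow the strategy of \cite{LN4}: apply Lemma \ref{lemma8.7} to $\mathcal{G}$ to construct a starlike Lipschitz subdomain $\tilde D \subset D \cap B(w,r)$ with center $\hat w'$ on which $|\nabla \mathcal{G}| \approx \mathcal{G}(\hat w')/s$; then consider the linearized operator $\breve{\mathcal{L}}$ from \eqref{eqn8.88}--\eqref{eqn8.90} associated to $\breve v_1 = \tilde u$ and $\breve v_2 = \mathcal{G}$ with weights $t_1 = T/\tilde u(a_{r_1}(w))$, $t_2 = 1/\mathcal{G}(a_{r_1}(w))$. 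On $\tilde D$ the weight $\beta(x)$ is controlled by $\mathcal{G}$ alone, so $\breve{\mathcal{L}}$ is uniformly elliptic there; the PDE argument of Lemma \ref{lemma8.13} (via \eqref{eqn8.98}, the Carleson measure estimate of Lemma \ref{lemma8.11}, and the $A^\infty$-property \eqref{eqn8.93}) then yields $T\,h_1 - h_2 \geq 0$ on $\tilde D$ for $T$ comparable to $\tilde u(a_{r_1}(w))/\mathcal{G}(a_{r_1}(w))$, which is exactly the required lower bound inside $\tilde D$; Harnack's inequality and iteration over a covering then propagate it to $D \cap B(w,r_1)$.

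Finally, to remove the smoothness assumption \eqref{eqn8.94} that is used to guarantee existence of Green's function and elliptic measure for $\breve{\mathcal{L}}$ in $\hat D$, I would approximate $\mathcal{R}$ by smooth graph functions $\mathcal{R}_m$ with Lipschitz constants uniformly controlled as in \eqref{eqn9.22a}, obtain the conclusion in the smooth domains $\hat D_m$ with constants depending only on the data, and pass to the limit using the compactness provided by Lemmas \ref{lemma2.1}, \ref{lemma2.2}, and \ref{lemma8.4}. The principal obstacle throughout is the interior degeneracy of the $\mathcal{A}$-harmonic equation at points where $\nabla \tilde u$ is small, which is precisely what the comparison with $\mathcal{G}$ and the subdomain construction of Lemma \ref{lemma8.7} are designed to circumvent.
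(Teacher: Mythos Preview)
Your reduction to Green's functions is the right idea, but the single--Green's--function sandwich is circular. With $\hat D$ chosen so that $\hat D\cap B(w,4r)=D\cap B(w,4r)$, the function $\mathcal{G}$ vanishes on $\ar D\cap B(w,4r)$ exactly as $\ti u$ does, and neither inequality $\ti u\lesssim\mathcal{G}$ nor $\ti u\gtrsim\mathcal{G}$ follows from the maximum principle alone. Your upper--bound argument needs $\ti u\le C\mathcal{G}$ on $\ar B(w,2r_1)\cap D$, but for $x$ on this sphere with $d(x,\ar D)$ small, Harnack's inequality does \emph{not} give a uniform lower bound on $\mathcal{G}(x)$ (the chain length blows up), so controlling the ratio there is precisely the boundary Harnack inequality you are trying to prove. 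Your lower--bound argument invokes Lemmas~\ref{lemma8.11}--\ref{lemma8.13} with $\breve v_1=\ti u$, but those lemmas are stated for $\breve v_1,\breve v_2$ both satisfying \eqref{eqn8.28}, and their proofs genuinely use this: see \eqref{eqn8.63}, the classical differentiation in \eqref{eqn8.83}--\eqref{eqn8.85}, and the appeal to \eqref{eqn2.3}, all of which require a pointwise lower bound on $|\nabla\breve v_i|$. The claim that on $\ti D$ the weight $\be$ is ``controlled by $\mathcal{G}$ alone'' would need $t_1|\nabla\ti u|\lesssim t_2|\nabla\mathcal{G}|$, which is again equivalent to what you want.

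The paper breaks the circularity by using \emph{two} Green's functions instead of one. It builds a starlike Lipschitz cone $D_1\subset D$ with apex ball $\bar B(\ti w,\ti r')$ and base $\ar D\cap\bar B(w,\ti r')$, and a second domain $D_2$ that agrees with $D_1$ (hence with $D$) in $B(w,4\hat r)$ but is strictly larger outside. Since $D_1\subset D$, the lower bound $c\min(\ti u,\ti v)\ge r^{(n-p)/(p-1)}\mathcal{G}_1$ on $D_1\sem B(\ti w,\ti r'/4)$ follows from a pure maximum--principle comparison (both sides vanish on $\ar D_1$, and \eqref{eqn10.36}(c),(e) bound $\mathcal{G}_1$ on the inner sphere). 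Since $D_2$ is larger away from $w$, its Green's function $\mathcal{G}_2$ is bounded below on the lateral part of the cone, so $\max(\ti u,\ti v)\le c\,r^{(n-p)/(p-1)}\mathcal{G}_2$ again by maximum principle alone. Both $\mathcal{G}_1,\mathcal{G}_2$ satisfy \eqref{eqn8.28} by Lemma~\ref{lemma9.2}, so Corollary~\ref{corollary9.3} legitimately gives $\mathcal{G}_1/\mathcal{G}_2\approx 1$ on $D\cap B(w,\hat r)$, and the sandwich closes without ever needing \eqref{eqn8.28} for $\ti u$ or $\ti v$.
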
 
\begin{proof}                           
Let  $  \ti w    $   denote  the  point on  the  ray  from  $ z $  to  $ w $  with  $ | w  -  \ti w |=   \ti  r
< < r . $ To  prove  Lemma   \ref{lemma9.4},  we  assume as we may that $  \ti u (  \ti w )  = 1   
 =  \ti v (  \ti w  ),  $   since  $ \mathcal{A}$-harmonic functions are invariant under multiplication by positive constants. Also  from  \eqref{eqn8.16} we  see that  $  \ti c  \geq 1 $ can be chosen, depending only on the  data  so  that if   
  $ \ti r = r/ \ti c, $   then    
\begin{align}
 \label{eqn9.23a}      \max_{B ( w, \ti r )}   \ti u  \approx   \ti u  (  \ti  w ) = 1 \quad \mbox{ and } \quad   \max_{B ( w, \ti r )}   \ti v  \approx   \ti v  (  \ti w ) = 1. 
 \end{align}    
 Second  let  $ \ti r' = \ti r/c $   and  let   $ D_1 $  denote  the interior  of  the  domain  obtained  from  drawing  all  line  segments from points  in $ \ar  D  \cap   \bar B ( w, \ti r' ) $   to   $  \bar B (\ti   w,  \ti r'  ). $ 
If  $ c  > 10000  $ is large enough  (depending on the data),  and   $ 
\ti  r'   = \ti r/ c ,$   we  deduce as in   \eqref{eqn8.44}  that  $ D_1   $ is  starlike   Lipschitz  with  
center at $ \ti  w . $   Also  the       starlike  Lipschitz  constant  for  $ D_1 $   can  be estimated  in terms of the starlike Lipschitz constant for  $ D $ as in  Lemma    \ref{lemma8.7}.  Finally  there  exists $ \hat c > > c  , $   depending only on  the  data  such  that  if $  \hat r  = r/  \hat c  <   <  \ti r' , $  then   
\begin{align}
    \label{eqn9.23b}    
    D \cap  B ( w,  32  \hat r )   =    D_1  \cap  B ( w,  32 \hat r ).   
    \end{align}
 Let $ \mathcal{G}_1 $ be the $ \mathcal{A}$-harmonic Green's  function for $ D_1 $ with pole  at  $  \ti w. $  
  Using  Harnack's inequality,   the  maximum principle for $  \mathcal{A}$-harmonic functions,    the fact that  $ D_1   \subset  D, $  \eqref{eqn10.36} $(e),$ and \eqref{eqn10.41}   we  obtain that     
\begin{align}  
\label{eqn9.23} 
 c \min (  \ti u,  \ti v )   \geq  r^{(n-p)/(p-1)}  \mathcal{G}_1   \quad \mbox{in} \quad D_1 \sem  B ( \ti w,  
\ti r' / 4 ).
 \end{align} 
Let $ \mathcal{R},   \mathcal{R}_1,  $  denote the graph functions for  $ D,  D_1 $ and  set   
\[ 
\bea{l}  
 K_i := \left\{\om  =   \frac{ y - \ti w }{| y - \ti w |} : y \in B ( w, 2^{ 4- i}  
\hat r   )  \, \right\} \quad \mbox{for}\quad  i = 0, 1, 2,  
\\
L: = {\ds \sup_{K_0} \mathcal{R}_1}.    
\ea 
\]
   Choosing   $ \hat  c $  still larger if  necessary but with the  same dependence we  see that in addition to   \eqref{eqn9.23b}   we  may  also  assume  that    
\begin{align}
 \label{eqn9.24a}  \{  \mathcal{R}_1 ( \om ) \,   \om   :   \om \in K_0   \}    \subset   
 \ar D  \cap  \ar  D_1  \,.
 \end{align}
 From   our  construction  we  deduce  that  there exists   
  $ c_- $ (depending
on $ p, n, $ and the Lipschitz constant for $ \mathcal{R}_1 $)   with   
\begin{align}
\label{eqn9.24} 
 \min  \{  d ( K_2,  \mathbb{S}^{n-1} \sem K_1 ),  
   d ( K_1,  \mathbb{S}^{n-1} \sem K_0 ) \} \geq c_-^{ - 1} 
   \end{align}
Let  $ 0 \leq  \vartheta  \leq 1$ with $\vartheta  \in C_0^\infty  ( \rn{n} ), $  and $ \vartheta  \equiv 1 $ on
$ K_2 $  while  $ \vartheta \equiv 0 $ on $  \mathbb{S}^{n-1}  \sem K_1. $  
 Moreover,  thanks to  \eqref{eqn9.24}, we can choose $ \vartheta$ so that 
\begin{align}
 \label{eqn9.25} 
 | \nabla \vartheta | \, \leq  \, \bar c_-^{ -1 }  \mbox{ where  $\bar  c_-$ has the same dependence as  $ c_-$ . }   \,  
 \end{align} 
  Let  
  \[  
  \log \mathcal{R}_2 ( \om ) := 
  \left\{ 
  \bea{ll}  
  \vartheta \, \log \mathcal{R}_1 
\, + ( 1 - \vartheta ) \, \log (2 L) & \mbox{when} \quad \om \in K_0, \\
\log(2L) & \mbox{when}\quad  \om \in  \mathbb{S}^{n-1} \sem K_0. 
\ea 
\right.  
\]
Using \eqref{eqn9.25}, it is easily shown that 
\[   
 \| \log \mathcal{R}_2  \hat \|_{\mathbb{S}^{n-1}} 
 \, \leq \, c \, ( \|  \log\mathcal{R}  \hat \|_{\ar K_0}   +  1 ). 
 \]   
Let $  D_2  $ be the starlike Lipschitz domain with  center at $
 \ti w  $ and graph function $\mathcal{R}_2. $  Also let $ \mathcal{G}_2 $ be the $ \mathcal{A}$-harmonic Green's  function  for  $  D_2  $   with pole at  $ \ti w. $   Then from our construction,  the fact that $ L  \approx   
\ti  r, $ 
      Harnack's inequality,   and once again 
   \eqref{eqn10.36} $(e),$ \eqref{eqn10.41},    we deduce  first  
          that    
          \[
     c \, r^{(n-p)/(p-1) }  \mathcal{G}_2 \geq    1 \quad \mbox{on}\quad  \{ \ti w  + t   \mathcal{R}_1 ( \om ) \,  \om  : \om \in  K_0 \sem K_1,  0  \leq  t   \leq 1. \}            
     \]  
      and  second  from  \eqref{eqn9.23a},  \eqref{eqn9.24a}, and  the  maximum  principle  for $ \mathcal{A}$-harmonic  functions   that   
\begin{align}
  \label{eqn9.25a}     \max(u, v )  \leq c \, r^{(n-p)/(p-1) }  \mathcal{G}_2   \mbox{ in }  
 \{ \ti w  + t   \mathcal{R}_1 ( \om ) \,  \om  : \om \in  K_0 ,  0  <   t   \leq 1. \} 
 \end{align}
Now  from    \eqref{eqn9.24a},  \eqref{eqn9.23b},    and the  definition of  $ \mathcal{R}_2, $   it  follows that     
\begin{align}
 \label{eqn9.25b}   D\cap B ( w, 4 \hat r) = D_1  \cap  B ( w,  4 \hat r )  = D_2 \cap  B ( w, 4 \hat r )  .
 \end{align}
  Using  this display,  \eqref{eqn9.25a},     
    Corollary  \ref{corollary9.3}  with   $  r $  replaced by $  \hat  r, $     and  once again Harnack's  inequality  we deduce the  validity  of  Lemma   \ref{lemma9.4}    since    $ r_1 $  can be chosen so  that    
    \[ 
 \frac{ \mathcal{G}_1 ( a_{r_1} (w)) }{ \mathcal{G}_2 ( a_{r_1} (w))} \, \approx 1    \approx  
   \ti u (a_{r_1}(w) )   \approx \ti v (a_{r_1}(w) ).  
\]     \end{proof} 

  In order to finish the   proof  of   our  boundary Harnack  inequalities      we  need    a  lemma whose proof  requires  only     Lemmas  \ref{lemma2.1}-\ref{lemma2.2}. 
     \begin{lemma} 
         \label{lemma9.0} 
Let $O\subset\mathbb{R}^n$ be  an open set,           $ p $ fixed, 
 $1 <   p <n$ and  $\mathcal{A} \in M_p(\alpha)$. 
Also, suppose  that
 $ \hat v_1, \hat v_2 $ are non-negative $\mathcal{A}$-harmonic  functions in $O$. 

 Let  $ \tilde a \geq 1$ , $y \in O$,  $ \eta \in \mathbb{S}^{n-1}$,  and assume that
 \[
\frac 1 {\tilde a} \frac { \hat v_1( y)}{d(y,\partial O )}\leq \lan \nabla \hat v_1 ( y ), \eta \ran  \leq  |\nabla \hat v_1(y)|\leq \tilde a \frac {\hat
v_1(y)}{d(y,\partial O )}.
\]
Let  $ \ti \ep^{- 1 }  =  (c\tilde a)^{(1 + \ti \he )/\ti \he }$ where $ \ti \he  $ is as in
\eqref{eqn2.3}  of  Lemma \ref{lemma2.2}.  If
\[  
( 1 - \ti \ep) \hat L \leq \frac{ \hat v_2  }{\hat v_1}
\leq ( 1 +  \ti \ep ) \hat L \quad \mbox{ in }\,\,  B ( y, {\ts \frac{1}{100}} d(y,\partial O )) 
\]
for some $ \hat L, 0 < \hat L  < \infty,$ then
for $ c  = c(p,n,\alpha)  $  suitably large, 
\begin{align*}
%\label{10.35}  
\frac{1}{c\, \tilde a}
   \,   \frac {\hat v_2(  y )}{d(y,\partial O )}
\, \leq  \lan \nabla \hat v_2( y ), \eta \ran \, \leq 
| \nabla \hat v_2(  y) |
 \leq   \,  c\,  \tilde a      \frac {\hat v_2( y )}{d(y,\partial O )}.
\end{align*}
\end{lemma}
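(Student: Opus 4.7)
The plan is to show that $\nabla \hat v_2(y)$ is close to $\hat L \, \nabla \hat v_1(y)$, and then combine with the hypothesized bounds on $\nabla \hat v_1$ to recover analogous bounds for $\nabla \hat v_2$. The tool that makes this work is the H\"older continuity of $\nabla \hat v_i$ from Lemma~\ref{lemma2.2}, which allows one to approximate $\hat v_i$ by its first-order Taylor polynomial with a quantitative remainder. The exponent $\ti \he$ in the hypothesis is identified with $\ti \be$ from \eqref{eqn2.2}, so that the quantitative choice $\ti \ep^{-1}=(c\ti a)^{(1+\ti \he)/\ti \he}$ is exactly what is needed to beat the Hausdorff-type error.

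\textbf{Key steps.} Set $r=d(y,\partial O)/100$. First, I would apply Lemma~\ref{lemma2.1} (Harnack) and Lemma~\ref{lemma2.2}$(\hat a)$ on $B(y,4r)\subset O$ to get, for $h\in B(0,r/2)$,
\[
|\nabla \hat v_i(y+h)-\nabla \hat v_i(y)|\leq c\,(|h|/r)^{\ti \be}\, r^{-1}\hat v_i(y),\qquad i=1,2.
\]
Integrating along the segment from $y$ to $y+h$ produces the Taylor remainder bound
\[
\bigl|\hat v_i(y+h)-\hat v_i(y)-\lan \nabla \hat v_i(y),h\ran\bigr|\leq c\,(|h|/r)^{1+\ti \be}\,\hat v_i(y).
\]
Forming the combination $\hat v_2-\hat L\hat v_1$, using the hypothesis $|\hat v_2-\hat L\hat v_1|\leq \ti \ep\,\hat L\hat v_1$ on $B(y,r)$ together with Harnack's inequality to replace $\hat v_1(y+h)$ by $\hat v_1(y)$, I would obtain
\[
\bigl|\lan \nabla \hat v_2(y)-\hat L\,\nabla \hat v_1(y),h\ran\bigr|\leq C\,\ti \ep\,\hat L\,\hat v_1(y)+C\,(|h|/r)^{1+\ti \be}\,\hat L\,\hat v_1(y).
\]
Dividing by $|h|$ and minimizing the right-hand side over $s:=|h|/r\in(0,1/2)$ (balancing $\ti \ep/s$ against $s^{\ti \be}$ at $s\approx \ti \ep^{1/(1+\ti \be)}$), together with the freedom to choose the direction $h/|h|$, yields
\[
\bigl|\nabla \hat v_2(y)-\hat L\,\nabla \hat v_1(y)\bigr|\leq C\,\ti \ep^{\ti \be/(1+\ti \be)}\,\frac{\hat L\,\hat v_1(y)}{r}.
\]

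\textbf{Conclusion.} The precise choice $\ti \ep^{-1}=(c\ti a)^{(1+\ti \be)/\ti \be}$ converts the right-hand side into $C\,(c\ti a)^{-1}\hat L\,\hat v_1(y)/r$. Choosing $c=c(p,n,\al)$ large enough so that $C/c\leq 1/200$, the error is bounded by $(2\ti a)^{-1}\hat L\,\hat v_1(y)/d(y,\partial O)$, which is strictly smaller than the main term $\hat L\,\lan \nabla \hat v_1(y),\eta\ran\geq \ti a^{-1}\hat L\,\hat v_1(y)/d(y,\partial O)$ supplied by hypothesis. The triangle inequality then gives
\[
\lan \nabla \hat v_2(y),\eta\ran\geq \frac{1}{2\ti a}\,\frac{\hat L\,\hat v_1(y)}{d(y,\partial O)},\qquad |\nabla \hat v_2(y)|\leq 2\ti a\,\frac{\hat L\,\hat v_1(y)}{d(y,\partial O)}.
\]
Finally, $\hat L\,\hat v_1(y)$ is replaced by $\hat v_2(y)$ via the hypothesis (for $\ti \ep\leq 1/2$), producing the claimed two-sided bound up to harmless constants absorbed into a relabeled $c=c(p,n,\al)$.

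\textbf{Main obstacle.} The one point requiring care is the bookkeeping of constants: every constant appearing in the H\"older remainder, in the Harnack comparison between $\hat v_i(y+h)$ and $\hat v_i(y)$, and in the passage between $\hat L\,\hat v_1(y)$ and $\hat v_2(y)$, must depend only on the data $p,n,\al$. Only then can one insist that the ``suitably large $c$'' in the definition of $\ti \ep$ be chosen universally. Identifying the H\"older exponent $\ti \he$ in the hypothesis with the exponent $\ti \be$ produced by Lemma~\ref{lemma2.2} is what makes the exponent $\ti \be/(1+\ti \be)$ appearing after optimization cancel with the defining power of $\ti \ep$, and this bookkeeping is the crux of the argument.
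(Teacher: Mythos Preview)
Your argument is correct and is the standard one. The paper itself does not supply a proof here; it simply refers the reader to \cite[Lemma 3.18]{LLN} and \cite[Lemma 5.4]{LN1}, and your Taylor-expansion-plus-optimization approach is precisely the method used in those references. One small remark: the statement points to the exponent $\ti\he$ from \eqref{eqn2.3}, whereas your proof (correctly) uses the first-derivative H\"older exponent $\ti\be$ from \eqref{eqn2.2}$(\hat a)$, which requires no extra hypotheses on $\mathcal{A}$ or on non-degeneracy of $\nabla\hat v_i$; this identification is harmless and your bookkeeping goes through as written.
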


For the  proof  of similar  lemmas  see Lemma 3.18 in \cite{LLN}  and   Lemma 5.4     in   \cite{LN1}.   

Using    Lemma \ref{lemma9.0}   and  Lemma \ref{lemma9.4}          we   prove    
\begin{lemma} 
\label{lemma9.5a}  
Let $ D,  D_1, \mathcal{G}_1,     \ti u,  \ti v,  w, \ti w,  \ti r,  \ti r',  \hat r,       c_1,  r_1   $  be as in   Lemma   \ref{lemma9.4}.    Then there exists $  c_2, 1 \leq c_1 < c_2  <\infty,  $  and $   \he   \in  (0,1) , $ depending only
on the  data , such that if  $ r_2 = r/c_2, $  then    
\[  
\left| \frac{ \ti u ( y ) }{ \ti v ( y ) } -  \frac{ \ti u (x ) }{ \ti v ( x ) } \, \right|  \leq \,  c_2            \left( \frac{| x  - y |}{ r} \right)^{ \he}   \frac{ \ti u ( y) }{\ti v (y)  }          
\]
whenever   $  x, y \in  D \cap B (w, r_2)$.    
   \end{lemma}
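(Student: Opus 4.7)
The plan is to upgrade the two-sided quotient bound in Lemma 9.4 to Hölder continuity of $\ti u/\ti v$ by the classical oscillation-decay argument. The essential device is the linearized operator $\mathcal{\breve{L}}$ from \eqref{eqn8.88}--\eqref{eqn8.91}, built with $\breve v_1=\ti u$ and $\breve v_2=\ti v$, because differences of the form $t_1\ti u-t_2\ti v$ solve $\mathcal{\breve{L}}h=0$ weakly even though $\ti u$ and $\ti v$ themselves are nonlinear solutions; this lets us circumvent the lack of a superposition principle for $\mathcal{A}$-harmonic functions.

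Applying Lemma 9.4 with $(\ti u,\ti v)$ and then with $(\ti v,\ti u)$ yields the symmetric estimate
$$\ti c_1^{-1}\,\frac{\ti u(a_{r_1}(w))}{\ti v(a_{r_1}(w))}\;\leq\;\frac{\ti u(y)}{\ti v(y)}\;\leq\;\ti c_1\,\frac{\ti u(a_{r_1}(w))}{\ti v(a_{r_1}(w))}\quad\text{for }y\in D\cap B(w,r_1).$$
For arbitrary $\ze\in\partial D\cap B(w,r_2/2)$ and $0<\rho\leq r_2/2$, set $M(\rho):=\sup_{D\cap B(\ze,\rho)}\ti u/\ti v$ and $m(\rho):=\inf_{D\cap B(\ze,\rho)}\ti u/\ti v$. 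The functions $h_1:=\ti u-m(\rho)\,\ti v$ and $h_2:=M(\rho)\,\ti v-\ti u$ are nonnegative on $D\cap B(\ze,\rho)$, vanish continuously on $\partial D\cap B(\ze,\rho)$, and each solves $\mathcal{\breve{L}}h_i=0$ weakly. I would then establish a boundary Harnack principle for positive $\mathcal{\breve{L}}$-solutions at this scale --- i.e., $h_i(y)\approx h_i(a_{\rho'}(\ze))\cdot \ti v(y)/\ti v(a_{\rho'}(\ze))$ on $D\cap B(\ze,\rho')$ for $\rho'=\rho/c^\ast$ --- and apply it to the pairs $(h_1,\ti v)$ and $(h_2,\ti v)$. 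This produces $M(\rho')-m(\rho)\leq c^\ast(m(\rho')-m(\rho))$ and $M(\rho)-m(\rho')\leq c^\ast(M(\rho)-M(\rho'))$; adding and rearranging gives $\mathrm{osc}_{B(\ze,\rho')}(\ti u/\ti v)\leq\tfrac{c^\ast-1}{c^\ast+1}\,\mathrm{osc}_{B(\ze,\rho)}(\ti u/\ti v)$, and iterating over a geometric sequence of scales yields the Hölder estimate with exponent $\he=\he(c^\ast)\in(0,1)$ depending only on the data.

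The main obstacle is producing the boundary Harnack for $\mathcal{\breve{L}}$ in the previous step. Lemma 8.12 is exactly such a statement, but its hypotheses use the smoothness \eqref{eqn8.94} of the graph $\mathcal{R}$ of $D$ and the non-degeneracy \eqref{eqn8.28} for the functions defining $\mathcal{\breve{L}}$; neither is guaranteed for $(\ti u,\ti v)$ near $\partial D$. I would handle this by the approximation device used in Corollary 9.3 and the proof of Lemma 9.4: smooth $\mathcal{R}$ to $\mathcal{R}_m\in C^\infty(\mathbb{S}^{n-1})$ satisfying \eqref{eqn9.22a}, use the construction from Lemma 9.4 to produce auxiliary starlike Lipschitz domains $D_{1,m}\subset D\subset D_{2,m}$ which agree with $D$ in $B(w,4\hat r)$, and take $\mathcal{G}_{1,m},\mathcal{G}_{2,m}$ to be the $\mathcal{A}$-harmonic Green's functions with pole at $\ti w$, for which \eqref{eqn9.14}$(\beta)$ holds uniformly in $m$ by Lemma 9.2. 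Sandwiching $\ti u,\ti v$ between constant multiples of these Green's functions as in \eqref{eqn9.23} and \eqref{eqn9.25a}, one applies Lemma 8.12 to ratios of positive solutions of the smoothed linearized operators; the uniform estimates and Lemmas 2.1, 2.2, and 9.1 then allow passage to the limit $m\to\infty$ to obtain the required Hölder estimate for $\mathcal{\breve{L}}$ built from $\ti u$ and $\ti v$, completing the iteration step and hence the proof of Lemma 9.5a.
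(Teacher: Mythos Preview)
Your oscillation-decay scheme has a genuine gap at its core. You assert that $h_1=\ti u-m(\rho)\ti v$ and $h_2=M(\rho)\ti v-\ti u$ both solve the operator $\mathcal{\breve L}$ of \eqref{eqn8.88}, and then you pair each with $\ti v$ in a boundary Harnack inequality. But the coefficients $\ti b_{kj}$ in \eqref{eqn8.89} depend explicitly on $(t_1,t_2)$: the combination $t_1\ti u-t_2\ti v$ is a solution only of the operator built with \emph{that} pair. Thus $h_1$ solves $\mathcal{\breve L}$ with $(t_1,t_2)=(1,m(\rho))$ while $h_2$ solves the one with $(t_1,t_2)=(1,M(\rho))$, and $\ti v$ by itself solves neither (except in the linear case $p=2$, where $f_{\eta_k\eta_j}=\de_{kj}$ and all these operators coincide with the Laplacian). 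Without a common linear equation there is no boundary Harnack comparison of $h_i$ against $\ti v$, so the key inequalities $M(\rho')-m(\rho)\le c^\ast(m(\rho')-m(\rho))$ and its companion cannot be derived. The smoothing and Green's-function sandwiching in your last paragraph do not repair this; the obstruction is algebraic, not a regularity issue.

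The paper circumvents this with an entirely different mechanism. It first reduces, via Lemma~\ref{lemma9.4}, to the case where $\ti u$ is a normalized Green's function $\mathcal{G}_1$, so that the non-degeneracy \eqref{eqn8.28} holds for $\ti u$ by Lemma~\ref{lemma9.2}. Then it interpolates: let $u(\cdot,t)$ be the $\mathcal{A}$-harmonic function in $D\cap B(w,r_1/2)$ with boundary data $t\bar u+(1-t)\ti v$, where $\bar u=2c_1\ti u$. For each fixed $t$ the linearization around $u(\cdot,t)$ alone (the case $t_1=1,\ \breve v_2\equiv0$ in Lemma~\ref{lemma8.12}) is solved by both $u(\cdot,t)$ and the limit $V=u_t(\cdot,t)$ of the difference quotients $(u(\cdot,t)-u(\cdot,\tau))/(t-\tau)$; now \eqref{eqn10.3} applies and yields H\"older continuity of $u_t/u$. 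Integrating in $t$ over a short interval $[\xi_1,\xi_2]$ gives H\"older continuity of $\log(u(\cdot,\xi_2)/u(\cdot,\xi_1))$, and the perturbation Lemma~\ref{lemma9.0} then propagates the non-degeneracy \eqref{eqn8.28} from $u(\cdot,\xi_1)$ to $u(\cdot,t)$ on the next interval. A finite induction reaches $t=1$, after which the smoothness assumption on $\ar D$ is removed by approximation. This interpolation family---and the use of the single-function linearization so that both $u$ and $u_t$ solve the \emph{same} equation---is the missing idea in your proposal.
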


   \begin{proof}    
   To prove  Lemma     \ref{lemma9.5a}  we may assume, as is easily shown using  Lemma 
\ref{lemma9.4}.  that
\begin{align}   
\label{eqn9.26}   
 \ti u (a_{r_1}(w))   =  \ti  v(a_{r_1} (w) )  = 1 \quad \mbox{and}\quad  \ti u   =   \frac{\mathcal{G}_1}{\mathcal{G}_1 (  a_{r_1} (w) )}    \mbox{ in }  D \cap B ( w, r_1) .    
 \end{align}  We also  temporarily assume  that  
\begin{align}
  \label{eqn9.26a} \mathcal{R}   \in  C^\infty ( \mathbb{S}^{n-1} ). 
  \end{align}  
 From  Lemma \ref{lemma9.4} we  see  that
  \begin{align} 
  \label{eqn9.27}  
  c_1^{-1}    \leq  \frac{ \ti  u(y)}{\ti  v(y)}   \leq  c_1  \quad \mbox{in}\quad  D \cap B ( w, r_1)     \end{align}  
   where $ c_1 \geq 1 $  depends only on the data.     Hence if   $ \bar  u  =  2 c_1  \ti  u, $   then   
  \begin{align}  
  \label{eqn9.28}  
  \ti  v   \leq  \bar u /2   \leq   c_1^2 \ti  v     \mbox{ in }  D \cap  B ( w,  r_1 ).  \end{align}
   Let   $   \{ u ( \cdot, t)\}$, $0 \leq  t \leq 1, $ be the sequence of    $\mathcal{A}$-harmonic functions  in   $  D \cap B  ( w,  r_1/2 ) $  with continuous boundary values,
\begin{align*}  
%\label{eqn9.29}   
u ( y, t ) = t  \bar u(y) + ( 1 - t ) \ti v  (y),  \quad \mbox{for} \, \, 0 \leq  t \leq 1.    
\end{align*}
 Existence  of   $ u (  \cdot, t ) ,  t   \in (0,1),  $  is proved in   
 \cite{HKM}.    Checking  boundary values  and  using
 the maximum principle for $\mathcal{A}$-harmonic functions,
    as well as   \eqref{eqn9.27}-\eqref{eqn9.28},    we find  for some $ \ti  c, $ depending only on the data,    that
\begin{align}   
\label{eqn9.30}  
\frac{u ( \cdot, t_1 )}{\ti c} \,    \leq \,   \frac{ u ( \cdot, t_2 ) - u (
\cdot, t_1) }{ t_2 - t_1}  \leq
\ti c\,   u( \cdot,  t_1 )  
\end{align} 
on $ D \cap  B ( w, r_1/2) $ whenever $ 0 \leq t_1 < t_2 \leq 1.$

 Let $ \ep_0=\ti \ep $ where $\ti \ep$ is as in
Lemma \ref{lemma9.0}.  From   \eqref{eqn9.30}  we find the existence of   $ \ep_0', 0 <
\ep_0' \leq \ep_0,  $ with the same dependence as $ \ep_0,$    such
that if  $ | t_2 - t_1 | \leq \ep_0', $ then
\begin{align*} 
%\label{eqn9.31}  
1 - \ep_0/2  \leq \frac{u ( \cdot, t_2 )}{ u (\cdot, t_1 ) } \leq 1 + \ep_0/2  \quad \mbox{in}\quad D \cap B (w, r_1/2).
\end{align*}
Let $ \xi_1 = 0 <  \xi_2 <  \ldots < \xi_l = 1 $ and consider $[0,1]$ as divided into  $ \{ [ \xi_k,
\xi_{k+1} ] \}, 1 \leq k \leq l - 1$. We assume that all of these intervals have a length of
$ \ep_0'/2 $ with the possible exception of the interval containing $\xi_l=1$  which is of length
$  \leq  \ep_0'/2.$  Using    Lemma  \ref{lemma9.2} with  $  \mathcal{G} =  \mathcal{G}_1 , z = \ti w, $ the fact that    $ u ( \cdot, \xi_1 ) =  \bar u = 2 c_1  \ti  u,  $ 
\eqref{eqn9.26},  and   \eqref{eqn9.25b},  we see that    Lemma \ref{lemma9.0} can be applied with $ \hat v_1 =  u  ( \cdot, \xi_1) $ and  $ \hat
v_2 =u ( \cdot, t ), $  for $ 0 <  t  \leq  \xi_2. $     Doing this we   find, for some $ c_- \geq 1 $ depending only on   the data,
that
\begin{align}
\label{eqn9.32} 
c_-^{-1} \frac{u ( y, t )}{d(y,\ar D )}\leq |\nabla u ( y,t )|\leq    c_-  \lan \frac{\ti w - y }{|y-\ti w|},  \nabla    u ( y, t ) \ran  
\, \leq   c_-^2   \frac{ u ( y,  t )}{d(y, \ar D)},
\end{align} 
whenever $y\in D  \cap  B ( w,    r_1 / 4 )$.  
From    \eqref{eqn9.32}  and  \eqref{eqn9.30}     we  see  as in  \eqref{eqn10.42}    that if    for fixed $ t \in  [0, \xi_2] $ we define 
\[ U ( \cdot ,    \tau   )  =  \frac{  u  ( \cdot,  t )  -   u  ( \cdot, \tau) }{ t - \tau } \quad \mbox{on}\, \, D \cap B ( w, r_1/4)   
\]  
 for  $  0  \leq  \tau < t \leq \xi_2,  $  then  $   \mathcal{L}^{\tau}_* U ( \cdot ,  \tau  )  = 0 $  weakly  in  
$  D  \cap  B ( w, r_1/4) $  and   $ U  ( \cdot,  \tau ) $  has   continuous boundary value  0  on  $ \ar D \cap  B ( w, r_1/4), $ where   
\begin{align}
 \label{eqn9.32a}  
 \mathcal{L}^{\tau}_* U ( x, \tau )  = 
 \sum_{i, j  = 1}^n   \frac{  \ar }{\ar x_i}  \left( b^*_{ij} (x, \tau )  \frac{ \ar  U ( x, \tau ) }{\ar x_j}\right)  =  0 
\end{align}
 and  
\begin{align}
 \label{eqn9.32b}
 b^*_{ij} ( x , \tau )  =    \int_0^1  f_{ \eta_i \eta_j }  ( s   \nabla  u (x, t )   + ( 1 - s )  \nabla u ( x, \tau ) ) \, ds \quad \mbox{for}\, \, 1 \leq i, j \leq n. 
\end{align}
Also  for      some  $ c \geq 1, $ depending only on the data,      
\begin{align}
 \label{eqn9.32c} 
   \sum_{i,j=1}^n    b^*_{ij} (x, \tau)   \xi_i \xi_j     \approx    |\xi |^2    \, ( | \nabla u ( x, t ) | + 
   | \nabla u  ( x,  \tau ) |  )^{p-2}  
 \end{align}
   whenever $  x \in   D \cap B ( w, r_1/4) $.   Moreover   from \eqref{eqn9.30},   $ 0  \leq  U ( \cdot, \tau )   \approx u ( \cdot, \tau)   $  so    from      \eqref{eqn8.15}, \eqref{eqn8.16},   elliptic PDE theory, and  Ascoli's theorem   it  follows that  a subsequence of  $ U  ( \cdot, \tau )$  converges  uniformly  as $  \tau   \rar t $   to   $ V  ( \cdot ) $   with continuous boundary value 0  on   $ \ar D  \cap B ( w, r_1/8) $   satisfying  $ \mathcal{L}^t_*   V  ( \cdot ) = 0  $  weakly in  $ D  \cap B ( w, r_1/8). $   
   Since $ u ( \cdot, t  ) $  is also  a weak  solution  to  this  pde  we  can  now apply     \eqref{eqn10.3}    of  Lemma   \ref{lemma8.12}  with $  \breve v_1  =  u ( \cdot, t ),  r =  r_1/32 ,  $   $  h_2  =   u (   \cdot, t), $  
   $ h_1  = V ( \cdot ),  $   $ \hat x  =  \ti w ,$  to  conclude  for  some 
   $ c   \geq  100, \he  \in [0, 1],   $  depending  only on the data  that  if   $ s_1  = r_1 / c , $  and  $ t \in   [\xi_1, \xi_2].$  then 
 \begin{align}
  \label{eqn9.32d}   
\left| \frac{  V ( y  ) }{ u ( y, t ) } -  \frac{  V (x  ) }{ u ( x, t ) } \, \right|  \leq \,  c            \left( \frac{| x  - y |}{ r} \right)^{\he }   \frac{  V (y) }{u ( y,  t  )}  \leq \,  c^2            \left( \frac{| x  - y |}{ r} \right)^{\he }           
\end{align}
whenever   $  x, y \in  D \cap B (w, s_1).$ To  avoid  confusion we now write  $ V ( \cdot, t )$ for   $  V ( \cdot ). $ 

   From   \eqref{eqn9.30} we see for fixed $ x  \in  D \cap  B ( w,  r_1/2) $  
   that  $  t  \rar  u ( x, t )  $  is  Lipschitz   on  [0, 1].  Hence   
   $ u_t ( x, t)  $  exists for  almost  every  $ t  \in  [0, 1]  $   and is  absolutely continuous on  
   $ [0, 1].$    
   Choose  a countable dense sequence  $ ( x_{\nu})$  of  $ D \cap B ( w, r_1/2) $  and  a set 
   $ W \subset [0, 1] $  such  that  $ \mathcal{H}^1 ( [0, 1] \sem W ) = 0 $  and  
    $ u_t ( x_{\nu}, t ) $  exists for  all $ t \in W $  and $ \nu = 1, 2,   \dots  $    Then  clearly  
   \[   
   u_t ( x_{\nu}, t )  = V ( x_{\nu }, t)  \quad \mbox{for}\, \,  t  \in W \, \,   \mbox{and}\, \,  \nu = 1, 2, \dots 
    \] 
	   Using this  equality   in     \eqref{eqn9.32d}     we  deduce that
\begin{align}
 \label{eqn9.33a} 
\biggl|  \log \biggl( \frac{  u ( x_m,  \xi_2    ) }{  u ( x_m,
 \xi_1     ) }\biggr)   \,
 -   \log\biggl(\frac{  u ( x_k,  \xi_2  ) }{  u ( x_k , \xi_1  ) }\biggr) \biggr|\leq
\int\limits_{\xi_1}^{\xi_2}
\biggl|  \frac{  u_\tau  ( x_m, \tau  ) }{  u ( x_m, \tau  ) }   \,
 -    \frac{  u_\tau  ( x_k, \tau  ) }{  u ( x_k , \tau  ) } \biggr|
\, d \tau   \, \leq
\,    c
  \biggl(  \frac{ | x_m  - x_k |}{ r } \biggr)^\he 
  \end{align}
 whenever  $x_m,x_k\in (x_\nu)$ and $ x_m, x_k   \in  D \cap B (w,  s_1  )$.  As $(x_\nu)$ is a dense sequence
 in $ D \cap B (w, s_1 )$ it follows  from continuity that   \eqref{eqn9.33a}  is  valid with  $ x_m =  x,   x_k  = y,  $  whenever $  x, y  \in  B ( w,  s_1 ). $  This  validity  and \eqref{eqn9.28}   imply    for   $ x, y  \in D   \cap  B  ( w,  s_1 ). $   that   
\begin{align}
 \label{eqn9.33b}   
\left| \frac{  u ( y , \xi_2 ) }{ u ( y, \xi_1 ) } -  \frac{  u (x, \xi_2   ) }{ u ( x, \xi_1 ) } \, \right|  \leq \,  \,   c'            \left( \frac{| x  - y |}{ r} \right)^{\he }   \frac{ \ti u (y, \xi_2) }{u ( y,  \xi_1  )}          
\end{align}
Using  \eqref{eqn9.33b}   we see there exists    $  \ti c_1'     \geq 1, $  depending only on the data  so that  if  $   s_1'  =    s_1/ \ti c_1' , $  then   for  fixed  $  y  \in  D  \cap  B ( w, s_1' ), $     we   have 
\[  
 ( 1  -     \ep_0/4)   \frac{ u ( y, \xi_2 )}{ u ( y, \xi_1) }  \leq      \frac{ u ( x, \xi_2 )}{ u ( x, \xi_1) }  \leq     ( 1  +  \ep_0 /4 )       \frac{ u ( y, \xi_2 )}{ u ( y, \xi_1) }    \quad \mbox{ whenever }   x        \in   D \cap B  ( w,  s_1'  ) .   \]  
  If  $  t  \in  [\xi_2,  \xi_3] ,  $ it  follows  from our choice of  $  \ep_0' ,  s_1'  , $     that  if $    x        \in D \cap   B  ( w,  s_1'  ), $ then   
\[ 
 \frac{ u (x, t )}{u ( x,  \xi_1)}  =     \frac{ u (x, t)}{u ( x,  \xi_2)}   \cdot  \frac{ u (x, \xi_2 )}{u ( x,  \xi_1)}   \leq   ( 1 + \ep_0/2)( 1 + \ep_0/4) \frac{ u (y, \xi_2 )}{u ( y,  \xi_1)}              <   ( 1 + \ep_0) \frac{ u (y, \xi_2 )}{u ( y,  \xi_1)}  .
  \]    
   for   $ \ep_0 >  0 $  small enough. 
 Similarly   
 \[ (  1  -  \ep_0 )  \frac{ u (y, \xi_2 )}{u ( y,  \xi_1)}   \leq     
\frac{ u (x, t )}{u ( x,  \xi_1)}     \mbox{ whenever }       x        \in   D \cap B  ( w,  s_1'  ).      
\] 
From  these inequalities we  obtain  that  Lemma  \ref{lemma9.0}   can  be  applied  with  
$ \hat v_1  = u  ( \cdot,  \xi_1 ),  \hat v_2  =   u  (  \cdot,  t ) $   and  $ L  =   
\frac{ u (y, \xi_2 )}{u ( y,  \xi_1)} $   in  $ D   \cap   B ( w,  s_1' ). $  We get that  
  \eqref{eqn9.32}  holds  with  constants depending only on the data in   $ D \cap B  ( w,  s_1'/2 ) $  whenever   $  t  \in [ \xi_2, \xi_3] .   $ We then can argue  as  above  to eventually 
  conclude  that  \eqref{eqn9.33a},  \eqref{eqn9.33b} are valid  with  $  \xi_1,  \xi_2 $  
  replaced  by   $  \xi_2,   \xi_3 $   in  $   D   \cap  B (  w, s_2 ),   s_2  < < s_1' . $  
  
  We  now continue by induction, as in the proof of (4.24) - (4.27) in  Theorem 2 of \cite{LN1}.  We  eventually  obtain      (see \cite[Lemma 4.28]{LN1}) that
 \eqref{eqn9.32}   holds  for some constant depending  only on  the  data  in   
 $  D   \cap  B ( w, s_l) $ whenever   $  t  \in   [ \xi_{l-1},   \xi_l] .   $    Also $ r/s_l $   depends only  on  the data. Using  this  fact  and  arguing as above  we finally obtain for some $ c \geq 100 $ depending only on the data  that if   $ s_l' =   s_l/c, $   then  \eqref{eqn9.33a},  \eqref{eqn9.33b} are valid  with  $  \xi_1,  \xi_2 $  
  replaced  by   $  \xi_{l-1},   \xi_l $   in  $   D   \cap  B (  w, s'_{l}  ),    s_l' < s_l <  s'_{l-1} < 
  \dots  < s_1'   <  s_1.    $  
     Since $ l $ also depends only on the data it follows from the  $ l  $  inequalities  obtained and the triangle  inequality  that  Lemma   \ref{lemma9.5a} is true under  assumption 
       \eqref{eqn9.26a}.

    Finally, we show the assumption \eqref{eqn9.26a} is unnecessary. Let  $ \{
    \mathcal{R}_k\}_{k\geq 1}$  be  a  sequence of  $ C^{\infty} ( \mathbb{S}^{n-1} )$  functions 
with  
\[
 \mathcal{R}  \leq  \mathcal{R}_k \quad\mbox{and}\quad  \| \mathcal{R}_k \hat \|  \leq  c   \| \mathcal{R} \hat \|
 \]
 satisfying $\mathcal{R}_k  \to  \mathcal{R} $ uniformly on compact subsets of  
$ \mathbb{S}^{n-1} . $  Let  $  D_k,  k  = 1, 2,  \dots $  be   Lipschitz starlike domains with  center  $ z $  and  graph function  $\mathcal{R}_k. $  Extend  $  \ti u, \ti v $  to  continuous functions on $ B (w,4 r )  \cap D_k $ by  putting  $ \ti u = \ti v  = 0 $  in    
$   B (w,4 r )  \cap (D_k \sem D ). $

Let  $ \ti u_k,  \ti v_k $  be  $\mathcal{A}$-harmonic functions  in  $  D_k \cap  B ( w, 3r) $  with   $   \ti u_k  = \ti u ,  \ti v_k  = \ti v    $  on  $ \ar (D_k \cap  B ( w, 3r)). $ 
Using   Lemma  \ref{lemma2.2},  we see that  
\[
\{\ti u_k\}_{k\geq 1}\, \, \mbox{and}\, \, \{\ti v_k \}_{k\geq 1}\,\,  \mbox{converge uniformly  to}\, \,    \ti u\, \, \mbox{and} \, \, \ti v\, \,   \mbox{on}\, \,   B (w, 3 r ).
\]
 Moreover,    $ \{   \nabla \ti u_k\}_{k\geq 1}$ and $\{\nabla \ti v_k\}_{k\geq 1}$ converge  uniformly to $\nabla  \ti u$ and $\nabla \ti v$ on  compact subsets of $D \cap B  (w, 3 r)$. 
 Also, $ \ti u_k,  \ti v_k $  satisfy the hypotheses of  Lemma   \ref{lemma9.5a}  with $   3r,  D_k, $  replacing  $ 4r,  D. $  We  apply this lemma  to  $ \ti u_k, \ti v_k. $  Since  the  constants in this inequality   depend  only on the data, we can then take limits to get   Lemma 
\ref{lemma9.5a} for  $ \ti u,  \ti v . $   
\end{proof}  

   As  a  corollary to Lemma \ref{lemma9.5a}   we note that the proof  outlined above gives 
 \begin{corollary} 
 \label{corollary9.6a}   
   Let $ D,    p,   \mathcal{A},  \ti v,  w,  r,  r_2 $  be as in Lemma \ref{lemma9.5a}.  Then    
 \begin{align} 
  \label{eqn9.34}  
c^{-1} \frac{\ti v ( y )}{d(y, \ar D )}\leq |\nabla \ti v ( y )|\leq    c  \, \lan  \frac{ \ti w - y}{ | \ti w -   y | }  ,  \nabla    \ti v ( y
 ) \ran \leq   c^2   \frac{ \ti v ( y )}{d(y,\ar D)}  
\end{align}  
whenever $y  \in   D \cap B ( w,  r_2 )$.
\end{corollary}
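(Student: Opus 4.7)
My plan is to transfer the gradient estimate already known for the Green's function $\mathcal{G}_1$ of the auxiliary starlike Lipschitz domain $D_1$ (introduced in the proof of Lemma \ref{lemma9.4}) to the arbitrary positive $\mathcal{A}$-harmonic function $\ti v$, using Lemma \ref{lemma9.0} as the transfer device and Lemma \ref{lemma9.5a} to verify its closeness hypothesis. Recall from \eqref{eqn9.25b} that $D_1 \cap B(w,4\hat r) = D \cap B(w,4\hat r)$, so $\mathcal{G}_1$ is $\mathcal{A}$-harmonic in $D \cap B(w,4\hat r)$ and vanishes continuously on $\partial D \cap B(w,4\hat r)$, and we may play it off against $\ti v$ exactly as $\ti u$ was played off against $\ti v$ in Lemma \ref{lemma9.5a}.

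First, I would apply Lemma \ref{lemma9.5a} with $\ti u$ replaced by $\mathcal{G}_1/\mathcal{G}_1(a_{r_1}(w))$; this is legitimate since $\mathcal{G}_1$ satisfies all the hypotheses placed on $\ti u$, and in the proof of Lemma \ref{lemma9.5a} the normalization $\ti u = \mathcal{G}_1/\mathcal{G}_1(a_{r_1}(w))$ is in fact the one used (see \eqref{eqn9.26}). This yields
\[
\left| \frac{\ti v(x)}{\mathcal{G}_1(x)} - \frac{\ti v(y)}{\mathcal{G}_1(y)} \right|
\leq c_2 \left( \frac{|x-y|}{r}\right)^{\he} \frac{\ti v(y)}{\mathcal{G}_1(y)}
\]
for $x,y \in D \cap B(w,r_2)$. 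Fix $y \in D \cap B(w,r_2/2)$ and let $L = \ti v(y)/\mathcal{G}_1(y)$. Choose $x \in B(y, d(y,\partial D)/100)$; then $|x-y|/r \leq 1/(100\, c_2)$ for $y \in D \cap B(w, r_2)$, which we may arrange (by enlarging the constant $c_2$ in the definition of $r_2$ if necessary) to force
\[
(1-\ti\ep)L \leq \frac{\ti v(x)}{\mathcal{G}_1(x)} \leq (1+\ti\ep)L \qquad \text{on } B\bigl(y,\tfrac{1}{100}d(y,\partial D)\bigr),
\]
where $\ti\ep$ is the threshold from Lemma \ref{lemma9.0}.

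Next, I would invoke Lemma \ref{lemma9.2} applied to $\mathcal{G}_1$ (taking $\eta = (\ti w - y)/|\ti w - y|$), which provides a constant $\ti a$ depending only on the data such that the hypothesis of Lemma \ref{lemma9.0} holds for $\hat v_1 = \mathcal{G}_1$ at the point $y$. Now apply Lemma \ref{lemma9.0} with $\hat v_2 = \ti v$ and $\hat L = L$: its conclusion delivers precisely
\[
\frac{1}{c\,\ti a}\, \frac{\ti v(y)}{d(y,\partial D)} \leq
\left\langle \frac{\ti w - y}{|\ti w - y|}, \nabla \ti v(y) \right\rangle
\leq |\nabla \ti v(y)| \leq c\, \ti a\, \frac{\ti v(y)}{d(y,\partial D)},
\]
which is \eqref{eqn9.34} after relabeling constants. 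Since the argument is pointwise in $y \in D \cap B(w,r_2/2)$ and all constants depend only on the data, a harmless shrinking of $r_2$ gives the stated estimate.

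The main obstacle I anticipate is the calibration of the H\"older estimate from Lemma \ref{lemma9.5a} against the $\ti\ep$ threshold of Lemma \ref{lemma9.0}: one has to verify that the radius $d(y,\partial D)/100$ on which we need closeness of $\ti v/\mathcal{G}_1$ to the constant $L$ is small enough relative to $r$ for the H\"older bound to produce an oscillation below $\ti\ep L$. Since $y \in D \cap B(w,r_2)$ forces $d(y,\partial D) \leq 2 r_2 \ll r$, this is achievable provided $r_2$ is chosen small relative to $r$; this is the reason the statement allows $r_2$ to be strictly smaller than the $r_2$ appearing in Lemma \ref{lemma9.5a}. Everything else is routine invocation of the tools already built.
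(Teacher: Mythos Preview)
Your proposal is correct, but it takes a different route from the paper. The paper's proof of Corollary~\ref{corollary9.6a} simply points back into the proof of Lemma~\ref{lemma9.5a}: the induction there (the step from \eqref{eqn9.32} through the chain $\xi_1,\dots,\xi_l$) already establishes the gradient estimate \eqref{eqn9.32} for every interpolant $u(\cdot,t)$, in particular for the endpoint $u(\cdot,\xi_l)=\ti v$, under the smoothness assumption \eqref{eqn9.26a}; the general case then follows by the same approximation argument used at the end of Lemma~\ref{lemma9.5a}. In other words, for the paper \eqref{eqn9.34} is a byproduct of the machinery already running inside Lemma~\ref{lemma9.5a}, and no new argument is needed.

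Your approach instead treats Lemma~\ref{lemma9.5a} as a black box: you take its H\"older conclusion for the ratio $\ti v/\mathcal{G}_1$, feed it into the closeness hypothesis of Lemma~\ref{lemma9.0}, and transfer the gradient estimate from $\mathcal{G}_1$ (where it is known by Lemma~\ref{lemma9.2}) to $\ti v$ in a single shot. This is more modular and self-contained --- you never reopen the induction, and you do not need to redo the smooth approximation, since Lemmas~\ref{lemma9.2} and~\ref{lemma9.5a} are already stated in full generality. The price is a slight redundancy: the H\"older estimate in Lemma~\ref{lemma9.5a} was itself built on repeated applications of Lemma~\ref{lemma9.0} along the interpolation, so you are in effect extracting back, via one more use of Lemma~\ref{lemma9.0}, gradient information that was present all along in that induction. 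Your calibration remark at the end (shrinking $r_2$ so that $c_2(d(y,\partial D)/r)^{\he}\leq\ti\ep$) is exactly the point that needs care, and you handle it correctly.
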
  
\begin{proof}  
As noted in the proof  of Lemma \ref{lemma9.5a}  ,
	an  induction type argument eventually gives  \eqref{eqn9.34} for $ u ( \cdot, \xi_l) = \ti v $ in the smooth  case.   Taking limits  as  previously,  we then get  Corollary \ref{corollary9.6a}  in general.  
\end{proof} 

 Finally in  this  section  we use  our  results on  starlike  Lipschitz domains to  prove 
   \begin{lemma} 
\label{lemma9.5}  
Let $ \hat  D $  be a   Lipschitz  domain      and suppose that   
\[
\hat D  \cap  B ( w, 4r)  = \{ y = ( y', y_n )  \in \rn{n} :  y_n > \ph ( y')   \} \cap  B  ( w, 4r)
    \]  where $ \ph $ is Lipschitz on $ \rn{n - 1},$  $w\in\partial \hat D,$ and $r>0$. Given $ p, 1 <   p <n$,  suppose
that $ \ti u,  \ti v   $ are positive $ \mathcal{A}  =  \nabla  f$-harmonic functions in $ \hat D\cap B ( w, 4r )$ and  that 
$ \ti u,  \ti v $ are continuous in  $ B ( w, 4r ) \sem \hat D $, with 
 $ \ti u,  \ti v = 0 $ on $ B (w, 4r ) \sem \hat D $.  Then there exists $  c_3,  c_4,   1 \leq c_3, c_4  < \infty, $ depending only on  $ p, n,  \al,  \La, $  and  $  \|  \ph  \hat \| $    such that if 
 $ r_3  = r/ c_3,  $  then 
\begin{align}
 \label{eqn9.34a}
\left| \frac{ \ti u ( y ) }{ \ti v ( y ) } -  \frac{ \ti u (x ) }{ \ti v ( x ) } \, \right|  \leq \,  c_3            \left( \frac{| x  - y |}{ r} \right)^{ \he}   \frac{ \ti u ( y) }{\ti v (y)  }          
\end{align}
and if  $ u^*  =  \ti u $  or $ \ti v,  $ then 
\begin{align}
 \label{eqn9.34b}  c_4^{-1} \frac{u^* ( y )}{d(y,\ar \hat D )}\leq |\nabla u^* ( y  )|\leq    c_4  \,  \lan e_n ,  \nabla    u^* ( y ) \ran  
\, \leq   c_4^2   \frac{ u^* ( y )}{d(y, \ar \hat D)} ,
\end{align}
whenever   $  x, y \in  D \cap B (w, r_3)$.    
 \end{lemma}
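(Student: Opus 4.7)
The strategy is to replace the Lipschitz graph domain $\hat D$ locally by a starlike Lipschitz subdomain $D'$ to which Lemma \ref{lemma9.5a} and Corollary \ref{corollary9.6a} apply directly. Let $M = \|\phi \hat\|_{\mathbb{R}^{n-1}}$ and pick $L = L(M)$ large, say $L > 2M + 10$; then from the point $z = w + L r e_n$, every ray aimed at a boundary point $y \in \partial \hat D \cap B(w, 2r)$ makes an angle with the vertical of at most $\arctan(2/L) < \arctan(1/M)$, which is strictly smaller than the aperture of the Lipschitz cone that $\hat D$ supports at each of its boundary points. Consequently such a ray strikes the graph transversally and exactly once. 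Define the starlike domain
\[
D' := \mathrm{int}\bigl\{ z + t(y - z) \,:\, 0 \le t \le 1,\; y \in \overline{\hat D \cap B(w, 2r)} \bigr\},
\]
which is starlike with respect to $z$, and whose radial graph function $\mathcal{R}$ on $\mathbb{S}^{n-1}$ inherits a bound $\|\log \mathcal{R} \hat\|_{\mathbb{S}^{n-1}} \le c(M)$ from the Lipschitz bound on $\phi$. Moreover there is a radius $r'' = r/c'$, with $c' = c'(M)$, such that $\partial D' \cap B(w, 4 r'') = \partial \hat D \cap B(w, 4 r'')$: the conical "artificial" part of $\partial D'$ lies over the image of $\partial(\partial \hat D \cap B(w, 2r))$ and therefore stays at a definite distance from $w$.

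Since $D' \subset \hat D \cap B(w, 4r)$, the functions $\tilde u, \tilde v$ are $\mathcal{A}$-harmonic in $D'$, and because $\partial D'$ and $\partial \hat D$ coincide in $B(w, 4r'')$, both extend continuously to zero on $B(w, 4r'') \setminus D'$. Thus the data $(D', z, w, r'')$ satisfy the hypotheses of Lemma \ref{lemma9.5a}, which then furnishes a radius $r''_2 = r''/c_2$ and the H\"older estimate \eqref{eqn9.34a} for $x, y \in D' \cap B(w, r''_2)$. Taking $r_3 \le r''_2$ and observing that $D' \cap B(w, r_3) = \hat D \cap B(w, r_3)$ (both open sets share the boundary $\partial \hat D \cap B(w, 4r'')$ and each is the side containing points near $w$ with $y_n > \phi(y')$), this yields \eqref{eqn9.34a}.

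For \eqref{eqn9.34b}, apply Corollary \ref{corollary9.6a} to the same $D'$ and to $u^* \in \{\tilde u, \tilde v\}$, obtaining
\[
c^{-1}\,\frac{u^*(y)}{d(y, \partial D')} \,\le\, |\nabla u^*(y)| \,\le\, c\,\bigl\langle \tfrac{\tilde w - y}{|\tilde w - y|},\, \nabla u^*(y) \bigr\rangle \,\le\, c^2\,\frac{u^*(y)}{d(y, \partial D')}
\]
for $y \in D' \cap B(w, r''_2)$, where $\tilde w$ sits on the segment from $z$ to $w$ at distance $\tilde r = r''/\tilde c$ from $w$. Because $z - w = L r\, e_n$ and both $\tilde w - w$ and $y - w$ have size at most a fixed multiple of $\tilde r$, the unit vector $\tfrac{\tilde w - y}{|\tilde w - y|}$ differs from $e_n$ by $O(r_3 / \tilde r)$; choosing $c_3$ sufficiently large (hence $r_3$ sufficiently small) drives this deviation below $1/(2c)$, so that $\langle e_n, \nabla u^*(y)\rangle \ge (2c)^{-1} |\nabla u^*(y)|$. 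Since $d(y, \partial D') = d(y, \partial \hat D)$ for $y \in B(w, r_3)$, the estimate \eqref{eqn9.34b} follows.

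The main technical burden is the geometric construction of $D'$: verifying that it is genuinely starlike Lipschitz with constant depending only on $M$ (a Lipschitz bound on $\mathcal{R}$ in polar coordinates around $z$ must be extracted from the Cartesian Lipschitz bound on $\phi$), and that its boundary coincides with $\partial \hat D$ in a quantitative neighborhood of $w$. Once this is in hand, the transfer of estimates from $D'$ to $\hat D$ and the angular comparison between $e_n$ and the radial direction from $\tilde w$ are essentially bookkeeping.
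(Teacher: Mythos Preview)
Your approach is essentially the paper's: build a starlike Lipschitz subdomain whose boundary coincides with $\partial\hat D$ near $w$, then invoke Lemma~\ref{lemma9.5a} and Corollary~\ref{corollary9.6a}. The paper does exactly this, citing the ice-cream-cone construction already used in Lemma~\ref{lemma8.7}: it takes the center at $\bar w = w + \tfrac{r}{4}e_n$ and lets $D$ be the interior of all segments joining $B(\bar w, r/\bar c)$ to $\partial\hat D \cap B(w, r/\bar c)$, so that $D\subset \hat D\cap B(w,4r)$ automatically and the starlike Lipschitz constant is controlled by $\|\nabla\phi\|_\infty$.

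Your construction has one slip. With $L>2M+10>4$ the center $z=w+Lr\,e_n$ lies outside $B(w,4r)$, so $z\in D'$ forces $D'\not\subset B(w,4r)$, and your assertion ``$D'\subset \hat D\cap B(w,4r)$, the functions $\tilde u,\tilde v$ are $\mathcal{A}$-harmonic in $D'$'' is false as written. This is not fatal: Lemma~\ref{lemma9.5a} (via Lemma~\ref{lemma9.4}) only requires $\tilde u,\tilde v$ to be $\mathcal{A}$-harmonic in $D'\cap B(w,4r'')$, and you have already arranged $D'\cap B(w,4r'')=\hat D\cap B(w,4r'')\subset \hat D\cap B(w,4r)$. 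So the fix is simply to weaken that sentence. Alternatively, moving the center down to height $\approx r/4$ as the paper does makes the containment genuine and lets you quote Lemma~\ref{lemma8.7} for the starlike Lipschitz verification rather than redoing it.
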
 
\begin{proof}                           
Let  $ \bar w = w + \frac{r}{4} e_n. $  
 As in Lemma  \ref{lemma8.7} we   observe that  if $ \bar c $ is large enough (depending on $ p, n $
and the  Lipschitz norm of $ \ph $),  then the domain 
$ D  \subset \hat D\cap B(w,4r) $ obtained from drawing all open line segments  from points in $  \ar \hat D  \cap B ( w, r/ \bar c)  $ to points in $  B (  \bar  w, r/ \bar c), $ is starlike  Lipschitz
with center $ \bar  w  $  and   starlike  Lipschitz constant  bounded above by $   c  (   \| | \nabla \ph |
\|_{\infty} + 1 ), $ where $ c $ depends only on $ n. $ 
From  this  observation,   we conclude that  \eqref{eqn9.34a}   follows  from  Lemma     \ref{lemma9.5a}  while \eqref{eqn9.34b} follows from this observation, Corollary  \ref{corollary9.6a}, and basic geometry.     

\end{proof} 

   \setcounter{equation}{0} 
   \setcounter{theorem}{0}
       \section{Weak convergence of  certain  measures on  $ \mathbb{S}^{n-1}$}    
\label{section11}       
In this section, we use the results in  sections \ref{section9} and \ref{section10} to  fill in some of the details  outlined  in  section \ref{section8},  regarding the pullback  of   a   certain measure  under the Gauss map  on  the boundary of  a  convex domain.  To begin, suppose that  $   E $ is  a compact convex set with $ 0 $ in the interior of    $ E $  and  let $ u $   be  the   $ \mathcal{A}  = \nabla f$-capacitary function for $ E. $  Let     
 $  \mathcal{\ti A} =   \nabla \ti f, $  where  
   $  \ti f ( \eta )$  
  $ =  f (- \eta ),  \eta  \in \rn{n} \sem \{0\}.   $  
From  convexity of  $  E $,  we see that $ \ar  E $ is  Lipschitz so Corollary \ref{corollary9.6a}  can be  applied to $  1 - u $ with  
   $ \mathcal{A} = \nabla  f $  replaced  by   
   $  \mathcal{\ti A} =  \nabla \ti f .$  More specifically, from  this   corollary  and basic geometry we see as in   
 Lemma  \ref{lemma8.7}  that  
  if  $ w \in  \ar  E $   and  $ 0 < r_4   \leq r_3 $ is small enough, depending only on  the  data (i.e., the Lipschitz constant for $ E, p, n,   \al $   in  Definition \ref{defn1.1}  and  $ \La $  in \eqref{eqn1.8}) that   there is  a  starlike  Lipschitz domain, say  $ \ti \Om   
  \subset   \rn{n} \sem E $   with  center at  $   z \in \rn{n} \sem E,   | w - z  | \approx r_4  \approx  d ( z,  E )$,    and  
  \[
  \ti \Om \cap B (w, r_4)  =   ( \rn{n} \sem E ) \cap B ( w, r_4 ). 
  \] 
  Moreover, the starlike Lipschitz constant for $ \ti \Om $ can be estimated in terms of the Lipschitz constant for $ E $ as in  Lemma  \ref{lemma8.7}.   Using  these facts  and   Corollary 
  \ref{corollary9.6a}  we see that $ v = 1 - u $  satisfies  \eqref{eqn8.28} with $ f $ replaced by  $  \ti f $  and  $  D $ by  $ \ti  \Om. $   Thus Lemma \ref{lemma8.6}  and Proposition   \ref{proposition8.9}  hold for  $ v . $    It follows  that for  $ \mathcal{H}^{n-1}$-almost every $y
\in   \ar  E, $  
\begin{align}  
\label{eqn11.1a}    
 \lim\limits_{\substack{x \to y\\ x \in \Ga (y) }}  \nabla u ( x ) =\nabla u ( y ) \, \, \mbox{exists} 
\end{align}
and
\begin{align}
\label{eqn11.1b}
\frac{\nabla u(y) }{|\nabla u (y) |}\, \, \mbox{is the unit inner normal to}\, \, E.
\end{align}      
Here  $  \Ga (y) $ is  a  non-tangential  approach region  $  \subset  \rn{n} \sem E $ defined below   \eqref{eqn8.18}.  
Also,  if   $     \De ( w,  r  )  =  \ar E  \cap B ( w, r ), $   and     $ \tau $ is the measure corresponding to  $ 1 - u $  as in   \eqref{eqn8.17},    then 
  $ \tau $ is absolutely continuous with respect to  $ \mathcal{H}^{n-1}  $ on $  \ar E $  
and    
\[
d \tau(y)  =  p   \frac{f (   \nabla u(y)  )} {| \nabla  u(y)  |}d\mathcal{H}^{n-1}\quad \mbox{for}\, \, \mathcal{H}^{n-1}\mbox{-a.e.}\, \,   y\in\ar  E.  
\]
Using the above  facts  and  Lemma  \ref{lemma8.5} we observe for $ 0 < r \leq r_4 $ that      
\begin{align}  
\label{eqn11.2}  
p   r^{p-n}   \int_{ \De ( w, r ) }   \frac{ f (  \nabla u )}{|\nabla u |}    d \mathcal{H}^{n-1}  =   r^{p-n} \tau ( \De ( w, r  ) ) \approx  \left(1 - u  ( a_{2r} (w)  ) \right)^{p-1} 
\end{align}      
where proportionality constants depend only on the data.  Finally, from  \eqref{eqn11.2},   
\eqref{eqn8.29}, and  \eqref{eqn8.49a}-\eqref{eqn8.49b}, and H{\"o}lder's inequality   we have  for $ 0 < r  \leq r_4, $ 
\begin{align}   
 \label{eqn11.3} 
\begin{split}
&(a)\hs{.2in}  \int_{\De ( w, r )}  \left( \frac{ f (  \nabla u )}{|\nabla u |} \right)^t  d \mathcal{H}^{n-1} \,   \leq  \,  c_*  r^{(n-1)(   1  - t )}   \left( \int_{\De ( w, r )}   \frac{ f (  \nabla u )}{|\nabla u |}    d \mathcal{H}^{n-1} \right)^{t} \\ 
 &(b) \hs{.2in}    \int_{\De ( w, r )}   \mathcal{N}_r  ( | \nabla u |)^{t(p-1)}  d \mathcal{H}^{n-1} \,   \leq  \,  c_*  r^{(n-1) (  1 - t ) }    \left( \int_{\De ( w, r )}  \frac{ f (  \nabla u )}{|\nabla u |}  d \mathcal{H}^{n-1} \right)^{t}  
\end{split}
 \end{align}
    % (c) \hs{.2in}  \log  \frac{ f (  \nabla u )}{|\nabla u |} \mbox{ and  }  \log |\nabla u | \in 
%\mbox{  BMO } ( \De ( w, r ) ),   \ea \end{equation} 
for some  $ t >   p/(p-1) $ and $ c_* $  depending only on the data.   We  note that   the non-tangential maximal function  $\mathcal{N}_r (\cdot) $ was defined above  Lemma \ref{lemma8.6}.      	

Let  $\mathbf{g}_{E}( x )=\mathbf{g}: \partial E\to \mathbb{S}^{n-1}$ be defined by
\[
\mathbf{g}_{E}( x ) = -\frac{\nabla u (x)}{ |\nabla u  (x) |}
\]
which is well-defined  on a set  $ \He    \subset   \ar E   $ with   $  \mathcal{H}^{n-1} ( \ar E \sem \He  ) =  0. $   From   \eqref{eqn11.1a} and \eqref{eqn11.1b} we  see that  if   $  F  \subset  \mathbb{S}^{n-1}$  is  a  Borel  set,  then  $\mathbf{g}^{-1} (F) $ is 
 $ \mathcal{H}^{n-1} $  measurable.    Define  a measure  
$ \mu(\cdot)= \mu_{E,f} ( \cdot )   $  on  $  \mathbb{S}^{n-1}$  by    
\begin{align} 
\label{eqn11.4}  
\mu ( F )  :=   \int_{ \He   \cap \mathbf{g}^{-1} ( F  )}  f ( \nabla u ) \,  d \mathcal{H}^{n-1}  \, \, \mbox{whenever $ F  \subset  \mathbb{S}^{n-1}$ is Borel set}. 
\end{align}   

Next suppose that $ \{E_m \}_{m\geq 1}$        is  a  sequence of  compact convex  sets with nonempty interiors  which converge to  $  E $ 
 in  the  sense of  Hausdorff distance. That is,  $ d_{\mathcal{H}} ( E_m, E ) \rar 0 $ as 
 $ m \to \infty $ where   $  d_{\mathcal{H}} $  was defined at  the beginning of section \ref{NSR}.  Let     $ u_m  $  be   the  corresponding  $ \mathcal{A} = \nabla f$-capacitary function for $E_m$   and  $m=1, 2, \ldots$   Then for $ m $ large enough say  $ m  \geq m_0 $  we see that  \eqref{eqn11.1a}-\eqref{eqn11.3} hold  with $ u,  E $  replaced by   $ u_m,  E_m,  $ when $ m  \geq m_0 $ with  constants depending only on the data for  $ E. $  
 Let  $  \mu_m  $  be the measure on  $  \mathbb{S}^{n-1}$   defined as in   \eqref{eqn11.4}  with  $ u, E  $ replaced by  $ u_m, E_m. $  From    \eqref{eqn11.2}, \eqref{eqn11.3},  we see for some 
$ q  > p $  that   
\begin{align} 
\label{eqn11.5} 
\int_{\ar E }   |\nabla u |^q   d \mathcal{H}^{n-1}  +   \int_{\ar E_m }   |\nabla u_m |^q    d \mathcal{H}^{n-1}    \leq   T   <   \infty 
\end{align}     
for  $ m \geq m_0 $ where $ T $  depends   on the data and the number of  balls of radius $ r_4 $  needed to cover  $  \ar  E. $    From $ p$-homogeneity of  $ f $,   \eqref{eqn11.5},  and H\"{o}lder's  inequality we see that each of the above measures has finite total mass  $  \leq  \hat  T  $  where $ \hat T $  has the same dependence as  $ T$  above.         
  We prove  
  \begin{proposition} 
  \label{proposition11.1}   
  Let $ \{\mu_m\}_{m\geq 1}$ and $\mu$ be measures  corresponding to  $\{E_m\}_{m\geq 1}$ and $E $ as 
  in  \eqref{eqn11.4}. Then
   \[  
   \mu_m \rightharpoonup  \mu \quad \mbox{weakly as} \,\,   m \to \infty.
   \] 
   \end{proposition}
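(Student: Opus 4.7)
The plan is to extract a weakly convergent subsequence of $\{\mu_m\}$ on $\mathbb{S}^{n-1}$ via Banach--Alaoglu---possible since the total masses $\mu_m(\mathbb{S}^{n-1})=\int_{\partial E_m}f(\nabla u_m)\,d\mathcal{H}^{n-1}$ are uniformly bounded by $\hat T$, as noted just before the proposition---and then to identify any such weak limit $\nu$ with $\mu$ by testing against arbitrary $\phi\in C(\mathbb{S}^{n-1})$. Preliminarily, an Arzelà--Ascoli argument imitating the proof of Lemma~\ref{lemma3.1}, applied to $\{u_m\}$ and combined with the Hausdorff convergence $E_m\to E$, yields $u_m\to u$ uniformly on $\mathbb{R}^n$ (the Hölder modulus from Lemma~\ref{lemma2.3} is uniform in $m$, because $E$ has nonempty interior and so the inradii of $E_m$ stay bounded below for $m$ large) together with $\nabla u_m\to\nabla u$ uniformly on compact subsets of $\mathbb{R}^n\setminus E$.

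Fix $\phi\in C(\mathbb{S}^{n-1})$, extend it to a continuous $0$-homogeneous function $\Phi$ on $\mathbb{R}^n\setminus\{0\}$, and set $F(\eta):=\Phi(-\eta)f(\eta)$, a continuous $p$-homogeneous function. For $s\in(0,1)$ define
\[
J_m(s):=\int_{\{u_m=s\}}F(\nabla u_m)\,d\mathcal{H}^{n-1},\qquad J(s):=\int_{\{u=s\}}F(\nabla u)\,d\mathcal{H}^{n-1}.
\]
For each fixed $s\in(0,1)$, I would show $J_m(s)\to J(s)$ as $m\to\infty$: the superlevel sets $\{u_m\ge s\}$ are convex by Lemma~\ref{lemma3.4}, with Hausdorff limit $\{u\ge s\}$; the bounding hypersurfaces $\{u_m=s\}$ are $C^1$ with $\nabla u_m\neq 0$ by Lemma~\ref{lemma3.3}; and uniform convergence of $u_m$ and $\nabla u_m$ on a compact neighborhood of $\{u=s\}$ (which lies in $\mathbb{R}^n\setminus E$) converts this into convergence of the surface integrals. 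Moreover, by Proposition~\ref{proposition8.9} and the uniform integrability afforded by the reverse Hölder bound~\eqref{eqn11.3}(b), one has $\lim_{s\to 1^-}J_m(s)=\int_{\mathbb{S}^{n-1}}\phi\,d\mu_m$ for each $m$, and similarly $\lim_{s\to 1^-}J(s)=\int\phi\,d\mu$.

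A three-epsilon argument then closes the proof, provided the limit $\lim_{s\to 1^-}J_m(s)=\int\phi\,d\mu_m$ holds \emph{uniformly in $m$}---this is the main obstacle. To obtain this uniformity, the plan is to combine the coarea identity
\[
\int_s^1 J_m(t)\,dt=\int_{\{s<u_m<1\}}F(\nabla u_m)\,|\nabla u_m|\,dx
\]
with the non-tangential maximal function bound~\eqref{eqn11.3}(b) and the boundary Hölder regularity of Lemma~\ref{lemma8.4}, thereby controlling $\sup_m|J_m(s)-\int\phi\,d\mu_m|$ by a quantity tending to zero as $s\to 1^-$. The key point is that the constants in the reverse Hölder estimate of Lemma~\ref{lemma8.6} and in the construction of Lemma~\ref{lemma8.7} depend only on the data and on the Lipschitz character of $\partial E_m$; since $E$ has nonempty interior and $E_m\to E$ in Hausdorff distance, this Lipschitz character is uniform in $m$ for $m$ large, which gives the required uniformity. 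Combined with the pointwise convergence of the preceding paragraph this yields $\int\phi\,d\mu_m\to\int\phi\,d\mu$ for every $\phi\in C(\mathbb{S}^{n-1})$, hence $\nu=\mu$.
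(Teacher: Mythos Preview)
Your approach via level sets is genuinely different from the paper's, but the step you flag as the ``main obstacle''---the uniformity in $m$ of $\lim_{s\to 1^-}J_m(s)=\int\phi\,d\mu_m$---is not resolved by the tools you list. The coarea identity only delivers the \emph{average} $\frac{1}{1-s}\int_s^1 J_m(t)\,dt$ as a volume integral, not $J_m(s)$ itself, so it cannot bound $|J_m(s)-\int\phi\,d\mu_m|$ without further information on the oscillation of $t\mapsto J_m(t)$. The maximal-function estimate \eqref{eqn11.3}(b) and the boundary H\"older bound of Lemma~\ref{lemma8.4} control the \emph{size} of $\nabla u_m$ non-tangentially, but your integrand also carries the \emph{direction} through $\Phi(-\nabla u_m)$: what you need is that $-\nabla u_m/|\nabla u_m|$ on $\{u_m=s\}$ approaches the Gauss map of $\partial E_m$ at a rate independent of $m$, and none of the cited estimates give that. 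For a fixed $m$ the limit $J_m(s)\to\int\phi\,d\mu_m$ does follow from Proposition~\ref{proposition8.9} by dominated convergence, but the dominating function $\mathcal{N}_r(|\nabla u_m|)^p$ varies with $m$ and the pointwise rate of convergence of $\nabla u_m$ to its non-tangential limit depends on the local geometry of $\partial E_m$; the three-epsilon argument therefore does not close.

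The paper avoids this difficulty altogether. It first invokes Jerison's flatness lemma (Lemma~\ref{lemma11.2}) to cover most of $\partial E$ by balls $B(x_j,r_j)$ in each of which both $\partial E$ and $\partial E_m$ are graphs with Lipschitz constant $\le\epsilon$. On each such piece the Gauss map varies by at most $c\epsilon$, so $\phi(\mathbf g_m(\cdot))$ is \emph{nearly constant} there, and it suffices to prove convergence of the \emph{unweighted} integrals $\int_{\partial E_m\cap B_j}f(\nabla u_m)\,d\mathcal H^{n-1}\to\int_{\partial E\cap B_j}f(\nabla u)\,d\mathcal H^{n-1}$. For this the paper uses a Rellich identity---the divergence theorem applied to $(e-x)\hat f(\nabla\hat u_m)$ in a cylinder over $B_j$---to rewrite the boundary integral as a volume integral plus lateral-boundary terms; these converge by interior $C^1$ convergence of $u_m\to u$ together with the uniform bound $|\nabla\hat u_m(x)|\le c(\epsilon)\,d(x,\partial\hat E_m)^{-\eta}$ (claim~\eqref{eqn11.9}, proved by comparison with a homogeneous cone solution), which supplies uniform integrability near the boundary. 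The residual set $\Phi_m$ of small $\mathcal H^{n-1}$ measure is handled by H\"older's inequality against the uniform $L^q$ bound \eqref{eqn11.5}. In short, the flatness localization decouples the test function $\phi$ from the oscillation of $\nabla u_m/|\nabla u_m|$---exactly the coupling your argument leaves uncontrolled.
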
   

Armed with our  work in  sections  \ref{section9} and  \ref{section10},  we could   follow  \cite[section 4]{CNSXYZ} which in turn  was inspired by the argument  in \cite[section 3]{J}.  However,  this approach  would  require that  we  first  prove  some  preliminary results that were available in the $p$-harmonic setting.    Thus, we give another argument which makes use of the major ideas in  \cite{J}  but which for us  was considerably more straight forward.   To this end, we first need the following lemma  (see \cite[Lemma 3.3]{J}).
\begin{lemma}[{\cite[Lemma 3.3]{J}}]
    \label{lemma11.2}   
	    For any $ \ep > 0 $  there exists  a positive integer  $  m_1 = m_1 (\ep) > m_0, $ and  a  finite collection of  disjoint closed  balls   $  \bar B ( x_j, r_j )$, $1\leq j \leq N,  $   with  $ r_j \leq \ep$,   $ x_j  \in  \ar E, $  and  
\begin{align*}  
%   \label{eqn11.6}    
\mathcal{H}^{n-1}    (   \ar E  \sem \bigcup_{j=1}^N    \bar B ( x_j,  r_j ) )  \leq \ep. 
   \end{align*} 
   Moreover, for every  $ j \in \{1, \dots, N\} $ and $ m \geq m_1, $  there exists    a  rotation and translation,  say  $  M_j $ of  $ \rn{n},  $ for which $ M_j ( x_j ) = 0, $ 
   \[
   \bea{c} 
    M_j (  E  \cap B ( x_j, r_j/\ep ))    =   \{  ( x' , x_n )  :  x_n  >  \ph ( x' ) \} \cap B (0, r_j/\ep ),    \\ 
    M_j (  E_m  \cap B ( x_j, r_j/\ep ) )= \{  ( x' , x_n )  :  x_n  >  \ph_m ( x' ) \}   \cap B ( 0, r_j/\ep ),  
   \ea 
   \]   
   where  $ \ph$ and $\ph_m$ are  Lipschitz  functions on $ \mathbb{R}^{n-1}$  satisfying   
\begin{align} 
\label{eqn11.7}    
\| \nabla \ph \|_\infty +  \| \nabla \ph_m \|_\infty  \leq \ep. 
\end{align}    
   \end{lemma}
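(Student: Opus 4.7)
The plan decomposes into three steps: (a) identify an $\mathcal{H}^{n-1}$-full subset of $\partial E$ at which the boundary locally admits a Lipschitz graph representation with arbitrarily small Lipschitz norm, (b) extract a finite disjoint subcollection of centered balls covering $\partial E$ up to measure $\epsilon$, and (c) transfer the graph representation from $E$ to $E_m$ for $m$ large using Hausdorff convergence and convexity.

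For step (a), since $E$ is convex, the outer unit normal $\mathbf{g}(x)$ exists for $\mathcal{H}^{n-1}$-a.e.\ $x \in \partial E$; call this set $G \subset \partial E$. Fix $x \in G$ and let $M$ be a rigid motion with $M x = 0$ and $M\mathbf{g}(x) = -e_n$. Convexity of $E$ makes $M(\partial E)$ locally the graph of a convex function $\varphi$ near $0$ with $\varphi(0) = 0$ and, since $-e_n$ is the unique outer normal, $\nabla \varphi(0) = 0$. For convex functions, the gradient is continuous at every point of differentiability (the subdifferential is single-valued there and upper semicontinuous), so for each $x \in G$ there exists $R(x) > 0$ such that on $B(x, R(x))$ the rotated boundary is the graph of $\varphi$ with $\|\nabla \varphi\|_\infty \leq \epsilon/4$.

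For step (b), consider the family
\[
\mathcal{F}_\epsilon = \{\bar B(x, r) : x \in G,\ 0 < r \leq \min(\epsilon, \epsilon R(x)/2)\},
\]
which is a Vitali cover of $G$. Since $(\partial E, \mathcal{H}^{n-1})$ is doubling, the Vitali covering theorem produces a countable pairwise disjoint subcollection whose union covers $G$ up to $\mathcal{H}^{n-1}$-null sets. Truncate to a finite subcollection $\{\bar B(x_j, r_j)\}_{j=1}^N$ with $\mathcal{H}^{n-1}(\partial E \setminus \bigcup_{j=1}^N \bar B(x_j, r_j)) \leq \epsilon$. By construction $r_j \leq \epsilon$ and $r_j/\epsilon \leq R(x_j)/2$, so the motion $M_j$ realizes $M_j(E \cap B(x_j, r_j/\epsilon))$ as $\{x_n > \varphi(x')\} \cap B(0, r_j/\epsilon)$ with $\|\nabla \varphi\|_\infty \leq \epsilon/4 \leq \epsilon$.

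For step (c), work in the $M_j$-coordinate system. The Lipschitz-graph condition with constant $\epsilon/4$ is equivalent to the two-sided cone condition: for every $y \in \partial E \cap B(0, r_j/\epsilon)$, the cone $y + K_+$ with $K_+ = \{v : v_n > (\epsilon/2)|v'|\}$ meets $\mathrm{int}(E)$ and $y - K_+$ meets $E^c$, as long as one remains inside $B(0, r_j/\epsilon)$; moreover, at interior points of these cones one gains a positive distance from $\partial E$ proportional to $|v|$ and to the angular gap $\epsilon/4$. Since $E_m$ and $E$ are convex compact bodies with nonempty interior and $E_m \to E$ in Hausdorff distance, $\partial E_m \to \partial E$ in Hausdorff distance as well. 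For $m$ large, any $y \in \partial E_m \cap B(0, r_j/\epsilon - \sigma)$ lies within a tiny distance $\eta_m \to 0$ of some $y^* \in \partial E$; the translated cones $y + \{v_n > \epsilon |v'|\}$ and $y - \{v_n > \epsilon|v'|\}$, being strictly interior to $y^* + K_+$ respectively $y^* - K_+$, consist of points whose distance to $\partial E$ exceeds $\eta_m$, so that a short connectedness argument in $\{x : d(x, \partial E_m) > 0\}$ places them on the correct side of $\partial E_m$. Thus $M_j(E_m) \cap B(0, r_j/\epsilon) = \{x_n > \varphi_m(x')\} \cap B(0, r_j/\epsilon)$ with $\|\nabla \varphi_m\|_\infty \leq \epsilon$. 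Taking $m_1(\epsilon)$ to be the maximum over $j \in \{1, \ldots, N\}$ of the thresholds yields the result.

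The principal obstacle is the quantitative, uniform cone transfer in step (c): one must choose the Hausdorff threshold $\eta_m$ small compared to both the angular slack $\epsilon/4$ and $\min_j r_j$ (which is positive since $N < \infty$), and verify uniformity of the separation estimate $d(\cdot, \partial E) \gtrsim |v|$ over all boundary points of $E_m$ in the $N$ balls. All other ingredients (Gauss map a.e., Vitali selection, convex-function regularity) are classical once assembled.
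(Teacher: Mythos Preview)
The paper does not supply its own proof of this lemma; it is stated with attribution to \cite[Lemma 3.3]{J} and then used immediately in the proof of Proposition~\ref{proposition11.1}. So there is no in-paper argument to compare against, and your task was effectively to reconstruct Jerison's lemma.

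Your reconstruction is sound in outline, and steps (a) and (b) are correct as written: a.e.\ existence of the Gauss map on $\partial E$, continuity of the subdifferential of a convex function at points of differentiability, and the Vitali selection are exactly the right tools. In step (c), however, the cone-transfer argument is more delicate than necessary and is only sketched. A cleaner route exploits convexity more directly: in the $M_j$-coordinates, $M_j(E_m)$ is convex, so its lower boundary over $\{|x'|<r_j/\ep\}$ is automatically the graph of a \emph{convex} function $\ph_m$ once you check it is nonempty there (which follows from Hausdorff closeness and the fact that $E$ has interior points just above the graph of $\ph$). Hausdorff convergence of the bodies gives uniform convergence $\ph_m\to\ph$ on compact subsets of $B'(0,R(x_j))$, and for convex functions uniform convergence implies local uniform convergence of the gradients. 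Since you arranged $r_j/\ep\le R(x_j)/2$, the ball $\{|x'|\le r_j/\ep\}$ is compactly contained in $B'(0,R(x_j))$, and $\|\nabla\ph_m\|_\infty\le\ep$ there for $m$ large. This replaces the cone geometry and the ``short connectedness argument'' you flagged as the principal obstacle, and makes the uniformity over $j\in\{1,\dots,N\}$ immediate from finiteness of $N$.
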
 
%%%
%%%
%%%
%%      
   \begin{proof}[Proof of Proposition \ref{proposition11.1}]        
        Let  
        \[
        \Ph  :=  \ar E  \sem \bigcup_{j=1}^N 
   \bar B ( x_j,  r_j )\quad \mbox{and} \quad 
   \Ph_m  : =    \ar E_m  \sem \bigcup_{j=1}^N 
   \bar B ( x_j,  r_j )  \quad \mbox{for}\,\,  m  \geq  m_1. 
   \]  
   Let   $  \rho_1,    \rho_2, $  denote respectively the radius of  the largest  ball  contained in the interior of  $ E, $  and the smallest ball containing $ E, $ both with center at  the origin.   We note that the radial  projections from  $  E, E_m,  $ onto  $  B (0, \rho_1/2) $  are   bilipschitz mappings whose  bilipschitz constants can  be estimated independently of $ m $  ($ m  \geq m_1) $  and in   terms  of  $ \rho_2/\rho_1,   n. $    Using this fact and comparing the projections of   $ \Ph,   \Ph_m $  onto  $  \mathbb{S}^{n-1}$   we see from Lemma \ref{lemma11.2} that 
\begin{align} 
\label{eqn11.8}  
\mathcal{H}^{n-1} (\Ph_m)    \leq  \kappa   \ep \quad \mbox{for}\,\,   m \geq m_2 \geq m_1, 
\end{align}
   where $ \kappa $ is a positive constant independent of $ m, $  depending only on the ratio of    $ \rho_2$  to  $ \rho_1 $  and $ n.$ 
   
Next  for fixed  $ j, 1 \leq j  \leq N, $ and $ M_j $ as in Lemma \ref{lemma11.2}  we  let  
\[
   \hat  u_m (x) :=         u_m  (M_j^{-1} x  )\quad \mbox{and}\quad   \hat u ( x ) := u  (M_j^{-1} x )
   \]
   when  $ x \in  \mathbb{R}^n$. We also put     
   \[
    \hat E_m :=  M_j  (E_m)\quad \mbox{and}\quad \hat E :=  M_j (E).  
    \]
        We note that          $ \hat u$ and $\hat u_m $  are   $ \mathcal{ \hat  A } =  \nabla  \hat f$-harmonic in 
         $ \mathbb{R}^n \sem  \hat E$ and $\mathbb{R}^n \sem  \hat E_m$  where 
         $  \hat  f $  satisfies the  same  structure and smoothness conditions as $f$.                 Let
         \[
          C   =   \{ ( x' , x_n ) \in  \mathbb{R}^n :  | x' |  \leq   r_j, - r_j <  x_n < r_j \}.
          \]
We also put
\[
 D' :=   C \cap (\mathbb{R}^n \sem \hat E ) \quad \mbox{and}\quad D'_m :=   C \cap (\mathbb{R}^n \sem \hat E_m ) \quad \mbox{for}\,\,  m  \geq m_1.   
 \]
Given  $ \eta > 0 $ small,  we claim that if  $ \ep  > 0 $ is small enough,   then   
\begin{align}  
\label{eqn11.9}   
| \nabla \hat u_m ( x' , x_n ) |  \leq c (\ep)  (x_n - \ph_m (x'))^{- \eta}  
      \end{align} 
      when $x  \in    B (0, 2 r_j) \sem \hat E_m$ and where  $ c (\ep)  $  depends on $ \ep $  and the data for  $ E, u, $   but is independent of  $ m $ and $ j $  for $ m $ large enough, say  $  m  \geq m_3 \geq m_2. $   The proof of this claim will be given  after  we use it to prove  Proposition \ref{proposition11.1}. 
         
 We next let $e  =  \frac{r_j}{2} e_n$ and        using  the  Gauss-Green Theorem in certain approximating domains to  $ D'_m $,  \eqref{eqn11.1a}-\eqref{eqn11.3} to take limits  we deduce as in  
       \eqref{eqn8.30}-\eqref{eqn8.34}  that  
       \begin{align}  
       \label{eqn11.10}   
L_m =    (p - 1)  \int_{ \ar D'_m \cap \ar  \hat E_m  }  \lan e - x  ,  \nabla \hat u_m \ran \,     \frac{ \hat f  ( \nabla  \hat u_m )}{|\nabla  \hat u_m |} d \mathcal{H}^{n-1}         = J_m + K_m 
       \end{align}  
       where    
         \[   
         J_m  =     (n-p)        \int_{D'_m}  \hat f ( \nabla \hat u_m )  dx  
          \]
         and 
         \begin{align*}
         K_m   =    \int_{\ar  D'_m\sem \ar E_m }  \lan  & e - x,  
     \nu_m \ran  \hat f  ( \nabla  \hat u_m )  d \mathcal{H}^{n-1}   \\
       & +  \int_{\ar D'_m \sem \ar E_m } \lan x  - e ,  \nabla  \hat u_m  \ran  \, \lan \nabla \hat f(\nabla \hat u_m ) , 
      \nu_m  \ran  \, d \mathcal{H}^{n-1}.                                                                   
 \end{align*}   
From   Lemmas \ref{lemma2.1} and  \ref{lemma2.2}                                            as well as uniqueness of the  $ \mathcal{\hat A}$-capacitary function for a  compact convex set with interior, we see that  $\{ \hat u_m\}_{m\geq 1}$ converges uniformly to  $ \hat u $ in  $ \mathbb{R}^n  $  while  $ \{\nabla  \hat u_m \}_{m\geq 1}$ converges uniformly to  $ \nabla  \hat u $  on compact subsets of $ \mathbb{R}^n\sem E. $   Using these facts and  claim \eqref{eqn11.9}  we find from uniform integrability type estimates that 
\begin{align}     
\label{eqn11.11}    
\lim_{m \to \infty}    J_m  =     (n-p)        \int_{D'}  \hat f ( \nabla \hat u )  dx   
\end{align}
and
\begin{align}
\label{eqn11.12} 
\begin{split}
{ \ds \lim_{m \to \infty}   K_m  
                         =    \int_{\ar  D' \sem \ar E }  \lan  e - x,  
      \nu \ran  \hat f  ( \nabla  \hat u )   d \mathcal{H}^{n-1} }   \\
     \hs{.5in}  +  {\ds \int_{\ar D' \sem \ar E } \lan x  - e ,  \nabla  \hat u  \ran  \, \lan \nabla \hat f(\nabla \hat u ), 
         \nu   \ran  \, d \mathcal{H}^{n-1}. }                                                            
\end{split}
\end{align}
 where $ \nu $ is the outer unit normal to $D'.$  Now \eqref{eqn11.10} also holds  with  $ D'_m $  replaced by $ D' $ and $ \hat u_m $ by  $ \hat u $  Using this fact and   \eqref{eqn11.11}-\eqref{eqn11.12}  we conclude that     
\begin{align}
 \label{eqn11.13}  
\lim_{m\to \infty}     \int_{ \ar D'_m \cap \ar  \hat E_m  }  \lan e - x  ,  \nabla \hat u_m \ran \,      \frac{ \hat f  ( \nabla  \hat u_m )}{|\nabla  \hat u_m |} d \mathcal{H}^{n-1}  =   \int_{ \ar D' \cap \ar  \hat E  }  \lan e - x  ,  \nabla \hat u \ran \,      \frac{ \hat f  ( \nabla  \hat u )}{|\nabla  \hat u |} d \mathcal{H}^{n-1}.
\end{align}                                         
Next we note that  we can compute the inner  normal to   $  \ar \hat  E_m $ in two ways:   $(\hat a)$  using  \eqref{eqn11.1a}-\eqref{eqn11.1b} with  
   $ \hat u, \hat E,  $ replaced by  $ \hat u_m, \hat E_m $ 
 or $(\hat b)$  using \eqref{eqn11.7}  of  Lemma  \ref{lemma11.2} and calculus.    Doing this and using the resulting computation in     \eqref{eqn11.13}, we obtain      
\begin{align}
 \label{eqn11.14}  
 \begin{split}
 \limsup_{m\to \infty}  &\left|   \int_{ \ar D'_m \cap \ar  \hat E_m  }   
         \hat f  ( \nabla  \hat u_m ) d \mathcal{H}^{n-1}     
 -   \int_{ \ar D' \cap \ar  \hat E  }   \hat f  ( \nabla  \hat u ) d \mathcal{H}^{n-1} \right|          
 \\  
 &\leq  c^*  \ep  
        \left[ \limsup_{m\to \infty}     \int_{ \ar D'_m \cap \ar  \hat E_m  }    \hat f  ( \nabla  \hat u_m ) d \mathcal{H}^{n-1}  +   \int_{ \ar D' \cap \ar  \hat E  }   \hat f  ( \nabla  \hat u ) d \mathcal{H}^{n-1} \right] 
\end{split}
        \end{align}   
 where                                            $ c^*$  depends only on the data for $ E, u. $  As noted earlier,  both integrals on the right-hand side of   \eqref{eqn11.14} are  $  \leq  \hat T  < \infty. $   We assume as we may that 
  $ c^* \ep \leq 1/4.$      Then from   \eqref{eqn11.14} we easily deduce that  
\begin{align}
\label{eqn11.15} 
\begin{split}
& (\al)\, \,
	       \limsup_{m\to \infty}      \int_{ \ar D'_m \cap \ar  \hat E_m  }
	        \hat f  ( \nabla  \hat u_m ) d \mathcal{H}^{n-1}    \leq  
        2  \int_{ \ar D' \cap \ar  \hat E  }     \hat f  ( \nabla  \hat u ) d \mathcal{H}^{n-1},\\  
	&(\be) \, \, \limsup_{m\to \infty}  \left|   \int_{ \ar D'_m \cap \ar  \hat E_m }     \hat f  ( \nabla  \hat u_m ) d \mathcal{H}^{n-1}     
 -   \int_{ \ar D' \cap \ar  \hat E  }   \hat f  ( \nabla  \hat u ) d \mathcal{H}^{n-1} \right|   \leq              3  c^*  \ep  
     \int_{ \ar D' \cap \ar  \hat E  }   \hat f  ( \nabla  \hat u ) d \mathcal{H}^{n-1}.   
\end{split}
\end{align}                   
    Transferring back to our original scenario using  $ M_j, M_j^{-1}, $ allowing  $ j $ to vary, and summing from 1 to  $ N $  we obtain from  \eqref{eqn11.14}, \eqref{eqn11.15},  that 
  \begin{align*}
%   \label{eqn11.16}  
         \limsup_{m\to \infty}   \left|   \int_{   \partial E_m \sem \Ph_m  }     f  ( \nabla   u_m ) d \mathcal{H}^{n-1}  -   \int_{  \partial E \sem \Ph  }    f  ( \nabla   u ) d \mathcal{H}^{n-1} \right|   \leq   c^{**}  \ep \hat T, 
        \end{align*}   
      where $ \hat T $ was defined above Proposition \ref{proposition11.1} and $ 
      c^{**} $  has the same dependence as $ c^*. $
                                              
       To continue the proof  of  Proposition   \ref{proposition11.1} (under the assumption that claim \eqref{eqn11.9} is true), let $ \he  $  be  a  continuous function on  $ \mathbb{S}^{n-1}$   and let  
\[  
\psi (\rho)  = \sup \{ | \he ( \om ) -  \he (\om') | :\, \, \om, \om'  \in  \mathbb{S}^{n-1}, \, \,   | \om -  \om'| \leq \rho\} \quad \mbox{whenever}\, \,   \rho \in (0, \infty) 
\] 
be the modulus of continuity of  $  \he. $   Let  $ \mathbf{g}_m  =  - | \nabla \hat u_m |^{-1} \nabla  \hat u_m   $  be the Gauss map defined on a set  $ \He_m  \subset         \ar E_m $  with   $  \mathcal{H}^{n-1} ( \mathbb{S}^{n-1} \sem \He_m ) = 0.$  Define   $ \mu_m $ as  in \eqref{eqn11.4} relative to $ \mathbf{g}_m, E_m, u_m $ and set 
        \[  
        \bea{l} 
        \mu_{j,m} (F)  :=  {\ds \int_{\He_m \cap \bar B ( x_j, r_j ) \cap \mathbf{g}_m^{-1} (F)} f ( \nabla u_m ) d  \mathcal{H}^{n-1} } \quad \mbox{for}\, \,   j = 1, \dots, N,
        \\
        \mu_{0, m} ( F ) := {\ds \int_{\He_m \cap \Ph_m \cap \mathbf{g}_m^{-1} (F)} f ( \nabla u_m ) d  \mathcal{H}^{n-1}}  
        \ea 
        \] 
        whenever  $ F \subset  \mathbb{S}^{n-1}$ is a Borel set.  Define  $  \mu_{j}$ for $0 \leq j \leq N, $ similarly with $ u_m, \mathbf{g}_m, E_m, \He_m, $ replaced by  $ u, \mathbf{g}, E, \He. $  We note that     
\begin{align*}
%\label{eqn11.17}  
 \mu_m  :=  \sum_{ j=0}^N \mu_{j,m} \quad \mbox{and} \quad \mu:= \sum_{ j=0}^N \mu_{j}.
\end{align*}          
Let $ j $ be fixed, $ 1 \leq j \leq N, $ 
          and  let 
          \[
           x, x' \in  \ar E \cap \He \cap B ( x_j, r_j ) \quad \mbox{and} \quad y, y'  \in \ar E_m \cap \He_m \cap B ( x_j, r_j).
           \] 
Then from    from  Lemma  \ref{lemma11.2}  we see for  $ m \geq m_1 $ that  
\begin{align}
\label{eqn11.18}  
| \mathbf{g} ( x ) -  \mathbf{g} ( x' ) |  + | \mathbf{g}_m ( y ) - \mathbf{g}_m ( y') |  + | \mathbf{g}(x) - \mathbf{g}_m ( y )  | \leq  c \ep 
\end{align}
where $ c = c ( n). $  From            \eqref{eqn11.18} and \eqref{eqn11.15} we deduce that   
\begin{align} 
\label{eqn11.19} 
\begin{split}
 \limsup_{m\to \infty}   & \left| \int  \he  d \mu_{j,m}  -   \int \he d\mu_{j} \right| \\
 &\leq\| \he \|_\infty  \limsup_{m \to \infty}   \,  |  \mu_{j, m}  ( \mathbb{S}^{n-1})  -    \mu_j ( \mathbb{S}^{n-1})  | +  \psi (  c \ep)\,  [ {\ds  \limsup_{m \to \infty}    \mu_{j, m}  ( \mathbb{S}^{n-1})  +    \mu_j ( \mathbb{S}^{n-1}) } ] \\
 &\leq 3 ( c^* \ep \| \he \|_\infty +  \psi ( c \ep)  ) \,   \mu_j ( \mathbb{S}^{n-1}).
           \end{split}
           \end{align}                                                                                                                                                                                            
           Also  from     \eqref{eqn11.8}, Lemma  \ref{lemma11.2},  H\"{o}lder's  inequality,  and \eqref{eqn11.5},  we deduce  for  $ m \geq m_2 $  that   
\begin{align}
\label{eqn11.20} 
\begin{split}
\mu_m (\Ph_m)  +  \mu (\Ph )  &\leq   \hat c  \, 
         [ \mathcal{H}^{n-1} ( \Ph_m \cup \Ph ) ]^{1 - p/q}  \, \,     T^{p/q}  \\
         &\leq \hat c^2  \ep^{ 1 - p/q } \,  T^{p/q} 
\end{split}         
         \end{align}   
           where $  \hat c $  depends only on the data for $ E, u. $  
                                                                      Summing \eqref{eqn11.19} over  $ 1 \leq j  \leq N $  we get in view of   \eqref{eqn11.20},   that  
           \begin{align*}
             % \label{11.21} 
\begin{split}
             \limsup_{m\to \infty}   \left| \int_{\mathbb{S}^{n-1}}   \he  d \mu_{m}  -   \int_{\mathbb{S}^{n-1}}  \he d\mu \right| \leq 
            3 ( \psi ( c \,   \ep)\, + c^*  \ep  \| \he \|_\infty  )   \hat  T +  \hat c^2  \| \he  \|_\infty  \, \ep^{ 1 - p/q }  \, T^{p/q}.
\end{split}
             \end{align*}
   Since  $  \ep $ can be arbitrarily small  and  $ \he $ is an arbitrary continuous function on $ \mathbb{S}^{n-1}$   we conclude that  Proposition \ref{proposition11.1} is true under claim   \eqref{eqn11.9}.
 \end{proof}%
 \begin{proof}[Proof of claim \eqref{eqn11.9}]      
 Our  proof is quite similar to the proof of Lemma 5.28  in  \cite{LN}.  Let  $ \hat f $ be as 
  defined above  \eqref{eqn11.9} and let    
  \[
   K = \{ x: x_n/|x|   >   \cos  \hat \he  \} 
   \]
    be the open  spherical cone  in  $ \mathbb{R}^n $ with vertex at the origin,  angle opening  $ \hat \he,  \pi/2 <
\hat \he  <  \pi, $  and  axis parallel to  $ e_n.  $  Let   $  \bar u_l,$ be the  $ \mathcal{\hat A} =  \nabla \hat f $-capacitary function for  $  \bar B (0, l ) \sem K $  and put 
   \[
    \bar v_l   =    \frac{ 1 -  \bar u_l }{ 1 - \bar u_l ( e_n) } \quad   \mbox{for} \,\, l = 1, 2, \dots.
    \]
      Then 
   $ \bar v_l $  is   $   \mathcal{\breve{A}}  =  \nabla \breve f$-harmonic in  the  complement of   $ B ( 0, 1 ) \sem  K  $
        where 
   $ \breve f (  \eta ) =  \hat f  ( - \eta ) ,  \eta \in  \mathbb{R}^n. $  Also  $  \bar v_l $  has continuous boundary values with  $ \bar v_l  \equiv 0 $ on  $  B (0, l ) \sem K $ and 
   $ \bar v_l ( e_n )  = 1. $   Using Lemmas \ref{lemma3.1}-\ref{lemma3.3} and taking limits of  a  certain subsequence of  $ \{\bar v_l\}_{l\geq 1} $, we  see there exists $ v \geq 0,$ a continuous function on $  \mathbb{R}^n $  which is                  
     $  \mathcal{\breve  A}  =  \nabla \breve  f$-harmonic in   $ K $
     with  $ v \equiv 0  $  on  $  \mathbb{R}^n  \sem K  $  
     and     $ v ( e_n )  = 1. $           
     
     We assert that 
\begin{align} 
\label{eqn11.22}      
v ( t x ) =  v ( t  e_n )  \,  v ( x )  \quad \mbox{whenever} \, \,  x \in \mathbb{R}^n\, \, \mbox{and}\, \,  t  \in (0, \infty).
\end{align}
        To see this, we note that  $ v ( t x ),  x  \in \mathbb{R}^n $ is also  $ \mathcal{\breve{A}}$-harmonic  in   $ K $ as follows from $p$-homogeneity of  $ \breve f. $  Also $ v ( t x ) = 0 $  when $  x \not \in K. $  From these  observations we see that  Lemma \ref{lemma9.5} can be used with  $ \ti u, \ti v $  replaced by    $ v ( x), v ( t x ), $ for  $ x  \in  K \cap B ( 0, R )$ when $R\geq 1$.   Moreover, from Harnack's inequality we see that  the ratio of  $  v ( Re_n )$ to $  v ( t R e_n) $ is  bounded above and below by constants independent of  $ R $ when $ R \geq 1. $   Using these facts and  letting  $  R \to \infty $ we deduce from  Lemma \ref{lemma9.5}  that  $ v ( t x )$ is a constant multiple of  $ v ( x )  $  whenever $ x \in K. $ From   this statement  and   $ v ( e_n ) = 1, $ we obtain \eqref{eqn11.22}.   Differentiating  \eqref{eqn11.22} with respect to $ t $  and evaluating at $ t =  1 $  we see that   
      \[ 
        \lan x,  \nabla v ( x ) \ran =   \lan e_n,  \nabla v ( e_n ) \ran  v ( x )\quad \mbox{whenever} \quad x  \in K. 
        \]    
If we let $ r =  | x |,  \om =  x / | x |$  in this identity we obtain that  
\[
r  v_r  ( r \om ) =  \lan e_n,  \nabla v ( e_n )\ran v ( r  \om ).  
\]
Dividing this equality by  $ r v  ( r \om )  $   and integrating with respect to $ r, $  we find  that   
        $  v (  \rho \om )  =  \rho^{\ga } v ( \om ) $  whenever $ \om \in \mathbb{S}^{n-1}$  where  $ \ga  =  \lan e_n,  \nabla v ( e_n ) \ran .$  Next we assert that  
   \begin{align} 
   \label{eqn11.23}   
   \ga = \ga (\hat  \he ) \to 1 \quad \mbox{as}\quad  \hat \he \to \pi/2.  
   \end{align}     
          To verify this assertion, we write $ v = v ( \cdot, \hat \he ), \ga =  \ga ( \hat \he ) $ for the above $ v,  \ga. $   If  $ \pi/2  <  \hat \he_k  \to  \pi/2 $ as $ k  \to  \infty $  and a   subsequence of  $  \ga ( \hat \he_k ) \to \ga  $  as  $ k \to \infty$.  Then, also a certain  subsequence of  $ v ( \cdot, \hat \he_k ) $ converges uniformly  to $ v $  an 
        $ \mathcal{\breve A}$-harmonic function in  $  \{ x : x_n > 0 \} $  with  $ v ( x ) = 0 $ 
        when $ x_n \leq 0. $    Since  $ x_n $ is also $ \mathcal{\breve  A}$-harmonic we conclude as above that $ v $ is a constant multiple of $ x_n $  and thereupon that  \eqref{eqn11.23} is true.

   Now  given  $  \eta >  0  $ as  in   \eqref{eqn11.9}  it follows from   \eqref{eqn11.22}  and  \eqref{eqn11.23}      that there exists $    \hat \he$ such that
   \[
\pi/2 <  \hat \he < \pi\quad \mbox{with}\, \,  \ga (\hat \he)   \geq  1  -  \eta.
    \]
         With  $   \hat \he  $ is now fixed,  we find  from  Lemma 
   \ref{lemma11.2}  that  there exists $ \ep = \ep (\hat \he )  > 0 $  for which the following is true: 
    Given  $ \hat x  \in \ar  \hat E_m \cap B (0, 2 r_j )$ we have   $ B ( \hat x, r_j ) \sem    ( K + \hat x )    \subset  \hat E_m . $     
      Let   $ v^*   $  be  an   $ \mathcal{\breve  A}$-harmonic  function with   $ v^* \equiv 1 - \hat u_m $ on  $   \ar [B  ( \hat x,  r_j) \sem ( K  +  \hat x )].  $ Existence of $ v^* $  follows as in  
\cite{HKM}.  Comparing boundary values and using the maximum principle for $  \mathcal{\breve  A}$-harmonic functions we have 
      $ 1 -\hat u_m \leq   v^* $ in   $   B  ( \hat x,  r_j) \sem ( K  +  \hat x ).  $   
        Since    
      $ x  \to   v (  x -  \hat x   )$  is  $  \mathcal{\breve  A}$-harmonic in   $ K  +  \hat x $ we conclude from Lemma   \ref{lemma8.13}, construction of  $ v, v^* \leq 1, $   and our  choice of $   \hat \he  $  that     
\begin{align}
 \label{eqn11.24}   
\begin{split} 
 ( 1 -  \hat  u_m  ) (   x  )& \leq     v^* ( x )    \\
 &\leq   c   \frac{ v^*(  \hat x  +  r_j e_n/2) }{ v (  r_j e_n/2) }  v ( x - \hat x)  \\
       &     \leq c ( | x  -   \hat x |/r_j )^{ 1  - \eta}
\end{split}
\end{align}
in $B ( \hat x, r_j )  \sem  \hat E_m$ where $ c $  depends only on the data for   $  m  \geq m_2$.   
From  \eqref{eqn11.24} with  $ x - \hat x $  a  multiple of  $ e_n $    and  \eqref{eqn2.2} we get   claim \eqref{eqn11.9}.     In view of our earlier remarks, this finishes the proof of Proposition \ref{proposition11.1}. 
\end{proof}

\setcounter{equation}{0} 
\setcounter{theorem}{0}
\setcounter{equation}{0} 
\setcounter{theorem}{0}

\section{The Hadamard variational formula for nonlinear capacity} 
\label{section12}
Let $ E_1$ and $E_2$ be   compact convex sets  and suppose  $ 0 $ is in the interior of  $ E_1 \cap E_2. $ Let   $ u ( \cdot, t ) $   be the $ \mathcal{A} = \nabla f$-capacitary function  for  $  E_1  + t  E_2 $  when $ t  \geq 0. $  Also  let   
$  \mu_{E_1 + t E_2} $ be the measure defined   in   \eqref{eqn7.6}  relative to   $ u (  \cdot,  t ) . $   In this section  we  prove 
\begin{proposition} 
\label{proposition12.1}  
With  the  above  notation let  $  h_1,  h_2 $   be  the  support   functions for $ E_1, E_2, $  respectively and let  $\mathbf{g}( \cdot,  E_1 + t E_2 ) $  be the  Gauss map for $  \ar  (  E_1 + t E_2 ) .$  Then   for  $  t_2   \geq  0 $ we  have   
\begin{align}  
\label{eqn12.1}   
\left. \frac{d}{dt}    \mbox{Cap}_{\mathcal{A}} (  E_1 + t E_2 )  \,  \right|_{t=t_2} =   (p-1)\int_{\mathbb{S}^{n-1}}  h_2 ( \mathbf{g}(z,  E_1 + t_2 E_2 ) )   \,  d  \mu_{ E_1 + t_2 E_2}(z). 
\end{align} 
\end{proposition}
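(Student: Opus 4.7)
The plan is to first establish \eqref{eqn12.1} when $E_1$ and $E_2$ are $C^\infty$ strictly convex, and then pass to the general case by approximation in Hausdorff distance, relying on Proposition \ref{proposition11.1} to control the boundary measures.

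In the smooth strictly convex case, $h_1, h_2$ are $C^\infty$ on $\mathbb{S}^{n-1}$ and $E(t) := E_1 + t E_2$ is $C^\infty$ strictly convex for every $t \geq 0$. Since $E(t)$ contains a ball, Lemma \ref{lemma3.3} yields $|\nabla u(\cdot,t)|$ bounded above and below in a neighborhood of $\partial E(t)$, and Lemma \ref{lemma2.2} together with assumption \eqref{eqn1.8}$(i)$ promotes $u(\cdot,t)$ to $C^{2,\alpha}$ up to $\partial E(t)$. Writing $u_t := u(\cdot,t)$ and using $p$-homogeneity of $f$ in the form $\mbox{Cap}_{\mathcal{A}}(E(t)) = p \int_{\Omega(t)} f(\nabla u_t)\, dy$, I would apply the Reynolds transport theorem, noting that the outward normal velocity of $\partial E(t)$ at the point with Gauss image $\xi$ is $h_2(\xi)$. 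Integrating by parts on the smooth shrinking sublevel sets $\Omega_\epsilon(t) := \{u_t < 1-\epsilon\}$, using $\mathcal{A}$-harmonicity together with the $L^p$ and $L^{p^*}$ decay of $u_t$ and its material derivative $\dot u_t$ at infinity (from \eqref{eqn1.3} and the arguments of Lemma \ref{lemma3.2}), and then letting $\epsilon \to 0$, the identity $u_t(\gamma(t)) = 1-\epsilon$ differentiated along a moving level-set curve identifies the boundary value $\dot u_t|_{\partial E(t)} = |\nabla u_t|\, h_2(\mathbf{g})$. Combining these pieces yields
\begin{align*}
\left. \frac{d}{dt} \mbox{Cap}_{\mathcal{A}}(E(t)) \right|_{t=t_2} = (p-1) \int_{\partial E(t_2)} h_2(\mathbf{g}(x, E(t_2)))\, p\, f(\nabla u_{t_2}(x))\, d\mathcal{H}^{n-1},
\end{align*}
which is \eqref{eqn12.1} after recognizing the boundary integrand through \eqref{eqn11.4}.

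For general compact convex $E_1, E_2$ with nonempty interior, I would choose smooth strictly convex $E_i^{(m)} \to E_i$ in Hausdorff distance and integrate the smooth-case formula from $0$ to $t$:
\begin{align*}
\mbox{Cap}_{\mathcal{A}}(E_1^{(m)} + t E_2^{(m)}) - \mbox{Cap}_{\mathcal{A}}(E_1^{(m)}) = (p-1) \int_0^t \!\! \int_{\mathbb{S}^{n-1}} h_2^{(m)}\, d\mu_{E_1^{(m)} + \tau E_2^{(m)}}\, d\tau.
\end{align*}
Continuity of $\mbox{Cap}_{\mathcal{A}}$ under Hausdorff convergence (from the variational characterization and Lemmas \ref{lemma2.3}, \ref{lemma3.1}) handles the left side; on the right, $h_2^{(m)} \to h_2$ uniformly on $\mathbb{S}^{n-1}$ since support functions of convex bodies converge uniformly under Hausdorff convergence, while Proposition \ref{proposition11.1} gives $\mu_{E_1^{(m)} + \tau E_2^{(m)}} \rightharpoonup \mu_{E_1 + \tau E_2}$ weakly for each $\tau$. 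The total masses remain uniformly bounded since the capacities of bodies contained in a fixed compact set are bounded, so bounded convergence in $\tau$ passes the limit through the outer integral and produces the integrated identity for the limiting bodies. Applying Proposition \ref{proposition11.1} once more shows $\tau \mapsto \int_{\mathbb{S}^{n-1}} h_2\, d\mu_{E_1 + \tau E_2}$ is continuous, so the integrated identity can be differentiated in $t$ to yield \eqref{eqn12.1} at every $t_2 \geq 0$.

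The main obstacle is the rigorous execution of the smooth case: one must show that the material derivative $\dot u_t$ exists at interior points with $\nabla \dot u_t$ sufficiently integrable to justify the integration by parts, and that the $\epsilon \to 0$ limit of the level-set boundary integrals converges to the claimed quantity on $\partial E(t)$. Both issues stem from the moving-boundary setup, but both can be resolved from the $C^{2,\alpha}$ regularity of $u_t$ up to $\partial E(t)$, the non-degeneracy $|\nabla u_t| > 0$, and the decay recorded in Lemma \ref{lemma3.2}, although the bookkeeping is substantial. An alternative that bypasses the material derivative would be to prove matching one-sided estimates directly, using the variational characterization of $\mbox{Cap}_{\mathcal{A}}$ with trial functions obtained by composing $u(\cdot,t_2)$ with a smooth ambient deformation realizing the Minkowski displacement $s\,\nabla h_2(\xi)$ on $\partial E(t_2)$, in the spirit of \cite{CS,CNSXYZ}.
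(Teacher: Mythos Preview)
Your approach is essentially the paper's. For the smooth case the paper makes the material-derivative argument rigorous by working directly with the difference quotient $\zeta(x,t_1)=(u(x,t_1)-u(x,t_2))/(t_1-t_2)$: it shows $\zeta$ is a weak solution of a linearized divergence-form equation with coefficients that are uniformly H\"older as $t_1\to t_2$, that $\zeta\to u_t(\cdot,t_2)$ locally uniformly with boundary value $|\nabla u(\cdot,t_2)|\,h_2(\mathbf{g})$ on $\partial(E_1+t_2E_2)$, and then writes the capacity difference quotient as a fixed-domain piece plus an annulus piece---exactly the Reynolds transport split you describe, with the existence and integrability of $\dot u_t$ (which you correctly flag as the obstacle) supplied by this difference-quotient analysis together with a gradient bound $|\nabla\zeta(x)|\le C\,d(x,\partial(E_1+t_2E_2))^{\theta-1}$.

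The passage to general bodies is also as you outline (integrate in $t$, pass to the limit via Proposition~\ref{proposition11.1}, then differentiate using continuity of the integrand), with one correction: the uniform bound on $\mu_{E_1^{(m)}+\tau E_2^{(m)}}(\mathbb{S}^{n-1})$ does not follow from boundedness of capacity alone. The paper obtains it from the identity $\mbox{Cap}_{\mathcal{A}}(E_0)=\tfrac{p-1}{n-p}\int_{\partial E_0}h_0\,f(\nabla u_0)\,d\mathcal{H}^{n-1}$ (itself a consequence of the smooth-case formula applied with $E_1=E_2=E_0$ at $t=0$) combined with a uniform positive lower bound on the support functions $h_0\ge\min_{\mathbb{S}^{n-1}}h_1>0$.
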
  

\begin{proof}   
In  the proof  of  \eqref{eqn12.1} we first assume for $ i = 1, 2 $  that 
\begin{align} 
\label{eqn12.2} 
\begin{split}
\mbox{$ \ar E_i $ is  locally the graph of}&\mbox{ an infinitely differentiable} \\
& \mbox{and strictly convex  function on  $  \mathbb{R}^{n-1}. $ } 
\end{split}
\end{align}   
 We  note from Lemma  \ref{lemma2.2}   that   $ u ( \cdot,  t_i) $, for $i=1,2$,   has  H\"{o}lder  continuous  second  partials  in    $\mathbb{R}^n \sem (E_1 + t_2 E_2).   $   Moreover, as in  \eqref{eqn4.8}-\eqref{eqn4.12}  we see that  if $ 0   < t_1  <   t_2, $ then   
 \[     
 \zeta (x, t_1)  =   \frac{ u ( x, t_1 )  -  u ( x, t_2)}{ t_1 - t_2} \quad \mbox{whenever} \,\, x  \in  \mathbb{R}^n,   
 \] 
 is  a positive weak  solution to    
\begin{align}  
\label{eqn12.3}  
 \sum_{i, j = 1}^n  \frac{ \ar}{ \ar x_i} (\bar{d}_{ij}  \ze_{x_j} ) 
= 0   
\end{align}  
in    $ \mathbb{R}^n  \sem  ( E_1 + t_2  E_2)  $ where  
\begin{align*}   
%\label{eqn12.4}    
\bar{d}_{ij} (x)  =  \int_0^1 f_{\eta_i \eta_j} ( s  \nabla u ( x, t_1 ) + (1-s) \nabla u ( x, t_2 ) ) ds. 
\end{align*} 
Also,   
\begin{align}  
\label{eqn12.5}   
c^{-1} \bar \si (x)  \,   | \xi |^2  \, \leq \,   \sum_{i,j=1}^n   \bar d_{ij} ( x  )  \xi_i  \xi_j   \leq c \, \bar \si (x) \, |\xi|^2  
\end{align}
whenever $\xi  \in \mathbb{R}^n \sem \{0\},$ $ x   \in  \mathbb{R}^n  \sem  ( E_1 + t_2  E_2)$ and 
\begin{align}
\label{eqn12.6} 
\bar \si ( x  )  \approx  ( |  \nabla  u (x, t_2 ) | + |  \nabla  u (x, t_1 ) |  )^{p-2} . 
\end{align}
Constant $c$  in \eqref{eqn12.5} depends only on $p$ and $n$ and constants in \eqref{eqn12.6} depend only on the structure constants for $ \mathcal{A},  p,$  and $  n.  $    
  Also  from  Lemmas   \ref{lemma2.2},   \ref{lemma3.2}, and  the  Theorem  in \cite[Theorem 1]{Li}  mentioned earlier,  we see that   $ \nabla  u (\cdot, t_i  )$ for $i = 1, 2, $      extend   to   H\"{o}lder continuous  functions in the closure of  $  \mathbb{R}^n \sem  (E  _1+ t_i E_2)$.  More  specifically, if  
  \[
   t_2/2 \leq t_1 <  t_2 \quad \mbox{and} \quad \rho  = 2 t_2 \left(\mbox{diam}(E_1) +  \mbox{diam} (E_2) \right)
   \]
    then there  exist $  \be \in (0,1)$ and  $C^{\star}  \geq 1, $ independent of $ t_1, $  such that for $ i = 1, 2, $ 
\begin{align}
\label{eqn12.7} 
\begin{split}
&(a)  \hs{.2in} \, | \nabla  u ( x, t_i  ) - \nabla  u ( y     ,  t_i ) | \, \leq \,
      C^{\star}     | x - y |^{ \be },
      \\  
  &    (b) \hs{.2in} (C^{\star})^{-1}  \leq    | \nabla u (x, t_i )  |  \leq C^{\star}  
\end{split}
      \end{align}
 whenever $x,y$ are  in the closure of   $B ( 0, \rho )  \sem ( E_ 1 + t_i E_2 )$.    From \eqref{eqn12.7} $(b)$ and the  mean value theorem from  calculus we see that   $ \ze $  is  bounded  on  $  \ar  (E_1 + t_2  E_2 )$   by a constant  independent of  $ t_1 $  when  $ t_2/2 \leq t_1 < t_2. $   Using this fact,  \eqref{eqn3.4} $(b)$,   \eqref{eqn3.12} $(b)$   and  a   weak   maximum  principle type argument  we see for $C^{\star} $ sufficiently large,  independent of  $ t_1 \in [ t_2/2,  t_2) $     that   
\begin{align}  
\label{eqn12.8}   
\ze \leq  C^{\star}    u ( \cdot, t_2 ) \quad \mbox{on}\quad  \mathbb{R}^n \sem( E_1 + t_2 E_2 ).  
\end{align}  
  Also  from   \eqref{eqn12.7} $(a)$,   \eqref{eqn12.7}  $(b)$,  and the Lemmas mentioned above we deduce that \begin{align}
\label{eqn12.9}
\nabla u  ( \cdot, t_1 ) \to  \nabla u ( \cdot, t_2 ) \, \, \mbox{uniformly on  the  closure of}\, \,  \mathbb{R}^n \sem (E_1 + t_2 E_2)\, \,  \mbox{as}\, \, t_1 \to t_2.   
\end{align} 
From  \eqref{eqn12.9}   and  \eqref{eqn12.3}-\eqref{eqn12.6}  we see that  $ \ze $ is  locally  a  bounded solution  
to  a     divergence form uniformly elliptic PDE  with   coefficients that  have  local   H\"{o}lder, say $  \hat \be$-norm independent of  $  t_1   \in   [t_2/2,  t_2).  $        
From  these  facts,  Lemma  \ref{lemma2.2},  and Caccioppoli type estimates for locally uniformly elliptic  PDE, we deduce that if  $ t_1 \to  t_2  $  through  an increasing   sequence,  then a  subsequence of the  functions corresponding to  these values  converges  uniformly  on    
compact subsets of  $ \mathbb{R}^n   \sem ( E_1 + t_2 E_2 ) $  to  a locally   H\"{o}lder $ \hat \be$  continuous    function,   say  $ \breve \ze,  $  with  
$  \breve  \ze  \leq C^{\star}    u ( \cdot,  t_1 ). $  Moreover,  this  subsequence also converges    to   $  
\breve \ze $ locally  weakly in  $  W^{1,2} $ of   $ \mathbb{R}^n   \sem ( E_1 + t_2 E_2 ). $ Finally,     
\begin{align}
\label{eqn12.10}   
\sum_{i, j = 1}^n  \frac{ \ar}{ \ar x_i} ( f_{\eta_i \eta_j} ( \nabla u ( x, t_2 ) ) \,  \breve \ze_{x_j} (x))  
= 0  \quad \mbox{for}\quad  x   \in   \mathbb{R}^n  \sem  ( E_1 + t_2  E_2)
\end{align} 
locally in the weak sense in  $ \mathbb{R}^n   \sem ( E_1 + t_2 E_2 ). $

Next  we  show that  $ \breve  \ze $  is  independent of  the choice of  sequence.   To do this, for $k = 1, 2$ we let      
\begin{align*}  
%\label{eqn12.11}   
x_k  ( Z) =     \nabla  h_k  (  Z  )\quad \mbox{whenever} \, \, Z  \in  \mathbb{S}^{n-1}.
\end{align*}    
We fix  $ X, Y   \in  \mathbb{S}^{n-1} , $    write  $ x, y $  for   $ x_1 ( X ) + t_2  x_2 (X),   x_1 (Y)  +  t_2   x_2 (Y)$  respectively and note that    
\[
x  =  \mathbf{g}^{-1} ( X,  E_1  + t_2 E_2 )\in   \ar ( E_1 + t_2 E_2) \quad \mbox{and}\quad  y   =  \mathbf{g}^{-1} ( Y,  E_1  + t_2 E_2 )\in   \ar ( E_1 + t_2 E_2).
\]   
 We  consider two cases.  First if  
\[
|  x  - y |  \leq   d ( x,  E_1 +  t_1 E_2 )/2
\]  
then from  \eqref{eqn12.7} $(a)$ and the mean value theorem of   calculus    we have    
\begin{align}
\label{eqn12.12}   | \ze (x)  -   \ze (y)  -  \lan \nabla  \ze (x),  x -  y \ran  |  \leq   \hat C
| x - y |^\be  
\end{align} 
where  $ \hat C $ is independent of  $ t_1. $   Second,   if   
\[
  | x - y | >  d ( x,  E_1 + t_1 E_2   )/2
  \]   
  then   using $ u ( \cdot , t_1  )  \equiv 1 $ on 
 $ \ar ( E_1 + t_1 E_2) $  and the  same strategy as  above we see that 
\begin{align}   
\label{eqn12.13} 
\begin{split}
  | \ze (x)  +    \lan \nabla  & u ( x_1 ( X ) + t_1 x_2 (X),  t_1 ) ,   x_2  (X) \ran  |  \\
   &+  | \ze (y)  +     \lan \nabla  u (x_1 (Y ) +  t_1  x_2 (Y) , t_1 ),  y_2 (Y) \ran  |  \\
   & \hs{.8in} \leq   \hat  C | x - y |^\be.  
\end{split}
\end{align}  Now   $ h_1  + t_1 h_2 $ is the support  function  for  $  E_1 + t_1  E_2 $  and so   from \eqref{eqn5.26} we have
\begin{align}
\label{eqn12.14} 
\begin{split}
 \lan  \nabla u ( x_1 ( X ) + t_1 (x_2 (X)),  t_1 ) ,   x_2  (X) \ran &=   | \nabla u ( x_1 ( X ) + t_1 (x_2 (X)),  t_1 ) | \lan X,  x_2 ( X ) \ran \\
&=    | \nabla u ( x ,  t_1 ) | h_2 ( \mathbf{g} (x, E_1 + t_2 E_2 ) )   +  \la (x)  \\
&=III_1+ \la (x)
\end{split}
\end{align}  
where   $  \la (x) \leq  \bar C   | x - y |^\be $  and  $  \bar C $ is independent of   $ t_1. $   Similarly,  
\begin{align}  
\label{eqn12.15} 
\begin{split}
 \lan \nabla  u ( x_1 ( Y ) + t_1 x_2 (Y),  t_1 ) ,   x_2  (Y) \ran &=   | \nabla u ( y,  t_1 ) |   h_2 ( \mathbf{g} (y,  E_1   + t_2  E_2  )  )  +  \bar \la (y) \\
 &=III_2+\bar \la (y) 
\end{split}
\end{align} 
where $ \bar \la (y) $  satisfies the same inequality  as  $  \la (x). $ 
From  \eqref{eqn12.13}-\eqref{eqn12.15} and the triangle  inequality we find that  
\begin{align} 
\label{eqn12.15n} 
\begin{split}
 | \ze ( x, t_1) -  \ze (y, t_1) | &  \leq  \left| III_1-III_2\right| +\la ( x ) + \bar \la (y)  
 \\ &\leq  \ti C | x  - y |^{\be}
\end{split}
 \end{align} 
 where $  \ti C $ is independent of $ t_1 \in  [t_2/2, t_2)$.
  From  \eqref{eqn12.12}, \eqref{eqn12.15n},  we deduce that     $ \ze (\cdot, t_1)  $ is  H\"{o}lder  $  \be $ continuous  on  $ \ar ( E_1 + t_2 E_2)$ with  H\"{o}lder norm    
  bounded  by  a  constant independent of  $ t_1 \in   [t_2/2, t_2 ). $    
 
We note from  \eqref{eqn12.10} that   $  \ze $ is the  solution to  
    a divergence form uniformly elliptic  PDE with Lipschitz  coefficients.  Using  this note  and  
    H\"{o}lder $ \be $   continuity  of $ \ze $   on  $ \ar E_1+ t_2 E_2 $    it   follows  first  (see \cite[Theorem 8.29]{GT})  that   $ \ze $ is  H\"{o}lder     $ \he  $  continuous for some $ \he > 0$  independent of $ t_1 $ in  $[t_2/2 , t_2] $ with  H\"{o}lder norm on the closure of   $ B (0,  \rho)  \sem \ar ( E_1 + t_2 E_2)$    bounded  by  a  constant that is also  independent of  $ t_1 \in   [t_2/2, t_2 ). $  Second,    
\begin{align}
\label{eqn12.16a}  |\nabla \ze (x) |  \leq   \bar C   d ( x,  \ar (E_1 + t_2 E_2)^{\he - 1} \quad \mbox{whenever}\quad  x \in   B (0,  \rho)  \sem \ar ( E_1 + t_2 E_2). 
\end{align}
Taking limits we conclude from  Ascoli's  theorem  that  $ \breve  \ze $ is  
the uniform limit in  the closure of   $ B (0, \rho )  \sem (E_1  +   t_2 E_2) $ of   a  certain subsequence of  $  \ze ( \cdot,  t_1 )  $ and thus is also  
H\"{o}lder $\he$  continuous  in the closure of    $ B (0, \rho )  \sem (E_1  +   t_2 E_2). $  Moreover,    \eqref{eqn12.16a}  holds for $ \breve \ze . $  Finally,   arguing as  in    \eqref{eqn12.13}-\eqref{eqn12.15n}  we  find that   
\begin{align}  
\label{eqn12.16}   
\ze (x, t_1)    \to   | \nabla u ( x ,  t_2 ) | h_2 ( \mathbf{g} (x,  E_1   + t_2  E_2  ) ) \quad  \mbox{as}\quad   t_1   \to   t_2 
  \end{align}
 whenever  $  x  \in  \ar  (E_1 + t_2 E_2)$.  From \eqref{eqn12.16} we see that  every convergent subsequence of   $  \{ \ze ( \cdot, t_1 ) \} $ converges to  a  weak solution of      
\eqref{eqn12.10} with continuous boundary values
 \[ 
\left.  |  \nabla  u  ( \cdot, t_2  ) \right|  h_2 (\mathbf{g} ( \cdot,  E_1  + t_2  E_2 )  )  \quad   \mbox{on}\quad   \ar ( E_1 + t_2 E_2 ).
  \]     
From this deduction,  \eqref{eqn12.8},  \eqref{eqn12.10},     \eqref{eqn3.4} $(b)$,   \eqref{eqn3.12} $(b)$ and  a   weak   maximum  principle  argument  we conclude that  
\begin{align}  
\label{eqn12.17}   
u_t ( x, t) |_{t=t_2}  =    \lim_{t_1 \to t_2}   \ze (x, t_1)   =  \breve \ze (x) 
\end{align}
whenever  $  x $  is  in  the closure of  $ \mathbb{R}^n \sem   (E_1 + t_2 E_2). $ 
To  begin the  proof  of  Proposition \ref{proposition12.1}  in the smooth case and when $ t_2/2  \leq t_1 <  
t_2 $  we write  
\begin{align}
\label{eqn12.18}   
( t_1 - t_2)^{-1} [ \mbox{Cap}_{\mathcal{A}} ( E_1 + t_1 E_2)    -   \mbox{Cap}_{\mathcal{A}} ( E_1 + t_2 E_2) ] =T_1+T_2
\end{align}
where
\[
T_1=( t_1 - t_2)^{-1} {\ds  \int_{\mathbb{R}^n  \sem ( E_1 + t_2 E_2 ) } } (  f (  \nabla  u  ( x,  t_1 ) )  -   f (  \nabla  u  ( x,  t_2 ) ) )  dx
\]
and
\[
T_2= ( t_1 - t_2)^{-1} {\ds  \int_{  (E_1 + t_2 E_2)  \sem ( E_1 + t_1 E_2) } }   f (  \nabla  u  ( x,  t_1 ) )  dx.
\]
  We note from   \eqref{eqn12.16}  that   
\begin{align}
\label{eqn12.19} 
\begin{split}
T_2   &=   p^{-1} \, ( t_1 - t_2)^{-1} {\ds  \int_{  (E_1 + t_2 E_2) 
  \sem ( E_1 + t_1 E_2) } }  \nabla \cdot  [  (u  ( x,  t_1 ) - 1 )   \, \nabla f (  \nabla  u  ( x ,  t_1 ) ) ]  dx  \\
  &= - p^{-1}     {\ds  \int_{  \ar  ( E_1 + t_2 E_2) }   \ze (x, t_1 )  | \nabla u ( x, t_2 )  |^{-1}   
\lan \nabla   f( \nabla u  ( x,  t_1 ) ),   \, \nabla u  ( x,  t_2 )  \ran \, d \mathcal{H}^{n-1}   }  
\\
  & \to  - {\ds  \int_{  \ar  ( E_1 + t_2 E_2) } }  h_2  ( \mathbf{g}(x ,  E_1 + t_2 E_2 ) )   f( \nabla u  ( x,  t_2 ) ) \, d  \mathcal{H}^{n-1}      
\end{split}
\end{align}
   as   $  t_1 \to t_2  .$   Next, we claim that   
\begin{align}
\label{eqn12.20} 
          \lim_{t_1 \to t_2 }  T_1   =   \int_{ \mathbb{R}^n   \sem (E_1 + t_2 E_2 ) }  \lan   (\nabla  f)   ( \nabla u  ( x,  t_2 ) ),   \nabla u_{t_2} (x)  \ran dx.
\end{align} 
To   prove this claim, first observe that 
\begin{align}
\label{eqn12.21}  
\begin{split}         
 T_1   &=   (t_1 - t_2 )^{-1}  \,  \int_{ \mathbb{R}^n   \sem (E_1 + t_2 E_2 ) } 
   \int_0^1  \frac{d}{ds}  \left[ f    ( s  \nabla u ( x,  t_1 ) + (1-s)  \nabla u (x, t_2 ) ) \right]ds   dx  \\
   &  =  \int_{ \mathbb{R}^n   \sem (E_1 + t_2 E_2 ) } 
    \int_0^1    \lan   ( \nabla f )  ( s  \nabla u ( x,  t_1 ) + (1-s)  \nabla u (x, t_2 ) ) ,   \nabla  \ze    ( x ,  t_1 )   \ran  ds   dx.
    \end{split}
    \end{align}
    From    local  weak  convergence in 
$ W^{1,2} $  of  $  \ze  ( \cdot, t_1 ) $  to  $ u_{t_2} $,  we  have  
\[ 
 \int_{ K } 
    \int_0^1   \lan   ( \nabla f )  ( s  \nabla u ( x,  t_1 ) + (1-s)  \nabla u (x, t_2 ) ) ,   \nabla  \ze    ( x ,  t_1 )   \ran  ds dx  \to  
\int_{ K  }  \lan   (\nabla  f)   ( \nabla u  ( x,  t_2 ) ),   \nabla u_{t_2} (x )  \ran dx 
\]  
 when $ t_1 \to t_2$ for each compact   $ K  \subset  \mathbb{R}^n   \sem (E_1 + t_2 E_2 )$. Thus to prove 
\eqref{eqn12.20} in view of  \eqref{eqn12.21}  it suffices to show for  given $  \ep > 0 $ that if  
\[   
\mathcal{K}(x, s, t_1 )  =  |  \nabla f   ( s  \nabla u ( x,  t_1 ) + (1-s)  \nabla u ( x, t_2 ) ) |  |   \nabla  \ze    ( x  ,  t_1 ) |   
  \]   
  for  $ s \in [0,1], t_1  \in [t_2/2, t_2 ), x  \in  \mathbb{R}^n   \sem (E_1 + t_2 E_2 ), $  then  there exists $  \de > 0 $ small  and $ R > 0 $ large such that 
  \begin{align}  
  \label{eqn12.22} 
\int_{\mathbb{R}^n \sem B ( 0, R ) }  \mathcal{K} (x, s, t_1 )  dx  + \int_{\{x:\, \,   d ( x,  \ar (E_1 + t_2 E_2 ) ) \leq \de \}}  \mathcal{K}(x, s, t_1 )  dx  \leq \ep. 
\end{align} 
Indeed, from  \eqref{eqn3.12},   \eqref{eqn12.7},    \eqref{eqn12.8},  and   Cacccioppoli type estimates for  uniformly elliptic PDE       
in  divergence  form,  we see that if   $ E_1 + t_2 E_2  \subset   B (0, R/2)$,  then      
  \begin{align}
 \label{eqn12.23} 
 \begin{split}
\int_{\mathbb{R}^n \sem B ( 0, R ) }  \mathcal{K} (x, s, t_1 )  dx  &\leq  \ti C   \int_{R}^\infty   r^{ (1-n)/(p-1)}\, dr\\
&= ({\ts  \frac{n-p}{p-1}})  \ti C  R^{(p-n)/(p-1)}    \\
& \leq   \ep/2  
\end{split}
\end{align}
 for  $  R \geq R_0 $ where $ R_0 ,  \ti  C, $ is independent of  $ s, t_1 $ in the above intervals.   Also from    \eqref{eqn12.16a}    we see that
\begin{align}
  \label{eqn12.24} 
\int_{ \{x:\,\,   d ( x,  \ar (E_1 + t_2 E_2 ) ) \leq \de  \} }  \mathcal{K}(x, s, t_1 )  dx \leq  \ti C  \de^{\he} \leq \ep/2
\end{align}
for  $ \de \leq \de_0 $  where  $  \de_0 $  is  independent of $ s, t_1 $  in the above intervals.   From  \eqref{eqn12.21}-\eqref{eqn12.24}  we conclude        \eqref{eqn12.20}.                        
 
From  \eqref{eqn12.21} and   \eqref{eqn12.16a}  in the closure of  $ \mathbb{R}^n \sem ( E_1 + t_2 E_2 ),   \mathcal{A} = \nabla f$-harmonicity of   $ u ( \cdot, t_1)$, $p$-homogeneity of  $ f, $ and   \eqref{eqn12.16}, \eqref{eqn12.17},  we deduce that 
\begin{align}
\label{eqn12.25} 
 \lim_{t_1 \to t_2 }  T_1   =  p  \int_{\ar (E_1 + t_2 E_2 ) }  h_2 (\mathbf{g}( x,  E_1 + t_2  E_2  ) )       f  ( \nabla  u  (x,  t_2 )  )  d\mathcal{H}^{n-1}.
 \end{align} 
Combining  \eqref{eqn12.25} and \eqref{eqn12.19}, we conclude  from  \eqref{eqn12.18} that 
\begin{align}
 \label{eqn12.26} 
 \begin{split}
 \frac{d}{dt}    \mbox{Cap}_{\mathcal{A}} (  E_1 + t E_2 )  \,  |_{t=t_2}&=\lim_{t_1  \to t_2} \frac{\mbox{Cap}_{\mathcal{A}} ( E_1 + t_2 E_2)  -  \mbox{Cap}_{\mathcal{A}} ( E_1 + t_1 E_2) }{t_2 - t_1}\\ 
  &= (p-1)   \int_{\ar  (E_1 + t_2 E_2 ) }   h_2 (\mathbf{g} ( x,  E_1 + t_2  E_2  ) )       f  ( \nabla  u  ( x,  t_2 )  )  \,  d\mathcal{H}^{n-1}.
\end{split}
\end{align}
 Now,  if  $ 0 < s <  1 $  and  $ t  =  s/ ( 1 - s ), $  then     
\begin{align}
 \label{eqn12.27}    
 \begin{split}
 \mbox{Cap}_{\mathcal{A}} ( E_1 + t E_2) &= (1-s)^{p-n}    [  \mbox{Cap}_{\mathcal{A}} ( (1-s) E_1 + s E_2)  ] \\
 &= (1-s)^{p-n} \ph (s)^{n-p} .   
\end{split} 
 \end{align}
 where  $ \ph $ is concave on [0,1] thanks to  Theorem \ref{theorem1.4} so Lipschitz and differentiable off  a  countable set.  From this observation,  the chain rule,  and  \eqref{eqn12.26}-\eqref{eqn12.27} we see that  Proposition  \ref{proposition12.1} is valid  under assumption \eqref{eqn12.2}  except for at most a countable set of  $ t \in [0, \infty)$.     Also,  from  Proposition \ref{proposition11.1} and properties of support functions  we  observe that 
\begin{align}  
 \label{eqn12.28} 
  \psi (t) =(p-1)   \int_{\ar (E_1 + t  E_2)}   h_2 ( \mathbf{g}(x,  E_1 + t E_2) )\,    f(\nabla u (x, t ) )  d\mathcal{H}^{n-1}   
\end{align}  
   is   continuous as  a   function of $ t $ on $[0, \infty]$.   From the Lebesgue differentiation  theorem we conclude that   Proposition 
 \ref{proposition12.1} is valid under assumption \eqref{eqn12.2}.

     We next remove the assumption \eqref{eqn12.2}. To this end, choose  sequences of uniformly bounded convex domains  $\{E_1^{(k)}\}_{k\geq 1}$ and $\{E_2^{(k)}\}_{k\geq 1}$  with  $ E_i \subset  E_i^{(k)} $  for $ i = 1, 2 $  and $k=1,2,\ldots,$ satisfying   \eqref{eqn12.2}   with $ \ar  E_i $  replaced by  $  \ar E_i^{(k)} , i = 1, 2$ and $k=1,2,\ldots,$.  We also choose these sequences so  that  $  E_i^{(k)} $  converges to  $ E_i $ in the sense of Hausdorff distance as  $ k  \to \infty$. Let  $  \psi_k ( t ) $ denote the function in 
 \eqref{eqn12.28} with  $ E_i, $ replaced by $ E_i^{(k)}$.   Given  $ 0 < a  < \infty $ we claim   there exists  $ l =  l (a), M =  M (a),   $  such that   for  $ k  \geq  l, $  we have  
 \begin{align}   
 \label{eqn12.29}   
 0 <  \psi_k ( t )   \leq   M   \quad \mbox{for} \quad  t  \in  [0,a].  
 \end{align} 
 To verify this  assertion   fix  $k,  t,  $  let    
 \[
 E_0   =  E_1^{(k)}  + t  E^{(k)}_2
 \]
   and let  $ h_0, \mathbf{g}_0, u_0  $  be the support, Gauss map,   and  $  \mathcal{A}  =   \nabla  f$-capacitary functions corresponding to  $ E_0. $   Applying  Proposition \ref{proposition12.1}  in this  case  with $ E_1,  E_2,  t,  $  replaced by  $ E_0,  E_0, 0, $  and  using the fact  that  
\[    
\mbox{Cap}_{\mathcal{A}} ( (1+t)  E_0)  =   (1+t)^{n-p}  \mbox{Cap}_{\mathcal{A}} (  E_0) 
\] 
we get   
\begin{align}
 \label{eqn12.30}  
 \mbox{Cap}_{\mathcal{A}} (  E_0)    =  \frac{p  - 1}{n-p}  \int_{\ar E_0 }  
 h_0 (\mathbf{g}_0(x,E_0))  \,   f ( \nabla u_0(x)) \, d\mathcal{H}^{n-1}.  
\end{align} 
Since  $  E_0 $  is  uniformly bounded and  $ h_0  \geq \min_{\mathbb{S}^{n-1}}  h_1  >  0, $  it follows from \eqref{eqn12.30} and properties of capacity, support functions,  that   \eqref{eqn12.29}  is  true.     From   \eqref{eqn12.29}, \eqref{eqn12.30}, Proposition 
\ref{proposition11.1}, Proposition 
\ref{proposition12.1} in the smooth case,  and  the  Lebesgue dominated  convergence  theorem  we conclude  that 
\begin{align}
\label{eqn12.31}   
\begin{split}
\mbox{Cap}_{\mathcal{A}} ( E_1 + t  E_2)  -   \mbox{Cap}_{\mathcal{A}} (  E_1 )  
&=  {\ds  \lim_{k\rar \infty}   [ \mbox{Cap}_{\mathcal{A}} ( E^{(k)}_1 + t  E^{(k)}_2)  -   \mbox{Cap}_{\mathcal{A}} (  E^{(k)}_1 ) ] } 
\\ 
 & = \lim_{k \rar \infty}     \int_0^t \psi_k (s) ds=   \int_0^t \psi (s) ds.
\end{split}
\end{align}
  Also $\psi $ is  continuous on $ [0, \infty)$   by   Proposition  \ref{proposition11.1} 
so  \eqref{eqn12.31} and  the Lebesgue differentiation theorem yield  Proposition 
\ref{proposition12.1} without assumption  \eqref{eqn12.2}.   
     \end{proof} 
          
     \begin{remark}  
\label{remark12.2}
Finally,  we remark   that  Proposition \ref{proposition12.1} remains valid   for  $ t_2 > 0 $  if    we  assume only  that  $0 \in E_1,  $  rather than  
$0$  is in the interior of  $ E_1 $  (so  $ \mathcal{H}^n(E_1)=0 $  is possible but from the definition of $E_2$ we still have $0$ in the interior of $E_2$).  To handle  this case  we    put  $ E'_1 = E_1 + t_2 E_2$ and 
 $ E'_2   = E_2 .$   Then    $ E_1' ,  E_2' $  are compact convex  sets and  $0$ is in the interior of  $ E_1'  \cap E_2' .$       Applying  Proposition \ref{proposition12.1}  with  $ E_1, E_2 $  replaced by  $  E_1'  ,  E_2'  $  respectively     and   at  $ t_2=0 $  we obtain  the above generalization  of     
Proposition \ref{proposition12.1}. 

We  also note  that if  $ E_1 $ has  interior  points and  
$ E_2  =  B (0, 1 ), $  then from the  Brunn-Minkowski inequality we  have  for fixed 
$ p,  1< p   < n, $  and  $ t \in (0, 1) $ that  
\[  
\mbox{Cap}_{\mathcal{A}} ( E_1 + t  E_2)^{1/(n-p)}  -   \mbox{Cap}_{\mathcal{A}} (  E_1 )^{1/(n-p)}  \geq  t  \mbox{Cap}_{\mathcal{A}} (  E_2 )^{1/(n-p)} .
\] 
Dividing  this  inequality by $ t $ and letting $ t \to 0 $  we get from Proposition \ref{proposition12.1} and the  chain rule that   
\begin{align}  
\label{eqn12.32}     
\mu_{ E_1 } ( \mathbb{S}^{n-1} ) \geq  c^{-1}  \mbox{Cap}_{\mathcal{A}} (  E_1 )^{\frac{ n - 1  - p  }{n-p} }
\end{align}
  where $ c \geq 1 $ depends only on the  data.       

\end{remark}   
\setcounter{equation}{0} 
\setcounter{theorem}{0}
\section{ Proof of  Theorem \ref{mink}}  
\label{section13}
For  use  in  proving  Theorem \ref{mink}  we  shall  need the following Lemma. 
 \begin{lemma} 
 \label{lemma13.1}  
 Let  $ \hat E $ and   $  \hat E_l, l = 1, 2, \dots,  $  be  a  sequence of  uniformly bounded  compact convex sets with  $  \hat E_l   \to  \hat  E  $ in the Hausdorff distance sense as $ l  \to \infty.  $  Then   
\begin{align}
    \label{eqn13.1}  
    \lim_{l\to \infty}    \mbox{Cap}_{\mathcal{A}}  (\hat E_l) 
   =  \mbox{Cap}_{\mathcal{A}}  (\hat E) .  
   \end{align} 
   \end{lemma}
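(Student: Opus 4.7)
The plan is to combine a monotonicity-plus-scaling sandwich with the equivalence of Hausdorff convergence and uniform convergence of support functions for convex bodies. First I would record two facts for $\mbox{Cap}_{\mathcal{A}}$. \emph{(i) Monotonicity:} if compact sets satisfy $E \subset F$, then $\mbox{Cap}_{\mathcal{A}}(E) \leq \mbox{Cap}_{\mathcal{A}}(F)$; since we are in the setting $\mathcal{A} = \nabla f$ (Part II), this is immediate from the variational characterization displayed after Definition \ref{defn1.2}, or equivalently from the weak maximum principle giving $u_E \leq u_F$ and the integration-by-parts identity \eqref{eqn3.4}(a). \emph{(ii) Scaling:} the identity \eqref{eqn1.5}(a$'$), $\mbox{Cap}_{\mathcal{A}}(\lambda(E-z)+z) = \lambda^{n-p}\,\mbox{Cap}_{\mathcal{A}}(E)$. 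Hausdorff convergence $d_{\mathcal{H}}(\hat E_l, \hat E) \to 0$, phrased via $h_{\hat E_l} \to h_{\hat E}$ uniformly on $\mathbb{S}^{n-1}$, supplies $\epsilon_l \to 0^+$ with
\[
\hat E_l \subset \hat E + \bar B(0, \epsilon_l), \qquad \hat E \subset \hat E_l + \bar B(0, \epsilon_l).
\]

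\textbf{Case 1: $\hat E$ has nonempty interior.} Fix $B(x_0, r_0) \subset \hat E$. Uniform support-function convergence forces $B(x_0, r_0/2) \subset \hat E_l$ for $l$ large. Using the Minkowski gauge of the convex body $\hat E - x_0$ (which contains $\bar B(0, r_0)$), a direct convex-combination check yields
\[
\hat E + \bar B(0, \epsilon_l) \subset x_0 + (1 + \epsilon_l/r_0)(\hat E - x_0).
\]
For the reverse inclusion, pick $y \in \hat E$ with a companion $y_l \in \hat E_l$ at distance $\leq \epsilon_l$. The convex hull of $\{y_l\} \cup B(x_0, r_0/2)$ lies in $\hat E_l$ and contains the ball $B(\lambda y_l + (1-\lambda)x_0, (1-\lambda)r_0/2)$; choosing $\lambda_l := 1 - 2\sqrt{\epsilon_l}/r_0 \to 1$ makes $\lambda_l \epsilon_l \leq (1-\lambda_l)r_0/2$ for $l$ large, so $\lambda_l y + (1-\lambda_l)x_0 \in \hat E_l$, i.e.\ $x_0 + \lambda_l(\hat E - x_0) \subset \hat E_l$. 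Monotonicity plus \eqref{eqn1.5} then gives
\[
\lambda_l^{n-p}\,\mbox{Cap}_{\mathcal{A}}(\hat E) \leq \mbox{Cap}_{\mathcal{A}}(\hat E_l) \leq (1 + \epsilon_l/r_0)^{n-p}\,\mbox{Cap}_{\mathcal{A}}(\hat E),
\]
and sending $l \to \infty$ proves \eqref{eqn13.1}.

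\textbf{Case 2: $\hat E$ has empty interior.} The dilation sandwich collapses, so I would argue as follows. \emph{Upper bound:} monotonicity and the first Hausdorff inclusion give $\mbox{Cap}_{\mathcal{A}}(\hat E_l) \leq \mbox{Cap}_{\mathcal{A}}(\hat E + \bar B(0, \epsilon_l))$, and outer regularity of $\mbox{Cap}_{\mathcal{A}}$ (via the variational characterization, or via the comparison \eqref{eqn1.4} with the classically outer-regular $p$-capacity) forces the right side to tend to $\mbox{Cap}_{\mathcal{A}}(\hat E)$. \emph{Lower bound:} a compactness-plus-uniqueness argument. Since $\hat E_l \subset \bar B(0,R)$ uniformly, the already-proved upper bound yields $\mbox{Cap}_{\mathcal{A}}(\hat E_l) \leq c_1 R^{n-p}$ uniformly. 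Lemmas \ref{lemma2.1}, \ref{lemma2.3} and \ref{lemma3.2} then supply uniform H\"older bounds on $u_l$ on $\mathbb{R}^n$ and on $\nabla u_l$ on compact subsets of $\mathbb{R}^n \setminus \hat E$, together with the decay $u_l(x) \leq c|x|^{(p-n)/(p-1)}$ uniformly in $l$. By Ascoli--Arzel\`a a subsequence satisfies $u_l \to \tilde u$ locally uniformly on $\mathbb{R}^n$ and $\nabla u_l \to \nabla \tilde u$ locally uniformly on $\mathbb{R}^n \setminus \hat E$. Arguing as in Lemma \ref{lemma3.1}, $\tilde u$ is $\mathcal{A}$-harmonic in $\mathbb{R}^n \setminus \hat E$, continuous on $\mathbb{R}^n$, identically $1$ on $\hat E$ (using the second Hausdorff inclusion together with the maximum principle), and satisfies the integrability \eqref{eqn1.3}(c); uniqueness then gives $\tilde u = u$, the capacitary function of $\hat E$. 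Fatou's lemma applied to $\langle \mathcal{A}(\nabla u_l), \nabla u_l\rangle$ finishes $\liminf_l \mbox{Cap}_{\mathcal{A}}(\hat E_l) \geq \mbox{Cap}_{\mathcal{A}}(\hat E)$.

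\textbf{Main obstacle.} The subtle step is the lower-bound half of Case 2: verifying that the subsequential limit $\tilde u$ really equals $1$ throughout $\hat E$, including the relative boundary of $\hat E$ in its affine hull. When $\mbox{Cap}_{\mathcal{A}}(\hat E) > 0$, one has $\mathcal{H}^{n-p}(\hat E) = \infty$ by the HKM criterion invoked in Lemma \ref{lemma2.3}, so the metric-capacity estimate \eqref{eqn2.5} and the H\"older boundary continuity of Lemma \ref{lemma2.3}(ii) propagate the boundary values through the Hausdorff limit. When $\mbox{Cap}_{\mathcal{A}}(\hat E) = 0$, the upper-bound half of Case 2 already forces $\mbox{Cap}_{\mathcal{A}}(\hat E_l) \to 0$ and nothing further is required.
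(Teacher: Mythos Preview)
Your proof is correct, but it follows a different organization than the paper's. The paper does not split on whether $\hat E$ has interior; instead it takes a subsequence of the capacitary functions $\hat u_l$ together with their associated measures $\hat\nu_l$ (as in Lemma~\ref{lemma2.4}), passes to a weak limit $(\hat u,\hat\nu)$, and then distinguishes the cases $\mathcal{H}^{n-p}(\hat E)<\infty$ (forcing $\hat u\equiv 1$ and $\hat\nu=0$) versus $\mathcal{H}^{n-p}(\hat E)=\infty$ (where the uniform estimate \eqref{eqn2.5} for $\hat E_l$ makes the H\"older bound in Lemma~\ref{lemma2.3} uniform in $l$, so $\hat u$ is the capacitary function and $\hat\nu$ the capacitary measure for $\hat E$). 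Convergence of capacities then comes in one stroke from $\hat\nu_l(\hat E_l)=\mbox{Cap}_{\mathcal{A}}(\hat E_l)$ and weak convergence of the compactly supported $\hat\nu_l$.

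Your Case~1 sandwich via monotonicity and \eqref{eqn1.5} is genuinely more elementary than the paper's argument for sets with interior and avoids any PDE compactness; this is a nice gain. Your Case~2 is essentially the paper's identification-of-the-limit argument, recast as ``upper bound by outer regularity, lower bound by Fatou on $\langle\mathcal{A}(\nabla u_l),\nabla u_l\rangle$''. Two small remarks: (a) the parenthetical appeal to \eqref{eqn1.4} for outer regularity only yields $\limsup_l\mbox{Cap}_{\mathcal{A}}(\hat E_l)\le c^2\,\mbox{Cap}_{\mathcal{A}}(\hat E)$ in general---it is the variational formula $\mbox{Cap}_{\mathcal{A}}(K)=\inf\{\int f(\nabla\psi):\psi\in C_0^\infty,\ \psi\ge 1\text{ on }K\}$ (valid here since $\mathcal{A}=\nabla f$) that gives honest outer regularity, so you should rely on that option; (b) your ``main obstacle'' paragraph is exactly the point the paper isolates in its Case~2, and your resolution via the uniform version of \eqref{eqn2.5} is the same as the paper's. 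The paper's measure-convergence route has the minor advantage of producing both inequalities simultaneously, while your split makes the elementary nature of the interior case transparent.
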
 
   \begin{proof}   
   Let  $   \hat u_l $  be the capacitary function      for $ \hat E_l $   and  let  $ \hat \nu_l $  be the corresponding capacitary  measure  for  $ \hat E_l,   l = 1, 2, \dots $  defined as in \eqref{eqn2.6} $(i)$  relative to  $ 1 - u_l. $  From   \eqref{eqn3.4} $(a)$ we deduce  that     
\begin{align}
 \label{eqn13.2} 
 \hat  \nu_l (\hat E_l ) = \mbox{Cap}_{\mathcal{A}} ( \hat{E_l} )\quad     \mbox{ for } \quad l = 1, 2, \dots.  
     \end{align}      
    From   Lemmas  \ref{lemma2.2}, \ref{lemma2.3},    \eqref{eqn3.4} $(b),$  and  Ascoli's  theorem  we  see that  a  subsequence of  $\{( \hat u_l ),  ( \nabla \hat u_l ) \}$ converges uniformly on compact subsets of  $  \mathbb{R}^n  \sem \hat E  $  and  locally in $  W^{1,p} ( \mathbb{R}^n )  $    to   $\{\hat u, \nabla \hat u\}$  with    $ \hat u $  an  $ \mathcal{A}$-harmonic function   in 
$  \mathbb{R}^n  \sem \hat E  $. Moreover, both $\hat u$ and  $\nabla \hat u $ are locally in 
    $  W^{1,p} ( \mathbb{R}^n ).  $   On the other hand, taking a   subsequence of  the subsequence we used  to get $\{\hat u, \nabla \hat u \}$  if necessary  and using local  uniform boundedness in  $ W^{1,p} $  of this sequence   we may also  assume that  $\{\hat \nu_l\}_{l\geq 1}$  converges weakly to  a  measure  $ \hat \nu $   with support in  $ \bar B (0, \rho ). $  Replacing $ \ti u  $ in  \eqref{eqn2.6} $(i)$ by   $  \hat u_l  $   and  taking limits in  \eqref{eqn2.6}  $(i) $  of  the above subsequence,  we then get    
\begin{align}
 \label{eqn13.3} 
   \int   \lan  \mathcal{A} ( \nabla \hat u ),  \nabla \ph \ran  dx =
 \int \ph \,  d \hat \nu\quad \mbox{ whenever }  \quad \ph \in C_0^\infty  (  \mathbb{R}^n ).
 \end{align}  
    
To  prove  Lemma  \ref{lemma13.1} we consider  two  cases.  

\noindent {\bf Case 1:} If  $\mathcal{H}^{n-p} ( \hat E )< \infty$   then  $  \mbox{Cap}_p ( \hat E ) = 0  $  so   $1-  \hat u $  extends to a non-negative  
        $ \mathcal{A}$-harmonic function in  $ \mathbb{R}^n $  (see \cite[Chapter 2]{HKM}) with
        \[
          1 -  \hat  u ( x ) \to  0\quad \mbox{uniformly as}\quad   |x|  \to\infty. 
          \]  
          Using the maximum principle it then follows that 
    $ \hat u  \equiv 1. $    So  from   \eqref{eqn13.3} we see that  
    $ \hat \nu  (  \mathbb{R}^n ) = 0. $   Since  every  subsequence of  capacitary measures  contains a  subsequence  converging weakly to the  0 measure  we  conclude from \eqref{eqn13.2}  that  
    \eqref{eqn13.1}  is true in this case and 
     the  limit is zero. 
     
\noindent {\bf Case 2:} If   $\mathcal{H}^{n-p} (\hat E )  =  \infty$ then using  Hausdorff  convergence of $ \hat  E_l $ to $ \hat E $  we see that the constants in  \eqref{eqn2.5}  with $  \ti E $ replaced by  $ \hat E_l $   can be chosen independent of $ \hat  E_l $ provided  $ l \geq l_0  $ and  $l_0 $ is large enough.   From this observation we deduce that $  \hat \si, c,  $   in  \eqref{eqn2.4} $(ii)$ applied to $1-  \hat u_l $  are  independent of  $ l $ for $ l  \geq l_0. $  Thus,  $  \{\hat u_l\}_{l\geq l_0}^\infty  $ is locally 
    H\"{o}lder continuous with uniform constants on $ \mathbb{R}^n. $  Using  this fact we find  that  $ \hat u \equiv 1 $ on $  \hat E. $    We conclude  that  
    $ \hat u $ is the capacitary function for $ \hat E $  and  $ \hat \nu  $  the corresponding capacitary measure.   Since  every subsequence of  measures  has  a  subsequence  converging weakly to the  capacitary measure for $  \hat E $,  we  deduce from \eqref{eqn3.4} $(a)$  for 
$ \hat \nu $  and    \eqref{eqn13.2}  that   Lemma  \ref{lemma13.1} is true in  this case also.  
\end{proof}      
\subsection{Proof of  existence in    Theorem \ref{mink} in the  discrete case}  
Finally we are in  a  position to start the proof of existence of the measure in Theorem \ref{mink} in the discrete case.  
     Let   $  c_1,  c_2,  \dots,  c_m  $  be positive numbers  and  $   \xi_i \in  
\mathbb{S}^{n-1}$,  for  $   1 \leq  i   \leq  m . $     Assume that  $ \xi_i \not = \xi_j,  i \not = j, $  and  let  $ \de_{\xi_i} $  denote the  measure with  a point mass at  $ \xi_i. $  
Let  $ \mu $  be a measure on   $ \mathbb{S}^{n-1}$  with    
\[   
\mu ( K )  = \sum_{i=1}^m   c_i   \de_{\xi_i} (K) \quad \mbox{whenever} \quad K\subset \mathbb{S}^{n-1} \, \,  \mbox{is a Borel set}.   
\]   
We also  assume  $ \mu $  satisfies  \eqref{eqn7.1} $ (i)$ and $(ii). $   That is,  
\begin{align} 
\label{eqn13.4}   
\sum_{i=1}^m   c_i  \,  |  \lan \he,  \xi_i \ran |   >   0  \quad \mbox{for all}  \quad \he \in  \mathbb{S}^{n-1}     
\end{align}
and
\begin{align}
\label{eqn13.5} 
\sum_{i=1}^m   c_i  \,  \xi_i  =  0. % \mbox{ so if  $  x  $ is  a point in $ \rn{n}, $  then }   \sum_{j=1}^{m} \,   \lan  x, \xi_j \ran  \,  c_i     = 0 , 
\end{align}    
For technical reasons and following  either \cite{J} or \cite{CNSXYZ} we also assume 
\begin{align}
\label{eqn13.6}  
 \mbox{either}\, \, \, \mu  ( \{\xi\} ) \, \, \, \mbox{or} \, \, \, \mu (\{- \xi\} )= 0  \quad \mbox{whenever} \quad \xi \in \mathbb{S}^{n-1}.
 \end{align}   
This condition will be removed   in  our  proof  of   the general case of  Theorem \ref{mink}.    For   $  \mu $  as above and $ p \not = n - 1, $   we    show    there is  a   compact   convex  polyhedron  $E$  with  $0$  in the interior of  $ E $   and  
\[
\mu (K)  = \int_{\mathbf{g}^{-1}(K)}  f(\nabla U ) \, d\mathcal{H}^{n-1} \quad \mbox{whenever} \quad K\subset\mathbb{S}^{n-1} \, \, \mbox{is a Borel set}
\]
where   $ \mathbf{g}$  is the  Gauss map for  $ \ar E $  and    $U$ is the $ \mathcal{A}$-capacitary function for  $  \rn{n}\sem E. $  
 Thus, if  $  F_i $  denotes the  face of  $ \ar E $ with outer normal  $ \xi_i,  1 \leq i  \leq  m,  $  then  $\mathbf{g}(F_i) = \xi_i  $  and  
\begin{align}  
\label{eqn13.7}     
\mu ( \{\xi_i\} )  = c_i   =  \int_{F_i} f(\nabla U ) \, d\mathcal{H}^{n-1}  \quad \mbox{for}\quad  1 \leq i   \leq m.
\end{align} 
If $ p  = n - 1 $ our results are less precise.   For given  $\mu $  as above we show the existence of  $ E $  as above  with $ \mbox{Cap}_{\mathcal{A}} ( E ) = 1 $   and corresponding  capacitary function $ U$  such that  for some $ b \in (0, \infty), $   \eqref{eqn13.7}  holds with  $ f $ replaced by $ b f.$  
   
To set up the minimization problem that will  eventually lead to  \eqref{eqn13.7}  (as in  
\cite{J,CNSXYZ})  fix   $ m,  (\xi_i)_1^m ,  (c_i)_1^m, $ and  let 
$ q=(q_1,\ldots, q_m)\in \rn{m} $ with  $ q_i   \geq 0,  1 \leq i  \leq m. $ Let  
\[  
E(q):=  \bigcap_{i=1}^m  \{x:\, \,  \lan x,  \xi_i \ran  \leq q_i  \}    \quad \mbox{and}\quad  \He:=   \{E (q):\,\,   \mbox{Cap}_{\mathcal{A}}  (E(q)) \geq  1 \}.
 \]
We also set     
\[ 
\ga (q) =  \sum_{i=1}^m  c_i  \, q_i \quad \mbox{with}\quad     \ga =     \inf \{ \ga (q):\,\,  E (q)  \in  \He \}. 
\]    
We want to show there exists 
\begin{align} 
\label{eqn13.8}    
\breve q  =  (\breve q_1, \dots, \breve q_m),\, \,  \breve  q_i  > 0 \, \, \mbox{for}\, \, 1 \leq i  \leq m \, \,  \mbox{with}\, \,  \ga ( \breve q ) = \ga\,\,  \mbox{and} \,\,  \mbox{Cap}_{\mathcal{A}} (   E (\breve q  )  )= 1.
\end{align} 
Once \eqref{eqn13.8}  is proved we can use the same argument as in \cite[Section 5]{J} or \cite[Section 6]{CNSXYZ} to  get  \eqref{eqn13.7}.

 To begin the proof of \eqref{eqn13.8}   we first note  that  if  $ E (q)\in \He, $  then  $ E(q) $  is   a closed convex set.   Also  we note from   \eqref{eqn13.5} that 
 \[   
 \int_{ \mathbb{S}^{n-1}}   \lan \tau, \xi \ran^+  d \mu ( \xi )           =  \int_{ \mathbb{S}^{n-1}}   \lan \tau, \xi \ran^-  d \mu ( \xi )  \quad \mbox{whenever}\quad    \tau \in \mathbb{S}^{n-1}
  \]   
  where $ a^+ =  \max{(a, 0) } $  and  $ a^-   = \max{(- a, 0)} . $   From this note and  \eqref{eqn13.4} 
 we see that   for some $ \ph >  0, $ 
 \begin{align}  
 \label{eqn13.9}
  \ph  <    \int_{ \mathbb{S}^{n-1}}   \lan \tau, \xi \ran^+  d \mu ( \xi )   \quad \mbox{for all} \quad  \tau  \in \mathbb{S}^{n-1}.
 \end{align}
If $ r \tau  \in  E (q ) , $  it  follows from \eqref{eqn13.9}  that  $ r  \leq  \ga (q ) /\ph $ so 
\begin{align}
\label{eqn13.10}  
 E(q)   \subset   \{ x:\,\,  |x| \leq  \ga (q) /\ph \}. 
 \end{align}  
From \eqref{eqn13.10}    we deduce the existence of       $ q^l  = (q_1^l,  \dots, q_m^l ),  q_i^l \geq 0,   1 \leq i \leq m, $  such that  $  E_l = 
E (q^l)$ for $l = 3, 4,  \dots,  $ is  a  sequence of  uniformly bounded compact convex sets in   $ \He,  $   with    
\[
\hat  q  =  \lim_{l\to \infty} q^l \quad \mbox{and}\quad  \lim_{l \to \infty}  \ga ( q^l )  =  \ga =  \ga ( \hat q ). 
\]   
From   finiteness of  $ \ga $   we also may assume (by taking further subsequences if necessary) that       $   E_l \to  E (\hat q )=  E_1, $  a  compact  convex set  containing $0$,  uniformly in the Hausdorff distance sense. From  Lemma  \ref{lemma13.1} we observe that   
\begin{align}
\label{eqn13.11} \lim_{l \to \infty}   \mbox{Cap}_{\mathcal{A}} (E_l ) =  \mbox{Cap}_{\mathcal{A}} (E_1 ). 
\end{align} 
It follows that   $ \mbox{Cap}_\mathcal{A}(  E_1 )  \geq  1$  and   $  E_1 \in \He. $   In fact    $ \mbox{Cap}_{\mathcal{A}} (   E_1 )  = 1, $  since otherwise we  would  have 
$ \ga (\ti q) < \ga(\hat q)$  for  $  \ti E  = \ti E (\ti q) \in \He $   where  for 
$ j \in \{1,2, \dots, m \}, $    
\[   
\ti q_j  =  \frac{\hat q_j }{  \mbox{Cap}_{\mathcal{A}} (  E_1 )^{1/(n-p)}}. 
\] 
%\noi {\bf Remark 1.}  Throughout our proof  we use the fact that  if  $  \{F_i\} $ is a sequence of  uniformly bounded convex sets then  a  subsequence  of this sequence converges in the Hausdorff distance sense  to  a  convex set.  Also if  $ h_i $  denotes  the support function of $ F_i $ then  a subsequence of   $ \{h_i \} $  converges uniformly on compact subsets if and only if  the  corresponding subsequence of  the  $  F$'s converge in the Hausdorff distance sense.      `Convergence of sets' implies  uniform convergence of support functions, follows directly from the deinition of  a  support function while the other way  follows from  the first part of our  remark and uniqueness of the support function for a  convex domain.  Finally  if   $   \{F_i\} $  is  a sequence of  uniformly bounded  convex sets  and   $ \{ F_i  \} \rar  F $  in the Hausdorff distance sense then 

Next we consider two cases.  
 
\noindent {\bf Case 1:} First  suppose  that  $ z $ is an  interior point of $  E_1.  $   Then  $  \breve E   =    E_1  -  z   \in   \He, $  since  the distance   from  $0$ to  each plane composing the boundary of  $ \breve E $ is positive. Also,      $ \breve  E  $  has  $ \mathcal{A}$-capacity $1$  and   if  $  \breve  E  =  E (\breve  q ), $ then from  \eqref{eqn13.5}  we see that  $ \ga (\breve q ) = \ga.      $   Thus,      \eqref{eqn13.8}  is  valid  in this case. 
      
\noindent {\bf Case 2:} If $   E_1 $  has empty interior, then  from convexity of  $  E_1 $  and \eqref{eqn13.6} we find that 
      $    E_1 $ is  contained in a  $ k < n - 1 $  dimensional plane  and  $  0  < \mathcal{H}^k (E_1) 
<  \infty. $   Moreover we must have $ p  >  n  - k $ since otherwise as mentioned in  Lemma \ref{lemma13.1} we have          
  $  \mbox{Cap}_{\mathcal{A}} (  E_1 )  = 0 $.  Also,  we may assume $0$ is an interior point of  $  E_1 $   relative to the  $ k$-dimensional plane containing  $   E_1 $  since otherwise we  consider $  E_1 - z $ for some $ z $ having this property and  argue as above.       In this case from the definition of  $ \He $  and  \eqref{eqn13.4},   we see that  there exists  a subset, say  $  \La $  of  $ \{ 1, \dots, m \} $  with $ \hat q_i = 0 $ when 
$ i  \in \La. $   From  \eqref{eqn13.6}  we deduce that  $  \La  $  has  cardinality at  least $3$.  Also since a point has $\mathcal{A}$-capacity zero  we see that   $  \{1, \dots, m \} \sem \La $  contains  at least two points.  
  Moreover,  since  $    E_1 $  is  a minimizer we observe  that  if  $  s    \not \in  \La, $  then  $ \hat q_s \not = 0$   and  
\[    
\{ x:\,\,  \lan x, \xi_s  \ran = \hat q_s  \}  \cap   E_1   \neq  \es.
\]   
Let     $ a   =   \frac{1}{4}    \min \{  \hat q_i  :   i   \not  \in  \La  \} $  and  for   small $ t > 0 $   let  
\begin{align} 
\label{eqn13.12}  
\begin{split}
\ti E  (t)  &=   {\ds \bigcap_{i = 1}^m  \{ x:\,\, \lan x,  \xi_i \ran    \leq  \hat q_i + a t  \} } 
\\
E_2   &=  {\ds \bigcap_{i = 1 }^m }  \{ x: \,\,\lan x,  \xi_i \ran  \leq   a   \}  . 
\end{split}
\end{align}  
 Put  
\begin{align}
\label{eqn13.13}   
E_t     =       \frac{ \ti  E  ( t )  }{ \mbox{Cap}_{\mathcal{A}} ( \ti E  (t) )^{ 1/(n-p)}}.   
\end{align}
We note that, in view of \eqref{eqn13.12},  $  E_t  =   E ( q (t) )  $  where $q(t)=(q_1(t), \ldots, q_m(t))$ and 
\begin{align}    
\label {eqn13.14}     
q_j (t) =  \frac{\hat q_j +  a t}{  \mbox{Cap}_{\mathcal{A}} ( \ti  E (t) )^{1/(n -p)}}\quad  \mbox{for}\quad  1 \leq j  \leq m. 
\end{align}  
 From properties of  $ \mathcal{A}$-capacity, we have $  \mbox{Cap}_{\mathcal{A}}  (E_t ) = 1 $  so  $ E_t \in \He. $ To get a  contradiction to our assumption that $   E_1 $ has empty interior  we show that  
\begin{align}  
\label{eqn13.15}  \ga ( q (t) )  <   \ga  \quad \mbox{for some  small}\, \,  t > 0. 
\end{align} 
 To prove \eqref{eqn13.15},   we first note that     
 $   E_1   +  t   E_2   \subset    \ti E (t )  $  for  $ t \in (0, 1) $ so    
 \[
    \mbox{Cap}_{\mathcal{A}} ( E_1 + t E_2  ) \leq    \mbox{Cap}_{\mathcal{A}} ( \ti  E (t ) ). 
\]      
    From this  inequality and    \eqref{eqn13.13},  \eqref{eqn13.14},    we conclude  that  to prove     \eqref{eqn13.15}  it suffices  to show  if     
    \[
      k ( t )  =  \mbox{Cap}_{\mathcal{A}} ( E_1  +  t  E_2  )^{- 1/(n -p)}   \sum_{i = 1 }^m     c_i  ( \hat q_i  + at )     
      \]  
   then  
\begin{align}   
    \label{eqn13.16}  k (t)  <  \ga \quad  \mbox{for}\, \,    t  > 0 \, \, \mbox{near  0}.  
\end{align}  
     
To  prove   \eqref{eqn13.16}, we   let, as in section \ref{section12}, $ u ( \cdot, t )  $  be the  $ \mathcal{A} = \nabla f$-capacitary function for    
$E_1  + t E_2    $  and let   $\mathbf{g} ( \cdot,  E_1 + t E_2  )     $ be the   Gauss map for  $ \ar (E_1 + t E_2 )  $    while   $ h_1, h_2  $  are   the support  functions for  $  E_1,  E_2, $  respectively.  Then  from   Remark \ref{remark12.2}  and  Proposition \ref{proposition12.1}  we have    for  $ t  \in   (0, 1), $  
   
\begin{align}
\label{eqn13.17}   
\frac{d}{dt}   \mbox{Cap}_{\mathcal{A}} (  E_1  +  t E_2 ) =    (p-1)  \int_{  \ar  ( E_1 +  t E_2  ) }   h_2  ( \mathbf{g}(x, E_1  +  t E_2)  )   f ( \nabla u (x, t ) ) d\mathcal{H}^{n-1} . 
\end{align}  
     Next we prove    
\begin{proposition}  
\label{proposition13.2}  
\begin{align} 
\label{eqn13.18} 
 \lim_{\tau \to 0}   \int_{\ar  (E_1 + \tau E_2 )}   h_2   ( \mathbf{g}( x ,  E_1  + \tau E_2 ) )  f ( \nabla u (x, \tau) ) d\mathcal{H}^{n-1}  \, = \, \infty.   \end{align}
 \end{proposition}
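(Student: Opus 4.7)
The plan is to apply the Rellich-Pohozaev-type identity \eqref{eqn12.30} to $E_1 + \tau E_2$, which contains the origin in its interior because $0$ lies in the interior of $E_2$ (and hence of $\tau E_2$) for every $\tau>0$. Since the support function of $E_1 + \tau E_2$ with respect to $0$ is $h_1 + \tau h_2$, this identity reads
\begin{align*}
\text{Cap}_\mathcal{A}(E_1 + \tau E_2) \;=\; \frac{p-1}{n-p}\bigl(\Phi(\tau) + \tau I(\tau)\bigr),
\end{align*}
where $I(\tau)$ is the integral in \eqref{eqn13.18} and
\begin{align*}
\Phi(\tau)\;:=\;\int_{\partial(E_1+\tau E_2)} h_1(\mathbf{g}(x,E_1+\tau E_2))\,f(\nabla u(x,\tau))\,d\mathcal{H}^{n-1}(x).
\end{align*}
By Lemma~\ref{lemma13.1}, $\text{Cap}_\mathcal{A}(E_1+\tau E_2)\to\text{Cap}_\mathcal{A}(E_1)$ as $\tau\to 0$, and in Case 2 this limit is \emph{strictly} positive: since $p>n-k$ and $\dim E_1=k$, one has $\mathcal{H}^{n-p}(E_1)=\infty$, whence positive $\mathcal{A}$-capacity via \eqref{eqn1.4} and Theorem 2.27 of \cite{HKM}. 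It therefore suffices to prove $\Phi(\tau)\to 0$; for then $\tau I(\tau)$ tends to the positive constant $\tfrac{n-p}{p-1}\text{Cap}_\mathcal{A}(E_1)$, forcing $I(\tau)\to\infty$.

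To show $\Phi(\tau)\to 0$ I would exploit that both $E_1=E(\hat q)$ and $E_2$ from \eqref{eqn13.12} are polytopes with supporting normals drawn from $\{\xi_1,\ldots,\xi_m\}$. Consequently $E_1+\tau E_2$ is a polytope whose boundary decomposes into facets $F_i(\tau)$ with outer normal $\xi_i$. Minimality of $\hat q$ forces $h_1(\xi_i)=\hat q_i$ for every $i$ (vanishing precisely for $i\in\Lambda$), so
\begin{align*}
\Phi(\tau)\;=\;\sum_{i\notin\Lambda}\hat q_i\,A_i(\tau),\qquad A_i(\tau):=\int_{F_i(\tau)}f(\nabla u(\cdot,\tau))\,d\mathcal{H}^{n-1}.
\end{align*}
For each $i\notin\Lambda$ the facet is $F_i(\tau)=F_i^{E_1}+\tau F_i^{E_2}$, the Minkowski sum of the face of $E_1$ at $\xi_i$ (of relative dimension at most $k-1$) with $\tau$ times the corresponding face of $E_2$; a mixed-volume / Steiner expansion of the surface-area measure then gives $\mathcal{H}^{n-1}(F_i(\tau))=O(\tau^{n-k})$ as $\tau\to 0$.

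The hard part will be pairing this area decay with a sufficiently sharp gradient estimate for $u(\cdot,\tau)$ on $F_i(\tau)$. A naive Lipschitz bound $|\nabla u(\cdot,\tau)|\lesssim \tau^{-1}$ (from the Lipschitz scale $\tau$ of $\partial(E_1+\tau E_2)$) only yields $A_i(\tau)=O(\tau^{n-k-p})$, which blows up because $p>n-k$. The required refinement is $|\nabla u(\cdot,\tau)|\lesssim \tau^{(p-n+k)/(p-1)-1}$ on $F_i(\tau)$, reflecting the true capacitary behavior: for small $\tau$ the function $u(\cdot,\tau)$ is close, at scale $\tau$, to the $\mathcal{A}$-capacitary function $u_0$ of the $k$-dimensional set $E_1$, whose gradient near $E_1$ behaves like $d(\cdot,E_1)^{(p-n+k)/(p-1)-1}$. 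I would prove this rigorously by a blow-up argument: rescale about a point in the relative interior of $F_i^{E_1}$ by the factor $1/\tau$ to obtain, in the limit, an $\mathcal{A}$-capacitary profile on the cylindrical model $V\times(\mathbb{R}^{n-k}\setminus E_2^{\mathrm{tr}})$ (with $E_2^{\mathrm{tr}}$ the transverse slice of $E_2$), and then compare $u(\cdot,\tau)$ to the associated barrier via the boundary Harnack and $C^{1,\alpha}$ estimates of Sections~\ref{section9}--\ref{section10}, in particular Lemma~\ref{lemma9.5} and Corollary~\ref{corollary9.6a}. Granted this gradient bound,
\begin{align*}
A_i(\tau)\;\lesssim\;\tau^{p[(p-n+k)/(p-1)-1]+(n-k)}\;=\;\tau^{(p-n+k)/(p-1)}\;\longrightarrow\; 0
\end{align*}
for each $i\notin\Lambda$ since $p>n-k$, whence $\Phi(\tau)\to 0$ and \eqref{eqn13.18} follows.
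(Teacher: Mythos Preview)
Your reduction via the identity \eqref{eqn12.30} is correct and elegant: writing $\frac{n-p}{p-1}\mbox{Cap}_{\mathcal{A}}(E_1+\tau E_2)=\Phi(\tau)+\tau I(\tau)$ and using $\mbox{Cap}_{\mathcal{A}}(E_1)=1$ does reduce \eqref{eqn13.18} to $\Phi(\tau)\to 0$, and the facet decomposition $\Phi(\tau)=\sum_{i\notin\Lambda}\hat q_i A_i(\tau)$ together with the Steiner-type bound $\mathcal{H}^{n-1}(F_i(\tau))=O(\tau^{n-k})$ are fine. The gap is entirely in Step~5. Every point of $F_i^{E_1}$ lies on the \emph{relative boundary} of $E_1$ (since $i\notin\Lambda$ forces $F_i^{E_1}$ to be a proper face of the $k$-polytope $E_1$), so your blow-up at a point of $F_i^{E_1}$ does not produce the cylindrical model you describe: the tangent cone of $E_1$ there is a half-space (or lower-dimensional cone) in $\mathbb{R}^k$, and the rescaled set is $(\mbox{tangent cone of }E_1)+E_2$, not a product. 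More seriously, the asserted bound $1-u_0\lesssim d(\cdot,E_1)^\psi$ (equivalently $|\nabla u(\cdot,\tau)|\lesssim\tau^{\psi-1}$) need not hold near $\partial' E_1$: the upper barrier $\hat V\approx|x''|^\psi$ vanishes on all of $\mathbb{R}^k$, so the comparison $1-u_0\le C\hat V$ fails on $\mathbb{R}^k\setminus E_1$, which is exactly the region adjacent to $F_i(\tau)$. The local homogeneity exponent of $1-u_0$ at a point of $\partial' E_1$ is some $\beta\le\psi$ (by comparison with the full-plane model) and may well be strictly smaller, invalidating the claimed pointwise gradient bound on $F_i(\tau)$.

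The paper avoids this difficulty entirely by taking the opposite route: instead of bounding $A_i(\tau)$ for $i\notin\Lambda$ from above, it bounds the integral over the faces with $i\in\Lambda$ (those meeting $B(0,2a)$) from \emph{below}. This requires only the one-sided barrier of Lemma~\ref{lemma13.3} ($1-u(x,t)\gtrsim|x''|^\psi$ for $|x''|\gtrsim t$), which yields via \eqref{eqn8.18}--\eqref{eqn8.29} and Harnack's inequality the estimate $\int_{B(\hat x,t/\tilde c)\cap\partial(E_1+tE_2)}f(\nabla u)\gtrsim t^{p(\psi-1)+n-1}$ for each $\hat x\in\partial(E_1+tE_2)\cap B(0,a)$. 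Summing over $\approx t^{-k}$ disjoint such balls gives a total $\gtrsim t^{(k+1-n)/(p-1)}\to\infty$; since $h_2(\xi_i)=a$ for $i\in\Lambda$, this directly gives \eqref{eqn13.18}. No upper bound on $\nabla u$ anywhere on $\partial(E_1+\tau E_2)$ is needed. Your strategy could perhaps be salvaged with a careful stratified analysis of $\partial' E_1$, but as written the crucial estimate is not established.
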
 
Assuming Proposition  \ref{proposition13.2} we get  \eqref{eqn13.15} and so    a contradiction to our assumption that 
$ E_1 $  has empty interior as follows.  First observe from \eqref{eqn13.17}  that  
\begin{align}
\label{eqn13.19} 
\begin{split}
\left. (n-p)  [\mbox{Cap}_{\mathcal{A}} (  E_1 + t E_2 )]^{1\, + \, 1/(n -p)} {\ds  \frac{d}{dt} } k  (t) \right|_{t=\tau}  =   (n-p)  \mbox{Cap}_{\mathcal{A}} (  E_1 + \tau  E_2 )\sum_{i   = 1}^m  c_i  a\\ 
 -  (p -  1)   [  \sum_{i  = 1}^m     c_i ( \hat q_i   +  a \tau  )]  \int_{  \ar ( E_1  + \tau  E_2 ) } 
 h_2   ( \mathbf{g}(x,  E_1 + \tau  E_2  ) )  f ( \nabla u (x, \tau) ) d\mathcal{H}^{n-1}. 
\end{split}
\end{align}
Now    $  E_1 + \tau  E_2   \to   E_1 $ as $ \tau  \to 0 $  in  the sense of  Hausdorff distance so by  Lemma   \ref{lemma13.1},   we have      
\begin{align} 
\label{eqn13.20}  
\lim_{\tau \to 0}   \mbox{Cap}_{\mathcal{A}} (  E_1 + \tau  E_2   )  = \mbox{ Cap}_{\mathcal{A}} (   E_1  ) =1.  
\end{align}   
Clearly,   \eqref{eqn13.18}-\eqref{eqn13.20}           
   imply for  some  $ t_0 > 0  $  small   that   
\begin{align}     
\label{eqn13.21}    
\left.\frac{d}{dt}  k (t)\right|_{t=\tau} < 0 \quad   \mbox{ for } \, \, \tau \in ( 0, t_0].  
\end{align}
Also, from  \eqref{eqn13.20} we see that  
\[
\lim_{\tau \to 0} k ( \tau ) = \ga.
\]     
From this observation, the mean value theorem from calculus,  and   \eqref{eqn13.21} we conclude that \eqref{eqn13.16}  holds  so   $  E_1 $  has interior points.     \eqref{eqn13.8}  now follows from our earlier remarks.  
 \begin{proof}[Proof of   Proposition  \ref{proposition13.2}]
 Recall  that  $  E_1 $ is  contained in  a  $ k  <  n - 1 $ dimensional plane  and  $ n - k <  p  <  n. $  We assume as we may that    
\begin{align}
\label{eqn13.22}  
E_1    \subset  \{ x = ( x' ,  x'' ) :  x' = (x_1, \dots, x_k )   \quad   \mbox{and} \quad x''  =  (x_{k+1}, \dots  x_n )   = (0, \dots, 0)  \}  = \mathbb{R}^k.  
\end{align}   
Indeed, otherwise  we  can   rotate  our coordinate system to get  \eqref{eqn13.22}   and  corresponding  $ \hat{\mathcal{A}}$-capacitary functions, say   $  \bar u ( \cdot, t )  .   $ 
    Proving    Proposition \ref{proposition13.2}  for  $ \bar  u ( \cdot, t ) $   and transferring back we obtain 
   Proposition  \ref{proposition13.2}.     
   
   We   also note   that
\begin{align}    
\label{eqn13.23}     
\bar B (0,  4 a )   \cap  \mathbb{R}^k    \subset   E_1   \subset  \bar B (0, \rho )    
\end{align}  
which follows from our choice of  $a$ and for some $\rho$ large (depending only on the data).    We shall need the following lemma.

 \begin{lemma}  
 \label{lemma13.3}
 There exists  $   C_1 \geq 1,  $    such   that 
   if   $ \psi = ( p  -  n  + k )/(p-1) $  and $ x \in B (0 , \rho ), $  then   
\begin{align}   
\label{eqn13.24}    
|x''|^\psi   \leq  C_1 ( 1 -  u ( x, t ) )  \quad \mbox{whenever}  \quad    C_1  t  \leq  |x''|
\end{align}  
 where  $  C_1 \geq  1 $  depends on various  quantities but is independent of  $ x \in B ( 0, 2 \rho)$  and $t$.      
 \end{lemma}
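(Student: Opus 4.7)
The plan is to reduce to an $(n-k)$-dimensional transverse problem via comparison with a cylindrical enlargement of $E_1+tE_2$, then construct a barrier based on the explicit $p$-harmonic function $|x''|^{\psi}$ and invoke the maximum principle for $\tilde{\mathcal{A}}$-harmonic functions, where $\tilde{\mathcal{A}}(\eta)=-\mathcal{A}(-\eta)$.

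First I would use the geometric containment: since $E_1\subset B(0,\rho)\cap\mathbb{R}^k$ by \eqref{eqn13.22}--\eqref{eqn13.23} and $E_2$ is uniformly bounded, one has $E_1+tE_2\subset C_t:=\bar B_k(0,\rho+c_0 t)\times\bar B_{n-k}(0,c_0 t)$ for some $c_0$ depending only on $\rho$ and $E_2$, where $\bar B_k$ and $\bar B_{n-k}$ are closed balls in the respective subspaces. Letting $U_t$ denote the $\mathcal{A}$-capacitary function for $C_t$, the nesting $C_t\supset E_1+tE_2$ combined with the maximum principle for the nested capacitary problem gives $u(\cdot,t)\leq U_t$; hence $1-u(\cdot,t)\geq 1-U_t$, and the lemma reduces to proving $1-U_t(x)\geq c^{-1}|x''|^{\psi}$ whenever $x\in B(0,2\rho)$ and $C_1 t\leq|x''|$.

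The choice of exponent $\psi=(p-n+k)/(p-1)$ is exactly what makes $|x''|^{\psi}$ a $p$-harmonic function on $\mathbb{R}^n\setminus\mathbb{R}^k$: a direct computation in cylindrical coordinates adapted to $\mathbb{R}^k$ yields $\Delta_p(|x''|^{\psi})=\psi^{p-1}(\psi(p-1)-p+n-k)|x''|^{\psi(p-1)-p}$, which vanishes precisely at this $\psi$. Using this, I would introduce the candidate barrier $\Phi(x):=c_2^{-1}(|x''|^{\psi}-(c_0 t)^{\psi})_+$ on the shell $\Omega_t:=\{x\in B(0,4\rho):|x''|>c_0 t\}\setminus C_t$. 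On the inner boundary $\Phi\equiv 0\leq 1-U_t$, while on the outer boundary $\partial B(0,4\rho)$ the far-field decay estimate in Lemma \ref{lemma3.2}$(b)$ applied to $1-U_t$ (together with the cap estimate $\mathrm{Cap}_{\mathcal{A}}(C_t)\to\mathrm{Cap}_{\mathcal{A}}(E_1)=1$ from Lemma \ref{lemma13.1}) would show $1-U_t\gtrsim 1$, so that $\Phi\leq 1-U_t$ there after fixing $c_2$ large. Since $1-U_t$ is $\tilde{\mathcal{A}}$-harmonic on $\Omega_t$, the comparison principle would yield $1-U_t\geq\Phi$ on $\Omega_t$, giving the claim after adjusting $C_1$ so that $|x''|\geq C_1 t$ forces $(c_0t)^{\psi}\leq\tfrac{1}{2}|x''|^{\psi}$.

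The main obstacle is verifying that $\Phi$ is an $\tilde{\mathcal{A}}$-subsolution on $\Omega_t$. The explicit calculation shows $\Phi$ is a $p$-subsolution, but for general $\mathcal{A}$ the function $|x''|^{\psi}$ need not be $\mathcal{A}$-harmonic, because $\mathcal{A}$ is not rotationally symmetric. To handle this, I would either (i) pass through the comparability of $\mathcal{A}$- and $p$-capacities from \eqref{eqn1.4} and run the above barrier argument with the $p$-capacitary function of $C_t$, then transfer back to $\mathcal{A}$ at the cost of a multiplicative constant depending only on the data; or (ii) replace $\Phi$ by a symmetrized radial barrier of the form $g(|x''|)$ where $g$ solves the ODE obtained by integrating the worst-case structure constants in Definition \ref{defn1.1}, producing a genuine $\tilde{\mathcal{A}}$-subsolution with the same $|x''|^{\psi}$ growth rate. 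Either approach yields Lemma \ref{lemma13.3} with a constant $C_1$ depending only on the data and on $E_1,E_2$ through $\rho$ and $a$.
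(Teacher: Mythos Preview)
Your overall architecture—produce a barrier comparable to $|x''|^{\psi}$, check boundary values, and invoke the comparison principle for $\tilde{\mathcal A}$-harmonic functions—matches the paper, but the crucial step you flag as the ``main obstacle'' is exactly where the gap lies. The paper does not construct the barrier by hand; it imports Lemma~5.3 of \cite{LN4}, which supplies a genuine $\tilde{\mathcal A}$-harmonic function $\hat V$ on $\mathbb{R}^n\setminus\mathbb{R}^k$ with $\hat V(x)\approx |x''|^{\psi}$. The paper then takes $v=\max(\hat V - C_2 t,0)$, observes $v\equiv 0$ on $E_1+tE_2$, and compares $v$ to $1-u(\cdot,t)$ on $B(0,2\rho)$ directly (your cylinder reduction is unnecessary). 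The existence of $\hat V$ with the \emph{exact} exponent $\psi$ for general $\mathcal A$ is a nontrivial fact that your proposal does not reproduce.

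Both of your workarounds fall short. For option~(i), the comparability \eqref{eqn1.4} relates the integral quantities $\mbox{Cap}_p$ and $\mbox{Cap}_{\mathcal A}$; it does not give a pointwise comparison between $1-U_t^p$ and $1-u(\cdot,t)$, since these solve different equations and no boundary Harnack principle from section~\ref{section10} applies across equations. For option~(ii), writing $\Phi=g(|x''|)$ and computing $\nabla\cdot\tilde{\mathcal A}(\nabla\Phi)$ yields, by the $(p-1)$-homogeneity in Definition~\ref{defn1.1}, an expression of the form
\[
(p-1)(g')^{p-2}g''\,a(\omega'') + (g')^{p-1}\,\frac{b(\omega'')}{|x''|},\qquad \omega''=\frac{x''}{|x''|},
\]
where $a(\omega'')=\langle\omega'',\tilde{\mathcal A}(\omega'')\rangle>0$ and $b(\omega'')$ are bounded but \emph{direction-dependent}. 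For $g(r)=r^{\gamma}$ this is nonnegative only when $\gamma\geq 1-\min_{\omega''}\bigl[b(\omega'')/((p-1)a(\omega''))\bigr]$, which in general differs from $\psi$. A radially symmetric subsolution therefore exists, but with exponent $\gamma^*$ possibly strictly larger than $\psi$, giving only $1-u\gtrsim |x''|^{\gamma^*}$; this is weaker than the stated \eqref{eqn13.24}. The fact that the homogeneous $\tilde{\mathcal A}$-harmonic function vanishing on $\mathbb{R}^k$ has degree exactly $\psi=(p-n+k)/(p-1)$ regardless of $\mathcal A$ is precisely what \cite{LN4} establishes, and it is the missing ingredient in your argument.
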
    
 \begin{proof}[Proof of Lemma \ref{lemma13.3}]  To prove this  Lemma  we  note from   Lemma  5.3  in    \cite{LN4} that  there  exists   an   $ \ti{\mathcal{A}}(\eta ) = - (\nabla f) ( - \eta )$-harmonic  function $  \hat  V $  on    $  \mathbb{R}^n  \sem  \mathbb{R}^k   $ with continuous boundary value $0$   on  $\mathbb{R}^k$  and  $ \hat V   (x)  \approx  | x'' |^{\psi}$ for  $x   \in      \mathbb{R}^n$  where constants depend only on $ p, n, k $  and the structure constants for $ f $.  For  fixed $ t \in (0, 1 )$,  let  $ v =  \max ( \hat V - C_2 t, 0 )$. Then  $  v $ is  $  \ti{\mathcal{A}}(\eta )$-harmonic  in       $\mathbb{R}^n \sem W  $ and  continuous on  $\mathbb{R}^n$   with  $ v \equiv 0 $ on 
     $ W = \{ x :  \hat V (x) \leq  C_2 t \}. $  
    From  the definition of  $ E_1 + t E_2 $ and $ v $  we see  for $ C_2 $ large enough,  depending on $ p, n, k, $  the structure constants for  $ f , $  and  $  \rho,  $  that 
        $ v  = 0 $ on  $ E_1 + t E_2 .$  Also  from   \eqref{eqn13.23}, \eqref{eqn3.4} of  Lemma  \ref{lemma3.2}, Harnack's inequality,  and  the fact that     $ E_1 + t E_2 $  has  $\mathcal{A} $-capacity  $  \geq 1 $  we  find      that  $  C_3  ( 1 -  u  ( \cdot, t  ) )  \geq  v $  on  $  \ar  B ( 0, 2 \rho ) $   where  $ C_3$  has the same dependence as  $ C_2$ and $u(x,t)$ is the $\tilde{\mathcal{A}}$-capacitary function for $ E_1 +  t  E_2$.   Using the maximum principle for  $  \ti{\mathcal{A}}$-harmonic functions it now follows that  $   v  \leq  C_4  ( 1 -  u ( \cdot, t ) )$  in    $  B ( 0, 2 \rho ). $    From this fact and our knowledge of  $  \hat  V $   we get   Lemma  \ref{lemma13.3}.   
\end{proof}  
      
To begin the proof of  Proposition \ref{proposition13.2}   we  assume  $ 0  < t \leq \ti t_0,  $  where $ \ti t_0   < <  a$.  We  also  observe    
 that     $ E_1 +  t  E_2  $   is  a compact  convex set   with  nonempty interior so  from  Corollary \ref{corollary9.6a} and Proposition \ref{proposition8.9}  we find that            for  $ \mathcal{H}^{n-1} $  almost every  $ \hat x  \in  \ar  ( E_1 + t E_2) $  
 \[
\nabla u ( y, t ) \to  \nabla u ( \hat x, t )\quad \mbox{as} \quad  y \to \hat x
\]
 non-tangentially  in  $   \mathbb{R}^n \sem (E_1 + t E_2 ). $  Moreover,  there exists  $ \ti c $   such that     $  B ( \hat x , 4t/\ti c ) \cap  \ar ( E_1 + t  E_2 ) $ is  the graph of  a  Lipschitz  function whenever 
 \[ 
 \hat x \in     B (0, 2a) \cap  \ar ( E_1 + t  E_2 )\quad   \mbox{and} \quad   0  < t \leq  \ti t_0  
 \]
 with Lipschitz constant independent of $ \hat x, t. $    It then    follows  from \eqref{eqn8.18}, \eqref{eqn8.29}$(a)$, and  \eqref{eqn8.49a}-\eqref{eqn8.49b} that   
 \begin{align}  
 \label{eqn13.26}  
\begin{split}
  c   \int_{B ( \hat x, t/\ti c ) \cap \ar (E_1 + t E_2)   }  f ( \nabla u ( \cdot, t ) ) d\mathcal{H}^{n-1} & \geq (  1 - u ( w, t ) )^p   t^{ n - 1 -p}  \\
   & \geq   c^{-1}   \int_{B ( \hat x, t/\ti c ) \cap  \ar ( E_1 + t  E_2)    }  f ( \nabla u ( \cdot, t ) ) d\mathcal{H}^{n-1} 
   \end{split}
\end{align}
 where  $ c$  depends only on the data 
and   $ w = w (\hat x, t ) $ denotes a point in  $ B (\hat  x, t/\ti c ) \cap (\mathbb{R}^{n}\setminus (E_1+t E_2))$ whose distance from  $ \ar (E_1 
+ t E_2  ) $ is   $ \geq  t/ c^2.$   Using   Harnack's inequality in  a  chain of  balls of radius $ \approx t $  connecting $ w $  to a  point 
 $ x \in  B (0, a )  $  with    $ 2 C_1  t  = |x''|   $   we deduce from  \eqref{eqn13.24} of  Lemma 
 \ref{lemma13.3} that  
\begin{align}   
\label{eqn13.26a}  
1  -  u (w, t ) \geq  C^{-1}  t^{\psi}
\end{align}
where $ C $ is independent of $ t \in (0,1). $   
 Using   \eqref{eqn13.26a} in  \eqref{eqn13.26} we obtain   for some $ C'   \geq 1, $ independent of  $ t, 0 < t \leq  \ti t_0,   $     that  
\begin{align}
\label{eqn13.27}    
   C' \int_{B ( \hat x, t/\ti c ) \cap  \ar (E_1  + t   E_2) }  f ( \nabla u ( \cdot , t ) )  d\mathcal{H}^{n-1} \,   \geq  t^{p( \psi - 1) + n-1}. 
\end{align}    
Now since  $  \ar (E_1 +  t E_2 )  \cap  B (0, 2a )  $ projects onto a  set  containing   $  B (0, 2a )  \cap  \rn{k} $   for $ 0 < t \leq  \ti t_0$,    we see there is  a  disjoint collection of  balls   $  B ( \hat x,  t/\ti c ) $  for $ \hat  x \in  \ar ( E_1 + t E_2) $   of  cardinality  approximately  $ t^{-k}  $   for which  \eqref{eqn13.27}  holds.    
            Since  
   \[  
   p  ( \psi - 1)  +  ( n - 1) -  k  =  (k + 1 - n ) / (p-1) <  0 
   \]   
   we conclude from  
    \eqref{eqn13.27}    that  for some $ C^*  $  independent of  small positive $ t $       
 \begin{align} 
 \label{eqn13.28}    
 C^*  \int_{\ar (E_1 + t E_2 ) \cap B(0,2a)}  f ( \nabla u ( \cdot, t ) )    d\mathcal{H}^{n-1}   \,  \geq   t^{ (k + 1 - n)/ (p-1) } \to \infty  \quad \mbox{as} \quad t  \to 0.  
\end{align}    
      Finally note that for  $ 0 < t  \leq \ti t_0, $   
      \[
       \mathbf{g}(x, E_1 + t  E_2   )  \in  \{ \xi_i : i \in \La \}
       \]  
       for   $\mathcal{H}^{n-1}$ almost  every  $ x  \in \ar ( E_1 + t E_2) \cap  B (0, 2a )   $   and      $ h_2  ( \xi_i  )  \equiv   a  $  whenever $ \xi_i \in \La. $    
  From this note and  \eqref{eqn13.28},  we   obtain first the validity  of \eqref{eqn13.18}  in  Proposition  \ref{proposition13.2} and thereupon that  \eqref{eqn13.8} is true.  
 % { \color{red} should we change   $ \hat q  $ to $ \breve q $ since we now know that  (13.8) is true}         
   \end{proof}
  Armed with   \eqref{eqn13.8}, we now  complete the  proof  of   existence in  Theorem  \ref{mink} in the discrete case.  Given   $   q^*    = ( q^*_1,  \dots, q^*_m ) \in \mathbb{R}^m $   with  $ q^*_i  > 0$ for $1 \leq i  \leq m, $ we note from   \eqref{eqn13.8}  that   for $ \bar t_0   > 0 $ sufficiently small,   as in the remark following  \eqref{eqn13.15}  that     
$  E ( q^* (t) ) \in   \He $   for  $  0 < t  \leq  t_0, $     where  
\[   
q^* (t )   =   \frac{  (1-t) \hat q  +  t q^* }{\mbox{Cap}_{\mathcal{A}}( (1-t) E (\hat  q)  + t E (q^*))^{1/(n-p)}  } .  
\] 
Also,    $  \ga (q^* (t)) \geq  \ga  $  for  $ 0 \leq t  \leq  \bar{t}_0 $ thanks to  \eqref{eqn13.8}.     Now  as in  \eqref{eqn13.19}, we have           
for  $ \tau > 0 $ small, 
\begin{align}
\label{eqn13.29} 
\begin{split}
  (n-p)  [&\mbox{Cap}_{\mathcal{A}} ( (1-t)   E(\hat q ) + t  E(q^*)  )]^{1\, + \, 1/(n -p)} \left.   \frac{d \ga (q^*(t)) }{dt} \right|_{t=\tau}  \\
  &=   (n-p)  \mbox{Cap}_{\mathcal{A}} ( (1 - \tau)   E (\hat q )  + \tau  E( q^*) )   \sum_{i   = 1}^m  c_i ( q^*_i  -  \hat q_i )    \\
&\hs{.4in}  -    \left[\sum_{i  = 1}^m     c_i ( (1-\tau) \hat q_i   +  \tau  q_i^*  ) \right]  \frac{d}{dt}
\left.  \mbox{Cap}_{\mathcal{A}}   ( (1-t) E ( \hat q ) + t  E ( q^* ) ) \right|_{t=\tau}.
\end{split}
\end{align}
Moreover, using  
 \[
       \mbox{Cap}_{\mathcal{A}}   ( (1-t) E ( \hat q ) + t  E ( q^* )) =   (1- t)^{n-p}    \mbox{Cap}_{\mathcal{A}}   ( E ( \hat q ) + s  E ( q^* ) )  
       \]  
       where  
$  s  =  t/(1-t)$, Proposition \ref{proposition12.1},        and   the  chain rule    we  have  with  $ \hat h, h^* $ the support functions for 
$  E ( \hat q ),  E (q^*), $  and  $ u^* ( \cdot,  s  ) $  the $\mathcal{A}$-capacitary function for  $  E (\hat q ) + s E ( q^*),  $   
\begin{align}
 \label{eqn13.30}  
 \begin{split}
( 1 - t )^{p+2-n} & \frac{d}{dt}
\left.  \mbox{Cap}_{\mathcal{A}}   (  (1-t) E ( \hat q ) + t  E ( q^* )  ) \right|_{t=\tau}   =   -  (n-p)  ( 1 - \tau)  \mbox{Cap}_{\mathcal{A}}   (  E ( \hat q ) + \tau  E ( q^* ) )  \\
& \hs{.4in}+ (p-1)   \int_{\ar ( E(\hat q ) + \tau  E (q^*) ) }   h^* ( \mathbf{g} ( \cdot,  E (\hat q )  +  \tau  E 
( q^* )  )  ) \, f ( \nabla u^* ( \cdot, \tau ) )   d\mathcal{H}^{n-1}   . 
\end{split}
\end{align}
 Using  \eqref{eqn12.30}  with $ E_0  =  E ( \hat q )  $  in  \eqref{eqn13.30} and   letting  $ t \rar 0, $   we conclude from  \eqref{eqn13.29},  \eqref{eqn13.11},  and   the mean value theorem in elementary calculus that    
\begin{align}
 \label{eqn13.31} 
\begin{split} 
  0   &\leq   ( n - p)  \lim_{\tau \rar 0} \frac{ d \ga ( q^* (\tau) )}{d\tau}  \\
  &=  (n-p)    \sum_{i=1}^m  c_i  ( q^*_i -  \hat q_i )  -   (p-1) \ga  \, {\ds  \int_{\partial E (\hat q) }  
 ( h^*  -  \hat h)  ( \mathbf{g} ( x, E ( \hat q) ) )   \,  f  ( \nabla  u^*  ( x, 0 ) )  d \mathcal{H}^{n-1} }  \\
 &=    { \ds (n-p)  \sum_{i=1}^m  c_i  ( q_i^* - \hat q_i ) -  (p-1) \ga  \,  \sum_{i=1}^m  ( q_i^* - \hat q_i ) \int_{ \mathbf{g}^{-1} ( \xi_i ,  E ( \hat q )  ) } f ( \nabla u^* ( x, 0 ) ) d \mathcal{H}^{n-1} }  
\end{split}
\end{align}
provided  $ q^*  $  is  near enough $ \hat q. $   Clearly,  the  possible choices of   $ q^* -  \hat q $  contain an  open neighborhood of  $0$.   Thus  
\begin{align}
 \label{eqn13.32} 
\begin{split} 
 c_i  =\left(\frac{p-1}{n-p}\right) \,  \ga  \,   \int_{ \mathbf{g}^{-1} ( \xi_i ,  E ( \hat q )  ) } f ( \nabla u^* ( \cdot, 0) )  \, d \mathcal{H}^{n-1} \quad  \mbox{for}\, \,   1 \leq i \leq m. 
 \end{split}
 \end{align}
From  \eqref{eqn13.32}  and  $ p$-homogeneity of  $ f $   we  find that if  $ p \not = n - 1, $  and 
$  E  =  \phi   E ( \hat q ) $   where  $  \phi ^{n  -  p - 1 } =  { \ts (\frac{p-1}{n-p}) } \,  \ga    , $  and  $ U  $  is  the  $  \mathcal{A}  =  \nabla f$-capacitary function corresponding to $ E, $    then  \eqref{eqn13.7} holds. If  $ p = n-1, $  put $  b  =  { \ts (\frac{p-1}{n-p}) } \,  \ga , $   $  E = E (\hat q), $ and $ U = u^* ( \cdot, 0). $    This completes the proof of existence in the discrete case when     \eqref{eqn13.4}-\eqref{eqn13.6} hold. 
\begin{remark} 
\label{rmk13.4}  
We  note for later use from \eqref{eqn13.32}, the definition  of $ \ph, $    \eqref{eqn12.30} with $ E_0 = E ,   $  and  \eqref{eqn12.32}  that if $ r $ denotes the radius of  a  ball with  $ \mathcal{A}$-capacity $1$ and  $ h $ is the support function for $ E $  as in  \eqref{eqn13.7}  when $ 1 < p < n, p \not = n - 1, $\  or its amended form when  $ p  = n-1,$   then  
\begin{align}
\begin{split}
 \label{eqn13.33} {\ds \int_{ \mathbb{S}^{n-1} }} h ( \xi   ) d \mu  ( \xi  ) & = \frac{n-p}{p-1}  \, \mbox{Cap}_{\mathcal{A}}  (E)\\
& \begin{cases}
    =\frac{n-p}{p-1}  \, \mbox{Cap}_{\mathcal{A}}  (E) \leq c    \mu  (E)^{\frac{n-p}{n  - p - 1} } &  \mbox{when} \, \,  1 < p < n  - 1, \\
     \leq \ga \, =  b/(n-2)     \leq r   \mu  ( E ) & \mbox{when}\, \,  p = n - 1, \\
     =    \frac{n-p}{p-1}  \, \mbox{Cap}_{\mathcal{A}}  (E)  \leq  c    (\mbox{diam}(E))^{n - p} & \mbox{when}\, \,  n - 1<p<n.
\end{cases}
\end{split}
\end{align}
\end{remark} 
\subsection{Existence in Theorem \ref{mink} in   the continuous case}  
Armed with existence in Theorem  \ref{mink} in the discrete case, we now  consider existence when  $ \mu  $  is  a  finite positive measure on $ \mathbb{S}^{n - 1}$  satisfying  \eqref{eqn7.1}.  We  choose  a  sequence of  discrete measures     
$ \{\mu_{j}\}_{j\geq 1}  $  satisfying  \eqref{eqn13.4}-\eqref{eqn13.6} when $ p $ is fixed $ 1 < p < n  $  with  
\[
\mu_j  \rightharpoonup \mu\quad   \mbox{weakly as}\quad   j  \to  \infty.
\]   

If  $ p  \neq n - 1$, we let  $  E_j, j = 1, 2, \dots,  $  be a  corresponding sequence of  compact convex sets with nonempty interiors and $ \mathcal{A}$-capacitary  functions  $ U_j $   for which    \eqref{eqn13.7}  holds  at support points of  $ \mu_j. $   

If $ p = n - 1 $, we choose  $ E_j   $  and  corresponding capacitary function $ U_j $  with   $ \mbox{Cap}_{\mathcal{A}} (E_j)   =  1 $  satisfying   the  amended form  of   \eqref{eqn13.7} for $ j = 1, 2,  \dots $  Then  $  \mu_j  =   b_j  \ti \mu_j $  for some 
$ b_j > 0, $ where  $ \ti \mu_j $  is the measure in    \eqref{eqn7.6} $(b)$ with  $ u,  \mu  $  replaced by  
   $ U_j,  \ti \mu_j $  for  $ j  = 1, 2,  \dots $

From the  definition of  weak convergence we  may  assume  for  some  $ C \geq 1 $ that  
\begin{align} 
\label{eqn13.34}   
C^{-1}  \leq   \mu_j  (  \mathbb{S}^{n-1} )  \leq   C  \quad  \mbox{for}\, \, j = 1, 2, \dots 
\end{align}  
and  
thanks to  \eqref{eqn13.4} that  for some $ \hat C  \geq 1, $  \begin{align} 
\label{eqn13.35}   
\hat C^{-1}  \leq    \int_{\mathbb{S}^{n-1} }    |  \lan \he,  \xi   \ran  |\,  d \mu_j (\xi ) \quad 
\mbox{whenever}\quad  \he \in  \mathbb{S}^{n-1} \, \, \mbox{and}\,\, j = 1, 2, \dots. 
\end{align}   
Following  \cite{J}  or  \cite{CNSXYZ},  we  claim that we may also assume  
\begin{align} 
\label{eqn13.36}   
  E_j  \subset  \bar B ( 0,  \rho )  \mbox{ for $ j = 1, 2, \dots, $ and some }  \rho  <  \infty.
\end{align} 
 To prove   \eqref{eqn13.36}   we  first note from  \eqref{eqn13.5} that we may translate $  E_j $ if necessary so that if $ d_j = \mbox{diam}(E_j)  $ then  the  line segment from  $  - d_j y_j/2 $  to  $ d_j y_j/2 $ is contained in  $ E_j $ for some $ y_j  \in 
  \mathbb{S}^{n-1} $ when $ j = 1, 2,  \dots $.  If  $ h_j $  denotes the support function for $ E_j, $  then from the definition of  support function it  follows that  
\begin{align} 
\label{eqn13.37}   
h_j ( \xi  )   \geq    {\ts \frac{1}{2}}  |  \lan d_j  y_j ,  \xi   \ran | \quad 
\mbox{whenever} \quad  \xi  \in   \mathbb{S}^{n-1}.
\end{align}   
Using   \eqref{eqn13.37},  \eqref{eqn13.35},    \eqref{eqn13.33},  and  \eqref{eqn13.34},   we deduce that       
\begin{align}
 \label{eqn13.38} 
 \begin{split}
 (2 \hat  C)^{-1} \,   d_j    &\leq    \frac{d_j}{2} {\ds \int_{\mathbb{S}^{n-1}} }    |  \lan    y_j ,  \xi   \ran  |\,  d \mu_j  (\xi)    \leq    {\ds  \int_{\mathbb{S}^{n-1}} }    h_j ( \xi )  d\mu_j ( \xi )  \\ 
 &=  \left(\frac{n-p}{p-1}\right) \, \, \mbox{Cap}_{\mathcal{A}}  ( E_j ) \\
&\leq  \begin{cases}
 \ti C  &   \mbox{for}\, \,  1 < p   <  n - 1, \\
   b_j /(n-2)   \leq    \ti C   & \mbox{for}\, \,  p = n,  \\ 
 c  d_j^{n-p}  \leq  \ti C &   \mbox{for}\, \,   n - 1 < p  < n. 
 \end{cases} 
 \end{split}
\end{align}
  where $ \ti  C  $  is    a positive constant that does not depend on $ j$.    Thus, claim  \eqref{eqn13.36}  is true.

From  \eqref{eqn13.36}  we   see   that  a  subsequence of  
$ \{E_j\}_{j\geq 1}$  (also denoted $\{E_j\}$)  converges to a  compact convex set  $  E   \subset  \bar B ( 0, \rho ) $ in the sense  of  Hausdorff distance.  

We proceed by considering the following two cases. \\

\noindent{\bf Case A: $E$ has nonempty interior.} In this case, if $ p \neq n  - 1, $   it  follows    from  weak convergence of measures in Proposition  \ref{proposition11.1}  that  Theorem \ref{mink}   is true.  

To handle the possibility that   $  p = n - 1, $  and for later use, fix  $ p,  1 < p < n, $ and if $ j  = 1, 2,  \dots, $ let  
$  \nu_j$ denote  the  capacitary measure  corresponding to  $ U_j $  as  in \eqref{eqn3.4} $(a)$ of   Lemma \ref{lemma3.2}.   Then    from Lemma \ref{lemma8.6} 
  and   \eqref{eqn8.49a} of Proposition \ref{proposition8.9} we  see that $ \nu_j $ is absolutely continuous  with respect to  $ \mathcal{H}^{n-1} $ on  $  \ar  E_j $  and   
\begin{align} 
\label{eqn13.40}   
   \frac{d \nu_j }{ d \mathcal{H}^{n-1}} (y)   =   p   \frac{ f  ( \nabla U_j (y) )}{| \nabla U_j ( y ) |} \quad \mbox{for}\quad  \mathcal{ H}^{n-1} \, \, \mbox{almost every}\, \, y  \in  \ar  E_j.
   \end{align} 
Let  $  \hat r  =  \sup\{s  :   B ( x, s) \subset E \} $ be the inner radius of  $ E. $   Since $ E_j $ converges to $ E $ in the sense of Hausdorff distance it   follows that  $ E_j $  has inner radius at least $ \hat{r}/2 $  and from  \eqref{eqn13.36}  that  $ E_j \subset  \bar B (0,   \rho ) $ for $ j \geq j_0, $ provided  $ j_0 $ is large enough.  Using  these  facts  and convexity of  $ E_j $,  it follows  from  basic geometry that  if    $ \hat x   \in   \ar   E_j , $   then 
after a  possible rotation, 
\[  
B ( \hat x, \hat r/100 ) \cap  E_j  =  \{ x = (x' , x_n ) : x_n >  \hat{\ph} (x') \}  
\]   
where  $ \hat{\ph}$ is a  Lipschitz function on  $ \mathbb{R}^{n-1} $  with  $  \|  \hat{\ph} \hat  \|_{\mathbb{R}^{n-1}}  \leq   c (\rho/\hat{r})$.   Moreover, $  \ar  E_j $ can be covered by at most $ c (\frac{\rho}{\hat r})^{n-1} $ of radius $\hat{r}/1000$ where  $ c $ depends only on the data. From these observations,  \eqref{eqn13.40},  as well as the reverse H\"{o}lder  inequality in  \eqref{eqn8.29} $(a) $ with $ p  = q $ and our discovery in  Corollary \ref{corollary9.6a} that for $j\geq j_0$  sufficiently large $j_0$, there exists  
$\breve C $   depending only on the data and  $ \hat r,   \rho, $   such that      
\begin{align}
 \label{eqn13.42}  
 \begin{split} 
\mu_j^*  ( \mathbb{S}^{n-1} )  &=  \int_{\ar E_j}  f  ( \nabla U_j )  d \mathcal{H}^{n-1}  \\ 
&\leq  \breve C   (\hat{r})^{(1-n)/(p-1)} \,  
	 \,   \left( \int_{\ar E_j}  p  \,   \frac{ f  ( \nabla U_j ) }{ | \nabla U_j |}   d \mathcal{H}^{n-1} \right)^{p/(p-1)} \\    
	 &=\breve C   	 (\hat r)^{(1-n) /(p-1)} \,   ( \nu_j  (\ar E_j ) )^{p/(p-1)}
\end{split}
\end{align}      
 where $ \mu_j^* = \mu_j $ if  $ p \neq n - 1 $ and 
$ \mu_j^* = \ti \mu_j $ if  $ p = n - 1$.  

Next as in   the discussion following \eqref{eqn13.2}, we see that  if   
$  \mbox{Cap}_{\mathcal{A}} (E)  \not  = 0 $  then a subsequence of  $ \{\nu_j \}$  (also denoted  $ \{\nu_j\}$)  satisfies,  
\begin{align}
\label{eqn13.43}  
\lim_{j \to \infty}   \nu_j  =  \nu  \quad \mbox{weakly  where $\nu$ is the capacitary measure for  $  E.$ } 
\end{align}  
We now finish the proof of  Theorem \ref{mink} when  $ E $ has nonempty interior.  From \eqref{eqn2.5} we deduce  $\mbox{Cap}_{\mathcal{A}} (E) \neq 0. $   
Then as in  \eqref{eqn13.1}  and  \eqref{eqn13.2} we observe that     
\begin{align}  
\label{eqn13.44}     
\lim_{j\to \infty}  \nu_j  ( \ar E_j )  =  \lim_{j \to \infty} \mbox{Cap}_{\mathcal{A}} ( E_j )  =   \mbox{Cap}_{\mathcal{A}} ( E )  =  \nu (E)  = 1.    
\end{align} 
Using   \eqref{eqn13.44}  in  \eqref{eqn13.42} we conclude that 
$  \{\ti \mu_j \}$  is  uniformly bounded for $j\geq j_0$.  In view of  \eqref{eqn13.38}  and  Proposition 
\ref{proposition11.1}  we can choose subsequences of   $ \{b_j\}$ and $\{\ti \mu_j \}$  (also  denoted  $ \{b_j\}$  and $\{\ti \mu_j\}$) so that  
\begin{align}
\label{eqn13.45}   
\lim_{j \to \infty}  b_j = b, \, \,0 < b  < \infty,   \quad \mbox{and}\quad     \lim_{ j \to \infty}  \ti \mu_j =  \ti \mu  \, \, \mbox{weakly}
\end{align}
where $ \ti \mu $  is the measure in  
   \eqref{eqn7.6} $(b)$ with  $ u,  \mu  $  replaced by  
$ U,  \ti  \mu. $   Also  $ U $ is the capacitary function for $  E. $  Then, using \eqref{eqn13.45}, we have  $ \mu  = b \, \ti \mu $ so the proof of  Theorem \ref{mink} is complete when 
 $  1 < p  < n  $ and $  E  $ has nonempty interior.   \\

\noindent {\bf Case B: $  E $  has empty interior.} In this case, we assume that 
\begin{align}  
\label{eqn13.46}   
\mbox{Cap}_{\mathcal{A}} (E) \not = 0.
\end{align} 
At the end of this subsection, we show that \eqref{eqn13.46} holds.  Since sets with finite $ \mathcal{H}^{n-p}  $ measure have  zero    $ \mathcal{A}$-capacity,  
it   then follows from  \eqref{eqn13.36}   as  in the discrete case  that there is a $k$-dimensional plane $ P $ with
$ n - p  < k  \leq n - 1$  and      
\begin{align}
\label{eqn13.46a} 
E  \subset  P  \cap B (0, \rho) \quad \mbox{ with }    \quad     0  <  \mathcal{H}^k ( E )<\infty.  
\end{align}    
By considering two cases, $ n-p< k  < n  - 1 $ and $k=n-1$, we show that \eqref{eqn13.46a} is not possible and this will finish the proof that $E$ has nonempty interior. \\

\noindent {\bf Case B1: Suppose that  $ n-p< k  < n  - 1. $}  Translating  and rotating  $ E $ if  necessary,  we  may assume   \eqref{eqn13.22}-\eqref{eqn13.23}  hold for some $ a > 0$  
with $  E_1 $ replaced by $ E $ and  $ \rho $  by  $ 2 \rho.$  
          
  Let
  \[  
  t_j  =     d_{\mathcal{H}} (  E_j,  E ) \quad \mbox{for}\, \, j  = 1, 2, \dots.    
\]  
Then for $  j $  large  enough we can argue as in  Lemma  \ref{lemma13.3}  with 
$  t $  replaced by $ t_j,     $     and  $  u(\cdot, t ), E_1 + t E_2, $   by  $ U_j, E_j . $ 
We  obtain from the analogue of  \eqref{eqn13.24}  for $ j \geq j_0,  $  that  
\begin{align}  
\label{eqn13.47a}   
1 - U_j  ( x  )    \geq  C_1^{-1}   \, |x''| ^{\psi }  \quad    \mbox{for}\, \,    x = ( x', x'' ) \in  B (0, 4 \rho ) \, \,   \mbox{and}\, \,     C_1 \,  t_j  \leq  |x''|
\end{align}  
where $ x' \in \mathbb{R}^k $  and  $ \psi   = (p-n+k)/(p-1). $   Fix  $ j  \geq  j_0, $   and given $  y   \in \ar E_j  \cap  B ( 0,  a)  $,  let $ T_j (y),  $     be  a  supporting hyperplane  to  $  \ar E_j $  at  $ y. $   Let  $\hat H_j $  be the open half space  with  
 $\hat H_j \cap E_j = \es $  and  $ \ar \hat H_j  =  T_j( y ). $  
  Let   $  y^* $  denote the point  in  $ \hat H_j $  which lies on the normal line through $ y  $  with  $  | y - y^*  |  =  2  C_1  t_j $ where $  C_1  $ is as in  \eqref{eqn13.47a}. Note that  for $ j $ sufficiently large it is true that  $ d ( y^*, P )  >  C_1 t_j $.  since otherwise  it  would follow from the triangle inequality that there exists  $ z  \in B  (0, 2 a ) \cap E $ 
  with  $ d ( z, E_j ) >  t_j . $  Thus  \eqref{eqn13.47a} holds with $ x = y^* . $     Let   $ \ph $  be the  $ \mathcal{A}$-harmonic function in $ \hat  H_j \cap  B ( y,  8    C_1  t_j )  \sem 
  \bar B ( y^*,  C_1 t_j)   $  with continuous boundary values
  \[
\phi\equiv 
\left\{
\begin{array}{ll}
1 - U_j & \mbox{on} \,\,\, \ar B ( y^*,   C_1   \, t_j), \\
0 & \mbox{on}\, \,\, \ar ( \hat H_j  \cap B ( y,  8 C_1  t_j )).
\end{array}
\right.  
  \]
  Then  from the maximum principle for 
     $ \ti {\mathcal{A}}$-harmonic functions we have  $ \ph \leq  1 -  U_j$ on $ \hat  H_j \cap  B ( y,  8    C_1  t_j )  \setminus B ( y^*,   C_1   \, t_j)$.    Comparing $ \ph $ to a linear function and using a boundary Harnack inequality from  \cite{LLN} in  
  $\hat  H_j  \cap  B ( y,  8 C_1   t_j) $  we deduce for some $ c^* $ depending only on the data   and  $\rho $ that     
  \[ 
  ( 1 -   U_j (  y^{*} ) ) /t_j  \, \leq \,  c^*  ( 1 -  U_j  (  \hat  z ) ) / d (  \hat z,  T_j (y)) 
  \]
when $  \hat z  \in      \hat H_j \cap B ( y,   C_1   \,  t_j ) . $  Letting  $ \hat z  \to y $ non-tangentially,   we conclude from      this  inequality and  \eqref{eqn13.47a} with $ x  = y^* $  that         \begin{align}
\label{eqn13.48a}   
\, t_j^{\psi  - 1}\,    \leq  C^{**}  |   \nabla U_j ( y ) |  \quad   \mbox{for}\, \,  \mathcal{H}^{n-1} \mbox{ almost  every }  y \in \ar E_j \cap B (0, a) 
  \end{align}
  and  $ j  \geq j_0.  $ Here  $ C^{**} $  has the same dependence as $ C_1. $  From  \eqref{eqn13.48a}, Theorem \ref{mink} in the discrete case, \eqref{eqn13.40} with $ \nu $  replaced by $ \nu_j $, and the structural assumptions on  $ f $  we see that  
\begin{align}
\label{eqn13.49a}  
t_j^{\psi  - 1}\,  \nu_j ( \ar E_j \cap B (0,a) )   \leq  C'  \mu^*_j  ( \mathbf{g}_{j}( \ar  E_j  \cap  B (0, a ) ))  
  \end{align}
    where $ \mu_j^*  = \mu_j $    when  $p \neq n - 1, $  and  $ \mu_j^* =  \ti  \mu_j $ when $ p  = n - 1$ for $ 1 <  p < n$.  Here $\mathbf{g}_j$ is the Gauss map for $\partial E_j$.
  We note that  $ \psi  -  1  =    ( 1 - n + k )/(p-1) < 0. $  Also         from  \eqref{eqn13.43} we  
  deduce  
     \[  
     \liminf_{j  \to  \infty}  \nu_j (\ar E_j  \cap B (0, a )  )  \geq  \nu  ( \ar E  \cap B (0, a/2)) > 0 
     \]  
     where  the last inequality follows from the fact that otherwise  $ U $  extends to an  $\mathcal{A}$-harmonic function in  $  B (0, a/2) $  which would then imply  by  Harnack's inequality  that  $ U  \equiv  1 $  .   Using this inequality in  \eqref{eqn13.49a} we see that  if  $ p  \neq n - 1, $ then   $  \mu_j  ( \mathbb{S}^{n-1})  \to \infty $  in  contradiction  to   \eqref{eqn13.34}.  If  $  p  = n - 1, $  then from 
  \eqref{eqn13.38}  we see  that   
     \[ 
     \inf \{ b_j, \, \,  j  = 1, 2,  \dots \} > 0 
     \]   
     since otherwise it would  follow that $  E $  consists of  a  single point  -  a  set with zero 
     $\mathcal{A}$-capacity.  Using  this  observation and  arguing as above, we get once again that  $  \mu_j (\mathbb{S}^{n-1})  \to  \infty $  which again  contradicts   \eqref{eqn13.34}.           Thus, we first conclude that $E$ can not be contained in a $k$-dimensional plane for $n-p<k<n-1$ under the assumption \eqref{eqn13.46}. \\

   %  This assumption will be removed at the end of this subsection. \\
     
  %    Theorem \ref{mink} is valid under  assumption  \eqref{eqn13.46} when  $  E  $ is  contained in  a $ k$-dimensional  plane,  $ n - p < k  < n - 1. $  \\

\noindent {\bf Case B2: Suppose that  $ k  = n  - 1. $} In this case, we continue our proof under the assumption that \eqref{eqn13.46} holds. We also assume, as  we may,   that $ P  = \{ x: x_n  = 0  \}  $   
and 
\begin{align}  
\label{eqn13.47}   
B  ( 0,  4a)  \cap  P  \subset     E   \subset  B ( 0, \rho ) \cap P.
\end{align}
   Let   $  U $  be  the  $ \mathcal{A}$-capacitary function for  $E$  and as previously, 
$ U_j $  is  the  $ \mathcal{A}$-capacitary function for  $  E_j.$    Translating  $  E_j $ slightly upward  if  necessary  we  may assume  that   
\[   
\lim_{j \to  \infty}    d_{\mathcal{H}} ( E_j,  E ) = 0   \quad \mbox{and}\quad   E_j  \subset \{  x: x_n > 0  \}.   
\]  
 Let  $ \nabla U_{+} (x) $  denote  the  limit (whenever it exists)  as   $y \to  x$     non-tangentially  through values with $  y_n  > 0 .$    We  prove   
 \begin{proposition}  
 \label{proposition13.5}    
  There exists $ C \geq 1 $  such that    
\begin{align}
\label{eqn13.48}   
C  \liminf_{j \to \infty }   \int_{\ar  E_j }  f ( \nabla U_j )  d\mathcal{H}^{n-1}  \geq    
    \int_{\ar  E }  f ( \nabla U_+)  d\mathcal{H}^{n-1}   
     -  C^2    \mathcal{H}^{n-1} ( E ).  
\end{align}
 \end{proposition}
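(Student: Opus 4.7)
The strategy is to lower-bound the left-hand side of \eqref{eqn13.48} by restricting the integral to the ``upper face'' of $\partial E_j$, where the outer normal nearly points in the direction $+e_n$, and then to pass to the limit using the non-tangential convergence of $\nabla U_j$ to $\nabla U_+$ from $\{y_n>0\}$. The non-negativity of $f$ allows one to discard the bottom face and side contributions, while the $-C^2\mathcal{H}^{n-1}(E)$ term on the right absorbs the residual boundary-layer errors via the uniform a priori bounds on $\nabla U_j$.

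First I would fix a small $\delta>0$ and decompose $\partial E_j = T_j^+\cup T_j^-\cup S_j$ according to whether $\langle \nu_j,e_n\rangle$ exceeds $1-\delta$, is less than $-(1-\delta)$, or lies in between. Since $E_j$ is convex and trapped in the slab $\{0<x_n<t_j\}$ with $t_j=d_{\mathcal{H}}(E_j,E)\to 0$, convexity together with the Hausdorff convergence $E_j\to E\subset P$ forces $T_j^+$ to be the graph of a concave function $h_j^+:\Pi_j\to(0,t_j]$ over the vertical projection $\Pi_j$ of $E_j$ onto $P$, with $\Pi_j\to E$ in Hausdorff distance. Moreover, on any compact $K$ contained in the relative interior of $E$, one has $\|h_j^+\|_{L^\infty(K)}\le t_j$ and $\|\nabla h_j^+\|_{L^\infty(K)}\to 0$ by convexity. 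Since $f\ge 0$,
\[
\int_{\partial E_j}f(\nabla U_j)\,d\mathcal{H}^{n-1}\;\ge\;\int_{T_j^+}f(\nabla U_j)\,d\mathcal{H}^{n-1}=\int_{\Pi_j}f\bigl(\nabla U_j(x',h_j^+(x'))\bigr)\sqrt{1+|\nabla h_j^+(x')|^2}\,dx'.
\]

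Next I would establish the non-tangential convergence of $\nabla U_j$ along $T_j^+$. For $x'$ in the relative interior of $E$, the points $(x',h_j^+(x'))$ approach $(x',0)\in E$ from $\{y_n>0\}$ along the $e_n$-axis, hence non-tangentially. The uniform convergence $U_j\to U$ and $\nabla U_j\to \nabla U$ on compact subsets of $\mathbb R^n\setminus E$ (from Lemmas \ref{lemma2.1}--\ref{lemma2.2} and uniqueness of the capacitary function) combined with the existence of $\nabla U_+$ $\mathcal{H}^{n-1}$-a.e.\ on $E$---obtained by applying Proposition \ref{proposition8.9} to $U$ in the starlike Lipschitz domain $\{x_n>\varepsilon\}\cap B(0,R)$ and letting $\varepsilon\to 0$, together with the boundary Harnack principle of Lemma \ref{lemma9.5}---yields $\nabla U_j(x',h_j^+(x'))\to \nabla U_+(x',0)$ for $\mathcal{H}^{n-1}$-a.e.\ $x'\in E$. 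Applying Fatou's lemma after passing via Egorov to a compact set $K$ in the relative interior of $E$ on which the convergence is uniform gives
\[
\liminf_{j\to\infty}\int_{T_j^+}f(\nabla U_j)\,d\mathcal{H}^{n-1}\ge \int_K f(\nabla U_+)\,d\mathcal{H}^{n-1},
\]
and exhausting $E$ by such $K$, while using the integrability of $f(\nabla U_+)$ on $E$ (from the reverse Hölder estimate \eqref{eqn8.29} applied to $U$ in the upper half-space), produces $\liminf_j\int_{T_j^+}f(\nabla U_j)\ge \int_E f(\nabla U_+)\,d\mathcal{H}^{n-1}-\eta$ for any $\eta>0$. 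To absorb the slack, the uniform bound $|\nabla U_j|\le C$ on $T_j^+$ follows from Corollary \ref{corollary9.6a} applied to $U_j$ in a starlike Lipschitz domain that contains the slab $\{0<x_n<2t_j\}\cap B(0,\rho)$ for large $j$, with starlike-Lipschitz constant depending only on the data and the inner $(n-1)$-dimensional radius of $E$ in $P$ (which is positive by \eqref{eqn13.47}); hence $f(\nabla U_j)\le C^p$ on $T_j^+$, and the Jacobian defect, the boundary-layer defect near the relative boundary of $E$ in $P$, and the prefactor $C$ on the left of \eqref{eqn13.48} all contribute at most $C^p\mathcal{H}^{n-1}(E)$, which is absorbed into $C^2\mathcal{H}^{n-1}(E)$ after renaming constants.

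The principal obstacle is the non-tangential convergence step: showing that $\nabla U_j(x',h_j^+(x'))\to \nabla U_+(x',0)$ in a form strong enough to feed Fatou. A diagonalization is needed because $\nabla U_j\to \nabla U$ is uniform only on sets bounded away from $E$, while $(x',h_j^+(x'))$ is collapsing onto $E$. Combining the rate of Hausdorff convergence $t_j\to 0$ with the boundary Hölder continuity of $\nabla U_j$ on the graph (via Lemma \ref{lemma9.5} applied in $\{y_n>t_j/2\}$) allows one to interpolate between the uniform-interior limit and the non-tangential boundary limit. Convexity of $E_j$ is essential: it ensures $\nabla h_j^+$ stays uniformly small on compact subsets of the relative interior of $E$, so that the approach cones are uniform and the tangential derivatives do not spoil the non-tangentiality.
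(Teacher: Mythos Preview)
Your strategy---restrict to the upper face $T_j^+$, pass to the limit via Fatou---is natural but has a genuine gap at the crucial step, and two of the auxiliary claims you make are actually false in this setting.

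The central gap is the pointwise convergence $\nabla U_j(x',h_j^+(x'))\to\nabla U_+(x',0)$. The points $(x',h_j^+(x'))$ lie on the moving boundary $\partial E_j$ and collapse onto $E$ as $j\to\infty$, so the locally uniform convergence $\nabla U_j\to\nabla U$ on compact subsets of $\mathbb{R}^n\setminus E$ is of no direct use. Your ``diagonalization'' sketch invokes Lemma~\ref{lemma9.5a} for ``boundary H\"older continuity of $\nabla U_j$'', but that lemma controls the \emph{ratio} of two $\mathcal{A}$-harmonic functions vanishing on the same boundary portion, not gradients; and here $1-U_j$ and $1-U$ vanish on \emph{different} sets. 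What you would actually need is a $C^{1,\beta}$-up-to-the-boundary estimate for $U_j$ on the top face with constants independent of $j$, together with a quantitative rate for $\nabla U_j-\nabla U$ in a shrinking strip above $T_j^+$. Neither is supplied, and neither follows from the lemmas you cite.

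Moreover, the claims that $|\nabla U_j|\le C$ uniformly on $T_j^+$ and that $f(\nabla U_+)\in L^1(E)$ are both false: Proposition~\ref{proposition13.6}, proved immediately after, shows $\int_E f(\nabla U_+)\,d\mathcal{H}^{n-1}=\infty$, so the boundary gradients necessarily blow up near the relative boundary $\partial' E$. Your scheme for ``absorbing the boundary-layer defect'' into $C^2\mathcal{H}^{n-1}(E)$ via a uniform pointwise bound therefore cannot work. (Incidentally, if Fatou on compacts $K\Subset\mathrm{int}\,E$ did succeed, monotone convergence as $K\uparrow E$ would already give the inequality without any subtracted term, so the error-absorption machinery is also misdirected.)

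The paper bypasses boundary-gradient limits entirely. It takes a Whitney decomposition $\{Q_k\}$ of the relative interior of $E$, and for each cube $Q$ uses the two-sided estimate of the form \eqref{eqn13.26} (coming from Lemma~\ref{lemma8.5} and \eqref{eqn8.29}) to replace both $\int_Q f(\nabla U_+)\,d\mathcal{H}^{n-1}$ and $\int_{\partial E_j\cap Q_+} f(\nabla U_j)\,d\mathcal{H}^{n-1}$ by quantities of the type $(1-\hat U(y_Q))^p\,s(Q)^{n-1-p}$ evaluated at an \emph{interior} point $y_Q$ above $Q$. The comparison is then made at the level of the solutions themselves via the elementary maximum-principle bound \eqref{eqn13.49}, namely $1-U\le\hat C(1-U_j+\epsilon^\alpha)$. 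Cubes on which $1-U(y_Q)$ is already $\lesssim\epsilon^\alpha$ contribute only $\lesssim s(Q)^{n-1}$, and summing these produces the $-C^2\mathcal{H}^{n-1}(E)$ term. No convergence of $\nabla U_j$ on $\partial E_j$ is ever needed.
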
   
\begin{proof} 
 Given  
    $ \ep  >  0  $  choose  $ j_1 $ so large that  
   $  d_{\mathcal{H}} ( E_j,  E)  \leq  \ep $ for 
    $ j \geq j_1. $                    Comparing   boundary values of   $ U,  U_j  $ in  
$ B (0,  2 \rho )  \sem  E_j   $    and  using   Lemmas  \ref{lemma2.3},  \ref{lemma3.3}, \ref{lemma3.4},   we  deduce   the existence of  $ 0 < \alpha \leq  1/2,  \hat C \geq  1,  $  
  such that      
\begin{align}
\label{eqn13.49}  
1 -  U  \leq  \hat  C   (   1  -  U_j     +  \ep^\al ) 
    \end{align}
    for  $ j  \geq j_1$.     Next  we  divide  the interior of   $ E $ into   $(n  - 1)$-dimensional closed  Whitney cubes   $ \{ Q_k  \} . $   Let    $ \ar'   E  $  denote  
the  boundary of  $  E  $  considered  as  a  set in  $ P.$    
Then  the cubes in  $  \{ Q_k \} $  have disjoint interiors with  side length  $ s ( Q_k ) $  and the property that considered as  sets in  $  
    P,   $  the distance say  $ d' ( Q_k,  \ar'  E  ) $  from $ Q_k $ to  the boundary of  $ E $    satisfies  
\begin{align}
\label{eqn13.50} 
10^{-n} s(Q_k)  \leq   d' ( Q_k, \ar'  E  )  \leq  10^n  s ( Q_k). 
\end{align}  
Let         
  $  Q  \in  \{ Q_k \}  $  with  $ s ( Q )  \geq   \ep^\al   $  and put    
  $   Q_+  =  Q \times (0, s(Q)) . $
Suppose  $  y = (y_1, \dots, y_n ) = y_Q $  is   a  point in    $  Q_+ \sem E_j $  with    
 $ d( y,  \ar'  E )   \geq y_n/ 2  \geq    s(Q)/4 . $  

 We  consider two possibilities.  If   $  ( 1 - U ) ( y )  \geq 2 \hat C \ep^{\al} $ ($\hat C $  as in 
\eqref{eqn13.49}),  then from \eqref{eqn13.49} we have    $     1 -  U(y)     \leq 2 \hat C  ( 1 - 
U_j ( y)  ) $  and arguing  as in  \eqref{eqn13.26}   for   $ U_j,  U  $   we  get 
\begin{align}  
\label{eqn13.51}          
\begin{split}    
    \bar C^3 \int_{\ar  E_j  \cap  Q_+ }  f ( \nabla U_j )  d\mathcal{H}^{n-1} &\geq  \bar C^2  (1-U_j )^p (y) \,   s ( Q )^{ n-1- p}  \\
    & \geq  \bar C  (1-U )^p (y)  \, s ( Q )^{ n-1 - p}  \\
    &\geq { \ds \int_{ Q  }  f ( \nabla U_+ )  d\mathcal{H}^{n-1} } . 
  \end{split}
  \end{align}
      If   $  1 -  U ( y )  <   2 \hat C   \ep^\al, $ then since  $ s ( Q )  \geq \ep^\al ,$  an argument similar to the above gives  
\begin{align}
\label{eqn13.52}   
\int_{ Q  }  f ( \nabla U_+ )  d\mathcal{H}^{n-1}   \leq  C_+ \,  s ( Q )^{n-1} 
\end{align}
 where  $ C_+  $ is independent of $ j \geq j_2 \geq j_1 $  provided $ j_2 $  is  large enough. 
  Combining   \eqref{eqn13.51},  \eqref{eqn13.52}  and using    \eqref{eqn13.50}    we find  after summing over  
  $  Q  \in  \{ Q_k \} $  that  for some  $ \breve C  \geq 1, $ independent of $ j  \geq j_2, $     
\begin{align} 
\label{eqn13.53}     
\breve C \int_{\ar E_j }  f ( \nabla U_j )  d\mathcal{H}^{n-1}  \geq  \int_{ \{ x \in  E :\,  d' (x, \ar' E) \geq \breve C  \ep^{\al} \} }   f  ( \nabla  U_+ )  d\mathcal{H}^{n-1}  -  
    \breve C^2  \mathcal{H}^{n-1}  ( E ) . 
    \end{align}

 Letting first $ j \to \infty $  and after that  $ \ep \to 0 $   we obtain from \eqref{eqn13.53}   and  the monotone convergence theorem or  Fatou's Lemma that  \eqref{eqn13.48} is true. This finishes the proof of   Proposition \ref{proposition13.5}.    
   \end{proof}  
   
Next we prove   

  \begin{proposition}  
  \label{proposition13.6} 
\begin{align}   
\label{eqn13.54}     
\int_{ E }  f ( \nabla U_+)  d\mathcal{H}^{n-1} =  \infty.    
    \end{align}
     \end{proposition}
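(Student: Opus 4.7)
The proof uses a Pohozaev-type identity combined with a blowup analysis near the relative boundary $\partial' E$ (the boundary of $E$ within the hyperplane $P$, of Hausdorff dimension $n-2$, well-defined by convexity of $E$ and the assumption $\mathcal{H}^{n-1}(E) > 0$ from \eqref{eqn13.46a}--\eqref{eqn13.47}). Multiplying the equation $\nabla \cdot \mathcal{A}(\nabla U) = 0$ by $\langle x, \nabla U\rangle$ and integrating by parts on $\Omega_\epsilon \cap B(0,R)$, where $\Omega_\epsilon = \mathbb{R}^n \setminus E_\epsilon$ and $E_\epsilon = \{d(\cdot, E) \leq \epsilon\}$, using the Euler identity $\langle \nabla U, \mathcal{A}(\nabla U)\rangle = p\,f(\nabla U)$ coming from $p$-homogeneity of $f$, yields (after letting $R \to \infty$; the flux at infinity vanishes by Lemma \ref{lemma3.2}$(b)$) the Pohozaev identity
\[
(n - p) \int_{\Omega_\epsilon} f(\nabla U)\, dx = \int_{\partial E_\epsilon} \Big[\langle x, \nu\rangle f(\nabla U) - \langle x, \nabla U\rangle \langle \mathcal{A}(\nabla U), \nu\rangle\Big] dS,
\]
where $\nu$ is the outer normal to $E_\epsilon$. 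As $\epsilon \to 0^+$, the left-hand side tends to $(n-p)\,\mbox{Cap}_{\mathcal{A}}(E)>0$ by \eqref{eqn13.46} and monotone convergence, so the right-hand side stays bounded away from zero.

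Next I would decompose $\partial E_\epsilon$ into a \emph{bulk} part $\{y: d(y, \partial' E) > M\epsilon\}$ (for $M$ large) and an \emph{edge} part $\{y: d(y, \partial' E) \leq M\epsilon\}$. On the bulk, Proposition \ref{proposition8.9} applied to the upper and lower half-spaces separately shows that $\nabla U$ has non-tangential limits parallel to $\pm e_n$ (since $U \equiv 1$ on $E \subset P$, the trace of the tangential gradient vanishes). Because $y_n = O(\epsilon)$ on the bulk piece, both $\langle y, \nu\rangle$ and $\langle y, \nabla U\rangle$ are $O(\epsilon)$, and $f(\nabla U), |\mathcal{A}(\nabla U)|$ are uniformly bounded there, so the bulk contribution is $O(\epsilon) \cdot \mathcal{H}^{n-1}(\text{bulk}) \to 0$. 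Consequently, the edge contribution must converge to $(n-p)\mbox{Cap}_{\mathcal{A}}(E)$; since the edge has $\mathcal{H}^{n-1}$-measure $\sim \epsilon\, \mathcal{H}^{n-2}(\partial' E)$ while $\langle y, \cdot\rangle = O(1)$ on the edge, the integrand (effectively $\sim |\nabla U|^p$) must be of size $\epsilon^{-1}$, forcing $|\nabla U| \sim \epsilon^{-1/p}$ on $\partial E_\epsilon$ near $\partial' E$.

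I would then fix $y_0 \in \partial' E$ and perform a blowup: rescale $V_\lambda(x) := \lambda^{-(p-1)/p}(1 - U(y_0 + \lambda x))$, chosen so that the scaling matches the Pohozaev-enforced rate. By the uniform H\"older estimates of Lemma \ref{lemma2.2} for $\nabla U$, together with the uniform bounds from Lemma \ref{lemma8.6} (applied on each side of the tangent half-plane to $E$ at $y_0$), one extracts a subsequential limit $V$ that is $\mathcal{A}$-harmonic in $\mathbb{R}^n \setminus H$, where $H$ is the tangent half-plane to $E$ at $y_0$, with $V = 0$ on $H$ and growth $\sim r^{(p-1)/p}$ at infinity. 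By translation invariance along $\partial H$, $V$ depends only on the two variables perpendicular to $\partial H$, and is nontrivial by the nonzero Pohozaev budget. Transferring back, $|\nabla U_+(x)| \geq c\, d(x, \partial' E)^{-1/p}$ for $x \in E$ near $\partial' E$. Using $f(\nabla U_+) \geq c|\nabla U_+|^p$ from \eqref{eqn1.1} with $\eta' = 0$, and the coarea factorization $d\mathcal{H}^{n-1}\lfloor_E \approx ds\, d\mathcal{H}^{n-2}\lfloor_{\partial' E}$ near $\partial' E$ (valid by convexity and Lipschitz regularity of $\partial' E$ in $P$),
\[
\int_E f(\nabla U_+)\, d\mathcal{H}^{n-1} \;\geq\; c \int_{\partial' E} \int_0^{\delta} s^{-1}\, ds\, d\mathcal{H}^{n-2} \;=\; \infty,
\]
which is \eqref{eqn13.54}.

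The main obstacle is the blowup step: rigorously transferring the $\epsilon^{-1/p}$ rate from the lateral surface $\partial E_\epsilon$ to a matching $s^{-1/p}$ lower bound for $|\nabla U_+|$ on $E$ at distance $s$ from $\partial' E$. The difficulty is that the convergence $\nabla U(y+\epsilon(-e_n)) \to \nabla U_+(y)$ as $\epsilon \to 0$ is only non-tangential, so the scaling information extracted from the Pohozaev balance on $\partial E_\epsilon$ must be propagated through the rescaled limit problem on $\mathbb{R}^n \setminus H$; this requires combining the quantitative H\"older continuity of $\nabla U$ (Lemma \ref{lemma2.2}) with the uniform non-degeneracy estimates of Lemma \ref{lemma8.6} to ensure both existence and nontriviality of the blowup limit $V$, as well as uniqueness up to scale of its leading radial profile.
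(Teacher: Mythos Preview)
Your Pohozaev identity is correct, but the bulk/edge decomposition breaks down. On the top face of $\partial E_\epsilon$ (where $\nu$ is the outer normal to $\Omega_\epsilon$, i.e.\ $\nu=-e_n$, and $\nabla U_+=-|\nabla U_+|e_n$) the integrand is, exactly,
\[
\langle y,\nu\rangle f(\nabla U)-\langle y,\nabla U\rangle\langle \nabla f(\nabla U),\nu\rangle
\;=\;(p-1)\,y_n\,f(\nabla U_+)\;=\;(p-1)\,\epsilon\,f(\nabla U_+),
\]
and similarly on the bottom face. Thus the bulk contribution is $(p-1)\epsilon\int_{\text{bulk}}\big(f(\nabla U_+)+f(\nabla U_-)\big)$, not $O(\epsilon)\cdot\mathcal H^{n-1}(\text{bulk})$. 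Your assertion that ``$f(\nabla U),|\mathcal A(\nabla U)|$ are uniformly bounded there'' is precisely what is at stake: the bulk $\{d(y,\partial'E)>M\epsilon\}$ contains points at distance $\sim M\epsilon$ from $\partial'E$, where $|\nabla U_+|$ is expected to blow up --- this is exactly the content of \eqref{eqn13.54}. So the argument is circular: you cannot allocate the Pohozaev budget to the edge without first controlling the bulk, and you cannot control the bulk without knowing the blow-up rate. (In fact the correct picture is the opposite of what you wrote: the bulk term stays of order one and \emph{that} is what forces $\int_E f(\nabla U_+)=\infty$; but then one must bound the edge term, and estimating $\int_{\text{edge}}|\nabla U|^p\,d\mathcal H^{n-1}$ near $\partial'E$ is again the hard part.) The subsequent blowup step inherits the same difficulty and additionally requires uniqueness of the homogeneous profile on $\mathbb R^n\setminus H$, which for general anisotropic $\mathcal A=\nabla f$ is not available in the paper.

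The paper sidesteps both problems with a device due to Venouziou--Verchota. Instead of the tube $E_\epsilon$ one uses, for small $t>0$, the cone-like domain $D_t=B(0,1)\setminus H_t$ where $H_t$ is the union of rays from $z_t=\tfrac14 e_1+te_n$ through $E$; one works with the $\tilde{\mathcal A}$-harmonic Green's functions $\mathcal G_0,\mathcal G_1,\mathcal G_2$ of $B(0,1)$, $B(0,1)\setminus E$, and $D_t$ with pole at $z_t$. The Rellich identity (Lemma~\ref{lemma13.7}) for $\mathcal G_2$ on $D_t$ has the crucial geometric feature $\langle x-z_t,\nu\rangle=0$ on the conical part of $\partial D_t$ (the ``edge'' vanishes identically) and $\langle x-z_t,\nu\rangle=t$ on $E\cap B(0,1)$. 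Combining this with the monotonicity $\mathcal G_2\le\mathcal G_1\le\mathcal G_0$ and the identity for $\mathcal G_0$ on $B(0,1)$ yields
\[
t\int_{E\cap B(0,1)}\tilde f\big((\nabla\mathcal G_1)_+\big)\,d\mathcal H^{n-1}\ \ge\ \gamma\big(\zeta_1(z_t)-\zeta_0(z_t)\big)\ \ge\ c_0>0
\]
uniformly in $t$, and letting $t\to0$ gives the divergence; a final comparison $c(1-U)\ge\mathcal G_1$ transfers it to $U$. For the special case $f(\eta)=p^{-1}|\eta|^p$ the paper also sketches a direct barrier argument using Krol's explicit $(1-1/p)$-homogeneous solution on a slit plane, which supplies the rate $1-U\gtrsim s^{1-1/p}$ without any Pohozaev balance; this is closer in spirit to what your blowup would produce, but relies on an explicit solution unavailable for general $f$.
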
   
   We  note that   Propositions  \ref{proposition13.5}, \ref{proposition13.6}  give a contradiction to our  assumption that   
\eqref{eqn13.47}  is true,   since the total mass of  the measures in   \eqref{eqn13.34} are uniformly bounded and  invariant under translation. Hence, we also conclude in this case that $E$ can not be contained in $k$-dimensional plane when $k=n-1$ under the assumption \eqref{eqn13.46}. This finishes the proof of existence in Theorem \ref{mink} under the assumption  \eqref{eqn13.46}. %As we noted earlier, this assumption will be removed at the end of this subsection. 

 \begin{proof}[Proof of Proposition \ref{proposition13.6}]
 For  the readers  benefit  we  first  outline a     ``simple''  proof  of \eqref{eqn13.54} in Proposition  \ref{proposition13.6} for    the  $p$-Laplace equation (i.e., when
 $ (f(\eta) =  p^{-1}  |\eta|^p)$).       We use the same  notation as in  Proposition \ref{proposition13.5}. 
   Let    $ \{ Q_k \}  $  be   a  Whitney decomposition of  the interior of $ E $  considered as  a  subset of  $ P. $   Let  $ Q  \in  \{  Q_k \},   $   and  let $ z $ be a  point in  $ \ar'  E $  with  $ d'  ( z,    Q )  \approx  s ( Q). $   From convexity of  $  E  $  we see that  there is  a  $(n - 2)$-dimensional  plane, say  $ P_1  $  containing $ z $ with the property that  $  E $  is contained in the closure of  
one of the components  of    $ P \sem  P_1 .  $    Rotating $  P_1 $ if necessary we may assume that  
\[   
E  \subset  \Om = \{   x \in \mathbb{R}^n  : \,\,  x_1  - z_1 \leq  0  \, \, \mbox{ and  } \, \,  x_n = 0   \}.   
\]   
We note that Krol in  \cite{Kr}   has  constructed  a   homogeneous $ 1 -  1/p  $   solution, say $v'$,  to the $p$-Laplace equation in   $  \{  (x_1, x_n ) \} \sem \{ (x_1,  0 ) : x_1 \leq 0 \}$      which  vanishes  continuously on      $ (-\infty,  0]  \times  \{0\}.  $       We   extend $v' $  continuously to  
$ \mathbb{R}^n $ (also denoted $v'$)  by  defining  this  function to   be constant in the other  $(n - 2)$ coordinate directions.  Then  $ v (x) = v' (x-z)$ for   $x  \in \rn{n}$ is $p$-harmonic in $\mathbb{R}^n  \sem \Om. $       
Comparing boundary values  and using the maximum  principle, as  well as   Lemmas \ref{lemma3.2}, \ref{lemma3.3},    we  deduce that 
\begin{align}
\label{eqn13.55}     
c (1 - U  ( x ) ) \geq  v ( x )   \quad \mbox{whenever}\, \,   x  \in    B (0,  2 \rho  ) ,  
\end{align}   
 where  $ c $  depends only on  the data and  the  $p$-capacity  of  $  E. $    As  in  Proposition \ref{proposition13.5}    we see   that    
\begin{align} 
\label{eqn13.56} 
c'   \int_Q    |\nabla  U_+ |^p   \, d\mathcal{H}^{n-1}  \geq   ( 1 - U (y_Q) )^p  s (Q)^{n-1-p}.
\end{align} 
Now from  Krol's construction, we also deduce that 
\begin{align} 
\label{eqn13.57}  
c'' v  (y_Q) \geq  s (Q)^{1-1/p} .  
\end{align}  
Combining  \eqref{eqn13.55}-\eqref{eqn13.57}  we conclude that  for some  $ \ti c  $  with the same dependence as the above constants, 
\begin{align} 
\label{eqn13.58}    \ti c   \int_Q    |\nabla  U_+ |^p   \, d\mathcal{H}^{n-1}  \geq   
  s (Q)^{n-2}.
  \end{align}
Now since  $  B (0, 4a)  \cap P \subset  E  $  we see that  for  $ l  $ large there are  at least $ \approx  2^{ l (n -2)}  $  members of  $ \{Q_k \} $ whose side length lies between  $  2^{- l-1} a $ and  $ 2^{-l} a. $     Using this fact  and  summing \eqref{eqn13.58}   we get   Proposition \ref{proposition13.6} in this special case.   

To  get  Proposition \ref{proposition13.6}  for a general $p$-homogeneous   $f$  satisfying our structure conditions,  we  use a  clever  idea of  Venouziou  and   Verchota in \cite{VV}.  To simplify matters we make a 
   further  translation, scaling  and rotation, if necessary, so that   $ E  $  becomes  $  E'  $  with  
\begin{align}
  \label{eqn13.59}  
 \begin{split}   
 & (a) \hs{.2in}  E'  \subset  B (0, \rho'  ) \cap  P \mbox{  for  some  }   \rho' >1, \\
 & (b) \hs{.2in}   0   \in \ar'  E',  \\ 
 & (c) \hs{.2in}    E'  \cap \ar' (B (0, 1 ) \cap  P)  \not  = \es,  \\
 &  (d)  \hs{.2in}     E'  \subset  \{  x \in \mathbb{R}^{n-1} :\,\,  x_{1}  \leq 0  \}. 
 \end{split}
 \end{align} 
 Then  $ f, U, $ become  $ f', U' $ under this transformation.   Since $ f' $ satisfies the same structure  assumptions as  $ f $  it clearly  suffices to prove  Proposition \ref{proposition13.6}   for  $  E' , U' ,  $  the  $\mathcal{A'} = \nabla  f'$  capacitary function  for  $  E' $.    For ease of notation we  drop the primes  in \eqref{eqn13.59} and just write   $ U,  E,   f. $      
  
Let    $  D =   B (0, 1)  \sem E$ and  given  $ t > 0$ for  $0 < t  \leq  1/8, $  we set   $  z_t   =  \frac{1}{4} e_{1} +  t e_n  $. Let  
\[   
H_t  =  \{  y +  s  (  y   -  z_t  )  :  s  \geq  0\, \, \mbox{and}\, \,  y   \in E \} \cap  \bar B (0, 1 ) \quad \mbox{and}\quad  \,  D_t  = B (0, 1)  \sem  H_t. 
\]     
We note  that   $ H_t $ is  the union of  line segments   with one endpoint on   $ E $  and the other endpoint on $ \ar B(0, 1). $  Also, each line segment lies  on a  
    ray  beginning at  
$ (1/4, 0, \dots, t ) $  and   $  D_t  $  is   a  starlike Lipschitz  domain with respect to  
$ z_t . $  That is, $ \ar D_t $  is  the  union  of   graphs of  a  finite number of  Lipschitz functions defined on  $(n- 1)$-dimensional  planes and if     $ y \in   D_t,  $  then  the  ray joining   $   z_t  $   to  $ y $ is also in  $ D_t.   $  
   
Let  $  \ti f ( \eta ) =  f ( - \eta )$ whenever $\eta  \in  \mathbb{R}^n$  and let  
\[
  \mathcal{G}_0 = \mathcal{G}_0 (  \cdot, z_t ),\quad \mathcal{G}_1 =  \mathcal{G}_1 ( \cdot,  z_t ), \quad \mbox{and}  \quad \mathcal{G}_2  =  \mathcal{G}_2  (\cdot, z_t ) 
  \]
denote the  
$  \ti{\mathcal{A}} = \nabla   \ti f$-harmonic  Green's functions for  $  B (0, 1  ),   D, D_t $  respectively with pole at  $ z_t.  $  Also,  let  $ F =  F  ( \cdot,  z_t )  $  be the  $ \ti {\mathcal{
A}}$-harmonic  fundamental    solution on  $ \rn{n} $  with pole at  $ z_t.  $      Put        
\[  
\left\{ 
\begin{array}{l}
\ze_0 (\cdot,  z_t )  =  F ( \cdot ,  z_t )  -  \mathcal{G}_0 ( \cdot, z_t ), \\    
\ze_1 (\cdot,  z_t )  =  F ( \cdot ,  z_t )  -  \mathcal{G}_1 ( \cdot, z_t ),\\     
   \ze_2  (\cdot,  z_t )  =  F ( \cdot ,  z_t )  -  \mathcal{G}_2  ( \cdot, z_t ).  
\end{array}
\right.   
   \]   
From  $(e) $  of    Lemma  \ref{lemma9.1}   with $ \mathcal{A} $  replaced by  $   \ti { \mathcal{A}}, $   we see that  
$\ze_i  = \ze_i  ( \cdot, z_t )   $  has  a  H{\"o}lder continuous extension to    a neighborhood  of   $  z_t $  and  so  is locally  H{\"o}lder continuous in 
its  respective domain whenever    $  i   \in  \{ 0, 1, 2 \}.  $      
\\ 

\noi  To  complete the proof  of  Proposition \ref{proposition13.6},  we shall need the following  Rellich-type identity. 
\begin{lemma}  
\label{lemma13.7}  
With the  above notation, for $i=0,2$, 
\begin{align}
\label{eqn13.60}  
\begin{split}
{\ds  \int_{\ar  O }   \lan x  -  z_t,  \nu  \ran \,  \ti f (\nabla  \mathcal{G}_i )   d \mathcal{H}^{n-1}  = \frac{(n-p)}{p(p-1)}  \,  \ze_i ( z_t ) } 
\end{split}
\end{align}
 where $ O = B(0,1)  $ when $ i = 0 $  and $ O = D_t $  when $ i = 2$ with $ \nu $ is the  outer unit  normal  to  $O$. 
\end{lemma}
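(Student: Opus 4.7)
I would prove this Rellich-type identity by a Pohozaev argument mirroring the derivation in Lemma~\ref{lemma8.6}, applied to $\mathcal{G}_i$ on $O_\epsilon := O \setminus \overline{B(z_t,\epsilon)}$, and then passing to the limit $\epsilon \to 0$ by comparison with the fundamental solution $F$.

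First, since $\mathcal{G}_i$ is $\tilde{\mathcal{A}}$-harmonic and $C^{1,\beta}$-smooth on $O_\epsilon$ (Lemmas~\ref{lemma2.2}, \ref{lemma9.1}(a)), I apply the divergence identity \eqref{eqn8.30}--\eqref{eqn8.33} with the vector field $x - z_t$ in place of $x$ and $\tilde f$ in place of $f$, yielding
\[(n-p)\int_{O_\epsilon}\tilde f(\nabla \mathcal{G}_i)\,dx = \int_{\partial O_\epsilon}\bigl[\langle x - z_t, \nu\rangle\tilde f(\nabla \mathcal{G}_i) - \langle x - z_t, \nabla \mathcal{G}_i\rangle\langle \nabla \tilde f(\nabla \mathcal{G}_i),\nu\rangle\bigr]\,d\mathcal{H}^{n-1}.\]
On $\partial O$, Lemma~\ref{lemma9.1}(b) gives $\mathcal{G}_i \equiv 0$, whence $\nabla \mathcal{G}_i = -|\nabla \mathcal{G}_i|\nu$; combining this with Euler's formula $\langle \nabla \tilde f(\eta),\eta\rangle = p\tilde f(\eta)$ applied to the $p$-homogeneous $\tilde f$ reduces the $\partial O$-integrand to $-(p-1)\langle x - z_t,\nu\rangle\tilde f(\nabla \mathcal{G}_i)$, exactly as in \eqref{eqn8.34}. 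This rearranges to
\[(p-1)\int_{\partial O}\langle x - z_t,\nu\rangle\tilde f(\nabla \mathcal{G}_i)\,d\mathcal{H}^{n-1} = T(\epsilon) - (n-p)\int_{O_\epsilon}\tilde f(\nabla \mathcal{G}_i)\,dx,\]
where $T(\epsilon)$ denotes the boundary integral on $\partial B(z_t,\epsilon)$.

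Next, I apply the same Pohozaev identity to $F$ on $B(z_t,\rho) \setminus \overline{B(z_t,\epsilon)}$ for $\rho$ with $B(z_t,2\rho) \subset O$. By the exact homogeneity of $F$ of degree $\alpha = (p-n)/(p-1)$ about $z_t$ (Lemma~\ref{lemma4.1}(e)), this identity gives an \emph{exact} balance between the $\partial B(z_t,\rho)$- and $\partial B(z_t,\epsilon)$-boundary integrals and the volume integral of $\tilde f(\nabla F)$. Using the splitting $\mathcal{G}_i = F - \zeta_i$ from Lemma~\ref{lemma9.1}(e) with the gradient estimate $|\nabla \zeta_i(x)| \leq c|x-z_t|^{\delta - 1}$ from Lemma~\ref{lemma9.1}(f), the Taylor expansion $\tilde f(\nabla F - \nabla \zeta_i) = \tilde f(\nabla F) - \langle \nabla\tilde f(\nabla F), \nabla \zeta_i\rangle + O(|\nabla F|^{p-2}|\nabla \zeta_i|^2)$ together with $|\nabla F| \sim |x-z_t|^{(1-n)/(p-1)}$ shows that the singular parts of $T(\epsilon)$ cancel against those of the volume integral $(n-p)\int_{O_\epsilon}\tilde f(\nabla \mathcal{G}_i)\,dx$, leaving a convergent difference as $\epsilon \to 0$.

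To isolate $\zeta_i(z_t)$ on the right-hand side, I would use the weak formulation of Lemma~\ref{lemma9.1}(d) paired with a test function supported near $z_t$ that samples the regular part. Specifically, choosing a smooth cutoff $\psi_\epsilon$ supported away from $\partial O$ and using $\mathcal{G}_i = F - \zeta_i$, the identities $\int \langle \nabla \tilde f(\nabla \mathcal{G}_i), \nabla\theta\rangle\,dx = \theta(z_t)$ (Lemma~\ref{lemma9.1}(d)) and $\int \langle \nabla \tilde f(\nabla F), \nabla\theta\rangle\,dx = b\,\theta(z_t)$ (Lemma~\ref{lemma4.1}(c), with $b$ the normalization constant from Remark~\ref{rmk7.1}) together expose $\zeta_i(z_t)$ as the finite part of the limit. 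Matching the dimensional factor $(n-p)$ from Step~1 against the $p$-homogeneity factor $p$ from Euler's formula, and the normalization from Lemma~\ref{lemma4.1}(c), produces the claimed constant $(n-p)/(p(p-1))$.

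\emph{The main obstacle} is the bookkeeping in Step~2--3: both $T(\epsilon)$ and $(n-p)\int_{O_\epsilon}\tilde f(\nabla \mathcal{G}_i)\,dx$ individually diverge like $\epsilon^{(p-n)/(p-1)}$, and the non-radial nature of $\tilde f$ (hence of $F$) precludes explicit computation of the sphere integrals. The delicate step is to show that the finite part of this subtraction is precisely $\frac{n-p}{p}\zeta_i(z_t)$ rather than some other angular average; this is where the weak formulation of Lemma~\ref{lemma9.1}(d) is essential, allowing one to bypass explicit angular integration and appeal instead to the distributional identity $\nabla\cdot\nabla\tilde f(\nabla\mathcal{G}_i) = -\delta_{z_t}$ to extract $\zeta_i(z_t)$ with the correct normalization.
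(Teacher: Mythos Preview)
Your opening Pohozaev step on $O_\epsilon = O \setminus \bar B(z_t,\epsilon)$ matches the paper exactly, including the simplification on $\partial O$ via $\nu = -\nabla\mathcal{G}_i/|\nabla\mathcal{G}_i|$ and Euler's formula. The divergence, however, is in how you extract $\zeta_i(z_t)$ from the $\partial B(z_t,\epsilon)$ contributions, and here your proposal has a real gap.

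Your Step~3 does not work as written. Both $F$ and $\mathcal{G}_i$ satisfy the \emph{same} distributional identity $\int \langle \nabla\tilde f(\nabla\cdot),\nabla\theta\rangle\,dx = \theta(z_t)$ (Lemma~\ref{lemma4.1}(c) and Lemma~\ref{lemma9.1}(d), with identical normalization), so subtracting yields zero and cannot expose $\zeta_i(z_t)$. The constant $b$ in Remark~\ref{rmk7.1} relates the \emph{unnormalized} $\hat G = h^{(p-n)/(p-1)}$ to the normalized $G$; once $F$ is taken as the normalized fundamental solution (as in Lemma~\ref{lemma9.1}(c)), there is no mismatch to exploit. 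Nor can the weak formulation distinguish between the singular and regular parts of a sphere integral in the way you suggest.

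The paper's route is different in two essential respects. First, it eliminates the divergent volume integral $(n-p)\int_{O_\epsilon}\tilde f(\nabla\mathcal{G}_i)\,dx$ not by subtracting the corresponding $F$-identity, but by a second integration by parts: since $\mathcal{G}_i = 0$ on $\partial O$, one has $p\int_{O_\epsilon}\tilde f(\nabla\mathcal{G}_i)\,dx = -\int_{\partial B}\mathcal{G}_i\,\langle\nabla\tilde f(\nabla\mathcal{G}_i),(x-z_t)/|x-z_t|\rangle\,d\mathcal{H}^{n-1}$. This reduces everything to integrals over $\partial B(z_t,\epsilon)$ in which $\mathcal{G}_i$ itself (not only $\nabla\mathcal{G}_i$) appears---and writing $\mathcal{G}_i = F - \zeta_i$ here is precisely what makes $\zeta_i(z_t)$ emerge, via continuity of $\zeta_i$ at $z_t$. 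Second, contrary to your claim that the anisotropy ``precludes explicit computation of the sphere integrals,'' the paper \emph{does} compute them explicitly: using the representation $F(x) = \theta\,h(x-z_t)^{(p-n)/(p-1)}$ from Remark~\ref{rmk7.1} together with the duality relation $\nabla k(\nabla h(x)) = x/h(x)$, one shows directly that the pure-$F$ contribution on $\partial B$ vanishes identically (their $J_1 \equiv 0$) and that the mixed term $J_2$ reduces to $\zeta_i(z_t)$ times an angular integral that equals exactly $(n-p)/p$ by the normalization of $\theta$. The constant $(n-p)/(p(p-1))$ then drops out from dividing by the factor $(p-1)$ on the $\partial O$ side.
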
   
\begin{proof} 
 To  start the  proof  of  Lemma  \ref{lemma13.7}  we write  $ \mathcal{G} $  for $ \mathcal{G}_i $ and   use the Gauss-Green Theorem  as in  \eqref{eqn11.10}-\eqref{eqn11.14} with   $  B  =   \bar B ( z_t,  \ep),  0  < \ep $  small,  to get   
 \begin{align}
 \label{eqn13.61}  
 \begin{split} 
 I &=  \int_{O\sem B}   \nabla \cdot (  (x - z_t )\ti
 f ( \nabla \mathcal{G} ) )  \, dx  \\
 &=    \int_{\ar  O}\lan x - z_t,  \nu  \ran \ti
 f  ( \nabla \mathcal{G} )  d \mathcal{H}^{n-1}   + \int_{\ar B }  \lan x - z_t,  \nu  \ran  \ti f  ( \nabla \mathcal{G} )   d \mathcal{H}^{n-1}  . 
\end{split}
\end{align}
 Moreover,  
 \begin{align}
  \label{eqn13.62}   
  \begin{split}
  I   &=   n  \int_{O\sem B}  
\ti   f ( \nabla \mathcal{G}  )  dx   +    \sum_{k, j  = 1}^n  \int_{O\sem B}     ( x_k - (z_t)_k ) \ti f_{\eta_j } (\nabla \mathcal{G}  )     \mathcal{G} _{x_k x_j}  dx 
    \\    &= n  \int_{O\sem B} \ti f ( \nabla \mathcal{G} )  dx   + I_1.
 \end{split}
 \end{align}
Integrating $  I_1 $  by parts, using  $p$-homogeneity of  $ \ti
f $, as well as  $\ti
 {\mathcal{A}} =  \nabla \ti
 f$-harmonicity of  $ \mathcal{G}   $ in $ O  \sem  \bar B $,  we  deduce that 
\begin{align} 
\label{eqn13.63} 
\begin{split} 
 I_1  &=  \int_{\ar O } \lan x - z_t ,  \nabla \mathcal{G}  \ran  \, \lan \nabla \ti
f (\nabla \mathcal{G} ) 
, \nu   \ran  \, d\mathcal{H}^{n-1} +   \int_{\ar B } \lan x - z_t ,  \nabla  \mathcal{G}  \ran  \, \lan \nabla \ti
f (\nabla \mathcal{G} ) 
, \nu   \ran  \, d\mathcal{H}^{n-1} \\
&\hs{.4in}-   p  \int_{O\sem B} \ti  f ( \nabla \mathcal{G}  )  dx .  
 \end{split}
 \end{align}     
Combining  \eqref{eqn13.61}-\eqref{eqn13.63},   we  find after some juggling  that  
\begin{align}
 \label{eqn13.64} 
 \begin{split}
       (n-p)      \int_{O\setminus B} \ti
 f ( \nabla \mathcal{G}  )  dx   &=    \int_{\ar  O}  \lan x - z_t,  \nu  \ran \ti  f  ( \nabla \mathcal{G})  d \mathcal{H}^{n-1}     -  \int_{\ar O } \lan x - z_t ,  \nabla \mathcal{G}  \ran  \, \lan \nabla \ti f(\nabla \mathcal{G} ) , \nu   \ran  \, d \mathcal{H}^{n-1}                                                                  \\
 & + \int_{\ar  B} \lan x - z_t,  \nu  \ran \ti
 f  ( \nabla \mathcal{G}  )  d \mathcal{H}^{n-1}      -  \int_{\ar B } \lan x - z_t ,  \nabla \mathcal{G}   \ran  \, \lan \nabla \ti
f (\nabla \mathcal{G} ) 
, \nu   \ran  \, d \mathcal{H}^{n-1}. 
\end{split}
\end{align}
     We  note that  
$  \nu  =  -  \frac{ \nabla \mathcal{G} }{|\nabla \mathcal{G}  |}  $ on  $  \ar O$ for    $\mathcal{H}^{n-1}    $  almost  everywhere.   Using   this  fact  and  $ p$-homogeneity  of  $ \ti
f $,  we  obtain     
\begin{align}
  \label{eqn13.65}      
  \begin{split}   
   \int_{\ar  O}  \lan x - z_t,  \nu  \ran  \ti f  ( 
\nabla \mathcal{G} ) d \mathcal{H}^{n-1}    -  \int_{\ar O } \lan x - z_t ,  &\nabla \mathcal{G}   \ran  \, \lan \nabla \ti
 f(\nabla \mathcal{G} ) 
, \nu   \ran  \, d \mathcal{H}^{n-1}                      
     \\
     &=   - (p - 1)  \int_{\ar  O}  \lan x - z_t,  \nu    \ran \ti
 f  ( \nabla \mathcal{G}  )  d \mathcal{H}^{n-1} .  
\end{split}
 \end{align}
       Now on  $  \ar B $ we have  $  \nu =   - \frac{ x - z_t }{| x -  z_t |} $ so               
\begin{align} 
\label{eqn13.66} 
\begin{split}
  \int_{\ar  B} &\lan
 x - z_t,  \nu  \ran \ti
 f  ( \nabla \mathcal{G}
 )     d\mathcal{H}^{n-1}    -  \int_{\ar B } \lan x - z_t ,  \nabla \mathcal{G}  \ran  \, \lan \nabla  \ti
f(\nabla \mathcal{G}) 
, \nu   \ran  \, d \mathcal{H}^{n-1} \\
  & =  
  -   \int_{\ar  B} | x - z_t | \ti
 f  ( \nabla \mathcal{G}
 ) dx +    \int_{\ar B } \lan x - z_t ,  \nabla \mathcal{G}
  \ran  \, \lan \nabla \ti
f(\nabla \mathcal{G}
),  {\ts\frac{x - z_t }{ | x - z_t |}}\ran  d \mathcal{H}^{n-1}.   
\end{split}
\end{align}
    Again, from  $ p$-homogeneity of  $ \ti
f$, and  $  \ti {\mathcal{A}} = \nabla   \ti
f$-harmonicity  of   
$ \mathcal{G}$,  and  the Gauss-Green  Theorem  we see that     
\begin{align}
\label{eqn13.67}  
\begin{split}
p  \int_{O\sem B }    \ti
 f (\nabla \mathcal{G}
 )  dx  =      \int_{O\sem B }  \nabla \cdot ( \mathcal{G}
  \nabla   \ti
f (\nabla \mathcal{G}
 ) )  dx       =   -  \int_{\ar B }  \mathcal{G}
  \,   \lan  \nabla  \ti
 f (\nabla \mathcal{G}
 ),  
 {\ts \frac{x - z_t }{ | x - z_t |}}\ran   d  \mathcal{H}^{n-1}.
  \end{split}
  \end{align} 
         Using \eqref{eqn13.65}-\eqref{eqn13.67}    in \eqref{eqn13.64} we arrive after  some  more juggling at       
\begin{align}
 \label{eqn13.68} 
 \begin{split}
    (n/p - 1 ) \int_{\ar B } \mathcal{G}
  \,  & \lan \nabla \ti
f (\nabla \mathcal{G}
 ),  {\ts\frac{x - z_t }{ | x - z_t |}} \ran  \, d \mathcal{H}^{n-1}    +    \int_{\ar B } \lan x - z_t ,  \nabla \mathcal{G}
  \ran  \, \lan \nabla  \ti f (\nabla \mathcal{G}
 ),   {\ts  \frac{x - z_t }{ | x - z_t |}} \ran  \, d \mathcal{H}^{n-1}  \\ 
 &  -   \int_{\ar  B} | x - z_t | \ti
 f  ( \nabla \mathcal{G}
 ) \, d  \mathcal{H}^{n-1}
   =     (p - 1)  \int_{\ar  O}  \lan x - z_t,  \nu    \ran \ti
 f  ( \nabla \mathcal{G}
 )  d \mathcal{H}^{n-1}.   
 \end{split}
 \end{align}     
    We intend  to  let  $  \ep  \to 0$  in  the left-hand side of  \eqref{eqn13.68}.    To  study the  asymptotics as $ \ep \to  0$, we  note from  Lemma \ref{lemma9.1}  with   $  w  = z_t$ and    $\ti {\mathcal{A}} =  \mathcal{A}$     that   $  F  -  \mathcal{G}
  =  \ze  $  in   $ O $  where  
$ \ze $   is    H\"{o}lder  continuous in   $  B  ( z_t , 1/8 ) $   for some  exponent  $  \al  \in (0, 1 ) $  depending  only on the data.   Moreover,  using   \eqref{eqn10.41}, \eqref{eqn10.47}-\eqref{eqn10.48},  elliptic regularity theory, and  arguing   as in \eqref{eqn12.3}-\eqref{eqn12.6}  we see for some constant  $ c  \geq  1, $  depending only on the data that  
\begin{align}
\label{eqn13.69}     
|  \nabla  \ze (x) |   \leq  c  \,  \ep^{\al - 1}  \max_{\ar B (  z_t,   1/8 ) } \ze \quad  \mbox{whenever} \, \,  x  \in  \ar  B  ( z_t ,  \ep )\, \, \mbox{and}\, \, 0 < \ep \leq  1/100.  
\end{align}      
 
Also from the structure and  regularity assumptions on  $ \ti f $, we see that 
if   $  \eta' ,  \eta^*  \in   B  (  \eta' ,  |\eta' |/2  )$ with $  \eta' \neq 0$  then    
\begin{align} 
\label{eqn13.70}
\begin{split}
 & (\al) \hs{.2in}     |\ti f (\eta' ) -  \ti f (\eta^*) | \leq  c | \eta' |^{p-1} 
| \eta' -  \eta^* |,   \\
& (\be) \hs{.2in}  |  \nabla\ti f (\eta') - \nabla \ti f ( \eta^* )|
\leq  c   |\eta'|^{p-2}  |\eta'  - \eta^* |.
\end{split}
\end{align}  
Moreover,   from    \eqref{eqn10.41} we have  
\begin{align} 
\label{eqn13.71} 
| z - z_t|^{-1}   | F( z ) |  +    | \nabla  F  ( z  ) |  \approx    | z  -  z_t |^{(1-n)/(p-1) }  \quad \mbox{for}\, \, 
 z   \in   \rn{n} \sem \{z_t \}.  
 \end{align}
 where proportionality constants and $ c $  in \eqref{eqn13.71} depend only on the data.  
      
If  $  x  \in \ar B ( z_t,  \ep )$ then from \eqref{eqn13.69}-\eqref{eqn13.71}  with $ z =  x$ and $\eta^*  = 
\nabla \mathcal{G} ( x$),  $\eta'  =  \nabla  F ( x )$,  we obtain    
\begin{align}
 \label{eqn13.72} 
 \begin{split}
 & (\al') \hs{.2in}       |\ti f (\nabla  F ( x ) )   - \ti f (\nabla \mathcal{G} ( x ) ) | \leq  \ti c   | \nabla  F  (x)  |^{p-1} \,  |  \nabla \zeta (x)  | \, 
  \leq \,  \ti c^2    |x - z_t |^{\al - n},  \\  
&  (\be')     \hs{.2in}      | \nabla \ti f (\nabla  F ( x ) )   -  \nabla \ti f (\nabla \mathcal{G} ( x ) ) | \leq  \ti c   | \nabla  F  (x)  |^{p-2} \,  |  \nabla \zeta (x) |    
 \leq  \ti c^2  \,  | x - z_t |^{1-n +\al  + \frac{(n-p)}{p-1}}.
 \end{split}
 \end{align}  
   Using  \eqref{eqn13.72}, we find  for the  integrands on the left-hand side of \eqref{eqn13.68}   that  
\begin{align}
 \label{eqn13.73} 
 \begin{split}
 (  n/p - 1 ) \mathcal{G}  \,   \lan \nabla \ti f (\nabla \mathcal{G}  ),   {\ts  \frac{x - z_t }{ | x - z_t |}} \ran  \, &=  (n/p-1) ( F - \ze ) \lan \nabla \ti f (\nabla F ),   {\ts  \frac{x - z_t }{ | x - z_t |}} \ran   +  O ( | x - z_t | )^{1-n + \al},   \\
    \lan x - z_t  ,  \nabla \mathcal{G}   \ran  \, \lan \nabla \ti f (\nabla \mathcal{G} ),   {\ts  \frac{x - z_t }{ | x - z_t |}} \ran  \, &=   \lan x - z_t  ,  \nabla F  \ran  \, \lan \nabla  \ti f (\nabla F ),   {\ts  \frac{x - z_t }{ | x - z_t |}} \ran   + O ( |x-z_t| )^{1-n + \al} ,  \\
   -     | x - z_t | \ti  f  ( \nabla \mathcal{G} ) \, &=    -   |x-z_t |  \ti f (\nabla F )  +  O(|x-z_t|^{1 - n + \al}) 
\end{split}
\end{align}   
 where we have used standard  big $ O $ notation.  Replacing the  sum of  the left-hand side of  these equations in \eqref{eqn13.73} by the sum of  the  right-hand sides  and using   $ (p-n)/(p-1)$-homogeneity of  $F$ in $  x - z_t$,  we  get  for $ x  \in  \ar B ( z_t, \ep) $ that        
\begin{align}
\label{eqn13.74} 
\begin{split}
  J_1 \, &= \, \int_{\ar B }  \left\{  {\ts  \frac{p - n}{p (p-1)} } \, F (x) \,   \,  \lan \nabla \ti f (\nabla F (x)),   {\ts  \frac{x - z_t }{ | x - z_t |}} \ran  \, -  | x - z_t | \ti f  ( \nabla F (x)) \, \right\}  d  \mathcal{H}^{n-1}      \\ 
   &=  {\ds  \int_{\ar B }}    {\ts \frac{n - p }{p}} \, \ze(x)  \, \,   \lan \nabla \ti f (\nabla F(x)),   {\ts  \frac{x - z_t }{ | x - z_t |}\,  \ran  d \mathcal{H}^{n-1}  }  +    (p - 1) {\ds  \int_{\ar  O}  \lan x - z_t,  \nu   \, \ran \ti f  ( \nabla \mathcal{G} (x))  d \mathcal{H}^{n-1} } + O (\ep^\al)   \\
   &  = J_2 +   (p - 1) {\ds  \int_{\ar  O}  \lan x - z_t,  \nu   \, \ran \ti f  ( \nabla \mathcal{G}  (x) )  d\mathcal{H}^{n-1}} + O (\ep^\al) 
\end{split}
   \end{align}
    where   the  last  integral (integral over $\partial O$) is  $ p - 1 $  times the one we want to compute.     We  claim  that      
    \begin{align}
       \label{eqn13.75}  
       J_1  \equiv   0 .  
       \end{align}    
    To prove  this claim we note from  section \ref{appendix}  with $ f $  replaced by $ \ti f $ (see the sentence  above  \eqref{eqn6.4}) that  
   if  $ \ti  f (\eta ) =   k (- \eta )^p  $  and 
   \[
      h ( x )  =  \max \left\{ \frac{  \lan x, y  \ran}{  k ( y)}, \, \, y  \in \rn{n} \sem \{0\}  \right\}, 
      \]
   then     $  h  $ has  continuous  second partials and 
\begin{align} 
\label{eqn13.76}  
\nabla   k ( \nabla  h (x)  ) = x/ h(x)  \quad   \mbox{ while } \quad   k ( \nabla  h ) = 1.
\end{align}      
  Also  it followed  (see  Remark \ref{rmk7.1})   that   $   \ti F ( x)  =  \he  \, (h (x))^{(p-n)/(p-1)}$  is  the fundamental solution for 
  $  \ti { \mathcal{A}} = \nabla \ti f$-harmonic functions with pole at  zero where    
    \[   
     \he^{p-1}   =  p^{-1} \left( \frac{ n - p}{p-1} \right)^{1-p}  \left( \int_{\mathbb{S}^{n-1}}  h ( \om )^{-n}  d \om  \right)^{-1}.  
    \]
   Let $   F ( x )  =  \ti F ( x - z_t ),  $ whenever $ x \in \rn{n} \sem 
\{z_t\}.  $   
   Using  these facts and \eqref{eqn13.76} in $ J_1 $ and the  homogeneity of the various  functions,  we see that   
   \begin{align}
   \label{eqn13.77}
    -  |x - z_t | \, \ti  f  (\nabla F (x)  ) =  - \he^p \,  |x - z_t |  \left ( \frac{n - p}{p-1}  \right)^p \,  h(x - z_t )^{\frac{(1-n)p}{p-1} } .  
   \end{align} 
	Moreover,  using \eqref{eqn13.76}, it follows that
	\begin{align}
	  \label{eqn13.78}  
	  \begin{split}
 F (x)  \lan \nabla \ti  f  ( \nabla  F ( x ) ),  
{\ts  \frac{x - z_t }{ | x - z_t |} } \ran & =  -  \he^p  h(x - z_t )^{\frac{p-n}{p-1} }  p \left( \frac{ n - p}{p-1} \right)^{p-1} 
 h( x - z_t )^{1-n} \lan    \nabla  k  (  \nabla h (x - z_t ) ) ,  {\ts \frac{x  - z_t}{| x - z_t |} }\ran
 \\ 
 &=   - \he^p  {\ds  p \,  \left( \frac{ n - p}{p-1} \right)^{p-1}   h(x - z_t )^{\frac{ (1-n) p}{p-1}}   | x - z_t |  }  .    
 \end{split}
 \end{align}   
 Multiplying  the right-hand side of  \eqref{eqn13.78}  by 
 $ \frac{p-n}{p(p-1)} $  and  adding to   \eqref{eqn13.77}   we obtain  claim \eqref{eqn13.75}. 
 
Next  arguing as in  \eqref{eqn13.77},  we get  
\begin{align} 
\label{eqn13.79}   
 J_2   = -  \int_{ \ar B ( z_t, \ep )}  \he^{p-1}  (n-p)   \left( \frac{ n - p}{p-1} \right)^{p-1} \ze ( x )  |x-z_t |  h(x - z_t)^{-n}  d \mathcal{H}^{n-1}.     
 \end{align}
  Using  one  homogeneity of  $ h $  we  see that \eqref{eqn13.79} can  be rewritten in spherical coordinates, $   \ep \, \om  =  x - z_t,    \,  \om \in \mathbb{S}^{n-1},  $ as
\begin{align}
\label{13.80}
   J_2   = -  \int_{ \mathbb{S}^{n-1} } \he^{p-1}    ( n - p)   \left( \frac{ n - p}{p-1} \right)^{p-1} 
   \ze (  z_t + \ep  \om  )  h(\om )^{-n}  d \mathcal{H}^{n-1}.  
   \end{align}
Letting  $  \ep \to 0 $ and using continuity of  $ \zeta$  at $ z_t $  we conclude 
from  \eqref{eqn13.74},  \eqref{eqn13.75}, and  \eqref{13.80}  that \eqref{eqn13.60} in Lemma  \ref{lemma13.7}   is true  since    
\begin{align}
\label{13.81}   
\he^{p-1}      \left( \frac{ n - p}{p-1} \right)^{p} \int_{ \mathbb{S}^{n-1} }  h(\om )^{-n}  d \mathcal{H}^{n-1}   =   \frac{n-p}{p(p-1)}. 
 \end{align}
 This finishes the proof of Lemma \ref{lemma13.7}.
\end{proof}  
\noi {\bf  Proof of  Proposition 13.6}.   We  shall  apply  Lemma  \ref{lemma13.7}    with  $ O  =  D_t. $ Before doing this  we note that if  $  \nu $ is the outer unit normal to  $ D_t $  then  
    $  \lan x  -  z_t , \nu (x)  \ran  = 0  $  when   $  x  \in  \ar D_t \sem \mathbb{S}^{n-1}   $  and  $  x_n  < 0 $  while  $  \lan x  -  z_t , \nu (x)  \ran  = t   $  when   $  x  \in  E  \cap   B (0, 1)$ for $\mathcal{H}^{n-1}$-almost everywhere.   Using these facts and  Lemma  \ref{lemma13.7}    we  obtain   for 
\[
\mathcal{G}_2 = \mathcal{G}_2  (\cdot, z_t )\quad \mbox{and}\quad  \ze_2  =  \ze_2 ( \cdot, z_t ) 
\]
  that 
\begin{align}
 \label{eqn13.82} 
 \begin{split}
 \ga \,  \ze_2 ( z_t )  &=   \int_{ \ar D_t  } \lan x - z_t,  \nu \ran    \ti  f (\nabla \mathcal{G}_2  ) d  \mathcal{H}^{n-1}   \\
 &=    t     \int_{ E \cap B (0, 1 ) }       \ti f ( \nabla \mathcal{G}_2   )  d  \mathcal{H}^{n-1}  + {\ds  \int_{ \ar  D_t \cap  \mathbb{S}^{n-1} }  \lan x - z_t,  \nu \ran  \ti f (\nabla \mathcal{G}_2   ) d  \mathcal{H}^{n-1}}.
 \end{split}
 \end{align}
where $\gamma:=\frac{n-p}{p(p-1)}$ is the constant in \eqref{13.81}.  Now   from the maximum principle for  $ \ti{\mathcal{A}}$-harmonic functions we  
 have for 
 \[
  \mathcal{G}_i = \mathcal{G}_i ( \cdot, z_t ), \quad \mbox{and} \quad \ze_i = \ze_i (\cdot, z_t ) \quad \mbox{for}\, \,  i = 0, 1, 2,  
  \]
  that
\begin{align}
\label{eqn13.83}   
\mathcal{G}_2      \leq  \mathcal{G}_1  \leq \mathcal{G}_0 \, \, \mbox{in}\, \,   D_t  \quad 
\mbox{ so } \quad  \ze_0 ( z_t ) \leq \ze_1 ( z_t ) \leq \ze_2 ( z_t). 
\end{align}   
Using  \eqref{eqn13.83}, the mean value theorem,  the fact that  $  \nabla \mathcal{G}_i ,  i  = 1, 2, $  has non-tangential  limits from  above  $\mathcal{H}^{n-1}$-almost everywhere on  $ E $  and that all limits have the same direction,   we conclude that 
 \[   
 \ti f  (  \nabla \mathcal{G}_2    )    \leq  \ti  f  (  (\nabla \mathcal{G}_1)_+  )  \quad \mbox{on}\, \,    E \cap  B ( 0, 1).  
 \]       
 Likewise,     
 \[   
 \ti  f  (  \nabla \mathcal{G}_2    )    \leq   \ti  f  (  \nabla \mathcal{G}_0  ) \quad \mbox{for}\, \,  
   \mathcal{H}^{n-1}\, \,   \mbox{almost every}\, \,    x \in  \mathbb{S}^{n-1} \cap \ar D_t. 
  \] 
  Using these facts and \eqref{eqn13.82}-\eqref{eqn13.83},  and Lemma \ref{lemma13.7}  with $O = B(0,1)$, we get 
\begin{align}  
\label{eqn13.84} 
\begin{split}
  t   {  \ds \int_{ E \cap B (0, 1 ) }  } 
   \ti f ( (\nabla \mathcal{G}_1)_+  )  d  \mathcal{H}^{n-1}   & \geq  \ga \ze_2 ( z_t) 
   -   {\ds  \int_{ \mathbb{S}^{n-1} }  \lan x - z_t,  \nu \ran \ti  f (\nabla \mathcal{G}_0 ) d \mathcal{H}^{n-1} }  \\ 
   & = \ga (\zeta_2 ( z_t ) - \ze_0 ( z_t ))  \\
   &\geq  \ga (\zeta_1 ( z_t ) -  \zeta_0 (z_t) ).   
\end{split}
\end{align}
 Letting  $ t \to 0$  in  \eqref{eqn13.84}  we  assert   that to  complete the  proof  of   Proposition \ref{proposition13.6} it suffices   to show   
\begin{align} 
\label{eqn13.85}   
c \, [  \zeta_1 ( z_t ) - \ze_0 ( z_t )] \geq 1
\end{align}
  where  $ c \geq 1 $ is a positive constant  depending on  the data    and  $ \mbox{Cap}_{\mathcal{A}} (E)  $  but    independent of  $ t.  $  Indeed,    from  \eqref{eqn3.4} $(b)$ of      Lemma \ref{lemma3.2} and  \eqref{eqn10.36} $(c) $ of  Lemma  \ref{lemma9.1}, as well as the maximum principle  for $  \ti{\mathcal{A}}$-harmonic functions , we  find  $ c \geq  1,  $  independent of  $ t , $    with 
$  c \,  (  1 - \, U  )  \geq   \mathcal{G}_1    $   on    $  D_t   \setminus  B (e_1/4, 1/8)$ when $  0 <   t  <   1/100 $.  Then as in the displays above  \eqref{eqn13.84} if  follows that 
\[ 
c  \ti f ( -  (\nabla U_+ ) ) =  c   f (  \nabla U_+ )     \geq  \ti f  ( (\nabla \mathcal{G}_1)_+ ) 
\] 
for  $  \mathcal{H}^{n-1} $  almost every  $  x   \in    E \cap  B (0, 1) . $   Using this inequality, \eqref{eqn13.85}, and  letting  $ t  \to  0$  in  \eqref{eqn13.84}  we  get   Proposition \ref{proposition13.6}.   

To   prove  \eqref{eqn13.85}  we  note   as  in  Lemma \ref{lemma2.3} 
  that for some  $  \ti \al \in (0, 1), c' \geq 1, $ 
  we have   
  \[  
  \max_{ B (0, s)} \mathcal{G}_1    \leq c'  s^{ \ti \al}   \mathcal{G}_1 ( e_1 /16 )  \quad \mbox{for}\,\,   0 < s 
  \leq   1/16  
  \] 
  where 
  $ c' $  depends only on the data and $ \mbox{Cap}_{\mathcal{A}} (E)$.  
  Also from  Lemma \ref{lemma2.1}   we see that  $  c''  \min_{\bar B(0, 1/16)} \mathcal{G}_0 \geq 1 $  where $ c'' $  
  depends only on the data.  Thus there exists  $ \hat \rho,  0 <  \hat \rho  <  1/16 $  with the same dependence as  $ c' $ such that 
\begin{align}  
\label{eqn13.86}    
\mathcal{G}_0 - \mathcal{G}_1 \geq  \hat \rho \quad \mbox{in}\, \,   \bar B (0, \hat \rho)  \sem  E. 
\end{align}
  We  claim  that  
\begin{align}  
\label{eqn13.87}    
|\nabla   \mathcal{G}_0 (x) |  \approx   \frac{\mathcal{G}_0 (x)}{ | x - z_t | } \approx   | x - z_t |^{\frac{1-n}{p-1} }\quad   \mbox{for}\, \,   x  \in B (0, 1/2). 
  \end{align}
The left-hand inequality  in  \eqref{eqn13.87}  follows from \eqref{eqn9.14} $(\be) $  while the right-hand inequality is a  consequence of   Lemma \ref{lemma9.1} $(e)$ and our knowledge of  $ F. $  
   
Armed  with  \eqref{eqn13.86},  \eqref{eqn13.87},   we now  prove  \eqref{eqn13.85}, and so also  Proposition \ref{proposition13.6}.  
  From  \eqref{eqn13.87}   it  follows  in a now well-known way  that  $  \mathcal{G}_0  - \mathcal{G}_1 $  
	  satisfies a    locally  uniformly elliptic  PDE in  $ B (0, 1/2 )  \sem  E $  similar to     \eqref{eqn10.42},  \eqref{eqn10.43}.    From Harnack's inequality for this  PDE  and  \eqref{eqn13.86} we deduce for some   $ \ti  \rho  > 0 $  that  
	  \[
	   \mathcal{G}_0 - \mathcal{G}_1  \geq  \ti  \rho \quad \mbox{on} \, \,    \ar  B (  e_1/4 , 1/4 -  \hat \rho ). 
	   \]
	   Using the same argument as in the proof of  Lemma 
  \ref{lemma9.1}  $ (e), $    it  now  follows that  $    \,   \ze_1 (z_t) -   \ze_0 (z_t)  \geq  \ti \rho $  whenever  $  t \in   (0,1/8). $   
  We  conclude that    \eqref{eqn13.85}  and  Proposition \ref{proposition13.6}   are true. 
  
 We next show that if $E$ has empty interior, the assumption  \eqref{eqn13.46} holds.  To this end, we consider following three cases depending on $p$.  For   $ 1< p  <  n - 1 $, we note from \eqref{eqn13.38}  and  \eqref{eqn1.6} that 
      for  some  $ C_+ \geq  1 $ independent of  $ j,  $  
\begin{align}
 \label{13.50b} 
 C_+^{-1}  \,  \mbox{diam}(E_j)  \leq   \mbox{Cap}_{\mathcal{A}} (E_j )      \leq       C_+  \, (\mbox{diam}(E_j) )^{n-p}.
 \end{align}
 Thus, \eqref{13.50b} implies that $\mbox{diam}(E_j)$ is bounded below independently of $ j , $  which in view of \eqref{eqn13.43}   implies that      \eqref{eqn13.46} always holds when $ 1 < p < n - 1. $  When  $ p  = n - 1 $, then from \eqref{eqn13.43} we deduce that  $ \mbox{ Cap}_{\mathcal{A}} (E ) = 1 $ so again 
     \eqref{eqn13.46}  always holds. Finally, when $  n - 1  < p  < n$, then  a  line  segment  of  length $ l$     has  capacity  $  \approx  l^{n-p} $  so  from  \eqref{eqn13.43} we deduce that if       \eqref{eqn13.46} is false then    $ \mbox{diam}(E_j)\to 0$.   For $j=1,2,\ldots,$ choose  $  s_j, \hat  E_j,   $  so  that  
        $ s_j  \hat E_j  =  E_j $   and  $\mbox{Cap}_{\mathcal{A}} ( \hat E_j )  = 1. $  Then from the above discussion we  find that  $ s_j  \to 0 $  as  $ j \to  \infty. $   
        Let  $  \hat \mu_j $  denote the measure  in    Theorem  \ref{mink}  defined  relative  to   $  \hat E_j . $  Then from the usual dilation argument  we  have    $  \hat  \mu_j  =  s_j^{p + 1  -  n}  \mu_j . $   From    \eqref{eqn13.34}              we see that  $  \{\hat \mu_j \}_{j\geq 1}$  converges weakly to  zero and  a  subsequence of  
          $ \{\hat E_j\} $  converges  to  $ \hat E $  a  set of  $ \mathcal{A}$-capacity one.   We can now argue  as  previously with  $  \hat  E  $  replacing  $ E $ to  get a  contradiction.  Using just uniform boundedness of  $ \{\hat \mu_j\}_{j\geq 1}$  and our earlier work  it  follows that $ E $ has nonempty interior.    From   weak convergence of  measures in  Proposition \ref{proposition11.1} we now  get a contradiction since  $ \hat \mu_j \to 0 $  weakly as  $ j  \to \infty. $  Thus,    assumption  \eqref{eqn13.46}  holds  when $ 1 < p < n$.  The proof of  existence in  Theorem  \ref{mink}   is now complete.   
 \end{proof}

\subsection{Uniqueness of Minkowski problem}
\label{uniq}
Uniqueness in Theorem \ref{mink}  can be shown  using   the equality result in the Brunn-Minkowski inequality(Theorem \ref{theorem1.4}) as in \cite{CNSXYZ} or \cite{CJL}: 

\begin{proof}[Proof of $(c), (e) $  in Theorem \ref{mink}] To  prove uniqueness  in  Theorem  
\ref{mink},  suppose  $  \mu $  
is a positive  finite   Borel measure on  $  \mathbb{S}^{n-1}$  satisfying   \eqref{eqn7.1}  and  let  $ E_0,  E_1$ be two compact convex sets  with nonempty interiors satisfying \eqref{eqn7.6} in Theorem \ref{mink}  relative to $ \mu. $       Let $h_0$ and $h_1$ be the support functions of $E_0$ and $E_1$ respectively. For $t\in[0,1]$ we let $ E_t=(1-t) E_0+t  E_1$.  Using    Proposition \ref{proposition12.1} and \eqref{eqn12.30} we deduce as in   \eqref{eqn13.30} and \eqref{eqn13.31}  that   if   $ p  \not  =  n - 1, $  then    
\begin{align}
\label{eqn13.88}
\begin{split}
\left.\frac{d}{dt} \mbox{Cap}_{\mathcal{A}}(E_t) \right|_{ t = 0 } \,  &= (p-1) \int_{\mathbb{S}^{n-1}} (h_1(\xi)-h_0(\xi)) d\mu(\xi) \\
&=(n-p) [\mbox{Cap}_{\mathcal{A}}(E_1)-\mbox{Cap}_{\mathcal{A}}(E_0)].
\end{split}
\end{align}
We  define
\[
\mathbf{m}(t)=\mbox{Cap}_{\mathcal{A}}(E_t)^{\frac{1}{n-p}}. 
\]
Then basic calculus  and   \eqref{eqn13.88}   gives us that   
\begin{align}
\label{eqn13.89}
\begin{split}
\mathbf{m}'(0)   &=\mbox{Cap}_{\mathcal{A}}(E_0)^{\frac{1}{n-p}-1}[\mbox{Cap}_{\mathcal{A}}(E_1)-\mbox{Cap}_{\mathcal{A}}( E_0)]\\
&=\mathbf{m}(0)^{1-n+p}[\mathbf{m}(1)^{n-p}-\mathbf{m}(0)^{n-p}].
\end{split}
\end{align}
From \eqref{eqn1.7} in Theorem \ref{theorem1.4} with $ E_1, E_2, \la $ replaced by $ E_0,  E_1, t $  we find that  $\mathbf{m}$ is a concave function   on  $[0,1]$ and therefore 
\begin{align}  
\label{eqn13.90}  \mathbf{m}'(0)\geq  \mathbf{m}(1) -  \mathbf{m}(0) 
\end{align}  
with  strict inequality unless    $ \mathbf{m}$ is  linear  in  $ t ,$   which implies  equality holds in the  Brunn Minkowski  inequality  for $ t \in [0,1]. $   Let      
\[  
l  = \left( \frac {\mbox{Cap}_{\mathcal{A}}(E_1)}{\mbox{Cap}_{\mathcal{A}}(E_0)}\right)^{\frac{1}{n-p}}.       
\] 
Using  \eqref{eqn13.90}  in  \eqref{eqn13.89} we see that   
\[      
l^{n-p}  - 1 \geq l -   1 .  
\]  
Reversing the roles of  $ E_0,  E_1 $  we also get  
  \[  
  l^{p-n} -  1  \geq  l^{-1} - 1.  
  \]   
  Clearly,  both these inequalities can only hold if  $ l = 1. $ 
Thus  $  \mbox{Cap}_{\mathcal{A}}(E_0) =  \mbox{Cap}_{\mathcal{A}}(E_1)  $  and  equality holds in   \eqref{eqn1.7} for $ t \in [0,1]. $  From  Theorem \ref{theorem1.4} we conclude that $ E_0$ is a translate and dilate of $ E_1$.   From  \eqref{eqn1.5} it follows that honest dilations  are not possible    when $p\neq n-1$.   

If $ p  = n - 1 $,  let  $ b_0, b_1 $ correspond to  $ E_0,  E_1, $ respectively as in  \eqref{eqn7.6} 
$(d). $   Then  $ \mbox{Cap}_{\mathcal{A}} ( E_i ) = 1,  $ for $ i = 0, 1 $ and   arguing as in  
\eqref{eqn13.88}  we  see that   
\begin{align}
 \label{eqn13.91}
\left.  b_0   \, \frac{d}{dt} \mbox{Cap}_{\mathcal{A}}(E_t) \right|_{ t = 0 } \,  = (p-1)  \int_{\mathbb{S}^{n-1}} (h_1(\xi)-h_0(\xi)) d\mu(\xi)   =  \left.  b_1   \, \frac{d}{dt} \mbox{Cap}_{\mathcal{A}}(E_t) \right|_{ t = 1 }.
\end{align} 
From  concavity of  $ \mathbf{m} (t) $  as  above we see that 
 $ \mathbf{m}' (0)  \geq  \mathbf{m}' (1) $  so   \eqref{eqn13.91} implies   $ b_0   \leq  b_1 $  with strict inequality unless  equality holds in  \eqref{eqn1.7}   of   Theorem 
\ref{theorem1.4}   for  $ E_t, \, t \in [0,1]. $ Reversing the roles of  $ E_0,  E_1 $ we get that 
$ b_0 = b_1 $  so  from  Theorem  \ref{theorem1.4},  $ E_1 $ is  homothetic to  $ E_0. $           

This finishes the proof of (c), (e).  in Theorem \ref{mink} and so also of  Theorem \ref{mink}.
\end{proof}

\section*{Acknowledgment}          
This material is based upon work supported by National Science Foundation under Grant No. DMS-1440140 while the first author were in residence at the MSRI in Berkeley, California, during the Spring 2017 semester. The first author was also supported by ICMAT Severo Ochoa project SEV-2015-0554, and also acknowledges that the research leading to these
results has received funding from the European Research Council under the European Union’s Seventh Framework Programme (FP7/2007-2013)/ERC agreement no. 615112 HAPDEGMT. 
The fourth author was partially supported by NSF DMS-1265996.

\newcommand{\etalchar}[1]{$^{#1}$}

    \end{document}